\else \usepackage[utf8]{inputenc} \fi
  \DeclareMathAlphabet\mathpzc{T1}{pzc}{mb}{it}
  \DeclareMathAlphabet{\pazocal}{OMS}{cmsy}{m}{n}
  \DeclareMathOperator{\Proj}{Proj}
  \DeclareMathOperator{\inter}{int}
  \DeclareMathOperator{\supp}{supp}
  \def\norml{\left\|}
  \def\normr{\right\|}
  \DeclareMathOperator{\circl}{\circ\,}
  \DeclareMathOperator{\divv}{div}
  \DeclareMathOperator{\Divv}{\textbf{div}}
  \DeclareMathOperator{\gradd}{\nabla\!}
  \DeclareMathOperator{\grad}{\nabla\!}
  \DeclareMathOperator{\maxx}{p_{+}}
  \DeclareMathOperator{\maxxc}{p_{c,+}}
  \def\dmaxx{d\hspace{-0.1em}\maxx}
  \def\dqq{d\mathbf{q}}
  \DeclareMathOperator{\normalInt}{\mathbf{n}_\mathbf{o}}
  \DeclareMathOperator{\tanInt}{\mathbf{t}_\mathbf{o}}
  \DeclareMathOperator{\normalExt}{\mathbf{n}}
  \DeclareMathOperator{\tanExt}{\mathbf{t}}
  \DeclareMathOperator{\JacV}{J_\Omega}
  \DeclareMathOperator{\JacB}{J_\Gamma}
  \def\prodL2#1#2{\left(#1\right)_{#2}}
  \def\prodD#1#2{\langle#1\rangle_{#2}}
  \DeclareMathOperator{\hh}{\mathbf{h}}
  \DeclareMathOperator{\pp}{\mathbf{p}}
  \DeclareMathOperator{\qq}{\mathbf{q}}
  \DeclareMathOperator{\uu}{\mathbf{u}}
  \DeclareMathOperator{\vv}{\mathbf{v}}
  \DeclareMathOperator{\ww}{\mathbf{w}}
  \DeclareMathOperator{\xx}{\mathbf{x}}
  \DeclareMathOperator{\zz}{\mathbf{z}}
  \DeclareMathOperator{\gG}{\mathbf{g}}
  \DeclareMathOperator{\Aa}{\mathbb{C}}
  \DeclareMathOperator{\Bb}{\mathbf{B}}
  \DeclareMathOperator{\Kk}{\mathbf{K}}
  \DeclareMathOperator{\Ll}{\mathbf{L}}
  \DeclareMathOperator{\Hh}{\mathbf{H}}
  \DeclareMathOperator{\Ss}{\mathbf{S}}
  \DeclareMathOperator{\Tt}{\mathbf{T}}
  \DeclareMathOperator{\Vv}{\mathbf{V}}
  \DeclareMathOperator{\Cc}{\pmb{\pazocal{C}}}
  \DeclareMathOperator{\Ii}{\mathbf{I}}
  \DeclareMathOperator{\Id}{\mathrm{Id}}
  \DeclareMathOperator{\ff}{\mathbf{f}}
  \DeclareMathOperator{\tauu}{\boldsymbol{\tau}}
  \DeclareMathOperator{\epsilonn}{\boldsymbol{\epsilon}}
  \DeclareMathOperator{\varepsilonn}{\boldsymbol{\varepsilon}}
  \DeclareMathOperator{\sigmaa}{\boldsymbol{\sigma}}
  \DeclareMathOperator{\muu}{\boldsymbol{\mu}}
  \DeclareMathOperator{\nuu}{\boldsymbol{\nu}}
  \DeclareMathOperator{\thetaa}{\boldsymbol{\theta}}
  \DeclareMathOperator{\Thetaa}{\boldsymbol{\Theta}}
  \DeclareMathOperator{\xii}{\boldsymbol{\xi}}
  \DeclareMathOperator{\phii}{\boldsymbol{\phi}}
  \DeclareMathOperator{\psii}{\boldsymbol{\psi}}
  \DeclareMathOperator{\Ww}{\mathbf{W}}
  \DeclareMathOperator{\Xx}{\mathbf{X}}
  \newtheorem{theo}{Theorem}[section]
  \newtheorem{cor}{Corollary}[section]
  \newtheorem{lemma}{Lemma}[section]
  \newtheorem{hypothesis}{\textsc{Assumption}}
  \newtheorem{thrm}{Théorème}[section]
  \newtheorem{prpstn}{Proposition}[section]
  \newtheorem{crllr}{Corollaire}[section]
  \newtheorem{lmm}{Lemme}[section]
  \newtheorem{hyp}{\textsc{Hypothèse}}
  \theoremstyle{definition}
  \newtheorem{remark}{Remark}[section]
  \newtheorem{defn}{Definition}[section]
  \newtheorem*{notation}{Notation}
  \newtheorem{rmrk}{Remarque}[section]
  \newtheorem{dfntn}{Définition}[section]
  \newtheorem{exmpl}{Exemple}[section]
\newenvironment{prf} {\begin{proof}[Proof]} {\end{proof}}
\def\BC#1{\textbf{\color{blue}#1}}
\definecolor{dartmouthgreen}{rgb}{0.05, 0.5, 0.06}
\begin{document}

\frontmatter                    

\frontispice                    

\chapter*{Résumé}               
\label{chap:resume}             
\phantomsection\addcontentsline{toc}{chapter}{\nameref{chap:resume}} 

Cette thèse traite du problème d'optimisation de formes dans le contexte de la mécanique des solides en contact. Le modèle physique considéré est celui de solides linéaires élastiques en petites déformations, en contact (glissant ou avec frottement de Tresca) avec un corps rigide. Les formulations mathématiques étudiées sont deux versions régularisées de l'inéquation variationnelle décrivant le système d'origine: la formulation pénalisée et la formulation Lagrangien augmenté. Comme ces deux formulations présentent des non différentiabilités, nous proposons une approche par dérivées directionnelles pour obtenir les dérivées de forme associées. En particulier, pour chacune des formulations, nous exprimons des conditions suffisantes pour que la solution soit dérivable par rapport à la forme. Ceci nous permet de construire un algorithme d'optimisation topologique de type gradient, s'appuyant sur les dérivées obtenues et une représentation des formes par des ensembles de niveau (\textit{level-sets}). L'algorithme bénéficie en outre d'une technique de découpage de maillage qui permet d'obtenir une représentation explicite de la forme à chaque itération, et ainsi d'appliquer fortement les conditions aux limites sur la zone de contact. Après avoir détaillé les différentes étapes de la méthode, nous présentons des résultats numériques en deux et trois dimensions pour en tester la validité.
\chapter*{Abstract}             
\label{chap:abstract}           
\phantomsection\addcontentsline{toc}{chapter}{\nameref{chap:abstract}} 

This thesis deals with shape optimization for contact mechanics. More specifically, the linear elasticity model is considered under the small deformations hypothesis, and the elastic body is assumed to be in contact (sliding or with Tresca friction) with a rigid foundation. The mathematical formulations studied are two regularized versions of the original variational inequality: the penalty formulation and the augmented Lagrangian formulation. In order to get the shape derivatives associated to those two non-differentiable formulations, we suggest an approach based on directional derivatives. Especially, we derive sufficient conditions for the solution to be shape differentiable. This allows to develop a gradient-based topology optimization algorithm, built on these derivatives and a level-set representation of shapes. The algorithm also benefits from a mesh-cutting technique, which gives an explicit representation of the shape at each iteration, and enables to apply the boundary conditions strongly on the contact zone. The different steps of the method are detailed. Then, to validate the approach, some numerical results on two-dimensional and three-dimensional benchmarks are presented.
\cleardoublepage

\tableofcontents                
\cleardoublepage


\listoffigures                  
\cleardoublepage



\chapter*{Remerciements}        
\label{chap:remerciements}      
\phantomsection\addcontentsline{toc}{chapter}{\nameref{chap:remerciements}} 

Mes premiers remerciements vont à mon directeur de thèse Jean Deteix, pour son implication et sa disponibilité tout au long de ces années. Avoir un directeur qui accorde autant de temps à ses étudiants est un vrai privilège. Je le remercie également de m'avoir laissé libre des directions que je souhaitais donner à nos travaux de recherche. Ses connaissances en optimisation et son expertise en méthodes numériques sont une composante essentielle des résultats présentés dans ce manuscrit.

J'aimerais ensuite remercier les autres professeurs et les professionnels de recherche du GIREF. Nous disposons grâce à eux d'un cadre de travail extrêmement stimulant. En particulier, les algorithmes numériques mis en \oe uvre par les doctorants dans MEF++ ne seraient pas ce qu'ils sont sans le soutien technique d'Éric Chamberland et Thomas Briffard.

Merci à Partice Hauret (directeur de recherche, Michelin) de m'avoir fait découvrir le GIREF. J'ai eu la chance de bénéficier de ses conseils lors des quelques discussions enrichissantes dans les premières années. Sa présence parmi les membres du jury de cette thèse était pour moi une évidence. Je le remercie d'avoir accepté l'invitation.

Je remercie par ailleurs les professeurs Michel Delfour (Université de Montréal), Alberto Paganini (University of Leicester) et José Urquiza (Université Laval) d'avoir accepté de faire partie du jury et de prendre le temps d'examiner ce document. 

Je profite également de ces quelques lignes pour remercier mes collègues doctorants du GIREF ainsi que mes amis. À Québec comme à Montréal, les rencontres que j'ai faites dans cette belle province ont été profondément enrichissantes. Je remercie également les membres de ma famille, pour leur soutien constant, malgré les six mille kilomètres qui nous séparent, malgré le fait qu'ils soient loin d'être familiers avec le domaine de la recherche en mathématiques. Enfin et surtout, un immense merci à ma femme, d'avoir toujours été là pour moi, et d'avoir été si patiente.            
\chapter*{Avant-propos}         
\label{chap:avant-propos}       
\phantomsection\addcontentsline{toc}{chapter}{\nameref{chap:avant-propos}} 

L'optimisation de formes est une problématique qui a toujours été au c\oe ur du processus de conception de structures. En particulier, depuis le début des avancées du calcul scientifique, notamment en termes de puissance et de temps de calcul, la plupart des chaînes de conception se sont vues ajouter une étape d'optimisation de formes, automatisée par des algorithmes mathématiques. Dans de nombreux domaines d'application en ingénierie (automobile, aéronautique, biomécanique), les pièces conçues sont amenées à subir des contraintes en présence de contact (pneu sur la route, contact entre deux pièces mécaniques d'un moteur, prothèse de genou). Dans ces cas-là, le développement de méthodes permettant de traiter des problèmes de conception optimale de structures en présence de contact est nécessaire.

Par ailleurs, le modèle standard de la mécanique des solides élastiques en contact prend la forme d'une inéquation variationnelle (IV). Le problème d'optimisation de formes dans ce contexte s'écrit donc sous la forme plus générale d'un problème de contrôle optimal d'une IV, où le paramètre de contrôle est la forme. La difficulté de ces problèmes est que la solution de l'IV n'est en général pas différentiable (au sens classique) par rapport au paramètre de contrôle. Depuis les travaux de Mignot sur la différentiabilité des opérateurs de projection dans les années 1970, la théorie du contrôle optimal de telles inéquations est restée un domaine de recherche actif. Différentes approches ont été proposées pour parvenir à une expression des conditions d'optimalité associées à de tels systèmes: par régularisation, à l'aide de dérivées directionnelles, ou plus récemment en utilisant les outils du calcul sous-différentiel (de Clarke \cite{clarke1990optimization} ou de Mordukhovich \cite{mordukhovich2006variationalI,mordukhovich2006variationalII}).

Comme le souligne le paragraphe précédent, dans l'étude de tels problèmes, l'obstacle majeur est la non-différentiabilité. Si elle se contourne facilement pour obtenir des résultats quant à l'existence et l'unicité de la solution de l'IV (ou pour la mise en \oe uvre de méthodes de résolution numérique), ce n'est plus le cas lorsqu'on aborde des questions liées à l'écriture de conditions d'optimalité telles que l'analyse de sensibilité par rapport à la forme. Nous proposons ici un moyen d'appréhender cette difficulté.

L'objectif de cette thèse est de proposer une approche mathématique \textit{rigoureuse} pour le traitement des problèmes d'optimisation de formes dans le contexte de la mécanique des solides en contact. Ce travail se situe donc à l'intersection entre les deux domaines présentés dans les deux premiers paragraphes. Plus précisément, plutôt que de nous concentrer sur les applications indutrielles de ces problèmes, nous choisissons d'en traiter les aspects théoriques en adoptant le point de vue du contrôle optimal. Ce choix nous restreint naturellement aux phénomènes mécaniques dont les formulations possèdent de bonnes propriétés (existence, unicité et régularité de la solution), à savoir les problèmes de contact glissant ou avec frottement de Tresca, en élasticité linéaire. Pour ce qui est des formulations mathématiques associées, nous privilégions celles qui sont les plus régulières. En particulier, plutôt que de considérer une formulation \textit{purement lagrangienne}, qui consiste en une réécriture équivalente de l'IV sous la forme d'un problème de point-selle (\textit{inf-sup}), nous préférons étudier des formulations issues d'une \textit{régularisation} de l'IV de départ: la formulation \textit{pénalisée}, et la formulation \textit{lagrangien augmenté}, qui transforment l'IV en une équation variationnelle (éventuellement mixte). Nous faisons ce choix pour deux raisons. Premièrement, ces formulations sont les plus utilisées dans les applications industrielles, car le fait de transformer l'inégalité en égalité facilite le traitement numérique. De plus, cela permet de s'affranchir d'une partie des difficultés techniques lorsqu'on cherche à écrire les conditions d'optimalité. 

Ce manuscrit se compose de six chapitres, répartis dans quatre grandes parties. La première partie est une mise en contexte. Elle peut être vue comme une introduction \textit{détaillée} mais \textit{spécifique} à notre cadre d'étude. On y présente au chapitre \ref{chap:1.1} la méthode générale d'optimisation de formes à l'aide d'ensembles de niveau. Puis, dans le chapitre \ref{chap:1.2}, on introduit les concepts de base de la mécanique du contact, et en particulier les formulations que nous avons choisi d'étudier. La deuxième partie se concentre sur l'étude du problème d'optimisation de formes pour la formulation pénalisée, et comporte deux chapitres. Le chapitre \ref{chap:2.1} propose une approche pour déterminer les expressions des dérivées de forme dans ce contexte. Il a été accepté le 5 juillet 2019 dans \textit{SIAM Journal on Control and Optimization}. Le chapitre \ref{chap:2.2} peut être vu comme une annexe au chapitre \ref{chap:2.1}. Il fait le lien entre les résultats obtenus pour la formulation pénalisée et ceux obtenus pour une version régularisée (très populaire) de cette même formulation. La troisième partie ne contient que le chapitre \ref{chap:3.1}, qui s'intéresse à l'expression des dérivées de forme pour la formulation lagrangien augmenté. Ce chapitre  a été accepté dans la revue \textit{ESAIM: Control, Optimisation and Calculus of Variations} le 12 janvier 2020. La quatrième et dernière partie, composée du chapitre \ref{chap:4.1}, porte sur les aspects numériques de la méthode d'optimisation de formes proposée. 

Les chapitres \ref{chap:1.1}, \ref{chap:1.2}, \ref{chap:2.1} et \ref{chap:3.1} peuvent être lus indépendamment. Pour le chapitre \ref{chap:2.2}, il est préférable d'avoir lu le chapitre \ref{chap:2.1}. Quant au chapitre \ref{chap:4.1}, nous recommandons de le lire après avoir lu les chapitres \ref{chap:1.1} et \ref{chap:1.2}.             

\mainmatter                        

\chapter*{Introduction}         
\label{chap:introduction}       
\phantomsection\addcontentsline{toc}{chapter}{\nameref{chap:introduction}} 

Lors de la conception d'un objet en vue d'une utilisation dans un contexte spécifique, la question de savoir quelle forme géométrique donner à l'objet pour en améliorer les propriétés est fondamentale. On peut facilement imaginer de nombreuses situations où cette question est posée: quelle forme donner à une prothèse de genou pour minimiser son usure? Quelle forme donner à une aile d'avion pour maximiser la portance et minimiser la traînée? Quelle forme donner à une pièce mécanique pour qu'elle soit aussi légère que possible tout en résistant à des contraintes imposées? etc. L'ensemble de ce type de problèmes constitue le domaine de \textit{l'optimisation de formes} en mécanique, puisqu'il est induit que l'objet considéré est soumis à des contraintes mécaniques. Dans cette thèse, nous nous intéressons à de tels problèmes dans le cas particulier de structures mécaniques en contact.

Plus formellement, la définition d'un problème d'optimisation de formes en mécanique requiert trois éléments principaux: un \textit{modèle}, i.e.$\!$ une équation d'état qui traduit l'équilibre mécanique, une \textit{fonctionnelle} (ou \textit{critère}) qu'on cherche à minimiser, et un \textit{ensemble admissible} qui représente l'ensemble des formes parmi lesquelles on s'autorise à chercher notre forme optimale. Résoudre ce type de problème, c'est donc trouver, parmi les formes admissibles, celle qui minimise la fonctionnelle, sous la contrainte que l'équilibre mécanique soit vérifié. 
En général, il est difficile d'obtenir mieux que les conditions d'optimalité d'ordre 1. On se dirige donc vers des méthodes de type gradient.
Cependant, en fonction du choix de l'ensemble admissible et de la manière d'imposer la contrainte d'équilibre mécanique, on peut se ramener à des problèmes complètement différents, avec des complexités variables. L'approche retenue pour le traitement du problème d'optimisation est, bien entendu, influencée par ces choix.

En ce qui concerne le choix de l'ensemble admissible, nous mentionnons trois grandes approches. Premièrement, il y a l'approche dite \textit{paramétrique}, dans laquelle la forme est représentée par un nombre fini de paramètres (points de contrôle, dimensions relatives à conception, etc), voir \cite{braibant1984shape,samareh2001survey}. Dans ces cas-là, l'ensemble admissible est constitué du produit cartésien des ensembles de valeurs permises pour chacun de ces paramètres. Ensuite, il y a l'approche \textit{géométrique}, un peu plus générale, qui consiste à modifier la forme de l'objet en faisant varier ses frontières, sans pour autant changer sa topologie, voir par exemple \cite{pironneau1982optimal,cea1986conception,sokolowski1992introduction,haslinger2003introduction}. L'ensemble admissible est ici considérablement plus grand que dans l'approche précédente, puisqu'on n'a aucune restriction a priori sur la structure et la régularité des frontières du domaine. Notons qu'avec cette approche, d'un point de vue numérique, il est possible d'introduire des changements de topologie de manière relativement simple (voir par exemple \cite{allaire2004structural}), quoique peu rigoureuse d'un point de vue théorique. Ce type de méthodes est d'ailleurs fort répandu dans les contextes d'applications. Enfin, l'approche la moins restrcitive est l'approche \textit{topologique}, dans laquelle on permet de modifier non seulement la forme des frontières, mais aussi la topologie du domaine (par exemple le nombre de composantes connexes de son complémentaire), voir les différentes méthodes \cite{BenSig2003,allaire2004structural, allaire2012shape}. Dans ce dernier cas, on n'a presque aucune restriction sur l'ensemble admissible: on recherche la forme optimale parmi toutes les formes possibles, peu importe leurs propriétés géométriques ou topologiques. 

Pour ce qui est de l'imposition de la contrainte d'équilibre mécanique, il existe deux « paradigmes » (voir \cite{LiPet2004}): celui dans lequel il convient d'\textit{optimiser-puis-discrétiser} et l'autre, dans lequel on préfère \textit{discrétiser-puis-optimiser}. Dans le premier, on traduit l'équilibre mécanique sous la forme continue d'une équation aux dérivées partielles (EDP) ou le plus souvent, de la formulation variationnelle qui en découle. On se ramène alors à un problème d'optimisation en dimension infinie, dont on cherche à exprimer les conditions d'optimalité. L'étape suivante consiste alors à choisir un schéma de discrétisation pour l'EDP et les conditions d'optimalité. Bien que ce point de vue impose de travailler avec des espaces de fonctions, il présente l'avantage de donner une expression des gradients indépendante du schéma de discrétisation. À l'inverse, dans le cas \textit{discrétiser-puis-optimiser}, on commence par discrétiser l'EDP, ce qui nous donne une contrainte d'équilibre sous la forme d'un système algébrique, puis on écrit les conditions d'optimalité. Même si cette étape se traite plus simplement à l'aide des outils d'optimisation en dimension finie, on obtient d'une part des gradients qui dépendent du schéma de discrétisation, et d'autre part une taille de système qui peut devenir très grande si on raffine beaucoup le maillage du domaine de calcul. Dans le cas de l'optimisation topologique de structures, on mentionnera par exemple la différence de point de vue entre \cite{allaire2012shape}, où on optimise puis discrétise, et \cite{BenSig2003}, où on discrétise puis optimise.

En énumérant les différentes combinaisons possibles entre l'approche pour le choix de l'ensemble admissible et le point de vue sur l'ordre des étapes d'optimisation et de discrétisation, on obtient une bonne vue d'ensemble des différents groupes de méthodes pour résoudre les problèmes d'optimisation de formes. Dans cette thèse, nous choisissons de nous placer dans le cadre de l'optimisation géométrique dans le paradigme \textit{optimiser-puis-discrétiser}, en bénéficiant d'une méthode numérique permettant d'introduire des changements de topologie.

Comme nous l'avons mentionné, le contexte physique dans lequel nous posons le problème d'optimisation de formes est celui de la mécanique des solides en contact. Étant donné que les phénomènes de contact sont omniprésents en ingénierie, les champs d'application de ce type de problèmes dans l'industrie sont extrêmement nombreux et variés. Du côté de la recherche, l'étude théorique et la résolution numérique du problème de contact seul constituent des domaines très actifs, aussi bien en génie mécanique qu'en physique ou qu'en mathématiques.

Du point de vue théorique, les formulations mathématiques associées à ces problèmes présentent de nombreuses difficultés, provenant à la fois du comportement des matériaux considérés et des phénomènes de contact. En effet, même lorsqu'il n'y a pas de contact, si le problème traduisant l'équilibre mécanique est bien posé dans le cas d'un matériau linéaire élastique en petites déformations, l'existence d'une solution dans le cas non-linéaire n'est garantie que sous certaines conditions, voir par exemple \cite{ball1976convexity,Cia1988a}. Quant à la question de l'unicité, elle reste un problème ouvert. Pour s'assurer que le système initial a de bonnes propriétés, nous limitons donc notre étude au cas de matériaux linéaires élastiques en petites déformations. La prise en compte des phénomènes de contact se fait par l'ajout de conditions aux limites sur la partie du bord concernée: une condition de \textit{non-pénétration}, qui s'exprime comme une contrainte d'inégalité sur le déplacement, et une condition qui traduit les effets de \textit{frottement} éventuels. Outre la contrainte d'inégalité, la complexité de la nouvelle formulation mathématique dépend de la loi de frottement considérée. Nous citons les deux plus utilisées: celle de Tresca et celle de Coulomb. Dans la première, le problème prend la forme d'une inéquation variationnelle elliptique de deuxième espèce, dont le caractère bien posé et les propriétés de régularité sont bien connues depuis les années soixantes (voir entre autres \cite{browder1965nonlinear,hartman1966some,lewy1969regularity}). Dans la deuxième, qui constitue un modèle physique plus réaliste et donc plus complexe, la formulation s'écrit comme une inéquation quasi-variationnelle. On dispose quand même de résultats d'existence \cite{necas1980solution,jaruvsek1983contact,eck1998existence} et d'unicité \cite{renard2006uniqueness} de la solution, mais seulement si le coefficient de frottement est suffisamment petit.

Du point de vue numérique, à cause des non-linéarités et des non-différentiabilités induites par les conditions de contact, la résolution de ces problèmes est une tâche difficile. Bien qu'il semble y avoir consensus sur le choix de la discrétisation (méthode des éléments finis), on compte quatre grands types de méthodes pour le traitement de ces difficultés techniques.
Premièrement, la \textit{méthode de pénalisation} consiste à transformer l'IV en une équation variationnelle, voir par exemple \cite{KikSon1981,WriSimTay1985}. Ceci se fait en deux étapes: d'abord on remplace la contrainte d'inégalité par un terme qui pénalise le non-respect de cette contrainte à l'aide d'un paramètre appelé \textit{paramètre de pénalisation}, puis on utilise ce même paramètre pour régulariser les lois de frottement (qui sont non différentiables). On obtient alors une solution approchée, dont l'écart à la solution d'origine tend vers 0 lorsque le paramètre de pénalisation tend vers 0. Cependant, le choix d'un paramètre trop petit détériore fortement le conditionnement du système algébrique obtenu après discrétisation, ce qui peut mettre en péril la résolution. Même si cette méthode est facile à mettre en \oe uvre et peu coûteuse, il y a néanmoins un compromis à faire entre précision et robustesse pour le choix du paramètre.
Il y a par ailleurs la \textit{méthode des multiplicateurs de Lagrange}. Le principe est cette fois de réécrire l'IV sous une formulation mixte équivalente, faisant intervenir deux multiplicateurs associés à la contrainte d'inégalité et à la condition de frottement, respectivement. Comme il n'y a aucune forme de régularisation dans cette approche, la solution ne dépend d'aucun paramètre. On discrétise ensuite la formulation, ce qui conduit à la résolution d'un problème de type point-selle. La condition sur le premier multiplicateur prend la forme d'une inégalité, qu'on peut par exemple traiter avec une stratégie de contraintes actives (voir le livre \cite{bertsekas2014constrained}). On a donc une méthode d'une grande précision, au prix d'un coût de calcul relativement important, puisqu'on a ajouté les deux multiplicateurs à la liste des inconnues variationnelles, ce qui a évidemment pour effet d'augmenter la taille et la complexité du système algébrique à résoudre, voir \cite{diop2019phd}.
Nous mentionnons également la \textit{méthode de lagrangien augmenté}, à mi-chemin entre les deux méthodes précédentes, voir \cite{WriSimTay1985,LanTay1986}. L'idée est de considérer une version régularisée de la formulation mixte avec mutliplicateurs de Lagrange. Plus précisément, sous certaines conditions de régularité, cette formulation mixte avec inégalités peut se réécrire en une formulation mixte non-linéaire sans inégalités, faisant intervenir un paramètre de régularisation. On peut alors découpler le problème en résolvant avec un algorithme itératif de type point fixe, dont chacune des itérations consiste à résoudre une formulation semblable à celle obtenue par la méthode de pénalisation. Contrairement à la méthode de pénalisation, cette approche est consistante, c'est-à-dire que la solution de la nouvelle formulation coïncide avec la solution du problème d'origine. Elle ne dépend donc pas du choix du paramètre de régularisation.
Enfin, nous terminons par la \textit{méthode de Nitsche}, qui a connu un regain de popularité avec ses applications récentes aux problèmes de contact, voir la série de travaux \cite{chouly2013nitsche,Ren2013,chouly2014adaptation,ChoHilRen2015}. Du point de vue de la formulation, cette méthode peut être vue comme une généralisation de la méthode de lagrangien augmenté. En effet, sous les mêmes conditions de régularité, elle propose de réécrire la formulation mixte avec multiplicateurs de Lagrange en une formulation non-linéaire non-mixte régularisée dans laquelle apparaissent le déplacement et les contraintes sur la zone de contact (écrites comme une fonction du déplacement). On se retrouve donc avec une formulation consistante, faisant intervenir des paramètres de régularisation, mais avec pour seule inconnue le déplacement.

Dans le contexte de l'optimisation de formes, le choix du modèle mécanique et de la méthode de résolution est crucial, car il va déterminer la formulation mathématique à partir de laquelle on va travailler pour exprimer les conditions d'optimalité.  Nous choisissons dans ce travail le modèle de l'élasticité linéaire en petites déformations, et considérons des phénomènes de contact sans frottement ou avec frottement de Tresca. Ce choix est motivé par le caractère bien posé des formulations associées. En effet, les conditions pour que le problème mécanique ait de bonnes propriétés sont trop restrictives dans le cas des solides en grandes déformations ou encore dans celui du contact avec frottement de Coulomb, ce qui limite considérablement les possibilités d'obtenir des résultats théoriques intéressants du point de vue de l'optimisation de formes. En ce qui concerne la méthode de résolution, nous privilégions les méthodes de pénalisation et régularisation (lagrangien augmenté), pour leur facilité d'implémentation et leur robustesse. C'est d'ailleurs pour ces raisons qu'elles sont les plus fréquemment utilisées dans les applications numériques. Soulignons enfin que ces méthodes donnent des formulations variationnelles sans inégalités.

Compte tenu de la formulation mathématique des problèmes de contact, on remarque que le problème d'optimisation de formes en présence de contact peut être vu comme un problème de contrôle optimal d'une IV, il appartient donc à la classe plus large des problèmes de \textit{programmation mathématique avec contrainte d'équilibre} (MPEC en anglais, pour \textit{Mathematical Programming with Equilibrium Constraints}), voir par exemple l'ouvrage \cite{outrata2013nonsmooth}. La difficulté commune à ces problèmes est d'arriver à exprimer des conditions d'optimalité alors que précisément l'opérateur qui au paramètre de contrôle associe la solution de l'IV n'est pas différentiable, puisqu'il est semblable à un opérateur de projection. Les premiers travaux apportant des pistes pour contourner cette difficulté sont ceux de Zarantello \cite{zarantonello1971projections} et Mignot \cite{mignot1976controle}, dans lesquels on définit une notion plus faible de différentiabilité, à savoir la \textit{dérivabilité conique}, qui est un cas particulier de dérivabilité directionnelle. On peut ensuite utiliser cette notion pour obtenir des conditions d'optimalité plus générales, voir par exemple \cite{mignot1984optimal}, parfois appelées \textit{strong stationarity conditions}, cf \cite{hintermuller2009mathematical}. Ces méthodes ont plus tard été reprises en optimisation de formes de certaines IV (problème de l'obstacle, contact glissant, contact avec frottement de Tresca), voir \cite{sokolowski1992introduction}. Dans le même esprit, mais en travaillant avec des sous-gradients (de Clarke \cite{clarke1990optimization} ou de Mordukhovich \cite{mordukhovich2006variationalI,mordukhovich2006variationalII}) plutôt qu'avec des dérivées coniques, on obtient des conditions d'optimalité différentes, appelées respectivement \textit{C-stationarity} ou \textit{M-stationarity conditions}. Cette autre approche, assez utilisée en contrôle optimal du problème de l'obstacle en dimension infinie \cite{wachsmuth2014strong,wachsmuth2016towards,rauls2018generalized}, semble difficile à généraliser au cas du problème de contact, que ce soit dans le contexte du contrôle d'optimal classique ou de l'optimisation de formes. En dimension finie en revanche, nous citons la série de papiers \cite{beremlijski2002shape,beremlijski2009shape,beremlijski2014shape} qui se concentre sur l'optimisation de formes du problème de contact avec frottement de Coulomb selon le paradigme \textit{discrétiser-puis-optimiser}. Bien que ces approches par dérivées généralisées soient très élégantes, l'utilisation en pratique des conditions d'optimalité obtenues peut s'avérer difficile, surtout en dimension infinie, car il est difficile de caractériser les sous-différentiels intervenant dans les équations, voir par exemple \cite{jaruvsek2007sharp}. Une approche complètement différente et plus répandue de contourner le problème de non-différentiabilité consiste à régulariser l'IV considérée pour rendre l'opérateur solution différentiable, par exemple en remplaçant l'IV par une équation variationnelle par la méthode de pénalisation, puis en régularisant les fonctions non-lisses qui apparaissent dans la formulation pénalisée. Pour le cas des problèmes de contact, nous mentionnons dans cette direction \cite{amassad2002optimal} dans le contexte du contrôle optimal classique, et dans le contexte de l'optimisation de formes, nous citons entre autres \cite{kim2000meshless,maury2017shape} selon l'approche \textit{optimiser-puis-discrétiser} et \cite{haslinger1986shape} selon l'approche \textit{discrétiser-puis-optimiser}. Contrairement à la précédente, cette approche permet d'avoir des dérivées classiques qu'on sait bien utiliser en pratique (introduction d'un état adjoint, algorithme de type gradient, etc). Par contre, la forme optimale obtenue dépendra nécessairement des différents paramètres (pénalisation et régularisation) introduits.

Nous proposons dans cette thèse une approche intermédiaire nouvelle, entre l'approche « directe » de \cite{sokolowski1992introduction} où on considère l'IV d'origine et l'approche « complètement régularisée » de \cite{kim2000meshless,maury2017shape} où on considère une version pénalisée puis à nouveau régularisée, assez éloignée de la formulation d'origine. En effet, les formulations du problème de contact auxquelles nous nous intéressons (pénalisée et lagrangien augmenté) sont certes plus régulières que l'IV d'origine, mais elles ne sont pas pour autant différentiables. Plutôt que de lisser les fonctions non-lisses comme dans l'approche « complètement régularisée », nous préférons écrire les conditions d'optimalité associées à ces formulations, sans étape de régularisation supplémentaire, en travaillant avec des dérivées directionnelles. 

On retiendra donc que le travail présenté dans ce manuscrit traite du problème d'optimisation de formes pour des solides élastiques en contact avec frottement de Tresca. Les formulations considérées pour le problème mécanique sont les formulation pénalisée et lagrangien augmenté. De plus, dans le but d'utiliser un algorithme de type gradient pour la résolution, on cherche à écrire, en dimension infinie, les conditions d'optimalité associées à ce problème d'optimisation, en travaillant à partir de dérivées directionnelles. La présentation qui suit est structurée en quatre parties.

La première partie introduit les différents concepts et notions qui sont à la base de notre méthodologie, que ce soit dans le domaine de l'optimisation de formes ou dans celui de la mécanique du contact. Dans le premier chapitre, nous proposons une introduction à l'optimisation de formes du point de vue spécifique de l'approche par level-set. En particulier, nous expliquons comment représenter une forme à l'aide d'une fonction level-set, et en quoi une telle représentation est adaptée à l'optimisation topologique. Nous donnons ensuite les définitions et les principaux résultats de l'analyse de sensibilité par rapport à la forme, en s'attardant notamment sur la notion de \textit{dérivée de forme}, qui présente un intérêt à la fois théorique et pratique dans notre contexte. Puis nous présentons dans les grandes lignes l'algorithme d'optimisation de formes que nous avons développé, ce qui permet de mettre en évidence la manière dont l'analyse de sensibilité par rapport à la forme peut être couplée à une méthode de level-set pour concevoir un algorithme de type gradient. Le deuxième chapitre est consacré à la mécanique du contact. Nous commençons par y présenter la formulation d'origine des problèmes de contact en élasticité linéaire, sans frottement puis avec frottement de Tresca. À partir de là, nous dérivons la formulation mixte équivalente en suivant l'approche classique par multiplicateurs de Lagrange. Puis nous détaillons les étapes de régularisation qui permettent d'obtenir les deux formulations qui nous intéressent ici, à savoir la formulation pénalisée et la formulation lagrangien augmenté. Enfin, nous exposons la méthode de Newton semi-lisse en dimension infinie, qui nous permettra de résoudre numériquement les formulations régularisées considérées.

La deuxième partie se concentre sur l'obtention de dérivées de formes pour la formulation pénalisée. Le troisième chapitre prend la forme d'un article, il présente les résultats théoriques obtenus pour cette formulation et les illustre par un exemple numérique 2d. Comme nous cherchons à optimiser toute la frontière (incluant la zone de contact), nous sommes confrontés au problème de non-différentiabilité. Après avoir introduit la formulation pénalisée, nous proposons donc une méthode s'appuyant sur des dérivées directionnelles pour déduire une expression des dérivées de formes, même si le problème n'est pas Fréchet-différentiable. Puis nous exhibons des conditions suffisantes pour que la solution soit dérivable par rapport à la forme. Sous ces conditions, nous montrons que les méthodes habituelles s'appliquent et que l'algorithme de type gradient proposé est valide. Le quatrième chapitre est un complément du troisième. Il fait le lien entre notre approche intermédiaire et l'approche complètement régularisée proposée dans \cite{maury2017shape}. En particulier, on montre qu'à partir des expressions des dérivées de formes dans \cite{maury2017shape}, on retombe sur les expressions du chapitre précédent si on fait tendre le paramètre de l'étape supplémentaire de régularisation vers $+\infty$.

La troisième partie s'intéresse quant à elle aux dérivées de formes pour la formulation lagrangien augmenté. Elle se compose uniquement du chapitre \ref{chap:3.1}, qui prend lui aussi la forme d'un article. L'approche suivie est semblable à celle du chapitre \ref{chap:2.1}, avec les adaptations nécessaires pour que tout reste valide pour cette nouvelle formulation, mais nous nous limitons ici au cas du contact glissant. Comme, en pratique, la méthode de lagrangien augmenté se présente sous la forme d'un algorithme itératif, la solution obtenue sera celle correspondant à la dernière itération. On cherche donc à exprimer des dérivées de formes pour la formulation résolue à chaque itération plutôt que la formulation d'origine. Ensuite, sous certaines hypothèses très spécifiques, on montre que la dérivée de forme de la solution obtenue à l'itération $k$ converge vers la dérivée de forme conique de la solution du problème de contact d'origine lorsque $k\to+\infty$.

Enfin, la quatrième partie ne contient que le chapitre \ref{chap:4.1}, qui traite des aspects numériques de l'algorithme d'optimisation de formes proposé. Dans une première section, nous revenons en détails sur les différentes étapes de l'algorithme, et plus particulièrement sur la technique de découpage de maillage mise en \oe uvre, qui permet de disposer d'un domaine de calcul maillé à chaque itération. Dans une deuxième section, nous présentons divers exemples numériques. Afin de valider notre méthode, nous essayons d'abord de résoudre des cas académiques classiques en élasticité linéaire sans contact, en deux et trois dimensions. Ceci permet dans le même temps de comparer notre méthode de découpage avec la méthode de remaillage plus sophistiquée proposée dans \cite{dapogny2013shape}. Nous nous intéressons finalement à des cas-tests en élasticité avec contact. En nous inspirant d'exemples dans la littérature, à la fois côté contact et côté optimisation de formes, nous proposons un cas-test (2d et 3d) qui nous semble bien adapté pour tester les dérivées de forme sur la zone de contact. Nous mettons ainsi en évidence les différences entre les formes optimales obtenues pour chaque formulation (pénalisée ou lagrangien augmenté).             
\part{Mise en contexte}

\chapter*{Notations}         
\label{chap:notations}       
\phantomsection\addcontentsline{toc}{chapter}{\nameref{chap:notations}} 

\paragraph{Notations générales}
$\:$

\begin{longtable}{p{2.35cm}p{14cm}}
$\complement A$ & complémentaire de l'ensemble $A$ \\
$\overline{A}$ & adhérence de l'ensemble $A$ \\
$\chi_A$ & fonction caractéristique de l'ensemble $A$\\
$I_A$ & fonction indicatrice de l'ensemble $A$\\
$|A|$ & mesure de l'ensemble $A$ \\
$|z|$ & norme euclidienne du vecteur $z$ \\
$d_A$ & distance à l'ensemble $A\subset\mathbb{R}^d$, $d_A(x):=\inf \{|x-y| \: | \: y\in A \}$ \\
$\Id$ & application identité \\
$p.p.$ ou $a.e.$ & presque partout ou \textit{almost everywhere} (par rapport à la mesure de Lebesgue)\\
$q.p.$ ou $q.e.$ & quasi partout ou \textit{quasi everywhere} \\
$X$, $Y$, $Z$ & espaces de Banach \\
$X^*$ & dual de l'espace $X$ \\
$\norml \cdot \normr_X$ & norme sur $X$\\
$\prodD{\cdot,\cdot}{X^*,X}$ & produit de dualité entre $X$ et $X^*$ \\
$\mathcal{L}(X,Y)$ & espace des applications linéaires de $X$ dans $Y$ \\
$\mathcal{L}_c(X,Y)$ & espace des applications linéaires continues de $X$ dans $Y$ \\
$H$ & espace de Hilbert \\
$\prodL2{\cdot,\cdot}{H}$ & produit scalaire sur $H$ \\
$\Proj_K$ & projection sur l'ensemble convexe fermé $K$ \\
$\Omega \subset \mathbb{R}^d$ & ensemble ouvert dans $\mathbb{R}^d$, où $d=2$ ou $3$ \\
$\partial\Omega$ & frontière de $\Omega$ \\
$\normalInt$ & normale unitaire sortante à $\Omega$ \\
$\kappa$ & courbure moyenne le long de $\partial\Omega$ \\
$\divv_\Gamma$ & divergence tangentielle \\ 
$\frac{\partial}{\partial\normalInt}$ ou $\partial_{\normalInt}$ & dérivée normale \\
$u(\Omega)$ & fonction scalaire définie sur $\Omega$ \\
$\uu(\Omega)$ & fonction vectorielle définie sur $\Omega$ \\
\end{longtable}

\paragraph{Espaces de fonctions}
$\:$

\begin{longtable}{p{2.35cm}p{14cm}}
$\pazocal{C}^k(\Omega)$ & espace des fonctions scalaires dont les dérivées d'ordre $\leq k$ sont continues \\
$\Cc^k(\Omega)$ & espace des fonctions vectorielles dont les dérivées d'ordre $\leq k$ sont continues \\
$L^p(\Omega)$ & espace des fonctions scalaires $u$ telles que $|u|^p$ est intégrable sur $\Omega$ \\
$\Ll^p(\Omega)$ & espace des fonctions vectorielles $\uu$ telles que $|\uu|^p$ est intégrable sur $\Omega$ \\
$\prodL2{\cdot,\cdot}{\Omega}$ & produit scalaire $L^2(\Omega)$ ou $\Ll^2(\Omega)$ \\
$\norml \cdot \normr_{0,\Omega}$ & norme $L^2(\Omega)$ ou $\Ll^2(\Omega)$ \\
$W^{m,p}(\Omega)$ & espace des fonctions $\left\{ u\in L^p(\Omega) \ | \ D^\alpha u \in L^p(\Omega), \: \forall |\alpha|\leq m \right\}$ \\
$\Ww^{m,p}(\Omega)$ & espace des fonctions $\left\{ \uu\in \Ll^p(\Omega) \ | \ D^\alpha \uu \in \Ll^p(\Omega), \: \forall |\alpha|\leq m \right\}$ \\
$H^m(\Omega)$ & espace de Sobolev $W^{m,2}(\Omega)$ \\
$\Hh^m(\Omega)$ & espace de Sobolev $\Ww^{m,2}(\Omega)$ \\
$\norml \cdot \normr_{m,\Omega}$ & norme $H^m(\Omega)$ ou $\Hh^m(\Omega)$ \\
\end{longtable}

\paragraph{Notations relatives au problème de contact}
$\:$

\begin{longtable}{p{2.35cm}p{14cm}}
$\Omega$ & corps déformable\\
$\Omega_{rig}$ & corps rigide\\
$\Gamma_C$ & partie de $\partial\Omega$ où s'applique une condition de contact\\
$\Gamma_D$ & partie de $\partial\Omega$ où s'applique une condition de Dirichlet homogène\\
$\Gamma_N$ & partie de $\partial\Omega$ où s'applique une condition de Neumann non-homogène\\
$\Gamma$ & partie de $\partial\Omega$ libre de contrainte\\
$\normalInt$ & normale unitaire sortante à $\partial\Omega$ \\
$\normalExt$ & normale unitaire entrante à $\partial\Omega_{rig}$\\
$v_{\normalInt}$ & produit scalaire $v\cdot\normalInt$ entre le vecteur $v\in\mathbb{R}^d$ et $\normalInt$ \\
$v_{\normalExt}$ & produit scalaire $v\cdot\normalExt$ entre le vecteur $v\in\mathbb{R}^d$ et $\normalExt$ \\
$v_{\tanInt}$ & composante tangentielle $v - v_{\normalInt}\normalInt$ du vecteur $v\in\mathbb{R}^d$ au sens de $\normalInt$ \\
$v_{\tanExt}$ & composante tangentielle $v - v_{\normalExt}\normalExt$ du vecteur $v\in\mathbb{R}^d$ au sens de $\normalExt$ \\
$\gG_{\normalInt}$ & gap dans la direction de $\normalInt$: fonction distance orientée à $\partial\Omega$\\
$\gG_{\normalExt}$ & gap dans la direction de $\normalExt$: fonction distance orientée à $\partial\Omega_{rig}$\\
$\Xx$ & espace de fonctions $\Hh^1_{\Gamma_D}(\Omega):=\{ \vv\in \Hh^1(\Omega) \: | \: \vv = 0 \: \mbox{ p.p. sur } \Gamma_D\}$ \\
$\Lambda_{\normalExt}$ & opérateur de trace normale sur $\Gamma_C$, défini de $\Xx$ dans $H^\frac{1}{2}(\Gamma_C)$ \\
$\Lambda_{\tanExt}$ & opérateur de trace tangentielle sur $\Gamma_C$, défini de $\Xx$ dans $\Hh^\frac{1}{2}(\Gamma_C)$ \\
\end{longtable}

\paragraph{Notations d'analyse convexe}
$\:$

\begin{longtable}{p{2.35cm}p{14cm}}
$f^*$ & fonction convexe conjuguée de la fonction $f$\\
$\partial f$ & sous-différentiel de la fonction $f$ \\
\end{longtable}

\paragraph{Notations d'optimisation de formes}
$\:$

\begin{longtable}{p{2.35cm}p{14cm}}
$J(\Omega)$ & critère ou fonctionnelle de forme \\
$dJ(\Omega)[\thetaa]$ & dérivée de forme de $J$ dans la direction $\thetaa$ \\
$\dot{u}(\Omega)[\thetaa]$ & dérivée matérielle de $u$ en $\Omega$ dans la direction $\thetaa$\\
$du(\Omega)[\thetaa]$ &  dérivée de forme de $u$ en $\Omega$ dans la direction $\thetaa$\\
\end{longtable}

\chapter{Optimisation de formes par la méthode de level-set}
\label{chap:1.1}

\section*{Introduction}

Le problème d'optimisation de formes peut s'énoncer sous la forme générale suivante: trouver la forme qui minimise un critère $J$ donné parmi toutes les formes d'un ensemble admissible $\pazocal{U}_{ad}$. En termes de formulation mathématique, cela se traduit par:
\begin{equation}
    \inf_{\Omega \in \pazocal{U}_{ad}} J(\Omega) \:.
\end{equation}
Ce genre de problème suscite de l'intérêt depuis des siècles. Nous pouvons notamment citer le problème isopérimétrique (énoncé historiquement par la reine Didon dans la Grèce Antique), ou la recherche de surfaces minimales de façon plus générale, qui a occupé de nombreux scientifiques au dix-huitième siècle (Euler, Lagrange, Plateau, etc). 

Depuis quelques dizaines d'années, avec les avancées du calcul scientifique et les besoins grandissants de l'industrie dans ce domaine, la popularité de l'optimisation de formes est montée en flèche. Ce sujet de recherche a été énormément étudié, tant sur ses aspects théoriques que numériques et de nombreux logiciels commerciaux de simulation numérique proposent désormais des modules d'optimisation de formes. Les champs d'application sont divers, allant de la mécanique des fluides (conception d'avions ou de voitures) à l'électromagnétisme en passant par la mécanique des solides (conception de structures mécaniques, génie civil). Dans le cas d'applications en mécanique des solides, on parle en général d'optimisation de structures, ou \textit{structural optimization} en anglais. Dans cette thèse, nous nous concentrerons sur ce contexte spécifique. 

Le but de l'optimisation de structures est de trouver la forme qui optimise certaines propriétés mécaniques d'une structure soumise à un ensemble de contraintes données. Par exemple, on peut vouloir trouver la forme d'une pièce mécanique, soumise à un chargement donné, de sorte qu'elle soit la plus légère possible tout en étant la plus rigide possible. On peut reformuler ce problème de la façon suivante: trouver la forme qui minimise le volume de la pièce et sa compliance. Dans cet exemple simple, on voit que le critère $J$ à minimiser est défini comme l'intégrale sur le domaine $\Omega$ d'une fonction qui dépend de $\Omega$ et de $u(\Omega)$, la solution de l'équation d'état caractérisant l'équilibre mécanique. En conséquence, $J$ dépend de $\Omega$ à la fois directement (via le domaine d'intégration) et indirectement (via $u(\Omega)$). Au vu de cette remarque, dans notre contexte, nous pouvons spécifier le problème général d'optimisation de formes:
\begin{equation}
    \begin{array}{cl}
         \displaystyle \inf_{\Omega \in \pazocal{U}_{ad}} J(\Omega) =& \inf \ \pazocal{J}(\Omega,u(\Omega))  \\
         & \mbox{sujet à } \left\{
         \begin{array}{l}
              \Omega \in \pazocal{U}_{ad}\:, \\
              u(\Omega) \mbox{ solution de l'équation d'état . }
         \end{array}
         \right.
    \end{array}
    \label{StructOPT}
\end{equation}

Nous faisons maintenant un bref tour d'horizon des grandes approches proposées au cours des cinquante dernière années pour résoudre numériquement \eqref{StructOPT}. Ces approches peuvent être catégorisées en trois principales familles, en fonction du choix qui est fait pour représenter la forme.
\begin{itemize}
    \item \textit{Représentation explicite.} Ces méthodes définissent le bord du domaine à partir d'un nombre fini de paramètres. Cette paramétrisation peut prendre la forme d'un ensemble de points de contrôle (courbes/surfaces de Béziers, NURBS, voir \cite{braibant1984shape}) ou bien des coordonnées d'un ensemble de n\oe uds représentant le bord d'un objet maillé (voir \cite{haslinger1986shape} ou \cite{pironneau1982optimal}).
    \item \textit{Représentation implicite.} La forme est représentée par l'intermédiaire d'une fonction lisse définie sur un domaine de calcul plus grand. Parmi ces méthodes, nous citons les plus répandues: la méthode de level-set (voir \cite{allaire2004structural}) et la méthode de champ de phase (voir \cite{bourdin2003design}). Dans la première (qui sera détaillée dans la prochaine section), la frontière de la forme est définie comme l'ensemble des zéros d'une fonction scalaire, ce qui donne un modèle avec interface nette. À l'inverse, la deuxième propose un modèle d'interface diffuse, où la forme est implicitement définie via une fonction scalaire qui s'interprète comme la phase du matériau (solide-liquide-vide). Dans ces deux méthodes, on peut faire évoluer la frontière de la forme en résolvant une équation d'advection (Hamilton-Jacobi pour la levet-set et Allen-Cahn ou Cahn-Hilliard pour le champ de phase) pour la fonction auxiliaire. 
    \item \textit{Méthodes de densité.} L'idée de ces méthodes est d'agrandir l'ensemble admissible des formes. En particulier, on ne considère plus seulement les formes « classiques », mais aussi les formes issues de structures micro-perforées qui, lorsque la taille des perforations tend vers $0$, peuvent être considérées comme un mélange de vide et de matériau. Le cadre théorique associé à ce modèle est celui de l'homogénéisation (voir le livre \cite{allaire2012shape}). Dans le même esprit, nous mentionnons la méthode SIMP (Solid Isotropic Material with Penalisation, voir \cite{BenSig2003}) dans laquelle on modélise la structure à partir d'une densité de mélange vide-matériau (pas nécessairement issue d'une micro-structure sous-jacente) dans chaque élément d'un maillage de calcul donné.
\end{itemize}
Dans la plupart des cas, on résout numériquement \eqref{StructOPT} par des méthodes de descente de gradient, ce qui signifie qu'on doit calculer la sensibilité de $J$ par rapport à la forme. Pour les méthodes de type SIMP et les méthodes s'appuyant sur une représentation explicite de la forme, on peut faire un calcul de sensibilité « classique » et appliquer un algorithme d'optimisation (éventuellement sous contrainte) en dimension finie. Cependant, les méthodes pour lesquelles la forme est représentée implicitement nécessitent le calcul de cette sensibilité pour le modèle continu.

Dans ce travail, nous avons choisi de représenter les formes par la méthode de level-set, puis d'appliquer une descente de gradient qui s'appuie sur des dérivées de forme. Dans les sections suivantes, nous présentons plus en détails la méthode de level-set, puis nous introduisons les concepts relatifs à l'analyse de sensibilité par rapport à la forme en dimension inifinie.

\section{Représentation de la forme par une level-set}

La méthode de level-set a d'abord été introduite par Osher et Sethian \cite{OshSet1988} pour proposer une représentation implicite de la frontière d'un ouvert régulier $\Omega\subset \mathbb{R}^d$, où $d=2$ ou $3$. Plus précisément, $\Omega$ est associé à l'ensemble des valeurs strictement négatives d'une fonction auxiliaire régulière $\phi$ définie partout sur $\mathbb{R}^d$ (ou en pratique, sur un domaine de calcul $D$ suffisamment grand pour contenir toutes les formes admissibles). De la même manière, la frontière $\partial\Omega$ est définie comme la ligne/surface de niveau zéro de $\phi$. En d'autres termes, $\phi$ vérifie:
\begin{equation}
    \left\{ \
    \begin{array}{ll}
         \phi(x) < 0 & \mbox{ si } x\in \Omega\:,  \\
         \phi(x) = 0 & \mbox{ si } x\in \partial\Omega \:, \\
         \phi(x) > 0 & \mbox{ si } x\in D\setminus \overline{\Omega}\:.
    \end{array}
    \right.
    \label{DefLS}
\end{equation}
Un exemple typique de fonction $\phi$ est la fonction distance signée à la frontière $\mathrm{d}_{\partial\Omega}$. Nous renvoyons à \cite{DelZol2001} pour une étude détaillée autour de la fonction distance et de son rôle central en optimisation de formes. Rappelons que, si $\phi$ est de classe $\pazocal{C}^1$, le vecteur normal unitaire sortant à $\Omega$ en tout point $x\in\partial\Omega$ est donné par: 
\begin{equation}
    \normalInt(x) = \frac{\grad\phi(x)}{|\grad\phi(x)|} \:.
    \label{DefNormalLS}
\end{equation}
On peut noter que cette définition suggère un relèvement naturel du vecteur normal à tout $D$. Par ailleurs, si $\phi$ est de plus $\pazocal{C}^2$, alors on dispose également d'une formule explicite pour la courbure moyenne en tout point $x\in\partial\Omega$:
\begin{equation}
    \kappa(x) = \divv \left(\normalInt(x)\right) = \divv \left(\frac{\grad\phi(x)}{|\grad\phi(x)|}\right) \:.
    \label{DefCourbLS}
\end{equation}

Un des principaux atouts de cette méthode est qu'elle permet de gérer l'évolution d'un domaine, c'est-à-dire, le cas d'un domaine $\Omega(t)$ qui se déplace sur un intervalle de temps $[0,T]$. Il suffit pour cela de considérer une fonction $\phi$ qui dépend à la fois du temps et de l'espace, i.e.$\!$ pour tout $t\in[0,T]$, $\phi(t,\cdot)$ est la level-set associée à $\Omega(t)$. Supposons qu'on parte d'un domaine $\Omega(0)=\Omega_0$, qu'on peut représenter à l'aide de la fonction $\phi_0$. Dans ce contexte, l'évolution de $\Omega$ en fonction de $t$ se traduit par une équation aux dérivées partielles (EDP) sur $\phi$, associée d'une condition initiale.
Dans ce qui suit, nous essayons de donner l'intuition de la marche à suivre (formellement) pour obtenir cette EDP, en fermant les yeux sur les (nombreuses) difficultés techniques rencontrées.

Supposons que le mouvement de $\Omega(t)$ est régi par un champ de vecteurs régulier $\thetaa:[0,T]\times \mathbb{R}^d\to \mathbb{R}^d$. Soit $\pazocal{O}\subset[0,T]\times \mathbb{R}^d$ un voisinage ouvert de $(t,\partial\Omega(t))$ pour $t$ dans un petit intervalle $I_t$. Si $\Omega(t)$ évolue de façon régulière, alors pour tout $t_0\in I_t$ et $x_0\in\partial\Omega(t_0)$, il existe un intervalle $(t_0-\delta,t_0+\delta)$ et une courbe $t\mapsto x(t)$ définie sur cet intervalle de sorte que $x(t)\in\partial\Omega(t)$ pour tout $t\in(t_0-\delta,t_0+\delta)$ et $x(t_0)=x_0$. Par définition de la level-set, ceci entraîne que:
$$
    \phi(t,x(t))=0\:, \quad \forall t \in (t_0-\delta,t_0+\delta)\:.
$$
Comme l'expression précédente est valide pour tout $(t_0,x_0)\in \pazocal{O}$, si on suppose que tout est suffisamment régulier, on peut la dériver en $t=0$ et utiliser la règle de dérivation composée pour obtenir:
\begin{equation}
    \frac{\partial\phi}{\partial t}(t,x) + \thetaa(t,x)\cdot \grad\phi(t,x) = 0\:,
    \label{LSAdvEquation0}
\end{equation}
puisque la vitesse le long de la courbe $x(t)$ est précisément $x'(t)=\thetaa(t,x(t))$. De plus, si le champ de vecteurs $\thetaa$ est dirigé dans la direction de la normale $\normalInt$, disons $\thetaa=\theta \normalInt$, alors on peut réécrire \eqref{LSAdvEquation0} en:
\begin{equation}
    \frac{\partial\phi}{\partial t}(t,x) + \theta(t,x) \cdot|\grad\phi(t,x)| = 0\:. 
    \label{HJEquation0}
\end{equation}
Cette discussion motive donc l'introduction de deux équations aux dérivées partielles posées sur $[0,T]\times \mathbb{R}^d$, munies de conditions initiales: l'\textit{équation d'advection de la level-set} \eqref{LSAdvEquation} et une \textit{équation de Hamilton-Jacobi} \eqref{HJEquation}: 
\begin{equation}
    \begin{aligned}
    &\frac{\partial\phi}{\partial t}(t,x) + \thetaa(t,x)\cdot \grad\phi(t,x) = 0\:, \\
    &\phi(0,x) = \phi_0(x)\:.
    \label{LSAdvEquation}
    \end{aligned}
\end{equation}\begin{equation}
    \begin{aligned}
    &\frac{\partial\phi}{\partial t}(t,x) + \theta(t,x) \cdot|\grad\phi(t,x)| = 0\:, \\
    &\phi(0,x) = \phi_0(x)\:,
    \label{HJEquation}
    \end{aligned}
\end{equation}
Comme mentionné plus haut, l'étude de ces équations est loin d'être une tâche facile. On sait néanmoins, voir par exemple \cite{rauch2012hyperbolic}, que pour $T$ suffisament petit, ces équations admettent une unique solution classique (régulière). Pour obtenir l'existence de solutions globales (en temps), il est nécessaire de travailler avec la notion plus générale de \textit{solution de viscosité}, nous renvoyons le lecteur à l'article de référence \cite{crandall1983viscosity}. Pour la résolution numérique, nous citons encore \cite{OshSet1988}.

En pratique, on travaille avec une discrétisation de $\phi$ sur un maillage, et on résout \eqref{LSAdvEquation} ou \eqref{HJEquation} numériquement (en général par différences finies). À un $\phi$ discret donné, on associe un domaine $\Omega$ sur lequel on cherche à résoudre un problème mécanique. Cette représentation implicite ne permet a priori pas d'avoir un maillage du bord du domaine $\partial\Omega$ car $\phi$ peut prendre la valeur $0$ à l'intérieur de composantes du maillage. La question qui se pose est donc celle de l'application des conditions aux limites pour le problème mécanique. Il existe deux principales approches permettant d'y répondre.
\begin{itemize}
    \item Dans l'approche \textit{sans reconstruction de maillage}, on applique les conditions aux limites faiblement à l'aide de méthodes qui ne requièrent pas de maillage de $\partial\Omega$, voir par exemple \cite{BelXiaPar2003}. L'avantage principal est d'économiser le remaillage du domaine $D$ pour limiter le coût de calcul.
    \item L'approche par \textit{découpage} ou \textit{remaillage} adopte le point de vue opposé: le but est ici de résoudre le plus précisément possible la mécanique en appliquant les conditions aux limites de façon usuelle, voir par exemple \cite{allaire2013mesh}. Ceci nécessite évidemment de disposer d'un maillage de $\partial\Omega$, qu'on obtient à partir de $\phi$, par exemple en découpant le maillage de $D$ autour de la ligne ou surface $\{ \phi = 0\}$. 
\end{itemize}
Puisque nous nous concentrons dans ce travail sur les problèmes de contact, nous choisissons l'approche par découpage pour privilégier la précision dans l'application des conditions aux limites, notamment sur la zone de contact.

Dans le contexte de l'optimisation de formes, la méthode de level-set est une alternative très intéressante qui permet d'avoir une représentation continue de la forme, et qui donc ne dépend pas d'un ensemble de n\oe uds ou de points de contrôles définis a priori. L'idée d'utiliser cette approche a d'abord été introduite dans \cite{allaire2004structural} pour l'optimisation de structures. Depuis, de nombreux auteurs s'en sont inspirés, dans beaucoup de domaines d'applications différents. Cependant, pour l'appliquer, il faut pouvoir trouver des directions de descente $\thetaa$ pour le modèle continu. Ceci soulève la question de l'analyse de sensibilité du critère $J$ par rapport à la forme, à laquelle nous nous intéressons dans la prochaine section.

\begin{rmrk}
	Évidemment, les régularités de $\Omega$, $\phi$, $\phi_0$ et $\thetaa$ (ou $\theta$)  sont liées. Dans le cas où on considère un ouvert $\Omega$ de classe $\pazocal{C}^1$, on choisira, comme dans \cite{allaire2016second}, $\thetaa$ (ou $\theta$)  et $\phi_0$ de classe $\pazocal{C}^1$ de sorte qu'il existe une unique solution $\phi\in \pazocal{C}^1([0,T]\times \mathbb{R}^d,\mathbb{R})$ à \eqref{LSAdvEquation} (ou \eqref{HJEquation}), pour $T$ suffisament petit.
\end{rmrk}

\begin{rmrk}
    Au cours de la résolution de l'équation d'évolution de $\phi$, il est tout à fait possible que la topologie de la forme représentée par $\{ \phi<0\}$ soit modifiée à cause de la diffusion numérique. Du point de vue de l'optimisation de formes, cela signifie que juste à partir d'un champ de vecteur $\thetaa$ donné par l'analyse de sensibilité par rapport à la frontière de la forme (optimisation géométrique), on peut générer des changements de topologie. On ne peut pas pour autant parler de méthode d'optimisation topologique, car du point de vue théorique, rien ne garantit que le changement de topologie observé en advectant la level-set fasse diminuer la valeur de la fonctionnelle à minimiser. La bonne notion pour s'intéresser à l'analyse de sensibilité par rapport à la topologie est la \textit{dérivée topologique}, voir \cite{NovSok2013}. 
\end{rmrk}

\section{Analyse de sensibilité par rapport à la forme}

\subsection{Différentiabilité dans des espaces de Banach}

Avant de parler de dérivabilité par rapport à la forme, nous rappelons quelques définitions de base de calcul différentiel dans des espaces de Banach. Dans cette section, $X$, $Y$ et $Z$ sont des espaces de Banach, $x\in X$, $t$ est un réel strictement positif et $F$ est une foncion de $X$ dans $Y$.

\begin{dfntn}
	On dit que $F$ est \textit{Gateaux semi-différentiable} en $x$ dans la direction de $h\in X$ si la quantité suivante existe
	$$
		dF(x;h):= \lim_{t\searrow 0} \frac{F(x+th)-F(x)}{t}\:.
	$$
	On appelle alors \textit{dérivée directionnelle} ou \textit{Gateaux semi-dérivée} de $F$ en $x$ dans la direction de $h$ cette quantité.
\end{dfntn}

\begin{dfntn}
	On dit que $F$ est \textit{Hadamard semi-différentiable} en $x$ dans la direction de $h\in X$ si la quantité suivante existe
	$$
		d_HF(x;h):= \underset{\tilde{h}\to h}{\underset{t\searrow 0,}{\lim}} \frac{F(x+t\tilde{h})-F(x)}{t}\:.
	$$
	On appelle alors \textit{Hadamard semi-dérivée} de $F$ en $x$ dans la direction de $h$ cette quantité.
\end{dfntn}

\begin{dfntn}
	On dit que $F$ est \textit{Gateaux différentiable} en $x$ si $F$ admet des dérivées directionnelles en $x$ dans toutes les directions $h\in X$ et si de plus l'application $h\mapsto dF(x;h)$ est linéaire continue de $X$ dans $Y$. 
\end{dfntn}

\begin{dfntn}
	On dit que $F$ est \textit{Hadamard différentiable} en $x$ si $F$ admet des Hadamard semi-dérivées en $x$ dans toutes les directions $h\in X$ et si de plus l'application $h\mapsto d_HF(x;h)$ est linéaire continue de $X$ dans $Y$. 
\end{dfntn}

\begin{dfntn}
	On dit que $F$ est \textit{Fréchet différentiable} en $x$ s'il existe une application linéaire $L(x) \in \mathcal{L}(X,Y)$ telle que:
	$$
		\lim_{h\to 0} \: \frac{1}{\norml h\normr_X} \norml F(x+h) - F(x) - L(x)h \normr_Y = 0\:.
	$$
	L'application $L(x)$ est alors appelée la \textit{différentielle} de $F$ en $x$.
\end{dfntn}

\begin{dfntn}
	Soit un ouvert $U\subset X$. On dit que $F$ est \textit{Gateaux différentiable sur $U$} (respectivement \textit{Hadamard} ou \textit{Fréchet différentiable sur $U$}) si $F$ est Gateaux différentiable (respectivement Hadamard ou Fréchet différentiable) en tout point $x\in U$. 
\end{dfntn}

On peut classer ces notions de différentiabilité de la plus faible à la plus forte: 
$$
	\mbox{Fréchet } \:\Rightarrow \: \mbox{ Hadamard }\: \Rightarrow \:\mbox{ Gateaux }.
$$


\paragraph{Continuité et dérivation en chaîne.} Une des propriétés principales de la semi-différentiabilité de Hadamard est qu'elle permet d'assurer que la règle de dérivation en chaîne demeure valide. 
\begin{prpstn}
	Soient $F:X \to Y$ et une fonction $G:Y\to Z$. On suppose que $F$ est Gateaux semi-différentiable en $x$ dans la direction de $h\in X$ et que $G$ est Hadamard semi-différentiable en $F(x)$ dans la direction de $dF(x;h)$, alors $G\circ F$ est Gateaux semi-différentiable en $x$ dans la direction de $h$ et de plus, on a:
	$$
		d(G\circ F)(x;h) = d_H G\left( F(x); dF(x;v)\right) \:.
	$$ 
\end{prpstn}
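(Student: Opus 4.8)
The plan is to compute the directional derivative of $G\circ F$ at $x$ in the direction $h$ directly from the definition, i.e. to show that the limit
$$
	\lim_{t\searrow 0} \frac{G(F(x+th)) - G(F(x))}{t}
$$
exists and equals $d_H G(F(x); dF(x;h))$. First I would introduce the notation $y:=F(x)$, $k:=dF(x;h)$, and for $t>0$ small enough the increment
$$
	k_t := \frac{F(x+th) - F(x)}{t} \:,
$$
so that $F(x+th) = y + t\,k_t$. The hypothesis that $F$ is Gateaux semi-differentiable at $x$ in direction $h$ says precisely that $k_t \to k$ in $Y$ as $t\searrow 0$. Then the difference quotient for $G\circ F$ rewrites as
$$
	\frac{G(F(x+th)) - G(F(x))}{t} = \frac{G(y + t\,k_t) - G(y)}{t} \:.
$$

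The key step is to recognize the right-hand side as exactly the kind of expression that Hadamard semi-differentiability controls: it is $\bigl(G(y+t\tilde h_t) - G(y)\bigr)/t$ with the "perturbed direction" $\tilde h_t = k_t$ depending on $t$ but converging to $k$. By definition of $d_H G(y;k)$ — which is the limit of $\bigl(G(y+t\tilde h)-G(y)\bigr)/t$ taken jointly over $t\searrow 0$ and $\tilde h \to k$ — any particular choice of path $t\mapsto \tilde h_t$ with $\tilde h_t \to k$ must yield that same limit. Applying this with $\tilde h_t = k_t$ gives
$$
	\lim_{t\searrow 0} \frac{G(y + t\,k_t) - G(y)}{t} = d_H G(y;k) = d_H G\bigl(F(x); dF(x;h)\bigr) \:,
$$
which is the claimed formula; in particular the limit on the left exists, so $G\circ F$ is Gateaux semi-differentiable at $x$ in direction $h$.

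The main point requiring care — and the only real obstacle — is the justification that a joint limit may be evaluated along the specific path $(t, k_t)$. This is legitimate because Hadamard semi-differentiability is, by its very definition, the existence of the limit of the function $(t,\tilde h)\mapsto \bigl(G(y+t\tilde h)-G(y)\bigr)/t$ as $(t,\tilde h)\to(0^+, k)$ in the appropriate filter; composing such a limit with any continuous curve $t\mapsto (t,k_t)$ landing at $(0^+,k)$ preserves it. One should state this reduction explicitly (e.g. via the $\varepsilon$–$\delta$ characterization: for every $\varepsilon>0$ there is $\delta>0$ with $\bigl\| \bigl(G(y+t\tilde h)-G(y)\bigr)/t - d_HG(y;k)\bigr\|_Z < \varepsilon$ whenever $0<t<\delta$ and $\|\tilde h - k\|_Y < \delta$, then choose $t$ small enough that both $t<\delta$ and $\|k_t - k\|_Y<\delta$). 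Everything else is bookkeeping; no continuity or linearity of the derivatives is needed beyond what the hypotheses already supply.
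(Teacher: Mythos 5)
Your proof is correct: rewriting the quotient as $\bigl(G(y+t\,k_t)-G(y)\bigr)/t$ with $k_t\to dF(x;h)$ and invoking the joint-limit ($\varepsilon$--$\delta$) definition of the Hadamard semi-derivative along the path $(t,k_t)$ is exactly the standard argument. Note that the paper itself gives no proof of this proposition — it defers to \cite{delfour2012introduction} (Chapitre 3) and \cite{DelZol2001} (Section 9.2) — and your reasoning coincides with the proof found there, so there is nothing to add.
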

Dans le cas où $G$ est Gateaux différentiable en $F(x)$ mais n'est pas Hadamard semi-différentiable en $F(x)$ dans la direction de $dF(x;h)$, alors le résultat n'est plus nécessairement vrai, voir \cite[Chapitre 3, Exemple 3.9]{delfour2012introduction} pour un contre-exemple.

\begin{prpstn}
	Si $d_HF(x;0)$ existe, alors $F$ est continue en $x$.
\end{prpstn}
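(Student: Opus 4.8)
The plan is to deduce continuity at $x$ from its sequential characterisation: since $X$ and $Y$ are Banach (hence metric) spaces, it suffices to show $F(x_n)\to F(x)$ in $Y$ whenever $x_n\to x$ in $X$. The case $x_n=x$ being trivial, assume $x_n\neq x$ for all $n$. The key idea is to factor the increment $x_n-x$ as a product $t_n\,\tilde{h}_n$ in such a way that $t_n\searrow 0$ and $\tilde{h}_n\to 0$ simultaneously, so that the difference quotient $\bigl(F(x+t_n\tilde{h}_n)-F(x)\bigr)/t_n$ is one of those controlled by the joint limit defining $d_HF(x;0)$. The natural choice is $t_n:=\norml x_n-x\normr_X^{1/2}$ and $\tilde{h}_n:=(x_n-x)/t_n$: then $t_n>0$, $t_n\to 0$, one has $\norml \tilde{h}_n\normr_X=\norml x_n-x\normr_X^{1/2}\to 0$, and $x+t_n\tilde{h}_n=x_n$.

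With this factorisation in hand, I would invoke the hypothesis: $d_HF(x;0)$ exists; call it $\ell\in Y$. By definition of the Hadamard semi-derivative, the quotient $\bigl(F(x+t\tilde{h})-F(x)\bigr)/t$ converges to $\ell$ as $(t,\tilde{h})\to(0^+,0)$ in $\mathbb{R}_{>0}\times X$; applying this along the particular sequence $(t_n,\tilde{h}_n)$ constructed above yields $\bigl(F(x_n)-F(x)\bigr)/t_n\to \ell$ in $Y$. In particular this sequence is bounded, say by $M$, so that $\norml F(x_n)-F(x)\normr_Y = t_n\,\norml \bigl(F(x_n)-F(x)\bigr)/t_n\normr_Y \le M\,t_n\to 0$, which is exactly the convergence sought. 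As a side remark, evaluating the limit along the constant approach $\tilde{h}\equiv 0$ shows $\ell=0$, but this value is not needed for the argument.

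The only genuinely non-obvious step is the choice of the scaling $t_n=\norml x_n-x\normr_X^{1/2}$: one needs a positive $t_n$ with both $t_n\to 0$ and $\tilde{h}_n=(x_n-x)/t_n\to 0$, i.e. $\norml x_n-x\normr_X/t_n\to 0$; any such $t_n$ works, the square root being simply the most economical choice. A minor point to verify beforehand is that the limit appearing in the definition of $d_HF(x;\cdot)$ is indeed a joint limit over $(t,\tilde{h})$, so that it governs arbitrary sequences $(t_n,\tilde{h}_n)\to(0^+,0)$ of the kind built above; granting this, the remainder of the proof is routine.
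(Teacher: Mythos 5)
Your proof is correct: the paper itself does not spell out an argument (it delegates the proofs of these differentiability facts to \cite[Chapitre 3]{delfour2012introduction}), and your construction — writing $x_n=x+t_n\tilde h_n$ with $t_n=\norml x_n-x\normr_X^{1/2}$ so that both $t_n\searrow 0$ and $\tilde h_n\to 0$, then invoking the joint limit defining $d_HF(x;0)$ — is exactly the standard argument found in that reference. Your side remarks are also accurate: the limit in the paper's definition is indeed a joint limit in $(t,\tilde h)$, and $d_HF(x;0)=0$ (as seen along $\tilde h\equiv 0$), though boundedness of the difference quotients already suffices to conclude.
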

Encore une fois, si la fonction $F$ est Gateaux différentiable en $0$ mais que $d_HF(x;0)$ n'existe pas, $F$ n'est pas nécessairement continue, voir \cite[Chapitre 3, Exemple 3.8]{delfour2012introduction}.

On retiendra donc que la différentiabilité au sens de Gateaux n'assure ni la continuité de la fonction, ni la validité de la règle de dérivation en chaîne.

Pour une présentation plus complète des différentes notions de différentiabilité et de leurs subtilités, ainsi que pour les preuves des résultats énoncés, nous renvoyons le lecteur à \cite[Chapitre 3]{delfour2012introduction} pour le cas de la dimension finie et \cite[Section 9.2]{DelZol2001} pour celui de la dimension infinie.

\subsection{Principe et notion de dérivée}

Nous présentons ici la méthode de variation des frontières de Hadamard, qui propose un cadre théorique pour étudier la variation des formes, et en particulier l'analyse de sensibilité par rapport à ces dernières.
Les idées fondatrices ont été introduites pour la première fois dans \cite{hadamard1908memoire}. La construction d'espaces métriques de formes a été faite pour la première fois en 1973 par Micheletti \cite{micheletti1972metrica} dans le cadre de problèmes de valeurs propres du laplacien. Dans le contexte de l'optimisation de formes, Murat et Simon présentent une construction similaire en 1975-1976 dans \cite{murat1975etude,murat1976controle}, où ils introduisent une notion de dérivabilité s'appuyant sur la \textit{méthode de perturbation de l'identité}. Parallèlement à ces travaux, en 1977 dans \cite{zolesio1977optimal} et en 1979 dans \cite{zolesio1979identification}, Zolésio propose une approche différente pour définir la notion de dérivabilité à partir de la \textit{méthode des vitesses} qui se veut plus générale. Nous faisons dans la suite une brève présentation de ces deux approches dans le cas le plus général de la déformation d'un domaine $\Omega$ à bord Lipschitz.

\paragraph{Méthode des vitesses.}
Soit $\tau>0$, et soit un champ de vitesse $\Vv:[0,\tau]\times\mathbb{R}^d\to \mathbb{R}^d$. On s'intéresse alors à l'équation différentielle, pour tout $X\in \mathbb{R}^d$,
$$
	\frac{d\xx}{dt}(t,X) = \Vv\left(t,\xx(t,X)\right), \: t\in [0,\tau], \:\: \mbox{ et } \xx(0,X)=X \:.
$$
À partir de la solution $\xx$ (lorsqu'elle existe), on peut générer une famille de transformations géométriques $\{ \Tt_t:\mathbb{R}^d\to \mathbb{R}^d \: | \: t\in [0,\tau] \}$ définies par:
$$
	\Tt_t(X):=\xx(t,X), \:\: \forall (t,X)\in [0,\tau]\times \mathbb{R}^d\:.
$$
On utilise ensuite ces transformations géométriques pour perturber le domaine $\Omega$ de la façon suivante:
$$
	\Omega(t):= \Tt_t(\Omega) = \{ \Tt_t(X) \: | \: X\in \Omega \}\:,
$$
où $\Omega(t)$ désigne le domaine perturbé. On sait que pour $\Vv \in \pazocal{C}^0\left( [0,\tau];\Ww^{1,\infty}(\mathbb{R}^d)\right)$, alors $\Tt_t$ est un difféomorphisme (quitte à prendre $\tau$ plus petit), voir \cite{DelZol2001}. Ainsi, si on se fixe un champ de vecteur $\thetaa\in \Ww^{1,\infty}(\mathbb{R}^d)$, on peut alors définir la dérivée dans la direction de $\thetaa$ comme on le fait classiquement par la méthode des vitesses dans des espaces de Banach.
\begin{dfntn}
    On définit la \textit{dérivée de forme} du critère $J$ en $\Omega$ dans la direction $\thetaa$ par:
    \begin{equation}
        d_HJ(\Omega)[\thetaa] := \underset{\Vv(0,\cdot)=\thetaa}{\underset{\Vv\in \pazocal{C}^0([0,\tau],\Ww^{1,\infty}),}{\underset{t\searrow 0,}{\lim}}} \frac{J\left( \Tt_t(\Omega) \right) - J(\Omega)}{t}\:.
    \label{DefDJSpeed}
    \end{equation}
\end{dfntn}

\begin{dfntn}
    On dit que $J$ est \textit{dérivable par rapport à la forme} en $\Omega$ si $d_HJ(\Omega)[\thetaa]$ existe pour n'importe quel $\thetaa\in \Ww^{1,\infty}(\mathbb{R}^d)$, et si de plus l'application $\thetaa \mapsto d_HJ(\Omega)[\thetaa]$ est linéaire continue.
\end{dfntn}
Cette notion est l'analogue de la différentiabilité de Hadamard puisque que la définition de la dérivéee dans la direction $\thetaa$ est indépendante du choix de $\Vv$ permettant d'approcher $\Omega$ dans cette direction.

\paragraph{Méthode de pertubartion de l'identité.}
Cette méthode constitue un cas particulier de la précédente. En effet, le principe consiste ici à perturber le domaine $\Omega$ dans la direction d'un champ de vecteurs $\thetaa\in \Ww^ {1,\infty}(\mathbb{R}^d)$ de la façon suivante:
$$
    \Omega(t) = (\Id+t\thetaa)\Omega\:,
$$
où $t>0$ et $\Omega(t)$ désigne le domaine perturbé dans la direction de $\thetaa$, voir la figure \ref{fig:DeformationOmega} dans le cas $t=1$. Ceci revient à la méthode des vitesses avec le choix spécifique du champ de vitesses autonome $V(t,x)=\thetaa(x)$, $\forall t\in [0,\tau]$. 

\begin{figure}
	\begin{center}
    	\includegraphics[width=0.4\textwidth]{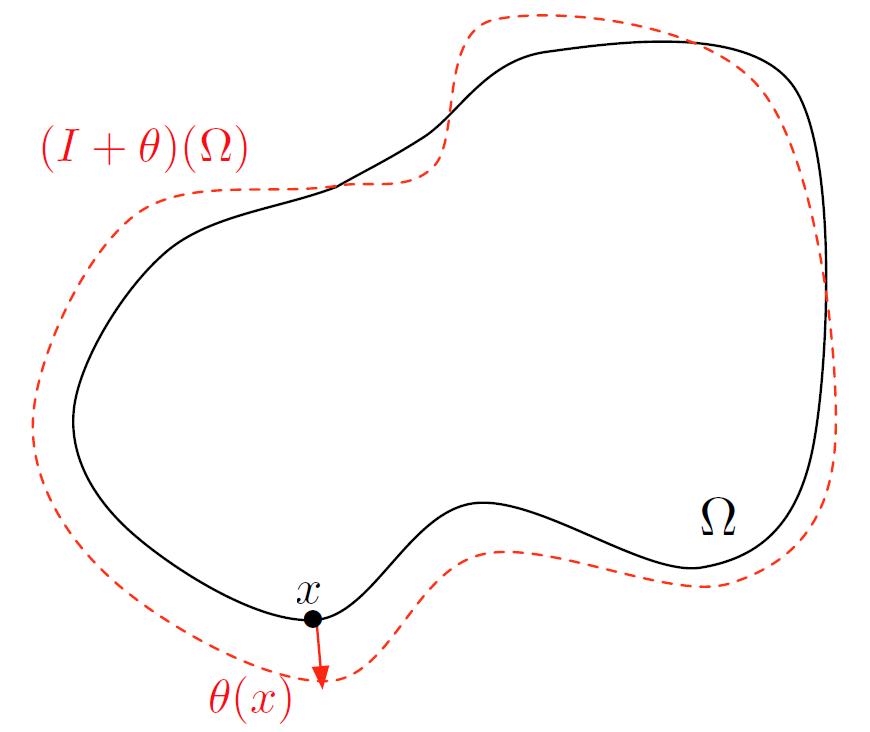}
    	\caption{Perturbation de $\Omega$ par $\thetaa$ (extrait de \cite{dapogny2013shape}).}
    	\label{fig:DeformationOmega}
  	\end{center}
\end{figure}  

On sait que pour $t$ suffisament petit, plus précisément pour $t$ tel que $t\norml \thetaa \normr_{\Ww^{1,\infty}}<1$, $(\Id+t\thetaa)$ est un difféomorphisme, voir par exemple \cite{henrot2006variation}. On peut donc définir la notion de différentiabilité par rapport à la forme par l'intermédiaire d'une notion classique de différentiabilité dans l'espace de Banach $\Ww^ {1,\infty}(\mathbb{R}^d)$. 
\begin{dfntn}
    On définit la \textit{dérivée de forme} du critère $J$ en $\Omega$ dans la direction $\thetaa$ par:
    \begin{equation}
        dJ(\Omega)[\thetaa] := \lim_{t\searrow 0} \frac{J\left( (\Id+t\thetaa)\Omega \right) - J(\Omega)}{t}\:.
    \label{DefDJ}
    \end{equation}
\end{dfntn}

\begin{dfntn}
    On dit que $J$ est \textit{dérivable par rapport à la forme} en $\Omega$ si $dJ(\Omega)[\thetaa]$ existe pour n'importe quel $\thetaa\in \Ww^{1,\infty}(\mathbb{R}^d)$, et si de plus l'application $\thetaa \mapsto dJ(\Omega)[\thetaa]$ est linéaire continue.
    \label{DefShapeDer}
\end{dfntn}
Cette définition correspond quant à elle à l'analogue de la différentiabilité de Gateaux.

\begin{rmrk}
    Soit $k\geq 1$. Les définitions précédentes tiennent encore si on prend des champs de vecteurs $\thetaa \in \Cc^ k_b(\mathbb{R}^d):=\Cc^k(\mathbb{R}^d)\cap \Ww^ {k,\infty}(\mathbb{R}^d)$, muni de la norme de $\Ww^{k,\infty}$. 
	On a de plus dans ces cas-là que $(\Id+t\thetaa)$ est un $\pazocal{C}^k$-difféomorphisme.
\end{rmrk}

La définition \ref{DefShapeDer} peut se reformuler en: $J$ est dérivable par rapport à la forme si l'application $\thetaa\mapsto J((\Id+\thetaa)\Omega)$ est Gateaux différentiable en $0$ dans $\Ww^{1,\infty}(\mathbb{R}^d)$. Cette observation suggère de proposer une troisième définition de dérivabilité par rapport à la forme qui est l'analogue de la différentiabilité de Fréchet.

\begin{dfntn}
    On dit que $J$ est \textit{Fréchet dérivable par rapport à la forme} s'il existe une application linéaire $\gradd J(\Omega): \Ww^ {1,\infty}(\mathbb{R}^d) \to \mathbb{R}$ telle que:
    $$
    	\lim_{\norml\thetaa \normr_{1,\infty}\to 0} \frac{1}{\norml\thetaa \normr_{1,\infty}}{|J((\Id+\thetaa)\Omega)-J(\Omega)-\prodD{\gradd J(\Omega),\thetaa}{} |} = 0\:.
    $$
\end{dfntn}

\begin{prpstn}
	Si $J$ est Fréchet dérivable par rapport à la forme, alors elle est Hadamard et Gateaux dérivable par rapport à la forme, et de plus, pour tout $\thetaa\in\Ww^{1,\infty}(\mathbb{R}^d)$:
	$$
		\prodD{\gradd J(\Omega),\thetaa}{} = d_HJ(\Omega)[\thetaa] = dJ(\Omega)[\thetaa]\:.
	$$  
\end{prpstn}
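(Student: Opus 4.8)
Le plan est de transposer au cadre de l'optimisation de formes la chaîne d'implications classique \emph{Fréchet} $\Rightarrow$ \emph{Hadamard} $\Rightarrow$ \emph{Gateaux} valable dans les espaces de Banach, en exploitant le fait que les trois notions de dérivabilité par rapport à la forme ont été définies via la différentiabilité (au sens correspondant) de l'application $\phi:\thetaa\mapsto J((\Id+\thetaa)\Omega)$ sur l'espace de Banach $\Ww^{1,\infty}(\mathbb{R}^d)$. Par hypothèse $\phi$ est Fréchet différentiable en $0$: en posant $L:=\gradd J(\Omega)\in\mathcal{L}_c(\Ww^{1,\infty}(\mathbb{R}^d),\mathbb{R})$, il existe un reste $r$ tel que
$$
J((\Id+\hh)\Omega)-J(\Omega)=\prodD{L,\hh}{}+r(\hh),\qquad \frac{|r(\hh)|}{\norml\hh\normr_{1,\infty}}\ \xrightarrow[\norml\hh\normr_{1,\infty}\to 0]{}\ 0 .
$$
Tout découle alors de cette unique identité, appliquée à des incréments $\hh$ bien choisis.

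\textbf{Gateaux.} Pour $\thetaa\neq 0$ fixé, on applique l'identité avec $\hh=t\thetaa$, $t\searrow 0$, puis on divise par $t$:
$$
\frac{J((\Id+t\thetaa)\Omega)-J(\Omega)}{t}=\prodD{L,\thetaa}{}+\frac{r(t\thetaa)}{t},\qquad \frac{|r(t\thetaa)|}{t}=\frac{|r(t\thetaa)|}{\norml t\thetaa\normr_{1,\infty}}\,\norml\thetaa\normr_{1,\infty}\ \longrightarrow\ 0 .
$$
Ainsi $dJ(\Omega)[\thetaa]=\prodD{L,\thetaa}{}$ (ce qui reste trivialement vrai pour $\thetaa=0$), et comme $\thetaa\mapsto\prodD{L,\thetaa}{}$ est linéaire continue, $J$ est dérivable par rapport à la forme au sens de Gateaux.

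\textbf{Hadamard.} Soit $\Vv\in\pazocal{C}^0([0,\tau],\Ww^{1,\infty}(\mathbb{R}^d))$ avec $\Vv(0,\cdot)=\thetaa$, et soit $\Tt_t$ le flot associé, un difféomorphisme de $\mathbb{R}^d$ pour $t$ assez petit. On pose $\thetaa_t:=\Tt_t-\Id\in\Ww^{1,\infty}(\mathbb{R}^d)$, de sorte que $\Tt_t(\Omega)=(\Id+\thetaa_t)\Omega$. L'étape cruciale est le développement du flot
$$
\frac1t\,\thetaa_t\ \xrightarrow[t\searrow 0]{}\ \thetaa\qquad\text{dans }\Ww^{1,\infty}(\mathbb{R}^d) .
$$
On l'obtient à partir de $\thetaa_t(X)=\int_0^t\Vv(s,\xx(s,X))\,ds$, où $\xx(\cdot,X)$ désigne la trajectoire issue de $X$: pour la composante $L^\infty$ on combine la majoration $|\xx(s,X)-X|\le Cs$, le caractère lipschitzien de $\thetaa$ et la continuité de $s\mapsto\Vv(s,\cdot)$ en $s=0$ dans $\Ww^{1,\infty}(\mathbb{R}^d)$; la composante en gradient se traite de la même façon, après une estimation de Gronwall donnant $\grad\xx(s,\cdot)\to\Id$ dans $L^\infty$ quand $s\searrow 0$. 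En particulier $\norml\thetaa_t\normr_{1,\infty}\to 0$ et $\norml\thetaa_t\normr_{1,\infty}/t$ demeure borné. En injectant $\hh=\thetaa_t$ dans l'identité de Fréchet et en divisant par $t$,
$$
\frac{J(\Tt_t(\Omega))-J(\Omega)}{t}=\prodD{L,\tfrac1t\thetaa_t}{}+\frac{r(\thetaa_t)}{t}\ \xrightarrow[t\searrow 0]{}\ \prodD{L,\thetaa}{} ,
$$
le premier terme par continuité de $L$, le second parce que $|r(\thetaa_t)|/t=\big(|r(\thetaa_t)|/\norml\thetaa_t\normr_{1,\infty}\big)\,\big(\norml\thetaa_t\normr_{1,\infty}/t\big)\to 0$. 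Cette limite valant $\prodD{L,\thetaa}{}$ indépendamment du champ $\Vv$ retenu, la quantité $d_HJ(\Omega)[\thetaa]$ existe et vaut $\prodD{L,\thetaa}{}$; la linéarité et la continuité de $\thetaa\mapsto\prodD{L,\thetaa}{}$ fournissent alors la dérivabilité par rapport à la forme au sens de Hadamard, et la comparaison des trois expressions donne $\prodD{\gradd J(\Omega),\thetaa}{}=d_HJ(\Omega)[\thetaa]=dJ(\Omega)[\thetaa]$.

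Le seul point réellement technique est le développement du flot $\thetaa_t/t\to\thetaa$ dans $\Ww^{1,\infty}(\mathbb{R}^d)$: l'estimation $L^\infty$ est élémentaire, tandis que celle portant sur $\grad\xx(s,\cdot)$ relève de la théorie classique de régularité des flots de champs de $\Ww^{1,\infty}(\mathbb{R}^d)$ (lemme de Gronwall), pour laquelle on pourra se reporter à \cite{DelZol2001}. Le reste n'est que l'observation de routine selon laquelle la différentiabilité de Fréchet contraint les deux limites à être gouvernées par la même forme linéaire continue $\gradd J(\Omega)$.
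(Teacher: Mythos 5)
La proposition n'est pas démontrée dans le manuscrit (elle y est invoquée comme un résultat classique de calcul différentiel); j'évalue donc votre preuve pour elle-même. Votre stratégie générale — tout ramener au développement de Fréchet de $\thetaa\mapsto J((\Id+\thetaa)\Omega)$ en $0$ — est la bonne, la partie Gateaux est correcte, et les estimations $\norml\thetaa_t\normr_{1,\infty}\le Ct$ ainsi que le traitement du reste $r(\thetaa_t)/t$ sont justes.

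En revanche, le lemme-clé de votre partie Hadamard, à savoir $\frac{1}{t}(\Tt_t-\Id)\to\thetaa$ dans $\Ww^{1,\infty}(\mathbb{R}^d)$ pour tout champ $\Vv\in\pazocal{C}^0([0,\tau],\Ww^{1,\infty})$ avec $\Vv(0,\cdot)=\thetaa$, est faux en général, et c'est une vraie lacune. La composante $L^\infty$ se traite bien comme vous le dites, mais pour le gradient on a $\frac{1}{t}\grad\thetaa_t(X)-\grad\thetaa(X)=\frac{1}{t}\int_0^t\left[(\grad\Vv)(s,\xx(s,X))\,\grad\xx(s,X)-\grad\thetaa(X)\right]ds$; après avoir absorbé, par continuité en temps et Gronwall, les contributions $\grad\Vv(s,\cdot)-\grad\Vv(0,\cdot)$ et $\grad\xx(s,\cdot)-\Ii$, il reste le terme de composition $\grad\thetaa(\xx(s,X))-\grad\thetaa(X)$, qui ne tend uniformément vers $0$ que si $\grad\thetaa$ est uniformément continu; or pour $\thetaa\in\Ww^{1,\infty}$, $\grad\thetaa$ n'est que $L^\infty$. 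Contre-exemple en dimension un: $\thetaa(x)=1+|x|$ convenablement tronqué, champ autonome $\Vv\equiv\thetaa$; les trajectoires issues de $X<0$ proche de $0$ traversent le point de saut de $\thetaa'$ en un temps de l'ordre de $|X|$, et pour $X\approx -t/2$ la moyenne $\frac{1}{t}\int_0^t\thetaa'(\xx(s,X))\,\grad\xx(s,X)\,ds$ vaut environ $0$ alors que $\thetaa'(X)=-1$: la convergence en norme $W^{1,\infty}$ échoue donc pour tout $t$ petit. Par conséquent, votre argument ne justifie pas le passage à la limite $\prodD{L,\thetaa_t/t}{}\to\prodD{L,\thetaa}{}$ pour un élément arbitraire $L=\gradd J(\Omega)$ du dual, qui n'est continu que pour la norme $\Ww^{1,\infty}$ (la convergence forte échoue, et rien dans votre preuve ne fournit la convergence faible requise). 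La démonstration se répare sous une régularité supplémentaire des directions — par exemple $\thetaa\in\Cc^1_b(\mathbb{R}^d)$ à gradient uniformément continu, ou des champs de vitesses à valeurs dans un espace de type $\Cc^1$, cadre que le manuscrit adopte d'ailleurs dans les chapitres suivants — auquel cas le terme de composition tend bien uniformément vers $0$ et votre raisonnement devient complet; mais telle quelle, avec des champs seulement $\Ww^{1,\infty}$, l'étape décisive n'est pas établie.
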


\begin{rmrk}
	Nous mentionnons également la méthode de dérivation proposée dans \cite{allaire2016second} dans le cas où la famille de domaines $\Omega(t)$ est définie par une fonction level-set $\phi(t,x)$ solution d'une équation de type \eqref{HJEquation}. En particulier, les auteurs montrent que lorsque $\Omega$ est de classe $\pazocal{C}^2$ et que la vitesse normale $\theta \in \pazocal{C}^1(\mathbb{R}^d)$, alors leur méthode donne une définition de la dérivée de forme (d'ordre 1) qui coïncide avec celles de Fréchet et de Hadamard.
\end{rmrk}

\begin{rmrk}
	Dans cette thèse, nous choisissons d'utiliser la définition qui est l'analogue de la différentiabilité de Gateaux. Néanmoins, il arrivera que nous démontrions la différentiabilité de Fréchet pour en déduire celle de Gateaux. Nous ne rencontrerons pas de problème pour la continuité puisque dans chaque cas où nous étudions la Gateaux différentiabilité d'une fonction, nous montrons au préalable sa continuité. Quant à la validité de la règle de dérivation en chaîne, nous nous en sortons car nous ne composerons les fonctions Gateaux différentiables qu'avec des fonctions au moins Hadamard différentiables.
\end{rmrk}

\begin{exmpl}
    Soient $\Omega$ localement Lipschitz, $J(\Omega)= |\Omega|$ et $\thetaa\in \Ww^{1,\infty}(\mathbb{R}^d)$, alors on a:
    $$
        J(\Omega(t)) = \int_{\Omega(t)}\!1 = \int_\Omega \det(I+t\grad\thetaa) \:,
    $$
    $$
        \mbox{d'où: } \qquad dJ(\Omega)[\thetaa] = \lim_{t\to 0} \frac{1}{t} \int_\Omega \left( \det(I+t\grad\thetaa)-1 \right) = \int_\Omega \divv \thetaa\: = \int_{\partial\Omega} \thetaa\cdot\normalInt.
    $$
    Il est clair que l'expression précédente est linéaire continue en $\thetaa$ dans les bons espaces, ce qui prouve que le volume est dérivable par rapport à la forme. On remarque que sa dérivée de forme ne dépend que de la composante de $\thetaa$ dans la direction de la normale $\normalInt$.
\end{exmpl}

Le fait que $dJ$ ne dépende que de $\thetaa\cdot\normalInt$ n'est pas spécifique à l'exemple précédent. En effet, pour des ouverts $\Omega$ plus réguliers, il existe un résultat très général: le théorème de structure de Hadamard-Zolésio. Bien que ce résultat ait été formalisé par Zolésio en 1979 dans sa thèse \cite{zolesio1979identification}, Dervieux et Palmerio avaient déjà commencé à l'étudier en 1976 \cite{dervieux1975formule}. Nous énonçons ici une version plus récente tirée de \cite[Théorème 2.27]{sokolowski1992introduction}.

\begin{thrm}
    Soient $D$ un ouvert de $\mathbb{R}^d$, $k\geq 1$, et $J$ un critère défini sur l'ensemble
    $$
        \pazocal{O}_k := \{\:  \Omega \subset D \ | \ \Omega \mbox{ ouvert et de classe } \pazocal{C}^k \:\}\:.
    $$
    Si $J$ est dérivable par rapport à la forme en tout $\Omega\in\pazocal{O}_k$ relativement à des perturbations $\thetaa\in\Cc^ k_b(\mathbb{R}^d)$, alors il existe une distribution scalaire $\mathfrak{g}\in \pazocal{D}^{-k}(\partial\Omega)$ d'ordre $-k$ telle que: pour tout $\Omega\in\pazocal{O}_k$, pour tout $\thetaa\in\Cc^k_b(\mathbb{R}^d)$,
    $$
        dJ(\Omega)[\thetaa] = \prodD{ \mathfrak{g}, \thetaa\cdot\normalInt}{\pazocal{D}^{-k}(\partial\Omega),\pazocal{D}^{k}(\partial\Omega)}\:.
    $$
    \label{ThmStruct}
\end{thrm}

\subsection{Optimisation de formes sous une contrainte d'équilibre}

Nous nous intéressons maintenant au cas particulier des problèmes d'optimisation de formes de type \eqref{StructOPT}. Comme mentionné précédemment, pour de tels problèmes, $J$ dépend de $\Omega$ à la fois directement et par l'intermédiaire d'une fonction $u(\Omega)$, solution d'une équation d'état posée sur $\Omega$. En particulier, $u(\Omega)$ n'est définie que sur $\Omega$, et elle vit dans un certain espace de Sobolev noté $W(\Omega)$. En perturbant le domaine dans une direction $\thetaa\in\Ww^{1,\infty}(\mathbb{R}^d)$ donnée,  on obtient $\Omega(t)$, et $u(\Omega(t))$, la solution de l'équation d'état  posée sur ce nouveau domaine. Pour étudier la dérivabilité de $u$ par rapport à la forme, il convient de s'intéresser au quotient dans \eqref{DefDJ}. Or, comme $u(\Omega(t))$ et $u(\Omega)$ ne sont pas définies au même endroit, ce quotient n'a de sens que pour des $x\in \Omega \cap \Omega(t)$. Il semble donc plus naturel de travailler avec la fonction composée $u(\Omega(t))\circ (\Id+t\thetaa)$ qui elle est bien définie sur le domaine de référence.

\begin{dfntn}
    La \textit{dérivée matérielle} ou \textit{dérivée lagrangienne} de $u$ en $\Omega$ dans la direction $\thetaa$ est la fonction de $W(\Omega)$ définie par:
    $$
        \dot{u}(\Omega)[\thetaa]:= \lim_{t\searrow 0} \frac{u(\Omega(t))\circ (\Id+t\thetaa)-u(\Omega)}{t}\:.
    $$
    S'il s'agit d'une limite faible (respectivement forte) dans $W(\Omega)$, on parle de dérivée matérielle faible (respectivement forte).
\end{dfntn}

\begin{dfntn}
    Si $\dot{u}(\Omega)[\thetaa]$ existe dans $W(\Omega)$ et si de plus $\grad u \thetaa \in W(\Omega)$, alors on peut définir la \textit{dérivée de forme} ou \textit{dérivée eulérienne} de $u$ en $\Omega$ dans la direction $\thetaa$ comme l'élément $du(\Omega)[\thetaa]\in W(\Omega)$ tel que:
    $$
        du(\Omega)[\thetaa] := \dot{u}(\Omega)[\thetaa] - \grad u \thetaa \:.
    $$
\end{dfntn}

\begin{dfntn}
    On dit que $u$ est \textit{dérivable par rapport à la forme en $\Omega$} si l'application $\thetaa\mapsto du(\Omega)[\thetaa]$ est linéaire continue de $\Ww^{1,\infty}(\mathbb{R}^d)$ dans $W(\Omega)$. 
\end{dfntn}

\begin{notation}
    Pour alléger les notations, lorsque le contexte est suffisament clair, on remplacera $u(\Omega)$, $\dot{u}(\Omega)[\thetaa]$ et $du(\Omega)[\thetaa]$ simplement par $u$, $\dot{u}$ et $du$.
\end{notation}

\begin{rmrk}
	Ici encore, les définitions précédentes tiennent encore si on remplace $\Ww^{1,\infty}$ par $\Cc^k_b$, pour tout $k\geq 1$.  
\end{rmrk}

\paragraph{Un peu de géométrie différentielle.}
Avant de se concentrer sur la dérivation d'un critère générique, nous rappelons brièvement deux définitions tirées de \cite{henrot2006variation} qui nous seront utiles.
\begin{dfntn}
	On suppose que $\Omega$ est $\pazocal{C}^1$. Soit un champ de vecteurs $\ww\in \Cc^1(\partial\Omega)$, on définit la \textit{divergence tangentielle} de $\ww$ par:
	$$
		\divv_\Gamma \ww := \left.\left( \divv \tilde{\ww} - (\gradd\tilde{\ww}\normalInt) \cdot \normalInt \right)\right|_{\partial\Omega}\:,
	$$
	où $\tilde{\ww}\in \Cc^1(\mathbb{R}^d)$ est un relèvement de $\ww$.
\end{dfntn}

\begin{dfntn}
	On suppose que $\Omega$ est $\pazocal{C}^2$. On définit alors la \textit{courbure moyenne} $\kappa$ le long de $\partial\Omega$ par:
	$$
		\kappa := \divv_\Gamma \normalInt\:.
	$$
\end{dfntn}

À partir de maintenant, pour être cohérent avec la représentation des formes par la méthode de level-set, nous considérons des ouverts $\Omega$ à bord $\pazocal{C}^1$, et donc des champs de vecteurs $\thetaa\in\Cc^1_b(\mathbb{R}^d)$. De plus, dans le cadre de l'optimisation de structures, nous nous limiterons à des critères $J$ s'exprimant sous la forme relativement générale:
\begin{equation}
  J(\Omega) = \pazocal{J}(\Omega,u(\Omega)):= \int_\Omega j(u(\Omega)) + \int_{\partial\Omega} k(u(\Omega))\:.
  \label{ExpressionJ}
\end{equation}
Les fonctions $j,k$ sont supposées $\pazocal{C}^1(\mathbb{R})$, et leurs dérivées par rapport à $u$, notées $j'$, $k'$, sont supposées Lipschitz. De plus, on impose qu'elles vérifient les conditions de croissance (\textit{growth conditions}) suivantes: pour tous $v$, $w \in \mathbb{R}$,
\begin{equation}
    |j(v)| \leq C\left(1+|v|^2\right) \qquad
    |k(v)| \leq C\left(1+|v|^2\right)
  \label{Condjk}
\end{equation}
\begin{equation}
    |j'(v)\cdot w| \leq C |v\cdot w| \qquad
    |k'(v)\cdot w| \leq C|v\cdot w| 
  \label{Condj'k'}
\end{equation}
pour des constantes $C>0$ données. Lorsque toutes ces hypothèses sont vérfiées, et lorsque $u$ est dérivable par rapport à la forme, alors on est capable de dériver l'expression \eqref{ExpressionJ} par composition.

\begin{prpstn}
    Si $\Omega$ est  $\pazocal{C}^1$, et si $u\in H^1(\Omega)$ admet une dérivée matérielle dans $H^1(\Omega)$ dans la direction $\thetaa\in\Cc^1_b(\mathbb{R}^d)$, alors $J$ admet une dérivée de forme dans la direction $\thetaa$ qui s'écrit:
    \begin{equation}
        dJ(\Omega)[\thetaa] = \int_{\Omega} j'(u)\cdot \dot{u} + j(u)\divv\thetaa + \int_{\partial\Omega} k'(u)\cdot \dot{u} + k(u)\divv_\Gamma\thetaa \:.
        \label{DJMDer}    
    \end{equation}
    Si de plus $\Omega$ est $\pazocal{C}^2$ et $u\in H^2(\Omega)$, alors la dérivée de forme $du\in H^1(\Omega)$ et on peut réécrire:
    \begin{equation}
        dJ(\Omega)[\thetaa] = \int_{\Omega} j'(u)\cdot du + \divv\left(j(u)\thetaa\right) + \int_{\partial\Omega} k'(u)\cdot du + (\kappa + \partial_{\normalInt})\left( k(u)\right) (\thetaa\cdot\normalInt)\:,
        \label{DJSDer}
    \end{equation}   
    où $\kappa$ et $\partial_{\normalInt}$ représentent respectivement la courbure moyenne et la dérivée normale le long de $\partial\Omega$.
\end{prpstn}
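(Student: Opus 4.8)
The idea is to differentiate the two integral terms in \eqref{ExpressionJ} separately by bringing everything back to the reference domain $\Omega$ via the diffeomorphism $\Tt_t = \Id + t\thetaa$, applying the classical change-of-variables formulas, then passing to the limit $t\searrow 0$ using the assumed existence of the material derivative $\dot u$. First I would recall the standard transport identities: for $f(\Omega(t)) \in L^1(\Omega(t))$ one has $\int_{\Omega(t)} f = \int_\Omega (f\circ\Tt_t)\,\JacV(t)$ with $\JacV(t) = \det(I + t\gradd\thetaa)$, and similarly for the boundary integral $\int_{\partial\Omega(t)} g = \int_{\partial\Omega} (g\circ\Tt_t)\,\JacB(t)$ with the tangential Jacobian $\JacB(t)$. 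The key first-order expansions are $\JacV(t) = 1 + t\,\divv\thetaa + o(t)$ and $\JacB(t) = 1 + t\,\divv_\Gamma\thetaa + o(t)$; both follow from differentiating determinants and are collected in, e.g., \cite{henrot2006variation,sokolowski1992introduction}.

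For the volume term, write
\[
  \frac{1}{t}\left( \int_{\Omega(t)} j(u(\Omega(t))) - \int_\Omega j(u(\Omega)) \right)
  = \int_\Omega \frac{j(u(\Omega(t))\circ\Tt_t) - j(u(\Omega))}{t}\,\JacV(t)
  + \int_\Omega j(u(\Omega))\,\frac{\JacV(t)-1}{t}\:.
\]
The second integral tends to $\int_\Omega j(u)\divv\thetaa$. For the first, add and subtract to split it as a difference quotient of $j$ composed with $u(\Omega(t))\circ\Tt_t$; the growth conditions \eqref{Condjk}--\eqref{Condj'k'} together with the Lipschitz regularity of $j'$ and the $H^1(\Omega)$-convergence of the difference quotient to $\dot u$ (Sobolev embedding $H^1 \hookrightarrow L^4$ in dimension $d\le 3$ handles the quadratic terms) give the limit $\int_\Omega j'(u)\cdot\dot u$. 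The boundary term is handled identically, replacing $\JacV$ by $\JacB$, the trace theorem $H^1(\Omega)\to H^{1/2}(\partial\Omega)\hookrightarrow L^2(\partial\Omega)$ controlling the boundary integrals; this yields $\int_{\partial\Omega} k'(u)\cdot\dot u + k(u)\divv_\Gamma\thetaa$, hence \eqref{DJMDer}. Linearity and continuity of $\thetaa\mapsto dJ(\Omega)[\thetaa]$ follow because each term is linear in $\dot u$ and in $\divv\thetaa$ (resp. $\divv_\Gamma\thetaa$), and $\thetaa\mapsto\dot u$ is assumed linear continuous.

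For the second formula \eqref{DJSDer}, I substitute $\dot u = du + \gradd u\,\thetaa$ into \eqref{DJMDer}. In the volume term, $j'(u)\cdot(\gradd u\,\thetaa) + j(u)\divv\thetaa = \divv(j(u)\thetaa)$ by the chain rule $\gradd(j(u)) = j'(u)\gradd u$ (legitimate since $u\in H^2(\Omega)$). For the boundary term, I use the tangential decomposition $\gradd u\,\thetaa = \gradd_\Gamma u\cdot\thetaa_\Gamma + (\partial_{\normalInt}u)(\thetaa\cdot\normalInt)$ and the tangential Green formula (e.g. \cite[Prop.~5.4.9]{henrot2006variation}): $\int_{\partial\Omega}\divv_\Gamma(v\,\thetaa_\Gamma) = \int_{\partial\Omega}\kappa\,v\,(\thetaa\cdot\normalInt)$ for $v = k(u)$, which converts the tangential-gradient contribution and the $k(u)\divv_\Gamma\thetaa$ term into $(\kappa + \partial_{\normalInt})(k(u))(\thetaa\cdot\normalInt)$, using again the chain rule on $\partial_{\normalInt}(k(u)) = k'(u)\partial_{\normalInt}u$. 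The fact that $du\in H^1(\Omega)$ follows from $\dot u\in H^1(\Omega)$ and $\gradd u\,\thetaa\in H^1(\Omega)$ (the latter because $u\in H^2$ and $\thetaa\in\Cc^1_b$).

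\textbf{Main obstacle.} The delicate point is not the formal manipulation but justifying the passage to the limit in the nonlinear terms $j(u(\Omega(t))\circ\Tt_t)$ and $k(\,\cdot\,)$ on the boundary: one must show the difference quotients converge strongly enough (in $L^2(\Omega)$, resp. $L^2(\partial\Omega)$) that the product with $j'$, $k'$ — evaluated along a path that also moves — converges to $j'(u)\cdot\dot u$. This is exactly where the growth conditions \eqref{Condjk}--\eqref{Condj'k'}, the Lipschitz continuity of $j',k'$, the Sobolev embeddings valid for $d\le 3$, and the continuity of $u$ with respect to the shape (needed so that $u(\Omega(t))\circ\Tt_t \to u$ in $H^1(\Omega)$) all enter; establishing that continuity, typically via an energy estimate on the transported state equation, is the real work underlying the hypothesis "$u$ admits a material derivative," and the chain-rule step in \eqref{DJSDer} additionally requires the $H^2$ regularity to make $\gradd(j(u))$ and the tangential Green formula rigorous.
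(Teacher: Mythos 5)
Your proposal is correct and follows essentially the same route as the paper, whose proof simply defers to \cite[Théorème 5.2.2 et Proposition 5.4.18]{henrot2006variation}: those proofs are exactly your transport-to-$\Omega$ argument with the first-order expansions of $\JacV(t)$, $\JacB(t)$, the growth/Lipschitz hypotheses to pass to the limit, and then the substitution $\dot{u}=du+\gradd u\,\thetaa$ with the chain rule and the tangential Stokes formula for \eqref{DJSDer}. One small slip: the identity you quote should be applied to the full field, $\int_{\partial\Omega}\divv_\Gamma\!\left(k(u)\thetaa\right)=\int_{\partial\Omega}\kappa\,k(u)\,(\thetaa\cdot\normalInt)$ (or, equivalently, use $\divv_\Gamma\thetaa=\divv_\Gamma\thetaa_\Gamma+\kappa\,\thetaa\cdot\normalInt$ together with $\int_{\partial\Omega}\divv_\Gamma\!\left(k(u)\thetaa_\Gamma\right)=0$ on the closed surface), since as written, with $\thetaa_\Gamma$ inside the divergence but $\thetaa\cdot\normalInt$ on the right, it is not an identity, although your final formula is the correct one.
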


\begin{proof}
    Ce résultat s'obtient en adaptant légèrement les preuves du théorème 5.2.2 et de la proposition 5.4.18 \cite{henrot2006variation}.
\end{proof}

\begin{crllr}
    Sous les hypothèses de la proposition précédente, si $u$ est dérivable par rapport à la forme en $\Omega$, alors $J$ l'est aussi.
\end{crllr}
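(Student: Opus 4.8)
Le plan est de vérifier directement, à partir de l'expression explicite de la dérivée de forme donnée par la proposition précédente, que l'application $\thetaa\mapsto dJ(\Omega)[\thetaa]$ est bien définie, linéaire et continue de $\Cc^1_b(\mathbb{R}^d)$ dans $\mathbb{R}$, ce qui est exactement la définition \ref{DefShapeDer}. Commençons par l'existence: comme $u$ est supposée dérivable par rapport à la forme en $\Omega$, la dérivée $du(\Omega)[\thetaa]$, donc aussi la dérivée matérielle $\dot u(\Omega)[\thetaa]=du(\Omega)[\thetaa]+\grad u\,\thetaa$, existe dans $H^1(\Omega)$ pour \emph{toute} direction $\thetaa\in\Cc^1_b(\mathbb{R}^d)$; les hypothèses de la proposition sont donc satisfaites dans toutes les directions et $dJ(\Omega)[\thetaa]$ existe, donné par \eqref{DJMDer}, ou par \eqref{DJSDer} dans le cas $\pazocal{C}^2$.

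\emph{Linéarité.} En substituant $\dot u(\Omega)[\thetaa]=du(\Omega)[\thetaa]+\grad u\,\thetaa$ dans \eqref{DJMDer}, j'écrirais $dJ(\Omega)[\thetaa]$ comme une somme finie de termes, chacun ne dépendant de $\thetaa$ qu'au travers de l'une des quantités $du(\Omega)[\thetaa]$, $\grad u\,\thetaa$, $\divv\thetaa$ ou $\divv_\Gamma\thetaa$. Chacune de ces applications est linéaire en $\thetaa$: pour $\thetaa\mapsto du(\Omega)[\thetaa]$ c'est précisément l'hypothèse de dérivabilité de $u$, pour les trois autres c'est immédiat. Donc $\thetaa\mapsto dJ(\Omega)[\thetaa]$ est linéaire; le même argument vaut pour \eqref{DJSDer}, où n'interviennent que $du(\Omega)[\thetaa]$, $\divv(j(u)\thetaa)$ et $\thetaa\cdot\normalInt$.

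\emph{Continuité.} Je majorerais ensuite chaque terme de cette somme à l'aide des conditions de croissance \eqref{Condjk}--\eqref{Condj'k'}, de l'inégalité de Cauchy--Schwarz et de la continuité de l'opérateur de trace $H^1(\Omega)\hookrightarrow L^2(\partial\Omega)$. Typiquement, $\left|\int_\Omega j'(u)\cdot\dot u\right|\leq C\,\norml u\normr_{0,\Omega}\norml\dot u\normr_{0,\Omega}$, $\left|\int_\Omega j(u)\divv\thetaa\right|\leq C\bigl(|\Omega|+\norml u\normr_{0,\Omega}^2\bigr)\norml\thetaa\normr_{1,\infty}$, et, pour les termes de bord, $\left|\int_{\partial\Omega}k'(u)\cdot\dot u\right|\leq C\,\norml u\normr_{0,\partial\Omega}\norml\dot u\normr_{0,\partial\Omega}\leq C'\,\norml u\normr_{1,\Omega}\norml\dot u\normr_{1,\Omega}$. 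Comme $\norml\dot u\normr_{0,\Omega}\leq\norml du(\Omega)[\thetaa]\normr_{0,\Omega}+\norml\grad u\normr_{0,\Omega}\norml\thetaa\normr_{0,\infty}$ et que $\thetaa\mapsto du(\Omega)[\thetaa]$ est continue de $\Cc^1_b(\mathbb{R}^d)$ dans $H^1(\Omega)$ par hypothèse, tous ces majorants sont de la forme $C_\Omega\,\norml\thetaa\normr_{1,\infty}$, la constante $C_\Omega$ ne dépendant que de $\Omega$, de $u$ et des constantes de \eqref{Condjk}--\eqref{Condj'k'}. On obtiendrait ainsi $|dJ(\Omega)[\thetaa]|\leq C_\Omega\,\norml\thetaa\normr_{1,\infty}$, d'où la continuité, et donc la dérivabilité de $J$ par rapport à la forme. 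Le cas $\pazocal{C}^2$ se traite de façon identique à partir de \eqref{DJSDer}, les termes supplémentaires $\kappa\,(k(u))\,(\thetaa\cdot\normalInt)$ et $\partial_{\normalInt}(k(u))\,(\thetaa\cdot\normalInt)$ étant eux aussi linéaires en $\thetaa$ et majorés par $C\,\norml\thetaa\normr_{0,\infty}$.

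\emph{Principale difficulté.} Il n'y a pas d'obstacle conceptuel: tout relève d'un suivi soigneux des espaces fonctionnels en jeu. Le point le plus délicat est le terme de bord $\int_{\partial\Omega}k'(u)\cdot\dot u$. Il faut d'une part vérifier que $\dot u(\Omega)[\thetaa]$ (donc $\grad u\,\thetaa$) possède bien une trace dans $L^2(\partial\Omega)$, ce que garantissent les hypothèses qui placent $\dot u$ et $\grad u\,\thetaa$ dans $H^1(\Omega)$; d'autre part, pour que l'inégalité de trace fournisse une constante indépendante de $\thetaa$, il faut savoir que $\thetaa\mapsto\dot u(\Omega)[\thetaa]$ est continue \emph{à valeurs dans $H^1(\Omega)$} et pas seulement dans $L^2(\Omega)$, ce qui résulte de la continuité de $\thetaa\mapsto du(\Omega)[\thetaa]$, de la continuité $L^2$ de $\thetaa\mapsto\grad u\,\thetaa$ et du théorème du graphe fermé.
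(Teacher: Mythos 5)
Votre démonstration est correcte et suit exactement l'argument que le texte laisse implicite (le corollaire y est énoncé sans preuve comme conséquence immédiate de la proposition) : dans \eqref{DJMDer}, la dépendance en $\thetaa$ ne passe que par $\dot u(\Omega)[\thetaa]=du(\Omega)[\thetaa]+\grad u\,\thetaa$, $\divv\thetaa$ et $\divv_\Gamma\thetaa$, et la linéarité/continuité de $\thetaa\mapsto dJ(\Omega)[\thetaa]$ se déduit de celle de $\thetaa\mapsto du(\Omega)[\thetaa]$ jointe aux majorations issues de \eqref{Condjk}--\eqref{Condj'k'} et de la trace. Une seule précision : l'existence de $\dot u(\Omega)[\thetaa]$ dans $H^1(\Omega)$ pour toute direction relève des « hypothèses de la proposition précédente » et non de la dérivabilité de forme de $u$ (qui, comme dans l'exemple de l'élasticité, n'est en général donnée qu'à valeurs $\Ll^2$), ce qui rend d'ailleurs votre recours au théorème du graphe fermé pour récupérer la continuité à valeurs $H^1$ pertinent pour le terme de bord.
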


Nous allons maintenant voir à l'aide d'un exemple comment montrer que la solution d'une EDP est dérivable par rapport à la forme, et comment expliciter cette dérivée, afin d'obtenir une expression de $dJ$ qu'on pourra utiliser en pratique.

\subsection{L'exemple de l'élasticité linéaire}
\label{subsec:SDiffElas}

Puisque nous nous intéressons à l'optimisation de structures, l'équation d'état associée au problème \eqref{StructOPT} correspond dans notre cas à l'équation traduisant l'équilibre mécanique de la structure considérée. Avant d'étudier la dérivabilité de la solution de cette équation par rapport à la forme, nous rappelons brièvement le modèle de l'élasticité linéaire. Bien que relativement simple dans son écriture, ce modèle est bien adapté à la description du comportement de structures qui se déforment peu.

Pour mesurer la déformation d'un solide $\Omega$, on introduit le \textit{déplacement} $\uu(x)$, pour tout $x\in\overline{\Omega}$. Ce vecteur correspond à l'écart entre les positions du point considéré dans les configurations déformée et non déformée. À partir de cette quantité, on peut introduire le \textit{tenseur des déformations}:
$$
	\epsilonn(\uu) = \frac{1}{2}\left( \grad\uu + \grad\uu^T \right)\:.
$$  
Une autre objet central en mécanique des milieux continus est le \textit{tenseur des contraintes} de Cauchy $\sigmaa$. Il caractérise les efforts internes qui sont générés dans le matériau en présence de sollicitations. Dans le modèle de l'élasticité linéaire, on suppose que $\sigmaa$ dépend linéairement de $\epsilonn(\uu)$, de sorte que:
$$
    \sigmaa(\uu) = \Aa : \epsilonn(\uu)\:,
$$
où le tenseur $\Aa$ d'ordre 4 est appelé \textit{tenseur d'élasticité}. Si le matériau est de plus isotrope, alors la relation précédente prend la forme de la loi de Hooke:
$$
    \sigmaa(\uu) = 2\mu \epsilonn(\uu)+\lambda \divv\uu \,\Ii\:, 
$$
où $\lambda$ et $\mu$ sont les coefficients de Lamé du matériau, et $\Ii$ est le tenseur identité d'ordre 2. Souvent, on  exprime ces coefficients en fonction du module d'Young $E$ et du coefficient de Poisson $\nu$:
\begin{equation*}
    \lambda = \frac{E\nu}{(1+\nu)(1-2\nu)}\:, \hspace{1em} \mu = \frac{E}{2(1+\nu)}\:.
\end{equation*}
Considérons un corps $\Omega$ rempli d'un tel matériau. On suppose que $\Omega$ est encastré (déplacement nul) sur une partie de son bord $\Gamma_D$, qu'on lui applique une force de traction $\tauu$ sur une autre partie du bord $\Gamma_N$, et que le reste du bord $\Gamma=\partial\Omega \setminus(\Gamma_D\cup\Gamma_N)$ est libre de toute contrainte. Si de plus $\Omega$ est soumis à des forces volumiques externes $\ff$, alors les équations de l'équilibre mécanique s'écrivent:
\begin{equation}
    \left\{ \
    \begin{array}{rlr}
        - \Divv \sigmaa(\uu) &= \ff  &\mbox{ dans } \Omega, \\
      \uu                &= 0    &\mbox{ sur } \Gamma_D,  \\
        \sigmaa(\uu) \cdot \normalInt &= \tauu  &\mbox{ sur } \Gamma_N,   \\
        \sigmaa(\uu) \cdot \normalInt &= 0  &\mbox{ sur } \Gamma.
    \end{array}
    \right.
    \label{FFElas}
\end{equation} 
Introduisons maintenant le cadre mathématique approprié pour étudier ces équations. On suppose que $\Omega$ est à bord Lipschitz. On définit alors l'espace de Hilbert:
$$
    \Xx:= \Hh^1_{\Gamma_D}(\Omega) = \{\: \vv\in \Hh^1(\Omega) \ | \ \vv = 0 \mbox{ sur } \Gamma_D \:\}\:.
$$
Si $\Aa\in L^\infty(\Omega,\mathbb{T}^4)$, $\ff\in \Ll^2(\Omega)$ et $\tauu\in\Ll^2(\Gamma_N)$, alors la solution $\uu$ du système d'équations précédent vérifie:
\begin{equation}
    \int_\Omega \Aa:\epsilonn(\uu):\epsilonn(\vv) = \int_\Omega \ff\vv +\int_{\Gamma_N} \tauu \vv\:, \ \ \forall \vv \in \Xx.
    \label{FVElas0}
\end{equation}
En notant $a$ la forme bilinéaire associée au membre de gauche, et $L$ la forme linéaire associée au membre de droite, on peut réécrire \eqref{FVElas0} sous la forme plus compacte:
\begin{equation}
    a(\uu,\vv) = L(\vv)\:, \ \ \forall \vv \in \Xx.
    \label{FVElas}
\end{equation}
En plus des hypothèses précédentes, on suppose que $\Aa$ est symétrique et elliptique de constante d'ellipticité $\alpha_0$, i.e.$\!$ $\Aa$ vérifie
\begin{equation*}
	\begin{aligned}
		& \Aa_{ijkl} = \Aa_{klij} = \Aa_{jikl}\: \:\: \forall \: 1 \leq i, j, k, l \leq d\:, \\
		& \Aa:\varepsilonn:\varepsilonn \geq \alpha_0 \varepsilonn:\varepsilonn\: \:\: \forall \varepsilonn \mbox{ tenseur symétrique d'ordre 2 .} 
	\end{aligned}
\end{equation*}
Si de plus, $|\Gamma_D|>0$, alors on a l'inégalité de Korn, voir \cite{DuvLio1972}. La forme bilinéaire $a$ est donc coercive et continue sur $\Xx\times\Xx$, et comme $L$ est linéaire continue sur $\Xx$, le lemme de Lax-Milgram permet de conclure qu'il existe une unique solution $\uu\in\Xx$ au problème \eqref{FVElas}.

\begin{rmrk}
    Le résultat tient encore sous les hypothèses de régularité plus faibles: $\ff\in\Xx^*$ et $\tauu\in\Hh^{-\frac{1}{2}}(\Gamma_N)$. En effet, ces régularités suffisent à assurer la continuité de $L$. La seule précaution à prendre est de remplacer les intégrales à droite dans \eqref{FVElas0} par des crochets de dualité. 
\end{rmrk}

Il est possible de réécrire \eqref{FVElas} sous la forme (équivalente) d'un problème de minimisation:
\begin{equation}
    \inf_{\vv\in\Xx} \varphi(\vv) := \inf_{\vv\in\Xx} \ \frac{1}{2}a(\vv,\vv)-L(\vv)\:.
    \label{OPTElas}
\end{equation}
Cette formulation se prête mieux à l'interprétation physique: on cherche le champ de déplacements $\uu$ qui minimise l'énergie totale $\varphi$ du système mécanique.

\paragraph{Dérivabilité par rapport à la forme.}

On cherche ici à montrer que la solution $\uu$ de \eqref{FVElas} est dérivable par rapport à la forme. Il existe plusieurs façons de procéder. Nous en citons deux: la première est présentée dans \cite{sokolowski1992introduction} s'intéresse à la différentiabilité de Hadamard, et la deuxième dans \cite{henrot2006variation} s'intéresse à la différentiabilité de Fréchet. Avant de rentrer plus dans les détails, introduisons quelques notations.

\begin{notation}
    Soient $\thetaa\in\Cc^1_b(\mathbb{R}^d)$ et $t>0$, on note:
    \begin{equation*}
        \begin{aligned}
        	\Omega(\thetaa):= (\Id+\thetaa)\Omega\:, \qquad & \Omega(t):=(\Id+t\thetaa)\Omega\:, \\
            \uu_{\thetaa}:=\uu(\Omega(\thetaa))\:, \qquad & \uu^{\thetaa}:=\uu_{\thetaa}\circ(\Id+\thetaa)\:, \\
            \uu_t:=\uu(\Omega(t))\:, \qquad & \uu^t:=\uu_t\circ(\Id+t\thetaa)\:. \\
        \end{aligned}
    \end{equation*}
    Avec ces notations, on a:
    \begin{equation*}
        \begin{aligned}
        \grad \uu^{\thetaa} &= \left((\grad \uu_{\thetaa})\circ (\Id+\thetaa)\right) (\Ii+\grad \thetaa) \:, \\
        \grad \uu^t &= \left((\grad \uu_t)\circ (\Id+t\thetaa)\right) (\Ii+t\grad \thetaa) \:. 
        \end{aligned}
    \end{equation*}
\end{notation}

Dans l'approche proposée dans \cite{sokolowski1992introduction}, on commence par montrer l'existence de dérivées matérielles directionnelles. Pour ce faire, on fixe un $\thetaa$, puis on divise par $t$ la différence entre les formulations vérifiées par $\uu^t$ et $\uu$ pour faire apparaître le quotient $\frac{1}{t}(\uu^t-\uu)$. On passe ensuite à la limite $t\to 0$ ce qui donne la formulation vérifiée par $\dot{\uu}$. À partir de cette formulation, on peut conclure quant à l'existence et l'unicité de $\dot{\uu}$, ainsi qu'à sa linéarité/continuité par rapport à $\thetaa$. 

Dans l'approche proposée dans \cite{henrot2006variation}, on travaille avec $\Omega(\thetaa)$ plutôt qu'avec $\Omega(t)$. On écrit la formulation variationnelle vérifiée par $\uu^{\thetaa}$, puis, si tous les termes sont assez réguliers, on applique le théorème des fonctions implicites pour conclure que $\thetaa\mapsto \uu^{\thetaa}$ est (Fréchet) différentiable en $0$.

Dans tous les cas, pour pouvoir écrire les équations posées sur le domaine transporté, il faut que les données $\ff$, $\tauu$, $\Aa$ soient aussi définies en dehors de $\Omega$. De plus, comme on s'intéresse à des propriétés de dérivabilité, on a besoin d'avoir un peu plus de régularité sur les données pour que les preuves fonctionnent. En particulier, cela donne ici:

\begin{hyp}
    $\Omega$ est de classe $\pazocal{C}^1$, $\Aa\in \pazocal{C}^1_b(\mathbb{R}^d,\mathbb{T}^4),\:\, \ff \in \Hh^1(\mathbb{R}^d),\:\, \tauu \in \Hh^2(\mathbb{R}^d)$.
    \label{hyp:RegData}
\end{hyp}

Énonçons maintenant le résultat de dérivabilité pour le problème \eqref{FVElas}. Ensuite, puisque tous les termes sont réguliers, nous le démontrerons en suivant la deuxième approche.

\begin{prpstn}
    Sous l'hypothèse \ref{hyp:RegData}, la solution $\uu$ de \eqref{FVElas} est (fortement) dérivable par rapport à la forme de $\Cc^1_b(\mathbb{R}^d)$ dans $\Ll^2(\Omega)$.
\end{prpstn}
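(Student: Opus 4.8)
The plan is to follow the second of the two approaches recalled above, namely the one of \cite{henrot2006variation}: since the data are smooth enough under Assumption \ref{hyp:RegData}, we transport the state equation onto the fixed reference domain $\Omega$ and apply the implicit function theorem in the Banach space $\Cc^1_b(\mathbb{R}^d)$.

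First I would transport \eqref{FVElas0} from the perturbed domain $\Omega(\thetaa)=(\Id+\thetaa)\Omega$ back to $\Omega$. For $\norml\thetaa\normr_{1,\infty}$ small enough $(\Id+\thetaa)$ is a $\pazocal{C}^1$-diffeomorphism, the change of variables $y=(\Id+\thetaa)(x)$ maps $\Hh^1_{(\Id+\thetaa)(\Gamma_D)}(\Omega(\thetaa))$ isomorphically onto $\Xx$, and, using $(\grad\uu_\thetaa)\circ(\Id+\thetaa)=\grad\uu^\thetaa\,(\Ii+\grad\thetaa)^{-1}$, the Jacobian $\gamma(\thetaa):=\det(\Ii+\grad\thetaa)$ and the transformation formula for the surface measure (Nanson's formula), the pulled-back problem becomes: find $\uu^\thetaa\in\Xx$ such that
\[
a_\thetaa(\uu^\thetaa,\vv) = L_\thetaa(\vv) \qquad \forall\,\vv\in\Xx,
\]
where $a_\thetaa$ and $L_\thetaa$ are obtained from $a$ and $L$ by replacing $\Aa,\ff,\tauu$ by $\Aa\circ(\Id+\thetaa)$, $\ff\circ(\Id+\thetaa)$, $\tauu\circ(\Id+\thetaa)$, the strain $\epsilonn(\cdot)$ by the symmetric part of $\grad(\cdot)\,(\Ii+\grad\thetaa)^{-1}$, and by inserting the weights $\gamma(\thetaa)$ (volume integral) and $\gamma(\thetaa)\,|(\Ii+\grad\thetaa)^{-T}\normalInt|$ (boundary integral). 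For $\thetaa=0$ this reduces exactly to \eqref{FVElas}.

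Next I would introduce, on a neighbourhood $U$ of $0$ in $\Cc^1_b(\mathbb{R}^d)$, the map $\mathcal{F}:U\times\Xx\to\Xx^*$ defined by $\prodD{\mathcal{F}(\thetaa,\ww),\vv}{\Xx^*,\Xx}:=a_\thetaa(\ww,\vv)-L_\thetaa(\vv)$, so that $\mathcal{F}(\thetaa,\uu^\thetaa)=0$ and $\mathcal{F}(0,\uu)=0$, and check the two hypotheses of the implicit function theorem. The partial differential $\partial_\ww\mathcal{F}(0,\uu)\in\mathcal{L}_c(\Xx,\Xx^*)$ is the operator associated with the bilinear form $a$, which is coercive and continuous thanks to Korn's inequality and the ellipticity of $\Aa$, hence an isomorphism by Lax--Milgram. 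For the $\pazocal{C}^1$-regularity of $\mathcal{F}$ near $(0,\uu)$, the maps $\thetaa\mapsto\gamma(\thetaa)$, $\thetaa\mapsto(\Ii+\grad\thetaa)^{-1}$ and $\thetaa\mapsto\gamma(\thetaa)|(\Ii+\grad\thetaa)^{-T}\normalInt|$ are smooth from a neighbourhood of $0$ into $\pazocal{C}^0_b(\mathbb{R}^d)$ (they are analytic in $\grad\thetaa$), so the only genuinely delicate point is the differentiability of the composition (Nemytskii-type) operators $\thetaa\mapsto\Aa\circ(\Id+\thetaa)$, $\thetaa\mapsto\ff\circ(\Id+\thetaa)$ and $\thetaa\mapsto\tauu\circ(\Id+\thetaa)|_{\Gamma_N}$. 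This is precisely where Assumption \ref{hyp:RegData} enters: $\Aa\in\pazocal{C}^1_b$ makes the first one $\pazocal{C}^1$ into $\pazocal{C}^0_b$, $\ff\in\Hh^1(\mathbb{R}^d)$ makes the second one $\pazocal{C}^1$ into $\Ll^2(\Omega)$ with derivative $\grad\ff\,\thetaa$, and the extra regularity $\tauu\in\Hh^2(\mathbb{R}^d)$ — needed to keep control of the trace on the moving boundary — makes the third one $\pazocal{C}^1$ into $\Ll^2(\Gamma_N)$ with derivative $(\grad\tauu\,\thetaa)|_{\Gamma_N}$. I expect this Nemytskii differentiability to be the main technical obstacle; the rest is multilinear algebra and routine estimates.

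The implicit function theorem then yields a $\pazocal{C}^1$ map $\thetaa\mapsto\uu^\thetaa$ from a neighbourhood of $0$ in $\Cc^1_b(\mathbb{R}^d)$ into $\Xx$. Restricting to rays $t\mapsto t\thetaa$ shows that $t\mapsto\uu^t$ is differentiable at $t=0$ with a strong limit in $\Xx$, i.e.\ the strong material derivative $\dot\uu(\Omega)[\thetaa]=\tfrac{d}{dt}\uu^t\big|_{t=0}\in\Xx$ exists; moreover $\thetaa\mapsto\dot\uu(\Omega)[\thetaa]$ coincides with the bounded linear map $D(\thetaa\mapsto\uu^\thetaa)(0)$, hence is linear and continuous from $\Cc^1_b(\mathbb{R}^d)$ into $\Xx$. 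Since $\uu\in\Hh^1(\Omega)$ gives $\grad\uu\,\thetaa\in\Ll^2(\Omega)$, the shape derivative $d\uu(\Omega)[\thetaa]=\dot\uu(\Omega)[\thetaa]-\grad\uu\,\thetaa$ is well defined in $\Ll^2(\Omega)$ and $\thetaa\mapsto d\uu(\Omega)[\thetaa]$ is linear continuous from $\Cc^1_b(\mathbb{R}^d)$ into $\Ll^2(\Omega)$, which is exactly strong shape differentiability into $\Ll^2(\Omega)$. The loss of one derivative — the material derivative lies in $\Xx\subset\Hh^1(\Omega)$ but $\grad\uu$ is only $\Ll^2(\Omega)$ — is what forces the target space $\Ll^2(\Omega)$ in the statement.
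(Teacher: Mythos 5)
Your proposal is correct and follows essentially the same route as the paper: transport the variational formulation to the reference domain, set up $\mathcal{F}(\thetaa,\ww)=a_\thetaa(\ww,\cdot)-L_\thetaa(\cdot)$, verify its $\pazocal{C}^1$-dependence on $\thetaa$ (where Assumption \ref{hyp:RegData} enters, exactly for the composition operators you single out) and the invertibility of $\partial_\ww\mathcal{F}(0,\uu)$ via Lax--Milgram, then conclude by the implicit function theorem that $\thetaa\mapsto\uu^\thetaa$ is Fréchet differentiable at $0$, hence the material derivative is linear continuous into $\Xx$ and the shape derivative exists in $\Ll^2(\Omega)$. The only cosmetic difference is that you write the boundary Jacobian explicitly through Nanson's formula while the paper denotes it $\JacB(\thetaa)$ and quotes the corresponding lemmas of Henrot--Pierre.
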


\begin{proof}

Tout d'abord, on écrit la formulation variationnelle pour le problème posé sur $\Omega(\thetaa)$: trouver $\uu_{\thetaa}\in \Xx_{\thetaa}:=\Hh^1_{\Gamma_D(\thetaa)}(\Omega(\thetaa))$ tel que, pour tout $\vv_{\thetaa} \in \Xx_{\thetaa}$,
\begin{equation} 
    \int_{\Omega(\thetaa)} \Aa : \epsilonn(\uu_{\thetaa}) : \epsilonn(\vv_{\thetaa})  = \int_{\Omega(\thetaa)} \ff \: \vv_{\thetaa} + \int_{\Gamma_N(\thetaa)} \tauu \: \vv_{\thetaa} \:.
  \label{FVTElas}
\end{equation}
On peut ramener \eqref{FVTElas} sur le domaine de référence $\Omega$ par changement de variable. Soient $\JacV(\thetaa) := $Jac$(\Id+\thetaa)$ et $\JacB(\thetaa):=$Jac$_{\Gamma(\thetaa)}(\Id+\thetaa)$ les Jacobien et Jacobien tangentiel de la transformation. Plus généralement, la composition avec l'opérateur $\circl (\Id+\thetaa)$ sera notée simplement $(\thetaa)$. Par exemple, $\Aa(\thetaa):=\Aa\circl(\Id+\thetaa)$.
Munis de ces notations, nous pouvons procéder au changement de variable:
\begin{equation*}
  \begin{aligned}
    \int_{\Omega} \Aa(\thetaa) & : \frac{1}{2} \left( (\gradd \uu_{\thetaa})(\thetaa) + ({\gradd \uu_{\thetaa}}^T)(\thetaa)  \right)  : \frac{1}{2} \left( (\gradd \vv_{\thetaa})(\thetaa) + ({\gradd \vv_{\thetaa}}^T)(\thetaa)  \right) \JacV(\thetaa) \\
    \: & \hspace{3em} = \int_{\Omega} \ff(\thetaa) \: \vv_{\thetaa}(\thetaa) \: \JacV(\thetaa) + \int_{\Gamma_{N}} \tauu(\thetaa) \: \vv_{\thetaa}(\thetaa) \: \JacB(\thetaa) \:.
     \end{aligned}
\end{equation*}
Comme $(\Id+\thetaa)$ est un $\pazocal{C}^1$-difféomorphisme, $\vv_{\thetaa} \mapsto \vv_{\thetaa}(\thetaa)$ est un isomorphisme de $\Xx_{\thetaa}$ dans $\Xx$. De fait, puisque \eqref{FVTElas} est valable pour tout $\vv_{\thetaa} \in \Xx_{\thetaa}$, il vient, pour tout $\vv \in \Xx$,
\begin{equation}
    \begin{aligned}
        \int_{\Omega} \Aa(\thetaa) & : \frac{1}{2} \left( \gradd\uu^{\thetaa} {(\Ii+\gradd\thetaa)}^{-1}  + {(\Ii+\gradd\thetaa^T)}^{-1} {\gradd \uu^{\thetaa}}^T \right) \\
        \: & : \frac{1}{2} \left( \gradd \vv{(\Ii+\gradd\thetaa)}^{-1} + {(\Ii+\gradd\thetaa^T)}^{-1}{\gradd \vv}^T  \right) \JacV(\thetaa) \\ 
        \: & \hspace{5em}- \int_{\Omega} \ff(\thetaa) \: \vv \: \JacV(\thetaa) - \int_{\Gamma_{N}} \tauu(\thetaa) \: \vv \: \JacB(\thetaa) = 0 \:.
        \end{aligned}
  \label{FVTRElas}
\end{equation}

On définit un opérateur linéaire continu $\pazocal{A}_{\thetaa} : \Xx \rightarrow \Xx^*$, et deux éléments $\pazocal{B}_{\thetaa}$, $\pazocal{C}_{\thetaa} \in \Xx^*$ tels que: pour tous $\ww$, $\vv\in \Xx$,
\begin{equation*}
    \begin{aligned}
        \prodD{\pazocal{A}_{\thetaa} \ww , \vv }{} := \int_{\Omega} \Aa(\thetaa) & : \frac{1}{2} \left( \gradd\ww {(\Ii+\gradd\thetaa)}^{-1}  + {(\Ii+\gradd\thetaa^T)}^{-1} {\gradd \ww}^T \right) \\
        \: & : \frac{1}{2} \left( \gradd \vv{(\Ii+\gradd\thetaa)}^{-1} + {(\Ii+\gradd\thetaa^T)}^{-1}{\gradd \vv}^T  \right) \JacV(\thetaa) \:, \\
        \prodD{\pazocal{B}_{\thetaa}, \vv}{} := \int_{\Omega} \ff(\thetaa)& \: \vv \: \JacV(\thetaa) \:, \\
        \prodD{\pazocal{C}_{\thetaa}, \vv}{} := \int_{\Gamma_{N}} \tauu(\thetaa&) \: \vv \: \JacB(\thetaa) \:.
        \end{aligned}
\end{equation*}

Maintenant, afin de prouver la différentiabilité de l'application $\Phi:\thetaa \in \Cc^1_b(\mathbb{R}^d) \mapsto \uu^{\thetaa} \in \Xx$ (i.e.$\!$ la dérivabilité matérielle de $\uu$ au sens de Fréchet), on introduit l'opérateur suivant:
 \begin{eqnarray*}
   \pazocal{F} : \: \:  \Cc^1_b(\mathbb{R}^d) \times \Xx & \longrightarrow & \Xx^* \\
   (\thetaa,\ww) & \longmapsto & \pazocal{A}_{\thetaa} \ww - \pazocal{B}_{\thetaa} - \pazocal{C}_{\thetaa}
 \end{eqnarray*}
 D'après les notations précédentes, la formulation \eqref{FVTRElas} peut être réécrite: trouver $\uu^{\thetaa} \in \Xx$ tel que,
 \begin{equation}
  \prodD{\pazocal{F} (\thetaa, \uu^{\thetaa}), \vv} \:= 0, \hspace{2em} \forall \vv \in \Xx 
  \label{FVTR1}
\end{equation}
Comme prévu, la différentiabilité de $\Phi$ peut être démontrée en appliquant le théorème des fonctions implicites à $\pazocal{F}$. Et donc, $\pazocal{F}$ doit satisfaire les hypothèses du théorème.

\subsubsection*{$\pazocal{F}$ est $\pazocal{C}^1$ par rapport à $\thetaa$.} Nous allons le vérifier pour chaque terme.
\subsubsection*{$\bullet\,\thetaa \in \Cc^1_b \mapsto \pazocal{A}_{\thetaa} \ww \in \Xx^*$.}
D'après les sections 5.3 et 5.4 de \cite{henrot2006variation}, pour $\thetaa \in \Cc^1_b$ les applications 
$$\thetaa \mapsto \JacV(\thetaa) \in L^\infty(\Omega),\: \thetaa \mapsto \JacB(\thetaa) \in \pazocal{C}^0(\partial\Omega)$$
$$\thetaa \mapsto (\Ii+\gradd\thetaa),\; (\Ii+\gradd\thetaa^T),\; {(\Ii+\gradd\thetaa)}^{-1},\; {(\Ii+\gradd\thetaa^T)}^{-1} \in L^\infty(\Omega, \pazocal{M}_{d\times d})$$ où $\pazocal{M}_{d \times d}$ représente l'ensemble des matrices $d \times d$ à coefficients réels, sont toutes de classe $\pazocal{C}^\infty$. 
De plus, pour $\Aa \in \pazocal{C}^1_b(\mathbb{R}^d, \mathbb{T}^4)$, $\thetaa \mapsto \Aa(\thetaa) \in L^\infty(\Omega, \mathbb{T}^4)$ est de classe $\pazocal{C}^1$. Par conséquent, pour tout $\ww \in \Xx$, l'application $\thetaa \in \pazocal{C}^1_b \mapsto \pazocal{A}_{\thetaa} \ww \in \Xx^*$ est de classe $\pazocal{C}^1$.
\subsubsection*{$\bullet\,\thetaa \in \Cc^1_b \mapsto \pazocal{B}_{\thetaa},\,\pazocal{C}_{\thetaa} \in \Xx^*$.}
Toujours d'après \cite{henrot2006variation} (Lemmes 5.3.3 et 5.3.9), pour $\ff \in \Hh^1(\mathbb{R}^d)$ et $\tauu \in \Hh^2(\mathbb{R}^d)$, les applications $\thetaa \in \Cc^1_b \mapsto \pazocal{B}_{\thetaa} \in \Xx^*$ et $\thetaa \in \Cc^1_b \mapsto \pazocal{C}_{\thetaa} \in \Xx^*$ sont de classe $\pazocal{C}^1$.

On a donc bien que pour tout $\ww \in \Xx$, $\thetaa \mapsto \pazocal{F}(\thetaa,\ww)$ est $\pazocal{C}^1$ sur $\Cc^1_b(\mathbb{R}^d)$, et a fortiori autour de zéro.

\subsubsection*{$D_{\ww} \pazocal{F}(0,\uu)$ est un isomorphisme.} 
Puisque tout est linéaire, il est facile d'exprimer la différentielle de $\pazocal{F}$ par rapport à $\ww$: pour tous $\ww$, $\vv \in \Xx$,
\begin{equation*}
  \prodD{D_{\ww} \pazocal{F}(0,\uu) \ww, \vv} \:=\: a(\ww,\vv)\:.
\end{equation*}
Il est donc clair que $D_{\ww} \pazocal{F}(0,\uu)$ un isomorphisme de $\Xx$ dans $\Xx^*$.

Toutes les hypothèses du théorème des fonctions implicites sont donc satisfaites. On en déduit l'existence d'une fonction $\thetaa \mapsto \zz(\thetaa) \in \Xx$ de classe $\pazocal{C}^1$ autour de zéro, et telle que $\pazocal{F}(\thetaa,\zz(\thetaa)) = 0$ dans $\Xx^*$. Par unicité de la solution à \eqref{FVTRElas}, on en conclut que $\zz(\thetaa) = \uu^{\thetaa}$. Par conséquent, $\thetaa \mapsto \uu^{\thetaa}$ est Fréchet différentiable en zéro par rapport à $\thetaa$. Ceci implique que $\thetaa \mapsto \dot{\uu}(\Omega)[\thetaa]$ est linéaire continue de $\Cc^1_b(\mathbb{R}^d)$ dans $\Xx$, et la définition de la dérivée de forme permet de conclure.

\end{proof}

Contrairement à l'approche proposée dans \cite{sokolowski1992introduction}, cette démonstration ne donne pas la formulation variationnelle vérifiée par la dérivée matérielle. Cependant, maintenant que nous savons que $\uu$ est dérivable par rapport à la forme, nous pouvons dériver directement \eqref{FVElas} terme à terme.

\paragraph{Formulation variationnelle vérifiée par $\dot{\uu}$.}
On se fixe une direction $\thetaa$, puis pour $t>0$, on s'intéresse à la formulation vérfiée par $\uu^t$, qui n'est rien d'autre que la formulation \eqref{FVTRElas} où on a remplacé les dépendances en $\thetaa$ par des dépendances en $t$. Il suffit ensuite de calculer la dérivée $\frac{\partial}{\partial t} |_{t=0}$ pour cette formulation.

En combinant les résultats de \cite{sokolowski1992introduction} (voir p.135 à 141) et de \cite{henrot2006variation} (théorème 5.2.2 et corollaire 5.2.5), on est en mesure de dériver chacun des termes de \eqref{FVTRElas}. Avant cela, nous avons besoin d'introduire la forme bilinéaire $a'$ et la forme linéaire $\epsilonn'$ telles que, pour tous $\ww$, $\vv \in \Xx$,
\begin{equation*}
    \begin{aligned} 
        &a'(\ww,\vv) := \int_\Omega \big\{ \Aa:\epsilonn'(\ww):\epsilonn(\vv) + \Aa:\epsilon(\ww):\epsilonn'(\vv) 
        \\ &\hspace{0.25\textwidth} 
        + (\divv \thetaa \: \Aa + \gradd \Aa \: \thetaa):\epsilonn(\ww):\epsilonn(\vv) \big\} \:, \\
        &\epsilonn'(\vv) := -\frac{1}{2}\left( \gradd \vv \gradd \thetaa + {\gradd \thetaa}^T {\gradd \vv}^T \right).
    \end{aligned}
\end{equation*}
Avec ces notations, les dérivées de chaque terme donnent:
\begin{itemize}[leftmargin=*]
  \item $\left. \dfrac{\partial}{\partial t} \prodD{\pazocal{A}_{t\thetaa}\uu^t,\vv}{} \right|_{t=0} = a(\dot{\uu},\vv)+a'(\uu,\vv)$ .
  
  \item $\displaystyle \left. \frac{\partial}{\partial t} \prodD{\pazocal{B}_{t\thetaa},\vv}{} \right|_{t=0} = \int_\Omega (\divv \thetaa \: \ff + \gradd \ff \: \thetaa) \: \vv$ .
  
  \item $\displaystyle \left. \frac{\partial}{\partial t} \prodD{\pazocal{C}_{t\thetaa},\vv}{} \right|_{t=0} = \int_{\Gamma_N} (\divv_\Gamma \thetaa \: \tauu+ \gradd \tauu \: \thetaa) \: \vv$ .
\end{itemize}
On obtient donc la formulation vérifiée par $\dot{\uu}$:
\begin{equation}
        a(\dot{\uu},\vv) = L[\thetaa](\vv) \:, \hspace{1em} \forall \vv \in \Xx ,
    \label{FVElasMDer}
\end{equation}
où la forme linéaire $L[\thetaa]$ est définie par, pour tout $\vv\in\Xx$:
\begin{equation*}
    L[\thetaa](\vv) := \int_\Omega (\divv \thetaa \: \ff + \gradd \ff  \thetaa) \vv + \int_{\Gamma_N} (\divv_\Gamma \thetaa \: \tauu+ \gradd \tauu \thetaa)\vv - \:a'(\uu,\vv) \:.
\end{equation*}

\paragraph{Calcul de la dérivée d'un critère générique}

On considère toujours un critère $J$ de la forme \eqref{ExpressionJ}, avec évidemment $j$, $k:\mathbb{R}^d\to \mathbb{R}$ car l'inconnue variationnelle $\uu$ est vectorielle ici. Soit $\thetaa$ une direction fixée. Puisque $\uu$ est dérivable par rapport à la forme, elle l'est en particulier dans la direction $\thetaa$, et on a d'après \eqref{DJMDer}:
$$
    dJ(\Omega)[\thetaa] = \int_{\Omega} j'(\uu)\cdot \dot{\uu} + j(\uu)\divv\thetaa + \int_{\partial\Omega} k'(\uu)\cdot \dot{\uu} + k(\uu)\divv_\Gamma\thetaa \:.
$$
Afin de pouvoir utiliser cette expression pour trouver des directions de descente, on voudrait que la dépendance en $\thetaa$ soit complètement explicite. Autrement dit, comme on ne connaît pas la dépendance de $\dot{\uu}$ en $\thetaa$, on aimerait se débarrasser des termes dans lesquels cette fonction apparaît. Pour cela, on utilise une méthode bien connue provenant du contrôle optimal: la \textit{méthode de l'adjoint}. Nous renvoyons le lecteur à \cite{hinze2008optimization} pour une introduction au contrôle optimal et à la notion d'état adjoint.
Dans le cas présent, compte tenu de la formulation \eqref{FVElasMDer} et des termes dont nous souhaitons nous débarrasser, on introduit l'état adjoint $\pp\in\Xx$ (même espace que $\dot{\uu}$) solution de:
\begin{equation}
    a(\pp,\ww) = -\int_\Omega j'(\uu)\cdot\ww - \int_{\Gamma_N} k'(\uu)\cdot\ww\:, \qquad \forall \ww \in\Xx\:.
    \label{FVElasAdj}
\end{equation}
D'après les hypothèses sur $j$ et $k$, on sait que le membre de droite constitue une forme linéaire continue sur $\Xx$. Le lemme de Lax-Milgram permet donc de conclure que $\pp$ existe et est unique. 

\begin{rmrk}
    Il est important de noter que $\pp$ ne dépend pas de $\thetaa$. Le calcul supplémentaire de $\pp$ est donc intéressant s'il nous permet de nous affranchir de $\dot{\uu}$.
\end{rmrk}

On peut prendre $\ww=\dot{\uu}$ comme fonction-test dans \eqref{FVElasAdj}, et réécrire les deux termes dont on souhaite se débarasser:
$$
    \int_{\Omega} j'(\uu)\cdot \dot{\uu}  + \int_{\partial\Omega} k'(\uu)\cdot \dot{\uu} = -a(\pp,\dot{\uu})\:.
$$
Puis, en utilisant la symétrie de $a$ et en prenant $\vv=\pp$ comme fonction-test dans \eqref{FVElasMDer}, il vient:
$$
    \int_{\Omega} j'(\uu)\cdot \dot{\uu}  + \int_{\partial\Omega} k'(\uu)\cdot \dot{\uu} = -L[\thetaa](\pp)\:.
$$
On en déduit la nouvelle expression de $dJ$, dans laquelle toutes les dépendances en $\thetaa$ sont explicites:
\begin{equation}
    dJ(\Omega)[\thetaa] = -L[\thetaa](\pp) + \int_{\Omega} j(\uu)\divv\thetaa + \int_{\partial\Omega} k(\uu)\divv_\Gamma\thetaa \:.
    \label{DJ0Vol}
\end{equation}
Si de plus on suppose que $\Omega$ est de classe $\pazocal{C}^2$ et que $\uu$, $\pp\in\Hh^2(\Omega)\cap\Xx$, alors on est capable d'obtenir une formule de $dJ$ sous la forme d'une intégrale de surface qui ne dépend que de la composante normale de $\thetaa$ (voir par exemple \cite{sokolowski1992introduction} pour le détail des calculs):
\begin{equation}
    dJ(\Omega)[\thetaa] = \int_{\partial\Omega} \mathfrak{g}(\thetaa\cdot\normalInt) \:,
    \label{DJ0Surf}
\end{equation}
où la fonction $\mathfrak{g}\in L^1(\partial\Omega)$ dépend de $\uu$, $\pp$ et des données:
\begin{equation}
    \mathfrak{g} = j(\uu)+\Aa:\epsilonn(\uu):\epsilonn(\pp)-\ff\pp + \chi_{\Gamma_N}(\kappa+\partial_{\normalInt})\left( k(\uu) - \tauu\pp\right)\:.
    \label{ExpStructdJ}
\end{equation}

\begin{rmrk}
    Dans le cas présent, où on dispose d'une régularité additionnelle, on est capable de donner une expression explicite de la distribution $\mathfrak{g}$ du théorème de structure.
\end{rmrk}

Maintenant que nous disposons de tous les outils permettant de construire une direction de descente, nous pouvons présenter dans les grandes lignes les étapes de l'algorithme type d'optimisation de formes par la méthode de level-set.

\section{Les étapes de l'algorithme}
\label{sec:algoODF}

Puisqu'on dispose des dérivées du critère à minimiser, on choisit un algorithme de gradient. Partant d'un domaine $\Omega^0$, on génère une suite de domaines $\{ \Omega^l\}_l\subset D$, où $D$ représente un domaine de calcul suffisament grand, de sorte $\{ J(\Omega^l)\}_l$ soit décroissante. Nous décrivons brièvement les quatre étapes d'une itération $l$ de l'algorithme, en nous appuyant à chaque fois sur l'exemple de l'élasticité linéaire afin d'illustrer notre propos.

\paragraph{Étape 1.} Résolution de l'équation d'état posée sur $\Omega^l$.

Dans le cas de l'élasticité linéaire, cela revient à déterminer $\uu(\Omega^l)$, la solution de \eqref{FVElas} posée sur $\Omega^l$.

\paragraph{Étape 2.} Résolution de la formulation adjointe posée sur $\Omega^l$.

À partir des données du problème et du critère $J$ considéré, on est capable de définir la formulation adjointe, puis de déterminer sa solution. Dans l'exemple, on cherche la solution $\pp(\Omega^l)$ de \eqref{FVElasAdj} posée sur $\Omega^l$.

\paragraph{Étape 3.} Calcul d'une direction de descente $\thetaa^l$.

À partir de l'expression explicite dont on dispose pour $dJ(\Omega^l)[\cdot]$ grâce à $\uu(\Omega^l)$ et $\pp(\Omega^l)$, voir \eqref{DJ0Surf} et \eqref{ExpStructdJ}, on peut choisir une direction de descente $\thetaa^l$, c'est-à-dire un champ de vecteurs vérifiant $dJ(\Omega^l)[\thetaa^l]<0$ (voir détails plus loin). Le théorème de structure (Théorème \ref{ThmStruct}) implique que seule la composante normale du champ de vecteurs a une influence sur la valeur de la dérivée de $J$ dans cette direction. Ceci suggère de chercher des $\thetaa^l$ dirigés selon la normale, i.e.$\!$ $\thetaa^l=\theta^l \normalInt^l$.

\paragraph{Étape 4.} Évolution du domaine $\Omega^l \to \Omega^{l+1}$.

On déforme le domaine $\Omega^l$ à partir de la direction de descente trouvée précédemment. En pratique plutôt que de calculer $\Omega^{l+1}=(\Id+t^l\thetaa^l)\Omega^l$ pour un certain pas $t^l>0$, on utilise la méthode de level-set pour advecter la frontière de $\Omega^l$. Plus précisément, si $\phi^l$ désigne la fonction level-set associée à $\Omega^l$, on transporte $\phi^l$ sur un intervalle de temps $[0,T^l]$, $T^l>0$.
La résolution de \eqref{LSAdvEquation} avec $\thetaa=\thetaa^l$, ou de \eqref{HJEquation} avec $\theta = \theta^l$, nous donne $\phi^{l+1}$, la fonction level-set associée à $\Omega^{l+1}$.
L'inconvénient d'une telle approche est qu'elle exige la connaissance de $\phi^l$ et $\thetaa^l$ (ou $\theta^l$) sur tout $D$, et qu'elle produit aussi $\phi^{l+1}$ sur tout $D$.

\begin{rmrk}
    Évidemment, un simple algorithme de gradient permet au mieux de trouver un minimum local. Cependant, en optimisation de formes, il est assez rare d'avoir existence d'une forme optimale. Nous renvoyons le lecteur à \cite[Chapitre 4]{henrot2006variation} ou à \cite[Section 6.2]{allaire2007conception} pour des discussions à ce sujet. De plus, dans les applications pratiques, et notamment dans l'industrie, il est fréquent de vouloir trouver un minimum local autour d'un domaine $\Omega^0$ donné.
\end{rmrk}

\paragraph{Relèvement d'une direction de descente.}

Supposons que nous sommes dans le cas où tout est suffisament régulier pour que la dérivée de forme de $J$ dans une direction $\thetaa$ donnée s'exprime sous une forme du type de \eqref{DJ0Surf}:
$$
    dJ(\Omega)[\thetaa] = \int_{\partial\Omega} \mathfrak{g}(\thetaa\cdot\normalInt)\:.
$$
La première intuition pour trouver une direction de descente est de prendre $\thetaa$ tel que $\thetaa = -\mathfrak{g}\normalInt$ sur $\partial\Omega$. Comme nous avons besoin d'un champ $\thetaa$ défini partout dans $D$ pour pouvoir advecter la level-set, il faut procéder à un relèvement. Si on choisit par exemple le relèvement harmonique, on obtient $\thetaa$ tel que:
\begin{equation}
    \left\{ \
    \begin{array}{cc}
         -\Delta\!\thetaa = 0 &  \mbox{ dans } D \:,\\
         \thetaa = -\mathfrak{g}\normalInt & \mbox{ sur } \partial\Omega\:.
    \end{array}
    \right.
    \label{RelevementDir}
\end{equation}

Bien que ce choix assure une direction de descente, il n'est pas très satisfaisant du point de vue de la régularité. D'une part, comme nous l'avons vu (cf \eqref{ExpStructdJ}), la fonction $\mathfrak{g}$ n'est en général pas très régulière, et donc $\thetaa$ ne le sera pas non plus sur le bord (qui est précisément l'endroit qui nous intéresse).  D'autre part, le fait d'imposer une condition de Dirichlet à l'intérieur du domaine $D$ peut générer des singularités dans la direction normale à l'interface $\partial\Omega$ (qui est précisément la direction qui nous intéresse). Plus précisément, pour que \eqref{RelevementDir} admette une solution, il faut déjà que $\mathfrak{g}\in H^{\frac{1}{2}}(\partial\Omega)$. Dans ce cas, on aura alors un unique $\thetaa \in \Hh^1(\Omega)\cap \Hh^1(D\setminus\Omega)$ solution, voir par exemple \cite{brezis2010functional}. On sait d'après la condition de Dirichlet que $\thetaa$ sera continu à l'interface, mais on ne peut rien dire de sa dérivée normale à cet endroit.

Un autre point de vue consiste à choisir un espace de Hilbert $\Hh$, une forme bilinéaire $b$ définie positive sur $\Hh$, puis définir $\thetaa$ comme l'unique solution de:
$$
    b(\thetaa,\vv) = -dJ(\Omega)[\vv]\:, \ \ \forall \vv\in \Hh\:.
$$
On obtient alors $dJ(\Omega)[\thetaa]=-b(\thetaa,\thetaa)$, et la valeur de $\thetaa$ sur $\partial\Omega$ n'est cette fois pas connue.
Un tel choix nous permet d'éviter les difficultés mentionnées plus haut, et d'obtenir une direction de descente $\thetaa$ plus régulière. Nous renvoyons à \cite{de2006velocity} pour une discussion plus détaillée à ce sujet.

\begin{rmrk}
    En pratique, on prend souvent $\Hh=\Hh^1(D)$ et $b$ comme le produit scalaire $\Hh^1$, éventuellement pondéré par des coefficients strictement positifs:
    $$
        \beta_0 \int_D \grad\thetaa \grad \vv + \beta_1 \int_D \thetaa \vv = -dJ(\Omega)[\vv] \:,
    $$
    où $\beta_0$ et $\beta_1$ sont des réels strictement positifs. Ce qui se réécrit sous la formulation forte:
\begin{equation}
    \left\{ \
    \begin{array}{cc}
         -\beta_0 \Delta\!\thetaa + \beta_1\thetaa = 0 &  \mbox{ dans } D \:,\\
         \dfrac{\partial\thetaa}{\partial\normalInt} = -\mathfrak{g}\normalInt & \mbox{ sur } \partial\Omega\:.
    \end{array}
    \right.
    \label{RelevementNeu}
\end{equation}      
    Cette fois, on sait d'après le lemme de Lax-Milgram qu'il existe une unique solution $\thetaa \in \Hh^1(D)$ dès que $\mathfrak{g}\in H^{-\frac{1}{2}}(\partial\Omega)$.
\end{rmrk}

\chapter{Mécanique des solides en contact}
\label{chap:1.2}

\section*{Introduction}

Nous avons présenté dans l'exemple du chapitre précédent le modèle de l'élasticité linéaire, qui permet de décrire la déformation d'un corps lorsque ce dernier est soumis à des efforts externes (surfaciques ou volumiques). Ici, on considère non plus un seul corps mais deux corps, chacun soumis à des efforts externes. On souhaite prendre en compte la possibilité que les deux corps se touchent dans la configuration déformée. Autrement dit, on veut traduire le fait que les déformations que subissent ces deux corps les font entrer en \textit{contact} l'un avec l'autre. Nous nous limiterons dans cette thèse au cas d'un corps déformable $\Omega$ (composé d'un matériau isotrope linéaire élastique) en contact avec un corps rigide (qui ne se déforme pas). On parle dans ce cas de contact \textit{rigide-déformable}. Comme souvent pour le contact en élasticité linéaire, nous supposerons de plus que nous sommes dans le contexte des \textit{petits déplacements}.

Lorsqu'on étudie le contact entre deux solides, on veut s'assurer de prendre en compte deux phénomènes: d'abord le fait que les deux objets ne puissent pas s'interpénétrer, et ensuite le fait que le contact puisse générer du frottement entre les deux solides. Si on pense à l'exemple d'un pied en contact avec un sol incliné, la force de réaction exercée par le sol sur le pied fait en sorte que le pied ne traverse pas le sol (pas d'interpénétration). Et de plus, si le pied peut rester immobile au lieu de glisser vers le bas, c'est grâce à l'effet des forces de frottement.

Les problèmes de contact présentent des difficultés tant sur le plan théorique que numérique à cause des non-linéarités qu'ils induisent. Plus précisément, même dans le cas le plus simple du contact rigide-déformable glissant (i.e.$\!$ sans frottement), on ne connaît pas a priori la partie de $\partial\Omega$ qui sera en contact avec le corps rigide. En particulier, on a besoin de connaître la déformation de $\Omega$ pour pouvoir déterminer les efforts de réaction du corps rigide. Or ces efforts vont eux aussi avoir un impact sur la déformation de $\Omega$.

Dans ce chapitre, nous commençons par présenter les modèles qui décrivent les conditions de contact, puis nous faisons une brève analyse de la formulation mathématique obtenue (dans le cas glissant, et dans le cas frottant). Ensuite, nous introduisons différentes formulations approchées qui permettent de contourner les irrégularités de la formulation initiale; l'objectif étant de faciliter à la fois la résolution numérique, mais aussi l'application à l'optimisation de formes.

\section{Concepts de base et formulation}

Comme mentionné, on considère le cas du contact rigide-déformable en élasticité linéaire. On note $\Omega$ le corps déformable, et $\Omega_{rig}$ le corps rigide. Pour éviter les difficultés techniques, nous supposons suffisamment de régularité sur ces deux domaines.

\begin{hyp}
    $\Omega$ est un ouvert borné à bord $\pazocal{C}^1$, et $\Omega_{rig}$ est un ouvert borné à bord $\pazocal{C}^3$.
    \label{hyp:RegBord}
\end{hyp}

La frontière $\partial\Omega$ de $\Omega$ est composée de parties sur lesquelles portent des conditions de Neumann et de Dirichlet (homogène), $\Gamma_N$ et $\Gamma_D$ respectivement, et de plus on introduit $\Gamma_C$: la zone de contact potentiel. C'est sur cette partie de la frontière qu'on impose les conditions aux limites de contact. Comme dans le cas de l'élasticité sans contact, on note $\Gamma=\partial\Omega\setminus(\Gamma_D\cup\Gamma_N\cup\Gamma_C)$ le reste du bord, qui est supposé libre de contrainte (i.e. $\!$ condition de Neumann homogène). Pour des questions de régularité, on suppose que $\overline{\Gamma_C}\cap\overline{\Gamma_D}=\emptyset$. Comme précédemment, $\normalInt$ désigne la normale sortante à $\partial\Omega$, et on introduit également $\normalExt$, la normale entrante à $\partial\Omega_{rig}$. La solution $\uu$ de ce problème vérifie donc le système d'élasticité \eqref{FFElas}, associé des conditions aux limites appropriées sur $\Gamma_C$. Voyons maintenant comment s'expriment ces conditions aux limites.

\subsection{Contact glissant}

Dans le modèle de contact glissant, on néglige les phénomènes de frottement pour se concentrer uniquement sur la \textit{condition de non-pénétration}, aussi appelée \textit{contrainte unilatérale}. Bien que ce problème ait déjà été considéré au dix-neuvième siècle par Hertz, c'est Signorini qui en donne la première formulation mathématique dans les années 1930. La preuve d'existence et unicité de la solution de cette formulation n'est obtenue qu'une trentaine d'années plus tard par un étudiant de Signorini, Fichera, dans \cite{fichera1963sul}.

\begin{figure}
\begin{center}
\begin{tikzpicture}

\draw [black] plot [smooth, tension=0.8] 
coordinates {(3,0) (4,1) (3,2.5) (1,3) (-0.5,1.5) (0,0.25) (1,0)};

\draw[black] (-1,0) -- (5,0);

\draw[black] (2.95,2.45) -- (3.05,2.55);
\draw[black] (.95,2.9) -- (0.90,3.05);
\draw[black] (-0.45,1.45) -- (-0.6,1.5);
\draw[black] (3.9,1) -- (4.1,1);
\draw[black] (-0.05,0.2) -- (.05,0.35);

\node[] at (4,2.25) {$\Gamma_D$};
\node[] at (-.75,2.25) {$\Gamma_N$};
\node[] at (1.5,0.3) {$\Gamma_C$};
\node[] at (1.65,1.75) {$\Omega$};
\draw[->] (3.75, 0.5) -- (4.1,0.15) ;
\node[] at (3.5, 0.75) {$x$};
\node[] at (4.8, 0.6) {$\normalInt(x)$};
\draw[red, densely dashed] (3.75, 0.5) -- (3.75,0) ;
\node[red] at (3.75,0) {\tiny{$\bullet$}};
\node[red] at (3.1,0.3) {$\gG_{\normalExt}(x)$};
\draw[->] (3.75,0) -- (3.75, -.5) ;
\node[] at (4.5, -0.5) {$\normalExt(x)$};
\node[] at (0.75,-0.75) {$\Omega_{rig}$};

\end{tikzpicture}
\end{center}
  \label{SchContact}
  \caption{Schéma du contact entre les corps $\Omega$ et $\Omega_{rig}$.}
\end{figure}
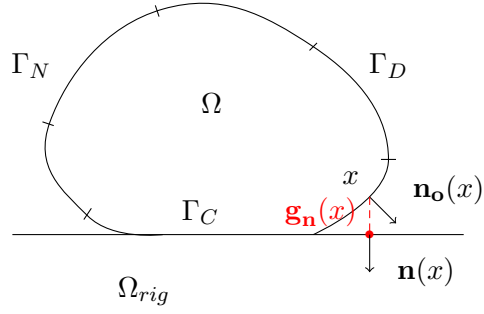

\paragraph{Fonction gap.}
Une quantité cruciale dans l'étude des problèmes de contact est la distance entre $\Omega$ et $\Omega_{rig}$. On appelle \textit{gap} (écart en français) cette quantité. On peut faire le choix de calculer cette distance dans la direction de la normale au corps rigide $\normalExt$ ou bien dans celle de la normale au corps déformable $\normalInt$. Nous choisissons ici $\normalExt$. Nous reviendrons plus tard sur l'impact de ce choix. Dans la configuration non déformée, le gap est donné par une fonction $\gG_{\normalExt}$ qui dépend uniquement des géométries respectives de $\Omega$ et $\Omega_{rig}$. En revanche, dans la configuration déformée, comme chaque point $x$ subit un déplacement $\uu(x)$, le gap devient $\gG_{\normalExt}-\uu_{\normalExt}$ si on considère de petits déplacements, voir \cite[Chapitre 2]{KikOde1988}.
Puisque $\Omega$ et $\Omega_{rig}$ sont suffisament réguliers (cf hypothèse \ref{hyp:RegBord}), on peut exprimer ces quantités géométriques à partir de la fonction distance orientée à $\partial\Omega_{rig}$, notée $b_{\Omega_{rig}}$. On suit le formalisme de \cite{DelZol2001}, ce qui donne pour la définition de cette fonction:
$$
	\gG_{\normalExt}(x) = b_{\Omega_{rig}}(x):= d_{\Omega_{rig}}(x) - d_{\mathbb{R}^d\setminus \Omega_{rig}}(x),  \: \forall x\in \mathbb{R}^d\:.
$$
On introduit ensuite la projection sur $\partial\Omega_{rig}$, notée $\Proj_{\partial\Omega_{rig}}$:
$$
	\Proj_{\partial\Omega_{rig}}(x)= x - b_{\Omega_{rig}}(x)\gradd b_{\Omega_{rig}}(x), \: \forall x\in \mathbb{R}^d\:.
$$
Puisque $\partial\Omega_{rig}$ est de classe $\pazocal{C}^3$ et compact, on sait d'après \cite{delfour1994shape} qu'il existe $h_0$ suffisamment petit tel que, pour tout $h\leq h_0$, l'ouvert
\begin{equation*}
    \partial\Omega_{rig}^h := \{ x\in\mathbb{R}^d \ | \ \ |b_{\Omega_{rig}}(x)| < h \}  
\end{equation*}
constitue un voisinage de $\partial\Omega_{rig}$ sur lequel $b_{\Omega_{rig}}$ est de classe $\pazocal{C}^3$. Ceci implique en particulier qu'on peut étendre $\normalExt$ de manière $\pazocal{C}^2$ à $\partial\Omega_{rig}^h$ de la façon suivante: pour $x\in \partial\Omega_{rig}^h\setminus \partial\Omega_{rig}$,
$$
	\normalExt(x):= \normalExt\left( \Proj_{\partial\Omega_{rig}}(x)\right)\:.
$$
Ainsi, on a que $\gG_{\normalExt}\normalExt\in \pazocal{C}^2(\partial\Omega_{rig}^h,\mathbb{R}^d)$, et on peut donc étendre cette fonction à tout $\mathbb{R}^d$ de sorte que l'extension $\gG$ vérifie: $\gG\in\pazocal{C}^2(\mathbb{R}^d)$ et $\gG=0$ sur $\complement(\partial\Omega_{rig}^{h'})$ pour $h'>h$. Enfin, comme $\overline{\Gamma_C}\cap\overline{\Gamma_D}=\emptyset$, il suit que $\overline{\Gamma_D}\subset\complement(\partial\Omega_{rig}^{h'})$ pour $h$, $h'$ suffisamment petits. On en déduit donc que $\gG\in\Xx$.

\paragraph{Condition de non-pénétration.}
En utilisant les notations précédentes, on peut naturellement traduire la contrainte unilatérale de la manière suivante:
\begin{equation}
    \uu_{\normalExt} \leq \gG_{\normalExt}\:, \ \ \mbox{p.p. sur } \Gamma_C\:.
    \label{CondUnil}
\end{equation}
On va donc chercher la solution de notre problème parmi les champs de déplacement qui satisfont cette condition, qu'on appelle l'ensemble des \textit{déplacements admissibles}
$$
	\Kk:=\{ \vv\in\Xx \ | \ \vv_{\normalExt} \leq \gG_{\normalExt} \mbox{ p.p. sur } \Gamma_C\}\:.
$$

\begin{rmrk}
    Il est clair que cet ensemble est convexe, fermé et non-vide ($\gG\in\Kk$).
\end{rmrk}

\paragraph{Formulations.}
Comme dans le cas sans contact, le déplacement $\uu$ solution doit minimiser l'énergie mécanique totale du système. La différence est qu'ici on ajoute la contrainte que $\uu$ appartienne à l'ensemble des déplacements admissibles. Le problème se traduit donc dans ce contexte en un problème de minimisation sous contrainte:
\begin{equation}
    \inf_{\vv\in\Kk} \ \varphi(\vv)\:,
    \label{OPTGliss}
\end{equation}
où $\varphi$ représente la fonctionnelle d'énergie introduite dans \eqref{OPTElas}. Ce problème rentre dans le cadre de l'optimisation dans des espaces de Hilbert, et tous les résultats classiques d'analyse convexe s'appliquent. En particulier, on a le résultat suivant.
\begin{prpstn}
    Lorsque l'hypothèse \ref{hyp:RegBord} est vérifiée, $\Aa$ est elliptique, $|\Gamma_D|>0$, $\ff\in \Ll^2(\Omega)$ et $\tauu\in\Ll^2(\Gamma_N)$, alors le problème \eqref{OPTGliss} admet une unique solution. De plus, $\uu\in\Kk$ est solution de \eqref{OPTGliss} si et seulement si elle vérifie l'inéquation variationelle (de première espèce):
    \begin{equation}
	    a(\uu,\vv-\uu) \:\geq\: L(\vv-\uu), \:\:\: \forall \vv \in \Kk\: .
        \label{IVGliss}
	\end{equation}
	\label{PropExisUniqGliss}
\end{prpstn}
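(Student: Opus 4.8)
Le plan est de reconnaître dans \eqref{OPTGliss} la minimisation d'une fonctionnelle quadratique coercive, strictement convexe et continue sur un convexe fermé non vide de l'espace de Hilbert $\Xx$, puis d'appliquer la théorie classique (Lions--Stampacchia) des inéquations variationnelles elliptiques, voir par exemple \cite{KikOde1988} ou \cite{sokolowski1992introduction}. De manière équivalente, on pourrait invoquer directement le théorème de Stampacchia pour l'inéquation \eqref{IVGliss} (qui en fournit existence et unicité), puis utiliser la symétrie de $a$ pour identifier la solution de \eqref{IVGliss} au minimiseur de $\varphi$. Je détaille plutôt la voie variationnelle, qui est élémentaire une fois rappelées les propriétés de $a$, $L$ et $\Kk$.

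Premièrement, je rappellerais les propriétés de $\varphi$. Par les mêmes arguments que dans le cas sans contact (inégalité de Korn, valide puisque $|\Gamma_D|>0$, combinée à l'ellipticité de $\Aa$), la forme bilinéaire $a$ est continue et coercive sur $\Xx\times\Xx$~: il existe $\alpha,M>0$ tels que $a(\vv,\vv)\geq\alpha\|\vv\|_{1,\Omega}^2$ et $|a(\ww,\vv)|\leq M\|\ww\|_{1,\Omega}\|\vv\|_{1,\Omega}$ pour tous $\ww,\vv\in\Xx$. Comme $\ff\in\Ll^2(\Omega)$ et $\tauu\in\Ll^2(\Gamma_N)$, la forme linéaire $L$ est continue sur $\Xx$. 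Il s'ensuit que $\varphi(\vv)\geq\tfrac{\alpha}{2}\|\vv\|_{1,\Omega}^2-\|L\|_{\Xx^*}\|\vv\|_{1,\Omega}\to+\infty$ quand $\|\vv\|_{1,\Omega}\to+\infty$ (coercivité), que $\varphi$ est continue, et que $\varphi$ est strictement convexe car $a$ est une forme symétrique définie positive. Enfin $\Kk$ est non vide (il contient $\gG$), convexe et fermé, comme remarqué plus haut.

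Ensuite, l'existence et l'unicité d'un minimiseur découlent du résultat standard~: on prend une suite minimisante $(\vv_n)\subset\Kk$~; la coercivité de $\varphi$ la borne dans $\Xx$, donc, quitte à extraire, $\vv_n\rightharpoonup\uu$~; $\Kk$ étant convexe et fermé, il est faiblement fermé, donc $\uu\in\Kk$~; $\varphi$ étant convexe et continue, elle est faiblement semi-continue inférieurement, donc $\varphi(\uu)\leq\liminf_n\varphi(\vv_n)=\inf_{\vv\in\Kk}\varphi(\vv)$, c'est-à-dire que $\uu$ est un minimiseur. L'unicité résulte immédiatement de la stricte convexité de $\varphi$.

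Enfin, je démontrerais l'équivalence avec l'inéquation variationnelle \eqref{IVGliss} par l'argument de convexité usuel. Si $\uu$ minimise $\varphi$ sur $\Kk$, alors pour tout $\vv\in\Kk$ et tout $t\in(0,1]$ le point $\uu+t(\vv-\uu)$ appartient à $\Kk$ par convexité, et le développement du terme quadratique (en utilisant la symétrie de $a$) donne
\[
    0\leq\varphi\bigl(\uu+t(\vv-\uu)\bigr)-\varphi(\uu)=t\bigl(a(\uu,\vv-\uu)-L(\vv-\uu)\bigr)+\tfrac{t^2}{2}\,a(\vv-\uu,\vv-\uu);
\]
en divisant par $t$ et en faisant $t\to0^+$ on obtient \eqref{IVGliss}. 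Réciproquement, si \eqref{IVGliss} est vérifiée, alors pour tout $\vv\in\Kk$,
\[
    \varphi(\vv)-\varphi(\uu)=\bigl(a(\uu,\vv-\uu)-L(\vv-\uu)\bigr)+\tfrac{1}{2}\,a(\vv-\uu,\vv-\uu)\geq a(\uu,\vv-\uu)-L(\vv-\uu)\geq0,
\]
donc $\uu$ est le minimiseur. Il n'y a pas de réelle difficulté ici~: tout se ramène à l'inégalité de Korn (déjà disponible) et à la théorie standard des inéquations variationnelles elliptiques. Le seul point demandant un peu de soin est le passage à la limite faible dans la suite minimisante, qui utilise la fermeture faible du convexe $\Kk$ et la semi-continuité inférieure faible de la fonctionnelle convexe continue $\varphi$.
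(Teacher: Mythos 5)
Votre démonstration est correcte et suit essentiellement la même voie que celle du manuscrit : propriétés de $\varphi$ (coercivité, continuité, stricte convexité) et de $\Kk$ (convexe, fermé, non vide), existence et unicité du minimiseur, puis caractérisation par la condition d'optimalité \eqref{IVGliss} via l'argument de convexité. La seule différence est de présentation : là où le texte renvoie à \cite[Chapitre 2]{ekeland1999convex} pour l'existence, l'unicité et l'équivalence avec l'inéquation variationnelle, vous déroulez explicitement la méthode directe (suite minimisante, fermeture faible de $\Kk$, s.c.i. faible de $\varphi$) et le calcul du quotient différentiel, ce qui est tout à fait acceptable.
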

\begin{proof}
    Puisque $a$ est coercive, on a que $\varphi$ est propre (i.e.$\!$ $\varphi\not\equiv +\infty$), coercive, continue et strictement convexe sur $\Xx$ qui est un espace de Banach réflexif. Comme de plus, $\Kk$ est convexe, fermé, non-vide, on sait d'après \cite[Chapitre 2]{ekeland1999convex} que \eqref{OPTGliss} admet une unique solution. Toujours d'après la même référence, comme $\varphi$ est dérivable au sens de Gateaux sur $\Xx$, avec des dérivées continues, on obtient que la solution $\uu$ est entièrement caractérisée par la condition d'optimalité \eqref{IVGliss}.
\end{proof}
\begin{rmrk}
    Comme dans le cas sans contact, les résultats précédents tiennent encore si on impose moins de régularité sur les données: $\ff\in\Xx^*$ et $\tauu\in\Hh^{-\frac{1}{2}}(\Gamma_C)$.
\end{rmrk}
En partant de la formulation faible \eqref{IVGliss}, on peut revenir à une formulation forte du type de \eqref{FFElas} (et vice-versa) à condition que les données soient suffisamment régulières, voir par exemple \cite{DuvLio1972}.
\begin{prpstn}
    Sous les hypothèses de la proposition \ref{PropExisUniqGliss}, on a que $\uu$ est solution de \eqref{IVGliss} si et seulement si $\uu$ est solution de:
\begin{equation}
    \left\{ \
    \begin{array}{rlr}
        - \Divv \sigmaa(\uu) &= \ff  &\mbox{ dans } \Omega, \\
      \uu                &= 0    &\mbox{ sur } \Gamma_D,  \\
        \sigmaa(\uu) \cdot \normalInt &= \tauu  &\mbox{ sur } \Gamma_N,   \\
        \sigmaa(\uu) \cdot \normalInt &= 0  &\mbox{ sur } \Gamma, \\
        \uu_{\normalExt} \leq \gG_{\normalExt}, \ \sigmaa_{\normalInt\!\normalExt}(\uu) &\leq 0, \ \sigmaa_{\normalInt\!\normalExt}(\uu)(\uu_{\normalExt}-\gG_{\normalExt})=0 & \mbox{ sur } \Gamma_C, \\
        \sigmaa_{\normalInt\!\tanExt}(\uu) &= 0 & \mbox{ sur } \Gamma_C,
     \end{array}
    \right.
    \label{FFGliss}
\end{equation} 
où on a introduit: $\sigmaa_{\normalInt\!\normalExt}(\uu)= \sigmaa(\uu) \cdot \normalInt \cdot \normalExt$ et $\sigmaa_{\normalInt\!\tanExt}(\uu)=\sigmaa(\uu) \cdot \normalInt - \sigmaa_{\normalInt\!\normalExt}(\uu)\normalExt$.
\end{prpstn}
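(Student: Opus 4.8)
The plan is to prove the equivalence by the classical Duvaut--Lions argument \cite{DuvLio1972}, moving between the weak form \eqref{IVGliss} and the pointwise form \eqref{FFGliss} by means of Green's formula for linear elasticity,
\begin{equation*}
  a(\uu,\vv)=-\int_\Omega\Divv\sigmaa(\uu)\cdot\vv+\int_{\partial\Omega}(\sigmaa(\uu)\cdot\normalInt)\cdot\vv\:,
\end{equation*}
combined with suitably localized test functions. Throughout I assume the solution is regular enough (say $\uu\in\Hh^2$ in a neighbourhood of $\partial\Omega$, which is why the statement tacitly strengthens the hypotheses of Proposition~\ref{PropExisUniqGliss}) for $\sigmaa(\uu)\cdot\normalInt$ to be an $\Ll^2$ trace and the boundary integral above to make sense; otherwise the conditions on $\Gamma_C$ are to be read in the $\Hh^{-1/2}/\Hh^{1/2}$ duality pairing. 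I also use the decomposition of $\partial\Omega$ into $\Gamma_D$, $\Gamma_N$, $\Gamma$, $\Gamma_C$ and the separation hypothesis $\overline{\Gamma_C}\cap\overline{\Gamma_D}=\emptyset$, which lets me place test functions near a single piece of the boundary while keeping them in $\Xx$ and in $\Kk$.

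For the implication \eqref{IVGliss} $\Rightarrow$ \eqref{FFGliss}: I first take $\vv=\uu\pm\varphii$ with $\varphii\in\Cc^\infty_c(\Omega)$ (both admissible) to obtain $-\Divv\sigmaa(\uu)=\ff$ in $\Omega$; then $\vv=\uu\pm\varphii$ with $\varphii$ supported near $\Gamma_N$, resp.\ near $\Gamma$, vanishing near $\overline{\Gamma_D}\cup\overline{\Gamma_C}$, to obtain $\sigmaa(\uu)\cdot\normalInt=\tauu$ on $\Gamma_N$ and $\sigmaa(\uu)\cdot\normalInt=0$ on $\Gamma$. Substituting these back, and using that every $w\in\Hh^{1/2}(\Gamma_C)$ with $w_{\normalExt}\le\gG_{\normalExt}$ is the $\Gamma_C$-trace of some $\vv\in\Kk$, the VI collapses to the boundary inequality $\int_{\Gamma_C}(\sigmaa(\uu)\cdot\normalInt)\cdot(\vv-\uu)\ge 0$ for all $\vv\in\Kk$. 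Decomposing $\sigmaa(\uu)\cdot\normalInt$ and $\vv-\uu$ into their components along and orthogonal to $\normalExt$: choosing $\vv$ with $\vv_{\normalExt}=\uu_{\normalExt}$ and arbitrary tangential trace yields $\sigmaa_{\normalInt\!\tanExt}(\uu)=0$ on $\Gamma_C$; choosing $\vv=\uu+\psi\,\normalExt$ with $\psi\le 0$ (admissible since $\uu_{\normalExt}\le\gG_{\normalExt}$) yields $\sigmaa_{\normalInt\!\normalExt}(\uu)\le 0$; and taking successively $\vv=\gG$ and $\vv=2\uu-\gG$ (both admissible) forces $\int_{\Gamma_C}\sigmaa_{\normalInt\!\normalExt}(\uu)(\uu_{\normalExt}-\gG_{\normalExt})=0$, which, being the integral of a product of two nonpositive functions, gives the complementarity relation $\sigmaa_{\normalInt\!\normalExt}(\uu)(\uu_{\normalExt}-\gG_{\normalExt})=0$ a.e.\ on $\Gamma_C$.

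For the converse \eqref{FFGliss} $\Rightarrow$ \eqref{IVGliss}: fix $\vv\in\Kk$ and note that $\uu\in\Kk$ by the unilateral constraint $\uu_{\normalExt}\le\gG_{\normalExt}$ in \eqref{FFGliss}. Multiply $-\Divv\sigmaa(\uu)=\ff$ by $\vv-\uu$ and integrate by parts. The boundary term decomposes over $\Gamma_D$ (where $\vv-\uu=0$), $\Gamma_N$ (where $\sigmaa(\uu)\cdot\normalInt=\tauu$, cancelling the corresponding contribution to $L$), $\Gamma$ (where $\sigmaa(\uu)\cdot\normalInt=0$), and $\Gamma_C$, leaving $a(\uu,\vv-\uu)-L(\vv-\uu)=\int_{\Gamma_C}(\sigmaa(\uu)\cdot\normalInt)\cdot(\vv-\uu)$. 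On $\Gamma_C$ the tangential part vanishes because $\sigmaa_{\normalInt\!\tanExt}(\uu)=0$, while the normal part equals $\sigmaa_{\normalInt\!\normalExt}(\uu)(\vv_{\normalExt}-\gG_{\normalExt})+\sigmaa_{\normalInt\!\normalExt}(\uu)(\gG_{\normalExt}-\uu_{\normalExt})$, whose second summand is $0$ by complementarity and whose first summand is $\ge 0$ since $\sigmaa_{\normalInt\!\normalExt}(\uu)\le 0$ and $\vv_{\normalExt}-\gG_{\normalExt}\le 0$. Hence $a(\uu,\vv-\uu)\ge L(\vv-\uu)$ for every $\vv\in\Kk$, i.e.\ \eqref{IVGliss}.

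The step I expect to be the real obstacle is not the bookkeeping above but giving rigorous meaning to the boundary traces $\sigmaa_{\normalInt\!\normalExt}(\uu)$ and $\sigmaa_{\normalInt\!\tanExt}(\uu)$ and to the integrations by parts: under only the hypotheses of Proposition~\ref{PropExisUniqGliss} one merely has $\sigmaa(\uu)\in\Ll^2(\Omega)$ with $\Divv\sigmaa(\uu)=-\ff\in\Ll^2(\Omega)$, so $\sigmaa(\uu)\cdot\normalInt$ lives a priori only in $\Hh^{-1/2}(\partial\Omega)$, and the sign and complementarity relations on $\Gamma_C$ then have to be written in the appropriate duality bracket. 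Either one keeps that weaker reading, or one first invokes an elliptic-regularity result near $\Gamma_C$ (legitimate because the change-of-boundary-condition set is separated from $\Gamma_C$ thanks to $\overline{\Gamma_C}\cap\overline{\Gamma_D}=\emptyset$) to upgrade $\uu$ locally and argue exactly as above.
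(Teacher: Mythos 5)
Your proof is correct, and it is precisely the classical Duvaut--Lions argument that the paper itself relies on: the text gives no written proof of this proposition, only the pointer to \cite{DuvLio1972} immediately before the statement, and your chain of localized test functions (interior, then near $\Gamma_N$ and $\Gamma$, then the choices $\vv=\uu\pm\ww$ with $\ww_{\normalExt}=0$, $\vv=\uu+\psi\normalExt$ with $\psi\le 0$, and $\vv=\gG$, $\vv=2\uu-\gG$) is the standard way to fill it in. Your closing caveat — that under only $\ff\in\Ll^2(\Omega)$, $\tauu\in\Ll^2(\Gamma_N)$ the trace $\sigmaa(\uu)\cdot\normalInt$ lives in $\Hh^{-1/2}(\partial\Omega)$ and the conditions on $\Gamma_C$ must be read in duality unless one invokes local regularity — is consistent with the paper's own remark that this is the minimal regularity ensuring the equivalence.
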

Nous nous intéressons maintenant de plus près à l'expression des conditions aux limites de contact glissant (deux dernières lignes de \eqref{FFGliss}). Les trois conditions de l'avant-dernière ligne de \eqref{FFGliss} décrivent les phénomènes liés au contact dans la direction normale à la surface de contact, i.e.$\!$ dans la direction de $\normalExt$. La condition de la dernière ligne décrit quant à elle les phénomènes qui interviennent tangentiellement à la surface de contact.
\begin{itemize}
	\item $\uu_{\normalExt} \leq \gG_{\normalExt}$: On a déjà vu que cette relation traduisait la condition de non-pénétration.
	\item $\sigmaa_{\normalInt\!\normalExt}(\uu)\leq 0$: La contrainte normale sur la surface de contact est dirigée dans le sens opposé à $\normalExt$. Ceci est intuitif car cette contrainte traduit la réaction du corps rigide sur le corps déformable.
	\item $\sigmaa_{\normalInt\!\normalExt}(\uu)(\uu_{\normalExt}-\gG_{\normalExt})=0$: Si on a une réaction non nulle du corps rigide, alors nécessairement on est en contact, c'est-à-dire $\uu_{\normalExt}=\gG_{\normalExt}$. À l'inverse, si on n'est pas en contact ($\uu_{\normalExt}<\gG_{\normalExt}$), alors nécessairement aucun effort n'est transmis au corps déformable.
	\item $\sigmaa_{\normalInt\!\tanExt}(\uu) = 0$: Comme on est dans un modèle de contact glissant, le contact entre les deux corps ne génère aucun effort tangentiel.
\end{itemize}

\paragraph{Choix de la direction pour le calcul du gap.}
Comme mentionné plus haut, contrairement à la plupart des auteurs qui traitent le cas de l'élasticité linéaire, nous faisons le choix de calculer le gap dans la direction de $\normalExt$. En effet, si on note $\gG_{\normalInt}$ le gap dans la direction de $\normalInt$ pour la configuration non déformée, alors les conditions aux limites de contact obtenues habituellement sont:
\begin{equation}
    \left\{ \
    \begin{array}{rlr}
        \uu_{\normalInt} \leq \gG_{\normalInt}, \ \sigmaa_{\normalInt\!\normalInt}(\uu) &\leq 0, \ \sigmaa_{\normalInt\!\normalInt}(\uu)(\uu_{\normalInt}-\gG_{\normalInt})=0 & \mbox{ sur } \Gamma_C, \\
        \sigmaa_{\normalInt\!\tanInt}(\uu) &= 0 & \mbox{ sur } \Gamma_C.
     \end{array}
    \right.
    \label{CLGlissBis}
\end{equation}
Bien que ces conditions soient à première vue différentes de celles dans \eqref{FFGliss}, elles sont en réalité équivalentes sous l'hypothèse des petits déplacements. En effet, cette dernière consiste à dire que lorsque les déplacements sont suffisament petits, on peut d'une part linéariser $\normalExt$ et $\gG_{\normalExt}$ autour de la configuration de référence (non déformée). Et d'autre part, au premier ordre, on peut considérer que $\normalInt$ et $\normalExt$ sont interchangeables le long de $\Gamma_C$, et idem pour $\gG_{\normalInt}$ et $\gG_{\normalExt}$. Nous renvoyons le lecteur à \cite[Section 2.2]{KikOde1988} pour une explication plus complète et rigoureuse.
En particulier, cela implique effectivement que les conditions aux limites de \eqref{FFGliss} et \eqref{CLGlissBis} sont identiques, et donc qu'elles mènent à la même solution.

\begin{rmrk}
	Le choix de travailler avec $\normalExt$ plutôt qu'avec $\normalInt$ s'avère très pratique du point de vue de l'optimisation de formes. Plus précisément, il est plus facile de dériver $\normalExt$ et $\gG_{\normalExt}$ par rapport à la forme que $\normalInt$ et $\gG_{\normalInt}$, comme nous le verrons au chapitre \ref{chap:2.1}. 
\end{rmrk}

\subsection{Contact frottant}

Dans ce travail, nous nous limitons au modèle de frottement de Tresca, introduit pour la première fois dans \cite{DuvLio1972}. Bien que ce modèle soit relativement simple, il permet de prendre en compte des contraintes tangentielles sur la surface de contact. Du point de vue pratique, on peut l'utiliser dans une boucle de point fixe pour \textit{simuler} une loi de Coulomb (plus réaliste), voir par exemple \cite{necas1980solution,licht1991remarks}. Du point de vue théorique, il est très populaire car même s'il présente des difficultés techniques supplémentaires par rapport à un modèle de contact glissant, il dispose de toutes les bonnes propriétés (sous forme d'une inéquation variationnelle, existence et unicité de la solution, etc). 

\paragraph{Conditions aux limites.}
Commençons par introduire le coefficient de frottement $\mathfrak{F} > 0$ tel que $\mathfrak{F} : \Gamma_C \rightarrow \mathbb{R}$ est uniformément Lipschitz. L'idée du modèle de Tresca est de remplacer le seuil de Coulomb $|\sigmaa_{\normalInt \!\normalExt}(\uu)|$ par une fonction positive $s\in L^2(\Gamma_C)$ fixée (indépendante de $\uu$). Les conditions aux limites imposées aux contraintes normales sont les mêmes que dans le cas du contact glissant. Et en ce qui concerne les contraintes tangentielles, on remplace donc la dernière équation de \eqref{FFGliss} par:
\begin{equation}
    \left\{ \
    \begin{array}{rlr}
    	|\sigmaa_{\normalInt\!\tanExt}(\uu)| \:&<\: \mathfrak{F} s & \: \mbox{ sur } \{ x \in \Gamma_C \: | \: \uu_{\tanExt}(x) = 0 \}\:, \\
    	\sigmaa_{\normalInt\!\tanExt}(\uu) \:&=\: -\mathfrak{F} s  \frac{\uu_{\tanExt}}{|\uu_{\tanExt}|} & \:\mbox{ sur } \{ x \in \Gamma_C \: | \: \uu_{\tanExt}(x) \neq 0 \}\:.
     \end{array}
    \right.
    \label{CLTresca}
\end{equation}

\begin{rmrk}
	On trouve parfois ces conditions de frottement écrites sous la forme:
	\begin{equation}
    		\left\{ \
    		\begin{array}{rll}
    			|\sigmaa_{\normalInt\!\tanExt}(\uu)| \:&<\: \mathfrak{F} s & \: \Longrightarrow \ \uu_{\tanExt}=0 \:, \\
    			|\sigmaa_{\normalInt\!\tanExt}(\uu)| \:&=\: \mathfrak{F} s & \: \Longrightarrow \ \exists \alpha\geq 0, \: \uu_{\tanExt} = -\alpha \sigmaa_{\normalInt\!\tanExt}(\uu) \:.
     	\end{array}
    		\right.
    		\label{CLTrescaBis}
	\end{equation}
	Avec cette formulation, on comprend que si les contraintes tangentielles au point considéré sont en dessous d'un certain seuil, alors le contact est \textit{adhérent}. À l'inverse, si ce seuil est atteint, alors c'est qu'il y a un glissement qui s'opère dans le sens opposé aux contraintes.	
\end{rmrk}

\paragraph{Formulations.}
On introduit la fonctionnelle convexe, non-linéaire et non-différentiable $j_T : \Xx \to \mathbb{R}$ définie par:
\begin{equation*}
	j_T(\vv) := \int_{\Gamma_C} \mathfrak{F} s |\vv_{\tanExt}| \: .
\end{equation*}
Du point de vue de l'optimisation, prendre en compte les phénomènes de frottement revient à ajouter le terme correspondant à la dissipation due au frottement dans l'expression de l'énergie mécanique totale. Ici, cela signifie qu'on cherche la solution de:
\begin{equation}
    \inf_{\vv\in\Kk} \ \varphi(\vv) + j_T(\vv)\:.
    \label{OPTTresca}
\end{equation}
Il s'agit d'un problème d'optimisation convexe non-lisse avec contrainte, qui rentre encore dans le cadre d'étude classique de l'analyse convexe. On dispose donc de tous les outils nécessaires pour montrer que ce problème a de bonnes propriétés.
\begin{prpstn}
	Sous les hypothèses de la proposition \ref{PropExisUniqGliss}, lorsque $\mathfrak{F}$ est strictement positive et uniformément Lipschitz, et $s\in L^2(\Gamma_C)$, le problème \eqref{OPTTresca} admet une unique solution. De plus, $\uu\in\Kk$ est solution de \eqref{OPTTresca} si et seulement si elle vérifie l'inéquation variationelle (de deuxième espèce):
    \begin{equation}
	    a(\uu,\vv-\uu) + j_T(\vv) - j_T(\uu) \:\geq\: L(\vv-\uu), \:\:\: \forall \vv \in \Kk\: .
        \label{IVTresca}
	\end{equation}
	\label{PropExisUniqTresca}
\end{prpstn}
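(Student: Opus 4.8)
The plan is to follow verbatim the proof of Proposition~\ref{PropExisUniqGliss}, the only new ingredient being the convex, nonsmooth term $j_T$, whose basic properties must be checked first. Since the tangential trace operator $\Lambda_{\tanExt}\colon\Xx\to\Hh^{\frac12}(\Gamma_C)$ is linear continuous and $\Hh^{\frac12}(\Gamma_C)\hookrightarrow\Ll^2(\Gamma_C)$, since $\mathfrak{F}$ is Lipschitz on the bounded set $\Gamma_C$ hence bounded, and since $s\in L^2(\Gamma_C)$, the integrand $\mathfrak{F}s|\vv_{\tanExt}|$ lies in $L^1(\Gamma_C)$ by Cauchy--Schwarz, with $|j_T(\vv)|\le \|\mathfrak{F}\|_{L^\infty}\|s\|_{L^2(\Gamma_C)}\,\|\Lambda_{\tanExt}\vv\|_{L^2(\Gamma_C)}\le C\|\vv\|_{\Xx}$; so $j_T$ is finite on all of $\Xx$. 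Applying the same bound to $\bigl|\,|\vv_{\tanExt}|-|\ww_{\tanExt}|\,\bigr|\le|\vv_{\tanExt}-\ww_{\tanExt}|$ shows that $j_T$ is globally Lipschitz, hence continuous, on $\Xx$, and it is convex because $\mathfrak{F}s\ge 0$ and $\vv\mapsto|\vv_{\tanExt}|$ is convex.

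For existence and uniqueness, note that $\varphi+j_T$ is then proper, continuous, coercive (the linear growth of $j_T$ does not affect the coercivity coming from $\varphi$), and strictly convex (strict convexity of $\varphi$, due to coercivity of $a$, plus convexity of $j_T$), on the reflexive Banach space $\Xx$; and $\Kk$ is convex, closed and nonempty (it contains $\gG$). The abstract minimization result of \cite[Chapitre~2]{ekeland1999convex} invoked in the gliding case then gives a unique solution of~\eqref{OPTTresca}.

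For the characterization by~\eqref{IVTresca}, one cannot differentiate $\varphi+j_T$ directly, since $j_T$ is not differentiable; instead I would use its convexity. If $\uu$ solves~\eqref{OPTTresca}, then for $\vv\in\Kk$ and $t\in(0,1]$ we have $\uu+t(\vv-\uu)\in\Kk$, and from $j_T(\uu+t(\vv-\uu))\le(1-t)j_T(\uu)+tj_T(\vv)$ together with minimality,
\[
    \frac{\varphi(\uu+t(\vv-\uu))-\varphi(\uu)}{t}+j_T(\vv)-j_T(\uu)\ \ge\ 0 .
\]
Letting $t\searrow0$ and using that $\varphi$ is Gâteaux differentiable with $\varphi'(\uu)(\vv-\uu)=a(\uu,\vv-\uu)-L(\vv-\uu)$ yields~\eqref{IVTresca}. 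Conversely, if $\uu\in\Kk$ satisfies~\eqref{IVTresca}, then for every $\vv\in\Kk$,
\[
    \varphi(\vv)-\varphi(\uu)=\tfrac12 a(\vv-\uu,\vv-\uu)+a(\uu,\vv-\uu)-L(\vv-\uu)\ \ge\ a(\uu,\vv-\uu)-L(\vv-\uu)
\]
by coercivity of $a$, so adding $j_T(\vv)-j_T(\uu)$ and using~\eqref{IVTresca} gives $\varphi(\vv)+j_T(\vv)\ge\varphi(\uu)+j_T(\uu)$; hence $\uu$ is the minimizer.

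I do not expect any genuine obstacle: the argument is a routine adaptation of Proposition~\ref{PropExisUniqGliss}. The one point deserving care is precisely the presence of the nonsmooth term: keeping $j_T(\vv)-j_T(\uu)$ intact (rather than attempting a one-sided directional derivative of the full energy) is what turns the first-order optimality condition into a variational inequality \emph{of the second kind}. Equivalently, one may phrase the characterization through subdifferential calculus --- $\uu$ minimizes $\varphi+j_T$ over $\Kk$ iff $0\in a(\uu,\cdot)-L+\partial j_T(\uu)+\partial I_{\Kk}(\uu)$, the sum rule being legitimate since $\varphi$ and $j_T$ are everywhere finite and continuous --- and then unwind the definitions of the subdifferentials to recover~\eqref{IVTresca}.
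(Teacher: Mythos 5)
Your proof is correct and follows essentially the same route as the paper: existence and uniqueness come from the same convexity/coercivity/continuity argument on $\varphi+j_T$ over the closed convex set $\Kk$, and the equivalence with \eqref{IVTresca} rests on the Gâteaux differentiability of $\varphi$ combined with the convexity of $j_T$. The only difference is that you write out the standard minimization-versus-variational-inequality argument in full, whereas the paper delegates it to \cite[Section 1.5]{oden1980theory}.
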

\begin{proof}
	Compte tenu des propriétés de $j_T$, il est clair que $\varphi + j_T$ est propre, coercive, continue et strictement convexe sur $\Xx$. Donc les arguments du cas glissant s'appliquent encore, et on obtient l'existence et l'unicité de la solution.
	De plus, d'après \cite[Section 1.5]{oden1980theory}, comme $\varphi$ est Gateaux-différentiable, on a que \eqref{IVTresca} constitue une formulation équivalente de \eqref{OPTTresca}. 
\end{proof}

Comme pour le cas glissant, on a encore équivalence entre cette formulation faible et la formulation forte, voir \cite{DuvLio1972}.

\subsection{Approche par multiplicateurs de Lagrange}

Nous terminons cette section en proposant une dernière formulation du problème de contact considéré. Celle-ci s'appuie sur la théorie des multiplicateurs de Lagrange en analyse convexe, voir par exemple \cite{ito2008lagrange}. L'idée est de réécrire le problème d'optimisation \eqref{OPTTresca} en une équation variationnelle mixte, en faisant intervenir des variables \textit{duales} ou \textit{multiplicateurs de Lagrange}.

Dans les prochaines pages, nous allons faire appel à quelques notions de calcul sous-différentiel, ainsi qu'à des résultats plus généraux d'analyse convexe. Nous renvoyons le lecteur à \cite[Chapitre 1]{ekeland1999convex} pour une présentation détaillée de ces concepts.

\begin{thrm}
  Si $\uu \in \Kk$ est solution de \eqref{OPTTresca}, alors il existe un unique couple de variables duales $(\lambda,\muu) \in H^{-\frac{1}{2}}(\Gamma_C) \times \Hh^{-\frac{1}{2}}(\Gamma_C)$ tel que:
  \begin{subequations} \label{LMFTresca:all}
    \begin{align}
    a(\uu,\vv) - L(\vv) + \prodD{\lambda, \vv_{\normalExt}}{\Gamma_C} + \prodD{\muu, \vv_{\tanExt}}{\Gamma_C} &= 0\:, \hspace{1em} \forall \vv \in \Xx\:, \label{LMFTresca:1}\\
    \prodD{\lambda, \zeta }{\Gamma_C}  &\geq 0\:, \hspace{1em} \forall \:\zeta \in H^\frac{1}{2}(\Gamma_C)\:, \:\: \zeta \geq 0\:, \label{LMFTresca:2}\\
    \prodD{\lambda, \uu_{\normalExt}-\gG_{\normalExt}}{\Gamma_C} &= 0\:, \label{LMFTresca:3}\\
    \prodD{ \mathfrak{F} s, |\nuu| }{\Gamma_C} - \prodD{ \muu, \nuu }{\Gamma_C} &\geq 0\:, \hspace{1em} \forall \nuu \in \Hh^{\frac{1}{2}}(\Gamma_C)\:, \label{LMFTresca:4}\\
    \prodD{ \mathfrak{F} s, |\uu_{\tanExt}| }{\Gamma_C} - \prodD{ \muu, \uu_{\tanExt}}{\Gamma_C} &= 0\:. \label{LMFTresca:5}
    \end{align}
  \end{subequations}
  \label{ThmExistLM0}
\end{thrm}
\begin{proof}
	On commence par introduire les opérateurs de trace normale $\Lambda_{\normalExt}$ et tangentielle $\Lambda_{\tanExt}$ sur $\Gamma_C$. On a $\Lambda_{\normalExt}:\Xx\to H^{\frac{1}{2}}(\Gamma_C)$ et $\Lambda_{\tanExt}:\Xx\to \Hh^{\frac{1}{2}}(\Gamma_C)$ tels que, pour tout $\vv\in\Xx$,
	$$
		\Lambda_{\normalExt}\vv = \vv_{\normalExt}\:, \hspace{3em} \Lambda_{\tanExt}\vv = \vv_{\tanExt}\:.
	$$
	On sait que ces opérateurs sont linéaires, continus et surjectifs.
	On définit également le convexe $\mathpzc{K}\subset L^2(\Gamma_C)$ par
	$$
		\mathpzc{K}:=	\{ \zeta \in L^2(\Gamma_C) \ | \ \zeta \leq \gG_{\normalExt} \mbox{ p.p. sur } \Gamma_C\}\:,
	$$
	ainsi que la fonctionnelle $h_T:\Ll^2(\Gamma_C)\to\mathbb{R}$ telle que, pour tout $\nuu\in \Ll^2(\Gamma_C)$,
	$$
		h_T(\nuu) := \int_{\Gamma_C} \mathfrak{F} s |\nuu| \: .
	$$
	En utilisant ces notations, on peut réécrire \eqref{OPTTresca} sous la forme d'un problème d'optimisation non lisse et non contraint:
	$$
		\inf_{\vv\in\Kk} \ \varphi(\vv) + j_T(\vv) \ = \  \inf_{\vv\in\Xx} \ \varphi(\vv) + I_\mathpzc{K}(\Lambda_{\normalExt}\vv) + h_T(\Lambda_{\tanExt}\vv) \:,
	$$
	où $I_\mathpzc{K}$ représente la fonction indicatrice de $\mathpzc{K}$. Puisque $\varphi+I_\mathpzc{K}\circl\Lambda_{\normalExt}+ h_T\circl\Lambda_{\tanExt}$ est convexe, $\uu$ est solution du problème de minimisation précédent si et seulement si:
	\begin{equation*}
	\begin{aligned}
		0 \ &\in \ \partial\left( \varphi+I_\mathpzc{K}\circl\Lambda_{\normalExt}+ h_T\circl\Lambda_{\tanExt} \right)(\uu) \\
		\Longleftrightarrow \quad 0 \ &\in \ \varphi'(\uu) + \partial\left( I_\mathpzc{K}\circl\Lambda_{\normalExt} \right)(\uu) + \partial\left( h_T\circl\Lambda_{\tanExt} \right)(\uu) \\
		\Longleftrightarrow \quad  0 \ &\in \ \varphi'(\uu) + \Lambda_{\normalExt}^*\:\partial I_\mathpzc{K}(\Lambda_{\normalExt}\uu) + \Lambda_{\tanExt}^*\:\partial h_T(\Lambda_{\tanExt}\uu)\:.
	\end{aligned}
	\end{equation*}
	Or la dernière inclusion signifie exactement qu'il existe $\lambda\in \partial I_\mathpzc{K}(\uu_{\normalExt})$ et $\muu\in \partial h_T(\uu_{\tanExt})$ tels que:
	$$
		a(\uu,\vv) - L(\vv) + \prodD{\lambda,\vv_{\normalExt}}{\Gamma_C} + \prodD{\muu,\vv_{\tanExt}}{\Gamma_C} = 0\:, \quad \forall \vv\in\Xx\:.
	$$
	Il ne reste plus qu'à caractériser les sous-différentiels $\partial I_\mathpzc{K}(\uu_{\normalExt})$ et $\partial h_T(\uu_{\tanExt})$. Par définition, on a $\lambda\in\partial I_\mathpzc{K}(\uu_{\normalExt})$ ssi  $\lambda\in\ H^{-\frac{1}{2}}(\Gamma_C)$ et:
	$$
		\prodD{\lambda, \zeta - \uu_{\normalExt}}{\Gamma_C} \: \leq \: I_\mathpzc{K}(\zeta) - I_\mathpzc{K}(\uu_{\normalExt})\:, \quad \forall \zeta\in H^{\frac{1}{2}}(\Gamma_C)\:.  
	$$
	On note au passage que si $\uu_{\normalExt}\notin \mathpzc{K}$, alors le sous-différentiel est vide. Ici, ce n'est pas le cas, puisqu'on suppose que $\uu$ est solution de \eqref{OPTTresca}.\\
	De même, on a $\muu\in\partial h_T(\uu_{\tanExt})$ ssi $\muu\in\Hh^{-\frac{1}{2}}(\Gamma_C)$ et:
	$$
		\prodD{\muu, \nuu - \uu_{\tanExt}}{\Gamma_C} \: \leq \: h_T(\nuu) - h_T(\uu_{\tanExt})\:, \quad \forall \nuu\in \Hh^{\frac{1}{2}}(\Gamma_C)\:. 
	$$
	En retravaillant les deux dernières relations à partir des définitions de $I_\mathpzc{K}$ et $h_T$, on parvient à retrouver les conditions \eqref{LMFTresca:2}, \eqref{LMFTresca:3}, \eqref{LMFTresca:4}, \eqref{LMFTresca:5}.
	
	Quant à l'unicité du couple $(\lambda,\muu)$, elle découle de \eqref{LMFTresca:1}.
\end{proof}

Lorsqu'on passe de la formulation faible \eqref{LMFTresca:1} à la formulation forte, on observe que les multiplicateurs $\lambda$ et $\muu$ correspondent en fait à $-\sigmaa_{\normalInt\!\normalExt}(\uu)$ et $-\sigmaa_{\normalInt\!\tanExt}(\uu)$, respectivement. Notons que ceci est cohérent avec la régularité de ces multiplicateurs puisque d'après \eqref{FF0:1}, $\sigmaa(\uu)\in \Hh(\divv;\Omega)$ lorsque $\ff\in \Ll^2(\Gamma_C)$.

\begin{rmrk}
	Un autre point de vue équivalent consiste à voir \eqref{LMFTresca:all} comme les conditions d'optimalité du problème de point selle suivant:
	\begin{equation}
		\inf_{\vv\in\Xx} \sup_{(\eta,\xii)\in H_+\times \Bb_s} \mathcal{L}(\vv,\eta,\xii)\:,
	\label{PtSelleTresca}
	\end{equation}
	où nous avons utilisé les notations:
	\begin{equation*}
	\begin{aligned}	
		H_+ &:=\{ \eta\in H^{-\frac{1}{2}}(\Gamma_C) \ | \ \prodD{\eta,\zeta}{\Gamma_C} \geq 0, \ \forall \zeta\geq 0 \}\:, \\
		\Bb_s &:= \{ \xii\in \Hh^{-\frac{1}{2}}(\Gamma_C) \ | \ \prodD{|\xii|-\mathfrak{F}s,\nuu}{\Gamma_C} \leq 0, \ \forall \nuu \geq 0 \}\:, \\
		\mathcal{L}(\vv,\eta,\xii) &:= \varphi(\vv) + \prodD{\eta,\vv_{\normalExt}-\gG_{\normalExt}}{\Gamma_C} + \prodD{\xii, \vv_{\tanExt}}{\Gamma_C} \:.
	\end{aligned}
	\end{equation*}
	Une conséquence directe du théorème \ref{ThmExistLM0} est donc que si $\uu$ est la solution de \eqref{OPTTresca}, alors le triplet $(\uu,\lambda,\muu)$ est l'unique solution de \eqref{PtSelleTresca}.
\end{rmrk}

\begin{rmrk}
	Lorsque le seuil de Tresca $s\in L^2(\Gamma_C)$, on peut montrer (voir \cite{ito2008lagrange}) qu'on a un gain de régularité pour le multiplicateur associé à la contrainte tangentielle: $\muu\in\Ll^2(\Gamma_C)$. De plus, dans ce cas, les crochets de dualité $\Hh^{-\frac{1}{2}}(\Gamma_C)-\Hh^{\frac{1}{2}}(\Gamma_C)$ peuvent être remplacés par des produits scalaires $\Ll^2(\Gamma_C)$. En revanche, on ne peut généralement pas espérer mieux qu'une régularité $H^{-\frac{1}{2}}(\Gamma_C)$ pour $\lambda$. Nous en discuterons plus en détails dans la section suivante.
	\label{RmkRegMu}
\end{rmrk}

\section{Résolution à l'aide de formulations régularisées}

Bien que les problèmes de contact soient bien connus et que le cadre théorique pour leur étude soit bien défini, leur résolution numérique reste encore aujourd'hui un domaine de recherche actif. En effet, les non-linéarités et les non-différentiabilités inhérentes à ces problèmes, que nous avons mis en évidence dans la section précédente, rendent inefficaces ou inapplicables une grande partie des méthodes numériques classiques. 

L'approche la plus répandue pour surmonter ces difficultés consiste à considérer des formulations légèrement différentes mais plus régulières, donc plus faciles à résoudre, et dont la solution est « suffisamment proche » (en un sens qu'on précisera) de la solution $\uu$ de la formulation d'origine. Parmi les méthodes qui suivent cette approche, nous choisissons dans cette thèse de nous intéresser à la méthode de \textit{pénalisation} et à la méthode de \textit{lagrangien augmenté}, car elles sont les deux plus utilisées dans le domaine de l'industrie.

\subsection{Formulation pénalisée}

La méthode de pénalisation a d'abord été introduite pour traiter le cas général des problèmes d'optimisation sous contrainte, voir par exemple \cite{Lio1969}. Le principe consiste à remplacer la contrainte par l'ajout d'un terme de pénalité dans la fonctionnelle à minimiser. On transforme ainsi le problème d'optimisation sous contrainte en un problème d'optimisation sans contrainte. Encore mieux, si ce terme de pénalité est suffisamment régulier et que la fonctionnelle initiale est différentiable, alors la condition d'optimalité associée à ce nouveau problème aura une forme relativement simple.

Bien que cette méthode soit déjà utilisée dans le domaine de l'optimisation sous contrainte depuis les années 1950-1960, ses premières applications aux problèmes de contact datent des années 1980, nous citons entre autres \cite{KikSon1981,WriSimTay1985,SimBer1986,KikOde1988}. Elle a connu depuis une immense popularité, notamment dans l'industrie, grâce à sa facilité d'implémentation. Plus récemment, on peut également citer \cite{chouly2013convergence} pour une étude détaillée de la discrétisation du problème de Tresca pénalisé, par la méthode des éléments finis.

Du point de vue de l'optimisation, le problème \eqref{OPTTresca} présente deux principales diffcultés: d'abord la contrainte $\uu\in \Kk$, et ensuite la non-différentiabilité de $j_T$. Ici, la méthode de pénalisation va donc nous permettre à la fois de relaxer la contrainte, et de régulariser la fonction non-lisse $j_T$.

\paragraph{Mise en équations.}
Soit $\varepsilon > 0$ le \textit{paramètre de pénalisation}. On commence d'abord par traiter la contrainte $\uu\in \Kk$. Au lieu de considérer le problème de minimisation de $\varphi+j_T$ sous contrainte, on considère  celui de $\varphi+j_T+j_\varepsilon$ sans contrainte, où $j_\varepsilon$ est un terme qui pénalise le non-respect de la contrainte. Pour pouvoir appliquer les résultats classiques des méthodes de pénalisation, on doit choisir $j_\varepsilon=\frac{1}{\varepsilon}j$, où $j$ désigne une fonction convexe, propre, semi-continue inférieurement (s.c.i.), et vérifiant
\begin{equation*}
  j(\vv)=0 \: \Longleftrightarrow \:\vv \in \Kk \:, \hspace{1em} \mbox{et} \hspace{1em} j(\vv) \geq 0 \:\:\: \forall \vv\in \Xx\:.
\end{equation*}
Ici, comme dans \cite{KikSon1981}, nous choisissons $j(\vv):=\frac{1}{2}\norml \, \maxx\left(\vv_{\normalExt}-\gG_{\normalExt}\right)\normr_{0,\Gamma_C}^2$, pour tout $\vv\in \Xx$, où $\maxx$ représente la projection sur $\mathbb{R}_+$, aussi appelée fonction partie positive: $\maxx:=\max(0,\cdot)$. Il est clair qu'un tel $j$ satisfait les conditions énoncées plus haut, et que $j$ est de plus différentiable. La fonction $j_\varepsilon$ ainsi obtenue peut s'interpréter comme une régularisation de la fonction indicatrice $I_{\Kk}$.

Ensuite, on peut également utiliser le paramètre de pénalisation comme paramètre de régularisation pour définir $j_{T,\varepsilon}$, une approximation régulière de $j_T$:
\begin{equation*}
    j_{T,\varepsilon}(\vv) := \int_{\Gamma_C} Q_\varepsilon(\vv_{\tanExt})\:,
\end{equation*}
où la fonction $Q_\varepsilon: \mathbb{R}^{d-1} \to \mathbb{R}$ est une régularisation de $\mathfrak{F}s|\cdot|$ telle que, pour tout $z\in\mathbb{R}^{d-1}$,
\begin{equation*}
    Q_\varepsilon(z) := \left\{
    \begin{array}{lr}
         \dfrac{1}{2\varepsilon}\left( |z|^2 + (\varepsilon\mathfrak{F}s)^2\right) & \mbox{ si } |z|\leq \varepsilon\mathfrak{F}s ,\\
         \mathfrak{F}s|z| & \mbox{ sinon.}
    \end{array}
    \right.
\end{equation*}
Contrairement à $\mathfrak{F}s|\cdot|$, on voit effectivement que $Q_\varepsilon$ est Fréchet différentiable. De plus, sa dérivée est donnée par:
$$
	Q_\varepsilon'(z) = \frac{1}{\varepsilon}\qq(\varepsilon\mathfrak{F}s,z)\:, \ \forall z\in \mathbb{R}^{d-1}\:,
$$
où nous avons introduit la fonction $\qq$ définie sur $\mathbb{R}_+\times\mathbb{R}^{d-1}$ par: $\forall (\alpha,z) \in \mathbb{R}_+\times\mathbb{R}^{d-1}$,
\begin{equation*}
    \qq(\alpha,z) := \left\{
    \begin{array}{lr}
         z & \mbox{ si } |z|\leq \alpha ,\\
         \alpha\dfrac{z}{|z|} & \mbox{ sinon.}
    \end{array}
    \right.
\end{equation*}
\begin{rmrk}
	Pour tout $\alpha\in\mathbb{R}_+$, la fonction $\qq(\alpha,\cdot)$ correspond à la projection sur la boule $\mathcal{B}(0,\alpha)$ dans $\mathbb{R}^{d-1}$.
\end{rmrk}

On peut maintenant introduire la formulation pénalisée du problème de contact avec frottement de Tresca:
\begin{equation}
  \underset{\vv \in \Xx}{\inf} \ \varphi(\vv) + j_{T,\varepsilon}(\vv) + j_\varepsilon(\vv) \: .
  \label{OPTPena}
\end{equation}
Comme annoncé, il s'agit d'une version régularisée de \eqref{OPTTresca}, qui prend la forme d'un problème d'optimisation lisse sans contrainte. Grâce aux propriétés de $j_\varepsilon$ et $j_{T,\varepsilon}$, on peut montrer que ce problème est bien posé et qu'il peut se réécrire en une formulation variationnelle.
\begin{prpstn}
	Sous les hypothèses de la proposition \ref{PropExisUniqTresca}, le problème \eqref{OPTPena} admet une unique solution. De plus, $\uu_\varepsilon\in\Xx$ est solution de \eqref{OPTPena} si et seulement si elle vérifie la formulation variationelle:
    \begin{equation}
	    a(\uu_\varepsilon,\vv) + \frac{1}{\varepsilon} \prodL2{\maxx\left(\uu_{\varepsilon,\normalExt} -\gG_{\normalExt}\right), \vv_{\normalExt}}{\Gamma_C}  + \frac{1}{\varepsilon} \prodL2{\qq(\varepsilon\mathfrak{F}s,\uu_{\varepsilon,\tanExt}), \vv_{\tanExt}}{\Gamma_C} = L(\vv), \:\:\: \forall \vv \in \Xx\: .
        \label{FVPena}
	\end{equation}	
	\label{PropExisUniqPena}	
\end{prpstn}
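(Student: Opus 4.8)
The plan is to argue exactly as in the proofs of Propositions~\ref{PropExisUniqGliss} and~\ref{PropExisUniqTresca}, by invoking the classical results of convex analysis in the Hilbert space $\Xx$. Set $\Psi_\varepsilon := \varphi + j_{T,\varepsilon} + j_\varepsilon$. First I would check that $\Psi_\varepsilon$ is proper, continuous, coercive and strictly convex on $\Xx$. Properness and continuity are clear: $\varphi$ is continuous, and $j_\varepsilon$, $j_{T,\varepsilon}$ are finite on all of $\Xx$ and continuous, since $\maxx$ and $Q_\varepsilon$ are (Lipschitz) continuous and the trace operators $\Lambda_{\normalExt}$, $\Lambda_{\tanExt}$ are continuous from $\Xx$ into $L^2(\Gamma_C)$ and $\Ll^2(\Gamma_C)$. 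Strict convexity: $\varphi$ is strictly convex because $a$ is coercive; $j_\varepsilon$ is convex as the integral over $\Gamma_C$ of the convex function $t \mapsto \tfrac{1}{2}\maxx(t)^2$ composed with the affine map $\vv \mapsto \vv_{\normalExt} - \gG_{\normalExt}$; and $j_{T,\varepsilon}$ is convex because $Q_\varepsilon$ is convex (it is, up to an additive constant, the Moreau--Yosida regularisation of $\mathfrak{F}s|\cdot|$, which one checks directly from its two-piece definition). Hence $\Psi_\varepsilon$ is strictly convex. Coercivity: since $j_\varepsilon \geq 0$ and $Q_\varepsilon \geq 0$, one has $\Psi_\varepsilon \geq \varphi$, and $\varphi$ is coercive thanks to Korn's inequality (recall $|\Gamma_D| > 0$). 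By \cite[Chapitre~2]{ekeland1999convex}, $\Psi_\varepsilon$ admits a unique minimiser $\uu_\varepsilon \in \Xx$, which is the first assertion.

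Next I would show that $\Psi_\varepsilon$ is Gateaux differentiable on $\Xx$ with continuous derivative, and compute it term by term. One has $\prodD{\varphi'(\vv),\ww}{} = a(\vv,\ww) - L(\ww)$. The scalar function $t \mapsto \tfrac{1}{2}\maxx(t)^2$ is of class $\pazocal{C}^1$ on $\mathbb{R}$ with globally Lipschitz derivative $\maxx(\cdot)$; combining this with the continuity of $\Lambda_{\normalExt}$ and a standard differentiation-under-the-integral (Nemytskii) argument yields that $j_\varepsilon$ is (even Fréchet) differentiable, with
\[ \prodD{j_\varepsilon'(\vv),\ww}{} = \frac{1}{\varepsilon}\,\prodL2{\maxx\!\left(\vv_{\normalExt} - \gG_{\normalExt}\right), \ww_{\normalExt}}{\Gamma_C}. \]
Likewise $Q_\varepsilon$ is of class $\pazocal{C}^1$ on $\mathbb{R}^{d-1}$ with Lipschitz gradient $Q_\varepsilon'(z) = \tfrac{1}{\varepsilon}\qq(\varepsilon\mathfrak{F}s,z)$ — the matching of the two pieces and of their gradients on the sphere $|z| = \varepsilon\mathfrak{F}s$ being checked by hand — so that $j_{T,\varepsilon}$ is differentiable with
\[ \prodD{j_{T,\varepsilon}'(\vv),\ww}{} = \frac{1}{\varepsilon}\,\prodL2{\qq(\varepsilon\mathfrak{F}s,\vv_{\tanExt}), \ww_{\tanExt}}{\Gamma_C}. \]
Continuity of $\vv \mapsto \Psi_\varepsilon'(\vv)$ then follows from the Lipschitz continuity of $\maxx$ and of $\qq(\varepsilon\mathfrak{F}s,\cdot)$ together with the continuity of the trace operators.

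Finally, since $\Psi_\varepsilon$ is convex and Gateaux differentiable on the whole space $\Xx$, its minimiser $\uu_\varepsilon$ is characterised by the Euler equation $\Psi_\varepsilon'(\uu_\varepsilon) = 0$ in $\Xx^*$ (see \cite[Section~1.5]{oden1980theory} or \cite{ekeland1999convex}), that is, $\prodD{\Psi_\varepsilon'(\uu_\varepsilon),\vv}{} = 0$ for every $\vv \in \Xx$; spelling this out with the three expressions above gives precisely \eqref{FVPena}, and conversely any solution of \eqref{FVPena} is a critical point of the convex function $\Psi_\varepsilon$, hence its minimiser. Together with the existence and uniqueness from the first step, this completes the proof. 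I expect the only genuinely delicate point to be the rigorous differentiation of the nonsmooth composite terms $j_\varepsilon$ and $j_{T,\varepsilon}$: one must justify interchanging the derivative with the boundary integral and deal with the kink of $\maxx$ at $0$ and of $\qq(\alpha,\cdot)$ on $\{|z| = \alpha\}$. This is handled by observing that the underlying scalar/vector functions $\tfrac12\maxx(\cdot)^2$ and $Q_\varepsilon$ are themselves $\pazocal{C}^1$ with globally Lipschitz derivative, so that composition with the continuous linear trace operators and integration over $\Gamma_C$ preserve $\pazocal{C}^1$-regularity, with derivative given by the chain rule.
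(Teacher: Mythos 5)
Your proposal is correct and follows essentially the same route as the paper: existence and uniqueness of the minimiser of $\varphi + j_\varepsilon + j_{T,\varepsilon}$ via strict convexity, coercivity and continuity, then the characterisation \eqref{FVPena} as the Euler optimality condition of this differentiable convex functional on the vector space $\Xx$. The only difference is that you spell out the Fréchet differentiability of $j_\varepsilon$ and $j_{T,\varepsilon}$ (via the $\pazocal{C}^1$ regularity of $\tfrac12\maxx^2$ and $Q_\varepsilon$ and the chain rule with the trace operators), which the paper simply asserts.
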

\begin{proof}
	Puisque $j_\varepsilon$ et $j_{T,\varepsilon}$ sont convexes, continues et positives,  la fonctionnelle $\varphi + j_\varepsilon + j_{T,\varepsilon}$ est strictement convexe, coercive et continue sur $\Xx$. Elle admet donc un unique minimum $\uu_\varepsilon$ sur $\Xx$. Par ailleurs, comme cette fonctionnelle est Fréchet différentiable et $\Xx$ est un espace vectoriel, on sait que $\uu_\varepsilon$ est entièrement caractérisée par la condition d'optimalité:
	$$	
		\prodD{\varphi'(\uu_\varepsilon) + j_\varepsilon'(\uu_\varepsilon) + j_{T,\varepsilon}'(\uu_\varepsilon), \vv}{\Xx^*,\Xx} = 0\:, \quad \forall\vv \in \Xx\:,
	$$
	qui correspond précisément à \eqref{FVPena}.
\end{proof}

\begin{rmrk}
	La version pénalisée du problème de contact prend donc la forme d'une formulation variationnelle. Cette équation est non-linéaire et non-différentiable, mais elle est néanmoins plus facile à résoudre numériquement qu'une inéquation variationnelle. À la fin de cette section, nous présenterons une méthode numérique permettant de traiter ce type de problèmes. 
\end{rmrk}

\paragraph{Rôle du paramètre de pénalisation.}
Le paramètre $\varepsilon$ intervient à la fois dans $j_\varepsilon$ et dans $j_{T,\varepsilon}$. Nous avons vu que dans les deux cas, il pouvait s'interpréter comme un paramètre de régularisation, de sorte que lorsque $\varepsilon\to 0$, les fonctions $j_\varepsilon$ et $j_{T,\varepsilon}$ « se rapprochent » respectivement de $I_{\Kk}$ et $j_T$. Évidemment, lorsqu'on introduit une formulation approchée, on veut que la solution  de cette formulation soit une bonne approximation de la solution d'origine. Dans notre cas, cela revient à dire qu'on veut que $\uu_\varepsilon \to \uu$ quand $\varepsilon\to 0$. Pour les choix de régularisation que nous avons fait, on a bien un tel résultat de convergence, voir par exemple \cite{chouly2013convergence}.
\begin{thrm}
	Sous les hypothèses de la proposition \ref{PropExisUniqPena}, on a $\uu_\varepsilon\to\uu$ fortement dans $\Xx$ lorsque $\varepsilon\to 0$.
\end{thrm}

\subsection{Formulation lagrangien Augmenté}

Cette approche consiste aussi en une régularisation, mais contrairement à la précédente, cette régularisation n'a pas d'impact sur la solution. En d'autres termes, là où la formulation pénalisée donne une solution $\uu_\varepsilon \neq \uu$ et dépendant du paramètre $\varepsilon$, la formulation lagrangien augmenté donne la solution $\uu$, indépendamment du paramètre de régularisation choisi. Le principe consiste à régulariser le lagrangien $\mathcal{L}$ de \eqref{PtSelleTresca} sans en changer le point selle. On obtient alors une version plus régulière de \eqref{LMFTresca:all}, qu'on peut résoudre par une méthode de type point fixe.

Historiquement, cette approche remonte à la fin des années 1960 avec \cite{Hes1969} et \cite{Pow1969} pour des problèmes d'optimisation non-linéaire sous contrainte d'égalité. Elle n'a été utilisée dans le cas des problèmes de contact que plus tard, dans les années 1980-1990, on peut citer notamment \cite{WriSimTay1985,LanTay1986} pour le contact glissant, et \cite{AlaCur1991,SimLau1992,LauSim1993,HeeCur1993} pour le contact frottant.
Pour le cadre théorique de la méthode en optimisation convexe non-lisse dans des espaces de Hilbert, nous citons \cite{ItoKun2000a}. Pour les applications à la résolution numérique de problèmes aux limites, nous renvoyons le lecteur à l'ouvrage de référence \cite{ForGlo1983a}.

\paragraph{Régularité de $\lambda$.}
Nous reprenons ici la discussion dans \cite[Section 4.4]{stadler2004infinite}. Pour qu'on puisse introduire la régularisation désirée, il faut que $(\lambda,\muu)\in L^2(\Gamma_C)\times \Ll^2(\Gamma_C)$. Comme nous l'avons mentionné dans la remarque \ref{RmkRegMu}, même si nous pouvons obtenir cette régularité pour $\muu$ en imposant $s\in L^2(\Gamma_C)$, rien ne garantit en général une telle régularité pour $\lambda$. Commençons donc par énoncer cette hypothèse nécessaire à l'application de la méthode, puis nous nous intéresserons à un cas particulier dans lequel elle est vérifiée.
\begin{hyp}
	 Le multiplicateur $\lambda$ est dans $L^2(\Gamma_C)$.
	\label{hyp:RegLambda}
\end{hyp}

Toujours dans \cite[Section 4.4]{stadler2004infinite}, l'auteur propose une condition suffisante pour que cette hypothèse soit vérifiée. 
\begin{prpstn}
 Soit $(\uu,\lambda,\muu)$ solution de \eqref{LMFTresca:all} avec $\ff\in \Ll^2(\Gamma_C)$, $\tauu\in\Hh^{\frac{1}{2}}(\Gamma_C)$. Si $\overline{\{ \uu_{\normalExt}-\gG_{\normalExt}=0\}} \subset \Gamma_C$, alors l'hypothèse \ref{hyp:RegLambda} est vérifiée.
\end{prpstn}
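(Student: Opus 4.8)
The plan is to exploit the complementarity relations \eqref{LMFTresca:2}--\eqref{LMFTresca:3} to localize $\lambda$ on the coincidence set, and then to invoke local regularity for the contact problem near that set, which by hypothesis sits well inside $\Gamma_C$. Throughout, recall that $\lambda=-\sigmaa_{\normalInt\!\normalExt}(\uu)$ and $\muu=-\sigmaa_{\normalInt\!\tanExt}(\uu)$ on $\Gamma_C$, that $\muu\in\Ll^2(\Gamma_C)$ by Remark \ref{RmkRegMu} (since $s\in L^2(\Gamma_C)$), and that $\uu$ solves the variational inequality \eqref{IVTresca}.

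First I would show that $\lambda$ is supported in the coincidence set $A:=\overline{\{\uu_{\normalExt}=\gG_{\normalExt}\}}$. From \eqref{LMFTresca:2}, $\lambda$ is a non-negative functional, so $\prodD{\lambda,\zeta}{\Gamma_C}\leq 0$ for $\zeta\leq 0$ as well. Given a bounded, non-negative $\psi\in H^{\frac12}(\Gamma_C)$ vanishing outside a set where $\uu_{\normalExt}-\gG_{\normalExt}\leq-\delta<0$, the trace $\uu_{\normalExt}-\gG_{\normalExt}+t\psi$ is still $\leq 0$ on $\Gamma_C$ for $t>0$ small; combining this with \eqref{LMFTresca:3} and the sign of $\lambda$ yields $t\,\prodD{\lambda,\psi}{\Gamma_C}=\prodD{\lambda,\uu_{\normalExt}-\gG_{\normalExt}+t\psi}{\Gamma_C}\leq 0$, hence $\prodD{\lambda,\psi}{\Gamma_C}=0$. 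Letting $\delta\downarrow 0$ and using density, $\lambda$ vanishes on the non-contact set, i.e. $\supp\lambda\subset A$. By the standing hypothesis, $A$ is a compact subset of the relatively open piece $\Gamma_C$, so $\mathrm{dist}(A,\partial\Gamma_C)>0$; in particular $A$ stays away from $\overline{\Gamma_D}\cup\overline{\Gamma_N}$.

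Next I would localize. Cover the compact set $A$ by finitely many balls $B_i$ centred on $A$ and small enough that $\partial\Omega\cap B_i\subset\Gamma_C$; it suffices to prove that $\sigmaa(\uu)$ has an $L^2$ trace on each $\partial\Omega\cap B_i'$, $B_i'\subset B_i$, since then $\lambda\in L^2$ on a neighbourhood of $A$, and $\lambda=0$ on $\Gamma_C\setminus A$, giving $\lambda\in L^2(\Gamma_C)$. On $\Omega\cap B_i$ the displacement $\uu$ solves the elasticity system $-\Divv\sigmaa(\uu)=\ff$ with $L^2$ data, subject on $\partial\Omega\cap B_i$ to the Signorini condition on the normal component and, on the tangential component, to the prescribed traction $\sigmaa_{\normalInt\!\tanExt}(\uu)=-\muu$ with $\muu\in\Ll^2(\Gamma_C)$. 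Since the geometry and the gap $\gG$ are smooth near $A$, the interior regularity theory for the Signorini problem — a tangential difference-quotient estimate along $\partial\Omega\cap B_i$ after flattening the boundary, followed by recovery of the normal derivatives from the equation — yields $\uu\in\Hh^2(\Omega\cap B_i')$, hence $\sigmaa(\uu)\in\Hh^1(\Omega\cap B_i')$ with trace in $H^{\frac12}\hookrightarrow L^2$ on $\partial\Omega\cap B_i'$, so $\lambda=-\sigmaa_{\normalInt\!\normalExt}(\uu)\in L^2(\partial\Omega\cap B_i')$. A finite cover then gives Hypothesis \ref{hyp:RegLambda}.

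The main obstacle is this last regularity step: controlling $\uu$ near the coincidence set, i.e. across the Signorini free boundary. For the scalar obstacle problem this is classical ($C^{1,1}$, hence $\Hh^2$, regularity), but for the elasticity system it is more delicate and needs sufficient smoothness of $\partial\Omega$ and of $\gG$ in a neighbourhood of $A$ — which is exactly why the hypothesis forces $A$ to avoid $\partial\Gamma_C$, where the transition between the mixed boundary conditions would destroy regularity. The friction contribution causes no trouble here precisely because $\muu\in\Ll^2(\Gamma_C)$. If one prefers to keep the regularity assumptions minimal, the difference-quotient computation can be run directly on \eqref{IVTresca}, estimating only the tangential derivatives of the boundary traces, which already suffices to place $\lambda$ in $L^2(\Gamma_C)$; that is the route I would actually carry out.
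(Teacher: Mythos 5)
Your overall strategy coincides with the paper's: kill $\lambda$ off the coincidence set via the complementarity relations, obtain $L^2$ regularity of $\sigmaa_{\normalInt\!\normalExt}(\uu)$ in the interior of $\Gamma_C$, and conclude from the compact containment $\overline{\{\uu_{\normalExt}-\gG_{\normalExt}=0\}}\subset\Gamma_C$. The difference is where the real work sits. The paper's entire regularity content is a citation: by \cite{kinderlehrer1981remarks}, $\lambda=-\sigmaa_{\normalInt\!\normalExt}(\uu)\in L^2_{loc}(\Gamma_C)$; once $\lambda$ is a function, \eqref{LMFTresca:2}--\eqref{LMFTresca:3} are read pointwise to give $\lambda=0$ a.e. on $\Gamma_C\setminus\{\uu_{\normalExt}-\gG_{\normalExt}=0\}$, and the hypothesis upgrades $L^2_{loc}$ to $L^2(\Gamma_C)$. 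You instead propose to re-derive that regularity by a tangential difference-quotient argument yielding $\uu\in\Hh^2$ near the coincidence set, and this step is only sketched. That is precisely the crux, and it is not routine: regularity of the elastic solution across the Signorini free boundary is delicate, and the friction term does not simply ``cause no trouble'' because $\muu\in\Ll^2(\Gamma_C)$ --- the functional $j_T$ is not invariant under tangential translations when $\mathfrak{F}s$ has no tangential regularity (here $s$ is only assumed to lie in $L^2(\Gamma_C)$), so difference quotients applied to \eqref{IVTresca} generate terms you cannot control without extra hypotheses. Citing the known local regularity result, as the paper does, is the intended move; asserting $\Hh^2$ and moving on leaves the decisive step unproved.

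A secondary point: your support argument is carried out at the $H^{-\frac{1}{2}}$--$H^{\frac{1}{2}}$ level before any regularity is known, which is legitimate, but the final step ``letting $\delta\downarrow 0$ and using density'' is not immediate. On $\Gamma_C\setminus\overline{\{\uu_{\normalExt}-\gG_{\normalExt}=0\}}$ the function $\uu_{\normalExt}-\gG_{\normalExt}$ is negative a.e. but not uniformly, so a test function supported in that open set need not be supported in $\{\uu_{\normalExt}-\gG_{\normalExt}\leq-\delta\}$ for any fixed $\delta>0$; you need a truncation or exhaustion argument, with some quasi-everywhere care since these level sets are only defined up to sets of zero capacity. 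The paper's ordering --- regularity first, pointwise complementarity second --- avoids this issue entirely, which is one more reason the citation-based route is the cleaner one here.
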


\begin{proof}
	Dans le cas où $\ff\in \Ll^2(\Gamma_C)$, $\tauu\in\Hh^{\frac{1}{2}}(\Gamma_C)$, et lorsque $\Omega$, $\Gamma_C$ sont suffisament réguliers, on sait d'après \cite{kinderlehrer1981remarks} que $\lambda=-\sigmaa_{\normalInt\!\normalExt}(\uu)\in L^2_{loc}(\Gamma_C)$.
Puis, puisque $\lambda$ est donc défini presque partout, on peut réécrire les conditions \eqref{LMFTresca:2} et \eqref{LMFTresca:3} en des conditions p.p. sur $\Gamma_C$, ce qui donne en particulier $\lambda=0$ p.p. sur $\Gamma_C\setminus \{ \uu_{\normalExt}-\gG_{\normalExt}=0\}$. Par conséquent, sous l'hypothèse $\{ \uu_{\normalExt}-\gG_{\normalExt}=0\}$ est strictement contenu dans $\Gamma_C$, il vient que $\lambda\in L^2_{loc}(\Gamma_C)$ implique $\lambda\in L^2(\Gamma_C)$.
\end{proof}

\begin{rmrk}
	La condition $\overline{\{ \uu_{\normalExt}-\gG_{\normalExt}=0\}} \subset \Gamma_C$ est en théorie difficile (voire impossible) à vérifier puisqu'elle dépend de $\uu$ qui n'est pas connue a priori. En pratique, dans de nombreux cas, la géométrie du problème et le choix de la zone de contact potentiel $\Gamma_C$ permettent de prédire qu'on aura bien cette condition. 
\end{rmrk}

\paragraph{Mise en équations.}
Soient $\gamma_1$, $\gamma_2>0$. On introduit le lagrangien augmenté $\mathcal{L}_{\gamma}$ défini par: $\forall (\vv,\eta,\xii) \in \Xx\times L^2(\Gamma_C)\times\Ll^2(\Gamma_C)$,
\begin{equation}
\begin{aligned}
	\mathcal{L}_\gamma(\vv,\eta,\xii) := \varphi(\vv) &+ \frac{1}{2\gamma_1} \left( \norml \maxx \left( \eta + \gamma_1(\vv_{\normalExt}-\gG_{\normalExt})\right) \normr_{0,\Gamma_C}^2 - \norml \eta \normr_{0,\Gamma_C}^2 \right) \\
	&+ \frac{1}{2\gamma_2} \left( \norml \qq(\mathfrak{F}s,\xii + \gamma_2\vv_{\tanExt})\normr_{0,\Gamma_C}^2 - \norml \xii \normr_{0,\Gamma_C}^2 \right) \:,
\end{aligned}
\label{DefLagAug}
\end{equation}
On peut maintenant énoncer le problème de point selle associé à ce nouveau lagrangien:
\begin{equation}
	\inf_{\vv\in\Xx} \sup_{(\eta,\xii)\in L^2\times \Ll^2} \mathcal{L}_\gamma(\vv,\eta,\xii)\:.
	\label{PtSelleLagAug}
\end{equation}

\begin{rmrk}
	Contrairement à \eqref{PtSelleTresca}, le problème de point selle associé au lagrangien augmenté est posé sur tout l'espace vectoriel $\Xx\times L^2(\Gamma_C)\times\Ll^2(\Gamma_C)$ sur lequel $\mathcal{L}_\gamma$ est défini.
\end{rmrk}

\begin{thrm}
	On suppose que l'hypothèse \ref{hyp:RegLambda} est vérifiée. Alors, pour tous $\gamma_1$, $\gamma_2>0$, le problème \eqref{PtSelleLagAug} admet pour unique solution le triplet $(\uu,\lambda,\muu)$ solution de \eqref{PtSelleTresca}.
\end{thrm}

\begin{proof}
	Nous renvoyons par exemple à \cite{ito2008lagrange} ou \cite{cohen2004optimisation} pour une présentation dans le cas plus général. L'idée est de commencer par montrer que l'expression \eqref{DefLagAug} est obtenue à partir d'une régularisation de type Moreau-Yosida appliquée à $\mathcal{L}$, voir le chapitre \ref{chap:3.1} pour le cas glissant. Ensuite, grâce aux propriétés de cette régularisation, on peut montrer que la solution de \eqref{PtSelleTresca} vérifie les conditions d'optimalité associées à \eqref{PtSelleLagAug}, et que \eqref{PtSelleLagAug} admet une unique solution entièrement caractérisée par ces conditions d'optimalité. Ce qui donne le résultat désiré.  
\end{proof}

Ce problème nous donne bien la même solution que \eqref{PtSelleTresca}, et ce quel que soit le choix des paramètres $\gamma_1$, $\gamma_2$. Maintenant, nous pouvons donc caractériser cette solution à partir des conditions d'optimalité associées à \eqref{PtSelleLagAug}, qui s'écriront comme des équations.

\begin{prpstn}
	Soient $\gamma_1$, $\gamma_2>0$. Le triplet $(\uu,\lambda,\muu)\in \Xx\times L^2(\Gamma_C)\times\Ll^2(\Gamma_C)$ est solution de \eqref{PtSelleLagAug} si et seulement si il vérifie les conditions d'optimalité:
  \begin{subequations} \label{LMFLagAug:all}
    \begin{align}
    a(\uu,\vv) - L(\vv) + \prodL2{\lambda, \vv_{\normalExt}}{\Gamma_C} + \prodL2{\muu, \vv_{\tanExt}}{\Gamma_C} &= 0\:, \hspace{1em} \forall \vv \in \Xx\:, \label{LMFLagAug:1}\\
    \prodL2{\lambda, \zeta }{\Gamma_C} - \prodL2{ \maxx\left( \lambda + \gamma_1(\uu_{\normalExt}-\gG_{\normalExt}) \right), \zeta}{\Gamma_C} &= 0\:, \hspace{1em} \forall \:\zeta \in L^2(\Gamma_C)\:, \label{LMFLagAug:2}\\
    \prodL2{ \muu, \nuu }{\Gamma_C} - \prodL2{ \qq(\mathfrak{F}s,\muu+\gamma_2\uu_{\tanExt}, \nuu }{\Gamma_C} &= 0\:, \hspace{1em} \forall \nuu \in \Ll^2(\Gamma_C)\:. \label{LMFLagAug:3}
    \end{align}
  \end{subequations}
\end{prpstn}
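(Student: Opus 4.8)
The plan is to read \eqref{PtSelleLagAug} as an unconstrained min-max over the whole product space $\Xx\times\big(L^2(\Gamma_C)\times\Ll^2(\Gamma_C)\big)$: since $\mathcal{L}_\gamma$ is convex in the primal variable, concave in the dual pair, and Gâteaux differentiable (indeed Fréchet differentiable) in all variables, a triplet is a saddle point if and only if the three partial derivatives of $\mathcal{L}_\gamma$ vanish there, after which the statement reduces to computing those derivatives. So I would first record the two structural facts. For fixed $(\eta,\xii)$ the map $\vv\mapsto\mathcal{L}_\gamma(\vv,\eta,\xii)$ is convex and differentiable on $\Xx$, because $\varphi$ is quadratic and coercive and because $\mathcal{L}_\gamma$ is, by construction, $\varphi$ plus two Moreau-Yosida regularizations — of the indicator of the unilateral constraint and of $h_T$ — each of which is a $\pazocal{C}^1$ convex functional composed with the continuous linear trace maps $\vv\mapsto\vv_{\normalExt}$ and $\vv\mapsto\vv_{\tanExt}$ (this is the same kind of smoothing that produces $Q_\varepsilon$ in the penalized setting, with derivatives featuring $\maxx(\cdot)$ and $\qq(\mathfrak{F}s,\cdot)$). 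For fixed $\vv$ the map $(\eta,\xii)\mapsto\mathcal{L}_\gamma(\vv,\eta,\xii)$ is concave and differentiable; for the normal part this rests on the identity $\norml\maxx(w)\normr_{0,\Gamma_C}^2=\norml w\normr_{0,\Gamma_C}^2-\norml\min(w,0)\normr_{0,\Gamma_C}^2$, which rewrites $\frac{1}{2\gamma_1}\big(\norml\maxx(\eta+\gamma_1(\vv_{\normalExt}-\gG_{\normalExt}))\normr_{0,\Gamma_C}^2-\norml\eta\normr_{0,\Gamma_C}^2\big)$ as an affine function of $\eta$ minus $\frac{1}{2\gamma_1}\norml\min(\eta+\gamma_1(\vv_{\normalExt}-\gG_{\normalExt}),0)\normr_{0,\Gamma_C}^2$, and the tangential part follows in the same spirit from the Moreau-Yosida origin of $\mathcal{L}_\gamma$ already exploited to prove the preceding theorem and detailed for the sliding case in Chapter~\ref{chap:3.1}.

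With these facts in hand, I would invoke the classical characterization of saddle points of a partially differentiable convex-concave functional on a product of two vector spaces without admissibility constraints, see e.g. \cite[Chapitre 1]{ekeland1999convex}: the triplet $(\uu,\lambda,\muu)$ solves \eqref{PtSelleLagAug} if and only if $D_\vv\mathcal{L}_\gamma(\uu,\lambda,\muu)=0$ in $\Xx^*$ and $D_{(\eta,\xii)}\mathcal{L}_\gamma(\uu,\lambda,\muu)=0$. The forward implication is Fermat's rule applied separately to the inner minimization in $\vv$ and to the inner maximization in $(\eta,\xii)$; the converse uses convexity in $\vv$ to upgrade $D_\vv\mathcal{L}_\gamma=0$ into $\mathcal{L}_\gamma(\uu,\lambda,\muu)\le\mathcal{L}_\gamma(\vv,\lambda,\muu)$ and concavity in $(\eta,\xii)$ to upgrade $D_{(\eta,\xii)}\mathcal{L}_\gamma=0$ into $\mathcal{L}_\gamma(\uu,\eta,\xii)\le\mathcal{L}_\gamma(\uu,\lambda,\muu)$.

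Finally I would compute the three derivatives explicitly, the chain rule being licit since the composites at hand are $\pazocal{C}^1$. Differentiating $\mathcal{L}_\gamma$ in $\eta$ and in $\xii$ yields directly \eqref{LMFLagAug:2} and \eqref{LMFLagAug:3}, while differentiating in $\vv$ yields, for all $\vv\in\Xx$,
\begin{equation*}
  a(\uu,\vv)-L(\vv)+\prodL2{\maxx\big(\lambda+\gamma_1(\uu_{\normalExt}-\gG_{\normalExt})\big),\vv_{\normalExt}}{\Gamma_C}+\prodL2{\qq(\mathfrak{F}s,\muu+\gamma_2\uu_{\tanExt}),\vv_{\tanExt}}{\Gamma_C}=0\:.
\end{equation*}
Substituting \eqref{LMFLagAug:2} and \eqref{LMFLagAug:3} into this identity collapses it to \eqref{LMFLagAug:1}, and conversely \eqref{LMFLagAug:all} plainly returns the three stationarity equations, which closes the equivalence. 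I expect the real work to be the preliminary step — proving that $\mathcal{L}_\gamma$ is genuinely concave, not merely differentiable, in the dual pair $(\eta,\xii)$: the $-\norml\eta\normr_{0,\Gamma_C}^2$ and $-\norml\xii\normr_{0,\Gamma_C}^2$ terms are exactly what make this true, and handling the tangential (friction) term correctly is where one must lean on the Moreau-Yosida construction rather than a naïve pointwise estimate.
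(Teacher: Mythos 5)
Your proposal is correct and follows essentially the same route as the paper: the paper's proof likewise recalls that $\frac{1}{2}\norml \Proj_B(\cdot)\normr_H^2$ is differentiable with derivative given by the projection, deduces the differentiability of $\mathcal{L}_\gamma$ because $\maxx$ and $\qq(\mathfrak{F}s,\cdot)$ are projection operators, and then states that writing the optimality (stationarity) conditions of the saddle-point problem \eqref{PtSelleLagAug} yields \eqref{LMFLagAug:all} directly. What you add — the explicit convex–concave structure justifying the equivalence between saddle points and stationarity, and the substitution of \eqref{LMFLagAug:2}–\eqref{LMFLagAug:3} to collapse the $\vv$-stationarity equation into \eqref{LMFLagAug:1} — is exactly the detail the paper leaves implicit, handled the way its Moreau–Yosida construction intends.
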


\begin{proof}
	Avant toute chose, on rappelle que si $B$ est un sous-ensemble convexe fermé non-vide d'un espace de Hilbert $H$, alors l'opérateur de projection sur $B$, noté $\Proj_B$, est bien défini, et de plus l'application $\frac{1}{2}\norml \Proj_B(\cdot) \normr_H^2 : H\to \mathbb{R}$ est Fréchet différentiable. En outre, la dérivée directionnelle de cette application en un point $x$ dans la direction $h$ est donnée par $(\Proj_B(x),h)_H$. D'après ces propriétés, $\mathcal{L}_\gamma$ est bien différentiable car $\maxx$ et $\qq(\mathfrak{F}s,\cdot)$ sont des opérateurs de projection, et l'écriture des conditions d'optimalité associées à \eqref{PtSelleLagAug} donne directement \eqref{LMFLagAug:all}.
\end{proof}

\begin{rmrk}
	Comme toutes les fonctions sont dans $L^2(\Gamma_C)/\Ll^2(\Gamma_C)$, on peut réécrire les conditions \eqref{LMFLagAug:2} et \eqref{LMFLagAug:3} comme des égalités presque partout sur $\Gamma_C$:
  \begin{subequations} \label{CondLagAugPP:all}
    \begin{align}
    \lambda &= \maxx\left( \lambda + \gamma_1(\uu_{\normalExt}-\gG_{\normalExt}) \right)\:, \hspace{1em} \mbox{ p.p. sur } \Gamma_C\:, \label{CondLagAugPP:1}\\
    \muu &= \qq(\mathfrak{F}s,\muu + \gamma_2\uu_{\tanExt}) \:, \hspace{1em} \mbox{ p.p. sur } \Gamma_C\:. \label{CondLagAugPP:2}
    \end{align}
  \end{subequations}
  Lorsque $(\uu,\lambda,\muu)$ est solution de \eqref{LMFTresca:all} et $(\lambda,\muu)\in L^2(\Gamma_C)\times\Ll^2(\Gamma_C)$, alors, à partir des conditions $\uu\in\Kk$, \eqref{LMFTresca:2}, \eqref{LMFTresca:3} on peut montrer que $(\uu,\lambda)$ vérifie \eqref{CondLagAugPP:1} pour tout $\gamma_1>0$. De la même manière, à partir de \eqref{LMFTresca:4} et \eqref{LMFTresca:5} on peut montrer que $(\uu,\muu)$ vérifie \eqref{CondLagAugPP:2} pour tout $\gamma_2>0$. Nous renvoyons à \cite{stadler2004infinite} pour le détail des calculs.
\end{rmrk}

\paragraph{Méthode itérative.}
Dans ce qui précède, nous avons proposé une formulation régularisée équivalente dans les cas où notre problème d'origine est suffisamment régulier. Cette formulation régularisée s'écrit comme une formulation variationnelle non-linéaire mixte. Une des approches pour résoudre ce genre de problèmes consiste à découpler les variables en utilisant un algorithme itératif de type point fixe. C'est celle que nous suivons ici avec la \textit{méthode de lagrangien augmenté}, dont nous rappelons les différentes étapes ci-dessous.

\begin{algorithm}
\caption*{\textbf{Algorithme:} Méthode de lagrangien augmenté}
\begin{enumerate}
  \item Choisir $(\lambda^0,\muu^0)\in L^2(\Gamma_C)\times\Ll^2(\Gamma_C)$ et initialiser $k=0$.
  \item Choisir $\gamma_1^{k+1}$, $\gamma_2^{k+1}>0$, puis trouver $\uu^{k+1}\in\Xx$ la solution de, $\forall \vv \in \Xx$,
    \begin{equation} 
    \begin{aligned} 
        a(\uu^{k+1},\vv) &+ \prodL2{\maxx\left( \lambda^k + \gamma_1^{k+1} (\uu^{k+1}_{\normalExt}-\gG_{\normalExt}) \right), \vv_{\normalExt}}{\Gamma_C} \\
        &+ \prodL2{ \qq\left( \mathfrak{F} s ,\muu^k + \gamma_2^{k+1} \uu^{k+1}_{\tanExt} )\right), \vv_{\tanExt}}{\Gamma_C} = L(\vv) \:.
        \label{IterLagAug}
    \end{aligned}
    \end{equation}
    \item Mettre à jour les multiplicateurs selon la règle: 
    \begin{subequations}\label{MAJMultLagAug:all}
      \begin{align}
      \lambda^{k+1} &= \maxx\left( \lambda^k + \gamma_1^{k+1} (\uu^{k+1}_{\normalExt}-\gG_{\normalExt}) \right) \:\: \mbox{ p.p. sur } \Gamma_C \: , \label{MAJMultLagAug:1}\\
        \muu^{k+1} &= \qq\left( \mathfrak{F} s ,\muu^k + \gamma_2^{k+1} \uu^{k+1}_{\tanExt} )\right)  \:\: \mbox{ p.p. sur } \Gamma_C \: \label{MAJMultLagAug:2}.
      \end{align}
    \end{subequations}
    \item Tant que le critère de convergence n'est pas respecté, mettre à jour $k=k+1$ et retourner à l'étape 2.
\end{enumerate}
\end{algorithm}

\begin{rmrk}
	Contrairement à la méthode d'Uzawa (voir \cite{uzawa1958iterative}), $\lambda^{k+1}$ et $\muu^{k+1}$ ne sont pas donnés explicitement en fonction de la solution de l'itération précédente. La méthode de lagrangien augmenté est d'ailleurs parfois vue comme une version implicite de la méthode d'Uzawa, cf \cite{ItoKun2000a}. Du point de vue de la résolution, la conséquence de ce choix est que le problèmes aux limites \eqref{IterLagAug} à résoudre à chaque itération est non-linéaire. Bien que la résolution d'une itération soit plus compliquée que dans le cas complètement explicite, cette méthode permet d'obtenir un résultat de convergence indépendant du choix des paramètres $\gamma_1$, $\gamma_2$, comme nous allons le voir. 
\end{rmrk}

\begin{rmrk}
	La formulation \eqref{IterLagAug} est exactement du même type que la formulation pénalisée \eqref{FVPena}: à savoir non-linéaire et non-différentiable. Elles pourront donc être traitées par la même méthode numérique (voir sous-section suivante).
\end{rmrk}

Il est bien connu que la méthode de lagrangien augmenté converge vers la solution de \eqref{PtSelleLagAug}, et ce quel que soit le choix des paramètres de régularisation. Nous énonçons le résultat suivant (voir par exemple \cite{stadler2004infinite} pour une preuve détaillée dans le cas particulier du contact avec frottement de Tresca).

\begin{thrm}
  Pour n'importe quel choix de paramètres $0<\gamma_1^1\leq\gamma_1^2\leq \cdots$, et $0<\gamma_2^1\leq \gamma_2^2\leq \cdots$, les itérées $\uu^k$ convergent vers $\uu$ fortement dans $\Xx$. De plus, les multiplicateurs $(\lambda^k,\muu^k)$ convergent vers $(\lambda,\muu)$ faiblement dans $L^2(\Gamma_C)\times\Ll^2(\Gamma_C)$.
  \label{ThmCvLagAug}
\end{thrm}

\begin{rmrk}
	Si on choisit $(\lambda^0,\muu^0)\in H^{\frac{1}{2}}(\Gamma_C)\times\Hh^\frac{1}{2}(\Gamma_C)$, alors compte tenu des formules \eqref{MAJMultLagAug:all}, on aura également $(\lambda^k,\muu^k)\in H^{\frac{1}{2}}(\Gamma_C)\times\Hh^\frac{1}{2}(\Gamma_C)$ pour tout $k>0$.
\end{rmrk}

\subsection{Résolution numérique}

Compte tenu des formulations approchées auxquelles nous nous intéressons, nous avons besoin d'une méthode numérique capable de traiter les formulations variationnelles (elliptiques) non-linéaires et non-différentiables. Ceci exclut donc la méthode de Newton  « classique ». Cependant, de nombreux travaux proposent des généralisations de la méthode de Newton dans les cas non-différentiables. On parle alors de \textit{méthode de Newton généralisée} ou \textit{méthode de Newton semi-lisse}. Pour le cas de la dimension finie, nous citons par exemple les papiers \cite{qi1993nonsmooth,qi1993convergence,alart1997methode} et l'ouvrage \cite{facchinei2007finite}. Pour le cas de la dimension infinie, nous renvoyons à \cite{chen2000smoothing,kummer2000generalized,hintermuller2002primal,ito2003semi,ulbrich2002semismooth}
ainsi qu'à \cite[Chapitre 8]{ito2008lagrange} pour la méthode générale, et à \cite{stadler2004semismooth,kunisch2005generalized,stadler2004infinite} pour des applications aux problèmes de contact.

\paragraph{Généralités.}
Nous suivons ici l'approche proposée dans \cite{stadler2004infinite} et \cite[Chapitre 8]{ito2008lagrange}. On commence par rappeler les définitions et principaux résultats. Le principe de base consiste à introduire une notion plus faible de différentiabilité sous laquelle on sera capable d'obtenir un résultat de convergence pour la méthode itérative proposée.

Dans cette section, $X$, $Y$ et $Z$ sont des espaces de Banach. On définit tout d'abord la notion de différentiabilité au sens de Newton telle qu'introduite dans \cite{hintermuller2002primal}.

\begin{dfntn}
	On dit que la fonction  $F:X\to Y$ est \textit{différentiable au sens de Newton} ou \textit{Newton différentiable} sur l'ouvert $U\subset X$ s'il existe une application $G:U\to \mathcal{L}(X,Y)$ telle que:
	$$
		\lim_{h\to 0}\ \frac{1}{\norml h \normr_X} \norml F(x+h) - F(x) - G(x+h)h \normr_Y \ = \ 0\:, 
	$$
	pour tout $x\in U$. L'application $G$ est alors appelée \textit{dérivée généralisée} de $F$.
\end{dfntn}

\begin{rmrk}
	La définition précédente requiert l'existence d'une dérivée généralisée, mais il n'est pas nécessaire qu'elle soit unique.
\end{rmrk}

On donne également une règle de dérivation en chaîne pour les dérivées généralisées dans un cas qui nous sera utile: celui de la composée d'une application affine et d'une fonction non-lisse (voir \cite[Théorèmes 2.10 et 2.11]{stadler2004infinite}).

\begin{thrm}
	Soient $L:X\to Y$ une application affine de la forme $Lx := Bx+b$ avec $B\in \mathcal{L}(X,Y)$, $F:Y\to Z$ une fonction Newton différentiable sur un ouvert $U$, et $G$ une dérivée généralisée de $F$. Si $L^{-1}(U)$ est non vide, alors $F\circ L$ est Newton différentiable sur $L^{-1}(U)$, et de plus $G(Lx)B$ est une dérivée généralisée de $F\circ L$ en $x$ pour tout $x\in L^{-1}(U)$.
	\label{ThmNChainRule1}
\end{thrm} 

\begin{thrm}
	Soient $L:Y\to Z$ une application affine de la forme $Ly := By+b$ avec $B\in \mathcal{L}(Y,Z)$, $F:X\to Y$ une fonction Newton différentiable sur un ouvert $U$, et $G$ une dérivée généralisée de $F$ telle que $\left\{ \norml G(x)\normr \ |\ x\in U \right\}$ est borné. Alors $L\circ F$ est Newton différentiable sur $U$, et de plus $BG(x)$ est une dérivée généralisée de $L\circ F$ en $x$ pour tout $x\in U$.
	\label{ThmNChainRule2}
\end{thrm} 

Supposons qu'on cherche à résoudre l'équation $F(x)=0$, avec $F$ non-lisse. On peut utiliser la notion de différentiabilité introduite plus haut pour proposer une méthode de type Newton. L'énoncé suivant présente une telle méthode, et donne un résultat de convergence.

\begin{thrm}
	On suppose que $x^*\in X$ est une solution de $F(x)=0$, que $F$ est Newton différentiable sur $U$ voisinage de $x^*$. Soit $G$ une dérivée généralisée de $F$ sur $U$ telle que $G(x)$ est inversible pour tout $x\in U$. Si l'ensemble $\left\{ \norml G(x)^{-1}\normr \ |\ x\in U \right\}$ est borné, alors la suite d'itérées $\{ x^l \}_l$ définie par
	$$
		x^{l+1} = x^l - G(x^l)^{-1}F(x^l)
	$$
	converge super-linéairement vers $x^*$, à condition que $\norml x^0-x^*\normr_X$ soit suffisamment petit.
	\label{ThmCvgSSN}
\end{thrm}

Nous renvoyons le lecteur à \cite[Chapitre 8]{ito2008lagrange} (entre autres) pour une preuve de ce résultat.

\paragraph{Cas du contact.} Nous nous appuyons ici sur la méthode de Newton semi-lisse proposée dans \cite[Section 3.8]{stadler2004infinite} pour résoudre une version régularisée du problème de Tresca. En particulier, l'auteur prouve la différentiabilité au sens de Newton pour $\maxx$ et $\qq(\mathfrak{F}s,\cdot)$ (voir définitions p.43). Avant d'énoncer ce résultat, on introduit les fonctions $G_+:\mathbb{R}\to\mathcal{L}(\mathbb{R})$ et $G_s:\mathbb{R}^{d-1}\to\mathcal{L}(\mathbb{R}^{d-1})$ telles que: pour tout $x\in\mathbb{R}$, pour tout $z\in\mathbb{R}^{d-1}$,
$$
	G_+(x):= \left\{
    \begin{array}{lr}
         0 & \mbox{ si } x\leq 0 ,\\
         1 & \mbox{ sinon,}
    \end{array}
    \right.
    \ \mbox{ et } \ 
    G_s(z) := \left\{
    \begin{array}{lr}
         \Id & \mbox{ si } |z|\leq \mathfrak{F}s ,\\
         \frac{\mathfrak{F}s}{|z|}\big(\Id  -  \frac{1}{|z|^2} z \otimes  z\big) & \mbox{ sinon,}
    \end{array}
    \right.
$$
où $z\otimes z$ représente le produit extérieur de $z$ par lui-même, parfois écrit $z^T z$ au sens matriciel.

\begin{notation}
	Dans la suite, par abus de notation, $\maxx$ pourra désigner à la fois la fonction de $\mathbb{R}$ dans $\mathbb{R}$ et l'opérateur correspondant de $L^q(\Gamma_C)$ dans $L^p(\Gamma_C)$. Il en va de même pour $\qq$, $G_+$ et $G_s$.
\end{notation}

\begin{prpstn}
	Soient $1\leq p < q < +\infty$. Alors $\maxx:L^q(\Gamma_C)\to L^p(\Gamma_C)$ et $\qq(\mathfrak{F}s,\cdot):\Ll^q(\Gamma_C)\to \Ll^p(\Gamma_C)$ sont Newton différentiables sur leurs ensembles de définition. De plus, $G_+:L^q(\Gamma_C)\to\mathcal{L}(L^q(\Gamma_C),L^p(\Gamma_C))$ et $G_s:\Ll^q(\Gamma_C)\to \mathcal{L}(\Ll^q(\Gamma_C),\Ll^p(\Gamma_C))$ sont des dérivées généralisées de $\maxx$ et $\qq(\mathfrak{F}s,\cdot)$, respectivement.
	\label{PropNDiffPQ}
\end{prpstn}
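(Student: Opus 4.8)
The plan is to reduce everything to a pointwise, finite-dimensional analysis of the scalar function $\maxx$ and of the ball projection $\qq(\mathfrak{F}s,\cdot)$, and then to \emph{lift} the pointwise Newton differentiability to the corresponding Nemytskii (superposition) operators by exploiting the norm gap $p<q$ together with the finiteness of $|\Gamma_C|$. This norm gap is exactly what makes an integrated $o(\|h\|)$ estimate possible even though the maps involved are only Lipschitz and piecewise smooth.

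First I would treat the scalar operator $\maxx:L^q(\Gamma_C)\to L^p(\Gamma_C)$. Fix $u,h\in L^q(\Gamma_C)$ and set, for a.e.\ $x$,
\[ r_h(x):=\maxx(u(x)+h(x))-\maxx(u(x))-G_+(u(x)+h(x))\,h(x). \]
A short case distinction on the signs of $u(x)$ and $u(x)+h(x)$ shows that $r_h(x)=0$ whenever $u(x)$ and $u(x)+h(x)$ are both $>0$ or both $\le 0$ (in particular when $u(x)=0$), while on the ``sign-change set''
\[ A_h:=\{\,x\in\Gamma_C\ :\ u(x)(u(x)+h(x))\le 0,\ u(x)\ne 0\,\} \]
one has $|r_h(x)|\le|h(x)|$ and $|u(x)|\le|h(x)|$. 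Hölder's inequality (exponents $q/p$ and its conjugate) then gives $\|r_h\|_{L^p(\Gamma_C)}\le\|h\|_{L^q(\Gamma_C)}\,|A_h|^{\frac1p-\frac1q}$, so it remains to see that $|A_h|\to 0$ as $\|h\|_{L^q}\to 0$. This follows from $A_h\subset\{0<|u|\le|h|\}$: splitting at a level $\delta>0$, the part where $|u|>\delta$ has measure $\le\delta^{-q}\|h\|_{L^q}^q$, while the part where $0<|u|\le\delta$ is contained in $\{0<|u|\le\delta\}$, whose measure tends to $0$ as $\delta\to 0$ since $\{0<|u|\le\delta\}\downarrow\emptyset$ and $|\Gamma_C|<\infty$. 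As $\frac1p-\frac1q>0$, this yields Newton differentiability with generalized derivative $G_+$, and $G_+(v)$, being multiplication by the characteristic function of $\{v>0\}$, does belong to $\mathcal{L}(L^q(\Gamma_C),L^p(\Gamma_C))$ (again because $|\Gamma_C|<\infty$).

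For $\qq(\mathfrak{F}s,\cdot):\Ll^q(\Gamma_C)\to\Ll^q(\Gamma_C)$ I would run the same scheme. Writing $\alpha:=\mathfrak{F}s\ge 0$, the map $\qq(\alpha(x),\cdot)$ is the metric projection of $\mathbb{R}^{d-1}$ onto $\overline{\mathcal{B}(0,\alpha(x))}$, hence globally $1$-Lipschitz, equal to the identity on the inactive region $\{|z|<\alpha(x)\}$ (where $G_s=\Id$) and equal to $z\mapsto\alpha(x)z/|z|$ on the active region $\{|z|>\alpha(x)\}$ (whose differential is precisely $G_s$). Defining $r_h(x):=\qq(\alpha(x),z(x)+h(x))-\qq(\alpha(x),z(x))-G_s(z(x)+h(x))h(x)$, one checks that $r_h(x)=0$ when $z(x)$ and $z(x)+h(x)$ both lie in the inactive region, while on the ``transition set'' $N_h$, where they straddle the sphere $\{|w|=\alpha(x)\}$, one has $|r_h(x)|\le 2|h(x)|$ together with $\bigl|\,|z(x)|-\alpha(x)\,\bigr|\le|h(x)|$ --- this last inequality plays the role of $|u|\le|h|$ above, so the Hölder estimate and the splitting-at-level-$\delta$ argument (applied now to $x\mapsto|z(x)|-\alpha(x)$) again force that contribution to vanish. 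On the ``both active'' region one uses the $C^1$ structure of $z\mapsto\alpha z/|z|$, estimating the first-order Taylor remainder; $G_s$ is uniformly bounded (by $1$ in operator norm), so $BG_s$ and $G_sB$ are legitimate generalized derivatives and Theorems \ref{ThmNChainRule1} and \ref{ThmNChainRule2} can be invoked once one composes with trace operators.

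I expect the main obstacle to be exactly this last point: controlling the Newton remainder of $\qq(\mathfrak{F}s,\cdot)$ on the set where both $z$ and $z+h$ are strictly active. Unlike $\maxx$, which is piecewise \emph{affine} so that the remainder vanishes identically off the transition set, $z\mapsto\alpha z/|z|$ is only piecewise $C^1$ and its second derivative scales like $\alpha/|w|^2$, so one must argue that the region where this bound degenerates (equivalently, where $|h|$ is not small compared with $|z|$) has small measure, and absorb it with the norm gap. For the full technical execution of this step --- indeed for the general principle that the superposition operator of a Lipschitz, piecewise-$C^1$ function is Newton differentiable from $L^q$ to $L^p$ when $p<q$ --- the clean reference is \cite[Section~3.8]{stadler2004infinite} (see also \cite[Chapitre~8]{ito2008lagrange}), and the proof of the proposition can be completed by citing it after the scalar argument above.
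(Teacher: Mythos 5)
Your proposal is correct, and it in fact does more than the paper: the paper gives no proof of this proposition at all, but simply quotes it from \cite[Section 3.8]{stadler2004infinite} (with \cite[Chapitre 8]{ito2008lagrange} for the general framework) — the very reference you fall back on at the end. Your argument for $\maxx$ is the standard complete proof (vanishing of the Newton remainder off the sign-change set $A_h$, the pointwise bounds $|r_h|\leq|h|$ and $|u|\leq|h|$ on $A_h$, Hölder with exponent $q/p$, and $|A_h|\to 0$ by Chebyshev plus continuity from above of the measure of $\{0<|u|\leq\delta\}$), and it is sound as written, including the remark that $G_+(v)\in\mathcal{L}(L^q(\Gamma_C),L^p(\Gamma_C))$ because $|\Gamma_C|<\infty$. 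For $\qq(\mathfrak{F}s,\cdot)$ your outline is the right one, but note a subtlety that your ``splitting at level $\delta$ applied to $|z|-\mathfrak{F}s$'' silently skips: the analogue of the scalar set $\{u=0\}$, namely $\{x\in\Gamma_C : |z(x)|=\mathfrak{F}(x)s(x)\}$, may have positive measure, and there the remainder is \emph{not} identically zero (the ball projection is not piecewise affine, unlike $\maxx$), only pointwise of second order; controlling its integrated contribution requires either a uniform lower bound on $\mathfrak{F}s$ yielding $|r_h(x)|\leq C|h(x)|^2$, or an $\varepsilon$-splitting of $\{|h|\leq\varepsilon\}$ against $\{|h|>\varepsilon\}$ of the same kind you propose for the both-active region. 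That, together with the Taylor-remainder estimate on the active set, is exactly the technical content of the cited proof, so finishing by invoking \cite[Section 3.8]{stadler2004infinite} is legitimate — and still leaves your write-up strictly more detailed than the paper's treatment, which delegates the whole statement to the literature, whereas yours isolates precisely which step needs it.
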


Avant d'utiliser ce résultat pour traiter les formulations qui nous intéressent, notons que pour tous $x\in\mathbb{R}$, $z\in\mathbb{R}^{d-1}$, les applications $G_+(x)$ et $G_s(z)$ vérifient:
\begin{itemize}
	\item $G_+(x)y\cdot y \geq 0$ pour tout $y\in\mathbb{R}$,
	\item $G_s(z)h\cdot h \geq 0$ pour tout $h\in\mathbb{R}^{d-1}$.
\end{itemize}

\paragraph{Application au contact pénalisé.}
On commence par réécrire \eqref{FVPena} sous la forme plus adéquate $F_\varepsilon(\uu_\varepsilon)=0$. Pour cela, on a besoin d'introduire l'opérateur $A\in \mathcal{L}_c(\Xx,\Xx^*)$, et $\ell \in \Xx^*$, tels que pour tous $\vv$, $\ww\in\Xx$,
$$
	\prodD{A\vv,\ww}{\Xx^*,\Xx}=a(\vv,\ww)\:, \hspace{2em} \prodD{\ell,\ww}{\Xx^*,\Xx}=L(\ww)\:.
$$
Grâce à ces notations, on peut définir $F_\varepsilon:\Xx\to\Xx^*$ par: $\forall \vv\in \Xx$,
$$
	F_\varepsilon(\vv) := A\vv + \frac{1}{\varepsilon}\:\Lambda_{\normalExt}^*\maxx(\Lambda_{\normalExt}\vv-\gG_{\normalExt}) + \frac{1}{\varepsilon}\:\Lambda_{\tanExt}^* \qq(\varepsilon\mathfrak{F}s,\Lambda_{\tanExt}\vv) - \ell\:.,
$$
où $\Lambda_{\normalExt}$ et $\Lambda_{\tanExt}$ représentent respectivement les traces normale et tangentielle sur $\Gamma_C$.
Par définition de $F_\varepsilon$, il est clair que $\uu_\varepsilon$ est solution de \eqref{FVPena} ssi $F_\varepsilon(\uu_\varepsilon)=0$. On peut donc appliquer la méthode présentée plus haut. En effet, comme $\Lambda_{\normalExt}\vv-\gG_{\normalExt} \in H^{\frac{1}{2}}(\Gamma_C)\hookrightarrow L^q(\Gamma_C)$ et $\Lambda_{\tanExt}\vv \in \Hh^{\frac{1}{2}}(\Gamma_C)\hookrightarrow \Ll^q(\Gamma_C)$ avec $q>2$, on déduit de la Proposition \ref{PropNDiffPQ} que $F_\varepsilon$ est Newton différentiable sur $\Xx$. Et de plus, à partir des dérivées généralisées de $\maxx$ et $\qq(\mathfrak{F}s,\cdot)$ et des Théorèmes \ref{ThmNChainRule1} et \ref{ThmNChainRule2}, on est capable de déterminer une dérivée généralisée $G_\varepsilon:\Xx\to\mathcal{L}(\Xx,\Xx^*)$ de $F_\varepsilon$ qui s'écrit: $\forall \vv\in \Xx$,
\begin{equation}
	G_\varepsilon(\vv) = A + \frac{1}{\varepsilon}\:\Lambda_{\normalExt}^*G_+(\Lambda_{\normalExt}\vv-\gG_{\normalExt})\Lambda_{\normalExt} + \frac{1}{\varepsilon}\:\Lambda_{\tanExt}^* G_{\varepsilon s}(\Lambda_{\tanExt}\vv)\Lambda_{\tanExt}\:.
	\label{ExpDerGenFPena}
\end{equation}

\begin{crllr}
	Soit $\uu_\varepsilon$ la solution de \eqref{FVPena}. La suite d'itérées $\{ \uu_\varepsilon^l \}_l$ définie par
	$$
		\uu_\varepsilon^{l+1} = \uu_\varepsilon^l - {G_\varepsilon(\uu_\varepsilon^l)}^{-1}F_\varepsilon(\uu_\varepsilon^l)
	$$
	converge super-linéairement vers $\uu_\varepsilon$, à condition que $\norml \uu_\varepsilon^0-\uu_\varepsilon\normr_{\Xx}$ soit suffisamment petit.
	\label{ThmCvgSSNPena}
\end{crllr}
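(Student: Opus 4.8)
The plan is to obtain this corollary as a direct application of the abstract semismooth Newton theorem, Theorem \ref{ThmCvgSSN}, to the map $F_\varepsilon\colon\Xx\to\Xx^*$, to its generalized derivative $G_\varepsilon$ given by \eqref{ExpDerGenFPena}, and to the point $x^*=\uu_\varepsilon$, which by construction of $F_\varepsilon$ satisfies $F_\varepsilon(\uu_\varepsilon)=0$ because $\uu_\varepsilon$ solves \eqref{FVPena}. Three hypotheses have to be verified: Newton differentiability of $F_\varepsilon$ near $\uu_\varepsilon$, invertibility of $G_\varepsilon(\vv)$ for every $\vv$ in a neighbourhood $U$ of $\uu_\varepsilon$, and uniform boundedness of the inverses $G_\varepsilon(\vv)^{-1}$ over $U$. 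The first is already available: since $\Lambda_{\normalExt}\vv-\gG_{\normalExt}\in H^{\frac{1}{2}}(\Gamma_C)\hookrightarrow L^q(\Gamma_C)$ and $\Lambda_{\tanExt}\vv\in\Hh^{\frac{1}{2}}(\Gamma_C)\hookrightarrow\Ll^q(\Gamma_C)$ for some $q>2$, Proposition \ref{PropNDiffPQ} combined with the chain rules of Theorems \ref{ThmNChainRule1} and \ref{ThmNChainRule2} shows that $F_\varepsilon$ is Newton differentiable on all of $\Xx$ with $G_\varepsilon$ as a generalized derivative, hence in particular on any neighbourhood $U$ of $\uu_\varepsilon$.

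First I would then treat invertibility, which is the heart of the argument. The essential point is that $G_\varepsilon(\vv)$ is a bounded perturbation of the coercive operator $A$ by two positive semidefinite terms: using the pointwise inequalities $G_+(x)y\cdot y\geq 0$ and $G_{\varepsilon s}(z)h\cdot h\geq 0$, for all $\vv,\ww\in\Xx$ the two boundary integrals entering $\prodD{G_\varepsilon(\vv)\ww,\ww}{\Xx^*,\Xx}$ are nonnegative, so that
\[
    \prodD{G_\varepsilon(\vv)\ww,\ww}{\Xx^*,\Xx}\;\geq\;a(\ww,\ww)\;\geq\;\alpha\,\norml\ww\normr_{1,\Omega}^2,
\]
where $\alpha>0$ is the coercivity constant of $a$ furnished by Korn's inequality, and it does not depend on $\vv$. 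Since moreover $G_+$ and $G_{\varepsilon s}$ act as bounded (multiplication) operators and the trace maps $\Lambda_{\normalExt},\Lambda_{\tanExt}$ are continuous into $L^2(\Gamma_C)$, resp. $\Ll^2(\Gamma_C)$, the operator $G_\varepsilon(\vv)$ is also bounded from $\Xx$ to $\Xx^*$. By the Lax--Milgram theorem $G_\varepsilon(\vv)$ is therefore an isomorphism of $\Xx$ onto $\Xx^*$ with $\norml G_\varepsilon(\vv)^{-1}\normr_{\mathcal{L}(\Xx^*,\Xx)}\leq 1/\alpha$, a bound uniform in $\vv$; in particular the family $\{\,\norml G_\varepsilon(\vv)^{-1}\normr : \vv\in U\,\}$ is bounded.

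With all hypotheses of Theorem \ref{ThmCvgSSN} verified at $x^*=\uu_\varepsilon$, that theorem yields the superlinear convergence of $\{\uu_\varepsilon^l\}_l$ to $\uu_\varepsilon$ whenever $\norml\uu_\varepsilon^0-\uu_\varepsilon\normr_{\Xx}$ is small enough, which is exactly the claim. The only genuinely delicate step is the uniform invertibility of the generalized derivative, but here it comes essentially for free: the nonsmooth contact and friction contributions enter $G_\varepsilon(\vv)$ with the correct sign, so the coercivity of the elasticity bilinear form is preserved for every $\vv$, independently of the active/inactive sets encoded in $G_+$ and $G_{\varepsilon s}$. This sign structure is the reason the semismooth Newton method applies to the penalized contact problem, and the same reasoning will carry over verbatim to the augmented Lagrangian subproblem \eqref{IterLagAug}.
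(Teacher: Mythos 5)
Your proposal is correct and follows essentially the same route as the paper: the paper's proof likewise reduces everything to the uniform bound on $G_\varepsilon(\vv)^{-1}$, obtained by noting that the boundary terms built from $G_+$ and $G_{\varepsilon s}$ are continuous and positive, so Lax--Milgram applies with the ellipticity constant $\alpha_0$ of $a$, giving $\norml G_\varepsilon(\vv)^{-1}\psii\normr_{\Xx}\leq \alpha_0^{-1}\norml\psii\normr_{\Xx^*}$ uniformly in $\vv$, after which Theorem \ref{ThmCvgSSN} concludes. The Newton differentiability of $F_\varepsilon$ that you re-verify is established in the paper just before the corollary, so your argument matches the intended one.
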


\begin{proof}
	Il suffit de montrer que $\left\{ \norml G_\varepsilon(\vv)^{-1}\normr \ |\ \vv\in \Xx \right\}$ est borné, puis on pourra appliquer le Théorème \ref{ThmCvgSSN}. Pour cela, prenons $\vv\in\Xx$ et $\psii\in\Xx^*$, et considérons le problème: trouver $\uu_{\psii}\in\Xx$ solution de
	$$
		G_\varepsilon(\vv)\uu_{\psii} = \psii\:.
	$$
	On peut réécrire ce problème sous la forme équivalente: trouver $\uu_{\psii}\in\Xx$ solution de
	\begin{equation*}
	\begin{aligned}
		a(\uu_{\psii},\ww) &+ \frac{1}{\varepsilon}\prodL2{G_+(\vv_{\normalExt}-\gG_{\normalExt})\uu_{\psii,\normalExt},\ww_{\normalExt}}{\Gamma_C} \\
		&+ \frac{1}{\varepsilon}\prodL2{G_{\varepsilon s}(\vv_{\tanExt})\uu_{\psii,\tanExt},\ww_{\tanExt}}{\Gamma_C} = \prodD{\psii,\ww}{\Xx^*,\Xx}\:, \ \forall \ww\in\Xx\:.
	\end{aligned}
	\end{equation*}
	D'après les propriétés de $G_+$ et $G_{\varepsilon s}$, on sait que les deux derniers termes du membre de gauche constituent des formes bilinéaires continues et positives. Ainsi, tout le membre de gauche est une forme bilinéaire continue et coercive. Comme le membre de droite est clairement une forme linéaire continue sur $\Xx$, on déduit du lemme de Lax-Milgram l'existence d'un unique $\uu_{\psii}=G_\varepsilon(\vv)^{-1}\psii\in\Xx$ solution. De plus, en utilisant la coercivité de $a$, on obtient l'estimation:
	$$
		\norml G_\varepsilon(\vv)^{-1}\psii \normr_{\Xx} \: \leq \: \frac{1}{\alpha_0} \norml \psii \normr_{\Xx^*}\:,
	$$
	où $\alpha_0$ désigne la constante d'ellipticité du tenseur d'élasticité $\Aa$. Cette inégalité permet de conclure, puisque $\psii$ et $\vv$ ont été choisis arbitrairement.
\end{proof}

\paragraph{Application au contact par lagrangien augmenté.} Comme nous l'avons mentionné dans la section précédente, la formulation \eqref{IterLagAug} considérée à chaque itération de la méthode de lagrangien augmenté est presque identique à \eqref{FVPena}. On peut donc suivre le même raisonnement que précédemment, et introduire $F^k:\Xx\to\Xx^*$ telle que: $\forall \vv \in \Xx$,
$$
	F^k(\vv) := A\vv + \Lambda_{\normalExt}^*\maxx\left(\lambda^{k-1}+\gamma_1^k(\Lambda_{\normalExt}\vv-\gG_{\normalExt})\right) + \Lambda_{\tanExt}^* \qq\left(\mathfrak{F}s,\muu^{k-1}+\gamma_2^k\Lambda_{\tanExt}\vv\right) - \ell\:, 
$$
de sorte que $\uu^k\in\Xx$ est solution de \eqref{IterLagAug} si et seulement si $F^k(\uu^k)=0$. De plus, on a vu que lorsque $(\lambda^0,\muu^0)\in H^{\frac{1}{2}}(\Gamma_C)\times \Hh^{\frac{1}{2}}(\Gamma_C)$, alors $(\lambda^{k-1},\muu^{k-1})\in H^{\frac{1}{2}}(\Gamma_C)\times \Hh^{\frac{1}{2}}(\Gamma_C)$. Ainsi, comme pour la formulation pénalisée, la fonction $F^k$ est Newton différentiable, et on a une dérivée généralisée $G^k:\Xx\to \mathcal{L}(\Xx,\Xx^*)$ définie par: $\forall \vv\in \Xx$,
\begin{equation}
	G^k(\vv) = A + \Lambda_{\normalExt}^*G_+\left(\lambda^{k-1}+\gamma_1^k(\Lambda_{\normalExt}\vv-\gG_{\normalExt})\right) \Lambda_{\normalExt} + \Lambda_{\tanExt}^* G_{s}\left(\muu^{k-1}+\gamma_2^k\Lambda_{\tanExt}\vv\right) \Lambda_{\tanExt}\:.
	\label{ExpDerGenFLagAug}
\end{equation}
On a également convergence de la méthode de Newton semi-lisse dans ce cas-là.

\begin{crllr}
	Soit $\uu^k$ la solution de \eqref{IterLagAug}. La suite d'itérées $\{ \uu^{k,l} \}_l$ définie par
	$$
		\uu^{k,l+1} = \uu^{k,l} - {G^k(\uu^{k,l})}^{-1}F^k(\uu^{k,l})
	$$
	converge super-linéairement vers $\uu^k$, à condition que $\norml \uu^{k,0}-\uu^k\normr_{\Xx}$ soit suffisamment petit.
	\label{ThmCvgSSNLagAug}
\end{crllr}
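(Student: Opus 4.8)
Le plan est d'appliquer le résultat abstrait de convergence de la méthode de Newton semi-lisse, à savoir le Théorème \ref{ThmCvgSSN}, exactement comme cela a été fait pour la formulation pénalisée dans le Corollaire \ref{ThmCvgSSNPena}. On sait déjà, d'après la discussion précédant \eqref{ExpDerGenFLagAug}, que $\uu^k$ est l'unique solution de $F^k(\uu^k)=0$, que $F^k$ est Newton différentiable sur $\Xx$ tout entier et que l'application $G^k$ donnée par \eqref{ExpDerGenFLagAug} en est une dérivée généralisée; ceci repose sur le fait que $(\lambda^{k-1},\muu^{k-1})\in H^{\frac12}(\Gamma_C)\times\Hh^{\frac12}(\Gamma_C)$ (de sorte que les applications affines $\vv\mapsto\lambda^{k-1}+\gamma_1^k(\Lambda_{\normalExt}\vv-\gG_{\normalExt})$ et $\vv\mapsto\muu^{k-1}+\gamma_2^k\Lambda_{\tanExt}\vv$ sont à valeurs dans $L^q(\Gamma_C)$, resp. $\Ll^q(\Gamma_C)$, avec $q>2$), combiné à la Proposition \ref{PropNDiffPQ} et aux règles de dérivation en chaîne des Théorèmes \ref{ThmNChainRule1} et \ref{ThmNChainRule2}. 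Il ne resterait donc qu'à vérifier que $G^k(\vv)$ est inversible, d'inverse borné uniformément en $\vv\in\Xx$.

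Pour établir ce dernier point, je fixerais $\vv\in\Xx$ et $\psii\in\Xx^*$ et j'étudierais le problème linéaire $G^k(\vv)\uu_{\psii}=\psii$, qui se réécrit: trouver $\uu_\psii\in\Xx$ tel que, pour tout $\ww\in\Xx$,
\begin{equation*}
a(\uu_\psii,\ww) + \prodL2{G_+\!\left(\lambda^{k-1}+\gamma_1^k(\vv_{\normalExt}-\gG_{\normalExt})\right)\uu_{\psii,\normalExt},\ww_{\normalExt}}{\Gamma_C} + \prodL2{G_s\!\left(\muu^{k-1}+\gamma_2^k\vv_{\tanExt}\right)\uu_{\psii,\tanExt},\ww_{\tanExt}}{\Gamma_C} = \prodD{\psii,\ww}{\Xx^*,\Xx}\:.
\end{equation*}
D'après la positivité ponctuelle de $G_+$ et $G_s$ rappelée plus haut ($G_+(x)y\cdot y\geq0$ et $G_s(z)h\cdot h\geq0$), les deux termes de bord définissent des formes bilinéaires continues et positives sur $\Xx\times\Xx$; en les ajoutant à $a$, qui est continue et coercive de constante d'ellipticité $\alpha_0$ grâce à l'inégalité de Korn, on obtient une forme bilinéaire continue et coercive. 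Le lemme de Lax--Milgram fournit alors un unique $\uu_\psii=G^k(\vv)^{-1}\psii$, et le choix $\ww=\uu_\psii$, après avoir laissé tomber les deux termes de bord positifs, donne $\alpha_0\norml\uu_\psii\normr_{\Xx}^2\leq\prodD{\psii,\uu_\psii}{\Xx^*,\Xx}\leq\norml\psii\normr_{\Xx^*}\norml\uu_\psii\normr_{\Xx}$, c'est-à-dire $\norml G^k(\vv)^{-1}\psii\normr_{\Xx}\leq\alpha_0^{-1}\norml\psii\normr_{\Xx^*}$. Comme $\vv$ et $\psii$ sont arbitraires, l'ensemble $\left\{\,\norml G^k(\vv)^{-1}\normr\ \middle|\ \vv\in\Xx\,\right\}$ est borné par $\alpha_0^{-1}$.

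Disposant de la Newton différentiabilité de $F^k$, de la dérivée généralisée $G^k$, de son inversibilité uniforme et de la borne uniforme sur $\norml G^k(\cdot)^{-1}\normr$, j'appliquerais le Théorème \ref{ThmCvgSSN} avec $x^*=\uu^k$, ce qui donne la convergence super-linéaire de $\{\uu^{k,l}\}_l$ vers $\uu^k$ dès que $\norml\uu^{k,0}-\uu^k\normr_{\Xx}$ est suffisamment petit. Je ne m'attends à aucune difficulté sérieuse par rapport au cas pénalisé: le seul point qui demande un peu d'attention — et la raison pour laquelle ce cas n'est pas une recopie littérale du Corollaire \ref{ThmCvgSSNPena} — est que les dérivées généralisées $G_+$, $G_s$ sont ici évaluées aux arguments décalés $\lambda^{k-1}+\gamma_1^k(\cdot)$ et $\muu^{k-1}+\gamma_2^k(\cdot)$, ce qui oblige à faire propager la régularité $(\lambda^{k-1},\muu^{k-1})\in H^{\frac12}(\Gamma_C)\times\Hh^{\frac12}(\Gamma_C)$ le long des itérations (conséquence des formules de mise à jour \eqref{MAJMultLagAug:all} et d'un choix initial $(\lambda^0,\muu^0)\in H^{\frac12}(\Gamma_C)\times\Hh^{\frac12}(\Gamma_C)$) afin de rester dans le champ d'application de la Proposition \ref{PropNDiffPQ}.
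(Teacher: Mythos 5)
Votre preuve est correcte et suit essentiellement la même démarche que le texte: le manuscrit ne détaille pas la preuve de ce corollaire précisément parce qu'elle est identique, mutatis mutandis, à celle du Corollaire \ref{ThmCvgSSNPena} (borne uniforme sur $\norml G^k(\vv)^{-1}\normr$ par Lax--Milgram grâce à la positivité de $G_+$, $G_s$ et à la coercivité de $a$, puis application du Théorème \ref{ThmCvgSSN}). Votre remarque sur la propagation de la régularité $(\lambda^{k-1},\muu^{k-1})\in H^{\frac12}(\Gamma_C)\times\Hh^{\frac12}(\Gamma_C)$ via \eqref{MAJMultLagAug:all} correspond bien à l'argument donné dans le texte juste avant l'énoncé.
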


\part{Formulation pénalisée des problèmes de contact}

\chapter{Shape derivatives for the penalty formulation of contact problems with Tresca friction}     
\label{chap:2.1}                   

\section*{Résumé}

  Dans cet article, on s'intéresse au problème d'optimisation de formes d'un solide linéaire élastique en contact frottant (Tresca) avec une fondation rigide. La solution de la formulation pénalisée associée à ce modèle n'est en général pas dérivable par rapport à la forme car cette-dernière fait intervenir des opérateurs de projection. Par ailleurs, pour pouvoir optimiser la zone de contact, il est nécessaire d'avoir une méthode de calcul pour le gap et pour sa dérivée de forme. Nous proposons ici une approche par dérivées directionnelles pour traiter cette non-différentiabilité. Nous montrons d'abord que la solution admet toujours des dérivées directionnelles de forme, puis nous donnons des conditions suffisantes pour qu'elle soit dérivable par rapport à la forme. Nous présentons enfin quelques résultats numériques obtenus avec un algorithme de type gradient s'appuyant sur ces dérivées.

\section*{Abstract}

\begin{otherlanguage*}{english}
  In this article, the shape optimization of a linear elastic body subject to frictional (Tresca) contact is investigated. Due to the projection operators involved in the formulation of the contact problem, the solution is not shape differentiable in general. Moreover, shape optimization of the contact zone requires the computation of the gap between the bodies in contact, as well as its shape derivative. Working with directional derivatives, sufficient conditions for shape differentiability are derived.  
  Then, some numerical results, obtained with a gradient descent algorithm based on those shape derivatives, are presented.
\end{otherlanguage*}


\section{Introduction}

Optimal design is becoming a key element in industrial conception and applications. As the interest to include shape optimization in the design cycle of structures broadens, we are confronted with increasingly complex mechanical context. Large deformations, plasticity, contact and such can lead to difficult mathematical formulation. The non-linearities and/or non-differentiabilities stemming from the mechanical model give rise to complex shape sensitivity analysis which often requires a specific and delicate treatment.

This article deals with bodies in frictional (Tresca model) contact with a rigid foundation. Therefore this model is concerned with the non-penetrability and the eventual friction of the bodies in contact. From the mathematical point of view, it takes the form of an elliptic variational inequality of the second kind, see for example \cite{DuvLio1972,boieri1987existence} for existence, uniqueness, and regularity results.

Our approach to solve shape optimization problems is based on a gradient descent and Hadamard's boundary variation method, which requires the shape derivative of the cost functional. Such approaches, following the pioneer work \cite{hadamard1908memoire}, have been widely studied for the past forty years, for example in \cite{murat1975etude,simon1980differentiation,pironneau1982optimal,sokolowski1992introduction,DelZol2001,henrot2006variation}, to name a few. Obviously this raises the question of the differentiability of the cost functional with respect to the domain, which naturally leads to shape sensitivity analysis of the associated variational inequality. More specifically, as in \cite{allaire2004structural}, we use a level-set representation of the shapes, which allows to deal with changes of topology during the optimization process. Regarding more general topology optimization methods, let us mention \textit{density methods}, in which the shape is represented by a local density of material inside a given fixed domain. Among the most popular, we cite \cite{BenSig2003} for the \textit{SIMP method} (Solid Isotropic Material with Penalisation) and  \cite{allaire2012shape} for the \textit{homogenization method}.

As projection operators are involved in the formulation, the solution map is non-differentiable with respect to the shape or any other control parameter. There exist three main approaches to treat this non-differentiability.
The first one was introduced in \cite{mignot1984optimal}, where the author proves differentiability in a weaker sense, namely conical differentiability, and derives optimality conditions using this notion. We mention \cite{sokolowski1988shape} for the application of this method to shape sensitivity analysis of contact problems with Tresca friction. 
Another approach consists in discretizing the formulation, then use the tools from subdifferential calculus, see the series of papers \cite{kovccvara1994optimization,beremlijski2002shape,haslinger2012shape,beremlijski2014shape}, in the context of shape optimization for elastic bodies in frictional contact with a plane. 
The third approach, which is very popular in mechanical engineering, is to consider the penalized contact problem, which takes the form of a variational equality, then regularize all non-smooth functions. This leads to an approximate formulation having a Fréchet differentiable solution map. 
%
%
%
Following this penalty/regularization approach, we mention \cite{kim2000meshless} for two-dimensional parametric shape optimization, where the authors consider contact with a general rigid foundation and get interested in the differentiation of the gap. We also mention the more recent work \cite{maury2017shape} for shape optimization using the level set method (see \cite{allaire2004structural}), where the authors compute shape derivatives for the continuous problem in two and three dimensions, but do not take into account a possible gap between the bodies in contact. 
The same approach can be found in the context of optimal control, see among others \cite{ito2000optimal} for the general framework, and \cite{amassad2002optimal} for the specific case of frictional (Tresca) contact mechanics.

Let us finally mention the substantial work of Haslinger et al., who proved existence of optimal shapes for contact problems in some specific cases, see \cite{haslinger1985existence,haslinger1985shape}. Moreover, in  \cite{haslinger1986shape}, they proved consistency of the penalty approach in this context. 

In this paper, we aim at expressing shape derivatives for the continuous penalty formulation of frictional contact problems of Tresca type. Our approach is similar to the penalty/regularization, but we do not regularize non-smooth functions involved in the formulation. Indeed, shape differentiability does not require Fréchet-differentiability of the solution map, which makes the regularization step unnecessary.
Especially, the goal is to get similar results to \cite{maury2017shape} whitout regularizing, and extend those results in two ways.
First, we add a gap in the formulation, which enables to completely optimize the contact zone, as in \cite{kim2000meshless}. 
Second, we work in the slightly more general case where the Tresca threshold is not necessarily constant. 
This way, the formulae obtained could also be used in the context of the numerical approximation of a regularized Coulomb friction law by a fixed-point of Tresca problems. We refer to \cite{DuvLio1972,oden1983nonlocal,cocu1984existence} for existence and uniqueness results for this regularized Coulomb problem, and to \cite{hueber2008primal} (among others) for its numerical resolution by means of a fixed-point algorithm. 

This work is structured as follows. Section 2 presents the problem, its formulation and some related notations. Section 3 is dedicated to shape optimization. Especially, we express sufficient conditions for the solution of the penalty formulation to be shape differentiable (Theorem \ref{ThmExistMDer} and Corollary \ref{CorExistSDer}). The shape optimization algorithm of gradient type, based on those shape derivatives, is briefly discussed. Finally, in section 5, some numerical results are exposed.

\section{Problem formulation}

\subsection{Geometrical setting}\label{sec:geo.2}
The body $\Omega \subset \mathbb{R}^d$, $d \in \{2,3\}$, is assumed to have $\pazocal{C}^1$ boundary, and to be in contact with a rigid foundation $\Omega_{rig}$, which has a $\pazocal{C}^3$ compact boundary $\partial\Omega_{rig}$, see Figure \ref{SchOpen.2}. Let $\Gamma_D$ be the part of the boundary where a homogenous Dirichlet conditions applies (blue part), $\Gamma_N$ the part where a non-homogenous Neumann condition $\tauu$ applies (orange part), $\Gamma_C$ the potential contact zone (green part), and $\Gamma$ the rest of the boundary, which is free of any constraint (i.e. homogenous Neumann boundary condition). Those four parts are mutually disjoint and moreover: $\overline{\Gamma_D} \cup \overline{\Gamma_N} \cup \overline{\Gamma_C} \cup \overline{\Gamma} = \partial \Omega$. In order to avoid technical difficulties, it is assumed that $\overline{\Gamma_C} \cap \overline{\Gamma_D} = \emptyset$.

The outward normal to $\Omega$ is denoted $\normalInt$. Similarly, the inward normal to $\Omega_{rig}$ is denoted $\normalExt$.

\subsection{Notations and function spaces}\label{sec:esp.2}
Throughout this article, for any $\pazocal{O}\subset \mathbb{R}^d$, $L^p(\pazocal{O})$ represents the usual set of $p$-th power measurable functions on $\pazocal{O}$, and $\left(L^p(\pazocal{O})\right)^d = \Ll^p(\pazocal{O})$. The scalar product defined on $L^2(\pazocal{O})$ or $\Ll^2(\pazocal{O})$ is denoted (without distinction) by  $\prodL2{\cdot,\cdot}{\pazocal{O}}$ and its norm $\|\cdot\|_{0,\pazocal{O}}$. 

The Sobolev spaces, denoted $W^{m,p}(\pazocal{O})$ with $p\in [1,+\infty]$, $p$ integer are defined as 
$$
W^{m,p}(\pazocal{O}) = \left\{u\in L^p(\pazocal{O})\: : \: D^{\alpha} u \in L^p(\pazocal{O})\ \forall |\alpha|\le m\right\},
$$
where $\alpha$ is a multi-index in $\mathbb{N}^d$ and $\Ww^{m,p}(\pazocal{O})= \left(W^{m,p}(\pazocal{O})\right)^d$. The spaces $W^{s,2}(\pazocal{O})$ and $\Ww^{s,2}(\pazocal{O})$, $s\in \mathbb{R}$, are denoted $H^s(\pazocal{O})$ and $\Hh^s(\pazocal{O})$ respectively. Their norm are denoted $\|\cdot\|_{s,\pazocal{O}}$. 

The subspace of functions in $H^s(\pazocal{O})$ and $\Hh^s(\pazocal{O})$ that vanish on a part of the boundary $\gamma\subset\partial \pazocal{O}$ are denoted $H^s_\gamma(\pazocal{O})$ and $\Hh^s_\gamma(\pazocal{O})$. In particular, we denote the vector space of admissible displacements $\Xx:=\Hh^1_{\Gamma_D}(\Omega)$, and $\Xx^*$ its dual.

In order to fit the notations of functions spaces, vector-valued functions are denoted in bold. For example, $w\in L^2(\Omega)$ while $\ww\in \Ll^2(\Omega)$.

For any $v$ vector in $\mathbb{R}^d$, the product with the normal $v \cdot \normalInt$ (respectively with the normal to the rigid foundation $v \cdot \normalExt$) is denoted $v_{\normalInt}$ (respectively $v_{\normalExt}$). Similarly, the tangential part of  $v$ is denoted $v_{\tanInt} = v - v_{\normalInt} \normalInt$ (respectively $v_{\tanExt} = v - v_{\normalExt} \normalExt$).

Finally, the space of second order tensors in $\mathbb{R}^d$, i.e. the space of linear maps from $\mathbb{R}^d$ to $\mathbb{R}^d$, is denoted $\mathbb{T}^2$. In the same way, the space of fourth order tensors is denoted $\mathbb{T}^4$.
\subsection{Mechanical model}\label{sec:meca.2}

In this work the material is assumed to verify the linear elasticity hypothesis (small deformations and Hooke's law, see for example \cite{Cia1988a}), associated with the small displacements assumption (see \cite{kikuchi1988contact}).
The physical displacement is denoted $\uu$, and belongs to $\Xx$.
The stress tensor is defined by $\sigmaa(\uu) = \Aa : \epsilonn(\uu)$, where $\boldsymbol{\epsilon}(\uu)=\frac{1}{2}(\gradd\uu+\gradd\uu^T)$ denotes the linearized strain tensor, and $\Aa$ is the elasticity tensor. This elasticity tensor is a fourth order tensor belonging to $L^\infty(\Omega,\mathbb{T}^4)$, and it is assumed to be elliptic (with constant $\alpha_0>0$). Regarding external forces, the body force $\ff \in \Ll^2(\Omega)$, and traction (or surface load) $\tauu \in \Ll^2(\Gamma_N)$. 
\begin{figure}
\begin{center}
\begin{tikzpicture}

\fill [blue!60] plot [smooth cycle] 
coordinates {(3.9,1.8) (4.055,1.64) (4.095,1.5) (4.055,1.34) (3.73,1.175) (3.76,1.5)};

\draw[black,fill=blue!15, thin,densely dotted,rotate=-15] (3.3,2.425) ellipse (0.05cm and .33cm);

\fill [orange!50] plot [smooth  cycle] 
coordinates {(0.2,1.22) (0.3,1.6) (0.05,1.22) (0.03,0.9) (0.15, 1.)};

\draw[black,fill=orange!15, thin,densely dotted,rotate=-23] (-0.3,1.22) ellipse (0.05cm and .42cm);

\fill [dartmouthgreen!60] plot [smooth  cycle] 
coordinates {(2.795,.50) (2.87,.1) (2.65,-.55) (1.5, -.55) (.5,-.3) (0.15,.5) (1.5, .6)};

\draw[black, fill=dartmouthgreen!25, thin,dotted, rotate=-0.5] (1.51,0.485) ellipse (1.4cm and .12cm);

\draw [black] plot [smooth cycle] 
coordinates {(0,1) (0.3, 1.6) (1,2) (2, 2.1) (3,2) (3.9,1.8) (4.055,1.64) (4.095,1.5) (4.055,1.34) (3.73,1.175) (3,1) (2.7,-.5) (1,-.5) (0.3, -.1)};

\draw[black, fill=dartmouthgreen!60,thin,dashed,rotate=-.5] (1.6,-.39) ellipse (.9cm and .1cm);

\draw[black, ultra thin] (-2,-0.75) -- (2.75,-0.75) -- (4.75,-.25) -- (0, -.25) -- (-2,-0.75);

\node[] at (2.9,2.3) {$\Gamma$};
\node[] at (4.5,1.25) {$\Gamma_D$};
\node[] at (-.25,1.5) {$\Gamma_N$};
\node[] at (3.3,0.) {$\Gamma_C$};
\node[] at (1.7,1.25) {$\Omega$};
\draw[->] (0.3, 0.1) -- (-0.1, -0.1) ;
\node[] at (0.5, 0.1) {$x$};
\node[] at (-0.3, 0.2) {$\normalInt(x)$};
\draw[red, densely dashed] (0.3, 0.1) -- (0.3, -0.6) ;
\node[red] at (0.3,-0.6) {\tiny{$\bullet$}};
\node[red] at (-0.3, -0.4) {$\gG_{\normalExt}(x)$};
\draw[->] (0.3 , -0.6) -- (0.3, -1.1) ;
\node[] at (.75, -0.97) {$\normalExt(x)$};

\end{tikzpicture}
\end{center}
  \caption{Elastic body in contact with a rigid foundation.}
  \label{SchOpen.2}
\end{figure}
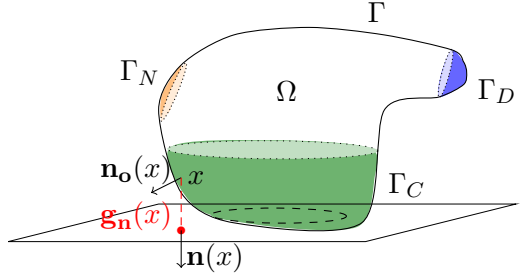

\subsection{Non-penetration condition} At each point $x$ of $\Gamma_C$, let us define the gap $\gG_{\normalExt}(x)$, as the oriented distance function to $\Omega_{rig}$ at $x$, see Figure \ref{SchOpen.2}. 
Due to the regularity of the rigid foundation, there exists $h$ sufficiently small such that
\begin{equation*}
    \partial\Omega_{rig}^h := \{ x\in\mathbb{R}^d \: : \: |\gG_{\normalExt}(x)| < h \}  \:, 
\end{equation*}
is a neighbourhood of $\partial\Omega_{rig}$ where $\gG_{\normalExt}$ is of class $\pazocal{C}^3$, see \cite{delfour1995boundary}. In particular, this ensures that $\normalExt$ is well defined on $\partial\Omega_{rig}^h$, and that $\normalExt\in \pazocal{C}^2(\partial\Omega_{rig}^h,\mathbb{R}^d)$. Moreover, in the context of small displacements, it can be assumed that the potential contact zone $\Gamma_C$ is such that $\Gamma_C \subset \partial\Omega_{rig}^h$. Hence there exists a neighbourhood of $\Gamma_C$ such that $\gG_{\normalExt}$ and $\normalExt$ are of class $\pazocal{C}^3$ and $\pazocal{C}^2$, respectively.

The non-penetration condition can be stated as follows: $\uu_{\normalExt}\leq \gG_{\normalExt}$ a.e.$\!$ on $\Gamma_C$. Thus, we introduce the closed convex set of admissible displacements that realize this condition, see \cite{eck2005unilateral}:
\begin{displaymath}
	\Kk:=\{\vv \in \Xx \: : \: \vv_{\normalExt}\leq \gG_{\normalExt} \:\:\mbox{a.e.$\!$ on}\: \Gamma_C\} .
\end{displaymath}
\subsection{Mathematical formulation of the problem}
Let us introduce the bilinear and linear forms $a : \Xx \times \Xx \rightarrow \mathbb{R}$ and $L : \Xx \rightarrow \mathbb{R}$, such that:
\begin{equation*}
  a(\uu,\vv) := \int_\Omega \Aa : \epsilonn(\uu) : \epsilonn(\vv) \:, \hspace{1.5em} L(\vv) := \int_\Omega \ff \vv + \int_{\Gamma_N} \tauu \vv \:.
\end{equation*}
According to the assumptions of the previous sections, one is able to show (see \cite{Cia1988a}) that $a$ is $\Xx$-elliptic with constant $\alpha_0$ (ellipticity of $\Aa$ and Korn's inequality), symmetric, continuous, and that $L$ is continuous (regularity of $\ff$ and $\tauu$). 

The unknown displacement $\uu$ of the frictionless contact problem is the minimizer of the total mechanical energy of the elastic body, which reads, in the case of pure sliding (unilateral) contact problems:
\begin{equation}
	\underset{\vv\in \Kk}{\inf} \:\: \varphi(\vv) \: := \underset{\vv\in \Kk}{\inf} \:\: \frac{1}{2}a(\vv,\vv)-L(\vv) \:.
 	\label{OPT0.2}
\end{equation}
It is clear that the space $\Xx$, equipped with the usual $\Hh^1$ norm, is a Hilbert space. Moreover, under the conditions of the previous section, since $\Kk$ is obviously non-empty and the energy functional is strictly convex, continuous and coercive, we are able to conclude (see e.g.$\!$ \cite[Chapter 1]{ekeland1999convex}) that $\uu$ solution of \eqref{OPT0.2} exists and is unique.

It is well known that \eqref{OPT0.2} may be rewritten as a variational inequality (of the first kind):
\begin{equation}
	a(\uu,\vv-\uu) \:\geq\: L(\vv-\uu), \:\:\: \forall \vv \in \Kk\: .
    \label{IV0.2}
\end{equation}
Moreover, as $\ff\in\Ll^2(\Omega)$ and $\tauu\in \Ll^2(\Gamma_N)$, it can be shown (see \cite{DuvLio1972}) that \eqref{OPT0.2} and \eqref{IV0.2} are also equivalent to the strong formulation: 
\begin{subequations} \label{FF0:all}
    \begin{align}
    - \Divv \sigmaa(\uu) &= \ff & \mbox{in } \Omega, \label{FF0:1}\\
      \uu                &= 0   & \mbox{on } \Gamma_D,  \label{FF0:2}\\
    \sigmaa(\uu) \cdot \normalInt &= \tauu & \mbox{on } \Gamma_N,   \label{FF0:3}\\
    \sigmaa(\uu) \cdot \normalInt &= 0 & \mbox{on } \Gamma,   \label{FF0:4}\\
     \uu_{\normalExt} \leq \gG_{\normalExt}, 
     \sigmaa_{\normalInt\!\normalExt}(\uu) &\leq 0, \sigmaa_{\normalInt\!\normalExt}(\uu)(\uu_{\normalExt}-\gG_{\normalExt})=0 & \mbox{on } \Gamma_C,   \label{FF0:5}\\
    \sigmaa_{\normalInt\!\tanExt}(\uu) &= 0 & \mbox{on } \Gamma_C,  \label{FF0:6}
    \end{align}
\end{subequations}
where $\sigmaa_{\normalInt\!\normalExt}(\uu) = \sigmaa(\uu)\cdot\normalInt\cdot\normalExt$ and $\sigmaa_{\normalInt\!\tanExt}(\uu) = \sigmaa(\uu)\cdot\normalInt-\sigmaa_{\normalInt\!\normalExt}(\uu)\normalExt$ are the normal and tangential constraints on $\Gamma_C$.
\begin{remark}
	Note that existence and uniqueness of the solution to \eqref{OPT0.2} $\uu\in \Xx$ also holds under weaker assumptions on the data, namely $\ff\in\Xx^*$ and $\tauu\in \Hh^{-\frac{1}{2}}(\Gamma_N)$ (under the appropriate modifications in the definition of $L$). Here, we choose the minimal regularity that ensures equivalence between \eqref{IV0.2} and \eqref{FF0:all}. Regarding regularity results, the reader is referred to \cite{kinderlehrer1981remarks}.
\end{remark}

\begin{remark}
  Conditions \eqref{FF0:5} and \eqref{FF0:6} may seem different from the usual
  \begin{subequations} \label{CL0:all}
    \begin{align}
       \uu_{\normalInt} \leq \gG_{\normalInt}, \hspace{1em}\sigmaa_{\normalInt\! \normalInt}(\uu) &\leq 0, \hspace{1em}\sigmaa_{\normalInt\! \normalInt}(\uu)(\uu_{\normalInt}-\gG_{\normalInt})=0 & \mbox{on } \Gamma_C, \label{CL0:1}\\
      \sigmaa_{\normalInt\! \tanInt}(\uu) &= 0 & \mbox{on } \Gamma_C,  \label{CL0:2}
    \end{align}
  \end{subequations}
  where $\gG_{\normalInt}(x)$ denotes the distance between $x\in \Gamma_C$ and the rigid foundation computed in the direction of the normal $\normalInt$ to $\Gamma_C$. 
  %
  %
  Actually, since we assume the deformable body undergoes small displacements relative to its reference configuration, both sets of conditions are equivalent. 
  %
  %
  %
  More specifically, from the small displacement hypothesis, the normal vector $\normalExt$ and the gap $\gG_{\normalExt}$ to the rigid foundation can be replaced by $\normalInt$ and $\gG_{\normalInt}$ 
  (we refer to \cite[Chapter 2]{kikuchi1988contact} for the details).

  Therefore, in our context, writing the formulation associated to the contact problem using $\normalInt$ or $\normalExt$ makes absolutely no difference. We choose the latter formulation because it proves itself very convenient when dealing with shape optimization, see Section \ref{sec:shapeopt.2}.
  \label{RemHypHPP}
\end{remark}

\subsection{Friction condition}
Let $\mathfrak{F} : \Gamma_C \rightarrow \mathbb{R}$, $\mathfrak{F} > 0$, be the friction coefficient. The basis of Tresca model is to replace the usual Coulomb threshold $|\sigmaa_{\normalInt \!\normalExt}(\uu)|$ by a fixed strictly positive function $s$, which leads to the following conditions on $\Gamma_C$:
\begin{equation}
  \left\{
	\hspace{0.5em}    
    \begin{aligned}
    	|\sigmaa_{\normalInt\!\tanExt}(\uu)| \:&<\: \mathfrak{F} s & \:\: \mbox{on } \{ x \in \Gamma_C \: : \: \uu_{\tanExt}(x) = 0 \}\:, \\
    	\sigmaa_{\normalInt\!\tanExt}(\uu) \:&=\: -\mathfrak{F} s  \frac{\uu_{\tanExt}}{|\uu_{\tanExt}|} & \:\: \mbox{on } \{ x \in \Gamma_C \: : \: \uu_{\tanExt}(x) \neq 0 \}\:,
    	\label{CL1}
    \end{aligned}
  \right.    	
\end{equation}
which represent respectively \textit{sticking} and \textit{sliding} points.
\begin{remark}
	Of course, replacing the Coulomb threshold by the fixed function $s$ leads to a simplified and approximate model of friction. Especially, in the Tresca model, there may exist points $x\in \Gamma_C$ such that $\sigmaa_{\normalInt\!\normalExt}(\uu)(x) = 0$ and $\uu_{\tanExt}(x)\neq 0$, in which case $\sigmaa_{\normalInt\!\tanExt}(\uu)(x)\neq 0$. In other words, friction can occur even if there is no contact.
\end{remark}
In order to avoid regularity issues, it is assumed that $\mathfrak{F}$ is uniformly Lipschitz continuous and $s \in L^2(\Gamma_C)$. Before stating the minimization problem in this case, let us introduce the non-linear functional $j_T : \Xx \rightarrow \mathbb{R} $ defined by:
\begin{equation*}
	j_T(\vv) := \int_{\Gamma_C} \mathfrak{F} s |\vv_{\tanExt}| \: .
\end{equation*}
With these notations, since considering the Tresca friction model means taking into account the frictional term $j_T$ in the energy functional, the associated minimization problem writes:
\begin{equation}
	\underset{\vv\in \Kk}{\inf} \:\: \varphi(\vv)+j_T(\vv) \:.
 	\label{OPT1}
\end{equation}
Since the additional term $j_T$ in the functional is convex, positive and  continuous 
one can deduce existence and uniqueness of the solution $\uu\in \Xx$, see for example \cite[Section 1.5]{oden1980theory}.
From this reference, one also gets that \eqref{OPT1} can be equivalently rewritten as a variational inequality (of the second kind):
\begin{equation}
	a(\uu,\vv-\uu) + j_T(\vv) - j_T(\uu) \:\geq\: L(\vv-\uu)\:, \:\:\: \forall \vv \in \Kk\: .
    \label{IV1}
\end{equation}
Again, from \cite{DuvLio1972} problems \eqref{OPT1} and \eqref{IV1} are equivalent to the strong formulation \eqref{FF0:all}, except for the last condition \eqref{FF0:6}, which is replaced by the two conditions \eqref{CL1}.

\begin{remark}
	Since $\uu\in \Xx$, the regularity of $\sigmaa(\uu)\cdot\normalInt$ is in general $\Hh^{-\frac{1}{2}}(\Gamma_C)$. However, even though no better regularity can be expected for the normal component $\sigmaa_{\normalInt\!\normalExt}$ in the general case, one has that the tangential component $\sigmaa_{\normalInt\!\tanExt}\in \Ll^2(\Gamma_C)$ when $s\in L^2(\Gamma_C)$. We refer to \cite[Chapter 4]{stadler2004infinite} for further details.
\end{remark}

\subsection{Penalty formulation}

The formulation that will be studied here originate from the classical penalty method, see \cite{Lio1969} and \cite{aubin2007approximation} for the general method,  \cite{kikuchi1981penalty} or \cite{kikuchi1988contact} for its application to unilateral contact problems, and \cite{chouly2013convergence} for its application to the Tresca friction problem. 
This formulation reads: find $\uu_\varepsilon$ in $\Xx$ such that, for all $\vv \in \Xx$,
\begin{equation}
  a(\uu_\varepsilon,\vv) + \frac{1}{\varepsilon} \prodL2{\maxx(\uu_{\varepsilon,\normalExt} -\gG_{\normalExt}), \vv_{\normalExt}}{\Gamma_C}  + \frac{1}{\varepsilon} \prodL2{\qq(\varepsilon\mathfrak{F}s,\uu_{\varepsilon,\tanExt}), \vv_{\tanExt}}{\Gamma_C} = L(\vv) \:,
  \label{FV}
\end{equation}
where $\maxx$ denotes the projection onto $\mathbb{R}_+$ in $\mathbb{R}$ (also called the positive part function) and, for any $\alpha\in\mathbb{R}_+$, $\qq(\alpha,\cdot)$ denotes the projection onto the ball $\mathcal{B}(0,\alpha)$ in $\mathbb{R}^{d-1}$. Those projections admit analytical expressions: for all $y\in\mathbb{R}$, $z\in\mathbb{R}^{d-1}$:
\begin{equation*}
    \maxx(y):=\max\{0,y\} \:,
\qquad
    \qq(\alpha,z) := \left\{
    \begin{array}{lr}
         z & \mbox{ if } |z|\leq \alpha ,\\
         \alpha\dfrac{z}{|z|} & \mbox{ else.}
    \end{array}
    \right.
\end{equation*}
It is well known (see for example \cite{kikuchi1981penalty,chouly2013convergence} or \cite[Section 6.5]{kikuchi1988contact}) that \eqref{FV} admits a unique solution $\uu_\varepsilon\in\Xx$. Moreover, from the same references, one gets that passing to the limit $\varepsilon \to 0$ leads to $\uu_\varepsilon \to \uu$ strongly in $\Xx$.

\begin{remark}
    Formulation \eqref{FV} is actually the optimality condition related to the unconstrained differentiable optimization problem derived from \eqref{OPT1}:
$$
   \underset{\vv \in \Xx}{\inf} \left\{ \varphi(\vv) + j_{T,\varepsilon}(\vv) + j_\varepsilon(\vv) \right\} \:,
$$
where $j_\varepsilon$ is a penalty term introduced to relax the constraint $\vv\in\Kk$, and $j_{T,\varepsilon}$ is a regularization of $j_T$.

Moreover, in this model, one gets from \eqref{FV} that the non-penetration conditions \eqref{FF0:5}-\eqref{FF0:6} and the friction condition \eqref{CL1} rewrite:
  \begin{subequations} \label{CL2:all}
    \begin{align}
       \sigmaa_{\normalInt\! \normalExt}(\uu_\varepsilon) &= -\frac{1}{\varepsilon} \maxx(\uu_{\varepsilon,\normalExt} -\gG_{\normalExt}) & \mbox{on } \Gamma_C, \label{CL2:1}\\
      \sigmaa_{\normalInt\! \tanExt}(\uu_\varepsilon) &= - \frac{1}{\varepsilon} \qq(\varepsilon\mathfrak{F}s,\uu_{\varepsilon,\tanExt}) & \mbox{on } \Gamma_C.  \label{CL2:2}
    \end{align}
  \end{subequations}
  From those expressions, one deduces the new definitions for the sets of points of particular interest:
  \begin{itemize}
  	\item points in contact: $\{ x \in \Gamma_C \: |\: \uu_{\varepsilon,\normalExt}\geq \gG_{\normalExt} \}$,
  	\item sticking points: $\{ x \in \Gamma_C \: |\:\: |\uu_{\varepsilon,\tanExt}|\leq \varepsilon\mathfrak{F}s \}$,
  	\item sliding points: $\{ x \in \Gamma_C \: |\:\: |\uu_{\varepsilon,\tanExt}|\geq \varepsilon\mathfrak{F}s \}$.
  \end{itemize}
\end{remark}

\section{Shape optimization}
\label{sec:shapeopt.2}

Given a cost functional $J(\Omega)$ depending explicitly on the domain $\Omega$, and also implicitly, through $y(\Omega)$ the solution of some variational problem on $\Omega$, the optimization of $J$ with respect to $\Omega$ or \textit{shape optimization problem} reads:
\begin{equation}
    \inf_{\Omega \in \pazocal{U}_{ad}} J(\Omega) \:,
    \label{ShapeOpt.2}
\end{equation}
where $\pazocal{U}_{ad}$ stands for the set of admissible domains. 

Here, since the physical problem considered is modeled by \eqref{FV}, one has $y(\Omega)=\uu_\varepsilon(\Omega)$ solution of \eqref{FV} defined on $\Omega$. Therefore, let us replace the notation of the functional $J$ by $J_\varepsilon$ to emphasize the dependence with respect to the penalty parameter.  
%
%
Let $D\subset \mathbb{R}^d$ be a fixed bounded smooth domain, and let $\hat{\Gamma}_D\subset\partial D$ be a part of its boundary which will be the "potential" Dirichlet boundary. This means that for any domain $\Omega\subset D$, the Dirichlet boundary associated to $\Omega$ will be defined as $\Gamma_D:=\partial\Omega \cap \hat{\Gamma}_D$. With these notations, we introduce the set $\pazocal{U}_{ad}$ of all admissible domains, which consists of all smooth open domains $\Omega$ such that the Dirichlet boundary $\Gamma_D\subset \partial D$ is of stritly positive measure, that is:
$$
   \pazocal{U}_{ad} := \{ \Omega \subset D \: : \: \Omega \mbox{ is of class $\pazocal{C}^1$ and } |\partial\Omega \cap \hat{\Gamma}_D| > 0 \}.
$$
\subsection{Derivatives}
\label{sec:deriv.2}
The shape optimization method followed in this work is the so-called \textit{perturbation of the identity}, as presented in~\cite{murat1975etude} and \cite{henrot2006variation}. Let us introduce $\Cc^1_b(\mathbb{R}^d) := {(\pazocal{C}^1(\mathbb{R}^d)\cap W^{1,\infty}(\mathbb{R}^d) )}^d$, equipped with the $d$-dimensional $W^{1,\infty}$ norm, denoted $\norml\cdot \normr_{1,\infty}$. In order to move the domain $\Omega$, let $\thetaa \in \Cc^1_b(\mathbb{R}^d)$ be a (small) geometric deformation vector field. The associated perturbed or transported domain in the direction $\thetaa$ will be defined as: $\Omega(t) := (\Id+t\thetaa)(\Omega)$ for any $t>0$. To make things clear
some basic notions of shape sensitivity analysis from \cite{sokolowski1992introduction} are briefly recalled. 

We denote again $y(\Omega)$ the solution, in some Sobolev space denoted $W(\Omega)$, of a variational formulation posed on $\Omega$. For any fixed $\thetaa$, for any small $t>0$, let $y(\Omega(t))$ be the solution of the same variational formulation posed on $\Omega(t)$. 
If the variational formulation is regular enough (e.g.$\!$ if it is linear), it can be proved (see~\cite[Chapter 3]{sokolowski1992introduction}) that $y(\Omega(t))\circl(\Id+t\thetaa)$ also belongs to $W(\Omega)$.
\begin{itemize}[leftmargin=*]
    \item The \textit{Lagrangian derivative} or \textit{material derivative} of $y(\Omega)$ in the direction $\thetaa$ is the element $\dot{y}(\Omega)[\thetaa] \in W(\Omega)$ defined by:
    $$
        \dot{y}(\Omega)[\thetaa] := \lim_{t\searrow 0} \:\frac{1}{t}\left( y(\Omega(t))\circl(\Id+t\thetaa) - y(\Omega)\right) \: .
        \label{DefMatDer}
    $$
    If the limit is computed weakly in $W(\Omega)$ (respectively strongly), we talk about \textit{weak} material derivative (respectively \textit{strong} material derivative).
    \item If the additional condition $\gradd y(\Omega)\thetaa \in W(\Omega)$ holds for all $\thetaa \in \Cc^1_b(\mathbb{R}^d)$, then one may define a directional derivative called the \textit{Eulerian derivative} or \textit{shape derivative} of $y(\Omega)$ in the direction $\thetaa$ as the element $dy(\Omega)[\thetaa]$ of $W(\Omega)$ such that:
    $$
        dy(\Omega)[\thetaa] := \dot{y}(\Omega)[\thetaa] - \gradd y(\Omega)\thetaa \: .
        \label{DefShaDer}
    $$
    \item The solution $y(\Omega)$ is said to be \textit{shape differentiable} if it admits a directional derivative for any admissible direction $\thetaa$, and if the map $\thetaa \mapsto dy(\Omega)[\thetaa]$ is linear continuous from $\Cc^1_b(\mathbb{R}^d)$ to $W(\Omega)$.
\end{itemize}
%
\begin{remark}
    Linearity and continuity of $\thetaa \mapsto \dot{y}(\Omega)[\thetaa]$ is actually equivalent to the Gâteaux differentiability of the map $\thetaa \mapsto y(\Omega(\thetaa))\circl(\Id+\thetaa)$. The reader is referred to \cite[Chapter 8]{DelZol2001} for a complete review on the different notions of derivatives.
\end{remark}
When there is no ambiguity, the material and shape derivatives of some function $y$ at $\Omega$ in the direction $\thetaa$ will be simply denoted $\dot{y}$ and $dy$, respectively. 

\subsection{Shape sensitivity analysis of the penalty formulation}

The goal of this section is to prove the differentiability of $\uu_\varepsilon$ with respect to the shape. 
As functions $\maxx$ and $\qq$ fail to be Fréchet differentiable it is not possible to rely on the implicit function theorem as in \cite[Chapter 5]{henrot2006variation}.
%
Nevertheless, these functions admit directional derivatives. %
Hence, working with the directional derivatives of $\maxx$ and $\qq$ and following the approach in \cite{sokolowski1992introduction}, we show existence of directional material/shape derivatives for $\uu_\varepsilon$. 
Then, under assumptions on some specific subsets of $\Gamma_C$ (this will be presented and referred to as Assumption \ref{A1.2}), shape differentiability of $\uu_\varepsilon$ is proved.

Since the domain is transported, the functions $\Aa$, $\ff$, $\tauu$, $\mathfrak{F}$, and $s$ have to be defined everywhere in $\mathbb{R}^d$. 
They also need to enjoy more regularity for usual differentiability results to hold. In particular 
we make the following regularity assumptions :
\begin{hypothesis}\label{A0}
    $\Aa\in \pazocal{C}^1_b(\mathbb{R}^d,\mathbb{T}^4),\:\, \ff \in \Hh^1(\mathbb{R}^d),\:\, \tauu \in \Hh^2(\mathbb{R}^d)$, $s\in L^2(\Gamma_C)$ and $\mathfrak{F}s \in H^2(\mathbb{R}^d)$.
\end{hypothesis}
%
\begin{notation} Through change of variables, we can transform expression on $\Omega(t)$ to expression on 
$\Omega$. 
Composition with the operator $\circl (\Id+t\thetaa)$ will be denoted by $(t)$, for instance, $\Aa(t):=\Aa\circl(\Id+t\thetaa)$. 
The normal and tangential component associated to $\normalExt(t)$ of a vector $v$ is denoted $v_{\normalExt(t)}$ and $v_{\tanExt(t)}$ respectively. 
For integral expressions the Jacobian and tangential Jacobian of the transformation gives $\JacV(t) := $ Jac$(\Id+t\thetaa)$ and $\JacB(t):=$ Jac$_{\Gamma(t)}(\Id+t\thetaa)$. 
To simplify the notations let $\uu_{\varepsilon,t} := \uu_\varepsilon(\Omega(t))$ and 
$\uu_{\varepsilon}^t := \uu_{\varepsilon,t}\circl(\Id+t\thetaa)$. Finally, we also introduce the map $\Phi_\varepsilon:\mathbb{R}_+ \to \Xx$ such that for each $t>0$, $\Phi_\varepsilon(t)=\uu_{\varepsilon}^t$.
\end{notation}
%

As differentiability of $\uu_\varepsilon$ with respect to the shape is directly linked to differentiability of $\Phi_\varepsilon$, we will focus on the latter. However, note that the direction $\thetaa$ is fixed in the definition of $\Phi_\varepsilon$, therefore every property of $\Phi_\varepsilon$ (continuity, differentiability) will be associated to a directional property for $\uu_\varepsilon$. 

\subsubsection{Continuity of $\Phi_\varepsilon$}

Before getting interested in differentiability, the first step is to prove continuity.

\begin{theo}
  If Assumption \ref{A0} holds, then for any $\thetaa \in \Cc^1_b(\mathbb{R}^d)$, $\Phi_\varepsilon$ is strongly continuous at $t=0^+$.
\end{theo}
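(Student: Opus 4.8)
The plan is to follow the classical a priori estimate / weak compactness / identification / strong convergence scheme, the only genuinely new ingredient being the treatment of the nonsmooth boundary terms built from $\maxx$ and $\qq$. \emph{Step 1 (transport to the reference domain).} First I would write the penalty formulation \eqref{FV} posed on the perturbed domain $\Omega(t)=(\Id+t\thetaa)(\Omega)$ and bring it back onto $\Omega$ through the change of variables $\Id+t\thetaa$, which is a $\pazocal{C}^1$-diffeomorphism for $t$ small. This gives a formulation on the fixed space $\Xx$ for $\uu_\varepsilon^t=\Phi_\varepsilon(t)$ whose bilinear part $a_t(\cdot,\cdot)$ involves the transported tensor $\Aa(t)$, the Jacobian $\JacV(t)$ and the factors $(\Ii+t\gradd\thetaa)^{-1}$, and whose boundary terms involve $\JacB(t)$, the transported normal $\normalExt(t)$, the transported gap $\gG(t)$ and the transported data $\mathfrak{F}(t)$, $s(t)$. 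Under Assumption \ref{A0} and the regularity of $\Omega_{rig}$ (so that $\gG$ and $\normalExt$ are smooth near $\Gamma_C$), all these transported quantities converge as $t\to0^+$ to their untransported counterparts: $\JacV(t)\to1$, $\JacB(t)\to1$, $(\Ii+t\gradd\thetaa)^{-1}\to\Ii$ and $\Aa(t)\to\Aa$ in $L^\infty$, $\ff(t)\to\ff$ in $\Ll^2(\Omega)$, $\tauu(t)\to\tauu$ in $\Ll^2(\Gamma_N)$, and the traces on $\Gamma_C$ of $\gG(t)$, $\normalExt(t)$, $\mathfrak{F}(t)s(t)$ converge strongly in $L^2(\Gamma_C)$.

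\emph{Step 2 (uniform a priori bound).} Testing the transported formulation with $\vv=\uu_\varepsilon^t$, I would use that for $t$ small $a_t$ is coercive with constant bounded below by $\alpha_0/2$ (ellipticity of $\Aa$, Korn's inequality, and the $L^\infty$-convergences of Step 1), that $\qq(\alpha,z)\cdot z\ge0$, and that $\maxx(\uu_{\varepsilon,\normalExt(t)}^t-\gG(t))\,\uu_{\varepsilon,\normalExt(t)}^t\ge\maxx(\uu_{\varepsilon,\normalExt(t)}^t-\gG(t))\,\gG(t)$, the right-hand side being controlled via a Young inequality by $\|\gG(t)\|_{0,\Gamma_C}$ and the trace of $\uu_\varepsilon^t$. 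Together with the continuity of the transported right-hand side, this yields $\|\uu_\varepsilon^t\|_{\Xx}\le C$ uniformly for $t\in(0,t_0]$.

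\emph{Step 3 (weak limit and identification).} From the bound, any sequence $t_n\to0^+$ has a subsequence along which $\uu_\varepsilon^{t_n}\rightharpoonup\uu^\star$ weakly in $\Xx$. Since the trace map $\Xx\to\Ll^2(\Gamma_C)$ is compact, $\uu_{\varepsilon,\normalExt(t_n)}^{t_n}\to\uu^\star_{\normalExt}$ and $\uu_{\varepsilon,\tanExt(t_n)}^{t_n}\to\uu^\star_{\tanExt}$ strongly in $\Ll^2(\Gamma_C)$ (also using the strong convergence of $\normalExt(t_n)$). By continuity of the Nemytskii operators associated with $\maxx$ and $\qq$ on $L^2(\Gamma_C)$, combined with the strong convergence of $\gG(t_n)$, $\mathfrak{F}(t_n)s(t_n)$ and $\JacB(t_n)$, the two nonsmooth boundary terms pass to the limit, while the coefficient convergences of Step 1 handle the bilinear part and the linear form. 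Hence $\uu^\star\in\Xx$ solves \eqref{FV} on $\Omega$, so $\uu^\star=\uu_\varepsilon$ by uniqueness; the limit being independent of the subsequence, $\uu_\varepsilon^t\rightharpoonup\uu_\varepsilon$ as $t\to0^+$.

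\emph{Step 4 (strong convergence) and main obstacle.} Testing the transported formulation at $t$ and \eqref{FV} both with $\vv=\uu_\varepsilon^t-\uu_\varepsilon$ and subtracting, I would isolate $a(\uu_\varepsilon^t-\uu_\varepsilon,\uu_\varepsilon^t-\uu_\varepsilon)$. The difference of the nonsmooth boundary terms splits into a genuinely monotone part—the maps $\vv\mapsto\maxx(\vv_{\normalExt}-\gG)$ and $\vv\mapsto\qq(\varepsilon\mathfrak{F}s,\vv_{\tanExt})$ are monotone on $\Xx$, since $\maxx$ and $\qq(\alpha,\cdot)$ are projections—which has the good sign, plus remainders carrying the factors $\JacB(t)-1$, $\normalExt(t)-\normalExt$, $\gG(t)-\gG$, $\mathfrak{F}(t)s(t)-\mathfrak{F}s$; these remainders, together with $(a_t-a)(\uu_\varepsilon^t,\cdot)$ and the difference of the linear forms, tend to $0$ because $\uu_\varepsilon^t$ is bounded in $\Xx$, the coefficients converge, and the traces on $\Gamma_C$ converge strongly (Step 3). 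Coercivity of $a$ then gives $\tfrac{\alpha_0}{2}\|\uu_\varepsilon^t-\uu_\varepsilon\|_{\Xx}^2\to0$, i.e. $\Phi_\varepsilon(t)\to\Phi_\varepsilon(0)$ strongly in $\Xx$. The delicate points are, in Step 3, passing the nonsmooth boundary terms to the weak limit—which relies crucially on the compactness of the trace operator on $\Gamma_C$ and on controlling the \emph{transported} normal $\normalExt(t)$ appearing inside $\maxx$ and $\qq$—and, in Step 4, cleanly separating the monotone good-sign part of those terms from the vanishing transport remainders.
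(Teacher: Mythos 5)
Your proposal is correct and follows essentially the same route as the paper: transport to the fixed domain, uniform a priori bound, weak compactness and identification of the limit with $\uu_\varepsilon$ via the compact trace embedding, Lipschitz continuity of $\maxx$ and $\qq$ and uniqueness of the penalized solution, then strong convergence by testing the difference of the two formulations with itself. The only harmless deviations are cosmetic: the paper gets the a priori bound from first-order Taylor expansions of the transported coefficients rather than your sign-plus-Young argument, and in the final step it bounds the nonsmooth boundary terms directly by $o(1)$ using the strong $\Ll^2(\Gamma_C)$ convergence of the trace of the difference, without needing their monotone good-sign structure.
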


\begin{prf} When $t$ is small enough, the transported potential contact zone verifies $\Gamma_C(t)\subset\partial\Omega_{rig}^h$, so that the regularities of $\gG_{\normalExt}$ and $\normalExt$ are preserved. 
When transported to $\Omega(t)$, problem \eqref{FV} becomes: find $\uu_{\varepsilon,t} \in \Hh^1_{\Gamma_D(t)}(\Omega(t))=:\Xx(t)$ such that, 
\begin{equation} 
    \begin{aligned}
        \int_{\Omega(t)}& \Aa : \epsilonn(\uu_{\varepsilon,t}) : \epsilonn(\vv_t) + \frac{1}{\varepsilon} \int_{\Gamma_C(t)} \maxx(\uu_{\varepsilon,t} \cdot \normalExt -\gG_{\normalExt}) (\vv_t )_{\normalExt} \\
        \: & + \frac{1}{\varepsilon} \int_{\Gamma_C(t)}  \qq\left(\varepsilon\mathfrak{F}s,(\uu_{\varepsilon,t})_{\tanExt}\right)(\vv_t)_{\tanExt} = \int_{\Omega(t)} \ff \: \vv_t + \int_{\Gamma_N(t)} \tauu \: \vv_t \quad \forall \vv_t \in \Xx(t).
  \end{aligned}
  \label{FVT0}
\end{equation}
We can transform \eqref{FVT0} as an expression on the reference domain $\Omega$. 
For the test function we use $\vv^t:=\vv_t\circl(\Id+t\thetaa)$. Moreover, to simplify the expressions, we introduce
\begin{align*}
    R_{\normalExt}(\vv) &:=\maxx(\vv_{\normalExt}-\gG_{\normalExt})\:, \quad R_{\normalExt}^{t}(\vv) :=\maxx(\vv_{\normalExt(t)}-\gG_{\normalExt}(t))\:,\\
    S_{\tanExt}(\vv) &:= \qq(\varepsilon\mathfrak{F}s,\vv_{\tanExt})\:, \quad S_{\tanExt}^t(\vv) := \qq(\varepsilon(\mathfrak{F}s)(t),\vv_{\tanExt(t)})\:,
\end{align*}
and finally, the transported strain tensor $\epsilonn^t$ is also introduced: for all $\vv\in\Xx$,
\begin{equation*}
    \epsilonn^t(\vv):= \frac{1}{2} \left( \gradd \vv{(\Ii+t\gradd\thetaa)}^{-1} + {(\Ii+t\gradd\thetaa^T)}^{-1}{\gradd \vv}^T  \right) \: .
\end{equation*}
With the notations introduced and the change of variables mentionned above we have
\begin{equation}
    \begin{aligned}
        \int_{\Omega} \Aa(t) : &\epsilonn^t(\uu_{\varepsilon}^{t}) : \epsilonn^t(\vv^t) \:\JacV(t) + \frac{1}{\varepsilon} \int_{\Gamma_{C}} R_{\normalExt}^{t}(\uu_{\varepsilon}^{t}) \vv^t_{\normalExt(t)}  \:\JacB(t) 
        \\ & 
        + \frac{1}{\varepsilon} \int_{\Gamma_{C}} S_{\tanExt}^t(\uu_{\varepsilon}^{t}) \vv^t_{\tanExt(t)} \JacB(t) 
        =  \int_{\Omega} \ff(t)\vv^t \JacV(t) + \int_{\Gamma_{N}} \tauu(t)\vv^t \JacB(t) \:.
    \end{aligned}
    \label{FVT}
\end{equation}
Note that for $t$ sufficiently small, $\norml t \thetaa \normr_{1,\infty} \! < 1$. Thus the application $(\Id+t\thetaa)$ is a $\pazocal{C}^1$-diffeomorphism, and so the map $\vv_t \mapsto \vv^t$ is an isomorphism from $\Xx(t)$ to $\Xx$. Thus, one deduces that $\uu_{\varepsilon}^{t}$ is the solution of the variational formulation obtained when replacing $\vv^t$ by $\vv$ in \eqref{FVT}, which holds for all $\vv$ in $\Xx$.

\paragraph{Uniform boundedness of $\uu_\varepsilon^t$ in $\Xx$} 
Let us show that $\uu_\varepsilon^t$ is uniformly bounded in $t$. To achieve this we use the first order Taylor expansions with respect to $t$ of all known terms in \eqref{FVT}. Such expansions are valid due to Assumption \ref{A0} and the regularity assumptions on $\Omega$, see \cite{henrot2006variation,sokolowski1992introduction}. We recall some of them: $\forall \vv \in \Xx$,
\begin{equation*}
    \begin{aligned}
        \left\lVert \: \epsilonn^t(\vv) - \epsilonn(\vv) + \frac{t}{2}\left( \gradd \vv \gradd \thetaa + {\gradd \thetaa}^T {\gradd \vv}^T \right)\right\lVert_{0,\Omega} &= O(t^2)\norml \vv \normr_{\Xx} \:,\\
        \norml \Aa(t) - \Aa - t\gradd \Aa :\thetaa \normr_{\infty,\Omega} &= O(t^2)\:, \\
        \norml \JacV(t)-1-t\divv\thetaa \normr_{\infty,\Omega} &= O(t^2)\:, \\
        \norml \JacB(t)-1-t\divv_{\Gamma}\thetaa \normr_{\infty,\partial\Omega} &= O(t^2)\:, \\
        \norml \vv_{\normalExt(t)}- \vv_{\normalExt} - t(\vv\cdot(\gradd \normalExt \thetaa))\normr_{0,\Gamma_C} &= O(t^2)\norml \vv \normr_{0,\Gamma_C}\:,\\
        \norml \vv_{\tanExt(t)}- \vv_{\tanExt} + t(\vv\cdot(\gradd \normalExt \thetaa))\normalExt + t(\vv\cdot\normalExt)(\gradd \normalExt \thetaa) \normr_{0,\Gamma_C} &= O(t^2)\norml \vv \normr_{0,\Gamma_C}\:.
    \end{aligned}
\end{equation*}

Making use of these expansions, the ellipticity of $a$ and taking $\uu_\varepsilon^t$ as test-function in \eqref{FVT}, one gets the following estimate:
\begin{equation*}
    (\alpha_0 + O(t))\norml \uu_\varepsilon^t \normr_{\Xx}^2 \:\leq \: O(t)\norml \uu_\varepsilon^t \normr_{\Xx} + \:O(t^2) \:.
\end{equation*}
Thus, for $t$ small enough, one gets that there exist some positive constants $C_1$ and $C_2$ such that the sequence $C_1\norml \uu_\varepsilon^t \normr_{\Xx}^2-C_2\norml \uu_\varepsilon^t \normr_{\Xx}$ is uniformly bounded in $t$, which proves uniform boundedness of $\{\uu_\varepsilon^{t_k}\}_k$ in $\Xx$, for any sequence $\{t_k\}_k$ decreasing to 0.

\paragraph{Continuity}
First, one needs to show that the limit (in some sense that will be specified) of $\uu_\varepsilon^t$ as $t\to 0$ is indeed $\uu_\varepsilon$. Let $\{t_k\}_k$ be a sequence decreasing to 0. Since the sequence $\{\uu_\varepsilon^{t_k}\}_k$ is bounded 
and $\Xx$ a reflexive Banach space, there exists a weakly convergent subsequence (still denoted $\{\uu_\varepsilon^{t_k}\}_k$), say $\uu_\varepsilon^{t_k} \rightharpoonup \hat{\uu}_\varepsilon \in \Xx$. 

Due to the Taylor expansions above, the weak convergence of $\{\uu_\varepsilon^{t_k}\}_k$, the compact embedding $\Hh^{\frac{1}{2}}(\Gamma_C) \hookrightarrow \Ll^2(\Gamma_C)$ and Lipschitz continuity of $\maxx$ and $\qq$, taking $t=t_k$ in \eqref{FVT} and passing to the limit $k\to +\infty$ leads to: for all $\vv \in \Xx$,
\begin{equation*}
        \int_{\Omega} \Aa : \epsilonn(\hat{\uu}_{\varepsilon}) : \epsilonn(\vv) + \frac{1}{\varepsilon} \int_{\Gamma_{C}} R_{\normalExt}(\hat{\uu}_{\varepsilon}) \vv_{\normalExt}
        + \frac{1}{\varepsilon} \int_{\Gamma_{C}} S_{\tanExt}(\hat{\uu}_{\varepsilon}) \vv_{\tanExt} =  \int_{\Omega} \ff \: \vv + \int_{\Gamma_{N}} \tauu\: \vv \:.
\end{equation*}
This precisely means that $\hat{\uu}_\varepsilon = \uu_\varepsilon$, since they are both solution of problem \eqref{FV}, which admits a unique solution. The uniqueness also proves that the whole sequence $\{\uu_\varepsilon^{t_k}\}_k$ tends to $\uu_\varepsilon$.

Now, strong continuity of the map $t \mapsto \uu_\varepsilon^t$ at $t=0^+$ in $\Xx$ may be proved using the difference $\boldsymbol{\delta}_{\uu,\varepsilon}^t:=\uu_\varepsilon^t-\uu_\varepsilon$, which appears when subtracting the formulations verified by $\uu_\varepsilon^t$ and $\uu_\varepsilon$, respectively. Note that $\boldsymbol{\delta}_{\uu,\varepsilon}^t$ is bounded in $\Xx$ and that it converges weakly to 0 in $\Xx$. 

For $t$ sufficiently small, let us consider
\begin{equation}
  \begin{aligned}
    \int_{\Omega} &\Aa(t) : \epsilonn^t(\uu_{\varepsilon}^{t}) : \epsilonn^t(\vv) \:\JacV(t) - \int_{\Omega} \Aa : \epsilonn(\uu_\varepsilon) : \epsilonn(\vv) \\
    & + \frac{1}{\varepsilon} \int_{\Gamma_{C}} R_{\normalExt}^{t}(\uu_{\varepsilon}^{t}) \vv_{\normalExt(t)} \: \JacB(t) - \frac{1}{\varepsilon} \int_{\Gamma_{C}} R_{\normalExt}(\uu_{\varepsilon})\vv_{\normalExt} \\
    & + \frac{1}{\varepsilon} \int_{\Gamma_{C}} S_{\tanExt}^t(\uu_{\varepsilon}^{t}) \vv_{\tanExt(t)} \JacB(t) - \frac{1}{\varepsilon} \int_{\Gamma_{C}} S_{\tanExt}(\uu_{\varepsilon}) \vv_{\tanExt} \\
    & = \int_{\Omega} \ff(t) \vv \: \JacV(t) - \int_{\Omega} \ff \: \vv + \int_{\Gamma_{N}} \tauu(t)\vv \: \JacB(t) - \int_{\Gamma_{N}} \tauu \: \vv \:.
  \end{aligned}
  \label{FVTMoinsFV}
\end{equation}
Let us introduce three groups of terms, for any $\vv \in \Xx$, say $T_1(\vv)$, $T_2(\vv)$, $T_3(\vv)$ and $T_4(\vv)$, each $T_i(\vv)$ corresponding to the $i$-th line in equation \eqref{FVTMoinsFV}. The terms $T_1$ and $T_4$ have already been treated in the literature as they appear in the classical elasticity problem. Especially, one gets from \cite[Section 3.5]{sokolowski1992introduction} that
\begin{equation*}
    \begin{aligned}
    T_1(\boldsymbol{\delta}_{\uu,\varepsilon}^t) & \: \geq \: \alpha_0 \norml \boldsymbol{\delta}_{\uu,\varepsilon}^t \normr_{\Xx}^2 - tC\norml \boldsymbol{\delta}_{\uu,\varepsilon}^t \normr_{\Xx} \:, \\
    | T_4(\boldsymbol{\delta}_{\uu,\varepsilon}^t) | & \: \leq \: tC \norml \boldsymbol{\delta}_{\uu,\varepsilon}^t \normr_{\Xx} = o(t) \:.
    \end{aligned}
\end{equation*}
As for $T_2$ and $T_3$, 
some boundedness results are needed which can be deduced from the properties of functions $\maxx$ and $\qq$, the trace theorem and continuity of $t\mapsto \normalExt(t)$, $t\mapsto \gG_{\normalExt}(t)$, $t\mapsto (\mathfrak{F}s)(t)$, $t\mapsto \vv_{\tanExt(t)}$. For any $\vv\in\Xx$, one has
\begin{equation*}
    \begin{aligned}
        \norml \vv_{\normalExt(t)}\normr_{0,\Gamma_C} \leq C \norml \vv \normr_{\Xx}\:, \hspace{2em} &\norml \vv_{\tanExt(t)}\normr_{0,\Gamma_C} \leq C \norml \vv \normr_{\Xx}\:, \\
        \norml R_{\normalExt}^{t}(\vv) \normr_{0,\Gamma_C} \leq C\left(1+\norml \vv \normr_{\Xx}\right)\:, \hspace{2em} &  \norml S_{\tanExt}^{t}(\vv) \normr_{0,\Gamma_C} \leq C \:,
    \end{aligned}
\end{equation*}
and the same inequalities applies to $\vv_{\normalExt}$, $\vv_{\tanExt}$, $R_{\normalExt}$ and $S_{\tanExt}$. 
Then, for  $T_2$
\begin{equation*}
    \begin{aligned}
        T_2(\vv) = & \: \frac{1}{\varepsilon} \int_{\Gamma_{C}} R_{\normalExt}^{t}(\uu_{\varepsilon}^{t})\vv_{\normalExt(t)} \: (\JacB(t)-1) + \frac{1}{\varepsilon} \int_{\Gamma_{C}} R_{\normalExt}^{t}(\uu_{\varepsilon}^{t}) (\vv_{\normalExt(t)}-\vv_{\normalExt}) \\
        & + \frac{1}{\varepsilon} \int_{\Gamma_{C}} \left( R_{\normalExt}^{t}(\uu_{\varepsilon}^{t}) - R_{\normalExt}(\uu_{\varepsilon})\right)\vv_{\normalExt} \:.
    \end{aligned}
\end{equation*}
since $ \left( R_{\normalExt}^{t}(\uu_{\varepsilon}^{t}) - R_{\normalExt}(\uu_{\varepsilon})\right)$ is bounded in $L^2(\Gamma_C)$ and $\boldsymbol{\delta}_{\uu,\varepsilon}^t\to 0$ strongly in $\Ll^2(\Gamma_C)$, 
$$
    \left| T_2(\boldsymbol{\delta}_{\uu,\varepsilon}^t) \right| \leq t\,C \norml \boldsymbol{\delta}_{\uu,\varepsilon}^t \normr_{\Xx} +  \frac{1}{\varepsilon} \int_{\Gamma_{C}} \left|R_{\normalExt}^{t}(\uu_{\varepsilon}^{t}) - R_{\normalExt}(\uu_{\varepsilon})\right| \left|\boldsymbol{\delta}_{\uu,\varepsilon}^t \right|  = o(1) \:.
$$
As for $T_3$, using the boundedness of $\left(S_{\tanExt}^t(\uu_{\varepsilon}^{t}) - S_{\tanExt}(\uu_{\varepsilon}) \right)$ in $\Ll^2(\Gamma_C)$ and the same decomposition as for $T_2$ we get
\begin{equation*}
    | T_3(\boldsymbol{\delta}_{\uu,\varepsilon}^t) | \leq t\,C 
    \norml \boldsymbol{\delta}_{\uu,\varepsilon}^t \normr_{\Xx} 
    +  \frac{1}{\varepsilon} \int_{\Gamma_{C}} \left| S_{\tanExt}^t(\uu_{\varepsilon}^{t}) - S_{\tanExt}(\uu_{\varepsilon}) \right| \left|\boldsymbol{\delta}_{\uu,\varepsilon}^t \right|  = o(1) \:,
\end{equation*}
Thus, choosing $\boldsymbol{\delta}_{\uu,\varepsilon}^t$ as a test-function in \eqref{FVTMoinsFV} yields: $\alpha_0 \norml \boldsymbol{\delta}_{\uu,\varepsilon}^t \normr_{\Xx}^2 \leq o(1)$, which proves strong continuity of $t \mapsto \uu_\varepsilon^t$ in $\Xx$ at $t=0^+$.
\end{prf}

This result means that $\uu_\varepsilon$ is strongly directionally continuous with respect to the shape. Now, it remains to prove that differentiability also holds.


\subsubsection{Directional differentiability of $\maxx$ and $\qq$}
\label{app:DirDiffQ}


In order to study differentiability of $\Phi_\varepsilon$, we need some preliminary results concerning the  directional differentiability of the non-Fréchet differentiable functions $\maxx$ and $\qq$.
Let us briefly recall the definition of a Nemytskij operator. 
\begin{defn}
  Let $S$ be a measurable subset of $\mathbb{R}^d$, let $X$ and $Y$ be two real Banach spaces of functions defined on $S$. Given a mapping $\psi : S\times X \to Y$, the associated Nemytskij operator $\Psi$ is defined by:
  \begin{displaymath}
    \Psi(v)(x) := \psi(x,v(x)) \:, \:\: \mbox{ for all } x \in S \: . 
  \end{displaymath}
\end{defn}
As explained in details in \cite{goldberg1992nemytskij} or \cite[Section 4.3]{troltzsch2010optimal}, the smoothness of $\psi$ does not guarantee the smoothness of $\Psi$. In our case, we are only interested in directional differentiability of the Nemytskij operators associated to $\maxx$ and $\qq$. Thus, directional differentiability and Lipschitz continuity of $\maxx$ and $\qq$ in $\mathbb{R}$ and $\mathbb{R}^d$, combined with Lebesgue's dominated convergence, will enable us to conclude directly, without using the more general results from \cite{goldberg1992nemytskij}.


\begin{lemma}
 The function $\maxx:\mathbb{R}\to\mathbb{R}$ is Lipschitz continuous and directionally differentiable, with directional derivative at $u$ in the direction $v\in\mathbb{R}$:
 $$
  \dmaxx(u;v) = \left\{
      \begin{array}{lr}
        0 & \mbox{ if } \:u<0 ,\\
        \maxx(v) & \mbox{ if } \:u=0, \\
        v & \mbox{ if } \:u>0.
      \end{array}
    \right.
$$
\label{LemDirDiffMax.2}
\end{lemma}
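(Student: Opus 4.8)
The statement is elementary and breaks into two independent claims about the scalar function $\maxx = \max\{0,\cdot\}:\mathbb{R}\to\mathbb{R}$: Lipschitz continuity, and the existence of the directional derivative together with its explicit formula. The plan is to dispose of Lipschitz continuity first, then compute the limit defining $\dmaxx(u;v)$ by splitting on the sign of $u$.

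First I would establish Lipschitz continuity with constant $1$. The cleanest way is to write $\maxx(y) = \tfrac12(y + |y|)$ and use the triangle inequality: for any $y_1,y_2\in\mathbb{R}$, $|\maxx(y_1) - \maxx(y_2)| = \tfrac12|(y_1-y_2) + (|y_1| - |y_2|)| \le \tfrac12\big(|y_1-y_2| + \big||y_1|-|y_2|\big|\big) \le |y_1-y_2|$, using the reverse triangle inequality $\big||y_1|-|y_2|\big|\le|y_1-y_2|$. Alternatively one could just check the three cases according to the signs of $y_1,y_2$, but the formula $\maxx(y)=\tfrac12(y+|y|)$ is quicker.

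Next I would compute $\dmaxx(u;v) = \lim_{t\searrow 0}\frac{\maxx(u+tv)-\maxx(u)}{t}$ by cases on the sign of $u$. If $u>0$, then for $t$ small enough $u+tv>0$, so $\maxx(u+tv)-\maxx(u) = (u+tv)-u = tv$, giving the limit $v$. If $u<0$, then for $t$ small enough $u+tv<0$, so both terms vanish and the limit is $0$. If $u=0$, the difference quotient is $\frac{\maxx(tv)}{t} = \frac{\max\{0,tv\}}{t} = \max\{0,v\} = \maxx(v)$ for every $t>0$ (using positive homogeneity of $\maxx$), so the limit is $\maxx(v)$. This matches the claimed formula.

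There is no real obstacle here; the only mild point of care is the $u=0$ case, where the derivative is genuinely only one-sided (the limit as $t\searrow 0$ exists and equals $\maxx(v)$, but the two-sided limit does not unless $v=0$) — this is exactly why the statement is phrased in terms of directional (one-sided) differentiability rather than Gateaux differentiability. I would simply remark that the difference quotient is constant in $t>0$ at $u=0$ by homogeneity, so the limit is immediate. Combining the three cases with the Lipschitz bound completes the proof.
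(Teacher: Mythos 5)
Your proof is correct. Note that the paper does not actually prove this lemma: it simply refers the reader to the literature (\cite{susu2018optimal}) for both this scalar statement and the Nemytskij-operator version, so there is no internal argument to compare against. Your elementary computation supplies exactly what such a reference would contain: Lipschitz continuity with constant $1$ via $\maxx(y)=\tfrac12\,(y+|y|)$ and the reverse triangle inequality, and the directional derivative obtained by splitting on the sign of $u$, with the cases $u>0$ and $u<0$ stabilized for small $t>0$ and the case $u=0$ handled by positive homogeneity, which makes the difference quotient constant in $t$. Your remark that at $u=0$ the derivative is genuinely one-sided (hence directional rather than Gateaux differentiability) is also the right observation, since this non-linearity in $v$ at $u=0$ is precisely what drives the later discussion of the sets $\pazocal{I}_\varepsilon^0$, $\pazocal{J}_\varepsilon^0$ and Assumption \ref{A1.2} in the paper. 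No gaps.
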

\begin{lemma}
    The Nemytskij operator $\maxx:L^2(\Gamma_C)\to L^2(\Gamma_C)$ is Lipschitz continuous and directionally differentiable.
\label{LemDirDiffNemytskijMax.2}
\end{lemma}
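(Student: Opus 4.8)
The plan is to lift the pointwise directional differentiability of $\maxx:\mathbb{R}\to\mathbb{R}$ (Lemma \ref{LemDirDiffMax.2}) to the Nemytskij operator on $L^2(\Gamma_C)$ by a dominated convergence argument. Lipschitz continuity of the operator is immediate: for $u,v\in L^2(\Gamma_C)$ one has $|\maxx(u(x))-\maxx(v(x))|\leq|u(x)-v(x)|$ a.e.\ on $\Gamma_C$ by the $1$-Lipschitz continuity of the scalar function, and integrating the square yields $\norml \maxx(u)-\maxx(v)\normr_{0,\Gamma_C}\leq\norml u-v\normr_{0,\Gamma_C}$. In particular $\maxx$ maps $L^2(\Gamma_C)$ into itself.

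For directional differentiability, fix $u,v\in L^2(\Gamma_C)$ and consider, for $t>0$, the difference quotient $w_t(x):=\dfrac{\maxx(u(x)+t v(x))-\maxx(u(x))}{t}$. By Lemma \ref{LemDirDiffMax.2}, for a.e.\ $x\in\Gamma_C$ the scalar limit $w_t(x)\to \dmaxx(u(x);v(x))$ holds as $t\searrow 0$, where the candidate limit function $W(x):=\dmaxx(u(x);v(x))$ satisfies $|W(x)|\leq|v(x)|$, hence $W\in L^2(\Gamma_C)$. Moreover, by $1$-Lipschitz continuity of $\maxx$, $|w_t(x)|\leq|v(x)|$ for every $t>0$ and a.e.\ $x$, so $|w_t(x)-W(x)|^2\leq 4|v(x)|^2$, which is an integrable dominating function independent of $t$. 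Lebesgue's dominated convergence theorem then gives $\int_{\Gamma_C}|w_t-W|^2\to 0$ as $t\searrow 0$, i.e.\ $\dfrac{\maxx(u+tv)-\maxx(u)}{t}\to W$ strongly in $L^2(\Gamma_C)$. This is exactly the statement that the Nemytskij operator $\maxx$ is directionally differentiable at $u$ in the direction $v$, with directional derivative the Nemytskij operator associated to $\dmaxx(\cdot\,;\cdot)$.

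The only mildly delicate point is measurability of the candidate derivative $W$: one must check that $x\mapsto \dmaxx(u(x);v(x))$ is measurable, which follows since it coincides with $v(x)\mathbf{1}_{\{u>0\}}(x)+\maxx(v(x))\mathbf{1}_{\{u=0\}}(x)$, a sum of products of measurable functions with indicator functions of measurable sets. Once measurability is secured, the domination $|w_t|\leq|v|\in L^2(\Gamma_C)$ does all the work, so there is no real obstacle here; the proof is a routine application of dominated convergence, and indeed the excerpt already signals that the general results of \cite{goldberg1992nemytskij} are not needed precisely because of this elementary argument. (The same scheme will later be reused verbatim for $\qq$, the only difference being that the Lipschitz constant of $\qq(\alpha,\cdot)$ as a projection onto a ball is again $1$, so the domination $|w_t|\leq|v|$ persists.)
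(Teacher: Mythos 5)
Your proof is correct and follows exactly the route the paper intends: the paper itself defers the proof of this lemma to \cite{susu2018optimal}, but the surrounding text announces precisely your strategy (pointwise Lipschitz continuity and directional differentiability of $\maxx$ combined with Lebesgue's dominated convergence), and the paper's own proof of the analogous Lemma for $\qq$ is the same domination-plus-dominated-convergence argument you wrote out. Your additional remark on measurability of the candidate derivative is a harmless, correct refinement; nothing further is needed.
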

The reader is referred to \cite{susu2018optimal}, for example, for the proof of those results.

\begin{notation} 
Let us introduce three subsets of $\mathbb{R}_{+}^{*}\times\mathbb{R}^{d-1}$: 
$$\pazocal{J}^-:=\{ (\alpha,z) \: : \: |z|<\alpha\},
\quad\pazocal{J}^0:=\{ (\alpha,z) \: : \: |z|=\alpha\},\quad 
\pazocal{J}^+:=\{ (\alpha,z) \: : \: |z|>\alpha\},$$
and the functions $\partial_\alpha \qq$ and $\partial_z \qq$, from $\mathbb{R}_+^*\times\mathbb{R}^{d-1}\!\setminus\pazocal{J}^0$ to $\mathcal{L}(\mathbb{R};\mathbb{R}^{d-1})$ and $\mathcal{L}(\mathbb{R}^{d-1})$, respectively, such that:
\begin{equation*}
  \partial_\alpha \qq(\alpha,z) = \left\{
      \begin{array}{lr}
        0 & \mbox{ in } \pazocal{J}^- ,\\
        \frac{z}{|z|} & \mbox{ in } \pazocal{J}^+ ,
      \end{array}
    \right.
\qquad
  \partial_z \qq(\alpha,z) = \left\{
      \begin{array}{lr}
           I_{d-1} & \mbox{in } \pazocal{J}^-, \\
           \frac{\alpha}{|z|}\big(I_{d-1}  -  \frac{1}{|z|^2} z \otimes  z\big) & \mbox{in } \pazocal{J}^+.
      \end{array}
    \right.
\end{equation*}
\end{notation}
\begin{lemma}
 The function $\qq:\mathbb{R}_+^*\times\mathbb{R}^{d-1}\to\mathbb{R}^{d-1}$ is Lipschitz continuous and directionally differentiable, with 
 derivative at $(\alpha,z)$ in the direction $(\beta,h)\in\mathbb{R}\times\mathbb{R}^{d-1}$:
 $$
  \dqq\left((\alpha,z); (\beta,h)\right) = \left\{
      \begin{array}{lr}
        h & \mbox{ in } \pazocal{J}^- ,\\
        h - \maxx\left( h\cdot \frac{z}{|z|}-\beta\right)\frac{z}{|z|} & \mbox{ in } \pazocal{J}^0 , \\
        \frac{\alpha}{|z|}\big(h- \frac{1}{|z|^2} (z\cdot h) z\big) + \beta\frac{z}{|z|} & \mbox{ in } \pazocal{J}^+ .
      \end{array}
    \right.
$$
\label{LemDirDiffQ}
\end{lemma}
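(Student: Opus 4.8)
The plan is to establish Lipschitz continuity and the three cases of the directional‑derivative formula separately. For Lipschitz continuity, given $(\alpha_1,z_1),(\alpha_2,z_2)\in\mathbb{R}_+^*\times\mathbb{R}^{d-1}$, I would split
\[
  |\qq(\alpha_1,z_1)-\qq(\alpha_2,z_2)|\le|\qq(\alpha_1,z_1)-\qq(\alpha_1,z_2)|+|\qq(\alpha_1,z_2)-\qq(\alpha_2,z_2)|,
\]
bound the first term by $|z_1-z_2|$ since $\qq(\alpha_1,\cdot)$ is the projection onto the fixed closed ball $\mathcal{B}(0,\alpha_1)$ and hence $1$‑Lipschitz, and bound the second by $|\alpha_1-\alpha_2|$ by a short case check (on whether $|z_2|$ lies below both radii, between them, or above both: in each case the difference of the two projections is at most $|\alpha_1-\alpha_2|$). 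This establishes global Lipschitz continuity of $\qq$.

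For directional differentiability, fix $(\alpha,z)$ and a direction $(\beta,h)\in\mathbb{R}\times\mathbb{R}^{d-1}$, set $\zeta_t:=z+th$ and $a_t:=\alpha+t\beta$, and study $\tfrac{1}{t}\bigl(\qq(a_t,\zeta_t)-\qq(\alpha,z)\bigr)$ as $t\searrow0$. In $\pazocal{J}^-$ the strict inequality $|z|<\alpha$ is an open condition, so for $t$ small $|\zeta_t|\le a_t$, hence $\qq(a_t,\zeta_t)=\zeta_t$ and the quotient equals $h$. In $\pazocal{J}^+$ likewise $|\zeta_t|>a_t>0$ for $t$ small, so $\qq(a_t,\zeta_t)=a_t\zeta_t/|\zeta_t|$, which is a $\pazocal{C}^1$ function of $t$ near $0$ because $z\ne0$; differentiating with the quotient rule and using $\tfrac{d}{dt}\big|_{0}\tfrac{\zeta_t}{|\zeta_t|}=\tfrac{1}{|z|}\bigl(h-\tfrac{(z\cdot h)z}{|z|^2}\bigr)$ yields exactly $\tfrac{\alpha}{|z|}\bigl(h-\tfrac{1}{|z|^2}(z\cdot h)z\bigr)+\beta\tfrac{z}{|z|}$ (so $\qq$ is in fact Fréchet differentiable there).

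The delicate region is $\pazocal{J}^0$, i.e. $|z|=\alpha$, where the branch of $\qq$ evaluated at $(a_t,\zeta_t)$ may depend on $t$. There $|\zeta_t|^2-a_t^2=2t\,(z\cdot h-\alpha\beta)+O(t^2)$, and since $|z|=\alpha$ one has $z\cdot h-\alpha\beta=\alpha\bigl(h\cdot\tfrac{z}{|z|}-\beta\bigr)$. If $h\cdot\tfrac{z}{|z|}-\beta<0$, then $|\zeta_t|<a_t$ for small $t>0$, so $\qq(a_t,\zeta_t)=\zeta_t$ and the quotient tends to $h=h-\maxx\bigl(h\cdot\tfrac{z}{|z|}-\beta\bigr)\tfrac{z}{|z|}$. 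If $h\cdot\tfrac{z}{|z|}-\beta>0$, then $|\zeta_t|>a_t$ for small $t>0$, so $\qq(a_t,\zeta_t)=a_t\zeta_t/|\zeta_t|$, and the same computation as in the $\pazocal{J}^+$ case, after substituting $\alpha=|z|$, collapses the derivative to $h-\bigl(h\cdot\tfrac{z}{|z|}-\beta\bigr)\tfrac{z}{|z|}=h-\maxx\bigl(h\cdot\tfrac{z}{|z|}-\beta\bigr)\tfrac{z}{|z|}$.

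The main obstacle is the borderline $h\cdot\tfrac{z}{|z|}-\beta=0$, where the two branches are not separated even to first order and $\qq(a_t,\zeta_t)$ may oscillate between them as $t\searrow0$. I would handle this uniformly in the branch: whenever $\qq(a_t,\zeta_t)=\zeta_t$ the quotient is exactly $h$, and whenever $\qq(a_t,\zeta_t)=a_t\zeta_t/|\zeta_t|$ the algebraic identity
\[
  \frac{\qq(a_t,\zeta_t)-z}{t}=h+\frac{\zeta_t}{|\zeta_t|}\cdot\frac{a_t-|\zeta_t|}{t},
\]
together with the expansion $|z+th|=\alpha+t\,\tfrac{z\cdot h}{\alpha}+O(t^2)$, gives $\tfrac{a_t-|\zeta_t|}{t}=\beta-\tfrac{z\cdot h}{\alpha}+O(t)=\bigl(\beta-h\cdot\tfrac{z}{|z|}\bigr)+O(t)\to0$ in this subcase, while $\zeta_t/|\zeta_t|$ stays bounded; hence the correction term vanishes and the quotient still converges to $h=h-\maxx(0)\tfrac{z}{|z|}$. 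Collecting the three regions yields the stated formula, and since the limit exists in every case this finishes the proof.
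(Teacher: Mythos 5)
Your proposal is correct and complete. Note that the paper does not actually prove Lemma \ref{LemDirDiffQ}: it states it without proof (the analogous results for $\maxx$ are delegated to a reference, and the pointwise properties of $\qq$ are treated as elementary), and only the Nemytskij-operator version, Lemma \ref{LemDirDiffNemytskijQ}, is proved, using the pointwise lemma as an ingredient. So there is no paper argument to compare against; your write-up supplies exactly the verification the paper implicitly relies on. Each step checks out: the Lipschitz bound via $1$-Lipschitzness of the projection onto a fixed ball plus the case check in $\alpha$; the open conditions making the branch stable on $\pazocal{J}^-$ and $\pazocal{J}^+$ (where $z\neq 0$ guarantees smoothness of $t\mapsto a_t\zeta_t/|\zeta_t|$, and the chain-rule computation reproduces the stated formula); the sign analysis of $|\zeta_t|^2-a_t^2=2t\,\alpha\bigl(h\cdot\tfrac{z}{|z|}-\beta\bigr)+O(t^2)$ on $\pazocal{J}^0$, which settles the two strict subcases and shows they collapse to $h-\maxx\bigl(h\cdot\tfrac{z}{|z|}-\beta\bigr)\tfrac{z}{|z|}$ after substituting $\alpha=|z|$. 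The only delicate point, the borderline direction $h\cdot\tfrac{z}{|z|}=\beta$ where the active branch may switch as $t\searrow 0$, is handled correctly: your identity $\tfrac{1}{t}\bigl(a_t\tfrac{\zeta_t}{|\zeta_t|}-z\bigr)=h+\tfrac{\zeta_t}{|\zeta_t|}\,\tfrac{a_t-|\zeta_t|}{t}$ is valid because $\zeta_t-th=z$, and the expansion of $|\zeta_t|$ (legitimate since $z\neq 0$ on $\pazocal{J}^0$) makes the correction term $O(t)$ in that subcase, so the difference quotient converges to $h$ regardless of which branch is active at each $t$. This is a sound, self-contained proof of the statement.
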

\begin{lemma}
    The Nemytskij operator $\qq:L^2(\Gamma_C;\mathbb{R}^*_+)\times\Ll^2(\Gamma_C)\to \Ll^2(\Gamma_C)$ is Lipschitz continuous and directionally differentiable.
\label{LemDirDiffNemytskijQ}
\end{lemma}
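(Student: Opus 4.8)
The plan is to mirror the proof of Lemma~\ref{LemDirDiffNemytskijMax.2}, reducing everything to the pointwise statements of Lemma~\ref{LemDirDiffQ} and concluding with Lebesgue's dominated convergence theorem. First I would record well-posedness and the Lipschitz estimate: since $\qq(\alpha,z)$ is the projection of $z$ onto the ball $\mathcal{B}(0,\alpha)$, one has $|\qq(\alpha,z)|\leq |z|$ for every $(\alpha,z)\in\mathbb{R}^*_+\times\mathbb{R}^{d-1}$, so $\qq(\alpha,v)\in\Ll^2(\Gamma_C)$ whenever $v\in\Ll^2(\Gamma_C)$; and from the nonexpansiveness of projections together with the (evident) $1$-Lipschitz dependence on the radius, one gets the pointwise bound $|\qq(\alpha_1,z_1)-\qq(\alpha_2,z_2)|\leq |\alpha_1-\alpha_2|+|z_1-z_2|$ — this is exactly the Lipschitz continuity of $\qq$ stated in Lemma~\ref{LemDirDiffQ}. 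Applying it a.e.\ on $\Gamma_C$, then squaring and integrating, yields $\norml \qq(\alpha_1,v_1)-\qq(\alpha_2,v_2)\normr_{0,\Gamma_C}\leq \norml \alpha_1-\alpha_2\normr_{0,\Gamma_C}+\norml v_1-v_2\normr_{0,\Gamma_C}$, which is the asserted Lipschitz continuity of the Nemytskij operator.

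For directional differentiability I would fix $(\alpha,v)$ in $L^2(\Gamma_C;\mathbb{R}^*_+)\times\Ll^2(\Gamma_C)$ and a direction $(\beta,h)$ in $L^2(\Gamma_C)\times\Ll^2(\Gamma_C)$, and take as candidate derivative the function $D$ defined a.e.\ on $\Gamma_C$ by $D(x):=\dqq\big((\alpha(x),v(x));(\beta(x),h(x))\big)$ via the explicit formula of Lemma~\ref{LemDirDiffQ}. This $D$ is measurable, being the composition of the measurable map $x\mapsto(\alpha(x),v(x),\beta(x),h(x))$ with the Borel map $\dqq$, which is defined piecewise on the Borel partition $\pazocal{J}^-$, $\pazocal{J}^0$, $\pazocal{J}^+$; and the three branches of Lemma~\ref{LemDirDiffQ} give the pointwise bound $|D(x)|\leq 2|h(x)|+|\beta(x)|$, so $D\in\Ll^2(\Gamma_C)$. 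Setting $q_t(x):=\tfrac1t\big(\qq(\alpha(x)+t\beta(x),v(x)+th(x))-\qq(\alpha(x),v(x))\big)$, the pointwise directional differentiability of $\qq$ gives $q_t(x)\to D(x)$ for a.e.\ $x$ as $t\searrow 0$, while the Lipschitz bound above gives the uniform domination $|q_t(x)|\leq |\beta(x)|+|h(x)|$. Hence $|q_t-D|^2\leq (2|\beta|+3|h|)^2\in L^1(\Gamma_C)$ uniformly in small $t$, and dominated convergence yields $\norml q_t-D\normr_{0,\Gamma_C}\to 0$; that is, the Nemytskij operator $\qq$ is directionally differentiable at $(\alpha,v)$ in the direction $(\beta,h)$, with derivative $D$.

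The only point requiring a little care — and the one I single out as the main (if modest) obstacle — is that $\alpha(x)>0$ holds only a.e.\ and is not bounded below, so $\alpha+t\beta$ need not stay in $\mathbb{R}^*_+$ a.e.\ for $t>0$. I would dispose of this by extending $\qq$ to $\mathbb{R}\times\mathbb{R}^{d-1}$, setting $\qq(\alpha,\cdot):=0$ (projection onto $\{0\}$) for $\alpha\leq 0$; the extension is still jointly $1$-Lipschitz and still pointwise directionally differentiable, so the argument goes through verbatim on the whole line. In the intended application the radius is $\alpha=\varepsilon\mathfrak{F}s$ with $\mathfrak{F}s\in H^2(\mathbb{R}^d)\hookrightarrow\pazocal{C}^0$ ($d\leq 3$) strictly positive on the compact $\overline{\Gamma_C}$, hence bounded below there, so the transported radius $\varepsilon(\mathfrak{F}s)(t)$ stays positive for $t$ small and the technicality never actually intervenes. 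It is worth noting, however, that the derivative map $(\beta,h)\mapsto D$ is positively homogeneous but fails to be linear on $\pazocal{J}^0$, consistently with $\qq$ not being Gâteaux (let alone Fréchet) differentiable; directional differentiability, which is all that is claimed here, nonetheless holds and is what the subsequent shape sensitivity analysis of $\uu_\varepsilon$ requires.
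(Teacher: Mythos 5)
Your proof is correct and follows essentially the same route as the paper's: pointwise Lipschitz continuity and directional differentiability of $\qq$ from Lemma~\ref{LemDirDiffQ}, the bound $|\dqq((\alpha,z);(\beta,h))|\leq C(|\beta|+|h|)$ as an $L^2$ dominating function, and Lebesgue's dominated convergence theorem to pass to convergence in $\Ll^2(\Gamma_C)$. Your additional care about $\alpha+t\beta$ possibly leaving $\mathbb{R}^*_+$ (handled by extending $\qq$ by zero for nonpositive radii, and noting it is irrelevant in the application since $\varepsilon\mathfrak{F}s$ is bounded below on $\overline{\Gamma_C}$) addresses a point the paper's proof silently glosses over, and is a welcome refinement rather than a different argument.
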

\begin{prf}
    First, Lipschitz continuity of this Nemytskij operator follows directly from Lipschitz continuity of $\qq:\mathbb{R}_+^*\times\mathbb{R}^{d-1}\to\mathbb{R}^{d-1}$.
    Then, from Lemma \ref{LemDirDiffQ}, it is clear that, for all $(\alpha,z)\in\mathbb{R}_+^*\times\mathbb{R}^{d-1}$ and $(\beta,h)\in\mathbb{R}\times\mathbb{R}^{d-1}$, one has:
    \begin{equation}
         \left| \dqq\left((\alpha,z);(\beta,h)\right)\right| \leq |\beta| + |h|  \:.
         \label{EstPartDerQ}
    \end{equation}
    Let $(\alpha,\zz)\in L^2(\Gamma_C;\mathbb{R}^*_+)\times\Ll^2(\Gamma_C)$ and $(\beta,\hh)\in L^2(\Gamma_C)\times\Ll^2(\Gamma_C)$, and $t>0$. Directional differentiability of $\qq:\mathbb{R}_+^*\times\mathbb{R}^{d-1}\to\mathbb{R}^{d-1}$ yields:
    \begin{equation*}
        \begin{aligned}
            \left|\: \frac{\qq(\alpha+t\beta,\zz+t\hh)-\qq(\alpha,\zz)}{t}- \dqq\left((\alpha,\zz);(\beta,\hh)\right) \:\right| \longrightarrow 0 \hspace{0.5em} \mbox{ a.e.$\!$ on } \Gamma_C.
        \end{aligned}
    \end{equation*}
    Moreover, from estimation \eqref{EstPartDerQ}, along with Lispchitz continuity of $\qq$, 
    one gets:
    \begin{equation*}
        \begin{aligned}
            \left|\: \frac{\qq(\alpha+t\beta,\zz+t\hh)-\qq(\alpha,\zz)}{t}- \dqq\left((\alpha,\zz);(\beta,\hh)\right) \:\right| \leq 2\left(|\beta|+|\hh|\right) \hspace{0.5em} \mbox{ a.e.$\!$ on } \Gamma_C.
        \end{aligned}
    \end{equation*} 
    Since $|\hh|$ and $|\beta|\in L^2(\Gamma_C)$, Lebesgue's dominated convergence theorem finishes the proof.
\end{prf}


\subsubsection{Differentiability of $\Phi_\varepsilon$}


We are now in mesure to state the main results of this work.
\begin{notation}
For any smooth function $f$ defined on $\mathbb{R}^d$, and that does not depend on $\Omega$, we denote $f'[\thetaa]$ or simply $f'$ the following directional derivative:
$$
  f'[\thetaa] := \lim_{t\searrow 0} \:\frac{1}{t}\left( f\circl(\Id+t\thetaa) - f\right) = (\grad f) \thetaa \:.
$$
Using this notation, $\normalExt':=(\gradd \normalExt) \thetaa\:$ and for any $\vv\in\Xx$ we define :
$$
    \vv_{\normalExt'}:=\vv\cdot{\normalExt}'\:,  \hspace{0.5em} \vv_{\tanExt'}:=-\vv\cdot((\gradd \normalExt) \thetaa)\normalExt - (\vv\cdot\normalExt)(\gradd \normalExt) \thetaa = -\vv_{ \normalExt'}\normalExt - \vv_{\normalExt} \normalExt'\:.
$$
For the gap $\,\gG_{\normalExt}':=(\grad \gG_{\normalExt}) \thetaa$ and since $\gG_{\normalExt}$ is the oriented distance function to the smooth boundary $\partial\Omega_{rig}$, $\grad \gG_{\normalExt} = -\normalExt$, which implies that $\gG_{\normalExt}'= -\thetaa\cdot\normalExt$. However, we will still use the notation $\gG_{\normalExt}'$ to emphasize that this term comes from differentiation of the gap. Finally we define 
$$\pazocal{I}_\varepsilon^0:=\{x \in \Gamma_C \: : \: \uu_{\varepsilon,\normalExt}-\gG_{\normalExt}=0\}\subset \Gamma_C\:, \quad \pazocal{J}_\varepsilon^0:=\{x \in \Gamma_C \: : \: |\uu_{\varepsilon,\tanExt}|=\varepsilon\mathfrak{F}s\}\subset \Gamma_C\:,$$ two sets of special interest in the rest of this work.

\end{notation}
\begin{theo} If Assumption \ref{A0} holds, then for any $\thetaa\in\Cc^1_b(\mathbb{R}^d)$, $\Phi_\varepsilon$ is strongly differentiable at $t=0^+$.
  \label{ThmExistMDer}
\end{theo}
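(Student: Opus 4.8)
The plan is to mimic the continuity proof, but now at the level of the difference quotient $\tfrac{1}{t}\boldsymbol{\delta}_{\uu,\varepsilon}^t = \tfrac{1}{t}(\uu_\varepsilon^t - \uu_\varepsilon)$, and to identify its limit as the solution of a well-posed linear variational problem. First I would subtract the formulation \eqref{FV} satisfied by $\uu_\varepsilon$ from the transported formulation \eqref{FVT} satisfied by $\uu_\varepsilon^t$, exactly as in \eqref{FVTMoinsFV}, and divide the whole identity by $t$. Using the first-order Taylor expansions listed in the continuity proof (valid under Assumption \ref{A0}), together with the strong continuity $\uu_\varepsilon^t \to \uu_\varepsilon$ already established, I would show that $\tfrac{1}{t}\boldsymbol{\delta}_{\uu,\varepsilon}^t$ is bounded in $\Xx$: the elasticity terms $T_1,T_4$ give the coercivity estimate $\alpha_0 \norml \tfrac{1}{t}\boldsymbol{\delta}_{\uu,\varepsilon}^t\normr_{\Xx}^2 \le C\norml \tfrac{1}{t}\boldsymbol{\delta}_{\uu,\varepsilon}^t\normr_{\Xx} + C$, provided the projection terms $T_2,T_3$ divided by $t$ are controlled. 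For the latter I would split, as in the continuity proof, into three pieces (the Jacobian-defect piece, the normal/tangential-trace-defect piece, and the genuine increment $\tfrac1t(R_{\normalExt}^t(\uu_\varepsilon^t)-R_{\normalExt}(\uu_\varepsilon))$, resp. for $S_{\tanExt}$), and invoke the Lipschitz continuity of $\maxx$ and $\qq$ (Lemmas \ref{LemDirDiffMax.2}, \ref{LemDirDiffQ}) to bound $\tfrac1t|R_{\normalExt}^t(\uu_\varepsilon^t)-R_{\normalExt}(\uu_\varepsilon)|$ by $C(\,|\tfrac1t\boldsymbol{\delta}_{\uu,\varepsilon}^t| + |\text{data defects}/t|\,)$ pointwise on $\Gamma_C$, the data-defect terms being $O(1)$ in $L^2$. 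This yields the uniform bound.

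Next, extracting a weakly convergent subsequence $\tfrac{1}{t_k}\boldsymbol{\delta}_{\uu,\varepsilon}^{t_k} \rightharpoonup \dot{\uu}_\varepsilon$ in $\Xx$, I would pass to the limit in the divided identity. The elasticity part converges to $a(\dot{\uu}_\varepsilon,\vv) + a'_\varepsilon(\uu_\varepsilon,\vv)$ where $a'_\varepsilon$ collects the derivatives of $\Aa(t)$, $\epsilonn^t$ and $\JacV(t)$ (the usual linear-elasticity computation), and the right-hand side converges to the explicit linear form in $\thetaa$ coming from differentiating $\ff(t)\JacV(t)$ and $\tauu(t)\JacB(t)$. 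The crucial new terms are $\tfrac1t(R_{\normalExt}^t(\uu_\varepsilon^t)-R_{\normalExt}(\uu_\varepsilon))$ and its tangential analogue: here I would use the compact embedding $\Hh^{1/2}(\Gamma_C)\hookrightarrow \Ll^2(\Gamma_C)$ to get $\boldsymbol{\delta}_{\uu,\varepsilon}^{t_k}/t_k \to \dot{\uu}_\varepsilon$ strongly in $\Ll^2(\Gamma_C)$, and then the directional differentiability of the Nemytskij operators $\maxx$ and $\qq$ (Lemmas \ref{LemDirDiffNemytskijMax.2}, \ref{LemDirDiffNemytskijQ}), applied with increment $t_k(\dot{\uu}_\varepsilon\cdot\normalExt + \uu_\varepsilon\cdot\normalExt' - \gG_{\normalExt}')$ in the normal slot and the analogous tangential increment, together with a Lipschitz-in-the-base-point estimate to absorb the error between $\uu_\varepsilon^{t_k}$ and $\uu_\varepsilon + t_k\dot{\uu}_\varepsilon$, to identify the limit of each projection increment in terms of $\dmaxx$ and $\dqq$ evaluated at $(\uu_{\varepsilon,\normalExt}-\gG_{\normalExt})$ resp. $(\varepsilon\mathfrak{F}s,\uu_{\varepsilon,\tanExt})$. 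This produces a variational equation for $\dot{\uu}_\varepsilon$ of the form
\begin{equation*}
  a(\dot{\uu}_\varepsilon,\vv) + \frac1\varepsilon\prodL2{\dmaxx(\uu_{\varepsilon,\normalExt}-\gG_{\normalExt};\,\dot{\uu}_{\varepsilon,\normalExt}+\uu_{\varepsilon,\normalExt'}-\gG_{\normalExt}'),\vv_{\normalExt}}{\Gamma_C} + (\text{tangential term}) = \ell_\varepsilon[\thetaa](\vv),
\end{equation*}
valid for all $\vv\in\Xx$, where $\ell_\varepsilon[\thetaa]$ is explicit. Because $\dmaxx(\cdot\,;\cdot)$ and $\dqq(\cdot\,;\cdot)$ are positively homogeneous and monotone in their direction argument, this is a well-posed problem (Lax–Milgram-type / Minty argument for the hemivariational structure): the map $\vv\mapsto$ (projection terms) is monotone and Lipschitz, $a$ is coercive, so $\dot{\uu}_\varepsilon$ exists and is unique, which forces the whole sequence $\boldsymbol{\delta}_{\uu,\varepsilon}^t/t$ to converge and not just a subsequence.

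Finally I would upgrade weak to strong convergence by the same trick as in the continuity proof: take $\boldsymbol{\delta}_{\uu,\varepsilon}^t/t - \dot{\uu}_\varepsilon$ (or rather $\boldsymbol{\delta}_{\uu,\varepsilon}^t/t$) as test function in the divided identity \eqref{FVTMoinsFV}, use $\alpha_0\|\cdot\|_\Xx^2 \le T_1(\cdot)$, and show all remaining terms are $o(1)$, using strong $\Ll^2(\Gamma_C)$ convergence of the traces and the monotonicity (nonnegativity) of the generalized-derivative bilinear contributions $G_+,G_s$-type terms on the diagonal. This gives strong differentiability of $\Phi_\varepsilon$ at $t=0^+$, i.e. $\uu_\varepsilon$ admits a strong directional material derivative $\dot{\uu}_\varepsilon(\Omega)[\thetaa]$ in $\Xx$. \textbf{The main obstacle} is the passage to the limit in the two projection increments: one must justify that the composition of a (merely directionally, not Fréchet) differentiable Nemytskij operator with the converging sequence $\uu_\varepsilon^{t_k}$ — which moves both in base point and in the argument — converges to the directional derivative evaluated at the limit, and this requires carefully combining Lipschitz continuity (to handle the base-point error $\uu_\varepsilon^{t_k} - \uu_\varepsilon - t_k\dot{\uu}_\varepsilon$, which is only $o(t_k)$ weakly a priori) with the strong $\Ll^2(\Gamma_C)$ trace convergence and Lebesgue dominated convergence as in Lemma \ref{LemDirDiffNemytskijQ}. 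Everything else is a controlled variant of computations already present in \cite{sokolowski1992introduction} and \cite{henrot2006variation}.
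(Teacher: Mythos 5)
Your overall route coincides with the paper's: divide \eqref{FVTMoinsFV} by $t$, bound the quotient $\ww_\varepsilon^t=\frac1t(\uu_\varepsilon^t-\uu_\varepsilon)$ uniformly in $t$, extract a weak limit, identify it via the directional differentiability of the Nemytskij operators $\maxx$ and $\qq$ (splitting off the base-point error with Lipschitz continuity and the compact trace embedding), prove uniqueness of the limit problem, and upgrade to strong convergence by testing with the difference. The gap is in the very first, and most delicate, step: the uniform bound. You propose to control the penalty increments by Lipschitz continuity, bounding $\frac1t\left|R_{\normalExt}^{t}(\uu_\varepsilon^t)-R_{\normalExt}(\uu_\varepsilon)\right|$ pointwise by $C\left(|\ww_\varepsilon^t|+O(1)\right)$ and asserting that "this yields the uniform bound". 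It does not: with $\vv=\ww_\varepsilon^t$ as test function this estimate puts a term of order $\frac{C}{\varepsilon}\norml \ww_{\varepsilon,\normalExt}^t \normr_{0,\Gamma_C}^2$ on the wrong side of the inequality, which the trace theorem only controls by $\frac{C\,C_{tr}}{\varepsilon}\norml \ww_\varepsilon^t \normr_{\Xx}^2$; this cannot be absorbed by the coercivity term $\alpha_0\norml \ww_\varepsilon^t \normr_{\Xx}^2$ unless $\varepsilon$ is large compared with $C\,C_{tr}/\alpha_0$, whereas the theorem is claimed for every fixed $\varepsilon$ and the relevant regime is $\varepsilon$ small. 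Taking absolute values destroys exactly the structure that closes the estimate.

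The missing ingredient, which is what the paper's proof uses, is the monotonicity of the projections. After stripping off the $O(t)$ geometric defects ($\JacB(t)-1$, $\vv_{\normalExt(t)}-\vv_{\normalExt}$, $\gG_{\normalExt}(t)-\gG_{\normalExt}$, $(\mathfrak{F}s)(t)-\mathfrak{F}s$), the remaining increment tested against $\ww_\varepsilon^t$ is $\frac{1}{\varepsilon}\int_{\Gamma_C}\frac1t\left(R_{\normalExt}(\uu_\varepsilon^t)-R_{\normalExt}(\uu_\varepsilon)\right)\ww_{\varepsilon,\normalExt}^t\geq 0$, because $\ww_{\varepsilon,\normalExt}^t$ is $\frac1t$ times the increment of the argument of $\maxx$ and $(\maxx(a)-\maxx(b))(a-b)\geq 0$; the tangential term is handled identically with $(\qq(\alpha,z_1)-\qq(\alpha,z_2))(z_1-z_2)\geq 0$. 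These terms are therefore simply dropped in the lower bound, giving $\frac1t T_2(\ww_\varepsilon^t)\geq -C\norml \ww_\varepsilon^t \normr_{\Xx}$ and likewise for $T_3$, and the coercivity of $T_1$ then yields boundedness with no constraint linking $\varepsilon$, $\alpha_0$ and the trace constant. With that repair the rest of your plan goes through. A secondary remark: your Minty/monotone-operator argument for the limit problem is a legitimate alternative to the paper's argument (which minimizes a strictly convex, coercive functional built from $b_\varepsilon$, $L_\varepsilon[\thetaa]$ and squared $\maxx$-terms), but note that the limit formulation is nonlinear in $\dot\uu_\varepsilon$ on $\pazocal{I}_\varepsilon^0\cup\pazocal{J}_\varepsilon^0$, so a literal Lax--Milgram statement is not available; you need monotonicity plus coercivity (or the convex-optimization reformulation) to get existence and uniqueness, and uniqueness is what upgrades the subsequence limit to convergence of the whole family.
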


\begin{prf}
    In order to prove the differentiability of this map, one has to study the difference $\ww_\varepsilon^t:=\frac{1}{t}(\uu_\varepsilon^t-\uu_\varepsilon)$, which appears when dividing \eqref{FVTMoinsFV} by $t$. Of course, this leads to the formulation: $\frac{1}{t}(T_1(\vv)+T_2(\vv)+T_3(\vv))=\frac{1}{t}T_4(\vv)$. 
Again, from \cite[Section 3.5]{sokolowski1992introduction}, taking $\vv=\ww_\varepsilon^t$ as a test-function, one gets the following estimates for the first and fourth groups of terms:
\begin{equation*}
	\begin{aligned}
    \frac{1}{t}\:T_1(\ww_\varepsilon^t) & \: \geq \: \alpha_0 \norml \ww_\varepsilon^t \normr_{\Xx}^2 - C\norml \ww_\varepsilon^t \normr_{\Xx} \:, \\
    \frac{1}{t}\:T_4(\ww_\varepsilon^t) &\: \leq \:  C\norml \ww_\varepsilon^t \normr_{\Xx} \:.
    \end{aligned}
\end{equation*}

Using the property $(\maxx(a)-\maxx(b))(a-b) \geq 0$, for all $a$, $b \in \mathbb{R}$, one gets for the second group of terms:
\begin{equation*}
    \begin{aligned}
        \frac{1}{t}\:T_2(\ww_\varepsilon^t)\:  & = \:\frac{1}{\varepsilon} \int_{\Gamma_{C}} R_{\normalExt}^{t}(\uu_{\varepsilon}^{t}) \ww_{\varepsilon,\normalExt(t)}^t  \: \frac{1}{t}(\JacB(t)-1) 
        \\
        &\quad + \frac{1}{\varepsilon} \int_{\Gamma_{C}} R_{\normalExt}^{t}(\uu_{\varepsilon}^{t}) 
        \frac{1}{t}(\ww_{\varepsilon,\normalExt(t)}^t - \ww_{\varepsilon,\normalExt}^t)
        + \frac{1}{\varepsilon} \int_{\Gamma_{C}} \frac{1}{t} \left( R_{\normalExt}^{t}(\uu_{\varepsilon}^{t}) - 
        R_{\normalExt}(\uu_{\varepsilon})\right) \ww_{\varepsilon,\normalExt}^t  
        \\
        & \geq \: - C\norml \ww_\varepsilon^t \normr_{\Xx} + \:\frac{1}{\varepsilon} \int_{\Gamma_{C}} \frac{1}{t} \left( R_{\normalExt}(\uu_{\varepsilon}^{t}) - 
        R_{\normalExt}(\uu_{\varepsilon})\right)  \ww_{\varepsilon,\normalExt}^t 
        \\
        & \geq \: - C \norml \ww_\varepsilon^t \normr_{\Xx} \:.
    \end{aligned}
\end{equation*}
One can estimate the third term in the same way, using this time the properties of $\qq$, and especially the property $(\qq(\alpha,z_1)-\qq(\alpha,z_2))(z_1-z_2)\geq 0$, for all $\alpha\in\mathbb{R}_+^*$, $z_1,z_2\in\mathbb{R}^{d-1}$.
\begin{equation*}
    \begin{aligned}
        \frac{1}{t}\:T_3(\ww_\varepsilon^t) & =  \: \frac{1}{\varepsilon} \int_{\Gamma_{C}} S_{\tanExt}^t(\uu_{\varepsilon}^{t}) \ww_{\varepsilon,\tanExt(t)}^t \frac{1}{t}(\JacB(t)-1) 
        + \frac{1}{\varepsilon} \int_{\Gamma_{C}} S_{\tanExt}^t(\uu_{\varepsilon}^{t}) \frac{1}{t}(\ww_{\varepsilon,\tanExt(t)}^t-\ww_{\varepsilon,\tanExt}^t) 
        \\
        &\quad + \frac{1}{\varepsilon} \int_{\Gamma_{C}} \frac{1}{t}\left( S_{\tanExt}^t(\uu_{\varepsilon}^{t}) - S_{\tanExt}(\uu_{\varepsilon}) \right) \ww_{\varepsilon,\tanExt}^t  
        \\
        & \geq \: - C\norml \ww_\varepsilon^t \normr_{\Xx} + \frac{1}{\varepsilon} \int_{\Gamma_{C}} \frac{1}{t}\left( S_{\tanExt}(\uu_{\varepsilon}^{t}) - S_{\tanExt}(\uu_{\varepsilon}) \right)\ww_{\varepsilon,\tanExt}^t  
        \\
        & \geq \: - C\norml \ww_\varepsilon^t \normr_{\Xx} \:.
    \end{aligned}
\end{equation*}
Combining these four estimates leads to boundedness of $\ww_\varepsilon^t$ in $\Xx$ (uniformly in $t$). Thus for any sequence $\{t_k\}_k$ decreasing to 0, there exists a weakly convergent subsequence of $\{ \ww_\varepsilon^{t_k} \}_k$ (still denoted $\{ \ww_\varepsilon^{t_k} \}_k$), say $\ww_\varepsilon^{t_k} \rightharpoonup \ww_\varepsilon \in \Xx$. 
\vspace{0.5em}

The next step is to characterize this weak limit as the solution of a variational formulation. This can be done by taking $t=t_k$, then passing to the limit $k\to+\infty$ in formulation \eqref{FVTMoinsFV} divided by $t$. Before doing that, the bilinear form $a'$ and the linear form $\epsilonn'$, which will be very useful, are introduced as in \cite[Section 3.5]{sokolowski1992introduction}: for any $\uu$, $\vv \in \Xx$,
\begin{equation*}
    \begin{aligned} 
        &a'(\uu,\vv) := \int_\Omega \big\{ \Aa:\epsilonn'(\uu):\epsilonn(\vv) + \Aa:\epsilon(\uu):\epsilonn'(\vv) 
        \\ &\hspace{0.25\textwidth} 
        + (\divv \thetaa \: \Aa + \gradd \Aa \: \thetaa):\epsilonn(\uu):\epsilonn(\vv) \big\} \:, \\
        &\epsilonn'(\vv) := -\frac{1}{2}\left( \gradd \vv \gradd \thetaa + {\gradd \thetaa}^T {\gradd \vv}^T \right).
    \end{aligned}
\end{equation*}

Now, passing to the limit $k\to+\infty$ in $T_1(\vv)$ is rather straightforward and gives:
\begin{equation}
    \frac{1}{t_k}\:T_1(\vv) \: \longrightarrow \: a(\ww_\varepsilon,\vv) + a'(\uu_\varepsilon,\vv) \:.
    \label{FVMDerT1}
\end{equation}
For the second group of terms, one gets:
\begin{equation*}
    \begin{aligned}
        \frac{1}{t_k}\:T_2(\vv) \: \longrightarrow \: & \: \frac{1}{\varepsilon} \int_{\Gamma_{C}} R_{\normalExt}(\uu_{\varepsilon}) \left(\vv \cdot (\divv_\Gamma \thetaa \normalExt + \normalExt'\right) \\
        & \qquad + \lim_k \frac{1}{\varepsilon} \int_{\Gamma_{C}} \frac{1}{t_k} \left( R_{\normalExt}^{t_k}(\uu_{\varepsilon}^{t_k})- R_{\normalExt}(\uu_{\varepsilon})\right) (\vv \cdot \normalExt) \:.
    \end{aligned}
\end{equation*}
The key ingredient to deal with the second limit is the directional differentiability of the function $\maxx$ from $L^2(\Gamma_C)$ to $L^2(\Gamma_C)$, see Section \ref{app:DirDiffQ}. The candidate function for the derivative of $t\mapsto R_{\normalExt}^{t}(\uu_{\varepsilon}^{t})$ at $t=0^+$ is:
$$
    R_{\normalExt}'(\uu_{\varepsilon}):=
    \dmaxx\left( \uu_{\varepsilon,\normalExt}-\gG_{\normalExt} ; \zz_{\varepsilon,\normalExt}^{\thetaa} \right)\:.
$$
where $\zz_{\varepsilon,\normalExt}^{\thetaa} := \ww_{\varepsilon,\normalExt}+\uu_{\varepsilon,\normalExt'}-\gG_{\normalExt}'$. Let us show strong convergence in $L^2(\Gamma_C)$ to this candidate function by estimating:
\begin{equation*}
    \begin{aligned}
        &\norml \frac{1}{t_k} \left( R_{\normalExt}^{t_k}(\uu_{\varepsilon}^{t_k}) - R_{\normalExt}(\uu_{\varepsilon})\right) - R_{\normalExt}'(\uu_{\varepsilon}) \normr_{0,\Gamma_C}  
        \\ & \: 
        \leq \: \left\lVert \frac{1}{t_k} \left( \maxx\left(\uu_{\varepsilon,\normalExt(t_k)}^{t_k}-\gG_{\normalExt}(t_k)\right) -  \maxx\left(\uu_{\varepsilon,\normalExt}-\gG_{\normalExt}+\, t_k\zz_{\varepsilon,\normalExt}^{\thetaa}\right)\right) \right\lVert_{0,\Gamma_C} 
        \\ & \qquad 
        +  \bigg\lVert \frac{1}{t_k} \left( \maxx\left(\uu_{\varepsilon,\normalExt}-\gG_{\normalExt}+\,t_k \zz_{\varepsilon,\normalExt}^{\thetaa} \right)- \maxx\left(\uu_{\varepsilon,\normalExt}-\gG_{\normalExt}\right)\right)
        - R_{\normalExt}'(\uu_{\varepsilon}) \bigg\lVert_{0,\Gamma_C} 
        \\ & \: 
        \leq \: \norml \ww_{\varepsilon}^{t_k} - \ww_{\varepsilon} \normr_{0,\Gamma_C} + t_k C\left( 1 + \norml \ww_\varepsilon \normr_{0,\Gamma_C} + \norml \uu_\varepsilon \normr_{0,\Gamma_C} \right)
        \\ & \quad 
        +  \bigg\lVert \frac{1}{t_k} \left( \maxx\left(\uu_{\varepsilon,\normalExt}-\gG_{\normalExt}+\,t_k \zz_{\varepsilon,\normalExt}^{\thetaa}\right)- \maxx\left(\uu_{\varepsilon,\normalExt}-\gG_{\normalExt}\right)\right)
        - \dmaxx\left( \uu_{\varepsilon,\normalExt}-\gG_{\normalExt} ; \zz_{\varepsilon,\normalExt}^{\thetaa} \right) \bigg\lVert_{0,\Gamma_C}
    \end{aligned}
\end{equation*}
The first term goes to 0 due to compact embedding, and the last one also goes to 0 using directional differentiability of the function $\maxx$ from $L^2(\Gamma_C)$ to $L^2(\Gamma_C)$.
This finally leads for the second group of terms:
\begin{equation}
        \frac{1}{t_k}\:T_2(\vv) \longrightarrow \frac{1}{\varepsilon} \int_{\Gamma_{C}} R_{\normalExt}(\uu_{\varepsilon}) 
        \left(\vv_{\normalExt} \divv_\Gamma \thetaa + \vv_{\normalExt'} \right)
        + \frac{1}{\varepsilon} \int_{\Gamma_C}  R_{\normalExt}'(\uu_{\varepsilon}) \vv_{\normalExt}.
    \label{FVMDerT2}
\end{equation}
From Lemma \ref{LemDirDiffMax.2}, 
$R_{\normalExt}'(\uu_{\varepsilon}) = \maxx(\zz_{\varepsilon,\normalExt}^{\thetaa})$ on $\pazocal{I}_\varepsilon^0$.
The function $\maxx:\mathbb{R}\to\mathbb{R}$ being non-linear, the limit formulation is non-linear in $\thetaa$ if $\pazocal{I}_\varepsilon^0$ is not of null measure.
For the third group of terms, one gets:
\begin{equation*}
    \begin{aligned}
        \frac{1}{t_k}\:T_3(\vv) \: \longrightarrow \: & \: \frac{1}{\varepsilon} \int_{\Gamma_{C}} S_{\tanExt}(\uu_{\varepsilon}) \left(\vv_{\tanExt} \divv_\Gamma \thetaa + \vv_{\tanExt'} \right) \\
        & \qquad + \lim_k \frac{1}{\varepsilon} \int_{\Gamma_{C}} \frac{1}{t_k} \left( S_{\tanExt,t_k}(\uu_{\varepsilon}^{t_k})- S_{\tanExt}(\uu_{\varepsilon})\right) \vv_{\tanExt} \:.
    \end{aligned}
\end{equation*}
The key ingredient is the directional differentiability of the Nemytskij operator associated to $\qq$, see Section \ref{app:DirDiffQ}. The candidate function for the derivative of $t\mapsto S_{\tanExt}^t(\uu_{\varepsilon}^{t})$ at $t=0^+$ is:
$$
    S_{\tanExt}'(\uu_{\varepsilon}):=\dqq\left( (\varepsilon\mathfrak{F}s,\uu_{\varepsilon,\tanExt}) ; \left(\varepsilon\grad(\mathfrak{F}s)\thetaa, \zz_{\varepsilon,\tanExt}^{\thetaa} \right) \right)\:,
$$
where $\zz_{\varepsilon,\tanExt}^{\thetaa}:=\ww_{\varepsilon,\tanExt}+\uu_{\varepsilon,\tanExt'}$. Another series of estimations gives strong convergence to this candidate function in $\Ll^2(\Gamma_C)$.
\begin{equation*}
    \begin{aligned}
        &\norml \frac{1}{t_k} \left( S_{\tanExt,t_k}(\uu_{\varepsilon}^{t_k}) - S_{\tanExt}(\uu_{\varepsilon})\right) - S_{\tanExt}'(\uu_{\varepsilon}) \normr_{0,\Gamma_C}  
        \\ & 
        \leq \: \left\lVert \frac{1}{t_k} \left( \qq\left( \varepsilon (\mathfrak{F}s)(t_k),\uu_{\varepsilon,\tanExt(t_k)}^{t_k} \right)
        - \qq\left( \varepsilon (\mathfrak{F}s + t_k\grad(\mathfrak{F}s)\thetaa) , \uu_{\varepsilon,\tanExt} + t_k \zz_{\varepsilon,\tanExt}^{\thetaa}\right) \right) \right\lVert_{0,\Gamma_C} 
        \\ & \ \
        +  \bigg\lVert \frac{1}{t_k} \left( \qq\left( \varepsilon (\mathfrak{F}s + t_k\grad(\mathfrak{F}s)\thetaa) , \uu_{\varepsilon,\tanExt} + t_k \zz_{\varepsilon,\tanExt}^{\thetaa}\right)
        - \qq\left( \varepsilon \mathfrak{F}s,\uu_{\varepsilon,\tanExt} \right) \right)
        - S_{\tanExt}'(\uu_{\varepsilon}) \bigg\lVert_{0,\Gamma_C} 
        \\ & 
        \leq \: \norml \ww_{\varepsilon}^{t_k} - \ww_{\varepsilon} \normr_{0,\Gamma_C} + C t_k \left( \varepsilon + \norml \ww_\varepsilon \normr_{0,\Gamma_C} + \norml \uu_\varepsilon \normr_{0,\Gamma_C} \right)
        \\ & \ \
        + \bigg\lVert \frac{1}{t_k} \left( \qq\left( \varepsilon (\mathfrak{F}s + t_k\grad(\mathfrak{F}s)\thetaa) , \uu_{\varepsilon,\tanExt} + t_k \zz_{\varepsilon,\tanExt}^{\thetaa}\right)
        - \qq\left( \varepsilon \mathfrak{F}s,\uu_{\varepsilon,\tanExt} \right) \right)
        - S_{\tanExt}'(\uu_{\varepsilon}) \bigg\lVert_{0,\Gamma_C} \!.
    \end{aligned}
\end{equation*}
Due to compact embedding and directional differentiability 
for $\qq$, all terms on the right hand side converge to 0.  Thus, 
\begin{equation}
    \begin{aligned}
        \frac{1}{t_k}\:T_3(\vv) \: \longrightarrow \: & \: \frac{1}{\varepsilon} \int_{\Gamma_{C}} S_{\tanExt}(\uu_{\varepsilon}) \left(\vv_{\tanExt} \divv_\Gamma \thetaa + \vv_{\tanExt'} \right) 
        + \frac{1}{\varepsilon} \int_{\Gamma_{C}} S_{\tanExt}'(\uu_{\varepsilon}) \vv_{\tanExt} \:.
    \end{aligned}
    \label{FVMDerT3}
\end{equation}
From Lemma \ref{LemDirDiffQ}, $S_{\tanExt}'(\uu_{\varepsilon})$ is non linear uniquely on $\pazocal{J}_\varepsilon^0$ where it uses the $\maxx$ function. Therefore the limit variational formulation is non linear only on $ \pazocal{J}_\varepsilon^0$.

Using once again the results from \cite[Section 3.5]{sokolowski1992introduction} gives 
\begin{equation}
    \frac{1}{t_k}\:T_4(\vv) \: \longrightarrow \: \int_\Omega \left(\divv \thetaa \: \ff + \gradd\ff \thetaa\right)\vv + \int_{\Gamma_N} \left(\divv_\Gamma \thetaa \: \tauu + \gradd\tauu \thetaa\right) \vv \:.
    \label{FVMDerT4}
\end{equation}
Combining \eqref{FVMDerT1}, \eqref{FVMDerT2}, \eqref{FVMDerT3} and \eqref{FVMDerT4}, and using the Heaviside function $H$ and $\partial_\alpha$ and $\partial_z$ defined  
in Section \ref{app:DirDiffQ}, one gets that $\ww_\varepsilon\in \Xx$ is the solution of
\begin{equation}
        b_\varepsilon(\ww_\varepsilon,\vv) + \frac{1}{\varepsilon} \prodL2{  R_{\normalExt}'(\uu_{\varepsilon}), \vv_{\normalExt} }{\pazocal{I}_\varepsilon^0} 
        + \frac{1}{\varepsilon} \prodL2{ S_{\tanExt}'(\uu_{\varepsilon}) , \vv_{\tanExt} }{\pazocal{J}_\varepsilon^0} = L_\varepsilon[\thetaa](\vv) \:, \hspace{1em} 
        \forall \vv \in \Xx ,
    \label{FVMDer}
\end{equation} 
where the bilinear form $b_\varepsilon$ and linear form $L_\varepsilon[\thetaa]$ are defined as, for any $\uu$, $\vv\in\Xx$:
\begin{equation*}
    \begin{aligned}
        b_\varepsilon(\uu,\vv):=a(\uu,\vv) &+ \frac{1}{\varepsilon} \prodL2{H(\uu_{\varepsilon,\normalExt}-\gG_{\normalExt}) \uu_{\normalExt} , \vv_{\normalExt}}{\Gamma_C\setminus\pazocal{I}_\varepsilon^0 } \\
        \:& + \frac{1}{\varepsilon} \prodL2{\partial_z \qq(\varepsilon\mathfrak{F}s,\uu_{\varepsilon,\tanExt}) \uu_{\tanExt} , \vv_{\tanExt}}{\Gamma_C\setminus\pazocal{J}_\varepsilon^0} ,
    \end{aligned}
\end{equation*}
\begin{equation*}
    \begin{aligned}
        L_\varepsilon[\thetaa](\vv) := &\int_\Omega (\divv \thetaa \: \ff + \gradd \ff  \thetaa) \vv + \int_{\Gamma_N} (\divv_\Gamma \thetaa \: \tauu+ \gradd \tauu \thetaa)\vv - \:a'(\uu_\varepsilon,\vv) \\
        & - \frac{1}{\varepsilon} \int_{\Gamma_C} R_{\normalExt}(\uu_{\varepsilon})\left(\vv \cdot (\divv_\Gamma \thetaa \normalExt + \normalExt')\right) \\
        & - \frac{1}{\varepsilon} \int_{\Gamma_C\setminus\pazocal{I}_\varepsilon^0} H(\uu_{\varepsilon,\normalExt}-\gG_{\normalExt}) \left( \uu_{\varepsilon,\normalExt'} - \gG_{\normalExt}' \right)\vv_{\normalExt} \\
        & - \frac{1}{\varepsilon} \int_{\Gamma_{C}} S_{\tanExt}(\uu_{\varepsilon}) \left(\vv_{\tanExt} \divv_\Gamma \thetaa + \vv_{\tanExt'} \right) \\
        & - \int_{\Gamma_{C}\setminus\pazocal{J}_\varepsilon^0} \left( \partial_\alpha \qq (\varepsilon\mathfrak{F}s,\uu_{\varepsilon,\tanExt}) \grad(\mathfrak{F}s)\thetaa 
        + 
        \frac{1}{\varepsilon} \partial_z \qq (\varepsilon\mathfrak{F}s,\uu_{\varepsilon,\tanExt}) \uu_{\varepsilon,\tanExt'}\right) \vv_{\tanExt}\:.
    \end{aligned}
\end{equation*}
Due to the regularities of $\normalExt$, $\gG_{\normalExt}$, $\thetaa$, $\ff$, $\tauu$, $\mathfrak{F}s$, $\uu_\varepsilon$, and uniform boundedness of both $\partial_\alpha\qq$, $\partial_z\qq$, it is clear that $L_\varepsilon[\thetaa] \in \Xx^*$ for any $\thetaa$. 
From the uniform boundedness and positivity of $H(\cdot)$ and $\partial_z \qq(\cdot,\cdot)$, one has, for all $\uu$, $\vv$, in $\Xx$
\begin{equation*}
  \begin{aligned}
   \left| \frac{1}{\varepsilon} \prodL2{H(\uu_{\varepsilon,\normalExt}-\gG_{\normalExt}) \uu_{\normalExt} , \vv_{\normalExt}}{\Gamma_C\setminus\pazocal{I}_\varepsilon^0} \right| &\leq \frac{K}{\varepsilon}\norml \uu \normr_{\Xx} \norml \vv \normr_{\Xx} \:, \\
   \frac{1}{\varepsilon} \prodL2{H(\uu_{\varepsilon,\normalExt}-\gG_{\normalExt}) \uu_{\normalExt} , \uu_{\normalExt}}{\Gamma_C\setminus\pazocal{I}_\varepsilon^0} &= \frac{1}{\varepsilon} \int_{\Gamma_C\setminus\pazocal{I}_\varepsilon^0} H(\uu_{\varepsilon,\normalExt}-\gG_{\normalExt}) \:(\uu_{\normalExt})^2 \geq 0 \: , 
   \\
   \left| \frac{1}{\varepsilon} \prodL2{\partial_z\qq(\varepsilon\mathfrak{F}s,\uu_{\varepsilon,\tanExt}) \uu_{\tanExt} , \vv_{\tanExt}}{\Gamma_C\setminus\pazocal{J}_\varepsilon^0} \right| 
   & \leq \frac{K}{\varepsilon}\norml \uu \normr_{\Xx} \norml \vv \normr_{\Xx} \:, 
   \\
   \frac{1}{\varepsilon} \prodL2{\partial_z\qq(\varepsilon\mathfrak{F}s,\uu_{\varepsilon,\tanExt}) \uu_{\tanExt} , \uu_{\tanExt}}{\Gamma_C\setminus\pazocal{J}_\varepsilon^0} &= \frac{1}{\varepsilon} \int_{\Gamma_C\setminus\pazocal{J}_\varepsilon^0}  \left(\partial_z\qq(\varepsilon\mathfrak{F}s,\uu_{\varepsilon,\tanExt}) \uu_{\tanExt}\right) \uu_{\tanExt} \geq 0 \: .
   \end{aligned}
\end{equation*} 
Thus  $b_\varepsilon$ is continuous and coercive over $\Xx \times \Xx$. Because of the non-linearities occuring on the sets $\pazocal{I}_\varepsilon^0$ and $\pazocal{J}_\varepsilon^0$, well-posedness of \eqref{FVMDer} is proved using an optimization argument. Let us introduce the following functionals, defined for any $\ww\in\Xx$:
$$
   \begin{aligned}
   \tilde{\phi}(\ww) := &\frac{1}{2} b_\varepsilon(\ww,\ww) - L_\varepsilon[\thetaa](\ww) + \phi_{\normalExt}(\ww) + \phi_{\tanExt}(\ww) \:,\\[1em]
        \phi_{\normalExt}(\ww) := \ & \frac{1}{2\varepsilon} \norml \maxx\left( \ww_{\normalExt}+\uu_{\varepsilon,\normalExt'}-\gG_{\normalExt}' \right) \normr^2_{0,\pazocal{I}_\varepsilon^0} , \\
        \phi_{\tanExt}(\ww) := \ & 
        \frac{1}{2\varepsilon} \norml \ww_{\tanExt}+\uu_{\varepsilon,\tanExt'} \normr^2_{0,\pazocal{J}_\varepsilon^0} \\ 
        & + \frac{1}{2\varepsilon} \norml 
        \maxx\left(
        -\varepsilon\grad(\mathfrak{F}s)\thetaa + \left(\ww_{\tanExt}+\uu_{\varepsilon,\tanExt'}\right)\cdot \frac{ \uu_{\varepsilon,\tanExt}}{| \uu_{\varepsilon,\tanExt}|}
        \right)
        \normr^2_{0,\pazocal{J}_\varepsilon^0} \:.
   \end{aligned}
$$
Obviously, solving \eqref{FVMDer} is equivalent to finding a minimum of $\tilde{\phi}$ over $\Xx$. Both $\phi_{\normalExt}$ and $\phi_{\tanExt}$ are convex, continuous and positive.   Due to the properties of $b_\varepsilon$ and $L_\varepsilon[\thetaa]$, $\tilde{\phi}$ is strictly convex, coercive and continuous, and one gets that problem \eqref{FVMDer} has a unique solution $\ww_\varepsilon$. Uniqueness also proves that the whole sequence $\{ \ww_\varepsilon^{t_k}\}_k$ converges weakly to $\ww_\varepsilon$.

\paragraph{Strong convergence} 
Strong convergence is proved taking $\boldsymbol{\delta}_{\ww,\varepsilon}^t:=\ww_\varepsilon^t-\ww_\varepsilon$ as test-function and subtracting: $\frac{1}{t}$\eqref{FVTMoinsFV} $-$ \eqref{FVMDer}. Gathering all terms properly enable to get the following estimation:
\begin{equation*}
    \begin{aligned}
        \alpha_0 \norml \boldsymbol{\delta}_{\ww,\varepsilon}^t \normr_{\Xx}^2 \ \leq & \ \left( tC + \norml \frac{1}{t} \left( R_{\normalExt}^{t}(\uu_{\varepsilon}^{t}) - R_{\normalExt}(\uu_{\varepsilon})\right) - R_{\normalExt}'(\uu_{\varepsilon}) \normr_{0,\Gamma_C} \right.\\
        & \ \ \left. + \norml \frac{1}{t} \left( S_{\tanExt}^t(\uu_{\varepsilon}^{t}) - S_{\tanExt}(\uu_{\varepsilon})\right) - S_{\tanExt}'(\uu_{\varepsilon}) \normr_{0,\Gamma_C}\right) \norml \boldsymbol{\delta}_{\ww,\varepsilon}^t \normr_{\Xx} \:.
    \end{aligned}
\end{equation*}
It has already been showed that all terms in parentheses go to $0$, which yields strong convergence of $\ww_\varepsilon^t$ to $\ww_\varepsilon$ in $\Xx$.
\end{prf}

Existence and uniqueness of the limit $\frac{1}{t}(\uu_\varepsilon^t-\uu_\varepsilon)$ have been established. In other words, it has been proved that $\uu_\varepsilon$ admits a strong material derivative in any direction $\thetaa$, namely $\ww_\varepsilon = \dot{\uu}_\varepsilon(\Omega)[\thetaa] \in \Xx$, or simply $\ww_\varepsilon = \dot{\uu}_\varepsilon\in \Xx$. Nevertheless, as mentionned in the previous proof, the map $\thetaa\mapsto \dot{\uu}_\varepsilon(\Omega)[\thetaa]$ fails to be linear  on $\pazocal{I}_\varepsilon^0 \cup \pazocal{J}_\varepsilon^0$ due to non-Gâteaux-differentiability of $\maxx$ and $\qq$. Thus, some additional assumptions are required.
%
%
A rather straightforward way to get around this is to assume that for a fixed value of $\varepsilon$, those sets are of measure zero: 

\begin{hypothesis}\label{A1.2}
    The sets $\pazocal{I}_\varepsilon^0$ and $\pazocal{J}_\varepsilon^0$ are of measure 0.
\end{hypothesis}

Note that, due to \eqref{CL2:1}, $x\in \pazocal{I}_\varepsilon^0$ implies that both $\uu_{\varepsilon,\normalExt}(x)-\gG_{\normalExt}(x)=0$ and $\sigmaa_{\normalInt\normalExt}(\uu_\varepsilon) (x)=0$, which means that $x$ is in contact but there is no contact pressure. On the other hand, by definition, a point $x\in \pazocal{J}_\varepsilon^0$ is at the same time in sliding contact and in sticking contact. In the case of the non-penalty formulation, $\pazocal{I}_\varepsilon^0$ is sometimes referred to as the \textit{weak contact set}, while $\pazocal{J}_\varepsilon^0$ is sometimes referred to as the \textit{weak sticking set} (see~\cite{beremlijski2014shape} for contact with Coulomb friction). Following these denominations, let us refer to the points of  $\pazocal{I}_\varepsilon^0$ and $\pazocal{J}_\varepsilon^0$ as \textit{weak contact points}, and \textit{weak sticking points}, respectively.

For example, Assumption \ref{A1.2} is satisfied when all weak contact points and all weak sticking points represent a finite number of points in 2D or a finite number of curves in 3D.

\begin{remark}
    Both sets can be gathered under the more general denomination of \textit{biactive sets}, borrowed from optimal control (see~\cite{wachsmuth2014strong} in the case of the obstacle problem). Moreover, in optimal control problems related to variational inequalities, Gâteaux differentiability of the solution with respect to the control parameter is obtained under the \textit{strict complementarity condition}, see for example \cite{bonnans2013perturbation}. This condition is actually quite difficult to explicit and to use in practice, see \cite[Lemma 2.6]{rauls2018generalized}, and \cite{wachsmuth2014strong} for a discussion. However, in our context, the variational inequality has  been regularized by the penalty approach. Therefore our conditions are simpler to express: the biactive sets are of zero measure.
\end{remark}

\begin{cor}  \label{CorExistSDer}
  If Assumption \ref{A0} and Assumption \ref{A1.2} hold, then $\uu_\varepsilon$ solution of \eqref{FV} is (strongly) shape differentiable in $\Ll^2(\Omega)$. For any $\thetaa\in\Cc^1_b(\mathbb{R}^d)$, its shape derivative in the direction $\thetaa$ writes $d\mathbf{u}_\varepsilon(\Omega)[\thetaa] :=  \dot{\uu}_\varepsilon(\Omega)[\thetaa] - \gradd \uu_\varepsilon \thetaa$, where $\dot{\uu}_\varepsilon(\Omega)[\thetaa]$ is the unique solution of 
    \begin{equation}
        b_\varepsilon(\dot{\uu}_\varepsilon,\vv) = L_\varepsilon[\thetaa](\vv) \:, \hspace{1em} \forall \vv \in \Xx .
        \label{FVMDerBis}
    \end{equation}
    Moreover, $\{\dot{\uu}_\varepsilon\}_\varepsilon$ and $\{d\mathbf{u}_\varepsilon\}_\varepsilon$ are uniformly bounded in $\Xx$ and $\Ll^2(\Omega)$, respectively.
\end{cor}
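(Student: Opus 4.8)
The plan is to bootstrap from Theorem~\ref{ThmExistMDer}, which already gives a \emph{strong} material derivative $\dot{\uu}_\varepsilon=\ww_\varepsilon\in\Xx$ in every direction $\thetaa$, satisfying the (a~priori nonlinear) problem \eqref{FVMDer}, and to use Assumption~\ref{A1.2} to turn \eqref{FVMDer} into the linear problem \eqref{FVMDerBis}, from which linearity, continuity and the $\varepsilon$-independent bounds follow. The decisive step is the reduction: under Assumption~\ref{A1.2} the sets $\pazocal{I}_\varepsilon^0$ and $\pazocal{J}_\varepsilon^0$ are $\Gamma_C$-null, so the two integrals over $\pazocal{I}_\varepsilon^0$, $\pazocal{J}_\varepsilon^0$ in \eqref{FVMDer} vanish, and in $b_\varepsilon$ and in $L_\varepsilon[\thetaa]$ every domain $\Gamma_C\setminus\pazocal{I}_\varepsilon^0$, resp. $\Gamma_C\setminus\pazocal{J}_\varepsilon^0$, may be replaced by $\Gamma_C$. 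Precisely the terms where the directional derivatives of $\maxx$ and $\qq$ entered nonlinearly (the ones involving $\maxx$ of a combination of $\ww_{\varepsilon}$, $\uu_{\varepsilon,\normalExt'}$, $\gG_{\normalExt}'$) disappear, so \eqref{FVMDer} collapses to $b_\varepsilon(\dot{\uu}_\varepsilon,\vv)=L_\varepsilon[\thetaa](\vv)$ for all $\vv\in\Xx$, i.e.\ \eqref{FVMDerBis}.

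Next I would read off shape differentiability. The bilinear form $b_\varepsilon$ does not depend on $\thetaa$ and is continuous and $\Xx$-coercive with constant $\alpha_0$ (established inside the proof of Theorem~\ref{ThmExistMDer}), while $\thetaa\mapsto L_\varepsilon[\thetaa]$ is linear: each of its terms is linear in $\thetaa$, through $\divv\thetaa$, $\gradd\ff\,\thetaa$, $\divv_\Gamma\thetaa$, $\gradd\tauu\,\thetaa$, $a'(\uu_\varepsilon,\cdot)$, $\normalExt'=\gradd\normalExt\,\thetaa$, $\gG_{\normalExt}'=-\thetaa\cdot\normalExt$, $\uu_{\varepsilon,\normalExt'}$, $\uu_{\varepsilon,\tanExt'}$ and $\grad(\mathfrak{F}s)\,\thetaa$; moreover, using Assumption~\ref{A0}, the regularity of $\uu_\varepsilon$, $\normalExt$, $\gG_{\normalExt}$ and the uniform boundedness of $\partial_\alpha\qq$, $\partial_z\qq$, one checks $\|L_\varepsilon[\thetaa]\|_{\Xx^*}\le C_\varepsilon\|\thetaa\|_{1,\infty}$. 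Lax--Milgram then gives that $\thetaa\mapsto\dot{\uu}_\varepsilon(\Omega)[\thetaa]$ is well defined, linear and continuous from $\Cc^1_b(\mathbb{R}^d)$ to $\Xx$, with $\|\dot{\uu}_\varepsilon(\Omega)[\thetaa]\|_\Xx\le\alpha_0^{-1}\|L_\varepsilon[\thetaa]\|_{\Xx^*}$. Since $\uu_\varepsilon\in\Xx\hookrightarrow\Hh^1(\Omega)$ and $\thetaa\in\Ll^\infty$, the term $\gradd\uu_\varepsilon\,\thetaa$ belongs to $\Ll^2(\Omega)$ and depends linearly and continuously on $\thetaa$, so $d\mathbf{u}_\varepsilon(\Omega)[\thetaa]:=\dot{\uu}_\varepsilon(\Omega)[\thetaa]-\gradd\uu_\varepsilon\,\thetaa$ is a well-defined element of $\Ll^2(\Omega)$ and $\thetaa\mapsto d\mathbf{u}_\varepsilon(\Omega)[\thetaa]$ is linear continuous from $\Cc^1_b(\mathbb{R}^d)$ to $\Ll^2(\Omega)$; that is, $\uu_\varepsilon$ is strongly shape differentiable in $\Ll^2(\Omega)$.

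The part I expect to be the real obstacle is the bound \emph{uniform in $\varepsilon$}. The crude estimate $\|\dot{\uu}_\varepsilon\|_\Xx\le\alpha_0^{-1}\|L_\varepsilon[\thetaa]\|_{\Xx^*}$ does not suffice because $L_\varepsilon[\thetaa]$ carries explicit $\varepsilon^{-1}$ factors. The idea is to test \eqref{FVMDerBis} with $\vv=\dot{\uu}_\varepsilon$, keep the nonnegative $\varepsilon^{-1}$-weighted parts of $b_\varepsilon(\dot{\uu}_\varepsilon,\dot{\uu}_\varepsilon)$ on the left, and split the $\varepsilon^{-1}$-weighted parts of $L_\varepsilon[\thetaa](\dot{\uu}_\varepsilon)$ into two kinds. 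First, $\tfrac1\varepsilon R_{\normalExt}(\uu_\varepsilon)$ and $\tfrac1\varepsilon S_{\tanExt}(\uu_\varepsilon)$ are, by \eqref{CL2:1}--\eqref{CL2:2}, exactly $-\sigmaa_{\normalInt\!\normalExt}(\uu_\varepsilon)$ and $-\sigmaa_{\normalInt\!\tanExt}(\uu_\varepsilon)$, which are uniformly bounded in $\Hh^{-\frac12}(\Gamma_C)$ since $\{\uu_\varepsilon\}_\varepsilon$ is bounded in $\Xx$ (it converges strongly to $\uu$) and $\Divv\sigmaa(\uu_\varepsilon)=-\ff\in\Ll^2(\Omega)$ by \eqref{FF0:1}. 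Second, the remaining $\varepsilon^{-1}$-weighted terms share the multipliers $H(\uu_{\varepsilon,\normalExt}-\gG_{\normalExt})$, resp.\ $\partial_z\qq(\varepsilon\mathfrak{F}s,\uu_{\varepsilon,\tanExt})$, with the coercive part of $b_\varepsilon$, so a Young inequality absorbs their $\dot{\uu}_\varepsilon$-factor into the left-hand side, leaving $\varepsilon^{-1}$-weighted data terms controlled by the penalty estimate on the constraint violation together with the regularity of $\normalExt$, $\gG_{\normalExt}$ and $\mathfrak{F}s$. This yields $\{\dot{\uu}_\varepsilon\}_\varepsilon$ bounded in $\Xx$, and then $\{d\mathbf{u}_\varepsilon\}_\varepsilon$ bounded in $\Ll^2(\Omega)$ follows at once from $d\mathbf{u}_\varepsilon=\dot{\uu}_\varepsilon-\gradd\uu_\varepsilon\,\thetaa$ and the uniform boundedness of $\{\uu_\varepsilon\}_\varepsilon$ in $\Xx$.
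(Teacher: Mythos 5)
Your reduction of \eqref{FVMDer} to \eqref{FVMDerBis} under Assumption \ref{A1.2}, and the linearity/continuity argument giving shape differentiability in $\Ll^2(\Omega)$, coincide with the paper's proof. The gap is in the uniform-in-$\varepsilon$ bound. Testing \eqref{FVMDerBis} with $\vv=\dot{\uu}_\varepsilon$ and splitting the $H$-weighted term of $L_\varepsilon[\thetaa]$ by Young's inequality leaves a remainder of the form
\begin{equation*}
  \frac{C}{\varepsilon}\int_{\Gamma_C} H(\uu_{\varepsilon,\normalExt}-\gG_{\normalExt})\left( \uu_{\varepsilon,\normalExt'}-\gG_{\normalExt}'\right)^2
\end{equation*}
(and its tangential analogue with $\partial_z\qq$), and this is \emph{not} controlled by the penalty estimate on the constraint violation: that estimate bounds $\norml \maxx(\uu_{\varepsilon,\normalExt}-\gG_{\normalExt})\normr_{0,\Gamma_C}$ by $O(\varepsilon)$, whereas the remainder involves the indicator $H$ of the set $\{\uu_{\varepsilon,\normalExt}>\gG_{\normalExt}\}$, whose measure converges to that of the true contact set (positive in general), multiplied by $(\uu_{\varepsilon,\normalExt'}-\gG_{\normalExt}')^2$, which has no reason to vanish there. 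So the term is genuinely $O(1/\varepsilon)$ and no choice of Young weights, nor a crude absorption into the coercive part, yields a bound independent of $\varepsilon$; at best you get $\norml\dot{\uu}_\varepsilon\normr_{\Xx}\leq C/\varepsilon$.

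The paper's proof avoids estimating these terms altogether by choosing the test function $\vv=\dot{\uu}_\varepsilon+\tilde{\uu}_\varepsilon$ with $\tilde{\uu}_\varepsilon := \left( \uu_{\varepsilon,\normalExt'}-\gG_{\normalExt}' \right)\normalExt - \uu_{\varepsilon,\normalExt}\normalExt'$, so that (using $\normalExt'\perp\normalExt$) $\vv_{\normalExt}=\dot{\uu}_{\varepsilon,\normalExt}+\uu_{\varepsilon,\normalExt'}-\gG_{\normalExt}'$ and $\vv_{\tanExt}=\dot{\uu}_{\varepsilon,\tanExt}-\uu_{\varepsilon,\normalExt}\normalExt'$; then the $H$- and $\partial_z\qq$-weighted contributions of $b_\varepsilon$ and $L_\varepsilon[\thetaa]$ combine into the sign-definite quantities $\frac{1}{\varepsilon}H(\cdot)(\vv_{\normalExt})^2$ and $\frac{1}{\varepsilon}\left(\partial_z\qq(\cdot)\vv_{\tanExt}\right)\cdot\vv_{\tanExt}$, which are simply dropped by positivity, leaving no $\varepsilon^{-1}$ remainder. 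The only surviving $\varepsilon^{-1}$ factors are $\frac{1}{\varepsilon}R_{\normalExt}(\uu_\varepsilon)$ and $\frac{1}{\varepsilon}S_{\tanExt}(\uu_\varepsilon)$, which are uniformly bounded in $L^2(\Gamma_C)$ (the paper uses this $L^2$ bound; your proposed $\Hh^{-\frac{1}{2}}(\Gamma_C)$ bound would additionally require justifying the duality pairing against products like $\vv\cdot(\divv_\Gamma\thetaa\,\normalExt+\normalExt')$, where the multiplier is only continuous and need not preserve $H^{\frac{1}{2}}$). To repair your argument you need this structural cancellation — or an equivalent one — rather than a Young-type estimate; your remaining steps then deliver the conclusion as in the paper.
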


\begin{prf}
    When Assumption \ref{A1.2} holds, the variational formulation \eqref{FVMDer} solved by $\dot{\uu}_\varepsilon$ may be rewritten as \eqref{FVMDerBis}.
    Since the map $\thetaa\mapsto L_\varepsilon[\thetaa]$ is linear from $\Cc^1_b(\mathbb{R}^d)$ to $\Xx^*$, one gets that the map $\thetaa \mapsto \dot{\uu}_\varepsilon(\Omega)[\thetaa] \in \Xx$ is linear as well, which directly leads to the desired result. 
    
    Regarding boundedness of $\{\dot{\uu}_\varepsilon\}_\varepsilon$, the key ingredient is the choice of the right test function. Let us introduce
    $$
      \tilde{\uu}_\varepsilon := \left( \uu_{\varepsilon,\normalExt'}-\gG_{\normalExt}' \right)\normalExt - \uu_{\varepsilon,\normalExt}\normalExt'\:.
    $$
    It is clear that $\tilde{\uu}_\varepsilon\in\Xx$, and that one has the following estimation
    $$
      \norml \tilde{\uu}_\varepsilon \normr_{\Xx} \leq C\left( 1 + \norml \uu_\varepsilon \normr_{\Xx} \right) \:.
    $$
    Now, as $\normalExt' \perp \normalExt$, if $\vv\in\Xx$ is defined by $\vv = \dot{\uu}_\varepsilon + \tilde{\uu}_\varepsilon$, then 
    $$
        \vv_{\normalExt} =  \dot{\uu}_{\varepsilon,\normalExt} + \uu_{\varepsilon,\normalExt'}-\gG_{\normalExt}' \:, \quad\quad
        \vv_{\tanExt} =  \dot{\uu}_{\varepsilon,\tanExt} - \uu_{\varepsilon,\normalExt}\normalExt'\:.
    $$
    Therefore, due to positivity of $H$ and $\partial_z \qq$, combined with uniform boundedness of both $\frac{1}{\varepsilon}R_{\normalExt}(\uu_\varepsilon)$ in $L^2(\Gamma_C)$ and $\frac{1}{\varepsilon}S_{\tanExt}(\uu_\varepsilon)$ in $\Ll^2(\Gamma_C)$, taking such a $\vv$ as test-function in \eqref{FVMDerBis} enables to conclude.
\end{prf}

\begin{remark}
    Another approach to get around this non-differentiability issue is to modify the formulation by regularizing non-smooth functions: in this case, replacing $\maxx$ and $\qq$ by regularized versions $\text{p}_{c,+}$ and $\qq_c$, where $c$ stands for the regularization parameter, $c\to\infty$. This leads to a solution map that is Fréchet-differentiable. It can be proved, see \cite{chaudet2019phd}, that the solution of the regularized formulation $\uu_\varepsilon^c \to \uu_\varepsilon$ in $\Xx$, and that in addition, when Assumption \ref{A1.2} holds, the shape derivative $d\mathbf{u}_\varepsilon^c \to d\mathbf{u}_\varepsilon$ in $\Ll^2(\Omega)$.
\end{remark}

\begin{remark}
	Uniform boundedness of $\{d\mathbf{u}_\varepsilon\}_\varepsilon$ implies that the sequence converges weakly in $\Ll^2(\Omega)$ (up to a subsequence) when $\varepsilon\to 0$. However, it seems difficult to characterize this weak limit.
\end{remark}

\subsection{Computation of the shape derivative of a general criterion}

Now that shape sensitivity of the penalty formulation have been studied, one may go back to our initial shape optimization problem \eqref{ShapeOpt.2}. Let us focus on cost functionals of the rather general type:
\begin{equation}
  J_\varepsilon(\Omega) := \int_\Omega j(\uu_\varepsilon(\Omega)) + \int_{\partial\Omega} k(\uu_\varepsilon(\Omega))\:,
  \label{GeneralJType.2}
\end{equation}
where $\uu_\varepsilon(\Omega)$ is the solution of \eqref{FV} on $\Omega$. The functions $j,k$ are $\pazocal{C}^1(\mathbb{R}^d,\mathbb{R})$, and their derivatives with respect to $\uu_\varepsilon$, denoted $j'$, $k'$, are Lipschitz. It is also assumed that those functions and their derivatives satisfy, for all $u$, $v \in \mathbb{R}^d$,
\begin{equation}
    |j(u)| \leq C\left(1+|u|^2\right) \qquad
    |k(u)| \leq C\left(1+|u|^2\right)
  \label{Condjk.2}
\end{equation}
\begin{equation}
    |j'(u)\cdot v| \leq C |u\cdot v| \qquad
    |k'(u)\cdot v| \leq C|u\cdot v| 
  \label{Condj'k'.2}
\end{equation}
for some constants $C>0$.
From the shape differentiability of $\uu_\varepsilon$, one may deduce the following results, see for example \cite{henrot2006variation}.

\begin{theo}  \label{ThmDJVol.2}
    When Assumption \ref{A0} and Assumption \ref{A1.2} hold, 
    $J_\varepsilon$ is defined by \eqref{GeneralJType.2} and satisfy \eqref{Condjk.2} and \eqref{Condj'k'.2}
    and $\uu_\varepsilon$ is the solution of \eqref{FV}, then 
    $J_\varepsilon$ 
    is shape differentiable at $\Omega$ and its derivative in the direction $\thetaa\in\Cc^1_b(\mathbb{R}^d)$ writes:
    \begin{equation}
            dJ_\varepsilon(\Omega)[\thetaa] = \int_\Omega j'(\uu_\varepsilon)\cdot \dot{\uu}_\varepsilon + \: j(\uu_\varepsilon)\divv\thetaa 
            + \int_{\partial\Omega} k'(\uu_\varepsilon)\cdot \dot{\uu}_\varepsilon + k(\uu_\varepsilon) \divv_\Gamma\thetaa.
        \label{DJVol0.2}
    \end{equation}
    with $\dot{\uu}_\varepsilon$ solution of \eqref{FVMDerBis}.
\end{theo}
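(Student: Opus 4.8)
The plan is to follow the classical boundary-variation argument of \cite[Theorem 5.2.2 and Proposition 5.4.18]{henrot2006variation}, now that shape differentiability of $\uu_\varepsilon$ has been established (Corollary \ref{CorExistSDer}). First I would transport the functional onto the reference domain $\Omega$ via the change of variables $\Id+t\thetaa$: keeping the notation $\uu_\varepsilon^t=\uu_{\varepsilon,t}\circl(\Id+t\thetaa)$ and the Jacobians $\JacV(t)$, $\JacB(t)$ from the proof of Theorem \ref{ThmExistMDer}, one has
\begin{equation*}
    J_\varepsilon(\Omega(t)) = \int_\Omega j(\uu_\varepsilon^t)\,\JacV(t) + \int_{\partial\Omega} k(\uu_\varepsilon^t)\,\JacB(t)\:.
\end{equation*}
Then the difference quotient $\frac{1}{t}\big(J_\varepsilon(\Omega(t))-J_\varepsilon(\Omega)\big)$ is split, for each of the two integrals, into a term in which the Jacobian is differentiated and the solution is frozen, and a term in which the solution is differentiated and the Jacobian is frozen.

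The Jacobian terms are routine. Using the first-order expansions $\norml\JacV(t)-1-t\divv\thetaa\normr_{\infty,\Omega}=O(t^2)$ and $\norml\JacB(t)-1-t\divv_\Gamma\thetaa\normr_{\infty,\partial\Omega}=O(t^2)$ recalled in Section \ref{sec:shapeopt.2}, together with the strong convergence $\uu_\varepsilon^t\to\uu_\varepsilon$ in $\Xx$ (hence in $\Ll^2(\Omega)$, and by the trace theorem in $\Ll^2(\partial\Omega)$) established in Theorem \ref{ThmExistMDer}, and the growth bound \eqref{Condjk.2} on $j$ and $k$, one obtains $\int_\Omega j(\uu_\varepsilon^t)\tfrac{\JacV(t)-1}{t}\to\int_\Omega j(\uu_\varepsilon)\divv\thetaa$ and, similarly, $\int_{\partial\Omega} k(\uu_\varepsilon^t)\tfrac{\JacB(t)-1}{t}\to\int_{\partial\Omega} k(\uu_\varepsilon)\divv_\Gamma\thetaa$.

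The heart of the proof is the remaining pair of terms, $\frac{1}{t}\int_\Omega\big(j(\uu_\varepsilon^t)-j(\uu_\varepsilon)\big)\JacV(t)$ and $\frac{1}{t}\int_{\partial\Omega}\big(k(\uu_\varepsilon^t)-k(\uu_\varepsilon)\big)\JacB(t)$. By the mean value theorem, $j(\uu_\varepsilon^t)-j(\uu_\varepsilon)=j'\big(\uu_\varepsilon+s_t(\uu_\varepsilon^t-\uu_\varepsilon)\big)\cdot(\uu_\varepsilon^t-\uu_\varepsilon)$ for some $s_t\in(0,1)$, so the volume integrand equals $j'\big(\uu_\varepsilon+s_t(\uu_\varepsilon^t-\uu_\varepsilon)\big)\cdot\ww_\varepsilon^t\,\JacV(t)$ with $\ww_\varepsilon^t=\frac{1}{t}(\uu_\varepsilon^t-\uu_\varepsilon)$. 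From Theorem \ref{ThmExistMDer}, $\ww_\varepsilon^t\to\dot{\uu}_\varepsilon$ strongly in $\Xx$, hence up to a subsequence a.e.\ on $\Omega$ (and, via traces, a.e.\ on $\partial\Omega$), while $\uu_\varepsilon^t\to\uu_\varepsilon$ and continuity of $j'$ give $j'\big(\uu_\varepsilon+s_t(\uu_\varepsilon^t-\uu_\varepsilon)\big)\to j'(\uu_\varepsilon)$ a.e.; combined with $\JacV(t)\to 1$, the integrand converges a.e.\ to $j'(\uu_\varepsilon)\cdot\dot{\uu}_\varepsilon$. Domination is supplied by the growth condition \eqref{Condj'k'.2}, which yields $\big|j'\big(\uu_\varepsilon+s_t(\uu_\varepsilon^t-\uu_\varepsilon)\big)\cdot\ww_\varepsilon^t\big|\leq C\big(|\uu_\varepsilon|+|\uu_\varepsilon^t|\big)|\ww_\varepsilon^t|$, an $L^1$ function bounded uniformly in $t$ by Cauchy--Schwarz and the (uniform in $t$) $\Xx$-boundedness of $\uu_\varepsilon^t$ and $\ww_\varepsilon^t$; a generalized dominated convergence theorem then gives convergence in $L^1(\Omega)$, and the analogous argument, passing the $\Xx$-bounds through the trace operator, gives the $L^1(\partial\Omega)$ convergence with $k'$. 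Passing to the limit (the limit being independent of the chosen subsequence) yields \eqref{DJVol0.2}. Finally, shape differentiability, i.e.\ linearity and continuity of $\thetaa\mapsto dJ_\varepsilon(\Omega)[\thetaa]$, follows from the corresponding property of $\thetaa\mapsto\dot{\uu}_\varepsilon(\Omega)[\thetaa]$ proved in Corollary \ref{CorExistSDer}, together with the obvious linearity and continuity in $\thetaa$ of $\int_\Omega j(\uu_\varepsilon)\divv\thetaa$ and $\int_{\partial\Omega} k(\uu_\varepsilon)\divv_\Gamma\thetaa$. The main obstacle is precisely the $L^1$-convergence of the nonlinear difference quotients: it is here that the strong (not merely weak) material derivative from Theorem \ref{ThmExistMDer} and the growth conditions \eqref{Condjk.2}--\eqref{Condj'k'.2} are indispensable, the boundary term being slightly more delicate only because all bounds must be carried through the trace operator.
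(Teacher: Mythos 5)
Your proposal is correct and follows exactly the route the paper itself relies on: the paper gives no separate proof of Theorem \ref{ThmDJVol.2} but refers to the classical results of \cite{henrot2006variation} (Theorem 5.2.2 and Proposition 5.4.18), whose adaptation is precisely what you carry out — transport to the reference domain, split of the difference quotient, treatment of the Jacobian terms, and passage to the limit in the nonlinear quotients using the strong material derivative from Theorem \ref{ThmExistMDer}, the growth conditions \eqref{Condjk.2}--\eqref{Condj'k'.2}, and a generalized dominated convergence argument, with linearity/continuity in $\thetaa$ inherited from Corollary \ref{CorExistSDer}. The only (inessential) refinement would be to use the integral form of the mean value theorem, $j(\uu_\varepsilon^t)-j(\uu_\varepsilon)=\int_0^1 j'\big(\uu_\varepsilon+s(\uu_\varepsilon^t-\uu_\varepsilon)\big)\cdot(\uu_\varepsilon^t-\uu_\varepsilon)\,ds$, to avoid any measurability question about the intermediate point $s_t(x)$.
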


\begin{remark}
  From Corollary \ref{CorExistSDer}, one automatically gets that $dJ_\varepsilon$ is uniformly bounded in $\varepsilon$. Therefore, formula \eqref{DJVol0.2} produces usable shape derivatives, regardless how small $\varepsilon$ gets.    
\end{remark}


From a numerical point of view, this last expression contains a number of difficulties (mainly the right hand side of \eqref{FVMDerBis} and divergence of $\thetaa$) that can be circumvented through simple transformations. 
Introducing the adjoint state, it is possible to rewrite \eqref{DJVol0.2} avoiding the construction of the right hand side of \eqref{FVMDerBis} and resulting in an expression having only boundary integrals with integrand involving only $\thetaa$.

In the context of problem \eqref{FV} with the functional $J_\varepsilon$, the associated \textit{adjoint state} $\pp_\varepsilon \in \Xx$ is defined as the solution of: 
\begin{equation}
    b_\varepsilon(\pp_\varepsilon,\vv) = -\int_\Omega j'(\uu_\varepsilon)\cdot \vv - \int_{\partial\Omega} k'(\uu_\varepsilon)\cdot \vv \qquad \forall\vv \in \Xx\:.
    \label{FVA}
\end{equation}
Note that by application of Lax-Milgram lemma, existence and uniqueness of $\pp_\varepsilon$ are guaranteed. Using this adjoint state, one is able to get a boundary integral expression of the following form for $dJ_\varepsilon(\Omega)[\thetaa]$.
\begin{theo}
  Suppose $\Omega$ is of class $\pazocal{C}^2$. Then, under the hypothesis of Theorem \ref{ThmDJVol.2}, with $\uu_\varepsilon$, $\pp_\varepsilon\in \Hh^2(\Omega)\cap\Xx$ solutions of \eqref{FV} and \eqref{FVA} respectively, one has:
  \begin{equation}
      dJ_\varepsilon(\Omega)[\thetaa] = \int_{\partial \Omega} \mathfrak{A}_\varepsilon \: (\thetaa\cdot\normalInt) + \int_{\Gamma_N} \mathfrak{B}_\varepsilon \: (\thetaa\cdot\normalInt) + \int_{\Gamma_C} \mathfrak{C}_\varepsilon \: (\thetaa\cdot\normalInt) \:,
    \label{DJ.2}
  \end{equation} 
  where $\mathfrak{A}_\varepsilon$, $\mathfrak{B}_\varepsilon$ and $\mathfrak{C}_\varepsilon$ depend on $\uu_\varepsilon$, $\pp_\varepsilon$, their gradients, and the data.
  \label{ThmDJ}
\end{theo}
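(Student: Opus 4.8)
The plan is to mimic the strategy used for the frictionless example of Section~\ref{subsec:SDiffElas}: first use the adjoint state $\pp_\varepsilon$ of \eqref{FVA} to eliminate the material derivative $\dot{\uu}_\varepsilon$ from the volume expression \eqref{DJVol0.2}, and then integrate by parts to convert the resulting volume and surface integrals into boundary integrals whose integrands carry only the normal component $\thetaa\cdot\normalInt$.

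\textbf{Step 1 (eliminating $\dot{\uu}_\varepsilon$).} First I observe that the bilinear form $b_\varepsilon$ built in the proof of Theorem~\ref{ThmExistMDer} is symmetric: $a$ is symmetric, the penalty contribution on $\Gamma_C\setminus\pazocal{I}_\varepsilon^0$ is $\frac{1}{\varepsilon}\int H(\uu_{\varepsilon,\normalExt}-\gG_{\normalExt})\,\uu_{\normalExt}\,\vv_{\normalExt}$ (a scalar multiplier), and the one on $\Gamma_C\setminus\pazocal{J}_\varepsilon^0$ is $\frac{1}{\varepsilon}\int (\partial_z\qq(\varepsilon\mathfrak{F}s,\uu_{\varepsilon,\tanExt})\,\uu_{\tanExt})\cdot\vv_{\tanExt}$, with $\partial_z\qq$ a fixed symmetric matrix field by its explicit expression. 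Taking $\vv=\dot{\uu}_\varepsilon$ in \eqref{FVA}, $\vv=\pp_\varepsilon$ in \eqref{FVMDerBis} and using this symmetry gives
\begin{equation*}
  \int_\Omega j'(\uu_\varepsilon)\cdot\dot{\uu}_\varepsilon + \int_{\partial\Omega} k'(\uu_\varepsilon)\cdot\dot{\uu}_\varepsilon \ = \ -\,b_\varepsilon(\pp_\varepsilon,\dot{\uu}_\varepsilon) \ = \ -\,L_\varepsilon[\thetaa](\pp_\varepsilon)\,.
\end{equation*}
Since Assumption~\ref{A1.2} holds, $\pazocal{I}_\varepsilon^0$ and $\pazocal{J}_\varepsilon^0$ are negligible, the corresponding surface terms in $L_\varepsilon[\thetaa]$ disappear, and $L_\varepsilon[\thetaa]$ is linear in $\thetaa$. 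Substituting into \eqref{DJVol0.2} yields the analogue of \eqref{DJ0Vol},
\begin{equation*}
  dJ_\varepsilon(\Omega)[\thetaa] \ = \ -\,L_\varepsilon[\thetaa](\pp_\varepsilon) + \int_\Omega j(\uu_\varepsilon)\,\divv\thetaa + \int_{\partial\Omega} k(\uu_\varepsilon)\,\divv_\Gamma\thetaa\,,
\end{equation*}
in which every dependence on $\thetaa$ is now explicit.

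\textbf{Step 2 (volume to boundary).} Next I transform each term via $\int_\Omega f\,\divv\thetaa = \int_{\partial\Omega} f\,(\thetaa\cdot\normalInt) - \int_\Omega \nabla f\cdot\thetaa$ and the tangential Stokes formula $\int_{\partial\Omega} g\,\divv_\Gamma\thetaa = \int_{\partial\Omega} \kappa\,g\,(\thetaa\cdot\normalInt) - \int_{\partial\Omega}\nabla_\Gamma g\cdot\thetaa$ (together with, on the relative boundaries $\partial\Gamma_C$ and $\partial\Gamma_N$, line integrals that I discard under the standing hypothesis that $\thetaa$ — or the data $k$, $\mathfrak{F}s$ — is supported away from those edges, so no residual contribution survives there). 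The interior integrals generated this way are handled exactly as in Section~\ref{subsec:SDiffElas} and \cite[pp.~135--141]{sokolowski1992introduction}: using $\uu_\varepsilon,\pp_\varepsilon\in\Hh^2(\Omega)\cap\Xx$, one integrates by parts once more and invokes the strong forms of \eqref{FV} and \eqref{FVA} to cancel every volume term, leaving only boundary contributions (giving $\mathfrak{A}_\varepsilon$ containing $j(\uu_\varepsilon)$, $\Aa:\epsilonn(\uu_\varepsilon):\epsilonn(\pp_\varepsilon)$, $\ff\cdot\pp_\varepsilon$, \dots, and $\mathfrak{B}_\varepsilon$ the $\Gamma_N$ term with $k(\uu_\varepsilon)$, $\tauu$, $\kappa$ and normal derivatives, as in the frictionless case). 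The contact-specific terms in $L_\varepsilon[\thetaa](\pp_\varepsilon)$ are rewritten using $\normalExt'=(\gradd\normalExt)\thetaa$, $\gG_{\normalExt}'=-\thetaa\cdot\normalExt$ and the definitions of $\vv_{\normalExt'}$, $\vv_{\tanExt'}$; on $\Gamma_C$ one then splits $\thetaa=(\thetaa\cdot\normalInt)\normalInt+\thetaa_\Gamma$ and applies the tangential Stokes formula once more to the $\Gamma_C$-integrands (involving $R_{\normalExt}(\uu_\varepsilon)$, $S_{\tanExt}(\uu_\varepsilon)$, $\partial_\alpha\qq$, $\partial_z\qq$, $\gradd(\mathfrak{F}s)$) so that the tangential part $\thetaa_\Gamma$ is absorbed and only the normal component remains — consistently with the Hadamard structure theorem, $J_\varepsilon$ being shape differentiable. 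Sorting the surviving boundary terms by where they live produces \eqref{DJ.2} with the claimed $\mathfrak{A}_\varepsilon$, $\mathfrak{B}_\varepsilon$, $\mathfrak{C}_\varepsilon$.

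\textbf{Main obstacle.} The delicate part is the $\Gamma_C$ bookkeeping in Step~2: one has to carry out the tangential-calculus manipulations of $\normalExt'$, $\gG_{\normalExt}'$, $\divv_\Gamma\thetaa$ and $\gradd(\mathfrak{F}s)\thetaa$, verify that all pieces proportional to $\thetaa_\Gamma$ cancel, and confirm that no $\partial\Gamma_C$ line integral remains; by contrast, the interior integration-by-parts is routine once $\Hh^2$ regularity is in hand, and the derivation of $\mathfrak{A}_\varepsilon$ and $\mathfrak{B}_\varepsilon$ is identical to the case without contact.
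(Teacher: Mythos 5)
Your proposal follows essentially the same route as the paper: eliminate $\dot{\uu}_\varepsilon$ by testing the adjoint equation \eqref{FVA} with $\dot{\uu}_\varepsilon$ and \eqref{FVMDerBis} with $\pp_\varepsilon$ via the symmetry of $b_\varepsilon$ to reach the distributed expression $dJ_\varepsilon(\Omega)[\thetaa]=-L_\varepsilon[\thetaa](\pp_\varepsilon)+\int_\Omega j(\uu_\varepsilon)\divv\thetaa+\int_{\partial\Omega}k(\uu_\varepsilon)\divv_\Gamma\thetaa$, then use the $\Hh^2$ regularity, integration by parts and the strong/variational forms of \eqref{FV} and \eqref{FVA} to pass to the boundary expression \eqref{DJ.2}. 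The extra bookkeeping you flag on $\Gamma_C$ (and the exact distribution of the $k(\uu_\varepsilon)$ term between $\mathfrak{A}_\varepsilon$ and $\mathfrak{B}_\varepsilon$) is exactly the computation the paper leaves implicit, so the argument is correct and not genuinely different.
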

\begin{prf}
  Due to Theorem \ref{ThmDJVol.2}, when considering \eqref{FVA} with $\vv=\dot{\uu}_\varepsilon\in\Xx$ as test-function, one gets:
  $$
    dJ_\varepsilon(\Omega)[\thetaa] = -b_\varepsilon(\pp_\varepsilon,\dot{\uu}_\varepsilon) + \int_\Omega j(\uu_\varepsilon)\divv\thetaa + \int_{\partial\Omega} k(\uu_\varepsilon) \divv_\Gamma\thetaa \:.
  $$
  Now, noting that $b_\varepsilon$ is symmetric and taking $\vv=\pp_\varepsilon\in\Xx$ in \eqref{FVMDer} leads to
  \begin{equation}
    dJ_\varepsilon(\Omega)[\thetaa] = -L_\varepsilon[\thetaa](\pp_\varepsilon) + \int_\Omega j(\uu_\varepsilon)\divv\thetaa + \int_{\partial\Omega} k(\uu_\varepsilon) \divv_\Gamma\thetaa \:.
    \label{DJVol}
  \end{equation}
  From that point, due to the additional regularity assumption on $\uu_\varepsilon$ and $\pp_\varepsilon$, integrating by parts and using the variational formulations \eqref{FV} and \eqref{FVA} with well chosen test-functions yields the desired result, with
  \begin{equation}\label{EqABC.2}
  \left\{
      \begin{aligned}
      \mathfrak{A}_\varepsilon &= j(\uu_\varepsilon) + (\kappa +\partial_{\normalInt})k(\uu_\varepsilon) + \Aa:\epsilonn(\uu_\varepsilon):\epsilonn(\pp_\varepsilon) - \ff\pp_\varepsilon \:,\\[0.5em]
      \mathfrak{B}_\varepsilon &= -(\kappa +\partial_{\normalInt})\left( \tauu \pp_\varepsilon \right) \:,\\
      \mathfrak{C}_\varepsilon &=  \mathfrak{C}_{\varepsilon}^{\normalExt} + \mathfrak{C}_{\varepsilon}^{\tanExt} = \frac{1}{\varepsilon}(\kappa +\partial_{\normalInt})\left(R_{\normalExt}(\uu_\varepsilon)\pp_{\varepsilon, \normalExt}\right) +\frac{1}{\varepsilon}(\kappa +\partial_{\normalInt})\left(S_{\tanExt}(\uu_\varepsilon)\pp_{\varepsilon,\tanExt}\right) \:.
  \end{aligned}
  \right.
  \end{equation}
  In the previous formulae, $\kappa$ denotes the mean curvature on $\partial\Omega$, and $\partial_{\normalInt}$ stands for the normal derivative with respect to $\normalInt$.
\end{prf}

\begin{remark}\label{RemNormalDefField}
  Expression \eqref{DJVol} is often referred to as the \textit{distributed shape deri\-vative}, and it is always valid as it only requires $\uu_\varepsilon$, $\pp_\varepsilon\in\Xx$. The reader is referred to \cite{hiptmair2015comparison,laurain2016distributed} for more details about distributed shape derivatives. The additional regularity assumption enables to get an explicit expression that fits the Hadamard-Zolésio structure theorem. This structure of the shape derivative suggests to consider deformation fields $\thetaa$ of the form $\thetaa=\theta \normalInt$, where the normal $\normalInt$ has been extended to $\mathbb{R}^d$ (not necessarily using the oriented distance function to $\partial\Omega$), which is possible when $\partial\Omega$ is at least $\pazocal{C}^1$, see \cite{henrot2006variation}.
\end{remark}

\begin{remark}
  The first two terms in \eqref{DJ.2} are exactly the same as for the elasticity formulation without contact. There are two additional components coming from the contact conditions, namely $\mathfrak{C}_\varepsilon^{\normalExt}$, stemming from the normal constraint, and $\mathfrak{C}_\varepsilon^{\tanExt}$, stemming from the tangential constraint. Obviously, these are the only terms involving $\normalExt$. When considering problems without contact, those last two terms cancel, while in the case of pure sliding contact, only $\mathfrak{C}_\varepsilon^{\tanExt}\equiv 0$. As for contact problems without gap (see for instance \cite{maury2017shape}), in the expression of $R_{\normalExt}$ the gap $\gG_{\normalExt}$ is simply set to 0, and \eqref{DJ.2}, \eqref{EqABC.2} coincides with the derivative in \cite{maury2017shape}. Moreover, note that neglecting the term with $\mathfrak{C}_\varepsilon$ (imposing $\thetaa=0$ on $\Gamma_C$) is equivalent to excluding the contact zone from the optimization process. In other words, the derived expression \eqref{EqABC.2} is rather general and adapts to many situations: sliding or frictional contact, contact with or without gap, optimizing or not the contact zone, etc.
\end{remark}
%

  We decided to write the contact boundary conditions using the normal $\normalExt$ to the rigid foundation instead of the normal $\normalInt$ to $\partial\Omega$ because it leads to a simpler expression for $dJ_\varepsilon$. Indeed, when differentiating our formulation with respect to the shape, as $\normalExt$ and $\gG_{\normalExt}$ do not depend on $\Omega$, the contact boundary condition can be treated like any Neumann condition. Alternatively, when differentiating the classical formulation, based on $\normalInt$ for the contact boundary conditions, additional terms involving the shape derivatives of the gap, $d\mathbf{g}_{\normalInt}$, and the normal, $d\mathbf{n_o}$, appear in the shape derivative of $J_\varepsilon$ (see \cite{maury2017shape} in the case with no gap). It turns out that these shape derivatives are quite technical to handle in practice. As these two formulations solve the same mechanical problem, see Remark \ref{RemHypHPP}, the simplified expression for $dJ_\varepsilon$, \eqref{DJ.2} with \eqref{EqABC.2}, is valid for both formulations.

\section{Numerical results}

\subsection{Shape optimization algorithm}
Following the usual approach, 
\cite{allaire2004structural, maury2017shape},
the algorithm proposed here to minimize $J_\varepsilon(\Omega)$ is a descent method, based on the shape derivative. 
Starting from an initial shape $\Omega^0\subset D$, using the cost functional derivative \eqref{DJ.2}, the algorithm generates a sequence of shapes $\Omega^k\in \pazocal{U}_{ad}$ such that the real-valued sequence $\{J_\varepsilon(\Omega^k)\}_k$ decreases. Each shape $\Omega^k$ is represented explictly, as a meshed subdomain of $D$, as well as implicitly, as the zero level set of some function $\phi^k$. The explicit representation enables to apply all boundary conditions rigorously, while the implicit representation enables to make the shape evolve smoothly from an iteration to the next by solving the following Hamilton-Jacobi equation on $[0,T]\times \mathbb{R}^d$:
\begin{equation}
    \begin{aligned}
    &\frac{\partial\phi}{\partial t} + \theta |\grad\phi| = 0\:, \\
    &\phi(0,x) = \tilde{\phi}(x)\:,
    \label{HamJac}
    \end{aligned}
\end{equation}
where $T$ is strictly positive, $\tilde{\phi}$ is a given initial condition,  and $\theta$ is the norm of the normal deformation field (see Remark \ref{RemNormalDefField}). The reader is referred to the pioneer work \cite{allaire2004structural} for more details about shape optimization using the level set method.
\vspace{1em}

As mentioned earlier, the method to generate the sequence $\{\Omega^k\}_k$ is based on a gradient descent. It consists in several successive steps.
\begin{enumerate}[leftmargin=3em, topsep=4pt]
    \item Find $\uu_\varepsilon^{k}$ solution of \eqref{FV} on $\Omega^k$.
    \item Find the adjoint state $\pp_\varepsilon^{k}$ solution of \eqref{FVA} on $\Omega^k$.
    \item Find a descent direction with $\thetaa^k=\theta^k \normalInt^k$ using \eqref{DJ.2} and \eqref{EqABC.2}.
    \item Update the level set function $\phi^{k+1}$ by solving \eqref{HamJac} on some interval $[0,T^k]$ with $T^k>0$, taking $\theta=\theta^k$ as velocity field and $\tilde{\phi}=\phi^k$ as initial condition.
    \item Cut the mesh of $D$ around $\{\phi^{k+1}\!=\!0\}$ to get an explicit representation of $\Omega^{k+1}$.
\end{enumerate}

\begin{remark}
    In step 4, the real number $T^k$ is chosen such that the monotonicity of $\{J_\varepsilon(\Omega^k)\}_k$ is guaranteed at each iteration. This numerical trick tries to ensure a descent direction, even in situations where Assumption \ref{A1.2} is not verified. Indeed, in such situations, expression \eqref{DJ.2} will not be an accurate representation of the shape derivative, however it still can provide a valid descent direction.
\end{remark}

\begin{remark}
    Even though such algorithms prove themselves very efficient from the numerical point of view, there are a few limitations. First, note that, in general, problem $\eqref{ShapeOpt.2}$ is not well-posed and $J_\varepsilon$ is not convex. Thus, using a descent method to try and solve it necessarily leads to finding a local minimum that is highly dependant on the initial shape $\Omega^0$. The reader is referred to \cite{allaire2004structural} for a more detailed discussion on that matter. Second, the algorithm may generate shapes for which the expression of $dJ_\varepsilon$ is inaccurate (e.g. Assumption \ref{A1.2} is not verified) and, in the worst case scenario, from which no descent direction $\thetaa^k$ can be obtained. In such cases, which have not been encountered in practice, the algorithm will stop and no solution will be found.
\end{remark}


\paragraph{Some details about the implementation}

Although it is not the purpose of this work, we present summarily some aspects of the implementation. Numerical experiments are performed with the code MEF++, developped at the GIREF (Groupe Interdisciplinaire de Recherche en \'El\'ements Finis, Université Laval). Problems \eqref{FV} and \eqref{FVA} are solved using the finite element method using Lagrange $P^2$ finite elements. The Hamilton-Jacobi type equation is solved on a secondary grid, using the second order finite difference scheme presented in \cite{OshSet1988}, with Neumann boundary conditions on $\partial D$. The reader is referred to the rather recent work \cite{chouly2013convergence} for finite element resolution and error estimate of the penalty formulation of contact problems in linear elasticity, and to \cite{Set1996,QuaVal2008} for details about level set methods and their numerical treatment using finite differences.


\subsection{Specific context}

Even though the method could deal with any functional $J_\varepsilon$ of the general type \eqref{GeneralJType.2}, we focus here on the special case of a linear combination of the compliance and the volume (with some weight coefficients $\alpha_1$ and $\alpha_2$).
\begin{equation*}
    J_\varepsilon(\Omega) = \alpha_1 C(\Omega) + \alpha_2 \textit{Vol}(\Omega)= \int_\Omega (\alpha_1\ff \uu_\varepsilon(\Omega)+\alpha_2)+ \int_{\Gamma_N} \alpha_1\tauu \uu_\varepsilon(\Omega) \:.
\end{equation*}
Indeed, from the engineering point of view, minimizing such a $J_\varepsilon$ means finding the best compromise (in some sense) between weight and stiffness. 

The materials are assumed to be isotropic and obeying Hooke's law (linear elastic), that is:
$$
    \sigmaa(\uu) = \Aa:\epsilonn(\uu) = 2\mu \epsilonn(\uu)+\lambda \divv\uu \:, 
$$
where $\lambda$ and $\mu$ are the Lamé coefficients of the material, which can be expressed in terms of Young's modulus $E$ and Poisson's ratio $\nu$:
\begin{equation*}
    \lambda = \frac{E\nu}{(1+\nu)(1-2\nu)}\:, \hspace{1em} \mu = \frac{E}{2(1+\nu)}\:.
\end{equation*}
Here, those constants are set to the classical academic values $E=1$ and $\nu=0.3$ and the penalty parameter $\varepsilon$ is set to $10^{-6}$. Such a value for $\varepsilon$ ensures that the solution $\uu_\varepsilon$ is close enough to the solution $\uu$ of the original contact problem.  

Concerning $D$ and the admissible shapes, at each iteration $k$, the current domain $\Omega^k$ will be contained in $D$ and its boundary $\partial\Omega^k$ will be divided as follows (the colours refer to \ref{fig:InitCanti2dCercle}):

\begin{itemize}[topsep=4pt, parsep=1pt, leftmargin=3em]
    \item $\Gamma_N^k=\Gamma_N^0$ is fixed as the orange part of $\partial D$,
    \item $\Gamma_D^k$ is the intersection of $\partial\Omega^k$ and $\hat{\Gamma}_D$, the blue part of $\partial D$, 
    \item $\Gamma_C^k$ is the transformation, through $\thetaa^k$, of the green part of $\partial D$.
\end{itemize}

Working with a formulation with no gap is quite convenient in several cases. First, when an a priori potential contact zone $\hat{\Gamma}_C\subset \partial D$ is known, then defining $\Gamma_C=\partial\Omega \cap \hat{\Gamma}_C$ enables to enforce the contact boundary to be part of $\partial D$. In such situations, see for example \cite{maury2017shape}, the boundary $\Gamma_C^k$ is either treated like $\Gamma_D^k$ (the contact area cannot be empty) or $\Gamma_N^k$ (the contact area is fixed) during the optimization process. Second, those formulations are well suited for interface problems involving several materials, see \cite{lawry2015level}. 

However, introducing a gap in the formulation allows to extend the method to situations where we want to optimize the shape of a body in contact with a rigid foundation (known a priori). Especially, the potential contact zone is included into the shape optimization process : as the contact zone is not fixed, the shape can be modified along $\Gamma_C$.

\subsection{The cantilever}

We revisit one of the most frequently presented test in shape optimization: the design of a bidimensional cantilever beam. This test differs from the usual one by the added possibility of a support of the beam through contact (sliding or frictional) with a rigid body. 
For this benchmark, the domain $D$ is the rectangular box $[0,2]\times [0,1]$ meshed with triangles, with an average number of vertices equal to 1300. The rigid foundation is the circle of radius $R=8$ and center $x_C=(1,-8)$. 
External forces are chosen such that $\ff=0$, and $\tauu=(0,-0.01)$ is applied on $\Gamma_N$ (in orange in \ref{fig:InitCanti2dCercle}). In the frictional case, $s=10^{-2}$ and $\mathfrak{F}=0.2$. And the weight coefficients in $J$ are $\alpha_1=15$, $\alpha_2=0.01$. These choices are based on the generic behavior of the model for the given data, and can be reinterpreted as searching for a stiff structure under volume constraint. Besides, since $\tauu=(0,-0.01)$, the order of magnitude of $\uu_\varepsilon$ is also $10^{-2}$, hence the difference between the orders of magnitude of $\alpha_1$ and $\alpha_2$.
\begin{figure}[h]
	\begin{center}
	\resizebox{0.65\textwidth}{!}{
    \begin{tikzpicture}

	\draw[>=latex,->] (-2,0) -- (-1.5,0);
	\draw[>=latex,->] (-2,0) -- (-2,0.5);
	\node[black] at (-1.25,0) {$x$};
 	\node[black] at (-2,0.75) {$y$};
    \node[white] at (10,0) {$\:$};
   
	\draw[black] (8.66,-0.62) arc (75:105:18);
	\draw[dartmouthgreen, very thick] (0,0) -- (8,0);
	\draw[black] (8,0) -- (8,1.6);
	\draw[orange, very thick] (8,1.6) -- (8,2.4);
	\draw[black] (8,2.4) -- (8,4);
	\draw[black] (8,4) -- (0,4);
	\draw[blue, very thick] (0,4) -- (0,0);

	\node[white] at (4,-0.3) {$\:$};
	\node[black] at (4,2) {\large{$D$}};
	\node[blue] at (0.7,2) {\large{$\hat{\Gamma}_D$}};
	\node[orange] at (7.3,2) {\large{$\Gamma_N$}};
	\node[dartmouthgreen] at (4,0.5) {\large{$\Gamma_C$}};
	\node[black] at (4,-0.5) {\large{$\Omega_{rig}$}};
	\end{tikzpicture}
    }	
	\end{center}
  	\caption{Initial geometry for the 2d cantilever.}
    \label{fig:InitCanti2dCercle}
\end{figure}
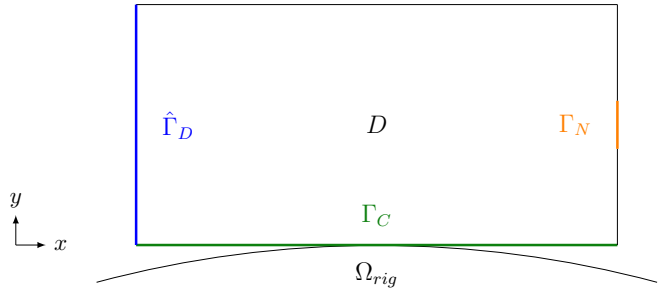

\begin{figure}[h]
  \begin{center}
    \subfloat[Initial design.]{
	\resizebox{0.46\textwidth}{!}{
	\begin{tikzpicture}
    	\node[anchor=south west,inner sep=0] at (0,0) {\includegraphics[width=\textwidth]{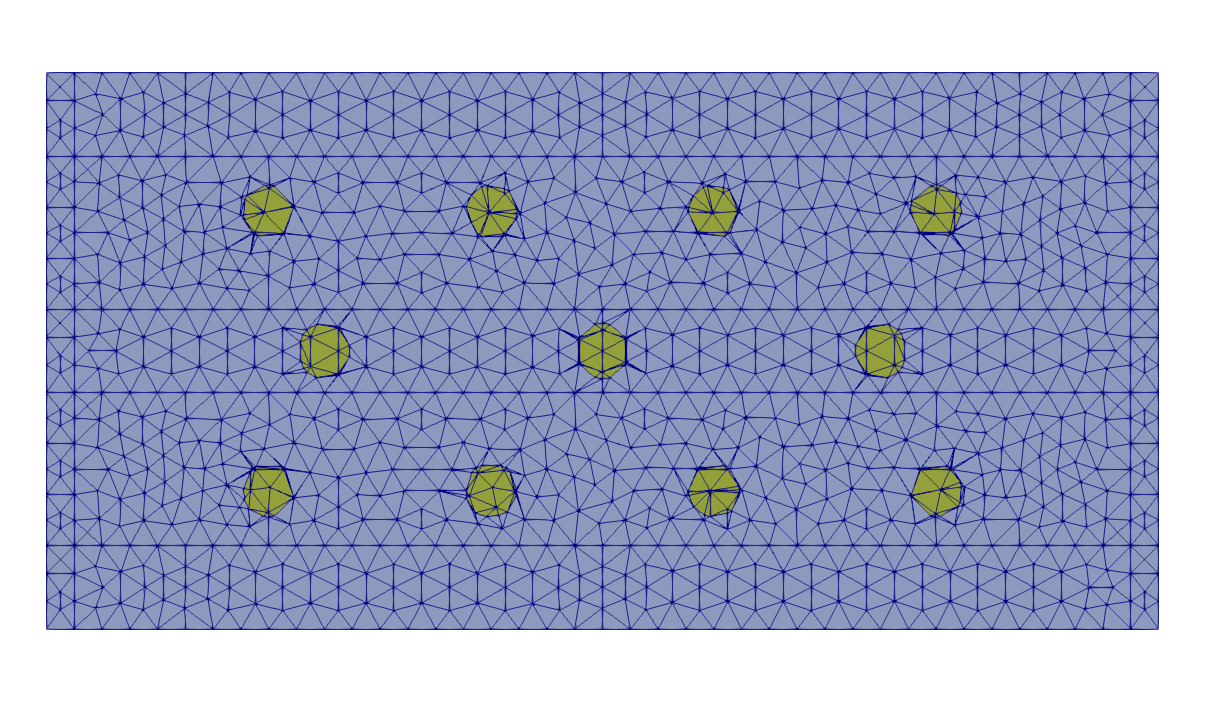}};
    	\draw[black] (13.2,0.08) arc (79:101:35);
    \end{tikzpicture}
    \label{sub:Canti2dCercleIt0}
    }
    }
    \hspace{.5em}
    \subfloat[Final design without contact.]{
	\resizebox{0.46\textwidth}{!}{
	\begin{tikzpicture}
    	\node[anchor=south west,inner sep=0] at (0,0) {\includegraphics[width=\textwidth]{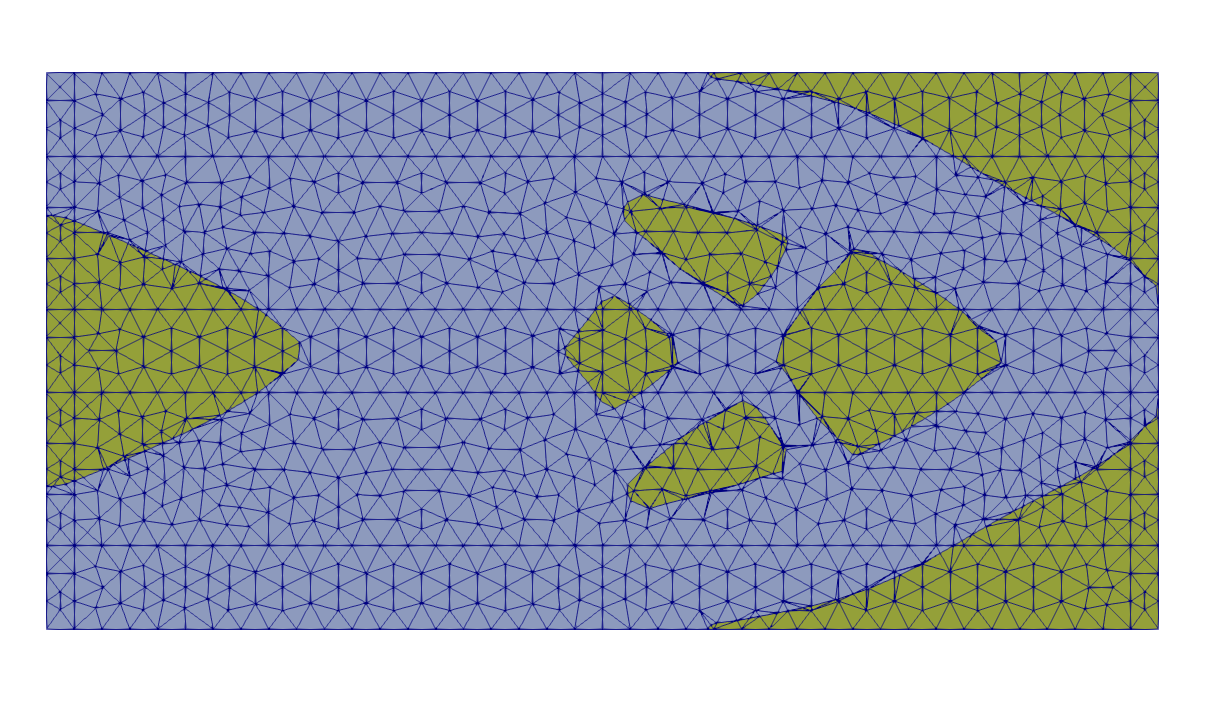}};
    	\draw[white] (13.2,0.08) arc (79:101:35);
    \end{tikzpicture}
    \label{sub:Canti2dBis}
    }
    }
    \hspace{.5em}
    \subfloat[Final design in pure sliding contact.]{
	\resizebox{0.46\textwidth}{!}{
	\begin{tikzpicture}
    	\node[anchor=south west,inner sep=0] at (0,0) {\includegraphics[width=\textwidth]{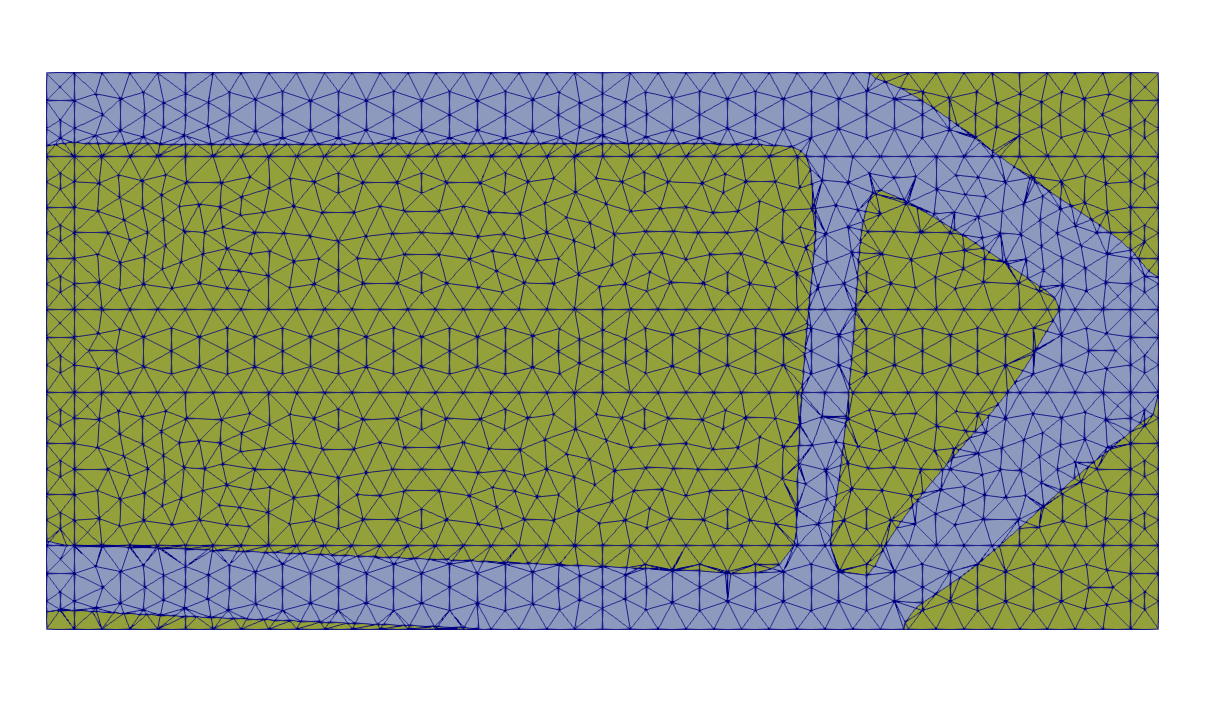}};
    	\draw[black] (13.2,0.08) arc (79:101:35);
    \end{tikzpicture}
    \label{sub:Canti2dCerclePena}
    }
    }
    \hspace{.5em}
    \subfloat[Final design in frictional contact.]{
	\resizebox{0.46\textwidth}{!}{
	\begin{tikzpicture}
    	\node[anchor=south west,inner sep=0] at (0,0) {\includegraphics[width=\textwidth]{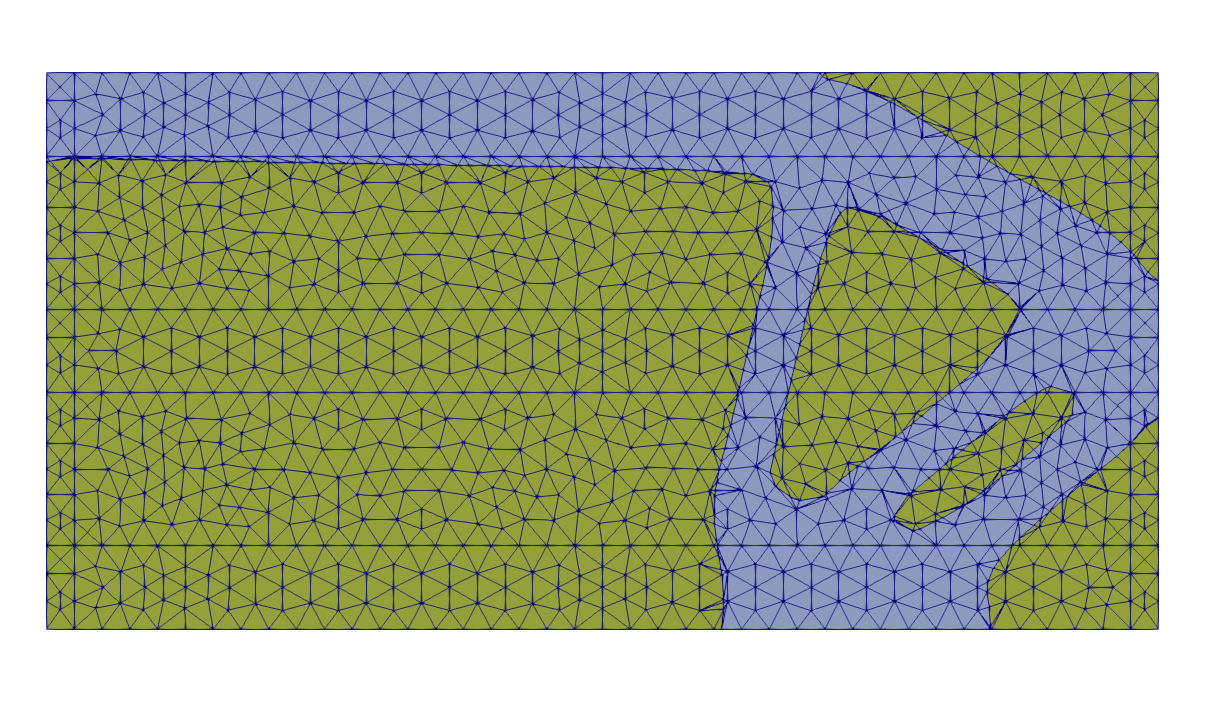}};
    	\draw[black] (13.2,0.08) arc (79:101:35);
    \end{tikzpicture}
    \label{sub:Canti2dCercleFrottPena}
    }
    }
    \end{center}
  \caption{Initial and final designs for the 2d cantilever in contact with a disk ($\Omega$ in blue, $D\setminus \Omega$ in yellow).}
  \label{fig:Canti2dCercle}
\end{figure}

\begin{figure}
\begin{center}
    \subfloat[Without contact.]{   
	\resizebox{!}{0.27\textwidth}{
	\begin{tikzpicture}
		\begin{axis}[
    		xlabel={Number of iterations $l$},
    		xmin=0, xmax=70,
    		]
    		\addplot[color=red,mark=none] table {res_canti2d_bis.txt};
   			\legend{$J(\Omega^l)$}
   		\end{axis}
	\end{tikzpicture}
	}
	}
	%
    \subfloat[Pure sliding contact.]{   
	\resizebox{!}{0.27\textwidth}{
	\begin{tikzpicture}
		\begin{axis}[
    		xlabel={Number of iterations $l$},
    		xmin=0, xmax=310,
    		]
    		\addplot[color=gray,mark=none] table {res_canti2d_cercle_pena.txt};
   			\legend{$J(\Omega^l)$}
   		\end{axis}
	\end{tikzpicture}
	}
	}
	%
    \subfloat[Frictional contact.]{   
	\resizebox{!}{0.27\textwidth}{
	\begin{tikzpicture}
		\begin{axis}[
    		xlabel={Number of iterations $l$},
    		xmin=0, xmax=120,
    		]
    		\addplot[color=dartmouthgreen,mark=none] table {res_canti2d_cercle_frott_pena.txt};
   			\legend{$J(\Omega^l)$}
   		\end{axis}
	\end{tikzpicture}
	}
	}
\end{center}
\caption{Convergence history for the 2d cantilever.}
\label{fig:CvgCanti2d}
\end{figure}
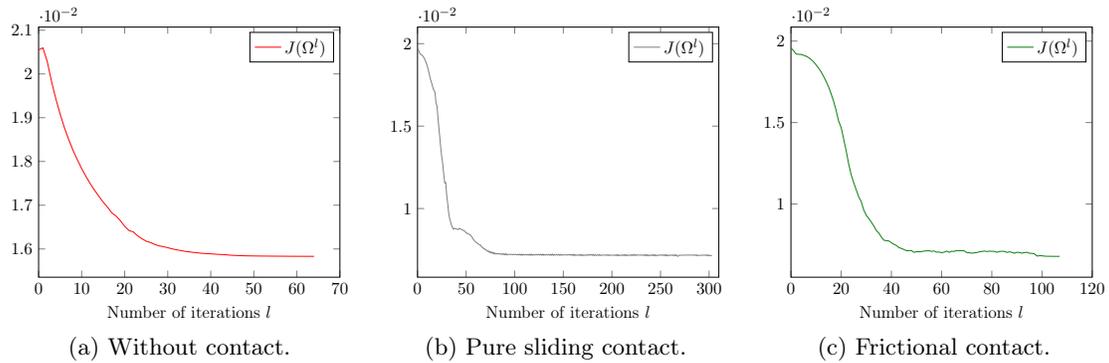

We tested our algorithm on three different physical models: the standard elasticity model without contact (as if the disk was not here), the pure sliding model which does not take into account potential friction (i.e.$\!$ $\mathfrak{F}=0$), and the model of contact with Tresca friction.
In the case without contact, we recover the classical result, although the cantilever obtained might seem a little heavy due to our choice of coefficients $\alpha_1$ and $\alpha_2$. 
In the cases with contact, as expected, the optimal design suggested by the algorithm uses the contact with the rigid foundation as well as the clamped region to gain stiffness. More specifically, it seems that the effective contact zone (active set) has been moved to the right during the process. This makes sense because the closer the contact zone is to the zone where the load is applied, the stiffer will be the structure. 
However, in the frictional case, the tangential stress associated to friction phenomena $\sigmaa_{\normalInt\!\tanExt}$ points to the left and slightly upwards, since it is parallel to $\tanExt$ and opposed to the tangential displacement. This helps the structure to be stiffer as it compensates part of the downward motion induced by the traction $\tauu$, which might explain why the optimal shape requires only one anchor point instead of two for the pure sliding case.

The convergence history for all cases is displayed figure \ref{fig:CvgCanti2d}. Note that the convergence is much faster in the case without contact, which was predictable since the mechanical problem is easier to solve, thus the shape derivatives should be more accurate. Moreover, the final value of $J$ is around $1.6$ in the case without contact whereas it is around $0.7$ in both cases with contact. Indeed, due to the possibility of laying onto a rigid foundation, the models with contact lead to better designs.

\section{Conclusion}

In this work, we expressed conditions (similar to strict complementarity conditions) that ensure shape differentiability of the solution to the penalty formulation of the contact problem with prescribed friction based on a Tresca model. In order to achieve this goal, we relied on Gâteaux differentiability, combined with an assumption on the measure of some subsets of the contact region where non-differentiabilities may occur. Under such assumptions, we derived an expression for the shape derivative of any general functional. 
Finally, this expression has been used in a gradient descent algorithm, which we tested on a revisited version of the classical cantilever benchmark. 

As far as future work is concerned, 
the idea of working with directional shape derivatives could be extended to other formulations where non-Gateaux-differentiable operators are involved: e.g.$\!$ the Augmented Lagrangian formulation or Nitsche-based formulations.


\chapter{Dérivées de forme pour la formulation pénalisée régularisée}     
\label{chap:2.2} 

\section*{Introduction}

Dans ce chapitre, nous étudions le même problème que dans le précédent sous un angle différent. En effet, plutôt que de considérer des dérivées directionnelles puis trouver des hypothèses qui garantissent la différentiabilité au sens de Gateaux, nous régularisons la formulation afin de la rendre Fréchet différentiable. Cette approche est très répandue, notamment dans le contexte de l'optimisation de formes des problèmes de contact. Nous citons \cite{haslinger1986shape} pour la première application au cas bidimensionnel, et plus récemment \cite{maury2017shape}, où les auteurs ont étendu la méthode au cas du contact frottant en trois dimensions.

Dans \cite{maury2017shape}, les auteurs s'intéressent à des problèmes de contact frottant (dont le frottement de Tresca), en écrivant le problème mécanique sous sa forme pénalisée, puis régularisée, de sorte que la formulation considérée ne contient que des termes différentiables (au sens classique). Après avoir montré la différentiabilité par rapport à la forme, ils donnent une expression pour la dérivée de forme d'un critère générique, qu'ils utilisent enfin dans un algorithme d'optimisation de formes de type gradient s'appuyant sur une représentation de la forme par des level-sets. Nous suivons ici les mêmes étapes, mais avec un choix de régularisation différent. Le but est dans un premier temps de retrouver des résultats similaires à ceux de \cite{maury2017shape}. Puis dans un second temps, nous souhaitons étudier le comportement des dérivées de forme lors du passage à la limite du paramètre de régularisation.

Plus précisément, nous commençons par introduire la formulation régularisée, et montrons sa consistance par rapport à la formulation pénalisée non régularisée du chapitre précédent. De plus, après avoir prouvé la (Fréchet) différentiabilité par rapport à la forme pour cette formulation, nous procédons à une analyse de convergence des dérivées matérielles et de forme lorsque le paramètre de régularisation tend vers $+\infty$. L'objectif est de retrouver, à la limite, les dérivées matérielles et de forme de la formulation non régularisée (obtenues au chapitre \ref{chap:2.1}).

\section{L'étape de régularisation}

\subsection{Principe}

Comme nous l'avons mentionné au chapitre précédent, la non différentiabilité de la formulation provient du fait que les fonctions $\maxx$ et $\qq$ ne sont pas Fréchet différentiables. Par conséquent, il suffit de régulariser ces fonctions non lisses pour obtenir une formulation que l'on peut dériver par rapport à la forme. Soient $\maxxc$ et $\qq_c$ des approximations respectives de $\maxx$ et $\qq$, qui sont de classe $\pazocal{C}^1$, de paramètre de régularisation $c$. Nous introduisons des primitives et les dérivées de ces fonctions régulières:
\begin{itemize}[topsep=2pt,leftmargin=*]
    \item $\psi_c$ est la primitive $\maxxc$ qui s'annule en $0$ (approximation $\pazocal{C}^2$ de $\frac{1}{2}\maxx^2$),
    \item $H_c$ est la dérivée de $\maxxc$ (approximation $\pazocal{C}^0$ de la fonction de Heaviside $H$),
    \item $h_c$ est la primitive de $\frac{1}{\varepsilon}\qq_c(\varepsilon\mathfrak{F}s,\cdot)$ qui s'annule en $0$ (aproximation $\pazocal{C}^2$ de $\mathfrak{F}s|\cdot|$),
    \item $\partial_\alpha \qq_c$est la dérivée partielle de $\qq_c$ par rapport à la première variable,
    \item $\partial_z \qq_c$ est la dérivée partielle de $\qq_c$ par rapport à la seconde variable.
\end{itemize}
Dans la section suivante, nous présentons les expressions analytiques de ces fonctions, ainsi que certaines de leurs propriétés.

Pour simplifier les raisonnements, comme dans \cite{maury2016shape}, nous supposerons dans ce chapitre que le coefficient de frottement $\mathfrak{F}$ et le seuil de Tresca $s$ sont des constantes strictement positives.

\subsection{Propriétés des fonctions régularisées}
\label{subsec:PropFctsReg}

Pour chaque fonction, nous commençons par donner sa définition, puis nous énonçons quelques propriétés utiles. 

\subsubsection{Terme normal}
Tout d'abord, $\maxxc$ est défini par:
\begin{equation*}
  \maxxc(x) := 
    \left\{
      \begin{array}{lr}
        x- \frac{1}{2c} & \mbox{si} \: x \geq \frac{1}{c}, \\
        c\frac{x^2}{2} & \mbox{si} \: 0 \leq x \leq \frac{1}{c}, \\
        0 & \mbox{sinon.}
      \end{array}
    \right.
\end{equation*}

\begin{figure}
\begin{center}\shorthandoff{;}
\begin{tikzpicture}[
	declare function={ func(\x)= (\x < 0)*(0) + and(\x >= 0, \x < 0.25)*(4*x^2/2) + (\x >= 0.25)*(\x-0.25/2) ;
					   funcbis(\x)= (\x < 0)*(0) + (\x >= 0)*\x ;
	}
  ]
	\begin{axis}[
		axis x line=middle,
		axis y line=middle,
    	xlabel={$x$},
    	ylabel={$\maxx(x)$, \color{blue}{$\mbox{p}_{c,+}(x)$}},
    	xmin=-1, xmax=1,
    	ymin=-0.5, ymax=1,
    	samples=1001
    	]
	\addplot[] {funcbis(x)};
	\addplot[blue,thick] {func(x)};
	\end{axis}    
\end{tikzpicture}
\caption{Graphes des fonctions $\maxx$ et $\maxxc$ ($c=4$).}
\label{fig:PlotMaxxc}
\end{center}
\end{figure}

Ce qui donne pour sa primitive $\psi_c$ (s'annulant en $0$) et sa dérivée $H_c$:
\begin{equation*}
  \psi_c(x) = 
    \left\{
      \begin{array}{lr}
        \frac{x^2}{2}-\frac{1}{2c}x+\frac{1}{6c^2} & \mbox{si} \: x \geq \frac{1}{c}, \\
        \frac{c}{6}x^3 & \mbox{si} \: 0 \leq x \leq \frac{1}{c}, \\
        0 & \mbox{sinon.}
      \end{array}
    \right.
  \hspace{1em}
  H_c(x) = 
    \left\{
      \begin{array}{lr}
       1 & \mbox{si} \: x \geq \frac{1}{c}, \\
        cx & \mbox{si} \: 0 \leq x \leq \frac{1}{c}, \\
        0 & \mbox{sinon.}
      \end{array}
    \right.
\end{equation*}
On peut montrer facilement que ces fonctions vérifient:
\begin{itemize}[topsep=2pt,parsep=0.05cm,itemsep=0.05cm]
  \item[(i)] $\maxxc$, $\psi_c$ et $H_c$ sont positives,
  \item[(ii)] pour tout $x \in \mathbb{R}$, $x \maxxc(x) \geq 0$,
  \item[(iii)] $\maxxc$ et $\psi_c$ sont convexes,
  \item[(iv)] pour tout $x \in \mathbb{R}$, $\maxxc(x) \leq \maxx(x) \leq |x|$,
  \item[(v)] $\norml \: \maxxc - \maxx \normr_\infty \leq \frac{1}{2c}$.
\end{itemize}

\subsubsection{Terme tangentiel}
Nous donnons maintenant la définition de la régularisation $\qq_c$ associée à la contrainte tangentielle:
\begin{equation*}
  \qq_c(\alpha,z) := \left\{
      \begin{array}{lr}
        z & \mbox{ si } \: |z| \leq \alpha -\frac{1}{c} ,\\
        \big[ \alpha + P_c(|z|-\alpha) \big] \frac{z}{|z|} & \mbox{ si } \: \alpha -\frac{1}{c} \leq |z| \leq \alpha +\frac{1}{c}, \\
        \alpha \frac{z}{|z|} & \mbox{ si } \: |z| \geq \alpha +\frac{1}{c}. 
        \end{array}
    \right.
\end{equation*}
Le polynôme $P_c$ est défini par $P_c(t) := -\frac{c}{4}t^2 + \frac{1}{2}t - \frac{1}{4c}$ et il vérifies donc $P_c\left(-\frac{1}{c}\right)=-\frac{1}{c}$,  $P_c'\left(-\frac{1}{c}\right)=1$, $P_c\left(\frac{1}{c}\right)=P_c'\left(\frac{1}{c}\right)=0$, et $-\frac{1}{c}\leq P_c\leq 0$ sur l'intervalle $\left[ -\frac{1}{c}, \frac{1}{c}\right]$.
\begin{figure}
\begin{center}\shorthandoff{;}
\begin{tikzpicture}[
	declare function={ func(\x)= (\x < -0.5)*(-0.5) + and(\x >= -0.5, \x < 0.5)*(\x) + (\x >= 0.5)*(0.5) ;
					   Pc(\t)= -\t^2 + 0.5*\t - 1./16;
					   funcbis(\x)= (\x < -0.75)*(-0.5) + and(\x >= -0.75, \x < -0.25)*(-(0.5 + Pc(-\x-0.5))) + and(\x >= -0.25, \x < 0.25)*(\x) + and(\x >= 0.25, \x < 0.75)*(0.5 + Pc(\x-0.5)) + (\x >= 0.75)*(0.5) ;
	}
  ]
	\begin{axis}[
		axis x line=middle,
		axis y line=middle,
    	xlabel={$x$},
    	ylabel={$\qq(0.5,x)$, \color{blue}{$\qq_c(0.5,x)$}},
    	xmin=-1, xmax=1,
    	ymin=-1, ymax=1,
    	samples=1001,
    	]
	\addplot[] {func(x)};
	\addplot[blue,thick] {funcbis(x)};
	\end{axis}    
\end{tikzpicture}
\label{fig:PlotQqc}
\caption{Graphes des fonctions $\qq(0.5,\cdot)$ et $\qq_c(0.5,\cdot)$ ($c=4$ et $d=2$).}
\end{center}
\end{figure}
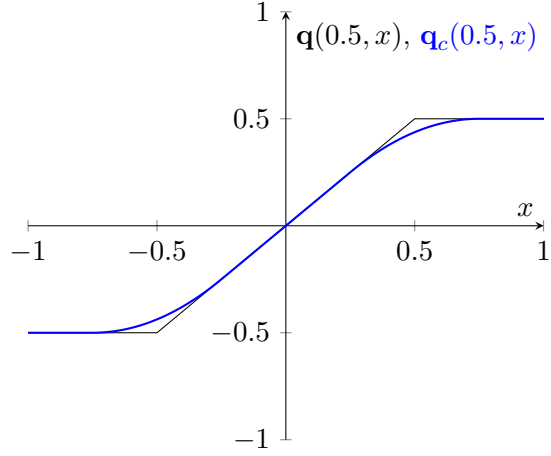

Passons maintenant à $h_c$, la primitive $\frac{1}{\varepsilon}\qq_c(\varepsilon\mathfrak{F}s,\cdot)$ qui s'annule en $0$. Cette fonction vérifie, pour tout $z\in\mathbb{R}^{d-1}$:
\begin{equation*}
  h_c(z) := \frac{1}{\varepsilon} \left\{
      \begin{array}{lr}
        \frac{1}{2}|z|^2 & \mbox{ si } \: |z| \leq \varepsilon\mathfrak{F}s -\frac{1}{c}\: ,\\
        \varepsilon\mathfrak{F}s |z| + Q_c(|z|-\varepsilon\mathfrak{F}s)- \frac{1}{2} (\varepsilon\mathfrak{F}s)^2  & \mbox{ si } \: \varepsilon\mathfrak{F}s -\frac{1}{c} \leq |z| \leq \varepsilon\mathfrak{F}s +\frac{1}{c} ,\\
        \varepsilon\mathfrak{F}s |z| - \frac{1}{2} (\varepsilon\mathfrak{F}s)^2 - \frac{1}{6c^2} & \mbox{ si } \:  |z| \geq \varepsilon\mathfrak{F}s +\frac{1}{c}\: ,
      \end{array}
    \right.
\end{equation*}
où $Q_c$ est la primitive de $P_c$ qui vaut $-\frac{1}{6c^2}$ en $0$, et $\mathfrak{F}$, $s$ sont des constantes indépendantes de $z$.

Par souci de lisibilité, nous introduisons les trois sous-ensembles de  $\mathbb{R}_+^*\times\mathbb{R}^{d-1}$ suivants: 
\begin{equation*}
	\begin{aligned}
		\pazocal{J}^{c,-}&:=\left\{ (\alpha,z) \mbox{ t.q. } |z| \leq \alpha -\frac{1}{c} \right\} \:,\\
		\pazocal{J}^{c,0}&:=\left\{(\alpha,z) \mbox{ t.q. } \alpha -\frac{1}{c} \leq |z| \leq \alpha +\frac{1}{c} \right\} \:,\\
		\pazocal{J}^{c,+}&:=\left\{(\alpha,z) \mbox{ t.q. } |z| \geq \alpha +\frac{1}{c} \right\}\:.
	\end{aligned}
\end{equation*}

On peut exprimer les dérivées partielles de $\qq_c$ par rapport à $\alpha$ et $z$, notées respectivement $\partial_\alpha\qq_c : (\frac{1}{c},+\infty)\times\mathbb{R}^{d-1} \to \mathcal{L}(\mathbb{R};\mathbb{R}^{d-1})$ et $\partial_z \qq_c :(\frac{1}{c},+\infty) \times\mathbb{R}^{d-1} \to \mathcal{L}(\mathbb{R}^{d-1})$, de la manière suivante:
\begin{equation*}
  \partial_\alpha \qq_c(\alpha,z) = \left\{
      \begin{array}{lr}
        0 & \mbox{ in } \pazocal{J}^{c,-} ,\\
        \big[ 1 - P_c'(|z|-\alpha) \big] \frac{z}{|z|} & \mbox{ in } \pazocal{J}^{c,0}\: , \\
        \frac{z}{|z|} & \mbox{ in } \pazocal{J}^{c,+} ,
      \end{array}
    \right.
\end{equation*}
\begin{equation*}
  \partial_z \qq_c(\alpha,z) = \left\{
      \begin{array}{lr}
           \Id & \mbox{in } \pazocal{J}^{c,-}, \\
           \frac{1}{|z|^2} P_c'(|z| - \alpha)z \otimes  z & \: \\
           \qquad + \frac{1}{|z|} \big[ \alpha + P_c(|z| - \alpha) \big] \big(\Id  -  \frac{1}{|z|^2} z \otimes  z\big)& \mbox{in } \pazocal{J}^{c,0}\:, \\
           \frac{\alpha}{|z|}\big(\Id  -  \frac{1}{|z|^2} z \otimes  z\big) & \mbox{in } \pazocal{J}^{c,+},
      \end{array}
    \right.
\end{equation*}
où $\Id$ correspond à la matrice identité de taille $d-1$.

Nous énonçons maintenant quelques propriétés utiles de ces fonctions.
\begin{prpstn}
Soient $\qq_c$, $h_c$, $\partial_\alpha \qq_c$, $\partial_z \qq_c$ telles que définies précédemment, alors on a: 
\begin{itemize}[topsep=2pt,parsep=0.05cm,itemsep=0.05cm]
    \item[(i)] $h_c$ est convexe,
    \item[(ii)] pour tous $\alpha\in\mathbb{R}_+^*$, $z\in\mathbb{R}^{d-1}$, $\qq_c(\alpha,z)z \geq 0$,
    \item[(iii)] pour tous $\alpha\in\mathbb{R}_+^*$, $z\in\mathbb{R}^{d-1}$, $|\qq_c(\alpha,z)| \leq |\qq(\alpha,z)|\leq\alpha$ et $|z|$,
    \item[(iv)] pour tous $\alpha\in\mathbb{R}_+^*$, $z,z'\in\mathbb{R}^{d-1}$, $(\qq(\alpha,z)-\qq(\alpha,z'))(z-z')\geq 0$,
    \item[(v)] pour tout $\alpha\in\mathbb{R}_+^*$, $\norml \qq_c(\alpha,\cdot)- \qq(\alpha,\cdot)\normr_\infty\leq \frac{2}{c}$,
    \item[(vi)] $\partial_\alpha \qq_c$ et $\partial_z \qq_c$ sont continues sur $(\frac{1}{c},+\infty)\times\mathbb{R}^{d-1}$,
    \item[(vii)] $\partial_\alpha \qq_c$ et $\partial_z \qq_c$ sont uniformément bornées,
    \item[(viii)] $\qq_c$ est Lispschitz sur $(\frac{1}{c},+\infty)\times\mathbb{R}^{d-1}$,
    \item[(ix)] $\partial_\alpha\qq_c $ est Lispschitz sur $(\frac{1}{c},+\infty)\times\mathbb{R}^{d-1}$,
    \item[(x)] il existe $K>0$ tel que, pour tous $\alpha, \alpha'\in(\frac{1}{c},+\infty)$, $z, z'\in\mathbb{R}^{d-1}$,
    \vspace{-0.5em}
    \begin{equation*}
        |\partial_z \qq_c(\alpha,z)-\partial_z \qq_c(\alpha',z')|\leq Kc\left( \left| (\alpha,z)-(\alpha',z')\right| + \left| (\alpha,z)-(\alpha',z')\right|^2 \right).
    \end{equation*}
\end{itemize}
\end{prpstn}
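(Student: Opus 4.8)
The plan is to establish all ten items by direct inspection of the explicit piecewise formulas for $\qq_c$, $h_c$, $\partial_\alpha\qq_c$ and $\partial_z\qq_c$ given above, after first recording a few elementary facts about the auxiliary polynomial, which can be written $P_c(t)=-\tfrac c4\big(t-\tfrac1c\big)^2$. On the transition interval $[-\tfrac1c,\tfrac1c]$ one has $-\tfrac1c\le P_c\le0$; $P_c'(t)=-\tfrac c2\big(t-\tfrac1c\big)$, hence $0\le P_c'\le1$; $P_c''\equiv-\tfrac c2$; the interface values $P_c(-\tfrac1c)=-\tfrac1c$, $P_c'(-\tfrac1c)=1$, $P_c(\tfrac1c)=P_c'(\tfrac1c)=0$; and $P_c(t)-t=-\tfrac14\big(\sqrt c\,t+\tfrac1{\sqrt c}\big)^2\le0$. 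Writing $r=|z|$ and $t=r-\alpha$, the two inequalities that neutralise the apparent $1/r$ singularities are, on $\pazocal{J}^{c,0}$: the bound $\alpha+P_c(t)\le\alpha+t=r$ (from $P_c(t)\le t$), and the identity $1-P_c'(t)=\tfrac c2\big(r-(\alpha-\tfrac1c)\big)\le\tfrac c2\,r$, valid since $\alpha>\tfrac1c$ forces $r\ge\alpha-\tfrac1c>0$.

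Items (i)--(v) are then elementary. Item (iv) is the firm non-expansiveness of the metric projection $\qq(\alpha,\cdot)=\Proj_{\mathcal B(0,\alpha)}$ onto a closed convex set. Item (ii) is checked region by region: $\qq_c(\alpha,z)\cdot z$ equals $|z|^2$, $\big(\alpha+P_c(t)\big)|z|$ and $\alpha|z|$ on $\pazocal{J}^{c,-}$, $\pazocal{J}^{c,0}$, $\pazocal{J}^{c,+}$ respectively, all nonnegative since $\alpha+P_c(t)\ge\alpha-\tfrac1c>0$. For item (iii) one has $\qq_c=\qq$ on $\pazocal{J}^{c,\pm}$, while on $\pazocal{J}^{c,0}$, $|\qq_c(\alpha,z)|=\alpha+P_c(t)\le\min(\alpha,r)=|\qq(\alpha,z)|$ using $P_c\le0$ and $P_c(t)\le t$; a short case split on the sign of $t$ completes it. Item (v): $\qq_c$ and $\qq$ differ only on $\pazocal{J}^{c,0}$, where $\qq_c-\qq$ equals $\big(P_c(t)-t\big)\tfrac z{|z|}$ if $t\le0$ and $P_c(t)\tfrac z{|z|}$ if $t\ge0$, so $|\qq_c-\qq|\le\tfrac1c\le\tfrac2c$. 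Item (i): since $\varepsilon h_c$ has gradient $\qq_c(\varepsilon\mathfrak Fs,\cdot)$ and Hessian $\partial_z\qq_c(\varepsilon\mathfrak Fs,\cdot)$, convexity reduces to positive semidefiniteness of $\partial_z\qq_c$, which equals $\Id$ on $\pazocal{J}^{c,-}$, $\tfrac\alpha r$ times the orthogonal projector onto $z^\perp$ on $\pazocal{J}^{c,+}$, and on $\pazocal{J}^{c,0}$ has eigenvalues $P_c'(t)\ge0$ and $\tfrac{\alpha+P_c(t)}r>0$.

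For items (vi)--(ix) the scheme is: continuity across the interfaces $\{r=\alpha\pm\tfrac1c\}$ follows from the matching values $P_c(\mp\tfrac1c)$, $P_c'(\mp\tfrac1c)$ listed above, each region's formula limiting to its neighbour's; interior continuity is clear; so (vi) holds. Uniform boundedness (vii) is read off the formulas using $|P_c'|\le1$, $\tfrac{\alpha+P_c(t)}r\le1$ and $\tfrac\alpha r\le1$ on $\pazocal{J}^{c,+}$. Both $\qq_c$ (which is therefore $C^1$) and $\partial_\alpha\qq_c$ are continuous and piecewise $C^1$ on the convex set $(\tfrac1c,+\infty)\times\mathbb R^{d-1}$, with piecewise Jacobians bounded, for $\qq_c$ by a constant and for $\partial_\alpha\qq_c$ by $O(c)$; the $O(c)$ stems from $|P_c''|=\tfrac c2$, from $\big|\nabla_z\big(z/|z|\big)\big|=\tfrac1{|z|}<\tfrac c2$ on $\pazocal{J}^{c,+}$ (where $r\ge\alpha+\tfrac1c>\tfrac2c$), and from $\tfrac{1-P_c'(t)}{|z|}\le\tfrac c2$ on $\pazocal{J}^{c,0}$. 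The mean value inequality along segments then gives (viii) and (ix).

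The main obstacle is item (x), the only second-order type estimate: it requires controlling the piecewise Jacobian of $\partial_z\qq_c$ uniformly up to the interfaces. On $\pazocal{J}^{c,+}$ every term obtained by differentiating $\tfrac\alpha r\big(\Id-\tfrac{z\otimes z}{r^2}\big)$ carries a factor $\tfrac\alpha{r^2}\le\tfrac1r<\tfrac c2$ (using $\alpha\le r$), so that piece of the Jacobian is $O(c)$. On $\pazocal{J}^{c,0}$ one differentiates $P_c'(t)\tfrac{z\otimes z}{r^2}+\tfrac{\alpha+P_c(t)}r\big(\Id-\tfrac{z\otimes z}{r^2}\big)$; the dangerous terms $\tfrac{P_c'(t)}{r}$ and $\tfrac{\alpha+P_c(t)}{r^2}$ near the corner $r=\alpha-\tfrac1c$ are tamed by $P_c'(t)=\tfrac c2\big((\alpha+\tfrac1c)-r\big)$ and $\alpha+P_c(t)\le r$ together with $r\ge\alpha-\tfrac1c$, and a short Taylor expansion near that corner (writing $t=-\tfrac1c+\eta$) shows the Jacobian stays $O(c)$. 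The quadratic term $Kc\,|(\alpha,z)-(\alpha',z')|^2$ is then what absorbs the contributions of pairs of points lying in different regions and the remainders of these expansions. I expect this uniform-up-to-the-interface control of $D\partial_z\qq_c$ to be the only genuinely technical point; everything else is routine once the algebra of $P_c$ is in place.
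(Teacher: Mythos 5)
Your proof is correct and, on the items where the paper actually argues, it is the same computation: the paper declares the first items immediate, proves (vii) by the crude bound $\norml z\otimes z \normr \le |z|^2$ (getting the constants $1$ and $3$, where your eigenvalue decomposition gives sharper ones), obtains (viii) exactly as you do by splitting in $\alpha$ and $z$ and using the sup of the partial derivatives, and for (ix)--(x) it merely states that a case disjunction and an expansion of the computations suffice. Your uniform piecewise-Jacobian strategy is a sound way to carry out what the paper leaves implicit, and for (x) it even yields more than asked: $\partial_z\qq_c$ is Lipschitz on $(\tfrac1c,+\infty)\times\mathbb{R}^{d-1}$ with constant $O(c)$, which implies the stated estimate with its quadratic slack.

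One step of your sketch for (x) must, however, be made precise, because the taming inequalities you quote are not sufficient term by term. Near the inner interface, when $\alpha$ is close to $\tfrac1c$ and $r=|z|$ is close to $\alpha-\tfrac1c$ (hence $r\ll\tfrac1c$), the two contributions to the Jacobian of $\partial_z\qq_c$ that carry $\partial_z\big(\tfrac{z\otimes z}{r^2}\big)$ are individually of size $P_c'(t)/r\sim 1/r$ and $(\alpha+P_c(t))/r^2\sim 1/r$, which is much larger than $c$; neither $P_c'(t)=\tfrac c2\big((\alpha+\tfrac1c)-r\big)$ nor $\alpha+P_c(t)\le r$ controls them there. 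The $O(c)$ bound appears only after grouping them: writing $\partial_z\qq_c=\tfrac{\alpha+P_c(t)}{r}\,\Id+\big(P_c'(t)-\tfrac{\alpha+P_c(t)}{r}\big)\tfrac{z\otimes z}{r^2}$ on $\pazocal{J}^{c,0}$, the exact identity
\begin{equation*}
 r\,P_c'(r-\alpha)-\big(\alpha+P_c(r-\alpha)\big)=-\frac{c}{4}\Big(r^2-\big(\alpha-\tfrac1c\big)^2\Big)
\end{equation*}
shows that the coefficient of the anisotropic part is $O(c\,r)$, so its angular derivative (of size $2/r$ per unit displacement) contributes $O(c)$, while the radial and $\alpha$-derivatives are bounded by $O(c)$ directly; the outer region is handled as you say by $1/r<c/2$. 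This cancellation is exactly what your deferred ``short Taylor expansion near the corner'' must produce — it, and not the individual bounds, is the real content of (x). (A minor remark: the positivity $\alpha+P_c(t)\ge\alpha-\tfrac1c>0$ used in (ii)--(iii) presupposes $\alpha>\tfrac1c$, i.e. the standing assumption $c>1/(\varepsilon\mathfrak{F}s)$, which is indeed the intended range, as in (vi)--(x).)
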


\begin{rmrk}
    Puisque $\qq_c$ a des dérivées partielles continues, elle est différentiable, et sa dérivée directionnelle en $(\alpha,z)$ dans n'importe quelle direction $(\beta,h)\in\mathbb{R}\times\mathbb{R}^{d-1}$ s'écrit:
    \begin{equation*}
        \qq_c'\left((\alpha,z);(\beta,h)\right) = \partial_\alpha\qq_c(\alpha,z)\beta+\partial_z\qq_c(\alpha,z)h\:.
    \end{equation*}
\end{rmrk}

\begin{proof}
     Les six premières propriétés sont immédiates. Nous nous concentrons sur les cinq suivantes.
    
    
    (vii): pour tout $(\alpha,z) \in \mathbb{R}_+^*\times\mathbb{R}^{d-1}$, puisque $\norml z\otimes z \normr \leq |z|^2$, on obtient:
    \begin{equation*}
        | \partial_\alpha\qq_c(\alpha,z) | \leq 1 \: , \hspace{3em} \norml \partial_z \qq_c(\alpha,z) \normr \leq 3 \: .
    \end{equation*}
   
    (viii): Soient $(\alpha,z)$ et $(\alpha',z') \in \mathbb{R}_+^*\times\mathbb{R}^{d-1}$, la propriété (vii) permet d'obtenir la majoration suivante:
    \begin{equation*}
        \begin{aligned}
        \left| \qq_c(\alpha',z') - \qq_c(\alpha,z) \right| &\leq \:\left| \qq_c(\alpha',z') - \qq_c(\alpha,z') \right| + \left| \qq_c(\alpha,z') - \qq_c(\alpha,z) \right| \: ,\\
        \: & \leq \: \sup_{\alpha} | \partial_\alpha\qq_c(\alpha,z) | \cdot |\alpha' - \alpha| + \sup_{z} \norml \partial_z \qq_c(\alpha,z) \normr \cdot |z'-z| \: ,\\
        \: & \leq \: 3 | (\alpha',z') - (\alpha,z) | \: .
    \end{aligned}
    \end{equation*}
    
    (ix) et (x): Pour démontrer ces deux dernières propriétés, il suffit de procéder par disjonction des cas, puis de développer les calculs. 
    
\end{proof}

\subsection{Formulation régularisée}

Avec ces nouvelles notations, nous pouvons énoncer une version régularisée de la formulation \eqref{FV}: trouver $\uu_\varepsilon^c \in \Xx$ tel que $\forall \vv \in \Xx$,
\begin{equation}
  a(\uu_\varepsilon^c,\vv) + \frac{1}{\varepsilon} \prodL2{\maxxc\left(  \uu_{\varepsilon,\normalExt}^c -\gG_{\normalExt} \right), \vv_{\normalExt}}{\Gamma_C} + \frac{1}{\varepsilon} \prodL2{\qq_c(\varepsilon\mathfrak{F}s,\uu^c_{\varepsilon,\tanExt}), \vv_{\tanExt}}{\Gamma_C} = L(\vv)\:.
  \label{FVR}
\end{equation}
\begin{rmrk}
    Pour que le terme $\qq_c(\varepsilon\mathfrak{F}s,\uu_{\varepsilon,\tanExt})$ soit régulier, il faut que $\varepsilon\mathfrak{F}s>\frac{1}{c}$ dans $\mathbb{R}^d$, ce qui est toujours le cas si $c$ vérifie $c>\frac{1}{\varepsilon\mathfrak{F}s}$. Nous supposerons que c'est le cas dans la suite. 
\end{rmrk}

On peut montrer que le problème \eqref{FVR} est équivalent au problème de minimisation suivant:
\begin{equation}
   \underset{\vv \in \Xx}{\inf} \left\{ \varphi(\vv) + \frac{1}{\varepsilon} \int_{\Gamma_C} \psi_c (\vv_{\normalExt}-\gG_{\normalExt}) + \int_{\Gamma_C} h_c (\vv_{\tanExt}) \right\}.
  \label{OPTR}
\end{equation}

\begin{thrm}\label{resu:existencePenalisation}
  Sous les hypothèses des sections~\ref{sec:geo.2}--\ref{sec:meca.2}, le problème \eqref{OPTR} est bien posé. Autrement dit, pour tout $\varepsilon > 0$ et $c >0$, $\uu_\varepsilon^c$ existe et est unique.
\end{thrm}

\begin{proof}
  $\varphi$ est semi-continue inférieurement (s.c.i.), strictement convexe et coercive sur $\Xx$, puisque $a$ est elliptique. Les applications $\vv\mapsto \vv_{\normalExt|_{\Gamma_C}}-\gG_{\normalExt}$ et $\vv\mapsto \vv_{\tanExt|_{\Gamma_C}}$ sont affines, elles sont donc convexes continues. D'où, par continuité, convexité et positivité de $\psi_c$ et $h_c$, les applications $\vv \mapsto \int_{\Gamma_C} \psi_c(\vv_{\normalExt}-\gG_{\normalExt})$ et $\vv \mapsto \int_{\Gamma_C} h_c (\vv_{\tanExt})$ sont s.c.i., positives et convexes. Par conséquent, la fonctionnelle à minimiser est strictement convexe et coercive sur $\Xx$ qui est un Hilbert, ce qui donne le résultat.
\end{proof}

\subsection{Convergence des solutions régularisées}

Maintenant que l'existence et l'unicité de la solution au problème régularisé a été démontrée, il reste à prouver que cette solution converge (dans un sens à préciser) vers celle de \eqref{FV} lorsque $c\rightarrow +\infty$. Cette sous-section est dédiée à la preuve du résultat suivant.

\begin{thrm}
    Sous les hypothèses du Théorème~\ref{resu:existencePenalisation}, pour tout $\varepsilon>0$, $\uu_\varepsilon^c \rightarrow \uu_\varepsilon$ fortement dans $\Xx$ lorsque $c \rightarrow +\infty$.
\end{thrm}

\begin{proof}
Pour alléger les notations, nous introduisons:
\begin{equation}\label{eq:notationInterpenetration}
    R_{\normalExt}(\vv) =\maxx\left(\vv_{\normalExt}-\gG_{\normalExt}\right)\:, \qquad  R_{\normalExt}^c(\vv) =\maxxc\left(\vv_{\normalExt}-\gG_{\normalExt}\right)\:,
\end{equation}
\begin{equation}\label{eq:notationFriction}
    S_{\tanExt}(\vv) = \qq(\varepsilon\mathfrak{F}s,\vv_{\tanExt})\:, \qquad S_{\tanExt}^c(\vv) = \qq_c(\varepsilon\mathfrak{F}s,\vv_{\tanExt}) \:.
\end{equation}
Commençons par montrer que $\{ \uu_\varepsilon^c \}_c$ admet une limite faible dans $\Xx$ (à une sous-suite près). En prenant $\uu_\varepsilon^c$ comme fonction-test dans \eqref{FVR}, on obtient
  \begin{equation*}
    a(\uu_\varepsilon^c,\uu_\varepsilon^c) + \frac{1}{\varepsilon} \prodL2{R_{\normalExt}^c(\uu^c_{\varepsilon}), \uu_{\varepsilon,\normalExt}^c}{\Gamma_C} + \frac{1}{\varepsilon} \prodL2{S_{\tanExt}^{c}(\uu^c_{\varepsilon}), \uu_{\varepsilon,\tanExt}^c}{\Gamma_C} = L(\uu_\varepsilon^c)\:.
  \end{equation*}
  Puis, en utilisant l'ellipticité de $a$ et la continuité de $L$,
  \begin{equation*}
    \begin{aligned}
        \alpha_0 \norml \uu_\varepsilon^c \normr_{\Xx}^2 + \frac{1}{\varepsilon} \int_{\Gamma_C} \underset{\geq 0}{\underbrace{R_{\normalExt}^c(\uu^c_{\varepsilon}) (\uu_{\varepsilon,\normalExt}^c-\gG_{\normalExt})}}& + \frac{1}{\varepsilon} \int_{\Gamma_C} \underset{\geq 0}{\underbrace{R_{\normalExt}^c(\uu^c_{\varepsilon})}} \underset{\geq 0}{\underbrace{\gG_{\normalExt}}} \\
        \: &  + \frac{1}{\varepsilon} \int_{\Gamma_C} \underset{\geq 0}{\underbrace{ S_{\tanExt}^{c}(\uu^c_{\varepsilon}) \uu_{\varepsilon,\tanExt}^c }} \:\leq\:  C \norml \uu_\varepsilon^c \normr_{\Xx} \:.
    \end{aligned}
  \end{equation*}  
 D'où $\{ \uu_\varepsilon^c \}_c$ est bornée dans $\Xx$, Banach réflexif. Ainsi, il existe une sous-suite, toujours notée $\{ \uu_\varepsilon^c \}_c$, qui converge faiblement dans $\Xx$, disons vers $\tilde{\uu}_\varepsilon$. 
 
 Clairement, $a(\uu_\varepsilon^c,\vv) \rightarrow a(\tilde{\uu}_\varepsilon,\vv)$ pour tout $\vv \in \Xx$. De plus, on a
  \begin{equation*}
    \begin{aligned}
      \prodL2{R_{\normalExt}^c(\uu^c_{\varepsilon}), \vv_{\normalExt}}{\Gamma_C} - \prodL2{R_{\normalExt}(\tilde{\uu}_{\varepsilon}), \vv_{\normalExt}}{\Gamma_C} 
      &= \prodL2{R_{\normalExt}^c(\uu^c_{\varepsilon})-R_{\normalExt}(\uu_{\varepsilon}^c), \vv_{\normalExt}}{\Gamma_C} 
      \\ & 
      \qquad\qquad + \prodL2{R_{\normalExt}(\uu_{\varepsilon}^c)-R_{\normalExt}(\tilde{\uu}_{\varepsilon}), \vv_{\normalExt}}{\Gamma_C} \:,
    \end{aligned}
  \end{equation*}
  \begin{equation*}
    \begin{aligned}
      \prodL2{S_{\tanExt}^{c}(\uu^c_{\varepsilon}), \vv_{\tanExt}}{\Gamma_C} - \prodL2{S_{\tanExt}(\tilde{\uu}_{\varepsilon}), \vv_{\tanExt}}{\Gamma_C} 
      &= \prodL2{S_{\tanExt}^{c}(\uu^c_{\varepsilon})-S_{\tanExt}(\uu_{\varepsilon}^c), \vv_{\tanExt}}{\Gamma_C} 
      \\ & 
      \qquad\qquad + \prodL2{S_{\tanExt}(\uu_{\varepsilon}^c)-S_{\tanExt}(\tilde{\uu}_{\varepsilon}), \vv_{\tanExt}}{\Gamma_C} \:.
    \end{aligned}
  \end{equation*}
  En utilisant les propriétés des fonctions régularisées, la continuité Lipschitz de $\maxx$ et $\qq$, l'inégalité de Cauchy-Schwarz, et la continuité de l'opérateur de trace, on obtient
 \begin{equation*}
    \begin{aligned}
        \left| \prodL2{R_{\normalExt}^c(\uu^c_{\varepsilon}), \vv_{\normalExt}}{\Gamma_C}
        -
        \prodL2{R_{\normalExt}(\tilde{\uu}_{\varepsilon}), \vv_{\normalExt}}{\Gamma_C} \right| 
        \leq &\;  \frac{K}{c}\norml \vv \normr_{\Xx} + \int_{\Gamma_C} | \uu_{\varepsilon,\normalExt}^c - \tilde{\uu}_{\varepsilon,\normalExt} | \: | \vv_{\normalExt} | 
        \\
        \leq &\;  \frac{K}{c}\norml \vv \normr_{\Xx} + K \norml \uu_{\varepsilon}^c-\tilde{\uu}_{\varepsilon} \normr_{0,\Gamma_C} \norml \vv \normr_{\Xx}\:,
   \end{aligned}
 \end{equation*}
  \begin{equation*}
    \begin{aligned}
        \left| \prodL2{S_{\tanExt}^{c}(\uu^c_{\varepsilon}), \vv_{\tanExt}}{\Gamma_C}
        -
        \prodL2{S_{\tanExt}(\tilde{\uu}_{\varepsilon}), \vv_{\tanExt}}{\Gamma_C} \right| 
        \leq &\;  \frac{K}{c}\norml \vv \normr_{\Xx} + \int_{\Gamma_C} | \uu_{\varepsilon,\tanExt}^c - \tilde{\uu}_{\varepsilon,\tanExt} | \: | \vv | 
        \\
        \leq &\;  \frac{K}{c}\norml \vv \normr_{\Xx} + K \norml \uu_{\varepsilon}^c-\tilde{\uu}_{\varepsilon} \normr_{0,\Gamma_C} \norml \vv \normr_{\Xx}\:.
   \end{aligned}
 \end{equation*}
 Puisque l'injection $\Hh^{1/2}(\Gamma_C) \hookrightarrow \Ll^2(\Gamma_C)$ est compacte, et que $\{ \uu_{\varepsilon}^c \}_c$ est bornée dans $ \Hh^{1/2}(\Gamma_C)$, il existe une sous-suite (toujours notée $\{ \uu_{\varepsilon}^c \}_c$) telle que $\uu_{\varepsilon}^c \rightarrow \tilde{\uu}_{\varepsilon}$ fortement dans $\Ll^2(\Gamma_C)$. Par conséquent, pour tout $\vv \in \Xx$, lorsque $c \rightarrow +\infty$,
 \begin{equation*}
    \begin{aligned}
        \prodL2{R_{\normalExt}^c(\uu^c_{\varepsilon}), \vv_{\normalExt}}{\Gamma_C} &\longrightarrow \: \prodL2{R_{\normalExt}(\tilde{\uu}_{\varepsilon}), \vv_{\normalExt}}{\Gamma_C}, \\
        \prodL2{S_{\tanExt}^{c}(\uu^c_{\varepsilon}), \vv_{\tanExt}}{\Gamma_C} &\longrightarrow \: \prodL2{S_{\tanExt}(\tilde{\uu}_{\varepsilon}), \vv_{\tanExt}}{\Gamma_C}.
    \end{aligned}
 \end{equation*}
 Le passage à la limite faible dans \eqref{FVR} donne:
 \begin{equation*}
   a(\tilde{\uu}_\varepsilon,\vv) + \frac{1}{\varepsilon} \prodL2{R_{\normalExt}(\tilde{\uu}_{\varepsilon}), \vv_{\normalExt}}{\Gamma_C} + \frac{1}{\varepsilon} \prodL2{S_{\tanExt}(\tilde{\uu}_{\varepsilon}), \vv_{\tanExt}}{\Gamma_C} = L(\vv), \:\:\:\:\: \forall \vv \in \Xx.
 \end{equation*}  
 On en déduit que $\tilde{\uu}_\varepsilon=\uu_\varepsilon$, par unicité de la solution à cette formulation variationnelle \eqref{FVPena}. L'unicité de $\uu_\varepsilon$, associée à la bornitude de $\{ \uu_\varepsilon^c\}_c$, permet de montrer que toute la suite converge (faiblement) vers $\uu_\varepsilon$.
 
 On s'intéresse maintenant à la convergence forte de $\{ \uu_\varepsilon^c \}_c$ vers $\uu_\varepsilon$ dans $\Xx$. La manière classique de procéder consiste à prendre $\boldsymbol{\delta}_{\uu,\varepsilon}^c:=\uu_\varepsilon^c-\uu_\varepsilon$ comme fonction-test dans \eqref{FV} et \eqref{FVR}, puis soustraire. Ici, cela donne:
 \begin{equation*}
    \begin{aligned}
        a(\boldsymbol{\delta}_{\uu,\varepsilon}^c,\boldsymbol{\delta}_{\uu,\varepsilon}^c) & + \frac{1}{\varepsilon} \prodL2{R_{\normalExt}^c(\uu^c_{\varepsilon})-R_{\normalExt}(\uu_{\varepsilon}), (\boldsymbol{\delta}_{\uu,\varepsilon}^c)_{\normalExt}}{\Gamma_C} \\
        \: & + \frac{1}{\varepsilon} \prodL2{S_{\tanExt}^{c}(\uu^c_{\varepsilon})-S_{\tanExt}(\uu_{\varepsilon}), (\boldsymbol{\delta}_{\uu,\varepsilon}^c)_{\tanExt}}{\Gamma_C} = 0\:.
   \end{aligned}
 \end{equation*}   
 Puis, en utilisant l'ellipticité de $a$ et l'inégalité triangulaire, il vient
 \begin{align*}
    \alpha_0 \left\| \boldsymbol{\delta}_{\uu,\varepsilon}^c \right\|_{\Xx}^2 \:
    \leq & \: \dfrac{1}{\varepsilon} \left( \norml \: R_{\normalExt}^c(\uu^c_{\varepsilon}) \normr_{0,\Gamma_C} + \norml \: R_{\normalExt}(\uu_{\varepsilon}) \normr_{0,\Gamma_C}  \right) \norml (\boldsymbol{\delta}_{\uu,\varepsilon}^c)_{\normalExt} \normr_{0,\Gamma_C} 
    \\ 
    \: & \: +  \dfrac{1}{\varepsilon} \left( \norml \: S_{\tanExt}^{c}(\uu^c_{\varepsilon}) \normr_{0,\Gamma_C} + \norml \: S_{\tanExt}(\uu_{\varepsilon}) \normr_{0,\Gamma_C}  \right) \norml (\boldsymbol{\delta}_{\uu,\varepsilon}^c)_{\tanExt} \normr_{0,\Gamma_C} \\ 
    \leq & \: \dfrac{2}{\varepsilon} \left( \norml \uu_{\varepsilon}^c \normr_{0,\Gamma_C}  + \norml \uu_{\varepsilon} \normr_{0,\Gamma_C} + \norml \gG_{\normalExt} \normr_{0,\Gamma_C}\right) \norml \boldsymbol{\delta}_{\uu,\varepsilon}^c \normr_{0,\Gamma_C} \:.
 \end{align*}
 Enfin, puisque $\gG_{\normalExt} \in \pazocal{C}^{2}(\mathbb{R}^d)\subset L^2(\Gamma_C)$, 
 \begin{equation*}
    \norml \boldsymbol{\delta}_{\uu,\varepsilon}^c \normr_{\Xx}^2 \: \leq \: \frac{K}{\varepsilon} \norml \boldsymbol{\delta}_{\uu,\varepsilon}^c \normr_{0,\Gamma_C} \:,
 \end{equation*}
 et la convergence forte $\uu_{\varepsilon}^c \rightarrow \uu_{\varepsilon}$ in $\Ll^2(\Gamma_C)$ permet de conclure. 
\end{proof}

\section{Dérivabilité par rapport à la forme pour la formulation régularisée}
\label{app:MSDiffFVReg}

Le but de cette section est de montrer que $\uu_\varepsilon^c$ est Fréchet différentiable par rapport à la forme en utilisant le théorème des fonctions implicites, selon la méthode présentée dans \cite{henrot2006variation}, et également appliquée dans un contexte similaire au nôtre dans \cite{maury2017shape}. Nous avons déjà suivi cette approche dans le chapitre \ref{chap:1.1}, section \ref{subsec:SDiffElas}, pour traiter le cas de l'élasticité sans contact. Puisque la seule différence entre cette-dernière formulation et la formulation \eqref{FVR} provient des termes issus du contact, ce sont ceux sur lesquels nous allons nous concentrer. Dans la suite, on suppose que l'hypothèse \ref{A1.2} du chapitre précédent est vérifiée. Nous commençons par énoncer le résultat, puis nous le démontrerons.

\begin{thrm}
    La solution $\uu_\varepsilon^c$ de \eqref{FVR} est (fortement) dérivable par rapport à la forme de $\Cc^1_b(\mathbb{R}^d)$ dans $\Ll^2(\Omega)$. 
    \label{ThmExistMSDerFVR}
\end{thrm}

\begin{proof}
    Tout d'abord, notons que puisque la fondation rigide $\Omega_{rig}$ est fixée, $\gG_{\normalExt}$ et $\normalExt$ ne dépendent pas de $\thetaa$. D'après les notations du chapitre précédent, la solution de la formulation \eqref{FVR} transportée à $\Omega(\thetaa)$ devrait s'écrire $\uu_\varepsilon^c(\Omega(\thetaa))$. Pour améliorer la lisibilité, nous la noterons simplement $\uu_{\varepsilon,\thetaa}^c$. Le problème \eqref{FVR} devient donc: trouver $\uu_{\varepsilon,\thetaa}^c \in \Xx_{\thetaa}$ tel que, pour tout $\vv_{\thetaa} \in \Xx_{\thetaa}$,
\begin{equation} 
    \begin{aligned}
        \int_{\Omega(\thetaa)}& \Aa : \epsilonn(\uu_{\varepsilon,\thetaa}^c) : \epsilonn(\vv_{\thetaa}) + \frac{1}{\varepsilon} \int_{\Gamma_C(\thetaa)} \maxxc\left(\uu_{\varepsilon,\thetaa}^c \cdot \normalExt -\gG_{\normalExt}\right) (\vv_{\thetaa} \cdot \normalExt) \\
        \: & + \frac{1}{\varepsilon} \int_{\Gamma_C(\thetaa)}  \qq_c\left(\varepsilon\mathfrak{F}s,(\uu_{\varepsilon,\thetaa}^c)_{\tanExt}\right)(\vv_{\thetaa})_{\tanExt} = \int_{\Omega(\thetaa)} \ff \: \vv_{\thetaa} + \int_{\Gamma_N(\thetaa)} \tauu \: \vv_{\thetaa} \:.
  \end{aligned}
  \label{FVRT}
\end{equation}
Comme pour le cas de l'élasticité, on peut transformer \eqref{FVRT} en une formulation posée sur le domaine de référence $\Omega$. Dans le même esprit que les notations déjà introduites au chapitre \ref{chap:1.1}, section \ref{subsec:SDiffElas}, la composante tangentielle associée à $\normalExt(\thetaa)$ d'un vecteur $v$ sera notée $v_{\tanExt(\thetaa)}$. En réutilisant le fait que $\Xx_{\thetaa}$ et $\Xx$ sont isomorphes, on obtient, pour tout $\vv\in\Xx$,
\begin{equation}
    \begin{aligned}
        \int_{\Omega} \Aa(\thetaa) & : \frac{1}{2} \left( \gradd\uu_{\varepsilon}^{c,\thetaa} {(\Ii+\gradd\thetaa)}^{-1}  + {(\Ii+\gradd\thetaa^T)}^{-1} {\gradd \uu_{\varepsilon}^{c,\thetaa}}^T \right) \\
        \: & : \frac{1}{2} \left( \gradd \vv{(\Ii+\gradd\thetaa)}^{-1} + {(\Ii+\gradd\thetaa^T)}^{-1}{\gradd \vv}^T  \right) \JacV(\thetaa) \\ 
        \: & \hspace{5em}+ \frac{1}{\varepsilon} \int_{\Gamma_{C}} \maxxc\left(\uu_{\varepsilon}^{c,\thetaa} \cdot \normalExt(\thetaa)-\gG_{\normalExt}(\thetaa)\right) (\vv \cdot \normalExt(\thetaa)) \: \JacB(\thetaa) \\
        \: & \hspace{5em} + \frac{1}{\varepsilon} \int_{\Gamma_C}  \qq_c\left(\varepsilon (\mathfrak{F}s)(\thetaa),(\uu_{\varepsilon}^{c,\thetaa})_{\tanExt(\thetaa)}\right)\vv_{\tanExt(\thetaa)} \JacB(\thetaa) \\
        \: & \hspace{5em}- \int_{\Omega} \ff(\thetaa) \: \vv \: \JacV(\thetaa) - \int_{\Gamma_{N}} \tauu(\thetaa) \: \vv \: \JacB(\thetaa) = 0 \:.
        \end{aligned}
  \label{FVRTR}
\end{equation}

On reprend l'opérateur $\pazocal{A}_{\thetaa}: \Xx \rightarrow \Xx^*$, et les deux éléments $\pazocal{B}_{\thetaa}$, $\pazocal{C}_{\thetaa} \in \Xx^*$ introduits au chapitre \ref{chap:1.1}, section \ref{subsec:SDiffElas}. Dans le même esprit, on introduit de plus les deux opérateurs $\pazocal{D}_{c,\thetaa}$, $\pazocal{E}_{c,\thetaa} : \Xx \rightarrow \Xx^*$ linéaires continus tels que: pour tous $\vv$, $\ww\in\Xx$,
\begin{equation*}
    \begin{aligned}
        \prodD{\pazocal{D}_{c,\thetaa} \ww , \vv }{} &:= \frac{1}{\varepsilon}\int_{\Gamma_C} \maxxc\left(\ww \cdot \normalExt(\thetaa)-\gG_{\normalExt}(\thetaa)\right) (\vv \cdot \normalExt(\thetaa)) \: \JacB(\thetaa) \:,\\
        \prodD{\pazocal{E}_{c,\thetaa} \ww , \vv }{} &:= \frac{1}{\varepsilon} \int_{\Gamma_C}  \qq_c\left(\varepsilon (\mathfrak{F}s)(\thetaa),\ww_{\tanExt(\thetaa)}\right)\vv_{\tanExt(\thetaa)} \JacB(\thetaa)\:.
    \end{aligned}
\end{equation*}
On veut prouver que $\Phi_\varepsilon^c : \thetaa \in \Cc^1_b(\mathbb{R}^d) \mapsto \uu_{\varepsilon}^{c,\thetaa} \in \Xx$ est Fréchet différentiable. De façon analogue à la démarche suivie au chapitre \ref{chap:1.1}, section \ref{subsec:SDiffElas}, on introduit l'opérateur:
 \begin{eqnarray*}
   \pazocal{F}_c : \: \:  \Cc^1_b(\mathbb{R}^d) \times \Xx & \longrightarrow & \Xx^* \\
   (\thetaa,\ww) & \longmapsto & \pazocal{A}_{\thetaa} \ww + \pazocal{D}_{c,\thetaa} \ww + \pazocal{E}_{c,\thetaa} \ww - \pazocal{B}_{\thetaa} - \pazocal{C}_{\thetaa}
 \end{eqnarray*}
 Ce qui nous permet de réécrire \eqref{FVRTR}: trouver $\uu_{\varepsilon,\thetaa}^c \in \Xx$ tel que,
 \begin{equation}
  \prodD{\pazocal{F}_c (\thetaa, \uu_{\varepsilon}^{c,\thetaa}), \vv} \:= 0, \hspace{2em} \forall \vv \in \Xx \:.
  \label{FVRTR2}
\end{equation}
Vérifions maintenant que $\pazocal{F}_c$ vérifie les hypothèses du théorème des fonctions implicites.
Même si les fonction $\maxx$ et $\qq$ ont été remplacées par des versions régularisées, ces termes constituent toujours la partie la plus technique dans l'analyse de \eqref{FVRTR2}.

\subsubsection*{$\pazocal{F}_c$ est $\pazocal{C}^1$ par rapport à $\thetaa$.}
\subsubsection*{$\bullet\,\thetaa \in \Cc^1_b \mapsto \pazocal{A}_{\thetaa} \ww, \pazocal{B}_{\thetaa},\,\pazocal{C}_{\thetaa} \in \Xx^*$.}
On a vu chapitre \ref{chap:1.1}, section \ref{subsec:SDiffElas}, que ces termes étaient $\pazocal{C}^1$.
\subsubsection*{$\bullet\,\thetaa \in \Cc^1_b \mapsto \pazocal{D}_{c,\thetaa} \ww \in \Xx^*$.}
Pour commencer, rappelons que $\normalExt\in\Cc^2(\partial\Omega_{rig}^h)$ et $\gG_{\normalExt}\in \pazocal{C}^2(\partial\Omega_{rig}^h)$. Donc, en supposant que $\thetaa$ soit suffisament petit pour que $\Gamma_C(\thetaa)\subset\partial\Omega_{rig}^h$, on peut effectuer un développement de Taylor avec reste intégral, ce qui donne, puisque $\Gamma_C$ est borné,
\begin{equation*}
    \begin{aligned}
        \norml \normalExt(\thetaa)-\normalExt - \gradd\normalExt\thetaa  \normr_{\infty,\Gamma_C} \leq K \norml \thetaa \normr_{\infty,\Gamma_C}^2 \:,\\
        \norml \gG_{\normalExt}(\thetaa)-\gG_{\normalExt} - \gradd\gG_{\normalExt} \thetaa  \normr_{\infty,\Gamma_C} \leq K \norml \thetaa \normr_{\infty,\Gamma_C}^2 \:.
    \end{aligned}
\end{equation*}
En d'autres termes, $\thetaa \mapsto \gG_{\normalExt}(\thetaa) \in L^\infty(\Gamma_C)$ et $\thetaa \mapsto \normalExt(\thetaa) \in \Ll^\infty(\Gamma_C)$ sont de classe $\pazocal{C}^1$. Il suit que $\thetaa \mapsto \ww \cdot \normalExt(\thetaa)-\gG_{\normalExt}(\thetaa) \in L^4(\Gamma_C)$ est $\pazocal{C}^1$ aussi, pour tout $\ww\in\Xx$. Il reste donc à prouver que $\maxxc$ est $\pazocal{C}^1$ de $L^4(\Gamma_C)$ dans $L^2(\Gamma_C)$. Pour ce faire, nous considérons l'opérateur de Nemytskij $\Psi$ (voir \cite{goldberg1992nemytskij}) associé à $\maxxc$:
\begin{equation*}
    \Psi(v)(x) = \psi(x,v(x)):= \maxxc\left(v(x)\right), \mbox{ pour } x \in \Gamma_C.    
\end{equation*}
On vérifie aisément que $\psi$ satisfait à la fois la condition de Carathéodory (CC) et la condition de croissante (GC) définies dans \cite{goldberg1992nemytskij}, ce qui assure que $\Psi$ est continu de $L^4(\Gamma_C)$ dans $L^2(\Gamma_C)$ (\cite[Théorème 4]{goldberg1992nemytskij}, avec $p=4$ et $q=2$). De plus, $\psi$ est Fréchet différentiable par rapport à $v$ dans $\mathbb{R}$, de dérivée $\psi_v(x,v)=H_c(v)$. Puisque $H_c$ est continue et bornée par $1$, $\psi_v$ satisfait aussi (CC) et (GC). Puis, toujours d'après \cite[Théorème 4]{goldberg1992nemytskij}, on a que $\Phi(v)(x) := \psi_v(x,v(x))$ est continu de $L^2(\Gamma_C)$ dans $L^r(\Gamma_C;\pazocal{L}(\mathbb{R}))$, pour n'importe quel $r\geq 1$. En prenant $r=\frac{pq}{p-q}=4$, on déduit de \cite[Théorème 7]{goldberg1992nemytskij} que $\Psi$ est continûment Fréchet différentiable de $L^4(\Gamma_C)$ dans $L^2(\Gamma_C)$. 
  
Ainsi, par composition, l'application $\thetaa \mapsto \maxxc\left(\ww \cdot \normalExt(\thetaa)-\gG_{\normalExt}(\thetaa)\right) \in L^2(\Gamma_C)$ est $\pazocal{C}^1$ pour tout $\ww\in \Xx$. Par ailleurs, $\thetaa \mapsto \JacB(\thetaa) \in L^\infty(\Gamma_C)$ est $\pazocal{C}^1$, et on a déjà vu que $\thetaa \mapsto \vv \cdot \normalExt(\thetaa)\in L^2(\Gamma_C)$ est aussi $\pazocal{C}^1$. On a donc que, pour tout $\ww \in \Xx$, pour tout $c>0$, l'application $\thetaa \in \Cc^1_b \mapsto \pazocal{D}_{c,\thetaa} \ww \in \Xx^*$ est de classe $\pazocal{C}^1$.
  
\subsubsection*{$\bullet\,\thetaa \in \Cc^1_b \mapsto \pazocal{E}_{c,\thetaa} \ww \in \Xx^*$.}
Étant donnée la régularité du produit $\mathfrak{F}s$, l'application $\thetaa\mapsto \mathfrak{F}(\thetaa) s(\thetaa) \in L^2_{\mathfrak{F}s}(\Gamma_C)$ est de classe $\pazocal{C}^1$, où $L^2_{\mathfrak{F}s}(\Gamma_C):=L^2 (\Gamma_C;(\mathfrak{F}s,+\infty))$. Concernant la trace tangentielle, on a par définition $\ww_{\tanExt(\thetaa)}=\ww-(\ww\cdot\normalExt(\thetaa))\normalExt(\thetaa)$, et on sait que $\normalExt(\thetaa)$ est continûment différentiable. On en déduit donc, pour tout $\ww\in\Xx$:
\begin{equation*}
    \norml \ww_{\tanExt(\thetaa)} -\ww_{\tanExt} - \left( \ww\cdot(\gradd\normalExt\thetaa)\right)\normalExt - (\ww\cdot\normalExt) (\gradd\normalExt\thetaa) \normr_{\Ll^4(\Gamma_C)} \leq K \norml \ww \normr_{\Ll^4(\Gamma_C)} \norml \thetaa \normr_{\infty,\Gamma_C}^2 \:.
\end{equation*}
Il reste encore à montrer que $\qq_c: L^2_{\mathfrak{F}s}(\Gamma_C)\times \Ll^4(\Gamma_C) \to \Ll^2(\Gamma_C)$ est $\pazocal{C}^1$. Malheureusement, il est impossible d'utiliser le concept d'opérateur de Nemytskij car $\qq_c$ est définie sur $\mathbb{R}_+^*\times \mathbb{R}^{d-1}$, qui n'est pas un espace vectoriel. On doit donc prouver cette propriété directement.

Premièrement, comme $\qq_c$ est Lipschitz, il est clair que l'application $\qq_c: L^2_{\mathfrak{F}s}(\Gamma_C)\times \Ll^4(\Gamma_C) \to \Ll^2(\Gamma_C)$ est au moins $\pazocal{C}^0$. Pour le reste, il s'agit de prouver que les deux applications
\begin{eqnarray*}
    \Psi_\alpha : & L^2_{\mathfrak{F}s}(\Gamma_C)\times \Ll^4(\Gamma_C) & \to \:\: L^2(\Gamma_C;\mathcal{L}(\mathbb{R};\mathbb{R}^{d-1})) \\
    \: & (\alpha,\zz) & \mapsto \:\: \partial_\alpha \qq_c (\alpha,\zz) \:,
\end{eqnarray*}
  \vspace{-0.8cm}
\begin{eqnarray*}
  \Psi_z : & L^2_{\mathfrak{F}s}(\Gamma_C)\times \Ll^4(\Gamma_C) & \to \:\: L^2(\Gamma_C;\mathcal{L}(\mathbb{R}^{d-1})) \\
    \: & (\alpha,\zz) & \mapsto \:\: \partial_z \qq_c (\alpha,\zz)
\end{eqnarray*}
sont continues pour obtenir la régularité souhaitée. D'après les propriétés de $\partial_\alpha \qq_c$ et $\partial_z \qq_c$ (voir section  \ref{subsec:PropFctsReg}) on a, pour tous $\alpha$, $\alpha'\in L^2_{\mathfrak{F}s}(\Gamma_C)$ et $\zz$, $\zz'\in\Ll^4(\Gamma_C)$:
\begin{equation*}
    \begin{aligned}
        \norml \Psi_\alpha(\alpha,\zz)-\Psi_\alpha(\alpha',\zz') \normr^2_{L^2(\Gamma_C;\mathcal{L}(\mathbb{R};\mathbb{R}^{d-1}))}  &\leq Kc\left(\int_{\Gamma_C}|\alpha-\alpha'|^2+\int_{\Gamma_C}|\zz-\zz'|^2\right) \\
        & \leq Kc\left( \norml \alpha-\alpha' \normr^2_{0,\Gamma_C} \!\!+ \norml \zz-\zz' \normr^2_{\Ll^4(\Gamma_C)} \right),
    \end{aligned}
\end{equation*}
\begin{equation*}
    \begin{aligned}
        \norml \Psi_z(\alpha,\zz)-\Psi_z(\alpha',\zz') \normr^2_{L^2(\Gamma_C;\mathcal{L}(\mathbb{R};\mathbb{R}^{d-1}))}  &\leq Kc\left(\int_{\Gamma_C}|\alpha-\alpha'|^2+\int_{\Gamma_C}|\zz-\zz'|^2\right. \\
        &  \hspace{4em} \left. + \int_{\Gamma_C}|\zz-\zz'|^4 \right) \\
        & \leq Kc\left( \norml \alpha-\alpha' \normr^2_{0,\Gamma_C} + \norml \zz-\zz' \normr^2_{\Ll^4(\Gamma_C)} \right. \\
        &  \hspace{4em} \left. + \norml \zz-\zz' \normr^4_{\Ll^4(\Gamma_C)} \right),
    \end{aligned}
\end{equation*}
ce qui montre que les applications $\Psi_\alpha$ et $\Psi_z$ sont bien continues.

Enfin, par composition, $\thetaa \mapsto \qq_c(\varepsilon(\mathfrak{F}s)(\thetaa),\ww_{\tanExt(\thetaa)}) \in \Ll^2(\Gamma_C)$ est $\pazocal{C}^1$ pour tout $\ww\in X$. Puisque $\theta \mapsto \JacB(\thetaa) \in L^\infty(\Gamma_C)$ est $\pazocal{C}^1$, et que $\theta \mapsto \vv_{\tanExt(\thetaa)} \in \Ll^2(\Gamma_C)$ l'est aussi, on obtient que pour tout $\ww \in \Xx$, pour tout $c>0$, l'application $\thetaa \in \Cc^1_b \mapsto \pazocal{E}_{c,\thetaa} \ww \in \Xx^*$ est de classe $\pazocal{C}^1$.

Tout ceci prouve que pour tout $c>0$ et tout $\ww \in \Xx$, $\thetaa \mapsto \pazocal{F}_c(\thetaa,\ww)$ est $\pazocal{C}^1$ sur $\Cc^1_b(\mathbb{R}^d)$, et en particulier autour de zéro.

\subsubsection*{$D_{\ww} \pazocal{F}_c(0,\uu_{\varepsilon}^c)$ est un isomorphisme.} 
On définit la forme bilinéaire suivante sur $\Xx\times\Xx$: pour tous $\ww$, $\vv \in \Xx$,
\begin{equation*}
    \begin{aligned}
        b_\varepsilon^c(\ww,\vv):=a(\ww,\vv) &+ \frac{1}{\varepsilon} \prodL2{H_c(\uu_{\varepsilon,\normalExt}^c-\gG_{\normalExt}) \ww_{\normalExt} , \vv_{\normalExt}}{\Gamma_C} \\
        \:& + \frac{1}{\varepsilon} \prodL2{\partial_z \qq_c(\varepsilon\mathfrak{F}s,\uu_{\varepsilon,\tanExt}^c) \ww_{\tanExt} , \vv_{\tanExt}}{\Gamma_C} .
    \end{aligned}
\end{equation*}
Avec ces notations, on peut exprimer la différentielle de $\pazocal{F}$ par rapport à $\ww$: pour tous $\ww$, $\vv \in \Xx$,
\begin{equation*}
  \prodD{D_{\ww} \pazocal{F}_c(0,\uu_{\varepsilon}^c) \ww, \vv} \:=\: b_\varepsilon^c(\ww,\vv)
\end{equation*}
Puisque $H_c(\cdot)$ et $\partial_z\qq_c$ sont toutes les deux uniformément bornées et positives, $b_\varepsilon^c$ est une forme bilinéaire continue et coercive sur $\Xx\times\Xx$. Ainsi, le lemme de Lax-Milgram nous permet de déduire que $D_{\ww} \pazocal{F}_c(0,\uu_{\varepsilon}^c) : \Xx \rightarrow \Xx^*$ est un isomorphisme.

On peut donc appliquer le théorème des fonctions implictes, et déduire l'existence d'une fonction $\thetaa \mapsto \vv_c(\thetaa) \in \Xx$ de classe $\pazocal{C}^1$ au voisinage de zéro, et telle que $\pazocal{F}_c(\thetaa,\vv_c(\thetaa)) = 0$ dans $\Xx^*$. D'où, par unicité, $\vv_c(\thetaa) = \uu_{\varepsilon}^{c,\thetaa}$, ce qui implique que $\Phi_\varepsilon^c$ est Fréchet différentiable en zéro.

\end{proof}

\section{Convergence des dérivées de forme régularisées}

D'une part, nous avons vu au chapitre précédent que, sous les bonnes hypothèses, $\uu_\varepsilon$ est dérivable par rapport à la forme. Et nous venons de voir d'autre part que $\uu_\varepsilon^c$ est toujours dérivable par rapport à la forme. Il paraît naturel de s'intéresser à la consistance de la régularisation proposée par rapport à la dérivation de forme. Autrement dit, en plus de la convergence $\uu_\varepsilon^c \to \uu_\varepsilon$, on aimerait, pour une direction $\thetaa$ donnée, avoir la convergence des dérivées de forme $\{{\uu_\varepsilon^c}\,'\}$ vers $\uu_\varepsilon'$. Pour cela, il faut commencer par étudier la convergence des dérivées matérielles.

\paragraph{Formulation variationnelle vérifiée par $\dot{\uu}_\varepsilon^c$}
Pour trouver cette formulation, comme nous l'avons vu au chapitre \ref{chap:1.1}, section \ref{subsec:SDiffElas}, il suffit de dériver \eqref{FVR} terme à terme par rapport à forme dans la direction $\thetaa$. 
Ceci revient à écrire la formulation vérifiée par $\uu_\varepsilon^{c,t}$, puis la dériver en $t=0$. Avec les notations de la section précédente, cette formulation s'écrit: $\forall \vv\in \Xx$,
\begin{equation*}
	\prodD{\pazocal{A}_{c,t\thetaa} \uu_\varepsilon^{c,t}, \vv }{} + \prodD{\pazocal{D}_{c,t\thetaa} \uu_\varepsilon^{c,t} , \vv }{} + \prodD{\pazocal{E}_{c,t\thetaa} \uu_\varepsilon^{c,t}, \vv }{} = \prodD{\pazocal{B}_{t\thetaa}, \vv }{} + \prodD{\pazocal{C}_{t\thetaa}, \vv }{}\:.
\end{equation*}
En dérivant chacun de ces termes par rapport à $t$ en $t=0$, on obtient alors: 

\begin{itemize}[leftmargin=*]
 
  \item $\left. \dfrac{\partial}{\partial t} \prodD{\pazocal{A}_{t\thetaa}\uu_\varepsilon^{c,t},\vv}{} \right|_{t=0} = a(\dot{\uu}_\varepsilon^c,\vv)+a'(\uu_\varepsilon^c,\vv)$ .
  
  \item $\displaystyle \left. \frac{\partial}{\partial t} \prodD{\pazocal{D}_{c,t\thetaa} \uu_\varepsilon^{c,t} , \vv }{} \right|_{t=0} = \frac{1}{\varepsilon} \int_{\Gamma_C} R_{\normalExt}^c(\uu_{\varepsilon}^c)\left(\vv \cdot (\divv_\Gamma \thetaa \normalExt + \normalExt'[\thetaa])\right)$ 
  
  \hspace{10em}  $\displaystyle + \: \frac{1}{\varepsilon} \int_{\Gamma_C}  H_c(\uu_{\varepsilon,\normalExt}^c-\gG_{\normalExt})\left( \dot{\uu}_{\varepsilon,\normalExt}^c + \uu^c_{\varepsilon,\normalExt'[\thetaa]} - \gG_{\normalExt}'[\thetaa] \right)\vv_{\normalExt}$ .
 
  \item $\displaystyle \left. \frac{\partial}{\partial t} \prodD{\pazocal{E}_{c,t\thetaa} \uu_\varepsilon^{c,t} , \vv }{} \right|_{t=0} = \frac{1}{\varepsilon} \int_{\Gamma_{C}} S_{\tanExt}^c(\uu_{\varepsilon}^c) \left(\vv_{\tanExt} \divv_\Gamma \thetaa + \vv_{\tanExt'[\thetaa]} \right)$
  
  \hspace{10em}  $\displaystyle + \: \frac{1}{\varepsilon}  \int_{\Gamma_{C}} \left( \partial_\alpha \qq_c \left(\varepsilon\mathfrak{F}s,\uu_{\varepsilon,\tanExt}^c\right) \gradd(\mathfrak{F}s)\thetaa \right) \vv_{\tanExt}$
  
  \hspace{10em}  $\displaystyle + \: \frac{1}{\varepsilon} \int_{\Gamma_C}  \left( \partial_z\qq_c\left(\varepsilon\mathfrak{F}s,\uu_{\varepsilon,\tanExt}^c\right)\left( \dot{\uu}_{\varepsilon,\tanExt}^c + \uu^c_{\varepsilon,\tanExt'[\thetaa]} \right)\right)\vv_{\tanExt}$ .  
    
  \item $\displaystyle \left. \frac{\partial}{\partial t} \prodD{\pazocal{B}_{t\thetaa},\vv}{} \right|_{t=0} = \int_\Omega (\divv \thetaa \: \ff + \gradd \ff \: \thetaa) \: \vv$ .
  
  \item $\displaystyle \left. \frac{\partial}{\partial t} \prodD{\pazocal{C}_{t\thetaa},\vv}{} \right|_{t=0} = \int_{\Gamma_N} (\divv_\Gamma \thetaa \: \tauu+ \gradd \tauu \: \thetaa) \: \vv$ .
  
\end{itemize}
On en déduit donc que $\dot{\uu}_\varepsilon^c$ est solution de:
\begin{equation}
        b_\varepsilon^c(\dot{\uu}_\varepsilon^c,\vv) = L_\varepsilon^c[\thetaa](\vv) \:, \hspace{1em} \forall \vv \in \Xx ,
    \label{FVRMDer}
\end{equation}
où la forme bilinéaire $b_\varepsilon^c$ a été définie plus haut, et la forme linéaire $L_\varepsilon^c[\thetaa]$ est telle que, pour tout $\vv\in\Xx$:
\begin{equation*}
    \begin{aligned}
        L_\varepsilon^c[\thetaa](\vv) := &\int_\Omega (\divv \thetaa \: \ff + \gradd \ff  \thetaa) \vv + \int_{\Gamma_N} (\divv_\Gamma \thetaa \: \tauu+ \gradd \tauu \thetaa)\vv - \:a'(\uu_\varepsilon^c,\vv) \\
        & - \frac{1}{\varepsilon} \int_{\Gamma_C} R_{\normalExt}^c(\uu_{\varepsilon}^c)\left(\vv \cdot (\divv_\Gamma \thetaa \normalExt + \normalExt'[\thetaa])\right) \\
        & - \frac{1}{\varepsilon} \int_{\Gamma_C} H_c(\uu_{\varepsilon,\normalExt}^c-\gG_{\normalExt}) \left( \uu_{\varepsilon,\normalExt'[\thetaa]}^c - \gG_{\normalExt}'[\thetaa] \right)\vv_{\normalExt} \\
        & - \frac{1}{\varepsilon} \int_{\Gamma_{C}} S_{\tanExt}^c(\uu_{\varepsilon}^c) \left(\vv_{\tanExt} \divv_\Gamma \thetaa + \vv_{\tanExt'[\thetaa]} \right) \\
        & - \int_{\Gamma_{C}} \left( \partial_\alpha \qq_c \left(\varepsilon\mathfrak{F}s,\uu_{\varepsilon,\tanExt}^c\right) \gradd(\mathfrak{F}s)\thetaa \right) \vv_{\tanExt} \\
        & - \frac{1}{\varepsilon} \int_{\Gamma_{C}} \left( \partial_z \qq_c \left(\varepsilon\mathfrak{F}s,\uu_{\varepsilon,\tanExt}^c\right) \uu_{\varepsilon,\tanExt'[\thetaa]}^c\right) \vv_{\tanExt}\:.
    \end{aligned}
\end{equation*}
On remarque que \eqref{FVRMDer} est une version régularisée de \eqref{FVMDer}. De plus, il est clair que $b_\varepsilon^c$ et $L_\varepsilon^c[\thetaa]$ satisfont toutes les conditions pour que le lemme de Lax-Milgram s'applique, ce qui assure que $\dot{\uu}_\varepsilon^c$ est bien définie comme l'unique solution de \eqref{FVRMDer}.

\begin{rmrk}
    D'après les propriétés de la section \ref{subsec:PropFctsReg}, on sait que les constantes d'ellipticité et de continuité de $b_\varepsilon^c$, tout comme la constante de continuité de $L_\varepsilon^c[\thetaa]$, ne dépendent pas de $c$.
\end{rmrk}

\begin{thrm}
  Sous l'hypothèse \ref{A1.2}, on a $\dot{\uu}_\varepsilon^c \rightarrow \dot{\uu}_\varepsilon $ fortement dans $\Xx$.
  \label{ThmCvgMDer}
\end{thrm}

De façon analogue à $\pazocal{I}_\varepsilon^{0}$ et $\pazocal{J}_\varepsilon^{0}$ (voir chapitre \ref{chap:2.1}, p.62), on introduit les sous-ensembles de $\Gamma_C$ suivants:
\begin{equation*}
    \pazocal{I}_\varepsilon^{+}:=\{\uu_{\varepsilon,\normalExt}-\gG_{\normalExt} > 0\}, \hspace{0.5em}  \pazocal{J}_\varepsilon^{-}:=\{|\uu_{\varepsilon,\normalExt}| < \varepsilon\mathfrak{F}s\}, \hspace{0.5em} \pazocal{J}_\varepsilon^{+}:=\{|\uu_{\varepsilon,\normalExt}| > \varepsilon\mathfrak{F}s\}.
\end{equation*}
Les sous-ensembles correspondants pour la solution régularisée $\uu_\varepsilon^c$ sont également introduits:
\begin{equation*}
    \pazocal{I}_\varepsilon^{c,0}:=\{\uu_{\varepsilon,\normalExt}^c-\gG_{\normalExt} = 0\}\:, \hspace{1em} \pazocal{I}_\varepsilon^{c,+}:=\{\uu_{\varepsilon,\normalExt}^c-\gG_{\normalExt} > 0\}\:,
\end{equation*}
\begin{equation*}
    \pazocal{J}_\varepsilon^{c,-}:=\{|\uu_{\varepsilon,\normalExt}^c| < \varepsilon\mathfrak{F}s\}, \hspace{0.5em} \pazocal{J}_\varepsilon^{c,0}:=\{|\uu_{\varepsilon,\normalExt}^c| = \varepsilon\mathfrak{F}s\}, \hspace{0.5em} \pazocal{J}_\varepsilon^{c,+}:=\{|\uu_{\varepsilon,\normalExt}^c| > \varepsilon\mathfrak{F}s\}.
\end{equation*}
La preuve du théorème ci-dessus repose sur le lemme préliminaire suivant. 
\begin{lmm}
    À une sous-suite près, on a convergence presque partout (p.p.$\!$) sur $\Gamma_C$ des produits de fonctions suivants:
\begin{itemize}[parsep=0.05cm,itemsep=0.05cm]
    \item[(i)] $\chi_{\pazocal{I}_\varepsilon^{c,+}\backslash\pazocal{I}_\varepsilon^{0}} H_c(\uu_{\varepsilon,\normalExt}^c-\gG_{\normalExt}) \longrightarrow \chi_{\pazocal{I}_\varepsilon^{+}}$ lorsque $c\to\infty$,
    \item[(ii)] $\chi_{\pazocal{J}_\varepsilon^{c,\pm}\backslash\pazocal{J}_\varepsilon^{0}} \partial_\alpha \qq_c\left(\varepsilon\mathfrak{F}s,\uu_{\varepsilon,\tanExt}^c\right) \longrightarrow \chi_{\pazocal{J}_\varepsilon^{\pm}} \partial_\alpha \qq \left(\varepsilon\mathfrak{F}s,\uu_{\varepsilon,\tanExt}\right)$ lorsque $c\to\infty$,
    \item[(iii)] $\chi_{\pazocal{J}_\varepsilon^{c,\pm}\backslash\pazocal{J}_\varepsilon^{0}} \partial_z \qq_c\left(\varepsilon\mathfrak{F}s,\uu_{\varepsilon,\tanExt}^c\right) \longrightarrow \chi_{\pazocal{J}_\varepsilon^{\pm}} \partial_z \qq \left(\varepsilon\mathfrak{F}s,\uu_{\varepsilon,\tanExt}\right)$ lorsque $c\to\infty$.
\end{itemize}
\end{lmm}

\begin{proof}
  Ces trois propriétés se démontrant de façon analogue, nous ne ferons la preuve que de la première.
  
  On montre dans un premier temps que $\pazocal{I}_\varepsilon^{+} = \underset{c\to\infty}{\lim} \left( \pazocal{I}_\varepsilon^{c,+}\backslash\pazocal{I}_\varepsilon^{0} \right)$ pour une certaine sous-suite.
  
  L'inclusion directe $\pazocal{I}_\varepsilon^{+} \subset \underset{c}{\lim\inf} \left( \pazocal{I}_\varepsilon^{c,+}\backslash\pazocal{I}_\varepsilon^{0} \right)$ est assez évidente. En effet, soit $x\in\pazocal{I}_\varepsilon^{+}$, alors $\uu_{\varepsilon,\normalExt}(x)-\gG_{\normalExt}(x)>0$, disons $\uu_{\varepsilon,\normalExt}(x)-\gG_{\normalExt}(x)>\delta$, avec $\delta>0$. Puisque $\uu_{\varepsilon}^c \rightarrow \uu_{\varepsilon}$ fortement dans $\Xx$, alors $\uu_{\varepsilon,\normalExt}^c \rightarrow \uu_{\varepsilon,\normalExt}$ fortement dans $L^2(\Gamma_C)$, donc il existe une sous-suite (indépendante de $x$) qui converge p.p.$\!$ sur $\Gamma_C$. En particulier, pour cette sous-suite, on a, $\exists c_0>0$, tel que $\forall c \geq c_0$, $| \uu_{\varepsilon,\normalExt}^c(x) - \uu_{\varepsilon,\normalExt}(x)| < \delta/2$. D'où $\forall c\geq c_0$, $\uu_{\varepsilon,\normalExt}^c(x)-\gG_{\normalExt}(x)>\delta/2>0$, c'est-à-dire que pour tout $c\geq c_0$, $x\in\pazocal{I}_\varepsilon^{c,+}\backslash\pazocal{I}_\varepsilon^{0}$.
  
  De plus, soit $x \in  \underset{c}{\lim\sup} \left( \pazocal{I}_\varepsilon^{c,+}\backslash\pazocal{I}_\varepsilon^{0} \right)$. Alors, par définition:
  \begin{equation*}
    \forall c>0, \: \exists c_1 \geq c  \:\:\mbox{tel que} \:\: \uu_{\varepsilon,\normalExt}^{c_1}(x)-\gG_{\normalExt}(x) > 0  \:\:\mbox{et} \:\:  \uu_{\varepsilon,\normalExt}(x)-\gG_{\normalExt}(x) \neq 0 \: .
  \end{equation*}
  Supposons que $x \notin \pazocal{I}_\varepsilon^{+}$, alors $\uu_{\varepsilon,\normalExt}(x)-\gG_{\normalExt}(x) <0$ (puisque ce terme est $\neq 0$), i.e.$\!$ il existe $\delta' >0$ tel que $\uu_{\varepsilon,\normalExt}(x)-\gG_{\normalExt}(x) < -\delta'$. Par ailleurs, la convergence p.p.$\!$ de $\{ \uu_{\varepsilon,\normalExt}^c \}$ donne, $\exists c_0'>0$, tel que $\forall c \geq c_0'$, $| u_{\varepsilon,\normalExt}^c(x) - u_{\varepsilon,\normalExt}(x)| < \delta'/2$. Il suit que $\forall c\geq c_0'$, $\uu_{\varepsilon,\normalExt}^c(x)-\gG_{\normalExt}(x) < -\delta'/2$, ce qui contredit la définition. Et donc, $x\in \pazocal{I}_\varepsilon^{+}$. \\
  Par conséquent, on a:
  \begin{equation*}
    \pazocal{I}_\varepsilon^{+} \subset \underset{c}{\lim\inf} \left( \pazocal{I}_\varepsilon^{c,+}\backslash\pazocal{I}_\varepsilon^{0} \right) \subset \underset{c}{\lim\sup} \left( \pazocal{I}_\varepsilon^{c,+}\backslash\pazocal{I}_\varepsilon^{0} \right) \subset \pazocal{I}_\varepsilon^{+} 
  \end{equation*}
  ce qui donne le résultat. Puisqu'on a égalité des ensembles, on peut en déduire la propriété de convergence suivante pour les fonctions caractéristiques:
  \begin{equation*}
    \chi_{\pazocal{I}_\varepsilon^{c,+}\backslash\pazocal{I}_\varepsilon^{0}} \longrightarrow \chi_{\pazocal{I}_\varepsilon^{+}} \hspace{1em} \mbox{p.p. sur} \:\: \Gamma_C . 
  \end{equation*}
  Maintenant, pour presque tout (p.p.t.$\!$) $x \in \Gamma_C$, si $\uu_{\varepsilon,\normalExt}(x)-\gG_{\normalExt}(x) =0$, alors $\chi_{\pazocal{I}_\varepsilon^{c,+}\backslash\pazocal{I}_\varepsilon^{0}}(x)H_c(\uu_{\varepsilon,\normalExt}^c(x)-\gG_{\normalExt}(x)) = 0 = \chi_{\pazocal{I}_\varepsilon^{+}}(x)$. 
  
  Sinon, $\uu_{\varepsilon,\normalExt}(x)-\gG_{\normalExt}(x)=\pm \delta$, avec $\delta>0$. D'où, en invoquant une fois de plus la convergence ponctuelle de $\{ \uu_{\varepsilon,\normalExt}^c \}$, on déduit que $\uu_{\varepsilon,\normalExt}^c(x)-\gG_{\normalExt}(x) \in \pm (\delta/2, +\infty)$ pour $c$ plus grand qu'un certain $c_0$ fixé. Ainsi, puisque $\delta$ ne dépend que de $x$, $H_c(\uu_{\varepsilon,\normalExt}^c(x)-\gG_{\normalExt}(x)) = \chi_{\pazocal{I}_\varepsilon^{+}}(x)$ pour $c$ suffisamment grand. En combinant ceci avec le résultat de convergence des fonctions caractéristiques, on obtient que la convergence p.p.$\!$ de
  \begin{equation*}
        \chi_{\pazocal{I}_\varepsilon^{c,+}\backslash\pazocal{I}_\varepsilon^{0}}(x)H_c(\uu_{\varepsilon,\normalExt}^c(x)-\gG_{\normalExt}(x)) \longrightarrow \chi_{\pazocal{I}_\varepsilon^{+}}(x)\:.  
  \end{equation*}

\end{proof}

\begin{proof}
  \textit{(Théorème \ref{ThmCvgMDer})} On peut montrer que $\left\{\dot{\uu}_\varepsilon^c \right\}$ est uniformément bornée dans $\Xx$ en partant de la formulation \eqref{FVRMDer}. En effet, prenons $\dot{\uu}_\varepsilon^c$ comme fonction-test. Puisque la forme bilinéaire est elliptique et que $L_\varepsilon^c$ est continue, on a:
  \begin{equation*}
    \norml \dot{\uu}_\varepsilon^c \normr_{\Xx} \leq \frac{K'}{\alpha_0} \: ,
  \end{equation*}
  où la constante $K'$ ne dépend pas de $c$. Donc $\left\{ \dot{\uu}_\varepsilon^c \right\}$ converge faiblement dans $\Xx$ (à une sous-suite près), disons vers $\tilde{\ww}_\varepsilon  \in \Xx$. On en déduit immédiatement que tous les termes de  $b_\varepsilon^c(\dot{\uu}_\varepsilon^c,\vv)$ et $L_\varepsilon^c[\thetaa](\vv)$ excluant ceux sur $\Gamma_C$ convergent vers les termes « non-régularisés »  correspondants de $b_\varepsilon(\tilde{\ww}_\varepsilon,\vv)$ et $L_\varepsilon[\thetaa](\vv)$. Il est en revanche nécessaire de détailler la preuve pour ceux qui proviennent des dérivées de $\maxxc$ et $\qq_c$. Nous allons traiter ces cinq termes (deux dans $b_\varepsilon^c(\dot{\uu}_\varepsilon^c,\vv)$ et trois dans  $L_\varepsilon^c[\thetaa](\vv)$) en utilisant l'hypothèse \ref{A1.2} ainsi que le lemme préliminaire.
 
  D'après la définition de $H_c$, l'hypothèse \ref{A1.2} (stipulant que $\pazocal{I}_\varepsilon^{0}$ et $\pazocal{J}_\varepsilon^{0}$ sont de mesure nulle) nous permet de réécrire les deux termes liés au contact de $b_\varepsilon^c(\dot{\uu}_\varepsilon^c,\vv)$:
  \begin{equation}
    \begin{aligned}
         \int_{\Gamma_C}\chi_{\pazocal{I}_\varepsilon^{c,+}\backslash\pazocal{I}_\varepsilon^{0}}H_c(\uu_{\varepsilon,\normalExt}^c-\gG_{\normalExt})\:\dot{\uu}^c_{\varepsilon,\normalExt} \:\vv_{\normalExt}\:, \quad 
         \int_{\Gamma_C}\chi_{\complement(\pazocal{J}_\varepsilon^{0})} \left( \partial_z \qq_c\left(\varepsilon\mathfrak{F}s,\uu_{\varepsilon,\tanExt}^c\right) \dot{\uu}^c_{\varepsilon,\tanExt}\right) \vv_{\tanExt} \:,
    \end{aligned}
    \label{TechnicalTermsBilForm}
  \end{equation}
  ansi que les trois liés au contact de $L_\varepsilon^c[\thetaa](\vv)$:
  \begin{equation}
    \begin{aligned}
         \int_{\Gamma_C}\chi_{\pazocal{I}_\varepsilon^{c,+}\backslash\pazocal{I}_\varepsilon^{0}}H_c(\uu_{\varepsilon,\normalExt}^c-\gG_{\normalExt})\left( \uu_{\varepsilon,\normalExt'[\thetaa]}^c - \gG_{\normalExt}'[\thetaa] \right)\vv_{\normalExt} \:, \\
         \int_{\Gamma_C}\chi_{\complement(\pazocal{J}_\varepsilon^{0})} \left( \partial_\alpha \qq_c \left(\varepsilon\mathfrak{F}s,\uu_{\varepsilon,\tanExt}^c\right) \gradd(\mathfrak{F}s)\thetaa \right) \vv_{\tanExt} \:, \\
         \int_{\Gamma_C}\chi_{\complement(\pazocal{J}_\varepsilon^{0})} \left( \partial_z \qq_c \left(\varepsilon\mathfrak{F}s,\uu_{\varepsilon,\tanExt}^c\right) \uu_{\varepsilon,\tanExt'[\thetaa]}^c\right) \vv_{\tanExt}\:.
    \end{aligned}
    \label{TechnicalTermsLinForm}
  \end{equation}  
  Par ailleurs, on sait d'après le lemme que les trois suites
  \begin{equation*}
    \begin{aligned}
      \left\{ \chi_{\pazocal{I}_\varepsilon^{c,+}\backslash\pazocal{I}_\varepsilon^{0}} H_c(\uu_{\varepsilon,\normalExt}^c-\gG_{\normalExt}) \right\}, \quad 
      \left\{ \chi_{\complement(\pazocal{J}_\varepsilon^{0})} \partial_\alpha \qq_c \left(\varepsilon\mathfrak{F}s,\uu_{\varepsilon,\tanExt}^c\right) \right\}, \quad 
      \left\{ \chi_{\complement(\pazocal{J}_\varepsilon^{0})} \partial_z \qq_c \left(\varepsilon\mathfrak{F}s,\uu_{\varepsilon,\tanExt}^c\right) \right\},
    \end{aligned}
  \end{equation*}
  convergent presque partout (à une sous-sous-suite près). Puisque ces suites sont également bornées dans $L^p(\Gamma_C)$, $L^p(\Gamma_C;\mathcal{L}(\mathbb{R}^{d-1};\mathbb{R}))$ et $L^p(\Gamma_C;\mathcal{L}(\mathbb{R}^{d-1}))$, respectivement, pour tout $1< p <+\infty$, alors elles convergent faiblement dans ces espaces.
  
  Commençons par les termes de \eqref{TechnicalTermsLinForm}. Nous les traitons en utilisant le résultat précédent de convergence faible avec $p=2$. En effet, étant données les régularités respectives de $\normalExt'[\thetaa]$, $\gG_{\normalExt}'[\thetaa]$, $\mathfrak{F}s$ et la convergence forte de $\{\uu_\varepsilon^c\}$ dans $\Hh^{\frac{1}{2}}(\Gamma_C)\hookrightarrow \Ll^4(\Gamma_C)$, on a que ces trois termes convergent vers les termes correspondants dans $L_\varepsilon[\thetaa](\vv)$, ce qui donne:
  \begin{equation*}
      L_\varepsilon^c[\thetaa](\vv) \longrightarrow L_\varepsilon[\thetaa](\vv)\:.
  \end{equation*}
  Concernant les termes de \eqref{TechnicalTermsBilForm}, on utilise toujours la même propriété mais avec $p=4$. Puisque l'injection $\Hh^{1/2}(\Gamma_C) \hookrightarrow \Ll^2(\Gamma_C)$ est compacte, $\dot{\uu}^c_{\varepsilon} \to \tilde{\ww}_\varepsilon$ fortement $\Ll^2(\Gamma_C)$ (à une sous-sous-sous-suite près...). Et donc, comme $\vv\in\Ll^4(\Gamma_C)$, on obtient bien la convergence des deux termes de \eqref{TechnicalTermsBilForm}, ce qui entraîne 
  \begin{equation*}
    b_\varepsilon^c(\dot{\uu}^c_{\varepsilon},\vv) \longrightarrow b_\varepsilon(\tilde{\ww}_\varepsilon,\vv) . 
  \end{equation*}
  En d'autres termes, la limite $\tilde{\ww}_\varepsilon$ est solution de \eqref{FVMDer}. Puisque la solution à ce problème est unique, on conclut que $\tilde{\ww}_\varepsilon=\dot{\uu}_\varepsilon$ et que toute la suite  $\left\{\dot{\uu}_\varepsilon^c \right\}$ converge faiblement dans $\Xx$ vers cette limite.
  
  Pour montrer la convergence forte, on prend comme à chaque fois $\dot{\boldsymbol{\delta}}_{\uu,\varepsilon}^c:=\dot{\uu}_\varepsilon^c-\dot{\uu}_\varepsilon $ comme fonction-test dans \eqref{FVRMDer} et \eqref{FVMDer}, puis on soustrait:
  \begin{equation}
    \begin{aligned}
      a\left(\dot{\boldsymbol{\delta}}_{\uu,\varepsilon}^c
      ,\dot{\boldsymbol{\delta}}_{\uu,\varepsilon}^c\right) 
      &+\frac{1}{\varepsilon}\prodL2{H_c(\uu_{\varepsilon,\normalExt}^c-\gG_{\normalExt})\:\dot{\uu}^c_{\varepsilon,\normalExt} -H(\uu_{\varepsilon,\normalExt}-\gG_{\normalExt})\:\dot{\uu}_{\varepsilon,\normalExt}, \left(\dot{\boldsymbol{\delta}}_{\uu,\varepsilon}^c\right)_{\normalExt}}{\Gamma_C}  \\
      & +\frac{1}{\varepsilon}\prodL2{\partial_z \qq_c\left(\varepsilon\mathfrak{F}s,\uu_{\varepsilon,\tanExt}^c\right) \dot{\uu}^c_{\varepsilon,\tanExt} - \partial_z \qq\left(\varepsilon\mathfrak{F}s,\uu_{\varepsilon,\tanExt}\right) \dot{\uu}_{\varepsilon,\tanExt} , \left(\dot{\boldsymbol{\delta}}_{\uu,\varepsilon}^c\right)_{\tanExt}}{\Gamma_C}  \\
      & = L_\varepsilon^c[\thetaa]\left(\dot{\boldsymbol{\delta}}_{\uu,\varepsilon}^c \right) - L_\varepsilon[\thetaa]\left(\dot{\boldsymbol{\delta}}_{\uu,\varepsilon}^c \right) \:.
  \end{aligned}
  \label{ConvDFV}
  \end{equation} 
  Les deuxième et troisième termes de l'expression précédente peuvent être majorés comme suit:
  \begin{equation*}
    \begin{aligned}
      \left| \prodL2{H_c(\uu_{\varepsilon,\normalExt}^c-\gG_{\normalExt})\:\dot{\uu}^c_{\varepsilon,\normalExt} -H(\uu_{\varepsilon,\normalExt}-\gG_{\normalExt})\:\dot{\uu}_{\varepsilon,\normalExt}, \left(\dot{\boldsymbol{\delta}}_{\uu,\varepsilon}^c \right)_{\normalExt}}{\Gamma_C} \right|  \hspace{6em} \\
      \leq \int_{\Gamma_C} \left( | \dot{\uu}_\varepsilon^c | + | \dot{\uu}_\varepsilon  | \right) \left| \dot{\boldsymbol{\delta}}_{\uu,\varepsilon}^c \right| \leq K\left( \norml  \dot{\uu}_\varepsilon^c \normr_{\Xx} + \norml \dot{\uu}_\varepsilon  \normr_{\Xx} \right) \norml \dot{\boldsymbol{\delta}}_{\uu,\varepsilon}^c  \normr_{0,\Gamma_C} \: ,
    \end{aligned}
  \end{equation*}
  \begin{equation*}
    \begin{aligned}
      \left| \prodL2{\partial_z \qq_c\left(\varepsilon\mathfrak{F}s,\uu_{\varepsilon,\tanExt}^c\right) \dot{\uu}^c_{\varepsilon,\tanExt} - \partial_z \qq\left(\varepsilon\mathfrak{F}s,\uu_{\varepsilon,\tanExt}\right) \dot{\uu}_{\varepsilon,\tanExt} , \left(\dot{\boldsymbol{\delta}}_{\uu,\varepsilon}^c\right)_{\tanExt}}{\Gamma_C} \right|  \hspace{6em} \\
      \leq \int_{\Gamma_C} \left( | \dot{\uu}_\varepsilon^c | + | \dot{\uu}_\varepsilon  | \right) \left| \dot{\boldsymbol{\delta}}_{\uu,\varepsilon}^c \right| \leq K\left( \norml  \dot{\uu}_\varepsilon^c \normr_{\Xx} + \norml \dot{\uu}_\varepsilon  \normr_{\Xx} \right) \norml \dot{\boldsymbol{\delta}}_{\uu,\varepsilon}^c  \normr_{0,\Gamma_C} \: .
    \end{aligned}
  \end{equation*}
   Comme on sait que $\{ \dot{\uu}_\varepsilon^c\}$ est bornée dans $\Xx$ et que toute la suite $\dot{\uu}_\varepsilon^c \rightarrow \dot{\uu}_\varepsilon $ fortement dans $\Ll^2(\Gamma_C)$, on en conclut que ces deux termes tendent vers $0$.
  
  Par ailleurs, les termes à droite de l'égalité dans \eqref{ConvDFV} se réécrivent:
  \begin{equation*}
    \begin{aligned}
       L_\varepsilon^c[\thetaa]\left(\dot{\boldsymbol{\delta}}_{\uu,\varepsilon}^c \right) - L_\varepsilon[\thetaa]\left(\dot{\boldsymbol{\delta}}_{\uu,\varepsilon}^c \right) 
       &= \int_\Omega (\divv \thetaa \: \ff + \gradd \ff  \thetaa) \dot{\boldsymbol{\delta}}_{\uu,\varepsilon}^c + \int_{\Gamma_N} (\divv_\Gamma \thetaa \: \tauu+ \gradd \tauu \thetaa)\dot{\boldsymbol{\delta}}_{\uu,\varepsilon}^c \\
       & \hspace{1.1em} - a'\left(\uu_\varepsilon^c-\uu_\varepsilon,\dot{\boldsymbol{\delta}}_{\uu,\varepsilon}^c \right) \\
       & \hspace{1.1em} - \frac{1}{\varepsilon} \int_{\Gamma_C} \left( R_{\normalExt}^c(\uu^c_{\varepsilon}) - R_{\normalExt}(\uu_{\varepsilon})\right) \left(\dot{\boldsymbol{\delta}}_{\uu,\varepsilon}^c \cdot (\divv_\Gamma \thetaa \normalExt + \normalExt'[\thetaa])\right) \\
       & \hspace{1.1em} - \frac{1}{\varepsilon} \int_{\Gamma_C} \left( H_c(\uu_{\varepsilon,\normalExt}^c-\gG_{\normalExt})\uu_{\varepsilon,\normalExt'[\thetaa]}^c  - H(\uu_{\varepsilon,\normalExt}-\gG_{\normalExt})\uu_{\varepsilon,\normalExt'[\thetaa]} \right)\left(\dot{\boldsymbol{\delta}}_{\uu,\varepsilon}^c \right)_{\normalExt}  \\
       & \hspace{1.1em} + \frac{1}{\varepsilon} \int_{\Gamma_C} \left( H_c(\uu_{\varepsilon,\normalExt}^c-\gG_{\normalExt}) - H(\uu_{\varepsilon,\normalExt}-\gG_{\normalExt})\right)  \gG_{\normalExt}'[\thetaa] \left(\dot{\boldsymbol{\delta}}_{\uu,\varepsilon}^c \right)_{\normalExt} \\
       & \hspace{1.1em} - \frac{1}{\varepsilon} \int_{\Gamma_{C}} \left( S_{\tanExt}^c(\uu_{\varepsilon}^c) -  S_{\tanExt}(\uu_{\varepsilon})\right) \left(\left(\dot{\boldsymbol{\delta}}_{\uu,\varepsilon}^c \right)_{\tanExt} \divv_\Gamma \thetaa + \left(\dot{\boldsymbol{\delta}}_{\uu,\varepsilon}^c \right)_{\tanExt'[\thetaa]} \right) \\
       & \hspace{1.1em} - \int_{\Gamma_{C}} \left( \partial_\alpha \qq_c \left(\varepsilon\mathfrak{F}s,\uu_{\varepsilon,\tanExt}^c\right) - \partial_\alpha \qq \left(\varepsilon\mathfrak{F}s,\uu_{\varepsilon,\tanExt}^c\right) \right) \left(\gradd(\mathfrak{F}s)\thetaa \right) \left(\dot{\boldsymbol{\delta}}_{\uu,\varepsilon}^c \right)_{\tanExt} \\
       & \hspace{1.1em} - \frac{1}{\varepsilon} \int_{\Gamma_{C}} \left( \partial_z \qq_c \left(\varepsilon\mathfrak{F}s,\uu_{\varepsilon,\tanExt}^c\right) \uu_{\varepsilon,\tanExt'[\thetaa]}^c - \partial_z \qq \left(\varepsilon\mathfrak{F}s,\uu_{\varepsilon,\tanExt}\right) \uu_{\varepsilon,\tanExt'[\thetaa]} \right) \left(\dot{\boldsymbol{\delta}}_{\uu,\varepsilon}^c \right)_{\tanExt} \:.
    \end{aligned}
  \end{equation*}
  On peut majorer ces termes un à un, et ensuite, en utilisant la convergence forte de $\{\uu_\varepsilon^c\}$ ainsi que la convergence faible de $\{\dot{\uu}_\varepsilon^c\}$, on montre que toute cette expression tend vers $0$.
  Puis l'ellipticité de $a$ termine la preuve.

\end{proof}

Évidemment, le résultat de convergence des dérivées matérielles donne directement le résultat de convergence suivant pour les dérivées de forme.
\begin{crllr}
  Sous l'hypothèse \ref{A1.2}, on a que $ \uu_\varepsilon^c\,' \rightarrow \uu_\varepsilon' $ fortement dans $\Ll^2(\Omega)$.
  \label{CorCvgSDer}
\end{crllr}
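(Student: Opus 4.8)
The plan is to deduce this convergence directly from Theorem~\ref{ThmCvgMDer}, together with the strong convergence $\uu_\varepsilon^c \to \uu_\varepsilon$ in $\Xx$ established earlier in this chapter, using only the definition of the shape derivative. Essentially all the analytic work has already been done in the proof of Theorem~\ref{ThmCvgMDer}; what remains is a short passage from material derivatives to shape derivatives.

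First I would recall that, by definition of the Eulerian derivative, $\uu_\varepsilon^c\,'(\Omega)[\thetaa] = \dot{\uu}_\varepsilon^c(\Omega)[\thetaa] - \gradd\uu_\varepsilon^c\,\thetaa$ and $\uu_\varepsilon'(\Omega)[\thetaa] = \dot{\uu}_\varepsilon(\Omega)[\thetaa] - \gradd\uu_\varepsilon\,\thetaa$, both being well-defined elements of $\Ll^2(\Omega)$ thanks to Theorem~\ref{ThmExistMSDerFVR} (shape differentiability of $\uu_\varepsilon^c$) and Corollary~\ref{CorExistSDer} (shape differentiability of $\uu_\varepsilon$ under Assumption~\ref{A1.2}). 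Consequently
\[
  \uu_\varepsilon^c\,' - \uu_\varepsilon' = \left( \dot{\uu}_\varepsilon^c - \dot{\uu}_\varepsilon \right) - \left( \gradd\uu_\varepsilon^c - \gradd\uu_\varepsilon \right)\thetaa \:,
\]
so it suffices to show that each of the two terms on the right-hand side tends to $0$ strongly in $\Ll^2(\Omega)$.

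For the first term, Theorem~\ref{ThmCvgMDer} gives $\dot{\uu}_\varepsilon^c \to \dot{\uu}_\varepsilon$ strongly in $\Xx = \Hh^1_{\Gamma_D}(\Omega)$, hence a fortiori strongly in $\Ll^2(\Omega)$. For the second term, I would invoke the strong convergence $\uu_\varepsilon^c \to \uu_\varepsilon$ in $\Xx$ proved earlier, which yields $\gradd\uu_\varepsilon^c \to \gradd\uu_\varepsilon$ strongly in $\Ll^2(\Omega)$; since $\thetaa \in \Cc^1_b(\mathbb{R}^d)$ is bounded, multiplication by $\thetaa$ is continuous on $\Ll^2(\Omega)$, and therefore $(\gradd\uu_\varepsilon^c - \gradd\uu_\varepsilon)\thetaa \to 0$ strongly in $\Ll^2(\Omega)$. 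Combining the two convergences through the triangle inequality gives $\uu_\varepsilon^c\,' \to \uu_\varepsilon'$ strongly in $\Ll^2(\Omega)$, which is the claim.

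I do not expect any genuine obstacle here: the statement is an immediate corollary once Theorem~\ref{ThmCvgMDer} is available. The only points deserving (mild) attention are to make sure the shape derivatives are interpreted as bona fide $\Ll^2(\Omega)$-functions, and to keep Assumption~\ref{A1.2} in force throughout, since it is precisely the hypothesis under which both $\uu_\varepsilon'$ (via Corollary~\ref{CorExistSDer}) and the convergence of the material derivatives (via Theorem~\ref{ThmCvgMDer}) are guaranteed.
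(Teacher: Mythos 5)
Your proposal is correct and follows essentially the same route as the paper, which simply notes that the corollary is an immediate consequence of Theorem~\ref{ThmCvgMDer}: the decomposition $d\uu = \dot{\uu} - \gradd\uu\,\thetaa$, the strong convergence $\dot{\uu}_\varepsilon^c \to \dot{\uu}_\varepsilon$ in $\Xx$, and the strong convergence $\uu_\varepsilon^c \to \uu_\varepsilon$ in $\Xx$ (with $\thetaa$ bounded) give the result in $\Ll^2(\Omega)$. Your added care about Assumption~\ref{A1.2} guaranteeing that $\uu_\varepsilon$ is shape differentiable is consistent with the paper's framework.
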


\chapter*{Transition}

Dans la partie précédente, nous avons proposé une approche par dérivées directionnelles permettant d'écrire les conditions d'optimalité d'ordre 1 associées au problème d'optimisation de formes pour la formulation pénalisée du contact. En particulier, l'idée était de s'appuyer sur les propriétés de différentiabilité connues des opérateurs de projection $\maxx$ et $\qq$ intervenant dans la formulation. Les résultats théoriques obtenus permettent de mettre en \oe uvre une méthode numérique de type gradient pour résoudre numériquement le problème d'optimisation de formes, dans le cas où le modèle de contact utilisé est la pénalisation.

Bien qu'elle soit la plus fréquemment utilisée dans les applications industrielles, la méthode de pénalisation présente deux inconvénients majeurs. Premièrement, elle est \textit{inconsistante}, c'est-à-dire que la solution $\uu_\varepsilon$ de la formulation pénalisée est différente de la solution $\uu$ de l'IV d'origine car elle dépend du paramètre de pénalisation $\varepsilon$. Deuxièmement, si on souhaite avoir un résultat précis, i.e.$\!$ si on veut que $\uu_\varepsilon$ soit proche de $\uu$, alors on doit choisir de très grandes valeurs pour $\varepsilon$, ce qui détériore fortement le conditionnement de la matrice du système algébrique associé, et a donc une influence néfaste sur la robustesse de la méthode.  

De ce point de vue, la formulation lagrangienne constitue une alternative intéressante. Le principe consiste à régulariser le Lagrangien du problème d'origine sans en modifier le point selle, ce qui en fait une approche consistante. Grâce à cette régularisation, on peut réécrire le problème comme une formulation variationnelle mixte (sans inégalités), qu'on résout habituellement en découplant les inconnues à l'aide d'une méthode itérative de type point fixe. De plus, la convergence de cette méthode itérative ne dépend d'aucun paramètre. Par ailleurs, comme cette formulation fait intervenir les mêmes opérateurs de projection $\maxx$ et $\qq$ que la formulation pénalisée, les idées de la partie précédente peuvent être adaptées puis réutilisées.

Dans la partie qui suit, nous nous intéressons précisément à l'optimisation de formes du problème de contact écrit sous sa formulation lagrangien augmenté. En pratique, la solution donnée par la méthode itérative correspond à la solution obtenue à la dernière itération. Nous écrivons donc, à l'aide des résultats de la partie précédente, les conditions d'optimalité relativement à la formulation résolue à chaque itération de la méthode de point-fixe, qui a une forme très semblable à la formulation pénalisée. À partir de la suite des solutions obtenues à chaque itération, nous construisons la suite des dérivées directionnelles de forme associées. Après avoir montré que ces dérivées directionnelles étaient en fait des dérivées coniques, nous donnons des conditions suffisantes pour que la solution à une itération donnée soit dérivable par rapport à la forme au sens classique, ce qui nous permet d'avoir un résultat similaire au Théorème \ref{ThmDJ}. Enfin, nous déterminons des conditions suffisantes pour que la suite des dérivées coniques de forme converge vers la dérivée conique de forme de la solution de l'IV d'origine. 

Nous avons fait le choix de nous concentrer ici sur le contact glissant, car les résultats de dérivabilité conique de la solution de l'IV sont plus difficiles à obtenir dans le cas du modèle avec frottement de Tresca, voir \cite{sokolowski1992introduction}. Cependant, tous les résultats portant sur la formulation de la méthode itérative s'étendent facilement au cas avec frottement de Tresca. De même, toujours pour cette formulation, on pourrait adapter les preuves du chapitre \ref{chap:2.2} pour montrer la consistance d'une étape de régularisation supplémentaire.

\part{Formulation lagrangien augmenté des problèmes de contact}

\chapter{Shape derivatives for an augmented Lagrangian formulation of elastic contact problems}     
\label{chap:3.1}                   

\section*{Résumé}
    Ce travail s'intéresse à l'optimisation de la forme d'un solide élastique en contact glissant (Signorini) avec une fondation rigide. Nous écrivons le problème mécanique sous sa formulation Lagrangien augmenté, que nous résolvons par l'approche itérative classique. 
    En pratique (numériquement), la méthode itérative n'est pas totalement convergé, on s'intéresse donc aux conditions d'optimalité associées à la formulation résolue à chaque itération.
    Comme cette formulation fait intervenir un opérateur de projection, elle n'est pas dérivable par rapport à la forme au sens classique. Cependant, nous arrivons à prouver que sa solution admet une dérivée de forme conique, ainsi qu'à exprimer des conditions suffisantes pour qu'elle soit de plus dérivable par rapport à la forme. 
    L'analyse de la suite des dérivées de forme coniques des itérées nous permet d'établir les conditions pour la convergence vers la dérivée de forme conique du problème de contact original. 
    Enfin, 
    après avoir établi des conditions de différentiabilité, nous donnons   
    une expression pour la dérivée de forme d'une fonction coût générale à l'aide d'un état adjoint.  

\section*{Abstract}
     This work deals with shape optimization of an elastic body in sliding contact (Signorini) with a rigid foundation. The mechanical problem is written under its augmented Lagrangian formulation, then solved using a classical iterative approach. For practical reasons we are interested in applying the optimization process with respect to an intermediate solution produced by the iterative method. Due to the projection operator involved 
     at each iteration, the iterate solution is not classically shape differentiable. However, using an approach based on directional derivatives, we are able to prove that it is conically differentiable with respect to the shape, and express sufficient conditions for shape differentiability. 
     From the analysis of the sequence of conical shape derivatives of the iterative process, conditions are established for the convergence to the conical derivative of the original contact problem. 
     Finally, differentiability conditions having been established, 
     we get an expression for the shape derivative of a generic functional thanks to an adjoint state. 

\section{Introduction}
Structural optimization has become an integral part of industrial conception, with applications in more and more challenging mechanical contexts. Those contexts often lead to complex mathematical formulations involving non-linearities and/or non-differentiabilities, which causes many difficulties when considering the associated shape optimization or optimal control problem. 

In this article, we study a shape optimization problem in the context of contact mechanics. Especially, the physical system models an elastic body (without restriction on its dimension) coming in sliding contact with a rigid foundation, which takes the mathematical form of an elliptic variational inequality (VI) of the first kind. As variational inequalities involve projection operators, differentiation with respect to the control parameter (in order to derive optimality conditions or use a gradient descent optimization method) is not an easy task. 


Using the terminology from \cite{heinemann2016shape}, let us gather the approaches to treat optimal control or shape optimization problems for variational inequalities in two families: the ones following the \textit{first optimize then discretize} paradigm, and the others following the \textit{first discretize then optimize} paradigm. In the first one, the idea is to work with a weaker notion of differentiability, namely conical differentiability (see Definition \ref{DefConDiff}), in order to get optimality conditions. Let us mention the pioneer work \cite{mignot1976controle}, as well as other works in the same direction \cite{mignot1984optimal}, the series of works \cite{sokolowski1982shape,sokolowski1985derivee,sokolowski1986sensitivity,soklowski1987shape, sokolowski1987shape,sokolowski1988shape,sokolowski1992introduction,sokolowski4differential} and also \cite{jaruvsek2003conical,heinemann2016shape}. In those works, the conical derivative of the solution with respect to the control parameter is expressed by means of a variational inequality. Thus the optimality conditions obtained might be difficult to use in practice. However, in \cite{neittaanmaki1988optimization}, for the two-dimensional Signorini problem, the authors give conditions for some specific functional to be shape differentiable. This allows them to get an explicit expression for the shape derivative of the functional using an adjoint state. In the second family of approaches, first the variational inequality is discretized, then tools from subdifferential calculus are used in the finite dimensional setting. The interested reader is referred to the series of papers \cite{kovccvara1994optimization,beremlijski2002shape,haslinger2012shape,beremlijski2014shape} dedicated to shape optimization for contact problems. There, the authors manage to characterize an outer approximation of the shape subdifferential of the functional to minimize, then use a bundle algorithm.


The approach proposed here follows 
the \textit{first optimize then discretize} paradigm, and aims at finding conditions that gives simple, explicit and usable expression for the shape derivative of some generic cost functional. It can somehow be understood as a continuation of \cite{neittaanmaki1988optimization}, targeting the development of the basic elements needed for a future efficient numerical implementation. In order to facilitate shape sensitivity of the contact problem, we write the sliding contact conditions using the normal to the rigid foundation instead of the normal to the body, as in \cite{chaudet2019shape}. 
Furthermore, the variational inequality arising from the basic formulation of sliding contact problems is transformed using an Augmented Lagrangian Formulation (ALF) which we plan on solving using a basic iterative approach, the Augmented Lagrangian method (ALM). 
Aiming at practical use of the ALM in a numerical shape optimization process, we study consistency of this iterative process with respect to shape differentiation. In other words, we aim at deriving conditions for the shape derivatives of the iterates generated by this method to converge to the shape derivative of the solution to the original problem.


This work is structured as follows. 
Section 2 presents the original problem, its different formulations 
as well as some notations and notions related to convex analysis and conical differentiability. 
We also introduce the augmented Lagrangian method (ALM) applied to this problem. 
Section 3 is dedicated to shape optimization and is divided in three parts. 
In the first one, we give a proof (adapted from the classical one) that the solution of this formulation is conically differentiable. 
In the second one, we prove the same property for each of the iterates generated by the iterative algorithm. 
Convergence analysis of those conical shape derivatives to the one obtained for the original formulation is studied in the third part. 
Finally, we give the expression of the derivative of a general cost functional $J$ for a sliding contact mechanical system.

\section{Problem formulation}

\subsection{Geometrical setting}\label{sec:geo}
Here, the body $\Omega \subset \mathbb{R}^d$, $d \in \{2,3\}$, is assumed to have $\pazocal{C}^1$ boundary, and to be in contact with a rigid foundation $\Omega_{rig}$, which has a $\pazocal{C}^3$ compact boundary $\partial\Omega_{rig}$, see Figure \ref{SchOpen}. Let $\Gamma_D$ be the non-empty part of the boundary where homogeneous Dirichlet conditions apply (green part), $\Gamma_N$ the part where a Neumann condition $\tauu$ applies, and $\Gamma_C$ the potential contact zone (blue part), such that $\Gamma_D$, $\Gamma_N$, $\Gamma_C$ are open and $\overline{\Gamma_D} \cup \overline{\Gamma_N} \cup \overline{\Gamma_C} = \partial \Omega$. In order to avoid technical difficulties, it is assumed that $\overline{\Gamma_C} \cap \overline{\Gamma_D} = \emptyset$.

The outward normal to $\partial\Omega$ is denoted $\normalInt$. Similarly, the inward normal vector to $\partial\Omega_{rig}$ is denoted $\normalExt$.

\subsection{Notation, function spaces and preliminaries}\label{sec:esp}
Throughout this article, for any $\pazocal{O}\subset \mathbb{R}^d$, $L^p(\pazocal{O})$ represents the usual set of $p$-th power measurable functions on $\pazocal{O}$, and $\left(L^p(\pazocal{O})\right)^d = \Ll^p(\pazocal{O})$. The scalar product defined on $L^2(\pazocal{O})$ or $\Ll^2(\pazocal{O})$ is denoted (without distinction) by  $\prodL2{\cdot,\cdot}{\pazocal{O}}$ and its norm $\|\cdot\|_{0,\pazocal{O}}$. 

The Sobolev spaces, denoted $W^{m,p}(\pazocal{O})$ with $p\in [1,+\infty]$, $p$ integer are defined as 
$$
W^{m,p}(\pazocal{O}) = \left\{u\in L^p(\pazocal{O})\: | \: D^{\alpha} u \in L^p(\pazocal{O})\ \forall |\alpha|\le m\right\},
$$
where $\alpha$ is a multi-index in $\mathbb{N}^d$ and $\Ww^{m,p}(\pazocal{O})= \left(W^{m,p}(\pazocal{O})\right)^d$. The spaces $W^{s,2}(\pazocal{O})$ and $\Ww^{s,2}(\pazocal{O})$, $s\in \mathbb{R}$, are denoted $H^s(\pazocal{O})$ and $\Hh^s(\pazocal{O})$ respectively. Their norm are denoted $\|\cdot\|_{s,\pazocal{O}}$.

Without distinction for the dimension, we denote the duality pairing between $H^{\frac{1}{2}}(\pazocal{O})$ and its dual $H^{-\frac{1}{2}}(\pazocal{O})$ (or between $\Hh^{\frac{1}{2}}(\pazocal{O})$ and $\Hh^{-\frac{1}{2}}(\pazocal{O})$)) by $\prodD{\cdot, \cdot}{\pazocal{O}}$. More generally, for a space $V$ and $V^*$ its dual, we denote the duality pairing by $\prodD{\cdot, \cdot}{V^*,V}$.

The subspace of functions in $H^s(\pazocal{O})$ and $\Hh^s(\pazocal{O})$ that vanish on a part of the boundary $\Gamma\subset\partial \pazocal{O}$ are denoted $H^s_\Gamma(\pazocal{O})$ and $\Hh^s_\Gamma(\pazocal{O})$. In particular, we denote the vector space of admissible displacements $\Xx:=\Hh^1_{\Gamma_D}(\Omega)$.

The sets $\mathbb{T}^2$ and $\mathbb{T}^4$ are the sets of real valued tensors of order 2 and 4 respectively. For any $v$ vector in $\mathbb{R}^d$ or second order tensor in $\mathbb{T}^2$, the product with the normal $v \cdot \normalInt$ (respectively with the normal to the rigid foundation $v \cdot \normalExt$) is denoted $v_{\normalInt}$ (respectively $v_{\normalExt}$). Similarly, the tangential part of  $v$ is denoted $v_{\tanInt} = v - v_{\normalInt} \normalInt$ (respectively $v_{\tanExt} = v - v_{\normalExt} \normalExt$). 

Finally, we introduce some notations from convex analysis. The indicator function and the characteristic function of an arbitrary set $S$ are denoted $I_S$ and $\chi_S$, respectively, that is
\begin{equation*}
    I_S(x) := \left\{
    \begin{array}{cc}
         0 & \mbox{ if } x\in S\:, \\
         +\infty & \mbox{ if } x\notin S\:,
    \end{array}
    \right.
    \hspace{2em}
    \chi_S(x) := \left\{
    \begin{array}{cc}
         1 & \mbox{ if } x\in S\:, \\
         0 & \mbox{ if } x\notin S\:.
    \end{array}
    \right.    
\end{equation*}
Let $H$ be a Hilbert space, and $C$ a non-empty closed convex subset of $H$, then for any $x\in H$, the unique projection of $x$ onto $C$ is denoted $\Proj_C(x)$. Moreover, if $b$ is a bilinear form inducing an inner product on $H$, the projection of $x$ onto $C$ with respect to this inner product is denoted  $\Proj^b_C(x)$.

Let us now recall some notions related to conical differentiability, see \cite{mignot1976controle}.

If $H$ denotes a Hilbert space and $b$ a coercive symmetric bilinear form on $H\times H$, then for any $K\subset H$ and $y\in K$, we define:
\begin{itemize}
    \item the \textit{polar cone} of $K$ with respect to $b$ as $\left[ K \right]_b^0 := \{ x\in H \: : \: \forall z\in K, \: b(x,z)\leq 0 \}$,
    \item the \textit{radial cone} of $K$ at $y$ as $C_y(K):= \{ w \in H \: : \: \exists t>0, \: y+tw\in K \}$,
    \item the \textit{tangent cone} of $K$ at $y$ as $S_y(K):= \overline{C_y(K)}$,
    \item the cone $S^y(K):= S_y(K)\cap \left[ \mathbb{R}(v-y)\right]^0_b$, where $v\in y + \left[ S_y(K) \right]^0_b$ and $\mathbb{R}(v-y) := \left\{\eta(v-y)\ |\ \eta \in \mathbb{R}\right\}$.
\end{itemize}
\begin{defn}
    Let $K\subset H$ be a closed convex set. $K$ is said to be \textit{polyhedric} at $v\in H$ if, denoting $y=\Proj^b_K(v)$, one has:
    $$
      S^y(K) = \overline{C_y(K)\cap \left[ \mathbb{R}(v-y)\right]^0_b} \:.
    $$
\end{defn}
\begin{defn}
    Let $K\subset H$ be a closed convex set. $K$ is said to be \textit{polyhedric} in $H$ if it is polyhedric at each $v\in H$.
\end{defn}
\begin{defn}\label{DefConDiff}
    Let $V_1$, $V_2$ be two Banach spaces. A continuous function $f:V_1\to V_2$ admits a conical derivative
at $x$ if there exists an operator $Q:V_1\to V_2$ positively homogeneous such that:
$$
    \forall h \in V_1, \:\forall t > 0,\quad f (x + th) = f (x) + tQ(h) + o(t)\:.
$$
\end{defn}
Using these notations, we may recall one of the main results on the differentiability of projection operators, namely \cite[Théorème 2.1]{mignot1976controle}.
\begin{theo}\label{ThmMignotPoly}
    Let $v\in H$ and $y=\Proj^b_K(v)$. If $K$ is polyhedric at $v$, then the projection $\Proj^b_K$ is conically differentiable at $v$, with conical
    derivative $\Proj^b_{S^y(K)}$. In other words, for all $w\in H$ and $t>0$:
    $$
        \Proj^b_K(v+tw) = y + t\Proj^b_{S^y(K)}(w) + o(t) \:.
    $$
\end{theo}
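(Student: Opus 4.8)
The plan is to reproduce Mignot's original argument, resting on the variational characterization of the $b$-projection and a weak limit passage in which polyhedricity is used at one decisive point. Fix $w\in H$ and $t>0$, and set $y:=\Proj^b_K(v)$, $y_t:=\Proj^b_K(v+tw)$, $w_t:=t^{-1}(y_t-y)$; write $\|x\|_b^2=b(x,x)$. Recall the characterizations: $y\in K$ with $b(v-y,z-y)\le 0$ for all $z\in K$, and $y_t\in K$ with $b(v+tw-y_t,z-y_t)\le 0$ for all $z\in K$. First I would derive an a priori bound: testing the first inequality at $z=y_t$, the second at $z=y$, and adding gives $\|y_t-y\|_b^2\le t\,b(w,y_t-y)$, hence $\|w_t\|_b\le\|w\|_b$. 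Therefore $\{w_t\}$ is bounded in $H$, and along some sequence $t_k\searrow 0$ one has $w_{t_k}\rightharpoonup\bar w$ weakly in $H$.

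Next I would check that $\bar w\in S^y(K)$. Since $y+t w_t=y_t\in K$, each $w_t$ belongs to the radial cone $C_y(K)$, so $\bar w\in\overline{C_y(K)}=S_y(K)$ by convexity and weak closedness. Moreover, dividing the characterization of $y$ (tested at $z=y_t$) by $t$ gives $b(v-y,w_t)\le 0$, while the characterization of $y_t$ (tested at $z=y$) yields $b(v-y,w_t)\ge t\bigl(\|w_t\|_b^2-b(w,w_t)\bigr)\to 0$; hence $b(v-y,\bar w)=0$, i.e. $\bar w\in[\mathbb{R}(v-y)]^0_b$, and so $\bar w\in S_y(K)\cap[\mathbb{R}(v-y)]^0_b=S^y(K)$.

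Then comes the key step. I would take an arbitrary $z'\in C_y(K)\cap[\mathbb{R}(v-y)]^0_b$; since $z'$ is radial, $y+tz'\in K$ for $t$ small, so it is admissible in the characterization of $y_t$, and after rearranging, using $b(v-y,z')=0$ and $b(v-y,y_t-y)\le 0$, one obtains $b(w_t-w,z'-w_t)\ge 0$. Passing to the limit along $t_k$, with the weak convergence $w_{t_k}\rightharpoonup\bar w$ and the weak lower semicontinuity of $\|\cdot\|_b$ (handling the cross terms by a $\limsup$ argument), I get $b(\bar w-w,z'-\bar w)\ge 0$; by the polyhedricity identity $S^y(K)=\overline{C_y(K)\cap[\mathbb{R}(v-y)]^0_b}$ and continuity of $b$, this extends to all $z\in S^y(K)$. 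Since $S^y(K)$ is a closed convex cone containing $\bar w$, these inequalities are exactly the variational characterization of $\bar w=\Proj^b_{S^y(K)}(w)$, and uniqueness of the projection forces the whole family $w_t$ to converge weakly to $\bar w$. To upgrade to the $o(t)$ estimate in norm, I would prove strong convergence: testing the limit inequality at $z=0$ and $z=2\bar w$ gives $\|\bar w\|_b^2=b(w,\bar w)$, while the a priori bound gives $\|w_t\|_b^2\le b(w,w_t)$, so $\limsup\|w_{t_k}\|_b^2\le b(w,\bar w)=\|\bar w\|_b^2\le\liminf\|w_{t_k}\|_b^2$; combined with weak convergence in the Hilbert space $H$, this yields $w_t\to\bar w$ strongly, i.e. $\Proj^b_K(v+tw)=y+t\,\Proj^b_{S^y(K)}(w)+o(t)$. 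Positive homogeneity of $Q:=\Proj^b_{S^y(K)}$ follows from $S^y(K)$ being a cone, so $Q$ is the desired conical derivative.

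The main obstacle is the limit passage in the key step: identifying the weak limit with $\Proj^b_{S^y(K)}(w)$ is exactly where polyhedricity is indispensable, since it is what allows one to replace the a priori possibly too small tested set $C_y(K)\cap[\mathbb{R}(v-y)]^0_b$ by its closure $S^y(K)$; and within that step one must carefully manage the signs of the quadratic cross terms, because only $\limsup$-type control of $\|w_{t_k}\|_b^2$ is available before strong convergence has been established.
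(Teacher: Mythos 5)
Your proof is correct: the a priori bound $\|w_t\|_b\le\|w\|_b$, weak compactness, identification of the weak limit through the variational inequality tested on the dense set $C_y(K)\cap[\mathbb{R}(v-y)]^0_b$ (the one place polyhedricity is needed), and the final upgrade to strong convergence via convergence of norms constitute exactly Mignot's classical argument. The paper states this theorem without proof, citing Mignot (1976), so your reconstruction follows the same approach the paper relies on.
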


\subsection{Mechanical model}\label{sec:meca}

In this work the material is assumed to verify the linear elasticity hypothesis (small deformations and Hooke's law, see for example \cite{Cia1988a}), associated with the small displacements assumption (see \cite{KikOde1988}).
The physical displacement is denoted $\uu$, and belongs to $\Xx$.
The stress tensor is defined by $\sigmaa(\uu) = \Aa : \epsilonn(\uu)$, where $\boldsymbol{\epsilon}(\uu)=\frac{1}{2}(\gradd\uu+\gradd\uu^T)$ denotes the linearized strain tensor, and $\Aa$ is the elasticity tensor. This elasticity tensor is a fourth order tensor belonging to $L^\infty(\Omega,\mathbb{T}^4)$, and it is assumed to be elliptic (with constant $\alpha_0$). Regarding external forces, the body force $\ff \in \Ll^2(\Omega)$, and traction (or surface load) $\tauu \in \Hh^{\frac{1}{2}}(\Gamma_N)$. 

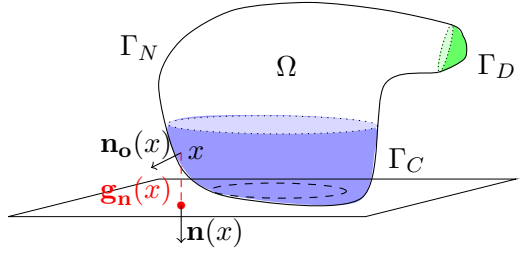
\begin{figure}
\begin{center}
\begin{tikzpicture}

\fill [green!60] plot [smooth cycle] 
coordinates {(3.9,1.8) (4.055,1.64) (4.095,1.5) (4.055,1.34) (3.73,1.175) (3.76,1.5)};

\draw[black,fill=green!15, thin,densely dotted,rotate=-15] (3.3,2.425) ellipse (0.05cm and .33cm);

\fill [blue!40] plot [smooth  cycle] 
coordinates {(2.795,.50) (2.87,.1) (2.65,-.55) (1.5, -.55) (.5,-.3) (0.15,.5) (1.5, .6)};

\draw[black, fill=blue!15, thin,dotted, rotate=-0.5] (1.51,0.485) ellipse (1.4cm and .12cm);

\draw [black] plot [smooth cycle] 
coordinates {(0,1) (0.3, 1.6) (1,2) (2, 2.1) (3,2) (3.9,1.8) (4.055,1.64) (4.095,1.5) (4.055,1.34) (3.73,1.175) (3,1) (2.7,-.5) (1,-.5) (0.3, -.1)};

\draw[black, fill=blue!40,thin,dashed,rotate=-.5] (1.6,-.39) ellipse (.9cm and .1cm);

\draw[black, ultra thin] (-2,-0.75) -- (2.75,-0.75) -- (4.75,-.25) -- (0, -.25) -- (-2,-0.75);

\node[] at (4.5,1.25) {$\Gamma_D$};
\node[] at (-.25,1.5) {$\Gamma_N$};
\node[] at (3.3,0.) {$\Gamma_C$};
\node[] at (1.7,1.25) {$\Omega$};
\draw[->] (0.3, 0.1) -- (-0.1, -0.1) ;
\node[] at (0.5, 0.1) {$x$};
\node[] at (-0.3, 0.2) {$\normalInt(x)$};
\draw[red, densely dashed] (0.3, 0.1) -- (0.3, -0.6) ;
\node[red] at (0.3,-0.6) {\tiny{$\bullet$}};
\node[red] at (-0.3, -0.4) {$\gG_{\normalExt}(x)$};
\draw[->] (0.3 , -0.6) -- (0.3, -1.1) ;
\node[] at (.75, -0.97) {$\normalExt(x)$};

\end{tikzpicture}
\end{center}
  \caption{Elastic body in contact with a rigid foundation.}
  \label{SchOpen}
\end{figure}

\subsection{Non-penetration condition} At each point $x$ of $\Gamma_C$, let us define the gap $\gG_{\normalExt}(x)$, as the oriented distance function to the rigid foundation at $x$, see Figure \ref{SchOpen}. 
Due to the regularity of the rigid foundation, there exists $h_0$ sufficiently small such that, for all $h<h_0$
\begin{equation*}
    \partial\Omega_{rig}^h := \{ x\in\mathbb{R}^d \ | \ \ |\gG_{\normalExt}(x)| < h \}  \:, 
\end{equation*}
is a neighbourhood of $\partial\Omega_{rig}$ where $\gG_{\normalExt}$ is of class $\pazocal{C}^3$, see \cite{delfour1995boundary}. In particular, this ensures that $\normalExt$ is well defined on $\partial\Omega_{rig}^h$, and that $\normalExt\in \pazocal{C}^2(\partial\Omega_{rig}^h,\mathbb{R}^d)$. Moreover, in the context of small displacements, it can be assumed that the potential contact zone $\Gamma_C$ is such that $\Gamma_C \subset \partial\Omega_{rig}^h$. Hence there exists a neighbourhood of $\Gamma_C$ such that $\gG_{\normalExt}$ and $\normalExt$ are of class $\pazocal{C}^3$ and $\pazocal{C}^2$, respectively. Especially, this implies that the function $\gG_{\normalExt}\normalExt \in \pazocal{C}^2(\partial\Omega_{rig}^h,\mathbb{R}^d)$. It can thus be extended to a function $\gG \in \pazocal{C}^2(\mathbb{R}^d)$ such that $\gG=0$ on $(\partial\Omega_{rig}^{h'})^c$ for some $h'>h$. Since $\overline{\Gamma_C}\cap \overline{\Gamma_D}=\emptyset$, one has that $\overline{\Gamma_D}\subset (\partial\Omega_{rig}^{h'})^c$ for $h$, $h'$ small enough. Consequently, $\gG\in\Xx$.

The non-penetration condition can be stated as follows: $\uu_{\normalExt}\leq \gG_{\normalExt}$ a.e.$\!$ on $\Gamma_C$. Thus, we introduce the closed convex set of admissible deformations that realize this condition, see \cite{eck2005unilateral}:
$$
	\Kk:=\{\vv \in \Xx \:|\: \vv_{\normalExt}\leq \gG_{\normalExt} \:\:\mbox{a.e.$\!$ on}\: \Gamma_C\} = \gG + \Kk_0 \:,
$$
where $\Kk_0$ is a closed convex cone defined as $\Kk_0:=\{\vv \in \Xx \:|\: \vv_{\normalExt}\leq 0 \:\:\mbox{a.e.$\!$ on}\: \Gamma_C\} $.

\begin{remark}
    Our definition of $\Kk$ differs from the usual one since we compute the gap in the direction of the normal $\normalExt$ to the rigid foundation instead of the normal $\normalInt$ to $\partial\Omega$. Actually, under the small displacements hypothesis, the normal vector $\normalExt$ and the gap $\gG_{\normalExt}$ to the rigid foundation can be replaced by $\normalInt$ and $\gG_{\normalInt}$ (we refer to \cite[Chapter 2]{kikuchi1988contact} for the details). This ensures that in our context, using $\normalInt$ or $\normalExt$ to write the formulation has no impact on the solution. However, it will be seen in the next sections that using $\normalExt$ will be really convenient when dealing with shape optimization.
    \label{RmkNN0}
\end{remark}

\subsection{Mathematical formulation of the problem}
Let us introduce the bilinear and linear forms $a : \Xx \times \Xx \rightarrow \mathbb{R}$ and $L : \Xx \rightarrow \mathbb{R}$, such that:
\begin{equation*}
  a(\uu,\vv) := \int_\Omega \Aa : \epsilonn(\uu) : \epsilonn(\vv) \:, \hspace{1.5em} L(\vv) := \int_\Omega \ff \vv + \int_{\Gamma_N} \tauu \vv \:.
\end{equation*}
According to the assumptions of the previous sections, one is able to show (see \cite{Cia1988a}) that $a$ is $\Xx$-elliptic with constant $\alpha_0$ (ellipticity of $\Aa$ and Korn's inequality), symmetric, continuous, and that $L$ is continuous (regularity of $\ff$ and $\tauu$). 

The unknown displacement $\uu$ of the frictionless contact problem is the minimizer of the total mechanical energy of the elastic body, which reads, in the case of pure sliding (unilateral) contact problems:
\begin{equation}
	\underset{\vv\in \Kk}{\inf} \:\: \varphi(\vv) \: := \underset{\vv\in \Kk}{\inf} \:\: \frac{1}{2}a(\vv,\vv)-L(\vv) \:.
 	\label{OPT0}
\end{equation}
It is clear that the space $\Xx$, equipped with the usual $\Hh^1$ norm, is a Hilbert space. Moreover, under the conditions of the previous section, since $\Kk$ is obviously non-empty and the energy functional is strictly convex, continuous and coercive, we are able to conclude that $\uu$ solution of \eqref{OPT0} exists and is unique, see for example \cite{ekeland1999convex}.

It is well known that \eqref{OPT0} may be rewritten as a variational inequality (of the first kind):
\begin{equation}
	a(\uu,\vv-\uu) \:\geq\: L(\vv-\uu)\:, \:\:\: \forall \vv \in \Kk\: .
    \label{IV0}
\end{equation}

Even if this variational inequality is very well known from the theoretical point of view such formulation is not well suited for computational purposes. One approach to get around this difficulty consists in rewriting the formulation so that the constraint is implicitly verified. One of the most frequently used formulation (in practical applications) is the penalty formulation. In \cite{chaudet2019shape}, we suggested a method based on directional derivatives in order to derive optimality conditions for the penalty formulation. 

The penalty formulation of \eqref{IV0} is numerically robust and relatively simple to implement, but its solutions depend on the penalty parameter (we say that the approach is not consistent). From a numerical point of view the simplest way to avoid this inconsistency while retaining the simplicity of the penalty approach is the augmented Lagrangian formulation. The rest of this work is related to the augmented Lagrangian formulation and the basic iterative process to solve this formulation, namely the augmented Lagrangian method.

It should be notice that both formulations write as non-linear non-differentiable variational equations (possibly mixed), and thus lead to the same kind of technical difficulties (related to regularity) when studying the associated shape optimization problem. Here, an approach similar to \cite{chaudet2019shape} will be followed.

\subsection{Lagrangian formulation}
The content of this paragraph is based on \cite{stadler2004infinite}. The reader is referred to the chapter 4 of this monograph for all proofs and technical details.
A mixed formulation of problem \eqref{OPT0} can be recovered in the framework of Fenchel duality theory by deriving the dual problem associated to this minimization problem. This result is formally stated in the following theorem.

\begin{theo} \label{ThmExistLM}
  If $\uu \in \Kk$ is the solution of \eqref{OPT0}, then there exists a unique dual variable $\lambda \in H^{-\frac{1}{2}}(\Gamma_C)$ such that:
  \begin{subequations} \label{LMF0:all}
    \begin{align}
    a(\uu,\vv) - L(\vv) + \prodD{\lambda, \vv_{\normalExt}}{\Gamma_C} &= 0\:, \hspace{1em} \forall \vv \in \Xx\:, \label{LMF0:1}\\
    \prodD{\lambda, \zeta }{\Gamma_C}  &\geq 0\:, \hspace{1em} \forall \:\zeta \in H^\frac{1}{2}(\Gamma_C)\:, \:\: \zeta \geq 0\:, \label{LMF0:2}\\
    \prodD{\lambda, \uu_{\normalExt}-\gG_{\normalExt}}{\Gamma_C} &= 0\:. \label{LMF0:3}
    \end{align}
  \end{subequations}
\end{theo}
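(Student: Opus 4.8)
The plan is to recover \eqref{LMF0:all} as the Karush–Kuhn–Tucker / Fenchel duality conditions associated with the constrained minimization problem \eqref{OPT0}. The key observation is that \eqref{OPT0} can be rewritten as an unconstrained nonsmooth minimization problem
\[
    \inf_{\vv\in\Xx} \ \varphi(\vv) + I_{\mathpzc{K}}(\Lambda_{\normalExt}\vv),
\]
where $\Lambda_{\normalExt}:\Xx\to H^{1/2}(\Gamma_C)$ is the (linear, continuous, surjective) normal trace operator, $\mathpzc{K}:=\{\zeta\in H^{1/2}(\Gamma_C)\ |\ \zeta\le \gG_{\normalExt}\ \text{a.e. on }\Gamma_C\}$ is closed and convex, and $I_{\mathpzc{K}}$ its indicator function. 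This mirrors exactly the structure used in the proof of Theorem \ref{ThmExistLM0} for the Tresca case, with the tangential term $h_T\circl\Lambda_{\tanExt}$ simply dropped.

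First I would record the standing facts: $\varphi$ is convex, continuous (hence Gâteaux differentiable with continuous derivative, since $a$ is continuous, symmetric and coercive), and $I_{\mathpzc{K}}\circl\Lambda_{\normalExt}$ is proper, convex and lower semicontinuous because $\mathpzc{K}$ is a nonempty closed convex set and $\Lambda_{\normalExt}$ is continuous (nonemptiness of $\mathpzc{K}$ follows from $\gG\in\Xx$, which gives $\gG_{\normalExt}\in\mathpzc{K}$). Since $\uu$ solves \eqref{OPT0}, the sum is minimized at $\uu$, so $0\in\partial(\varphi + I_{\mathpzc{K}}\circl\Lambda_{\normalExt})(\uu)$. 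The next step is to split the subdifferential: as $\varphi$ is continuous and Gâteaux differentiable, $\partial(\varphi+I_{\mathpzc{K}}\circl\Lambda_{\normalExt})(\uu)=\varphi'(\uu)+\partial(I_{\mathpzc{K}}\circl\Lambda_{\normalExt})(\uu)$, and then invoke the chain rule for subdifferentials (e.g.\ from \cite[Chapter 1]{ekeland1999convex}), whose qualification condition is satisfied because $\Lambda_{\normalExt}$ is surjective and $\mathpzc{K}$ has nonempty interior in $H^{1/2}(\Gamma_C)$ (it contains, e.g., $\gG_{\normalExt}-1$). This yields $\partial(I_{\mathpzc{K}}\circl\Lambda_{\normalExt})(\uu)=\Lambda_{\normalExt}^*\,\partial I_{\mathpzc{K}}(\Lambda_{\normalExt}\uu)$, so there exists $\lambda\in\partial I_{\mathpzc{K}}(\uu_{\normalExt})\subset H^{-1/2}(\Gamma_C)$ with $\varphi'(\uu)+\Lambda_{\normalExt}^*\lambda=0$ in $\Xx^*$, which is precisely \eqref{LMF0:1}.

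It then remains to unwind $\lambda\in\partial I_{\mathpzc{K}}(\uu_{\normalExt})$ into the explicit conditions \eqref{LMF0:2}–\eqref{LMF0:3}. By definition, $\lambda\in\partial I_{\mathpzc{K}}(\uu_{\normalExt})$ means $\prodD{\lambda,\zeta-\uu_{\normalExt}}{\Gamma_C}\le I_{\mathpzc{K}}(\zeta)-I_{\mathpzc{K}}(\uu_{\normalExt})$ for all $\zeta\in H^{1/2}(\Gamma_C)$; since $\uu\in\Kk$ gives $I_{\mathpzc{K}}(\uu_{\normalExt})=0$, this reads $\prodD{\lambda,\zeta-\uu_{\normalExt}}{\Gamma_C}\le 0$ for all $\zeta\in\mathpzc{K}$. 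Testing with $\zeta=\gG_{\normalExt}-\eta$ for arbitrary $\eta\ge 0$ in $H^{1/2}(\Gamma_C)$ gives $\prodD{\lambda,\gG_{\normalExt}-\uu_{\normalExt}}{\Gamma_C}\le\prodD{\lambda,\eta}{\Gamma_C}$; varying $\eta$ and using that $\gG_{\normalExt}-\uu_{\normalExt}\ge 0$ with $\gG_{\normalExt}\in\mathpzc{K}$ forces both $\prodD{\lambda,\eta}{\Gamma_C}\ge 0$ for all $\eta\ge 0$, which is \eqref{LMF0:2}, and $\prodD{\lambda,\gG_{\normalExt}-\uu_{\normalExt}}{\Gamma_C}\le 0$; the reverse inequality $\prodD{\lambda,\gG_{\normalExt}-\uu_{\normalExt}}{\Gamma_C}\ge 0$ comes from \eqref{LMF0:2} since $\gG_{\normalExt}-\uu_{\normalExt}\ge 0$, so together they give \eqref{LMF0:3}. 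Finally, uniqueness of $\lambda$ is immediate from \eqref{LMF0:1}: if $\lambda_1,\lambda_2$ both satisfy it, then $\prodD{\lambda_1-\lambda_2,\vv_{\normalExt}}{\Gamma_C}=0$ for all $\vv\in\Xx$, and surjectivity of $\Lambda_{\normalExt}$ onto $H^{1/2}(\Gamma_C)$ forces $\lambda_1=\lambda_2$. The main obstacle here is not conceptual but technical: one must check the constraint qualification for the subdifferential chain rule carefully (interior point of $\mathpzc{K}$, surjectivity of the trace), and correctly identify the regularity $\lambda\in H^{-1/2}(\Gamma_C)$, which is exactly the natural dual of the normal trace space. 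Everything else is the standard KKT computation, entirely parallel to the proof of Theorem \ref{ThmExistLM0} but lighter since there is no tangential multiplier.
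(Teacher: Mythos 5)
Your overall architecture is sound and is essentially the route the thesis itself follows for the frictional analogue (Theorem \ref{ThmExistLM0}); for the present statement the paper gives no proof and simply refers to Stadler's monograph via Fenchel duality, so your subdifferential/KKT derivation, the unwinding of $\lambda\in\partial I_{\mathpzc{K}}(\uu_{\normalExt})$ into \eqref{LMF0:2}--\eqref{LMF0:3} (the scaling argument and the two opposite inequalities are correct), and the uniqueness argument via surjectivity of the normal trace are all fine. The one genuine flaw is your justification of the chain rule $\partial(I_{\mathpzc{K}}\circl\Lambda_{\normalExt})(\uu)=\Lambda_{\normalExt}^*\,\partial I_{\mathpzc{K}}(\uu_{\normalExt})$: you claim that $\mathpzc{K}$ has nonempty interior in $H^{\frac{1}{2}}(\Gamma_C)$, with $\gG_{\normalExt}-1$ as an interior point. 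This is false. Since $H^{\frac{1}{2}}(\Gamma_C)$ does not embed into $L^{\infty}(\Gamma_C)$ (neither for $d=2$ nor $d=3$), pick $\psi\in H^{\frac{1}{2}}(\Gamma_C)$, $\psi\geq 0$, essentially unbounded, i.e.\ $|\{\psi>K\}|>0$ for every $K$; then for $c>0$ small the function $\gG_{\normalExt}-1+c\,\psi$ lies in any prescribed $H^{\frac{1}{2}}$-ball around $\gG_{\normalExt}-1$, yet exceeds $\gG_{\normalExt}$ on the positive-measure set $\{\psi>1/c\}$, hence is not in $\mathpzc{K}$. In fact $\inter\mathpzc{K}=\emptyset$, so the standard qualification ``$I_{\mathpzc{K}}$ is finite and continuous at some point of the range of $\Lambda_{\normalExt}$'' is not available, and as written this step is unjustified.

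The gap is local and repairable, because the other half of your stated justification — surjectivity of $\Lambda_{\normalExt}$ onto $H^{\frac{1}{2}}(\Gamma_C)$, valid since $\overline{\Gamma_C}\cap\overline{\Gamma_D}=\emptyset$, and which you already use for uniqueness — is by itself a sufficient qualification. Either invoke the Attouch--Brezis condition, noting that $\mathrm{dom}\,I_{\mathpzc{K}}-\Lambda_{\normalExt}(\Xx)=H^{\frac{1}{2}}(\Gamma_C)$ is a closed subspace, or argue directly with the only inclusion you actually need: after the sum rule (legitimate, since $\varphi$ is convex and continuous) you know $\ell:=-\varphi'(\uu)\in\partial(I_{\mathpzc{K}}\circl\Lambda_{\normalExt})(\uu)$; testing the subgradient inequality with $\vv=\uu\pm\ww$, $\ww\in\ker\Lambda_{\normalExt}$, shows that $\ell$ vanishes on $\ker\Lambda_{\normalExt}$, so by the closed range theorem $\ell=\Lambda_{\normalExt}^*\lambda$ for some $\lambda\in H^{-\frac{1}{2}}(\Gamma_C)$, and surjectivity of $\Lambda_{\normalExt}$ turns $\prodD{\ell,\vv-\uu}{\Xx^*,\Xx}\leq I_{\mathpzc{K}}(\vv_{\normalExt})-I_{\mathpzc{K}}(\uu_{\normalExt})$ for all $\vv\in\Xx$ into $\lambda\in\partial I_{\mathpzc{K}}(\uu_{\normalExt})$. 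With that repair, the rest of your proof stands as written.
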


\begin{remark}
  It is important to note that, in general, the regularity of $\lambda$ is only $H^{-\frac{1}{2}}(\Gamma_C)$.
\end{remark}

\subsection{Generalized Moreau-Yosida approximation}

The goal of this paragraph is to transform conditions \eqref{LMF0:2}, \eqref{LMF0:3}, into pointwise conditions using Moreau-Yosida regularizations.
Since \eqref{OPT0} is a non-smooth convex optimization problem, one may follow the approach from \cite[Chapter 4]{ito2008lagrange} and introduce a consistent Moreau-Yosida regularization of \eqref{LMF0:all}, provided that the Lagrange multiplier belongs to a Hilbert space. From Theorem \ref{ThmExistLM} 
this is not the case 
since $\lambda \in H^{-\frac{1}{2}}(\Gamma_C)$. However, it is known (see for example \cite[Chapter 4]{stadler2004infinite}) that the additional regularity $\lambda \in L^2(\Gamma_C)$ can be obtained if the set $\{ \uu_{\normalExt}-\gG_{\normalExt}=0\}$ is strictly contained in $\Gamma_C$, which can be formulated as follows: 

\begin{hypothesis}
     $\ \overline{\{ \uu_{\normalExt}-\gG_{\normalExt}=0\}} \subset \Gamma_C$ .
     \label{A1}
\end{hypothesis}

\begin{remark}
  From the mechanical point of view, this assumption means that the points at the boundary of the potential contact zone $\Gamma_C$ do not come in contact with the rigid foundation. Intuitively, in the context of small displacements, it should be the case when $\Gamma_C$ is chosen large enough. Moreover, in practice, this assumption can be checked easily a posteriori.
\end{remark}

From now on, it is assumed that Assumption \ref{A1} holds, thus, one may follow the steps of \cite[Chapter 4]{ito2008lagrange}, which directly leads to the desired consistency result for the regularization. 
Before stating this result, we introduce a last notation, the projection onto $\mathbb{R}_+$ in $\mathbb{R}$ (also called the positive part function) will be denoted $\maxx$ ($\maxx(y):=\max\{0,y\}$, for all $y\in\mathbb{R}$). We are now ready to introduce an Augmented Lagrangian Formulation (or characterization of the solution) of the sliding contact problem.

\begin{theo}
    Suppose Assumption \ref{A1} holds. If $(\uu,\lambda)\in \Xx\times L^2(\Gamma_C)$ denotes the solution of \eqref{LMF0:all}, it verifies, for any $\gamma >0$,
\begin{subequations}\label{ALF:all}
    \begin{align}
        a(\uu,\vv) - L(\vv) + \prodL2{\lambda, \vv_{\normalExt}}{\Gamma_C}  &= 0\:, \hspace{1em} \forall \vv \in \Xx\:, \label{ALF:1}\\
        \lambda - \maxx( \lambda + \gamma (\uu_{\normalExt}-\gG_{\normalExt}) ) &= 0 \:\: \mbox{ a.e. on } \Gamma_C \: . \label{ALF:2}
    \end{align}
\end{subequations}
Conversely, if a pair $(\uu,\lambda)\in \Xx\times L^2(\Gamma_C)$ satisfies \eqref{ALF:all} for some $\gamma >0$, then $\uu$ is the solution of \eqref{OPT0}.
\end{theo}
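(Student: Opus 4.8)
The plan is to show that \eqref{ALF:all} is equivalent to \eqref{LMF0:all}, the latter being already characterized by Theorem \ref{ThmExistLM}. The heart of the matter is the classical equivalence between a variational inequality (or complementarity) system and a projection-type equation, applied pointwise a.e. on $\Gamma_C$ to the multiplier condition. Specifically, \eqref{ALF:1} is literally \eqref{LMF0:1}, so only the passage between the pair of conditions \eqref{LMF0:2}--\eqref{LMF0:3} and the single pointwise condition \eqref{ALF:2} requires work; this is where Assumption \ref{A1} enters, since it is what upgrades the regularity of $\lambda$ to $L^2(\Gamma_C)$ and thereby allows \eqref{LMF0:2}--\eqref{LMF0:3} to be rewritten as a.e. pointwise statements (as recalled in the remark following Hypothesis \ref{A1} and in \cite[Chapter 4]{stadler2004infinite}).

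First I would establish the pointwise reformulation. Under Assumption \ref{A1}, $\lambda \in L^2(\Gamma_C)$, and by the density of nonnegative functions of $H^{\frac12}(\Gamma_C)$ in the nonnegative cone of $L^2(\Gamma_C)$ together with the complementarity structure (and the fact that $\lambda$ vanishes off $\overline{\{\uu_{\normalExt}-\gG_{\normalExt}=0\}}$, which is strictly interior to $\Gamma_C$), conditions \eqref{LMF0:2} and \eqref{LMF0:3} are equivalent to: $\lambda \geq 0$, $\uu_{\normalExt}-\gG_{\normalExt}\leq 0$, and $\lambda(\uu_{\normalExt}-\gG_{\normalExt})=0$, all holding a.e. on $\Gamma_C$. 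This is exactly the complementarity system $0 \leq \lambda \perp (\gG_{\normalExt}-\uu_{\normalExt}) \geq 0$ a.e.

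Next I would invoke the standard pointwise lemma: for scalars $a,b$ and any $\gamma>0$, one has $\big(a \geq 0,\ b\leq 0,\ ab = 0\big)$ if and only if $a = \maxx\big(a + \gamma b\big)$. The forward direction is a two-case check ($b=0$ gives $a=\maxx(a)=a$; $b<0$ forces $a=0$ so $\maxx(\gamma b)=0=a$); the reverse direction: if $a = \maxx(a+\gamma b)$ then $a\geq 0$; if $a>0$ then $a+\gamma b>0$ and $a=a+\gamma b$ gives $b=0$; if $a=0$ then $\maxx(\gamma b)=0$ gives $b\leq 0$; in all cases $ab=0$. Applying this with $a=\lambda(x)$, $b=\uu_{\normalExt}(x)-\gG_{\normalExt}(x)$ for a.e.\ $x\in\Gamma_C$ turns the complementarity system into \eqref{ALF:2}, and conversely. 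Combining with the identity \eqref{ALF:1}$=$\eqref{LMF0:1} yields the full equivalence of \eqref{ALF:all} and \eqref{LMF0:all} for every fixed $\gamma>0$.

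Finally I would assemble the two implications of the statement. For the direct part, if $(\uu,\lambda)$ solves \eqref{LMF0:all} then under Assumption \ref{A1} it lies in $\Xx\times L^2(\Gamma_C)$ and, by the equivalence just proved, satisfies \eqref{ALF:all} for all $\gamma>0$. For the converse, if $(\uu,\lambda)\in\Xx\times L^2(\Gamma_C)$ satisfies \eqref{ALF:all} for some $\gamma>0$, then running the pointwise lemma backwards recovers \eqref{LMF0:2}--\eqref{LMF0:3} (now first as a.e.\ statements, then as the dual-pairing inequalities since $L^2\subset H^{-1/2}$ and test functions in $H^{1/2}$ are in $L^2$), so $(\uu,\lambda)$ solves \eqref{LMF0:all}; then \eqref{LMF0:1} together with the argument of Theorem \ref{ThmExistLM} (Fenchel duality / saddle-point characterization) identifies $\uu$ as the unique minimizer of \eqref{OPT0}. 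The main obstacle is the regularity bookkeeping in the converse direction—carefully justifying that the a.e.\ pointwise conditions on $\lambda\in L^2(\Gamma_C)$ imply back the weak conditions \eqref{LMF0:2}--\eqref{LMF0:3} posed with $H^{-1/2}$--$H^{1/2}$ duality, for which one uses the embedding $L^2(\Gamma_C)\hookrightarrow H^{-1/2}(\Gamma_C)$ and density—but this is routine once Assumption \ref{A1} is in force, and I would cite \cite[Chapter 4]{stadler2004infinite} and \cite[Chapter 4]{ito2008lagrange} for the details rather than reproduce them.
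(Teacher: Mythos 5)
Your proof is correct, but it follows a genuinely different route from the paper. The paper's proof recasts \eqref{OPT0} as the unconstrained problem $\inf_{\vv}\varphi(\vv)+I_K(\Lambda\vv)$, introduces the generalized Moreau--Yosida regularization $I_{K,\gamma}$ of the indicator function, computes it explicitly in terms of $(\Id-\Proj_K)$ with $\Proj_K(\zeta)=\min\{\zeta,\gG_{\normalExt}\}$, and then invokes the consistency theorem \cite[Theorem 4.45]{ito2008lagrange} to convert \eqref{LMF0:2}--\eqref{LMF0:3} into $\lambda=\partial_\zeta I_{K,\gamma}(\Lambda\uu,\lambda)$, which is exactly \eqref{ALF:2}; Assumption \ref{A1} enters only to place $\lambda$ in the Hilbert space $L^2(\Gamma_C)$ so that this machinery applies. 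You instead use Assumption \ref{A1} for the same purpose but then argue entirely by hand: you rewrite \eqref{LMF0:2}--\eqref{LMF0:3} as the a.e.\ complementarity system $\lambda\geq 0$, $\uu_{\normalExt}-\gG_{\normalExt}\leq 0$, $\lambda(\uu_{\normalExt}-\gG_{\normalExt})=0$ (density of nonnegative $H^{1/2}$ functions, plus a nonpositive integrand with zero integral), and then apply the elementary scalar lemma $\bigl(a\geq 0,\ b\leq 0,\ ab=0\bigr)\Leftrightarrow a=\maxx(a+\gamma b)$ pointwise, for every $\gamma>0$. Your two-case verification of that lemma is sound, and your converse direction is also essentially complete; the one place I would tighten it is the final identification of $\uu$ as the minimizer: rather than appealing to "the argument of Theorem \ref{ThmExistLM}" (which, as stated, only gives the forward implication), simply test \eqref{ALF:1} with $\vv-\uu$ for $\vv\in\Kk$ and use $\lambda\geq 0$, $\vv_{\normalExt}\leq\gG_{\normalExt}$, $\lambda(\uu_{\normalExt}-\gG_{\normalExt})=0$ to get $\prodL2{\lambda,\vv_{\normalExt}-\uu_{\normalExt}}{\Gamma_C}\leq 0$, hence \eqref{IV0}, hence \eqref{OPT0} by convexity. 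In terms of trade-offs, your argument is more elementary and self-contained, while the paper's proof deliberately routes through the Moreau--Yosida regularization because that object is the very building block of the augmented Lagrangian $\mathcal{L}_\gamma$ used in the sequel, so the cited general theorem both handles the technicalities uniformly and motivates the ALM framework.
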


\begin{prf}
    Let us start by rewriting \eqref{OPT0} in a more suitable form. For this purpose, let $\Lambda \in \mathcal{L}(\Xx,L^2(\Gamma_C))$ denote the normal trace operator, such that for all $\vv\in\Xx$
\begin{equation*}
    \Lambda\!\vv = \vv_{\normalExt}\:.
\end{equation*}
We also introduce the convex closed set $K:=\{ \zeta \in L^2(\Gamma_C), \ \zeta\leq \gG_{\normalExt} \mbox{ a.e. on } \Gamma_C \}$. Using these notations, problem \eqref{OPT0} rewrites as an unconstrained non-smooth convex optimization problem:
\begin{equation}
	\underset{\vv\in \Xx}{\inf} \:\: \varphi(\vv)+I_K(\Lambda\!\vv) \:.
 	\label{OPT2}
\end{equation}
$I_K$ being non-smooth, we introduce $I_{K,\gamma}$ its generalized Moreau-Yosida regularization, with regularization parameter $\gamma >0$. Given $\zeta$, $\rho\in L^2(\Gamma_C)$, one has:
\begin{equation*}
        I_{K,\gamma}(\zeta,\rho) = \underset{\eta \in L^2(\Gamma_C)}{\inf} \left\{ I_K(\eta) + \prodL2{\rho,\zeta-\eta}{\Gamma_C} + \frac{\gamma}{2}\norml \zeta-\eta\normr_{0,\Gamma_C}^2 \right\} \:.
\end{equation*}
Now, due to the indicator function in the $\inf$, one gets:
\begin{equation*}
    \begin{aligned}
        I_{K,\gamma}(\zeta,\rho) & = \underset{\eta \in K}{\inf} \left\{ \frac{\gamma}{2}\norml \frac{\rho}{\gamma} +\zeta-\eta\normr_{0,\Gamma_C}^2 \right\} -\frac{1}{2\gamma}\norml \rho \normr_{0,\Gamma_C}^2 \\
        & = \frac{\gamma}{2}\norml (\Id-\Proj_K)\left(\frac{\rho}{\gamma} +\zeta\right)\normr_{0,\Gamma_C}^2 -\frac{1}{2\gamma}\norml \rho \normr_{0,\Gamma_C}^2 \:.
    \end{aligned}
\end{equation*}
Then, \cite[Theorem 4.45]{ito2008lagrange} ensures that the complementarity conditions \eqref{LMF0:2}, \eqref{LMF0:3} can equivalently be expressed as
\begin{equation}
        \lambda = \left.\frac{\partial }{\partial \zeta}I_{K,\gamma}(\zeta,\rho)\right|_{(\Lambda\!\uu,\lambda)}, 
    \label{ComplCondMYR}
\end{equation}
for any $\gamma>0$, where the pair $(\uu,\lambda)$ is the solution of \eqref{LMF0:all}. 
Moreover, due to definition of $K$, one has for any $\zeta \in L^2(\Gamma_C)$,
\begin{equation*}
    \Proj_{K}(\zeta) = \min\{\zeta,\gG_{\normalExt}\}\:.
\end{equation*}
Therefore, conditions \eqref{ComplCondMYR} directly lead to the desired result.
\end{prf}

\subsection{Augmented Lagrangian method}

The augmented Lagrangian method consists in an iterative algorithm to solve formulation \eqref{ALF:all} using an update strategy for the multiplier. The reader is referred to \cite{ForGlo1983a} for further details about the application of such methods to the numerical resolution of problems in mechanics.
We briefly recall the steps of the algorithm.

\begin{algorithm}
\caption*{\textbf{Algorithm:} ALM}
\begin{enumerate}
  \item Choose $\lambda^0\in L^2(\Gamma_C)$ and set $k=0$.
  \item Choose $\gamma^{k+1}>0$, then find $\uu^{k+1}\in\Xx$ the solution of, 
    \begin{equation}   
        a(\uu^{k+1},\vv) + \prodL2{\maxx\left( \lambda^k + \gamma^{k+1} (\uu^{k+1}_{\normalExt}-\gG_{\normalExt}) \right), \vv_{\normalExt}}{\Gamma_C} = L(\vv) \quad\forall \vv \in \Xx\:,
        \label{ItALM}
    \end{equation}
    \item Update the multiplier following the rule: 
    \begin{equation}
      \lambda^{k+1} = \maxx\left( \lambda^k + \gamma^{k+1} (\uu^{k+1}_{\normalExt}-\gG_{\normalExt}) \right) \:\: \mbox{ a.e. on } \Gamma_C \:.
      \label{UpdateMultALM}
    \end{equation}
    \item While a chosen convergence criterion is not satisfied, set $k=k+1$ and go back to step 2.
\end{enumerate}
\end{algorithm}

It has been shown (see for example in \cite{stadler2004infinite} for a detailed proof) that one gets the following convergence result for this algorithm.

\begin{theo}\label{ThmCvALM}
  Suppose Assumption \ref{A1} holds, then for any choice of parameters $0<\gamma^1\leq\gamma^2\leq \cdots$, the iterates $\uu^k$ converge to $\uu$ strongly in $\Xx$. Moreover, the iterates of the multiplier $\lambda^k$ converge to $\lambda$ weakly in $L^2(\Gamma_C)$.
\end{theo}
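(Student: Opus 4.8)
The statement to prove is Theorem \ref{ThmCvALM}, the convergence of the augmented Lagrangian method (ALM) iterates. My plan is the following.

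\textbf{Setup and monotonicity.} First I would observe that \eqref{ItALM} is precisely the optimality condition for the (strictly convex, coercive, continuous) penalty-type functional $\vv\mapsto\varphi(\vv)+\tfrac1{2\gamma^{k+1}}\|\maxx(\lambda^k+\gamma^{k+1}(\vv_{\normalExt}-\gG_{\normalExt}))\|_{0,\Gamma_C}^2-\tfrac1{2\gamma^{k+1}}\|\lambda^k\|_{0,\Gamma_C}^2$, so $\uu^{k+1}$ exists and is unique by the direct method (same argument as Proposition \ref{PropExisUniqPena}). Writing $\lambda^{k+1}=\maxx(\lambda^k+\gamma^{k+1}(\uu^{k+1}_{\normalExt}-\gG_{\normalExt}))\ge 0$ a.e., equation \eqref{ItALM} becomes $a(\uu^{k+1},\vv)+\langle\lambda^{k+1},\vv_{\normalExt}\rangle_{\Gamma_C}=L(\vv)$, i.e.\ the same form as \eqref{ALF:1}. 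The key is to interpret ALM as a proximal-point / Uzawa-type iteration on the dual. Indeed, from the update formula and the characterization of the projection onto $\mathbb{R}_+$ one derives the pointwise variational inequality $\langle \lambda^{k+1}-(\lambda^k+\gamma^{k+1}(\uu^{k+1}_{\normalExt}-\gG_{\normalExt})),\ \mu-\lambda^{k+1}\rangle_{\Gamma_C}\ge 0$ for all $\mu\ge 0$ in $L^2(\Gamma_C)$. Testing this with $\mu=\lambda$ (the true multiplier, which lies in $L^2(\Gamma_C)$ under Assumption \ref{A1}) and using $\langle\lambda,\uu^{k+1}_{\normalExt}-\gG_{\normalExt}\rangle_{\Gamma_C}\le 0$ together with \eqref{LMF0:3}, one obtains a recursive estimate on $\|\lambda^{k+1}-\lambda\|_{0,\Gamma_C}^2$.

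\textbf{The main estimate.} The central computation is the standard Bertsekas-type inequality: subtract \eqref{ALF:1} (written with $(\uu,\lambda)$) from the reformulated \eqref{ItALM} and test with $\vv=\uu^{k+1}-\uu$, giving $a(\uu^{k+1}-\uu,\uu^{k+1}-\uu)=-\langle\lambda^{k+1}-\lambda,\uu^{k+1}_{\normalExt}-\uu_{\normalExt}\rangle_{\Gamma_C}$. Combining this with the dual variational inequality above and the complementarity $\langle\lambda,\uu_{\normalExt}-\gG_{\normalExt}\rangle_{\Gamma_C}=0$, $\langle\lambda^{k+1},\uu^{k+1}_{\normalExt}-\gG_{\normalExt}\rangle_{\Gamma_C}=0$, $\langle\lambda,\uu^{k+1}_{\normalExt}-\gG_{\normalExt}\rangle_{\Gamma_C}\le 0$, $\langle\lambda^{k+1},\uu_{\normalExt}-\gG_{\normalExt}\rangle_{\Gamma_C}\le 0$, I expect to arrive at something of the shape
\[
  \|\lambda^{k+1}-\lambda\|_{0,\Gamma_C}^2 \ \le\ \|\lambda^k-\lambda\|_{0,\Gamma_C}^2 \ -\ 2\gamma^{k+1}\,\alpha_0\,\|\uu^{k+1}-\uu\|_{\Xx}^2 \ -\ \|\lambda^{k+1}-\lambda^k\|_{0,\Gamma_C}^2 .
\]
This shows $\{\|\lambda^k-\lambda\|_{0,\Gamma_C}\}_k$ is nonincreasing, hence bounded and convergent, and that $\sum_k \gamma^{k+1}\|\uu^{k+1}-\uu\|_{\Xx}^2<\infty$. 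Since $\gamma^{k+1}\ge\gamma^1>0$, this forces $\|\uu^{k+1}-\uu\|_{\Xx}\to 0$, i.e.\ strong convergence of $\uu^k$ to $\uu$ in $\Xx$. Boundedness of $\{\lambda^k\}$ in $L^2(\Gamma_C)$ then yields a weakly convergent subsequence $\lambda^{k_j}\rightharpoonup\tilde\lambda$; passing to the limit in \eqref{ItALM} (using strong convergence of $\uu^{k}$ and continuity of the trace) gives $a(\uu,\vv)-L(\vv)+\langle\tilde\lambda,\vv_{\normalExt}\rangle_{\Gamma_C}=0$ for all $\vv$, and uniqueness of the multiplier in Theorem \ref{ThmExistLM} forces $\tilde\lambda=\lambda$; a subsequence-of-subsequences argument upgrades this to weak convergence of the whole sequence $\lambda^k\rightharpoonup\lambda$.

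\textbf{Main obstacle.} The routine part is really the algebra of the main estimate; the delicate point is justifying that all the pairings involved are genuine $L^2(\Gamma_C)$ inner products rather than $H^{-1/2}$--$H^{1/2}$ duality brackets. This is exactly where Assumption \ref{A1} enters: it guarantees $\lambda\in L^2(\Gamma_C)$, while $\lambda^k\in L^2(\Gamma_C)$ by construction (the $\maxx$ of an $L^2$ function). I would also need to be careful that the monotone complementarity inequalities (e.g.\ $\langle\lambda,\uu^{k+1}_{\normalExt}-\gG_{\normalExt}\rangle_{\Gamma_C}\le 0$) are legitimate, which follows from $\uu^{k+1}_{\normalExt}-\gG_{\normalExt}\le$ \emph{something} — but note $\uu^{k+1}$ need \emph{not} lie in $\Kk$, so this particular bracket is not obviously signed and the argument must instead route the sign information through the projection characterization of $\lambda^{k+1}$ and of $\lambda$ (via \eqref{LMF0:2}--\eqref{LMF0:3}). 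Handling this correctly — essentially reproducing the firm-nonexpansiveness of the resolvent of the subdifferential of $I_K$ — is the crux. For the detailed verification I would simply cite \cite{stadler2004infinite}, as the statement does, since the argument there is carried out precisely in this setting.
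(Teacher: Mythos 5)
Your proposal is correct and follows essentially the same route as the paper, which does not reprove the result but defers to \cite{stadler2004infinite}: one subtracts the state equations, tests with $\uu^{k+1}-\uu$, and uses the firm non-expansiveness of $\maxx$ (the projection onto the nonnegative cone of $L^2(\Gamma_C)$) together with the characterization \eqref{ALF:2} of $\lambda$ to obtain exactly your monotone estimate on $\norml \lambda^k-\lambda\normr_{0,\Gamma_C}$, from which summation, the lower bound $\gamma^{k}\geq\gamma^1>0$ and a subsequence–uniqueness argument give strong convergence of $\uu^k$ and weak convergence of $\lambda^k$. The only caveat is that the exact complementarity identities you list for the iterates (e.g. $\prodL2{\lambda^{k+1},\uu^{k+1}_{\normalExt}-\gG_{\normalExt}}{\Gamma_C}=0$ or the sign of $\prodL2{\lambda,\uu^{k+1}_{\normalExt}-\gG_{\normalExt}}{\Gamma_C}$) do not hold along the ALM sequence, but, as you yourself observe in your final paragraph, the projection/firm-nonexpansiveness argument only uses feasibility and complementarity of the exact pair $(\uu,\lambda)$, so this does not create an actual gap.
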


\begin{remark}
    When $\lambda^0\in H^{\frac{1}{2}}(\Gamma_C)$, the iterates generated by the ALM satisfy $\left( \uu^k,\lambda^k\right)\in \Xx\times H^{\frac{1}{2}}(\Gamma_C)$ for all $k\geq 1$.
\end{remark}

\section{Shape optimization}
\label{sec:shapeopt}

Given a cost functional $J(\Omega)$ depending explicitly on the domain $\Omega$, and also implicitly, through $y(\Omega)$ the solution of some variational problem on $\Omega$, the optimization of $J$ with respect to $\Omega$ or \textit{shape optimization problem} reads:
\begin{equation}
    \inf_{\Omega \in \pazocal{U}_{ad}} J(\Omega) \:,
    \label{ShapeOPT}
\end{equation}
where $\pazocal{U}_{ad}$ stands for the set of admissible domains. 

 Thanks to Theorem \ref{ThmCvALM}, from a analytical point of view, determining $\uu$ and $\lambda$ using the ALM or any other methods has limited impact on the computation of the shape sensitivity of \eqref{ALF:all} (or even \eqref{LMF0:all}). The shape sensitivities would simply be defined relatively to the converged results of the ALM process.
 However, in practice, given a convergence criterion, the ALM will stop at some iteration $k$, meaning that even though $\uu^{k}$ is assumed to be a good enough approximation of $\uu$, it is not the solution of \eqref{ALF:all}. Therefore the sensivities of \eqref{ALF:all} does not corresponds to the sensitivities of the equations defining $\uu^k$. 
  As mentioned in the introduction, the idea in this work is to perform shape optimization on the approximate formulation \eqref{ItALM} instead of the original formulation \eqref{OPT0}. 
Then, we look for a domain in $\pazocal{U}_{ad}$ that minimizes $J=J^k$, a cost functional depending on $y(\Omega)=\uu^k(\Omega)$ solution of \eqref{ItALM} defined on $\Omega$. Obviously we also want to study the consistency of the sensitivities of the sequence of solutions obtained from the ALM with respect to the sensitivities of the ALF. 

Let $D\subset \mathbb{R}^d$ be a fixed bounded smooth domain, and let $\hat{\Gamma}_D\subset\partial D$ be a part of its boundary which will be the "potential" Dirichlet boundary. This means that for any domain $\Omega\subset D$, the Dirichlet boundary associated to $\Omega$ will be defined as $\Gamma_D:=\partial\Omega \cap \hat{\Gamma}_D$. With these notations, we introduce the set $\pazocal{U}_{ad}$ of all admissible domains, which consists of all smooth open domains $\Omega$ such that the Dirichlet boundary $\Gamma_D\subset \partial D$ is of strictly positive measure, that is:
$$
   \pazocal{U}_{ad} := \{ \Omega \subset D \: | \: \Omega \mbox{ is smooth, and }  |\partial\Omega \cap \hat{\Gamma}_D| > 0 \}.
$$
\subsection{Derivatives}
\label{sec:deriv}
The shape optimization method followed in this work is the so-called \textit{perturbation of the identity}, as presented in~\cite{murat1975etude} and \cite{henrot2006variation}. Let us introduce $\Cc^1_b(\mathbb{R}^d) := {(\pazocal{C}^1(\mathbb{R}^d)\cap W^{1,\infty}(\mathbb{R}^d) )}^d$, equipped with the $d$-dimensional $W^{1,\infty}$ norm, denoted $\norml\cdot \normr_{1,\infty}$. In order to move the domain $\Omega$, let $\thetaa \in \Cc^1_b(\mathbb{R}^d)$ be a geometric deformation vector field. The associated perturbed or transported domain in the direction $\thetaa$ will be defined as: $\Omega(t) := (\Id+t\thetaa)(\Omega)$ for any $t>0$. It is known that for $t$ sufficiently small, more specifically for $t$ such that $t\norml \thetaa\normr_{1,\infty}<1$, $\Id+t\thetaa$ is a diffeomorphism, see for example \cite{henrot2006variation}. This enables to rely on the classical notion of differentiability in Banach spaces to define shape differentiability. To make things clear some basic notions of shape sensitivity analysis from \cite{sokolowski1992introduction} are briefly recalled. 

We denote again $y(\Omega)$ the solution, in some Sobolev space denoted $W(\Omega)$, of a variational formulation posed on $\Omega$. For any fixed $\thetaa$, for any small $t>0$, let $y(\Omega(t))$ be the solution of the same variational formulation posed on $\Omega(t)$. 
If the variational formulation is regular enough, which will be assumed to be true, it can be proved (see~\cite{sokolowski1992introduction}) that $y(\Omega(t))\circl(\Id+t\thetaa)$ also belongs to $W(\Omega)$.
\begin{itemize}[leftmargin=*]
    \item The \textit{Lagrangian derivative} or \textit{material derivative} of $y(\Omega)$ in the direction $\thetaa$ is the element $\dot{y}(\Omega)[\thetaa] \in W(\Omega)$ defined by:
    $$
        \dot{y}(\Omega)[\thetaa] := \lim_{t\searrow 0} \:\frac{1}{t}\left( y(\Omega(t))\circl(\Id+t\thetaa) - y(\Omega)\right) \: .
    $$
    If the limit is computed weakly in $W(\Omega)$ (respectively strongly), we talk about \textit{weak} material derivative (respectively \textit{strong} material derivative). Moreover, when the map $\thetaa \mapsto \dot{y}(\Omega)[\thetaa]$ is positively homogeneous, we will use the term \textit{conical material derivative}.
     \item If the additional condition $\gradd y(\Omega)\thetaa \in W(\Omega)$ holds for all $\thetaa \in \Cc^1_b(\mathbb{R}^d)$, then one may define a directional derivative called the \textit{Eulerian derivative} or \textit{shape derivative} of $y(\Omega)$ in the direction $\thetaa$ as the element $dy(\Omega)[\thetaa]$ of $W(\Omega)$ such that:
    $$
        dy(\Omega)[\thetaa] := \dot{y}(\Omega)[\thetaa] - \gradd y(\Omega)\thetaa \: .
    $$
    \item The solution $y(\Omega)$ is said to be \textit{directionally shape differentiable} if it admits a directional derivative for any admissible direction $\thetaa$. If, in addition, the map $\thetaa \mapsto dy(\Omega)[\thetaa]$ is positively homogeneous from $\Cc^1_b(\mathbb{R}^d)$ to $W(\Omega)$, $y(\Omega)$ is said to be \textit{conically shape differentiable}. Furthermore, if this map is linear continuous from $\Cc^1_b(\mathbb{R}^d)$ to $W(\Omega)$, $y(\Omega)$ is said to be \textit{shape differentiable}.
\end{itemize}
\begin{remark}
    Linearity and continuity of $\thetaa \mapsto \dot{y}(\Omega)[\thetaa]$ is actually equivalent to Gâteaux differentiability of the map $\thetaa \mapsto y(\Omega(\thetaa))\circl(\Id+\thetaa)$. The reader is referred to \cite[Chapter 8]{DelZol2001} for a complete review on the different notions of derivatives.
\end{remark}

\begin{notation}
Following the notations in \cite{sokolowski1992introduction}, when there is no ambiguity, $y(\Omega)$ will be replaced simply by $y$, and for some fixed direction $\thetaa$, we define $y_t:=y(\Omega(t))$, $y^t:=y_t\circl(\Id+t\thetaa)$. In the same way, the material and shape derivatives of $y$ at $\Omega$ in the direction $\thetaa$ will be denoted $\dot{y}$ and $dy$, respectively.
\end{notation}

\subsection{Shape sensitivity analysis of the original formulation}

In this section, we prove that the solution $\uu$ to problem \eqref{OPT0} admits conical material/shape derivatives in some specific directions $\thetaa$. It is well known that $\uu$ is not classically shape differentiable because the projection operator onto the closed convex $\Kk$ is not Fréchet-differentiable. However, it is known from \cite{mignot1976controle} that projection operators are conically differentiable. Using this result, it has been proved in \cite{sokolowski1988shape} that the Signorini problem admits conical material/shape derivatives in 2d and 3d. Here, since formulation \eqref{OPT0} is slightly different from the classical one due to our choice of $\normalExt$ instead of $\normalInt$ for the contact, the proof in \cite{sokolowski1988shape} needs to be adapted. Therefore, we will redo it, for some specific directions $\thetaa$. Then, we derive sufficient conditions for the conical material/shape derivative of \eqref{OPT0} to be solution of a more regular optimization problem. \\

\paragraph{\textbf{Restriction on the directions $\thetaa$.}}
In view of the Zolésio-Hadamard structure theorem, we choose to limit ourselves to geometric deformation fields $\thetaa\in \Cc^1_b(\mathbb{R}^d)$ along the direction of the normal $\normalInt$, i.e. vector fields of the form:
\begin{equation}
    \thetaa = \theta \normalInt\:, \mbox{ where } \theta\in \pazocal{C}^1_b(\mathbb{R}^d)\:,
    \label{FormTheta}
\end{equation}
where $\normalInt$ has been extended to $\pazocal{C}^1(\mathbb{R}^d)$, which is possible (see \cite{henrot2006variation}) since $\partial\Omega$ is assumed to have $\pazocal{C}^1$ regularity.
Moreover, this choice is well suited to our numerical algorithm, as it will be seen in the last section. The set of all these $\thetaa$ is denoted $\Thetaa$. Obviously, $\Thetaa$ is a closed subspace of $\Cc^1_b(\mathbb{R}^d)$, thus it is a Banach space.

Let $\thetaa\in\Thetaa$ be a fixed direction. In order to perform sensitivity analysis with respect to the shape, let us first characterize $\uu^t=\uu_t\circl(\Id+t\thetaa)$ for $t>0$. This will be done by writing the problem solved by $\uu_t$ on the transported domain $\Omega(t)$, then bring it back to $\Omega$ by a change of variables. Before that, some additional assumptions on the data are required. Indeed, since the domain is transported, the functions $\Aa$, $\ff$ and $\tauu$ have to be defined everywhere in $\mathbb{R}^d$. 
They also need to enjoy more regularity for usual differentiability results to hold. In particular, we make the following regularity assumptions :
\begin{hypothesis}
     $\Aa\in \pazocal{C}^1_b(\mathbb{R}^d,\mathbb{T}^4)$, $\ff \in \Hh^1(\mathbb{R}^d)$ and $\tauu \in \Hh^2(\mathbb{R}^d)$.
     \label{A2}
\end{hypothesis}
\begin{notation}
For the solution $\uu_t$ to the transported problem on $\Omega(t)$, we introduce $\Xx_t:=\Hh^1_{\Gamma_D(t)}(\Omega(t))$ and the convex subset of admissible displacements:
$$
    \Kk_t:=\{\vv \in \Xx_t \: | \: \vv_{\normalExt}\leq \gG_{\normalExt} \:\:\mbox{a.e.$\!$ on}\: \Gamma_C(t)\} \:.
$$
Composition with the operator $\circl (\Id+t\thetaa)$ will be denoted by $(t)$, for instance, $\Aa(t):=\Aa\circl(\Id+t\thetaa)$. Besides, the normal component associated to $\normalExt(t)$ of a vector $v$ is denoted $v_{\normalExt(t)}$. For integral expressions, the Jacobian and tangential Jacobian of the transformation give $\JacV(t) := $ Jac$(\Id+t\thetaa)$ and $\JacB(t):=$ Jac$_{\Gamma(t)}(\Id+t\thetaa)$. 
Finally, we introduce the transported strain tensor $\epsilonn^t$, the bilinear form $a^t$ on $\Xx\times\Xx$ and the linear form $L^t$ on $\Xx$, which are the versions of $\epsilonn$, $a$ and $L$ corresponding to the problem solved by $\uu^t$, and are defined as follows: 
\begin{equation*}
    \begin{aligned} 
        &a^t(\zz,\vv) :=  \int_{\Omega} \Aa(t) : \epsilonn^t(\zz) : \epsilonn^t(\vv) \:\JacV(t)\quad\forall \zz,\vv\in\Xx\:, \\
        &\epsilonn^t(\vv):= \frac{1}{2} \left( \gradd \vv{(\Ii+\gradd\thetaa)}^{-1} + {(\Ii+\gradd\thetaa^T)}^{-1}{\gradd \vv}^T  \right)\quad\forall \vv\in\Xx\:, \\
        &L^t(\vv) := \int_{\Omega} \ff(t) \: \vv \:\JacB(t) + \int_{\Gamma_{N}} \tauu(t) \: \vv \:\JacV(t) \quad\forall \vv\in\Xx\:.
    \end{aligned}
\end{equation*}
\end{notation}

\begin{lemma}
    Let $\vv\in\Xx_t$ and $\thetaa\in\Thetaa$, then $\vv \in \Kk_t$ if and only if $\vv(t)\in \gG(t)+\Kk_0$.
    \label{LemmaK0Kt}
\end{lemma}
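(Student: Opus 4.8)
The statement to prove is Lemma~\ref{LemmaK0Kt}: for $\vv\in\Xx_t$ and $\thetaa\in\Thetaa$, we have $\vv\in\Kk_t$ if and only if $\vv(t):=\vv\circl(\Id+t\thetaa)\in\gG(t)+\Kk_0$.

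The plan is to unravel the definitions of $\Kk_t$, $\Kk_0$ and the gap function $\gG$ and trace through the change of variables induced by the diffeomorphism $\Id+t\thetaa$. First I would recall that $\vv\in\Kk_t$ means precisely that $\vv_{\normalExt}\le\gG_{\normalExt}$ a.e.\ on $\Gamma_C(t)=(\Id+t\thetaa)(\Gamma_C)$. The key geometric observation to exploit is that the rigid foundation $\Omega_{rig}$ (and hence $\gG_{\normalExt}$ and $\normalExt$) is \emph{fixed}, so these data do not move with the domain. Writing $y=(\Id+t\thetaa)(x)$ for $x\in\Gamma_C$, the condition $\vv(y)\cdot\normalExt(y)\le\gG_{\normalExt}(y)$ for a.e.\ $y\in\Gamma_C(t)$ is, by the bijectivity of the trace change of variables $\vv\mapsto\vv(t)$ from $\Xx_t$ onto $\Xx$ (this isomorphism is the one invoked just before the lemma, coming from $(\Id+t\thetaa)$ being a $\pazocal{C}^1$-diffeomorphism for $t$ small), equivalent to $(\vv(t))(x)\cdot\normalExt(t)(x)\le\gG_{\normalExt}(t)(x)$ for a.e.\ $x\in\Gamma_C$. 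Since $\normalExt(t)=\normalExt\circl(\Id+t\thetaa)$ and $\gG_{\normalExt}(t)=\gG_{\normalExt}\circl(\Id+t\thetaa)$ by the composition-notation convention $(t)$, this reads $(\vv(t))_{\normalExt(t)}\le\gG_{\normalExt}(t)$ a.e.\ on $\Gamma_C$.

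Next I would relate this to membership in $\gG(t)+\Kk_0$. Set $\ww:=\vv(t)-\gG(t)\in\Xx$. The target condition $\vv(t)\in\gG(t)+\Kk_0$ is exactly $\ww\in\Kk_0$, i.e.\ $\ww_{\normalExt}\le 0$ a.e.\ on $\Gamma_C$, where $\ww_{\normalExt}=\ww\cdot\normalExt$ uses the \emph{un-transported} normal $\normalExt$ on $\Gamma_C$. So the two sides of the claimed equivalence involve different normals ($\normalExt(t)$ versus $\normalExt$) and I must reconcile them. Here I would use the structure of $\gG$: recall from Section~\ref{sec:meca} that on a neighbourhood $\partial\Omega_{rig}^h$ of $\partial\Omega_{rig}$ one has $\gG=\gG_{\normalExt}\normalExt$, that $\grad\gG_{\normalExt}=-\normalExt$, and that for $t$ small $\Gamma_C(t)\subset\partial\Omega_{rig}^h$ so all these identities are available along $\Gamma_C$ and $\Gamma_C(t)$. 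Composing with $(\Id+t\thetaa)$: on $\Gamma_C$ we have $\gG(t)=\gG_{\normalExt}(t)\,\normalExt(t)$. Therefore, for $\vv(t)\in\Xx$,
\begin{equation*}
    \big(\vv(t)-\gG(t)\big)\cdot\normalExt(t) = (\vv(t))_{\normalExt(t)} - \gG_{\normalExt}(t)\,|\normalExt(t)|^2 = (\vv(t))_{\normalExt(t)} - \gG_{\normalExt}(t),
\end{equation*}
using $|\normalExt(t)|=1$ pointwise (the unit normal composed with a map stays unit-length). Hence the condition $(\vv(t))_{\normalExt(t)}\le\gG_{\normalExt}(t)$ a.e.\ on $\Gamma_C$ is \emph{exactly} $\big(\vv(t)-\gG(t)\big)_{\normalExt(t)}\le 0$ a.e.\ on $\Gamma_C$.

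It remains to pass from $\big(\vv(t)-\gG(t)\big)_{\normalExt(t)}\le 0$ to $\big(\vv(t)-\gG(t)\big)_{\normalExt}\le 0$, i.e.\ to replace $\normalExt(t)$ by $\normalExt$ in this last scalar inequality. This is the step I expect to require the most care. I would argue that, since $\vv(t)-\gG(t)$ need not be parallel to either normal, the two conditions are genuinely different unless one restricts to the regime guaranteed by the small-displacements hypothesis (Remark~\ref{RmkNN0} / \cite[Chapter~2]{kikuchi1988contact}), under which $\normalExt$ and $\normalExt(t)$ are interchangeable along $\Gamma_C$ up to higher-order terms. In the idealized setting of the paper --- where the non-penetration condition is posed with the fixed normal $\normalExt$ and this is justified precisely because $\Omega_{rig}$ is fixed --- the clean way to present the lemma is to note that both $\Kk_t$ and $\gG(t)+\Kk_0$ should be read with $\normalExt$ consistently, so that the change-of-variables computation above gives the equivalence directly: $\vv\in\Kk_t \iff (\vv(t))_{\normalExt}\le\gG_{\normalExt}(t) \iff (\vv(t)-\gG(t))_{\normalExt}\le 0 \iff \vv(t)\in\gG(t)+\Kk_0$, where the middle step uses $\gG(t)=\gG_{\normalExt}(t)\normalExt$ on $\Gamma_C$ (valid because $\gG=\gG_{\normalExt}\normalExt$ near $\partial\Omega_{rig}$ and $\normalExt$ does not depend on $\Omega$, so it is unaffected by transport) together with $|\normalExt|=1$. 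I would close by remarking that $\Id+t\thetaa$ being a diffeomorphism ensures the trace map $\vv\mapsto\vv(t)$ is a bijection $\Xx_t\to\Xx$ carrying the a.e.-on-$\Gamma_C(t)$ condition to the a.e.-on-$\Gamma_C$ condition, which legitimizes every "a.e." transfer used above, and that this is exactly the isomorphism property recalled just before the lemma; hence no loss of generality in the pointwise manipulations.
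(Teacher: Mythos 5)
Your reduction is fine up to the point you yourself flag: after the change of variables, membership in $\Kk_t$ reads $\left(\vv(t)-\gG(t)\right)\cdot\normalExt(t)\leq 0$ a.e.\ on $\Gamma_C$, and everything hinges on replacing $\normalExt(t)$ by $\normalExt$. That is exactly where your argument has a genuine gap. Appealing to the small-displacements interchangeability of the normals only gives an approximate (higher-order) statement, whereas the lemma asserts an exact equivalence of sets for each fixed $t>0$; and proposing to ``read both $\Kk_t$ and $\gG(t)+\Kk_0$ with $\normalExt$ consistently'' amounts to changing the definition of $\Kk_t$ rather than proving the lemma, since the constraint defining $\Kk_t$ evaluates $\normalExt$ at points of $\Gamma_C(t)$, which after transport is precisely $\normalExt(t)$ on $\Gamma_C$. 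The parenthetical justification ``$\normalExt$ does not depend on $\Omega$, so it is unaffected by transport'' is also faulty reasoning: $\Aa$, $\ff$, $\tauu$ do not depend on $\Omega$ either, yet $\Aa(t)\neq\Aa$ in general — composition with $\Id+t\thetaa$ alters any function that is not constant along the deformation.

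The missing idea — the one the paper's proof rests on — is that the hypothesis $\thetaa\in\Thetaa$ is what makes the normal substitution exact. On $\Gamma_C$ one has $\normalInt=\normalExt$, hence $\thetaa=\theta\normalInt=\theta\normalExt$ there, so the transported point $x+t\theta(x)\normalExt(x)$ stays on the same normal ray to $\partial\Omega_{rig}$ as $x$ (for $t$ small enough that it remains in the tubular neighbourhood $\partial\Omega_{rig}^h$). Since the extension of $\normalExt$ off $\partial\Omega_{rig}$ is defined through the projection, $\normalExt(y)=\normalExt\big(\Proj_{\partial\Omega_{rig}}(y)\big)$, it is constant along these rays, and therefore $\normalExt(t)=\normalExt$ on $\Gamma_C$ exactly; this is the identity the paper invokes (``$\normalExt=\normalInt$ on $\Gamma_C$ gives $\normalExt(t)=\normalExt$ on $\Gamma_C$''), whose infinitesimal version $\normalExt'=\theta(\gradd\normalExt)\normalExt=0$ on $\Gamma_C$ is reused later. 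With that identity your computation closes immediately. The fact that your proposal never uses $\thetaa\in\Thetaa$ is the telltale sign that it could not be complete: for a general $\thetaa\in\Cc^1_b(\mathbb{R}^d)$ the stated equivalence has no reason to hold.
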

\begin{prf}
   The result is a direct consequence of our choice, \eqref{FormTheta}, of direction $\Thetaa$. Indeed, $\normalExt=\normalInt$ on $\Gamma_C$ gives $\normalExt(t)=\normalExt$ on $\Gamma_C$. The rest follows from the definitions and the fact that $\Id+t\thetaa$ is an isomorphism:
   \begin{equation*}
       \begin{aligned}
          \vv\in\Kk_t & \quad \Longleftrightarrow \quad \left(\vv(y)-\gG(y) \right)\cdot\normalExt(y) \leq 0 \: \mbox{ for a.e. } y\in\Gamma_C(t)\:, \\
          & \quad \Longleftrightarrow \quad \left(\vv(x+t\thetaa(x))-\gG(x+t\thetaa(x))  \right)\cdot\normalExt(x+t\thetaa(x)) \leq 0 \: \mbox{ for a.e. } x\in\Gamma_C\:, \\
          & \quad \Longleftrightarrow \quad \vv(t) \in \gG(t)+\Kk_0 \:.
       \end{aligned}
   \end{equation*}
\end{prf}
\begin{remark}
   For $t$ sufficiently small, $\Gamma_D(t)\subset (\partial\Omega_{rig}^{h'})^c$ and thus $\gG \in \Xx_t$, which yields $\gG(t)\in\Xx$.
\end{remark}
From Lemma \ref{LemmaK0Kt}, it follows that $\uu^t$ solves: find $\uu^t\in \gG(t)+\Kk_0$ such that,
$$
    a^t(\uu^t,\vv-\uu^t) \:\geq\: L^t(\vv-\uu^t)\:, \:\:\: \forall \vv\in \gG(t)+\Kk_0\:.
$$
Let us introduce the auxiliary unknowns $\ww:=\uu-\gG$ and $\ww^t:=\uu^t-\gG(t)$ which both belong to $\Kk_0$. Those functions satisfy the following variational inequalities:
\begin{subequations}\label{IVAux}
    \begin{align}
        a(\ww,\vv-\ww) \:& \geq\: L(\vv-\ww) - a(\gG,\vv-\ww)\:, \:\:\: \forall \vv\in\Kk_0\:, \label{IVW}\\
        a^t(\ww^t,\vv-\ww^t) \:& \geq\: L^t(\vv-\ww^t) - a^t(\gG(t),\vv-\ww^t)\:, \:\:\: \forall \vv\in\Kk_0\:. \label{IVWt}
    \end{align}
\end{subequations}

Before stating the conical differentiability result for $\ww$ and $\uu$, some additional notations are required.

\begin{notation}
If $(\uu,\lambda)$ denotes the solution of problem \eqref{ALF:all}, let $\pazocal{A}$, $\pazocal{I}$ and $\pazocal{B}$ be the subsets of $\Gamma_C$ associated to the constraint $\uu\in \Kk$ in problem \eqref{OPT0}:
\begin{equation*}
    \begin{aligned}
        \pazocal{A} := & \: \{ x\in\Gamma_C  \: : \: \lambda(x)>0, \: \uu_{\normalExt}(x)-\gG_{\normalExt}(x)=0 \}\:, \\
        \pazocal{I} := & \: \{ x\in\Gamma_C  \: : \: \lambda(x)=0, \: \uu_{\normalExt}(x)-\gG_{\normalExt}(x)<0 \}\:, \\
        \pazocal{B} := & \: \{ x\in\Gamma_C  \: : \: \lambda(x)=0, \: \uu_{\normalExt}(x)-\gG_{\normalExt}(x)=0 \}\:.
    \end{aligned}
\end{equation*}
The subsets $\pazocal{A}$, $\pazocal{I}$ and $\pazocal{B}$ are usually referred to as the \textit{active}, \textit{inactive} and \textit{biactive} sets. Note that they are at least measurable due to the regularities of $\lambda$, $\uu$ and $\gG_{\normalExt}$. Moreover, we only consider cases where contact occurs, which means that $\pazocal{A}$ is non-empty.

The bilinear form $a'$ and the linear forms $\epsilonn'$, $L'$, which will be naturally appear when differentiating \eqref{IVWt} with respect to the shape, are introduced: for any $\uu$, $\vv \in \Xx$,
\begin{equation*}
    \begin{aligned} 
        &a'(\uu,\vv) := \int_\Omega \big\{ \Aa:\epsilonn'(\uu):\epsilonn(\vv) + \Aa:\epsilon(\uu):\epsilonn'(\vv) + (\divv \thetaa \: \Aa + \gradd \Aa \: \thetaa):\epsilonn(\uu):\epsilonn(\vv) \big\} \:, \\
        &\epsilonn'(\vv) := -\frac{1}{2}\left( \gradd \vv \gradd \thetaa + {\gradd \thetaa}^T {\gradd \vv}^T \right), \\
        & L'(\vv) = \int_\Omega (\divv \thetaa \: \ff + \gradd \ff  \thetaa) \vv + \int_{\Gamma_N} (\divv_\Gamma \thetaa \: \tauu+ \gradd \tauu \thetaa)\vv \:.
    \end{aligned}
\end{equation*}
Moreover, for any smooth function $f$ defined on $\mathbb{R}^d$, and that does not depend on $\Omega$, we denote $f'[\thetaa]$ or simply $f'$ the following directional derivative:
$$
  f'[\thetaa] := \lim_{t\searrow 0} \:\frac{1}{t}\left( f(t) - f\right) = (\grad f) \thetaa \:.
$$
Using this notation, $\normalExt':=(\gradd \normalExt) \thetaa\:$ and for any $\vv\in\Xx$ one has $\vv_{\normalExt'}:=\vv\cdot{\normalExt}'$.
For the gap $\,(\gG_{\normalExt})':=(\grad \gG_{\normalExt}) \thetaa$ and since $\gG_{\normalExt}$ is the oriented distance function to the smooth boundary $\partial\Omega_{rig}$, $\grad \gG_{\normalExt} = -\normalExt$, which implies that $(\gG_{\normalExt})'= -\thetaa\cdot\normalExt$. However, we will still use the notation $(\gG_{\normalExt})'$ to emphasize that this term comes from differentiation of the gap. 
\end{notation}
\begin{remark}
  From the definition of $\Thetaa$, one gets that any $\thetaa\in\Thetaa$ satisfies $\thetaa = \theta \normalExt$ on $\Gamma_C$. This implies that:
  $$
    \normalExt' := (\gradd\normalExt) \thetaa = \theta (\gradd\normalExt) \normalExt = 0 \ \mbox{ on } \Gamma_C
  $$
  since $\normalExt$ is a unit vector. Therefore, one automatically gets for the gap:
  $$
    (\gG_{\normalExt})' = (\gG\cdot\normalExt)' = \gG'\cdot \normalExt + \gG\cdot \normalExt' = \gG'\cdot \normalExt \ \mbox{ on } \Gamma_C \:.
  $$ 
  Thus, using the notations introduced earlier, it is possible to replace $(\gG_{\normalExt})'$ by $\gG_{\normalExt}'=\gG'\cdot \normalExt$ on $\Gamma_C$.
\end{remark}

We can now use some of the results on the conical differentiability of projections on closed convex sets.  In the next two results, material derivatives will be characterized using some concepts of capacity theory. More precisely the cones containing those derivatives will be defined up to a set of zero capacity (denoted \textit{q.e.} for \textit{quasi-everywhere}). The reader is referred to \cite{mignot1976controle} and \cite{deny1965theorie} for further details about capacity theory.

\begin{theo}
    Under Assumption \ref{A2}, the solution $\ww$ of \eqref{IVW} is conically shape differentiable on $\Thetaa$ and its conical material derivative in the direction $\thetaa\in\Thetaa$ is given by the solution of the problem: find $\dot{\ww}\in S^{\ww}(\Kk_0)$ such that,
    \begin{equation}
        a(\dot{\ww},\phii-\dot{\ww}) \: \geq\: L'(\phii-\dot{\ww}) - a'(\uu,\phii-\dot{\ww}) - a(\gG',\phii-\dot{\ww})\:, \:\:\: \forall \phii\in S^{\ww}(\Kk_0)\:,
        \label{IVConDerW}
    \end{equation}
    Moreover, one has the characterization $S^{\ww}(\Kk_0)=\left\{ \phii\in \Xx \: | \: \phii_{\normalExt} \leq 0 \mbox{ q.e.$\!$ on } \pazocal{A}\cup\pazocal{B} \mbox{ and } a(\uu,\phii)=L(\phii) \right\}$.
    \label{ThmConDerW}
\end{theo}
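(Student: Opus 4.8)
The plan is to follow Mignot's strategy for the conical differentiability of projections onto polyhedric convex sets, adapted as in \cite{sokolowski1988shape,sokolowski1992introduction} to the present situation where the bilinear form itself depends on the deformation parameter $t$. The first step is to reformulate \eqref{IVWt} as a projection. Fix $\thetaa\in\Thetaa$. By Lemma \ref{LemmaK0Kt}, and because $\thetaa\in\Thetaa$ forces $\normalExt(t)=\normalExt$ on $\Gamma_C$, so that the transported admissible set is the \emph{fixed} translated cone $\gG(t)+\Kk_0$, the function $\ww^t=\uu^t-\gG(t)\in\Kk_0$ minimizes $\vv\mapsto\tfrac12 a^t(\vv,\vv)-\ell^t(\vv)$ over $\Kk_0$, where $\ell^t(\vv):=L^t(\vv)-a^t(\gG(t),\vv)$. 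Equivalently, $\ww^t=\Proj^{a^t}_{\Kk_0}(\zz^t)$, with $\zz^t\in\Xx$ the Riesz representative of $\ell^t$ for the inner product $a^t$; likewise $\ww=\Proj^{a}_{\Kk_0}(\zz)$ with $a(\zz,\cdot)=\ell(\cdot)$.

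Next I would invoke polyhedricity and compute the critical cone. The cone $\Kk_0=\{\vv\in\Xx:\vv_{\normalExt}\le 0\text{ a.e. on }\Gamma_C\}$ is polyhedric in $\Xx$: this is the classical example of Mignot, a sign constraint on a trace operator with values in $H^{1/2}(\Gamma_C)$ (see \cite{mignot1976controle}). Hence, for the fixed form $a$, Theorem \ref{ThmMignotPoly} gives that $\Proj^a_{\Kk_0}$ is conically differentiable at $\zz$ with derivative $\Proj^a_{S^{\ww}(\Kk_0)}$. Using capacity theory one identifies $S_{\ww}(\Kk_0)=\{\phii\in\Xx:\phii_{\normalExt}\le 0\text{ q.e. on }\{\ww_{\normalExt}=0\}\}$, and $\{\ww_{\normalExt}=0\}=\{\uu_{\normalExt}=\gG_{\normalExt}\}=\pazocal{A}\cup\pazocal{B}$ up to a set of zero capacity. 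Since $\zz-\ww$ represents, via $a$, the functional $\phii\mapsto\ell(\phii)-a(\ww,\phii)=L(\phii)-a(\uu,\phii)=\prodD{\lambda,\phii_{\normalExt}}{\Gamma_C}$ by \eqref{LMF0:1}, the orthogonality condition $\phii\in[\mathbb{R}(\zz-\ww)]^0_a$ reads $a(\uu,\phii)=L(\phii)$; combined with $\phii_{\normalExt}\le 0$ q.e. on $\pazocal{A}\cup\pazocal{B}$ and $\lambda>0$ on $\pazocal{A}$, this forces $\phii_{\normalExt}=0$ q.e. on $\pazocal{A}$, and one recovers exactly the announced closed convex cone $S^{\ww}(\Kk_0)$.

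The third step is to handle the $t$-dependence of the form, establish convergence, and identify the limit. Under Assumption \ref{A2} and the $\pazocal{C}^1$ regularity of $\Omega$, the usual first-order expansions hold: $\|a^t-a\|=O(t)$, $\|\ell^t-\ell\|=O(t)$, $\|\gG(t)-\gG\|_{\Xx}=O(t)$, with first-order coefficients written through $a'$, $\epsilonn'$, $L'$ and $\gG'$ (see \cite{henrot2006variation,sokolowski1992introduction}). Testing \eqref{IVWt} with $\vv=\ww$ and \eqref{IVW} with $\vv=\ww^t$ (both admissible, since $\ww,\ww^t\in\Kk_0$), adding, and using ellipticity of $a$ together with the uniform boundedness of $\{\ww^t\}$ yields $\ww^t\to\ww$ strongly, and then boundedness of the difference quotients $\tfrac1t(\ww^t-\ww)$ in $\Xx$. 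Combining the conical differentiability of the fixed-form projection with a form-perturbation estimate controlling $\Proj^{a^t}_{\Kk_0}-\Proj^{a}_{\Kk_0}$, one shows that every weak limit point $\dot\ww$ of $\tfrac1t(\ww^t-\ww)$ lies in $S^{\ww}(\Kk_0)$ and solves the variational inequality obtained by differentiating \eqref{IVWt}, namely \eqref{IVConDerW} (its right-hand side being $L'(\cdot)-a'(\uu,\cdot)-a(\gG',\cdot)$, after using $\ww=\uu-\gG$ and the cancellation $\normalExt'=0$ on $\Gamma_C$). Since \eqref{IVConDerW} is an elliptic VI on the closed convex cone $S^{\ww}(\Kk_0)$, it has a unique solution, hence the whole family converges strongly and $\dot\ww=\Proj^a_{S^{\ww}(\Kk_0)}(\dot\zz[\thetaa])$, where $\dot\zz[\thetaa]$ is the Riesz representative (for $a$) of $L'(\cdot)-a'(\uu,\cdot)-a(\gG',\cdot)$, which depends linearly on $\thetaa$. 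As $S^{\ww}(\Kk_0)$ is a cone and $\dot\zz[\thetaa]$ is linear in $\thetaa$, the maps $\thetaa\mapsto\dot\ww$ and $\thetaa\mapsto d\ww=\dot\ww-\gradd\ww\,\thetaa$ are positively homogeneous, which is the asserted conical shape differentiability on $\Thetaa$.

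The main obstacle is the third step: Theorem \ref{ThmMignotPoly} is stated for a fixed inner product, whereas here both the argument $\zz^t$ and the form $a^t$ — hence the projection operator $\Proj^{a^t}_{\Kk_0}$ itself — vary with $t$. One must show that the form perturbation contributes only the smooth terms $a'$, $L'$ and does not interact, at first order, with the non-smooth projection; this is exactly what the two-sided energy estimate above achieves, following \cite{sokolowski1988shape}. A secondary technical point is the capacity-theoretic description of the tangent and critical cones, which relies on the regularity of $\uu$, $\lambda$ and $\gG_{\normalExt}$ to make the sets $\pazocal{A}$ and $\pazocal{B}$ meaningful up to sets of zero capacity.
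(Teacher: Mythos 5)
Your proposal is correct and rests on the same core machinery as the paper's proof: strong continuity of $t\mapsto\ww^t$, first-order expansions of $a^t$, $L^t$ and $\gG(t)$, polyhedricity of $\Kk_0$, and Mignot's theorem (Theorem \ref{ThmMignotPoly}), together with the identification of the critical cone through $\lambda$. The difference lies in the final assembly. The paper, following \cite[Section 5.2]{maury2016shape}, plugs the expansions into \eqref{IVWt} and rewrites the resulting inequality as $\ww^t=\Proj_{\Kk_0}^a\bigl(\mathfrak{l}-\gG+t(\mathfrak{l}'-\mathfrak{a}'_{\uu}-\gG')\bigr)+o(t)$, where $\mathfrak{l}$, $\mathfrak{l}'$, $\mathfrak{a}'_{\uu}$ are Riesz representatives with respect to the \emph{fixed} form $a$: the form perturbation is absorbed into the argument of a fixed-form projection, Lipschitz continuity of $\Proj_{\Kk_0}^a$ disposes of the $o(t)$, and Theorem \ref{ThmMignotPoly} then gives the expansion $\ww^t=\ww+t\,\Proj_{S^{\ww}(\Kk_0)}^a(\mathfrak{l}'-\mathfrak{a}'_{\uu}-\gG')+o(t)$ in one stroke, with polyhedricity and the characterization of $S^{\ww}(\Kk_0)$ cited from \cite[Lemma 5.2.9]{maury2016shape} (after the identification $\normalExt=\normalInt$ on $\Gamma_C$). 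You instead keep the $t$-dependent projection $\Proj^{a^t}_{\Kk_0}(\zz^t)$ and close by compactness of the difference quotients, identification of weak limit points as solutions of \eqref{IVConDerW}, and uniqueness --- the Sokolowski--Zol\'esio route of \cite{sokolowski1988shape}. That route can be made rigorous, but it is heavier exactly where you flag the obstacle: you need an operator-perturbation estimate for $\Proj^{a^t}_{\Kk_0}-\Proj^{a}_{\Kk_0}$, and identifying the limit inequality for all test functions in $S^{\ww}(\Kk_0)$ requires the density argument that polyhedricity provides, so you end up reproving part of Theorem \ref{ThmMignotPoly} rather than invoking it once on a perturbed argument. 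Your by-hand, capacity-theoretic derivation of $S^{\ww}(\Kk_0)$ (tangent cone of the trace constraint, orthogonality to $\zz-\ww$ read through \eqref{LMF0:1} as $\prodD{\lambda,\phii_{\normalExt}}{\Gamma_C}=0$) is a useful complement that the paper delegates to the cited lemma, and your observation that $\thetaa\in\Thetaa$ freezes $\normalExt$ on $\Gamma_C$ matches the paper's use of Lemma \ref{LemmaK0Kt}.
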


\begin{prf}
   We follow the same steps as in \cite[Section 5.2]{maury2016shape}, and adapt each of them to our specific formulation. The idea is to write $\ww^t$ as the projection onto $\Kk_0$ of some element in $\Xx$. Then, using the conical differentiability of the projection (see \cite{mignot1976controle}), one is able to perform a first order expansion of $\ww^t$ around $t=0^+$. 
   
   First, let us prove strong continuity of the map $t\mapsto \ww^t$ at $t=0^+$ in $\Xx$. Taking respectively $\vv=\ww$ and $\vv=\ww^t$ as test-functions in \eqref{IVWt} and \eqref{IVW}, then adding the two formulations, one obtains
   $$
     a(\ww-\ww^t,\ww-\ww^t) \: \leq\:  (L^t-L)(\ww-\ww^t) - a^t(\gG(t),\ww-\ww^t) + a(\gG,\ww-\ww^t)\:.
   $$
  Using ellipticity of $a$, one deduces the following estimation:
   $$
      \alpha_0 \norml \ww-\ww^t \normr_{\Xx} \: \leq \:  \norml L^t-L \normr_{\Xx^*} + C\left( \norml a^t-a\normr + \norml \gG(t)-\gG \normr_{\Xx} \right)\:,
   $$
   which yields the result since the right hand side goes to $0$, due to the properties of $a^t$, $L^t$, and the regularity of $\gG$. Next, using this result and the again the properties of $a^t$, $L^t$, we proceed to the expansion of each term in \eqref{IVWt}, which leads to:
   \begin{equation*}
        \begin{aligned}
            a^t(\ww^t,\vv-\ww^t) &= a(\ww^t,\vv-\ww^t) + ta'(\ww,\vv-\ww^t) + o(t)\:, \\
            L^t(\vv-\ww^t) &= L(\vv-\ww^t) + tL'(\vv-\ww^t) + o(t)\:, \\
            a^t(\gG(t),\vv-\ww^t) &= a(\gG,\vv-\ww^t) + ta'(\gG,\vv-\ww^t) + ta(\gG',\vv-\ww^t) + o(t)\:.
        \end{aligned}
   \end{equation*}
   Plugging these expansions in \eqref{IVWt} yields: for all $\vv\in\Kk_0$,
   \begin{equation}
        a(\ww^t,\vv-\ww^t) \: \geq\:  L(\vv-\ww^t) - a(\gG,\vv-\ww^t) + t\left( L'(\vv-\ww^t) - a'(\uu,\vv-\ww^t) - a(\gG',\vv-\ww^t)\right) + o(t) \:.
        \label{IVWtExp}
   \end{equation}
   Let $\mathfrak{l}$, $\mathfrak{l}'$, $\mathfrak{a}'_{\uu}\in\Xx$ such that for all $\zz\in\Xx$:
   $$
     a(\mathfrak{l},\zz)=L(\zz)\:, \hspace{2em} a(\mathfrak{l}',\zz)=L'(\zz)\:, \hspace{2em} a(\mathfrak{a}'_{\uu},\zz)= a'(\uu,\zz)\:.
   $$
   Then relation \eqref{IVWtExp} can be equivalently rewritten as:
   \begin{equation*}
     \begin{aligned}
       \ww^t &= \Proj_{\Kk_0}^a\left( \mathfrak{l}-\gG +t(\mathfrak{l}'-\mathfrak{a}'_{\uu}-\gG') + o(t) \right)\\
       &= \Proj_{\Kk_0}^a\left( \mathfrak{l}-\gG + t(\mathfrak{l}'-\mathfrak{a}'_{\uu}-\gG') \right) + o(t)\:.
     \end{aligned}
   \end{equation*}
   As seen in Remark \ref{RmkNN0}, one has the approximation $\normalExt=\normalInt$ on $\Gamma_C$ under the small displacements hypothesis. Thus we directly get from \cite[Lemma 5.2.9]{maury2016shape} that $\Kk_0$ is polyhedric. Therefore, from Theorem \ref{ThmMignotPoly}, it follows that $\Proj_{\Kk_0}^a$ is conically differentiable and that:
   $$
     \ww^t = \ww + t\Proj_{S^{\ww}(\Kk_0)}^a\left( \mathfrak{l}'-\mathfrak{a}'_{\uu}-\gG'\right) + o(t)\:,
   $$
   which yields \eqref{IVConDerW}. Finally, the characterization of $S^{\ww}(\Kk_0)$ is also given by \cite[Lemma 5.2.9]{maury2016shape}.
\end{prf}

\begin{cor}
    Under Assumption \ref{A2}, the solution $\uu$ of \eqref{OPT0} is conically shape differentiable on $\Thetaa$ and its conical material derivative in any direction $\thetaa\in\Thetaa$ is given by the solution of: find $\dot{\uu}\in \Ss$ such that,
    \begin{equation}
        a(\dot{\uu},\psii-\dot{\uu}) \: \geq\: L'(\psii-\dot{\uu}) - a'(\uu,\psii-\dot{\uu})\:, \:\:\: \forall \psii\in \Ss\:,
        \label{IVConDerU}
    \end{equation}
    where $\Ss:=\left\{ \psii\in \gG'+\Xx \: | \: \psii_{\normalExt} \leq \gG_{\normalExt}' \mbox{ q.e.$\!$ on } \pazocal{A}\cup\pazocal{B} \mbox{ and } a(\uu,\psii-\gG')=L(\psii-\gG') \right\}$.
\end{cor}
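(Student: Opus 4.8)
The plan is to deduce the statement from Theorem~\ref{ThmConDerW} by translating the result obtained for the auxiliary unknown $\ww=\uu-\gG$ back to $\uu$ itself. Since the rigid foundation $\Omega_{rig}$ is fixed, the gap extension $\gG$ does not depend on $\Omega$: it is only transported by composition with $\Id+t\thetaa$, so its material derivative exists and equals $\dot{\gG}=\gG'=(\gradd\gG)\thetaa$. Moreover $\gG\in\pazocal{C}^2(\mathbb{R}^d)$ vanishes on a neighbourhood of $\overline{\Gamma_D}$, hence so does $\gG'$, which gives $\gG'\in\Xx$. From $\uu^t=\ww^t+\gG(t)$ and Theorem~\ref{ThmConDerW}, $\uu$ admits, in any direction $\thetaa\in\Thetaa$, the conical material derivative $\dot{\uu}=\dot{\ww}+\gG'$; since $\thetaa\mapsto\dot{\ww}$ is positively homogeneous and $\thetaa\mapsto\gG'$ is linear, the map $\thetaa\mapsto\dot{\uu}$ is positively homogeneous on $\Thetaa$.

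It then remains to rewrite the variational inequality~\eqref{IVConDerW} in terms of $\dot{\uu}$. The substitutions $\dot{\ww}=\dot{\uu}-\gG'$ and $\phii=\psii-\gG'$ are the natural ones. First one checks that the affine translation $\psii\mapsto\psii-\gG'$ maps $\Ss$ bijectively onto $S^{\ww}(\Kk_0)$: using the characterization of $S^{\ww}(\Kk_0)$ from Theorem~\ref{ThmConDerW}, one has $\psii\in\gG'+\Xx$ iff $\phii\in\Xx$, the inequality $\psii_{\normalExt}\le\gG_{\normalExt}'$ q.e.\ on $\pazocal{A}\cup\pazocal{B}$ iff $\phii_{\normalExt}\le0$ q.e.\ there, and $a(\uu,\psii-\gG')=L(\psii-\gG')$ iff $a(\uu,\phii)=L(\phii)$. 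In particular $\dot{\uu}\in\Ss$ because $\dot{\ww}\in S^{\ww}(\Kk_0)$. Since $\phii-\dot{\ww}=\psii-\dot{\uu}$, the left-hand side of~\eqref{IVConDerW} becomes $a(\dot{\uu},\psii-\dot{\uu})-a(\gG',\psii-\dot{\uu})$, and the term $-a(\gG',\psii-\dot{\uu})$ cancels the identical term on the right-hand side, leaving precisely~\eqref{IVConDerU}. Existence and uniqueness of $\dot{\uu}\in\Ss$ follow from the Lions--Stampacchia theorem, since $a$ is $\Xx$-elliptic and continuous and $\Ss$ is a non-empty closed convex subset of the affine space $\gG'+\Xx$.

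Finally, for the conical shape derivative one notes that $\gradd\uu\,\thetaa\in\Ll^2(\Omega)$ because $\uu\in\Hh^1(\Omega)$ and $\thetaa\in\Ll^\infty(\mathbb{R}^d)$, so that $d\uu(\Omega)[\thetaa]:=\dot{\uu}(\Omega)[\thetaa]-\gradd\uu\,\thetaa$ is well defined in $\Ll^2(\Omega)$ for every $\thetaa\in\Thetaa$, and $\thetaa\mapsto d\uu(\Omega)[\thetaa]$ is positively homogeneous on $\Thetaa$ as the sum of the positively homogeneous map $\thetaa\mapsto\dot{\uu}$ and the linear map $\thetaa\mapsto\gradd\uu\,\thetaa$. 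The only genuinely delicate ingredient, namely the polyhedricity of $\Kk_0$ (obtained through the identification $\normalExt=\normalInt$ on $\Gamma_C$ and Mignot's differentiability theorem for projections), is already contained in Theorem~\ref{ThmConDerW}; the corollary is a bookkeeping translation, and the main care is needed in verifying that the translated cone $\Ss$ is described correctly and that the $a(\gG',\cdot)$ terms indeed cancel.
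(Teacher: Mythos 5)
Your proposal is correct and follows essentially the same route as the paper: the corollary is obtained from Theorem \ref{ThmConDerW} via the identity $\uu^t=\ww^t+\gG(t)$, hence $\dot{\uu}=\dot{\ww}+\gG'$, with the translation $\psii=\phii+\gG'$ turning \eqref{IVConDerW} into \eqref{IVConDerU} after the $a(\gG',\cdot)$ terms cancel. Your write-up merely makes explicit the bookkeeping (the affine bijection between $S^{\ww}(\Kk_0)$ and $\Ss$, positive homogeneity, and $\gG'\in\Xx$) that the paper leaves implicit.
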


\begin{prf}
   This is a direct consequence of Theorem \ref{ThmConDerW} since $\uu^t=\ww^t+\gG(t)$ and therefore $\dot{\uu}=\dot{\ww}+\gG'$.
\end{prf}


As formulations \eqref{IVConDerW} and \eqref{IVConDerU} relies on zero capacity sets they are not easy to handle. However, under some mild additional regularity assumptions, it is possible to rewrite those formulations as standard optimization problems. 
In the same way as in \cite[Section 4.1]{hintermuller2011optimal} for the obstacle problem, we introduce the following regularity assumption:
\begin{hypothesis}
    $\pazocal{A}\cup \pazocal{B} = \overline{\inter (\pazocal{A}\cup \pazocal{B})}$.
    \label{A3}
\end{hypothesis}

\begin{remark}
   This assumption implies that not only $\pazocal{A}\cup \pazocal{B}$ is closed, but also it has a non-empty interior.
   Moreover, for every $\phii \in\Xx$ it is obvious, that
    $$
        \phii_{\normalExt}\leq 0 \mbox{ q.e. on } \inter(\pazocal{A}\cup\pazocal{B}) \ \Longrightarrow \  \phii_{\normalExt}\leq 0 \mbox{ a.e. on } \inter(\pazocal{A}\cup\pazocal{B})\:.
    $$
Regarding the converse implication, we invoke the result stated in \cite[Lemma 4.31]{sokolowski1992introduction}, from which we get that the subspace
    $$
        \pazocal{H}:= \left\{ \phii_{\normalExt} \in H^{\frac{1}{2}}(\Gamma_C) \:|\: \phii \in \Xx \right\}
    $$
    associated with the appropriate bilinear form (expressed in \cite[Formula (4.192) ]{sokolowski1992introduction}) is a Dirichlet space in the sense of \cite[Définition 3.1]{mignot1976controle}. Therefore $\phii_{\normalExt}$ admits a unique quasi-continuous representative in the equivalence class related to the "q.e. on $\Gamma_C$ equality". Considering this specific representative, one gets from \cite[Théorème 5]{deny1965theorie} that 
    $$
        \phii_{\normalExt}\leq 0 \mbox{ a.e. on } \inter(\pazocal{A}\cup\pazocal{B}) \ \Longrightarrow \  \phii_{\normalExt}\leq 0 \mbox{ q.e. on } \inter(\pazocal{A}\cup\pazocal{B})\:,
    $$
    since $\inter(\pazocal{A}\cup\pazocal{B})$ is an open subset of $\Gamma_C$. Quasi-continuity of $\phii_{\normalExt}$ on $\Gamma_C$ also implies that $ \phii_{\normalExt}\leq 0$ q.e. on $\overline{\inter(\pazocal{A}\cup\pazocal{B})}=\pazocal{A}\cup\pazocal{B}$.
    \label{RmkA3}
\end{remark}

Before stating the optimization problem that $\dot{\ww}$ solves when Assumption \ref{A3} is fulfilled, let us define the subspace $\Xx_{\pazocal{A}} \subset \Xx$, and the cone $\Kk_{\pazocal{A}}$ such that:
$$
  \Xx_{\pazocal{A}}:=\{ \phii \in \Xx \: | \: \phii_{\normalExt} = 0  \mbox{ a.e. on }  \pazocal{A}\}\:, \hspace{2em} \Kk_{\pazocal{A}}:=\{ \phii \in \Xx_{\pazocal{A}} \: | \: \phii_{\normalExt} \leq 0  \mbox{ a.e. on } \pazocal{B}\}\:.
$$
Clearly, $\Xx_{\pazocal{A}}$ is a closed subspace of $\Xx$, therefore it is a Hilbert space, and $\Kk_{\pazocal{A}}$ is a non-empty closed convex cone. 

\begin{theo}\label{ThmOptConDerW}
    If Assumption \ref{A2} and Assumption \ref{A3} hold, then $\dot{\ww}$ is the solution of \eqref{IVConDerW} if and only if it solves:
    \begin{equation}
    	\underset{\phii\in \Kk_{\pazocal{A}}}{\inf} \:\: \frac{1}{2}a(\phii,\phii)-L'(\phii)+a'(\uu,\phii)+a(\gG',\phii) \:.
    \label{OPTConDerW}
    \end{equation}
\end{theo}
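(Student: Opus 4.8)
The plan is to show that the variational inequality \eqref{IVConDerW} characterizing $\dot{\ww}$ is exactly the first-order optimality condition of the convex minimization problem \eqref{OPTConDerW}, and that the feasible set $S^{\ww}(\Kk_0)$ and the cone $\Kk_{\pazocal{A}}$ coincide under Assumption \ref{A3}. The key observation is that \eqref{OPTConDerW} is a minimization of a strictly convex, continuous, coercive functional over the non-empty closed convex cone $\Kk_{\pazocal{A}}$ in the Hilbert space $\Xx$, so by the classical results from convex analysis (e.g. \cite[Chapter 1]{ekeland1999convex}), it admits a unique solution entirely characterized by its Euler inequality: $\dot{\ww}\in\Kk_{\pazocal{A}}$ is the minimizer if and only if
\begin{equation*}
    a(\dot{\ww},\phii-\dot{\ww}) - L'(\phii-\dot{\ww}) + a'(\uu,\phii-\dot{\ww}) + a(\gG',\phii-\dot{\ww}) \geq 0\:, \quad \forall \phii\in\Kk_{\pazocal{A}}\:,
\end{equation*}
which is precisely \eqref{IVConDerW} with $S^{\ww}(\Kk_0)$ replaced by $\Kk_{\pazocal{A}}$. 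Hence the theorem reduces to proving the identity $S^{\ww}(\Kk_0) = \Kk_{\pazocal{A}}$.

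First I would recall the characterization from Theorem \ref{ThmConDerW}: $S^{\ww}(\Kk_0) = \{ \phii\in\Xx \: | \: \phii_{\normalExt}\leq 0 \mbox{ q.e. on } \pazocal{A}\cup\pazocal{B} \mbox{ and } a(\uu,\phii)=L(\phii)\}$. I would treat the equality constraint and the inequality constraint separately. For the equality $a(\uu,\phii)=L(\phii)$: since $(\uu,\lambda)$ solves \eqref{ALF:all}, taking $\vv=\phii$ in \eqref{ALF:1} gives $a(\uu,\phii) - L(\phii) = -\prodL2{\lambda,\phii_{\normalExt}}{\Gamma_C}$. Now $\lambda\geq 0$ a.e.\ on $\Gamma_C$ with $\lambda>0$ exactly on $\pazocal{A}$ (and $\lambda=0$ on $\pazocal{I}\cup\pazocal{B}$, which covers $\Gamma_C$ up to null sets since $\pazocal{A}\cup\pazocal{I}\cup\pazocal{B}=\Gamma_C$ a.e.). Therefore $\prodL2{\lambda,\phii_{\normalExt}}{\Gamma_C} = \int_{\pazocal{A}}\lambda\,\phii_{\normalExt}$, and combined with $\phii_{\normalExt}\leq 0$ q.e.\ (hence a.e.)\ on $\pazocal{A}$ and $\lambda>0$ on $\pazocal{A}$, this forces $\phii_{\normalExt}=0$ a.e.\ on $\pazocal{A}$; conversely, $\phii_{\normalExt}=0$ a.e.\ on $\pazocal{A}$ immediately gives $\prodL2{\lambda,\phii_{\normalExt}}{\Gamma_C}=0$, i.e.\ the equality. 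So the pair of conditions "$\phii_{\normalExt}\leq 0$ q.e.\ on $\pazocal{A}$ and $a(\uu,\phii)=L(\phii)$" is equivalent to "$\phii_{\normalExt}=0$ a.e.\ on $\pazocal{A}$". For the remaining constraint $\phii_{\normalExt}\leq 0$ q.e.\ on $\pazocal{B}$: here I would invoke Assumption \ref{A3} and the Dirichlet-space argument recalled in Remark \ref{RmkA3}, which shows that for $\phii\in\Xx$ with $\phii_{\normalExt}=0$ on $\pazocal{A}$, "q.e.\ on $\pazocal{A}\cup\pazocal{B}$" reduces to "a.e.\ on $\pazocal{B}$" via quasi-continuity of the trace representative on the open set $\inter(\pazocal{A}\cup\pazocal{B})$ and density. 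Assembling these two equivalences yields $S^{\ww}(\Kk_0)=\Kk_{\pazocal{A}}$.

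Once $S^{\ww}(\Kk_0)=\Kk_{\pazocal{A}}$ is established, it only remains to verify the standard hypotheses for \eqref{OPTConDerW}: $\Kk_{\pazocal{A}}$ is non-empty (it contains $0$), closed and convex (it is the intersection of the closed subspace $\Xx_{\pazocal{A}}$ with a closed half-space condition); the objective $\phii\mapsto \frac{1}{2}a(\phii,\phii)-L'(\phii)+a'(\uu,\phii)+a(\gG',\phii)$ is strictly convex and coercive by ellipticity of $a$, and continuous since $L'$, $a'(\uu,\cdot)$ and $a(\gG',\cdot)$ are continuous linear forms on $\Xx$ (using Assumption \ref{A2} for the regularity of $\ff$, $\tauu$, $\Aa$ and $\gG'$). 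Hence existence, uniqueness, and the equivalence with the Euler inequality follow, completing the proof. The main obstacle is the careful handling of the capacity-theoretic constraint: translating "$\phii_{\normalExt}\leq 0$ q.e.\ on $\pazocal{A}\cup\pazocal{B}$" into the purely $L^2$/a.e.\ condition defining $\Kk_{\pazocal{A}}$, which is exactly where Assumption \ref{A3} and the Dirichlet-space machinery from \cite{mignot1976controle,deny1965theorie} (as summarized in Remark \ref{RmkA3}) are needed; everything else is routine convex analysis.
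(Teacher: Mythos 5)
Your proof is correct and follows essentially the same route as the paper: both reduce the statement to the identity $S^{\ww}(\Kk_0)=\Kk_{\pazocal{A}}$, convert the constraint $a(\uu,\phii)=L(\phii)$ into $\prodL2{\lambda,\phii_{\normalExt}}{\Gamma_C}=0$ via \eqref{ALF:1}, exploit $\lambda\geq 0$ with $\supp\lambda=\pazocal{A}$, and invoke Assumption \ref{A3} through Remark \ref{RmkA3} to pass between the q.e.\ and a.e.\ formulations of the trace constraint. The only difference is that you spell out the routine convex-analysis equivalence between the variational inequality and the minimization over $\Kk_{\pazocal{A}}$, which the paper leaves implicit.
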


\begin{prf}  
    It suffices to prove that $S^{\ww}(\Kk_0)=\Kk_{\pazocal{A}}$. First of all, let us point out that in the characterization of $S^{\ww}(\Kk_0)$ given in Theorem \ref{ThmConDerW}, since $\phii\in\Xx$ and $\uu$ solves \eqref{ALF:1}, condition $a(\uu,\phii)=L(\phii)$ is equivalent to $\prodL2{\lambda,\phii_{\normalExt}}{\Gamma_C}=0$. As $\lambda\in L^2(\Gamma_C)$, $\lambda\geq 0$ and $\supp \lambda = \pazocal{A}$, it follows that:
    $$
      \left( \phii\in \Xx \mbox{ s.t. } \phii_{\normalExt}\leq 0 \mbox{ a.e. on } \pazocal{A}\cup\pazocal{B} \mbox{ and } \prodL2{\lambda,\phii_{\normalExt}}{\Gamma_C}=0 \right) \ \Longleftrightarrow \ \phii\in\Kk_{\pazocal{A}} \:.
    $$
    And Remark \ref{RmkA3} gives us the equality of both sets. 
\end{prf}

Finally, some obvious results are obtained, introducing the assumption
\begin{hypothesis}
    The biactive set $\pazocal{B}$ is of measure zero.
    \label{A5}
\end{hypothesis}

\begin{cor}\label{CorConDerW}
If, in addition to the assumptions of Theorem \ref{ThmOptConDerW}, Assumption \ref{A5} holds, then $\dot{\ww}$ is the solution of \eqref{IVConDerW} if and only if it solves: find $\dot{\ww}\in\Xx_{\pazocal{A}}$ such that
  \begin{equation}
      a(\dot{\ww},\phii) = L'(\phii) - a'(\uu,\phii) - a(\gG',\phii)\:, \:\:\: \forall \phii\in \Xx_{\pazocal{A}}\:.
      \label{FVConDerW}
  \end{equation}  
  \end{cor}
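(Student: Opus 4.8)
The plan is to deduce this corollary from Theorem~\ref{ThmOptConDerW} by exploiting the fact that Assumption~\ref{A5} renders the extra inequality constraint in the definition of $\Kk_{\pazocal{A}}$ vacuous. First I would invoke Theorem~\ref{ThmOptConDerW}: since Assumptions~\ref{A2} and~\ref{A3} are in force, $\dot{\ww}$ is the solution of the variational inequality \eqref{IVConDerW} if and only if it minimizes the quadratic functional appearing in \eqref{OPTConDerW} over the convex cone $\Kk_{\pazocal{A}}$. Next I would observe that, by definition, $\Kk_{\pazocal{A}}=\{\phii\in\Xx_{\pazocal{A}}\ |\ \phii_{\normalExt}\le 0 \text{ a.e. on }\pazocal{B}\}$, and that under Assumption~\ref{A5} the biactive set $\pazocal{B}$ has measure zero, so the condition $\phii_{\normalExt}\le 0$ a.e. on $\pazocal{B}$ is satisfied by every $\phii\in\Xx$. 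Consequently $\Kk_{\pazocal{A}}=\Xx_{\pazocal{A}}$, and \eqref{OPTConDerW} turns into an \emph{unconstrained} minimization problem over the Hilbert space $\Xx_{\pazocal{A}}$, a closed subspace of $\Xx$.

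It then remains to recognize the first-order optimality condition of that unconstrained problem. The functional $\phii\mapsto \tfrac{1}{2}a(\phii,\phii)-L'(\phii)+a'(\uu,\phii)+a(\gG',\phii)$ is strictly convex, continuous and coercive on $\Xx_{\pazocal{A}}$: strict convexity and coercivity come from the $\Xx$-ellipticity of $a$, which is inherited by $\Xx_{\pazocal{A}}$, while continuity of the linear part $\phii\mapsto L'(\phii)-a'(\uu,\phii)-a(\gG',\phii)$ follows from the continuity of $L'$, $a'$ and $a$ together with Assumption~\ref{A2} and $\gG'\in\Xx$. Hence its minimizer exists, is unique, and is characterized by the Euler--Lagrange equation, which reads exactly \eqref{FVConDerW}; existence and uniqueness of the solution to \eqref{FVConDerW} also follow directly from the Lax--Milgram lemma applied on $\Xx_{\pazocal{A}}$.

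Alternatively, and perhaps more transparently, one can argue straight from the variational inequality: once $S^{\ww}(\Kk_0)$ has been identified (through Theorem~\ref{ThmConDerW} and Theorem~\ref{ThmOptConDerW}) with the linear subspace $\Xx_{\pazocal{A}}$ rather than with a genuine cone, testing \eqref{IVConDerW} successively with $\phii=\dot{\ww}+\psii$ and $\phii=\dot{\ww}-\psii$ for an arbitrary $\psii\in\Xx_{\pazocal{A}}$ converts the inequality into the equality \eqref{FVConDerW}. I expect this corollary to be essentially a bookkeeping consequence of the material already established; the only point requiring a little care is the identification $\Kk_{\pazocal{A}}=\Xx_{\pazocal{A}}$, i.e. checking that it is the almost-everywhere (and not the quasi-everywhere) form of the constraint that survives after Theorem~\ref{ThmOptConDerW} --- but this was already settled in the proof of that theorem via Remark~\ref{RmkA3}, so under Assumption~\ref{A5} the null-measure set $\pazocal{B}$ can simply be discarded.
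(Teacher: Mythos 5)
Your proposal is correct and follows essentially the same route as the paper: the paper's proof simply notes that Assumption \ref{A5} gives $\Kk_{\pazocal{A}}=\Xx_{\pazocal{A}}$, so that \eqref{OPTConDerW} becomes an unconstrained quadratic minimization over the subspace $\Xx_{\pazocal{A}}$ whose optimality condition is the linear variational formulation \eqref{FVConDerW}. Your extra remarks (Lax--Milgram, the $\pm\psii$ testing argument, and the a.e./q.e. bookkeeping via Remark \ref{RmkA3}) are consistent fillings-in of details the paper leaves implicit.
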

\begin{prf}
   When $\pazocal{B}$ is of measure zero, $\Kk_{\pazocal{A}}=\Xx_{\pazocal{A}}$ and the result follows as problem \eqref{OPTConDerW} can be equivalently rewritten as a linear variational formulation.
\end{prf}
\begin{remark}\label{StrongShapeDerU}
 Under the assumption of Corollary \ref{CorConDerW}, the material derivative becomes linear with respect to $\thetaa$, which implies that $\uu$ is strongly shape differentiable in $L^2(\Omega)$, and its shape derivative in any direction $\thetaa \in \Thetaa$ is given by:
 $$
   d\mathbf{u} = \dot{\uu} - \gradd \uu \thetaa =  \dot{\ww} + \gG' - \gradd \uu \thetaa\:,
$$
 where $\dot{\ww}\in\Xx_{\pazocal{A}}$ is the unique solution of \eqref{FVConDerW}. Note that this conclusion is similar to the one in \cite[Remark 4.1]{neittaanmaki1988optimization}. 
\end{remark}
\subsection{Shape sensitivity analysis of the augmented Lagrangian formulation}

The goal of this section is to prove, at every iteration of the ALM algorithm, the differentiability of $\uu^{k}$ with respect to the shape. 
As $\maxx$ fails to be Fréchet differentiable it is not possible to rely on the implicit function theorem as in \cite{henrot2006variation}.
However, as it is a projection operator, it is conically differentiable, which enables us to show existence of conical material/shape derivatives for $\uu^{k}$ following the approach in \cite{sokolowski1992introduction}. 
Then, under assumptions on some specific subsets of $\Gamma_C$ (this will be presented and referred to as Assumption \ref{A4}), classical shape differentiability of $\uu^{k}$ is proved.


First of all, let us briefly recall some properties of function $\maxx$.
\begin{lemma}
 The function $\maxx:\mathbb{R}\to\mathbb{R}$ is Lipschitz continuous and conically differentiable, with conical derivative at $u$ in the direction $v\in\mathbb{R}$:
 $$
  \dmaxx(u;v) = \left\{
      \begin{array}{lr}
        0 & \mbox{ if } \:u<0 ,\\
        \maxx(v) & \mbox{ if } \:u=0, \\
        v & \mbox{ if } \:u>0.
      \end{array}
    \right.
$$
\label{LemDirDiffMax}
\end{lemma}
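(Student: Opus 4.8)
The plan is to establish both assertions by a direct case analysis on the sign of $u$, exploiting that $\maxx$ is a piecewise-linear scalar function.

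For Lipschitz continuity, the quickest route is to write $\maxx(u)=\tfrac{1}{2}\bigl(u+|u|\bigr)$ for every $u\in\mathbb{R}$; since $u\mapsto u$ and $u\mapsto|u|$ are both $1$-Lipschitz, $\maxx$ is $1$-Lipschitz. (Equivalently, one checks $|\maxx(u)-\maxx(v)|\leq|u-v|$ by distinguishing the four sign configurations of the pair $(u,v)$.)

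For conical differentiability, fix $u\in\mathbb{R}$ and $v\in\mathbb{R}$ and examine $\maxx(u+tv)$ for $t>0$ small. If $u>0$, then $u+tv>0$ for $t$ sufficiently small (and for all $t>0$ when $v\geq 0$), so $\maxx(u+tv)=u+tv=\maxx(u)+tv$, whence $t^{-1}\bigl(\maxx(u+tv)-\maxx(u)\bigr)\to v$. If $u<0$, then $u+tv<0$ for $t$ small, so $\maxx(u+tv)=0=\maxx(u)$ and the difference quotient vanishes. If $u=0$, then $\maxx(tv)=\max\{0,tv\}=t\max\{0,v\}=t\,\maxx(v)$ by positive homogeneity of $\max\{0,\cdot\}$, so $\maxx(0+tv)=\maxx(0)+t\,\maxx(v)$ holds exactly, with no remainder. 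In all three cases the expansion $\maxx(u+tv)=\maxx(u)+t\,\dmaxx(u;v)+o(t)$ holds with $\dmaxx(u;v)$ given by the stated formula.

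It then remains only to note that $v\mapsto\dmaxx(u;v)$ is positively homogeneous, which matches Definition~\ref{DefConDiff} with $V_1=V_2=\mathbb{R}$: for $\eta>0$ one has $\dmaxx(u;\eta v)=\eta v$ if $u>0$, $\dmaxx(u;\eta v)=0$ if $u<0$, and $\dmaxx(0;\eta v)=\maxx(\eta v)=\eta\,\maxx(v)$ if $u=0$, i.e. $\dmaxx(u;\eta v)=\eta\,\dmaxx(u;v)$ in every case. There is no genuine obstacle here; the only point deserving a little care is the case $u=0$, where one must invoke the positive homogeneity of $\max\{0,\cdot\}$ so that the first-order term is the exact increment rather than a mere approximation — this is also what makes the map $v\mapsto\dmaxx(0;v)=\maxx(v)$ nonlinear, and hence why only \emph{conical} (not Gâteaux) differentiability can be claimed.
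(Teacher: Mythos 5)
Your proof is correct. The paper itself does not write this argument out: it states the lemma (together with its Nemytskij-operator counterpart) for \emph{directional} differentiability by citing an external reference, and then, in the accompanying remark, obtains the \emph{conical} version exactly as you do in your last paragraph, by observing that $v\mapsto \dmaxx(u;v)$ is positively homogeneous for every $u$. So what you have done differently is to supply the elementary content that the paper delegates to the citation: the Lipschitz bound via $\maxx(u)=\tfrac12(u+|u|)$ and the three-case computation of the difference quotient (with the exact identity $\maxx(tv)=t\,\maxx(v)$ at $u=0$, so the remainder vanishes there). This buys a self-contained verification at essentially no cost, while the paper's route buys brevity and directly covers the $L^2(\Gamma_C)$ Nemytskij-operator statement as well; your scalar argument would still need the dominated-convergence step (as done elsewhere in the paper for $\qq$) to reach that operator-level conclusion, but for the scalar lemma as stated nothing is missing. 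Your closing remark that nonlinearity of $\dmaxx(0;\cdot)=\maxx$ is precisely why only conical, not Gâteaux, differentiability holds matches the paper's own discussion.
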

\begin{lemma}
    The Nemytskij operator $\maxx:L^2(\Gamma_C)\to L^2(\Gamma_C)$ is Lipschitz continuous and conically differentiable.
\label{LemDirDiffNemytskijMax}
\end{lemma}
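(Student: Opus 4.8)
\textbf{Plan of proof for Lemma \ref{LemDirDiffNemytskijMax}.} The statement asserts that the superposition (Nemytskij) operator $\maxx: L^2(\Gamma_C) \to L^2(\Gamma_C)$, defined by $\maxx(u)(x) := \maxx(u(x))$, is Lipschitz continuous and conically differentiable, with conical derivative at $u$ in the direction $v$ given pointwise by $\dmaxx(u;v)$ as in Lemma \ref{LemDirDiffMax}. The approach mirrors the one already used in this chapter for the operator $\qq$ (proof of Lemma \ref{LemDirDiffNemytskijQ}): transfer the pointwise scalar properties of $\maxx:\mathbb{R}\to\mathbb{R}$ to the operator level using Lebesgue's dominated convergence theorem. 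No recourse to the more delicate growth-condition machinery of \cite{goldberg1992nemytskij} is needed because $\maxx$ is globally $1$-Lipschitz.

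First I would establish Lipschitz continuity. For $u_1, u_2 \in L^2(\Gamma_C)$, the pointwise bound $|\maxx(u_1(x)) - \maxx(u_2(x))| \leq |u_1(x) - u_2(x)|$ holds a.e. on $\Gamma_C$ (since $\maxx:\mathbb{R}\to\mathbb{R}$ is $1$-Lipschitz), hence $\|\maxx(u_1) - \maxx(u_2)\|_{0,\Gamma_C} \leq \|u_1 - u_2\|_{0,\Gamma_C}$ by integrating the square; in particular $\maxx(u)\in L^2(\Gamma_C)$ whenever $u\in L^2(\Gamma_C)$, so the operator is well defined. Next, for conical differentiability, fix $u, v \in L^2(\Gamma_C)$ and $t>0$, and consider the difference quotient
\begin{equation*}
    \Delta_t(x) := \frac{\maxx(u(x) + t v(x)) - \maxx(u(x))}{t} - \dmaxx(u(x);v(x)) \:.
\end{equation*}
By Lemma \ref{LemDirDiffMax}, $\Delta_t(x) \to 0$ for a.e.\ $x\in\Gamma_C$ as $t\searrow 0$. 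Moreover, using the $1$-Lipschitz property of $\maxx$ together with the pointwise bound $|\dmaxx(u(x);v(x))| \leq |v(x)|$ (immediate from the explicit formula), one gets $|\Delta_t(x)| \leq 2|v(x)|$ a.e.\ on $\Gamma_C$. Since $|v| \in L^2(\Gamma_C)$, the dominated convergence theorem yields $\|\Delta_t\|_{0,\Gamma_C} \to 0$ as $t\searrow 0$, which is precisely the statement that $\maxx: L^2(\Gamma_C)\to L^2(\Gamma_C)$ admits $v \mapsto \dmaxx(u;v)$ as directional derivative at $u$. Positive homogeneity of this map in $v$ is inherited from the pointwise formula, so the derivative is conical.

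The only point requiring a little care — and the closest thing to an obstacle — is the measurability of $x\mapsto \dmaxx(u(x);v(x))$, needed so that the candidate derivative genuinely lies in $L^2(\Gamma_C)$. This follows by splitting $\Gamma_C$ into the three measurable sets $\{u<0\}$, $\{u=0\}$, $\{u>0\}$ (measurable since $u$ is) and noting that on each of them the expression equals $0$, $\maxx(v)$, or $v$ respectively, each a measurable function; the $L^2$ bound $|\dmaxx(u;v)| \leq |v|$ then gives integrability. With that settled, the proof is complete. I would simply remark that this is the scalar analogue of the argument already spelled out for $\qq$, and reference \cite{susu2018optimal} as an alternative source, exactly as done for Lemmas \ref{LemDirDiffMax.2}--\ref{LemDirDiffNemytskijMax.2} in the previous chapter.
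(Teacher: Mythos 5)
Your proof is correct and follows essentially the same route the paper takes: the paper delegates directional differentiability and Lipschitz continuity to \cite{susu2018optimal} and notes that positive homogeneity of $\dmaxx(u;\cdot)$ upgrades this to conical differentiability, while the dominated-convergence argument you spell out is exactly the one the paper gives for the analogous operator $\qq$ in Lemma \ref{LemDirDiffNemytskijQ}. Your added remark on the measurability of $x\mapsto \dmaxx(u(x);v(x))$ is a fine (if routine) precision and does not change the argument.
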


\begin{remark}
   These results are well known if "conically" is replaced by "directionally", a proof can be found in \cite{susu2018optimal}, for example. Conical differentiability then follows directly because for any $u\in \mathbb{R}$, $\dmaxx(u;\cdot):\mathbb{R}\to\mathbb{R}$ is obviously positively homogeneous, and the same property holds for $\dmaxx(u;\cdot):L^2(\Gamma_C)\to L^2(\Gamma_C)$ for any $u\in L^2(\Gamma_C)$.
\end{remark}

\begin{theo}
  If Assumption \ref{A2} holds and $\lambda^0\in H^{\frac{1}{2}}(\Gamma_C)$, then for any $k\geq 0$, the pair $\left( \uu^k,\lambda^k\right)$ defined by \eqref{ItALM}--\eqref{UpdateMultALM} is conically shape differentiable on $\Thetaa$, strongly in $\Xx\times L^2(\Gamma_C)$.
  \label{ThmExistMDerk}
\end{theo}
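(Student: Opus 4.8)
The proof proceeds by induction on $k$, following the approach used in Chapter~\ref{chap:2.1} for the penalty formulation (Theorem~\ref{ThmExistMDer}), but now handling the sequence of iterates and the dependence on the previously computed multiplier $\lambda^k$. The base case and the inductive step have the same structure, so it suffices to assume $\left(\uu^k,\lambda^k\right)$ is conically shape differentiable strongly in $\Xx\times L^2(\Gamma_C)$ (with $\lambda^0$ being a fixed $\Omega$-independent datum in $H^{\frac12}(\Gamma_C)$, so that $\dot\lambda^0=0$ and the induction starts), and prove the same for $\left(\uu^{k+1},\lambda^{k+1}\right)$. Note that since $\lambda^0\in H^{\frac12}(\Gamma_C)$, the remark following Theorem~\ref{ThmCvALM} guarantees $\lambda^k\in H^{\frac12}(\Gamma_C)$ for all $k$, which gives the compact embedding $H^{\frac12}(\Gamma_C)\hookrightarrow L^2(\Gamma_C)$ needed for the limiting arguments, and also ensures that $\lambda^k\circl(\Id+t\thetaa)\in L^2(\Gamma_C)$ so the transported formulations make sense.

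First I would transport equation \eqref{ItALM} to $\Omega(t)$ and pull it back to $\Omega$ by change of variables, exactly as in the proof of Theorem~\ref{ThmExistMDer}: this produces a formulation for $\uu^{k+1,t}:=\uu^{k+1}(\Omega(t))\circl(\Id+t\thetaa)$ involving $\Aa(t)$, $\JacV(t)$, $\JacB(t)$, $\normalExt(t)$, $\gG_{\normalExt}(t)$, the transported strain $\epsilonn^t$, and crucially the transported multiplier $\lambda^{k,t}:=\lambda^k(\Omega(t))\circl(\Id+t\thetaa)$. Because $\thetaa\in\Thetaa$ has the form $\theta\normalInt$ with $\normalExt=\normalInt$ on $\Gamma_C$, we have $\normalExt'=0$ on $\Gamma_C$, which simplifies the boundary terms just as in the original-formulation analysis above. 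Strong continuity of $t\mapsto\uu^{k+1,t}$ at $t=0^+$ follows by the standard argument: uniform boundedness via ellipticity of $a$ and the $O(t)$ Taylor expansions of the known terms, then identification of the weak limit as $\uu^{k+1}$ by uniqueness, then a strong-convergence estimate using the monotonicity property $(\maxx(a)-\maxx(b))(a-b)\geq 0$ and the strong continuity of $\{\lambda^{k,t}\}$ (here the inductive hypothesis enters, since $\lambda^{k,t}\to\lambda^k$ strongly in $L^2(\Gamma_C)$). The only structural difference from Theorem~\ref{ThmExistMDer} is the extra term $\prodL2{\maxx(\lambda^{k,t}+\gamma^{k+1}(\uu^{k+1,t}_{\normalExt}-\gG_{\normalExt}(t))),\vv_{\normalExt(t)}}{\Gamma_C}$ where the argument of $\maxx$ now also carries the $\lambda^{k,t}$-dependence.

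Next, to get conical differentiability, I would form $\ww^t:=\tfrac1t(\uu^{k+1,t}-\uu^{k+1})$, subtract the formulations, divide by $t$, and pass to the limit. Uniform boundedness of $\ww^t$ uses ellipticity of $a$ together with the sign property of the difference quotient of $\maxx$. The candidate limit for the derivative of the boundary term is obtained from Lemma~\ref{LemDirDiffNemytskijMax} (conical differentiability of the Nemytskij operator $\maxx:L^2(\Gamma_C)\to L^2(\Gamma_C)$) applied to the argument $\lambda^k+\gamma^{k+1}(\uu^{k+1}_{\normalExt}-\gG_{\normalExt})$ with increment $\dot\lambda^k+\gamma^{k+1}(\dot\uu^{k+1}_{\normalExt}+\uu^{k+1}_{\normalExt'}-\gG_{\normalExt}')$; strong $L^2$ convergence of the difference quotients to this candidate is shown by the same two-term splitting estimate as in the proof of Theorem~\ref{ThmExistMDer}, invoking compact embedding and conical differentiability of $\maxx$ on $L^2(\Gamma_C)$. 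The resulting limit problem for $\dot\uu^{k+1}$ is a variational equation of the form $b^{k+1}(\dot\uu^{k+1},\vv)=\tilde L^{k+1}[\thetaa](\vv)$ on a set split into $\{\lambda^k+\gamma^{k+1}(\uu^{k+1}_{\normalExt}-\gG_{\normalExt})>0\}$, $\{<0\}$ and a biactive set $\{=0\}$, where on the strict-sign part the bilinear form $b^{k+1}$ is continuous and coercive (by the sign $G_+\geq 0$ of the generalized derivative of $\maxx$) and on the biactive set the $\maxx$-nonlinearity survives; well-posedness is then obtained by the optimization argument (strictly convex coercive continuous functional) exactly as at the end of the proof of Theorem~\ref{ThmExistMDer}. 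Uniqueness yields that the whole family $\ww^t$ converges, and positive homogeneity of $\thetaa\mapsto\dot\uu^{k+1}$ (hence of $\thetaa\mapsto d\uu^{k+1}$) follows from positive homogeneity of $\dmaxx(u;\cdot)$ and of all the other $\thetaa$-linear data terms. Finally, $\dot\lambda^{k+1}$ is simply read off from \eqref{UpdateMultALM}: differentiating $\lambda^{k+1}=\maxx(\lambda^k+\gamma^{k+1}(\uu^{k+1}_{\normalExt}-\gG_{\normalExt}))$ gives $\dot\lambda^{k+1}=\dmaxx(\lambda^k+\gamma^{k+1}(\uu^{k+1}_{\normalExt}-\gG_{\normalExt});\dot\lambda^k+\gamma^{k+1}(\dot\uu^{k+1}_{\normalExt}+\uu^{k+1}_{\normalExt'}-\gG_{\normalExt}'))$, strongly in $L^2(\Gamma_C)$ by Lipschitz continuity of $\maxx$ and the strong convergence already established, which closes the induction.

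The main obstacle is propagating the regularity and the strong-convergence bookkeeping through the induction: one must keep track that $\lambda^k\in H^{\frac12}(\Gamma_C)$ (so the compact embedding argument is available at step $k+1$), that $\dot\lambda^k$ genuinely lives in $L^2(\Gamma_C)$ and enters the right-hand side $\tilde L^{k+1}[\thetaa]$ as a bounded linear functional on $\Xx$, and that the biactive sets at level $k+1$, namely $\{x\in\Gamma_C: \lambda^k(x)+\gamma^{k+1}(\uu^{k+1}_{\normalExt}(x)-\gG_{\normalExt}(x))=0\}$, are handled by the optimization formulation rather than assumed negligible at this stage — the zero-measure assumption on these sets (Assumption~\ref{A4}) is only needed later to pass from conical to classical shape differentiability, not for the statement of Theorem~\ref{ThmExistMDerk} itself. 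Everything else is a direct transcription of the penalty-formulation argument with $\tfrac1\varepsilon\maxx(\cdot-\gG_{\normalExt})$ replaced by $\maxx(\lambda^k+\gamma^{k+1}(\cdot-\gG_{\normalExt}))$.
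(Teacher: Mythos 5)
Your proposal follows essentially the same route as the paper's proof: induction on $k$, each step being a transcription of the directional-derivative machinery of Chapter \ref{chap:2.1} (transport and pullback, uniform bounds, conical differentiability of the Nemytskij operator $\maxx$ on $L^2(\Gamma_C)$, an optimization argument for well-posedness of the limit problem on the biactive set, then reading $\dot{\lambda}^{k+1}$ off the update rule and checking positive homogeneity). The paper's proof is far more condensed — it simply invokes the penalty-formulation arguments — but the content matches, and your remark that Assumption \ref{A4} is only needed later, for classical shape differentiability, is exactly the paper's position. (Including $\uu^{k+1}_{\normalExt'}$ in the increment is harmless, since $\normalExt'=0$ on $\Gamma_C$ for $\thetaa\in\Thetaa$.)

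There is, however, one genuine slip, in the seed of your induction: you set $\dot{\lambda}^0=0$ on the grounds that $\lambda^0$ is a fixed, $\Omega$-independent datum. This conflates the shape (Eulerian) derivative, which is indeed zero for such a datum, with the material derivative, which is what enters the pulled-back formulation: after transport the contact term reads $\maxx\bigl(\lambda^0\circl(\Id+t\thetaa)+\gamma^{1}(\uu^{1,t}\cdot\normalExt(t)-\gG_{\normalExt}(t))\bigr)$, so the difference quotients pick up $(\lambda^0)'=(\grad \lambda^0)\thetaa$, which is nonzero in general. The paper's expression for ${R^1_{\normalExt}}'$ accordingly contains $(\lambda^0)'$, and it later adopts the abuse of notation $\dot{\lambda}^0:=(\lambda^0)'$ precisely so that the formulas hold for all $k\geq 0$. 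With $\dot{\lambda}^0=0$ your candidate limit in the base case is wrong, and the strong-convergence estimate for the difference quotient leaves an uncontrolled term of size $\tfrac{1}{t}\bigl(\lambda^0\circl(\Id+t\thetaa)-\lambda^0\bigr)$. The fix is immediate: replace $\dot{\lambda}^0$ by $(\lambda^0)'$ in the base case; the rest of your induction, which already propagates $\dot{\lambda}^k$ correctly, then goes through unchanged.
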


\begin{prf}
  As the solution of \eqref{ItALM} at each step $k+1$ depends on the previous iterate, the result will be proved by induction.\\

  \paragraph{\textbf{Base case}}
  For $k=0$, given $\gamma^1>0$, the first iteration of the ALM gives $\uu^1\in \Xx$ the solution of 
  $$
    a(\uu^{1},\vv) + \prodL2{R^1_{\normalExt}(\uu^1), \vv_{\normalExt}}{\Gamma_C}  = L(\vv) \:,
  $$
    where we have introduced the non-linear map $R^1_{\normalExt}:\Xx\to H^{\frac{1}{2}}(\Gamma_C)$ defined as, 
  \begin{equation*}
      R^1_{\normalExt}(\vv) := \maxx \left(\lambda^0 + \gamma^{1} (\vv_{\normalExt}-\gG_{\normalExt})\right)\quad\forall\vv\in\Xx.
  \end{equation*}
  Obviously, $R^1_{\normalExt}(\uu^1)=\lambda^1$, but we prefer here to denote $R^1_{\normalExt}(\uu^1)$ to emphasize that $\lambda^1$ is defined explictly from $\uu^1$ and the data.
  Using the same arguments as in \cite{chaudet2019shape}, one gets that, due to the regularity of the data, especially $\lambda^0$, $\uu^1$ admits a strong material derivative $\dot{\uu}^1\in\Xx$ in each direction $\thetaa\in\Thetaa$, which is the unique solution of 
  \begin{equation*}
      a(\dot{\uu}^{1},\vv) + \prodL2{{R^1_{\normalExt}}'(\uu^1), \vv_{\normalExt}}{\Gamma_C} = L^1[\thetaa](\vv)\:,
  \end{equation*}
  where, 
  for any $\vv\in\Xx$, with material derivative $\dot{\vv}\in\Xx$, we have
  \begin{equation*}
      \begin{aligned}
      {R^1_{\normalExt}}'(\vv) := &\: \dmaxx \left(\lambda^0 + \gamma^{1} (\vv_{\normalExt}-\gG_{\normalExt}); (\lambda^0)'+\gamma^1\left( \dot{\vv}_{\normalExt}-\gG_{\normalExt}'\right)\right)\:, \\
        L^1[\thetaa](\vv) := & \:L'(\vv) - a'(\uu^1,\vv) - \int_{\Gamma_C} \lambda^1 \vv_{\normalExt} \divv_\Gamma \thetaa\:.
    \end{aligned}
\end{equation*}
Moreover, as $\lambda^1$ only depends on $\lambda^0$ and $\uu^1$, it is clear that it also admits a strong material derivative in $L^2(\Gamma_C)$ in the direction $\thetaa$, which is simply obtained by differentiating \eqref{UpdateMultALM}:
\begin{equation}
   \dot{\lambda}^1 = {R^1_{\normalExt}}'(\uu^1)\:,
   \label{EqDotLambda1}
\end{equation}
which finishes to prove that  $\left( \uu^1,\lambda^1\right)$ is directionally shape differentiable. Moreover, one has that $a$ is bilinear, the maps $\thetaa\mapsto L^1[\thetaa], (\lambda^0)', \gG_{\normalExt}'$ are linear and $\dmaxx\left(\lambda^0 + \gamma^{1} (\uu^1_{\normalExt}-\gG_{\normalExt}); \cdot \right)$ is positively homogeneous. Thus one deduces that the map $\thetaa \mapsto \dot{\uu}^1$ is positively homogeneous. The same property holds for $\thetaa\mapsto \dot{\lambda}^1$ due to \eqref{EqDotLambda1}.\\

\paragraph{\textbf{Inductive step}}
Let $k\geq 1$ and assume $\left( \uu^k,\lambda^k\right)$ is conically shape differentiable, strongly in $\Xx\times L^2(\Gamma_C)$. This implies that $\lambda^k$ admits a strong material derivative $\dot{\lambda}^k\in L^2(\Gamma_C)$. Hence, using the arguments of the case $k=0$, one may differentiate \eqref{ItALM}, which leads to existence of a unique material derivative $\dot{\uu}^{k+1}\in\Xx$ for $\uu^{k+1}$ that solves:
  \begin{equation}
      a(\dot{\uu}^{k+1},\vv) + \prodL2{{R^{k+1}_{\normalExt}}'(\uu^{k+1}), \vv_{\normalExt}}{\Gamma_C} = L^{k+1}[\thetaa](\vv)\:,
      \label{EqMDerUk}
  \end{equation}
where the same notation as in the case $k=0$ has been used for $L^{k+1}[\thetaa]$, while the notation for ${R^{k+1}_{\normalExt}}'$ has been slightly adapted:
  \begin{equation*}
      {R^{k+1}_{\normalExt}}'(\vv) :=  \: \dmaxx \left(\lambda^k + \gamma^{k+1} (\vv_{\normalExt}-\gG_{\normalExt}); \dot{\lambda}^k+\gamma^{k+1}\left( \dot{\vv}_{\normalExt}-\gG_{\normalExt}'\right)\right)\:.
\end{equation*}
Especially, this also proves that $\lambda^{k+1}$ admits a material derivative in $L^2(\Gamma_C)$ defined as
\begin{equation}
    \dot{\lambda}^{k+1} = {R^{k+1}_{\normalExt}}'(\uu^{k+1})\:.
    \label{EqMDerLambdak}
\end{equation}
Again, since $a$ is bilinear, $\thetaa\mapsto L^{k+1}[\thetaa], \gG_{\normalExt}'$ are linear, and $\dmaxx\left(\lambda^k + \gamma^{k+1} (\uu^{k+1}_{\normalExt}-\gG_{\normalExt});\cdot\right)$, $\thetaa \mapsto \dot{\lambda}^k$ are positively homogeneous, one deduces that $\thetaa \mapsto \left( \dot{\uu}^{k+1}, \dot{\lambda}^{k+1}\right)$ is positively homogeneous as well.

\end{prf}

\begin{remark}
Existence of directional (even conical) material/shape derivatives in all directions does not guarantee that the solution $\uu^{k+1}$ is shape differentiable. Indeed, as these derivative depends on $\maxx$, even when $\dot{\uu}^k$ and $\dot{\lambda}^k$ are linear with respect to $\thetaa$, the derivative $\dot{\uu}^{k+1}$ may fail to have that property if the set $\left\{ \lambda^k + \gamma^{k+1} (\uu^{k+1}_{\normalExt}-\gG_{\normalExt}) = 0 \right\}$ is not of null measure. 
\end{remark}

In light of the previous remark, we get interested in sufficient conditions for shape differentiability of $\uu^k$. 
We introduce the corresponding subsets for problem \eqref{ItALM}, for each $k\geq 0$,
\begin{equation*}
    \begin{aligned}
        \pazocal{A}^{k+1} := & \, \left\{ \: x\in\Gamma_C  \: : \: \left(\lambda^k + \gamma^{k+1}\left(\uu_{\normalExt}^{k+1}-\gG_{\normalExt}\right)\right)(x) > 0 \, \right\}\:, \\
        \pazocal{I}^{k+1} := & \: \left\{ \, x\in\Gamma_C  \: : \: \left(\lambda^k + \gamma^{k+1}\left(\uu_{\normalExt}^{k+1}-\gG_{\normalExt}\right)\right)(x) < 0 \,  \right\} \:, \\
        \pazocal{B}^{k+1} := & \: \left\{ \, x\in\Gamma_C  \: : \: \left(\lambda^k + \gamma^{k+1}\left(\uu_{\normalExt}^{k+1}-\gG_{\normalExt}\right)\right)(x) = 0 \, \right\} \:.
    \end{aligned}
\end{equation*}
Using these notations, for any $k\geq 1$, we are able to state conditions that guarantee shape differentiability of $\uu^k$, and will be referred to as Assumption \ref{A4} at rank $k$:
\begin{hypothesis}
    For each $j\in \{1, \dots, k \}$, the set $\pazocal{B}^j$ is of measure zero.
    \label{A4}
\end{hypothesis}

\begin{cor}\label{CorSDerk}
Let $k\geq 1$, $(\uu^k,\lambda^k)$ still denotes the solution of \eqref{ItALM}--\eqref{UpdateMultALM} and $(\dot{\uu}^k,\dot{\lambda}^k)$ its material derivative, defined by \eqref{EqMDerUk}--\eqref{EqMDerLambdak}. Under the assumptions of Theorem \ref{ThmExistMDerk}, if Assumption \ref{A4} holds at rank $k$, then the map $\thetaa\mapsto (\dot{\uu}^k,\dot{\lambda}^k)$ is linear continuous from $\Thetaa$ to $\Xx\times L^2(\Gamma_C)$. Therefore, $\uu^k$ is (strongly) shape differentiable in $\Ll^2(\Omega)$, and its shape derivative in any given direction $\thetaa$ writes:
    $$
        d\mathbf{u}^k =  \dot{\uu}^k - \gradd \uu^k \thetaa\:.
    $$
\end{cor}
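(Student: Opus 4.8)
The plan is to argue by induction on $j\in\{1,\dots,k\}$, following step by step the proof of Theorem \ref{ThmExistMDerk} but now cashing in the hypothesis that each biactive set $\pazocal{B}^j$ is negligible. The induction hypothesis at rank $j$ is that $\thetaa\mapsto(\dot{\uu}^j,\dot{\lambda}^j)$ is linear and continuous from $\Thetaa$ to $\Xx\times L^2(\Gamma_C)$. The crucial point is the behaviour of the conical derivative ${R^j_{\normalExt}}'$: by Lemma \ref{LemDirDiffMax} and Lemma \ref{LemDirDiffNemytskijMax}, for $u\in L^2(\Gamma_C)$ the map $\dmaxx(u;\cdot)$ sends a direction $v$ to $v$ on $\{u>0\}$, to $0$ on $\{u<0\}$, and to the nonlinear expression $\maxx(v)$ only on $\{u=0\}$. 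Applying this with $u=\lambda^{j-1}+\gamma^j(\uu^j_{\normalExt}-\gG_{\normalExt})$, whose zero set is exactly $\pazocal{B}^j$, Assumption \ref{A4} (which makes $|\pazocal{B}^j|=0$) forces $\dmaxx(u;\cdot)=\chi_{\pazocal{A}^j}\,(\cdot)$ almost everywhere on $\Gamma_C$; hence ${R^j_{\normalExt}}'(\uu^j)=\chi_{\pazocal{A}^j}\bigl(\dot{\lambda}^{j-1}+\gamma^j(\dot{\uu}^j_{\normalExt}-\gG_{\normalExt}')\bigr)$ and, more importantly, ${R^j_{\normalExt}}'$ acts \emph{linearly} on the directional perturbation.

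With this simplification, \eqref{EqMDerUk} at rank $j$ becomes: $\dot{\uu}^j\in\Xx$ is the unique solution of
$$
b^j(\dot{\uu}^j,\vv) = L^j[\thetaa](\vv) - \prodL2{\chi_{\pazocal{A}^j}\bigl(\dot{\lambda}^{j-1}-\gamma^j\gG_{\normalExt}'\bigr),\vv_{\normalExt}}{\Gamma_C}\:,\qquad\forall\vv\in\Xx\:,
$$
where $b^j(\ww,\vv):=a(\ww,\vv)+\gamma^j\prodL2{\chi_{\pazocal{A}^j}\ww_{\normalExt},\vv_{\normalExt}}{\Gamma_C}$, with the convention, in the base case $j=1$, that $\dot{\lambda}^0$ is replaced by $(\lambda^0)'=(\grad\lambda^0)\thetaa$ as in Theorem \ref{ThmExistMDerk}. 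Since $\chi_{\pazocal{A}^j}\geq0$ and the normal trace $\Xx\to L^2(\Gamma_C)$ is continuous, $b^j$ is continuous and coercive on $\Xx\times\Xx$ with ellipticity constant at least $\alpha_0$, so Lax--Milgram applies and gives a bound on $\norml\dot{\uu}^j\normr_{\Xx}$ by $\alpha_0^{-1}$ times the norm of the right-hand side in $\Xx^*$. The right-hand side depends linearly on $\thetaa$: the map $\thetaa\mapsto L^j[\thetaa]$ is linear (through $L'$, $a'$ and the terms carrying $\divv_\Gamma\thetaa$, using Assumption \ref{A2} and the regularity of $\uu^j$, $\lambda^j$ already established in Theorem \ref{ThmExistMDerk}, exactly as in \cite{chaudet2019shape}), the term $\gG_{\normalExt}'=-\thetaa\cdot\normalExt$ is linear in $\thetaa$, and $\thetaa\mapsto\dot{\lambda}^{j-1}$ is linear continuous by the induction hypothesis; moreover each of these maps is continuous from $\Thetaa$ into $\Xx^*$ (resp. $L^2(\Gamma_C)$). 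Hence $\thetaa\mapsto\dot{\uu}^j$ is linear continuous from $\Thetaa$ to $\Xx$. Relation \eqref{EqMDerLambdak}, which now reads $\dot{\lambda}^j=\chi_{\pazocal{A}^j}\bigl(\dot{\lambda}^{j-1}+\gamma^j(\dot{\uu}^j_{\normalExt}-\gG_{\normalExt}')\bigr)$, then shows that $\thetaa\mapsto\dot{\lambda}^j$ is linear continuous from $\Thetaa$ to $L^2(\Gamma_C)$, which closes the induction and, at rank $k$, proves the first assertion.

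Finally, to obtain strong shape differentiability of $\uu^k$ in $\Ll^2(\Omega)$, note that $\uu^k\in\Xx\subset\Hh^1(\Omega)$, so $\gradd\uu^k\in\Ll^2(\Omega)$; since every $\thetaa\in\Thetaa\subset\Cc^1_b(\mathbb{R}^d)$ is bounded, $\gradd\uu^k\thetaa\in\Ll^2(\Omega)$ and $\thetaa\mapsto\gradd\uu^k\thetaa$ is linear continuous from $\Thetaa$ to $\Ll^2(\Omega)$. Combining this with the linear continuity of $\thetaa\mapsto\dot{\uu}^k$ in $\Xx$, the expression $d\mathbf{u}^k=\dot{\uu}^k-\gradd\uu^k\thetaa$ defines an element of $\Ll^2(\Omega)$ and $\thetaa\mapsto d\mathbf{u}^k$ is linear continuous, i.e. $\uu^k$ is (strongly) shape differentiable, as claimed. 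The only point requiring genuine care is the reduction ${R^j_{\normalExt}}'=\chi_{\pazocal{A}^j}\,(\cdot)$ to a linear multiplier: it hinges on Assumption \ref{A4} and on the fact that $\uu^j\in\Xx$ and $\lambda^j\in H^{1/2}(\Gamma_C)\hookrightarrow L^2(\Gamma_C)$ guarantee that $\lambda^{j-1}+\gamma^j(\uu^j_{\normalExt}-\gG_{\normalExt})$ is a genuine $L^2(\Gamma_C)$ function, so the pointwise description of $\dmaxx$ from Lemma \ref{LemDirDiffMax} is meaningful almost everywhere; the remaining steps are the same bookkeeping as in the penalty case treated in \cite{chaudet2019shape}.
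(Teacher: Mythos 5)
Your argument is correct and follows essentially the same route as the paper: induction on the iteration index, using Assumption \ref{A4} to reduce $\dmaxx$ to multiplication by $\chi_{\pazocal{A}^j}$, so that \eqref{EqMDerUk} becomes a linear variational problem whose data ($L^j[\thetaa]$, $\gG_{\normalExt}'$, $\dot{\lambda}^{j-1}$, $(\lambda^0)'$) depend linearly and continuously on $\thetaa$, from which linear continuity of $\thetaa\mapsto(\dot{\uu}^j,\dot{\lambda}^j)$ and then shape differentiability of $\uu^k$ in $\Ll^2(\Omega)$ follow. Your explicit Lax--Milgram verification with the bilinear form $b^j$ merely spells out what the paper leaves implicit, so there is nothing to flag.
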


\begin{prf}
  Here again, let us proceed by induction.\\
  
  \paragraph{\textbf{Base case}}
  Let $k=1$. From Assumption \ref{A4} at rank $1$, one gets that $|\pazocal{B}^1|=0$, which implies that 
  $$
    \dot{\lambda}^1 = {R^1_{\normalExt}}'(\uu^1) = \chi_{\pazocal{A}^1} \left( (\lambda^0)'+\gamma^1\left( \dot{\uu}^1_{\normalExt}-\gG_{\normalExt}'\right)\right) \: \mbox{ a.e. on } \Gamma_C\:.
  $$
  Therefore, $\dot{\uu}^1$ is the solution of a linear variational formulation. Moreover, due to the regularities of $\lambda^0$, $\gG_{\normalExt}$, $\normalExt$, and the expression of $L^1[\thetaa]$, the maps
  \begin{equation*}
      \begin{aligned}
        \Thetaa \ni \thetaa & \: \longmapsto \: (\lambda^0)' \in L^2(\Gamma_C)\:, \\
        \Thetaa \ni \thetaa & \: \longmapsto \: \gG_{\normalExt}' \in L^\infty(\Gamma_C)\:, \\
        \Thetaa \ni \thetaa & \: \longmapsto \: L^1[\thetaa] \in \Xx^*\:,
      \end{aligned}
  \end{equation*}
  are all linear continuous. Therefore, $\thetaa \mapsto (\dot{\uu}^1, \dot{\lambda}^1)$ is also linear continuous, from $\Thetaa$ to $\Xx\times L^2(\Gamma_C)$.\\
  
  \paragraph{\textbf{Inductive step}}
  Let $k\geq 1$ and assume the result of the corollary is true at iteration $k$. Now, suppose Assumption \ref{A4} holds at rank $k+1$. This exactly means that Assumption \ref{A4} holds at rank $k$ and $|\pazocal{B}^{k+1}|=0$. Thus, one has
  $$
    \dot{\lambda}^{k+1} = {R^{k+1}_{\normalExt}}'(\uu^{k+1}) = \chi_{\pazocal{A}^{k+1}} \left( \dot{\lambda}^{k}+\gamma^{k+1}\left( \dot{\uu}^{k+1}_{\normalExt}-\gG_{\normalExt}'\right)\right) \: \mbox{ a.e. on } \Gamma_C\:.
  $$  
  Hence, the variational formulation \eqref{EqMDerUk} is linear. Using the result at iteration $k$, one gets that $\thetaa\mapsto \dot{\lambda}^k$ is linear continuous from $\Thetaa$ to $L^2(\Gamma_C)$. This, combined with the properties of $\gG_{\normalExt}$, $\normalExt$, and linear continuity of $L^{k+1}[\cdot]$ from $\Thetaa$ to $\Xx^*$, enables us to conclude.
\end{prf}

\subsection{Convergence of the directional shape derivatives}

In this section, we aim at establishing the consitency of the augmented Lagrangian method with respect to shape differentiability. In other words, for any $\thetaa\in\Thetaa$ fixed, we get interested in convergence properties of the sequence $\left\{ \left(\dot{\uu}^{k}, \dot{\lambda}^{k}\right)\right\}_k$ as $k\to\infty$. Especially, we would like to derive sufficient conditions for $\left\{\dot{\uu}^{k}\right\}_k$ to converge to $\dot{\uu}$ the solution of \eqref{IVConDerU}.

\begin{remark}
  As we will manipulate in this section the material derivatives of $\uu^k$ and $\lambda^k$, it will be required that Assumption \ref{A2} holds (see Theorem \ref{ThmConDerW}).
\end{remark}

As for the original problem, let us define $\ww^{k+1}:=\uu^{k+1}-\gG\in\Xx$. From the previous section, we get that $\ww^{k+1}$ admits a strong material derivative in the direction $\thetaa$ given by $\dot{\ww}^{k+1}=\dot{\uu}^{k+1}-\gG'\in\Xx$. It follows from \eqref{EqMDerUk} and \eqref{EqMDerLambdak} that $\dot{\ww}^{k+1}$ solves:
\begin{subequations} \label{LMFMDerk:all}
  \begin{align}
    a(\dot{\ww}^{k+1},\vv) - L^{k+1}[\thetaa](\vv) + a(\gG',\vv) + \prodL2{\dot{\lambda}^{k+1},\vv_{\normalExt}}{\Gamma_C} &= 0\:, \hspace{1em} \forall \vv\in\Xx\:, \label{LMFMDerk:1} \\
    \dot{\lambda}^{k+1} - \dmaxx\left( \lambda^k + \gamma^{k+1}\ww_{\normalExt}^{k+1} ; \dot{\lambda}^k + \gamma^{k+1}\dot{\ww}^{k+1}_{\normalExt}\right)  &= 0 \:, \hspace{1em} \mbox{ a.e. on } \Gamma_C \:. \label{LMFMDerk:2}
  \end{align}
\end{subequations}
Note that \eqref{LMFMDerk:2} can be equivalently rewritten as:
\begin{equation}
    \dot{\lambda}^{k+1} = 
    \left\{
      \begin{array}{lr}
        \dot{\lambda}^k + \gamma^{k+1}\dot{\ww}^{k+1}_{\normalExt} &\mbox{ on } \pazocal{A}^{k+1}\:, \\
        0 &\mbox{ on } \pazocal{I}^{k+1}\:, \\
        \maxx\left(\dot{\lambda}^k + \gamma^{k+1}\dot{\ww}^{k+1}_{\normalExt}\right) &\mbox{ on } \pazocal{B}^{k+1}\:.
      \end{array}
    \right.
\end{equation}
In order to have a valid expression for all $k\geq 0$, the abuse of notation $\dot{\lambda}^0:={\lambda^0}'$ will be used.
Finally, we give some straightforward but very useful properties of $\dmaxx$:
\begin{itemize}
    \item for all $u$, $v\in \mathbb{R}$, one has $\dmaxx(u;v)v = |\dmaxx(u;v)|^2$,
    \item $\dmaxx$ is continuous on $\mathbb{R}^2\setminus (\{0\}\times\mathbb{R})$.
\end{itemize}

\begin{theo}
  For any increasing sequence of strictly positive parameters $\left\{\gamma^k\right\}_k$, there exists a subsequence of $\left\{ \left(\dot{\ww}^{k}, \dot{\lambda}^{k}\right) \right\}_k$ that is bounded in $\Xx\times H^{-\frac{1}{2}}(\Gamma_C)$.
  \label{ThmBoundMDer}
\end{theo}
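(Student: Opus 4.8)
The plan is to derive, from an energy estimate on the system \eqref{LMFMDerk:all}, a Cesàro-type control of $\{\|\dot{\ww}^k\|_{\Xx}\}_k$ forcing $\liminf_k\|\dot{\ww}^k\|_{\Xx}<\infty$; this produces a subsequence along which $\dot{\ww}^k$ is bounded in $\Xx$, and boundedness of the corresponding $\dot{\lambda}^k$ in $H^{-\frac{1}{2}}(\Gamma_C)$ is then read off the variational identity \eqref{LMFMDerk:1}. First I would collect the consequences of Theorem \ref{ThmCvALM}: since $\uu^k\to\uu$ strongly in $\Xx$ and $\lambda^k\rightharpoonup\lambda$ weakly in $L^2(\Gamma_C)$, the sequences $\{\uu^k\}_k$ and $\{\ww^k:=\uu^k-\gG\}_k$ are bounded in $\Xx$ and $\{\lambda^k\}_k$ is bounded in $L^2(\Gamma_C)$. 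Recalling $L^{k+1}[\thetaa](\vv)=L'(\vv)-a'(\uu^{k+1},\vv)-\int_{\Gamma_C}\lambda^{k+1}\vv_{\normalExt}\divv_\Gamma\thetaa$, the continuity of $a'$ and $L'$, together with the boundedness of $\divv_\Gamma\thetaa$, $\{\uu^k\}_k$ and $\{\lambda^k\}_k$, yield a constant $C$ independent of $k$ with $\|L^{k+1}[\thetaa]\|_{\Xx^*}\le C$. One also checks by induction that each $\dot{\lambda}^k\in L^2(\Gamma_C)$, using the pointwise bound $|\dot{\lambda}^{k+1}|\le|\dot{\lambda}^k|+\gamma^{k+1}|\dot{\ww}^{k+1}_{\normalExt}|$ (a consequence of $|\dmaxx(u;v)|\le|v|$) and $\dot{\ww}^{k+1}_{\normalExt}\in H^{\frac{1}{2}}(\Gamma_C)\hookrightarrow L^2(\Gamma_C)$.

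\textbf{Energy estimate.} Next I would test \eqref{LMFMDerk:1} with $\vv=\dot{\ww}^{k+1}$, which gives
\[ a(\dot{\ww}^{k+1},\dot{\ww}^{k+1})+\prodL2{\dot{\lambda}^{k+1},\dot{\ww}^{k+1}_{\normalExt}}{\Gamma_C}=L^{k+1}[\thetaa](\dot{\ww}^{k+1})-a(\gG',\dot{\ww}^{k+1}). \]
For the coupling term I use \eqref{LMFMDerk:2} and the pointwise identity $\dmaxx(u;v)\,v=|\dmaxx(u;v)|^2$: with $\dot{\lambda}^{k+1}=\dmaxx\!\big(\lambda^k+\gamma^{k+1}\ww^{k+1}_{\normalExt}\,;\,\dot{\lambda}^k+\gamma^{k+1}\dot{\ww}^{k+1}_{\normalExt}\big)$ one gets $\gamma^{k+1}\dot{\lambda}^{k+1}\dot{\ww}^{k+1}_{\normalExt}=|\dot{\lambda}^{k+1}|^2-\dot{\lambda}^{k+1}\dot{\lambda}^k\ge\frac{1}{2}|\dot{\lambda}^{k+1}|^2-\frac{1}{2}|\dot{\lambda}^k|^2$ a.e.\ on $\Gamma_C$ by Young's inequality. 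Integrating over $\Gamma_C$, using $\gamma^{k+1}\ge\gamma^k$, setting $s_k:=\frac{1}{2\gamma^k}\|\dot{\lambda}^k\|_{0,\Gamma_C}^2\ge0$ (with the convention $\dot{\lambda}^0:={\lambda^0}'$), and invoking ellipticity of $a$, the uniform bound on $\|L^{k+1}[\thetaa]\|_{\Xx^*}$, the fixed bound on $\|\gG'\|_{\Xx}$ and one more Young inequality on the right-hand side, I expect a constant $C_1>0$ independent of $k$ such that
\[ \frac{\alpha_0}{2}\|\dot{\ww}^{k+1}\|_{\Xx}^2+s_{k+1}\le C_1+s_k,\qquad\forall k\ge0. \]

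\textbf{Extraction and recovery of $\dot{\lambda}$.} Summing the last inequality from $k=0$ to $N-1$ and dropping $s_N\ge0$ gives $\frac{\alpha_0}{2}\sum_{m=1}^N\|\dot{\ww}^m\|_{\Xx}^2\le NC_1+s_0$, hence $\liminf_{k\to\infty}\|\dot{\ww}^k\|_{\Xx}^2\le\frac{2C_1}{\alpha_0}$, so there is a subsequence $\{k_j\}_j$ with $\sup_j\|\dot{\ww}^{k_j}\|_{\Xx}<\infty$. For the multiplier, going back to \eqref{LMFMDerk:1} and choosing, for $\eta\in H^{\frac{1}{2}}(\Gamma_C)$, a bounded lifting $\vv_\eta\in\Xx$ with $\vv_{\eta,\normalExt}=\eta$ and $\|\vv_\eta\|_{\Xx}\le C\|\eta\|_{\frac{1}{2},\Gamma_C}$ (surjectivity of the normal trace on $\Gamma_C$), I obtain $|\prodL2{\dot{\lambda}^{k+1},\eta}{\Gamma_C}|\le C(\|\dot{\ww}^{k+1}\|_{\Xx}+1)\|\eta\|_{\frac{1}{2},\Gamma_C}$, that is $\|\dot{\lambda}^{k+1}\|_{H^{-1/2}(\Gamma_C)}\le C(\|\dot{\ww}^{k+1}\|_{\Xx}+1)$; therefore $\{\dot{\lambda}^{k_j}\}_j$ is bounded in $H^{-\frac{1}{2}}(\Gamma_C)$ as well, which is the claim.

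\textbf{Main obstacle.} The delicate point is that the boundary coupling term $\prodL2{\dot{\lambda}^{k+1},\dot{\ww}^{k+1}_{\normalExt}}{\Gamma_C}$ has no definite sign, and the natural telescoping of the resulting estimate is broken by the varying (and possibly unbounded) sequence $\{\gamma^k\}_k$, which prevents a uniform bound on the whole sequence. The way around it is precisely to settle for a Cesàro average bound on $\|\dot{\ww}^k\|_{\Xx}^2$: this is weaker, but still forces $\liminf_k\|\dot{\ww}^k\|_{\Xx}<\infty$ and hence the existence of the desired bounded subsequence. A secondary technical caveat is that the base case $\dot{\lambda}^0={\lambda^0}'\in L^2(\Gamma_C)$ requires slightly more regularity on $\lambda^0$ than $H^{\frac{1}{2}}(\Gamma_C)$, which is consistent with the standing assumptions used to define the material derivatives.
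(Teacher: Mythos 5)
Your proof is correct, and its core is the same as the paper's: you test \eqref{LMFMDerk:1} with $\vv=\dot{\ww}^{k+1}$, use the identity $\dmaxx(u;v)\,v=|\dmaxx(u;v)|^2$ together with Young's inequality to bound the coupling term from below by $\tfrac{1}{2\gamma^{k+1}}\bigl(\|\dot{\lambda}^{k+1}\|_{0,\Gamma_C}^2-\|\dot{\lambda}^{k}\|_{0,\Gamma_C}^2\bigr)$, exploit $\gamma^{k+1}\ge\gamma^k$ and the uniform bound on $L^{k+1}[\thetaa]$ coming from Theorem \ref{ThmCvALM}, and finally recover the $H^{-\frac{1}{2}}(\Gamma_C)$ bound on $\dot{\lambda}^{k}$ from \eqref{LMFMDerk:1} via surjectivity of the (normal) trace. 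Where you differ is only in the last extraction step: the paper distinguishes two cases according to whether $\{\tfrac{1}{\gamma^k}\|\dot{\lambda}^k\|_{0,\Gamma_C}^2\}_k$ is bounded — in the bounded case it gets boundedness of the \emph{whole} sequence $\{\dot{\ww}^k\}_k$, and in the unbounded case it sums the recursion and uses a strictly monotone subsequence of the partial sums to find indices where $\alpha_0\|\dot{\ww}^{k}\|_{\Xx}^2-C\|\dot{\ww}^{k}\|_{\Xx}<0$ — whereas you sum the recursion unconditionally, drop $s_N\ge0$, and conclude via a Cesàro-average bound that $\liminf_k\|\dot{\ww}^k\|_{\Xx}<\infty$. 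Your finish is slightly cleaner in that it avoids the case split, and it fully suffices for the statement (only a bounded subsequence is claimed); the paper's case distinction buys a little more, namely that whole-sequence boundedness holds whenever $\tfrac{1}{\gamma^k}\|\dot{\lambda}^k\|_{0,\Gamma_C}^2$ stays bounded, which is exactly what is exploited afterwards in Corollary \ref{CorBoundMDer} with the parameter rule $\gamma^{k+1}=\max\{\gamma^k,\|\dot{\lambda}^k\|_{0,\Gamma_C}^2\}$. Your side remark on the regularity needed for $\dot{\lambda}^0=(\lambda^0)'$ is legitimate and consistent with the standing assumptions on the data; it is not a gap.
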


\begin{prf}
  For now, let $\left\{\gamma^k\right\}_k \subset \mathbb{R}_+^*$ be any increasing sequence, from which the ALM algorithm generates iterates $(\uu^k,\lambda^k)$. From Theorem \ref{ThmCvALM}, one gets that $\left\{(\uu^k,\lambda^k)\right\}_k$ is bounded in $\Xx\times L^2(\Gamma_C)$, from which one deduces that $\left\{L^{k}[\thetaa]\right\}_k$ is bounded in $\Xx^*$. Thus, taking $\vv=\dot{\ww}^{k+1}$ as test-function in \eqref{LMFMDerk:1} yields
  \begin{equation}
    \alpha_0 \norml \dot{\ww}^{k+1} \normr_{\Xx}^2 + \prodL2{\dot{\lambda}^{k+1},\dot{\ww}^{k+1}_{\normalExt}}{\Gamma_C} \leq C \norml \dot{\ww}^{k+1} \normr_{\Xx} \:.
    \label{EstWk}
  \end{equation}

  Now, we rewrite the second term, then use the properties of $\dmaxx$ and Young's inequality:
  \begin{equation}
    \begin{aligned}
      \prodL2{\dot{\lambda}^{k+1},\dot{\ww}^{k+1}_{\normalExt}}{\Gamma_C} &= \frac{1}{\gamma^{k+1}} \prodL2{\dot{\lambda}^{k+1},\dot{\lambda}^{k}+\gamma^{k+1}\dot{\ww}^{k+1}_{\normalExt}}{\Gamma_C} - \frac{1}{\gamma^{k+1}} \prodL2{\dot{\lambda}^{k+1},\dot{\lambda}^{k}}{\Gamma_C} \\
      & \geq \frac{1}{\gamma^{k+1}} \norml \dot{\lambda}^{k+1} \normr_{0,\Gamma_C}^2 - \frac{1}{\gamma^{k+1}} \prodL2{\dot{\lambda}^{k+1},\dot{\lambda}^{k}}{\Gamma_C}\\
      & \geq \frac{1}{2\gamma^{k+1}} \left( \norml \dot{\lambda}^{k+1} \normr_{0,\Gamma_C}^2 - \norml \dot{\lambda}^{k} \normr_{0,\Gamma_C}^2 \right)\:.
    \end{aligned}
    \label{EstPSLambdakWk}
  \end{equation}
  Plugging this into inequality \eqref{EstWk} leads to
  \begin{equation}
    \alpha_0 \norml \dot{\ww}^{k+1} \normr_{\Xx}^2 - C \norml \dot{\ww}^{k+1} \normr_{\Xx} + \frac{1}{2\gamma^{k+1}} \norml \dot{\lambda}^{k+1} \normr_{0,\Gamma_C}^2 \leq \frac{1}{2\gamma^{k+1}} \norml \dot{\lambda}^{k} \normr_{0,\Gamma_C}^2 \:.
    \label{EstWkBis}
  \end{equation}
  From here, let us distinguish the two possible cases.
  \begin{enumerate}[(i),leftmargin=*]
      \item Case $\left\{ \frac{1}{\gamma^{k}}\norml \dot{\lambda}^{k} \normr_{0,\Gamma_C}^2 \right\}_k$ bounded. \\
      Boundedness of the whole sequence $\left\{\dot{\ww}^k\right\}_k$ in $\Xx$ follows directly from \eqref{EstWkBis} since $\gamma^{k+1}\geq \gamma^k$.
      \item Case $\left\{ \frac{1}{\gamma^{k}} \norml \dot{\lambda}^{k} \normr_{0,\Gamma_C}^2 \right\}_k$ unbounded. \\
      In that case, let us sum \eqref{EstWkBis} from $0$ to $k-1$, with $k>1$:
      $$
        \sum_{l=1}^{k} \left( \alpha_0 \norml \dot{\ww}^{l} \normr_{\Xx}^2 - C \norml \dot{\ww}^{l} \normr_{\Xx} \right) + \frac{1}{2\gamma^{k}} \norml \dot{\lambda}^{k} \normr_{0,\Gamma_C}^2 \leq \frac{1}{2\gamma^{1}} \norml \dot{\lambda}^{0} \normr_{0,\Gamma_C}^2 \:.
      $$
      If we denote $W^k$ the first term of the left hand side, it follows that the sequence $\left\{ W^k\right\}_k$ is unbounded below. Thus there exists a subsequence (denoted ${\varphi(k)}$) such that $\left\{ W^{\varphi(k)} \right\}_k$ is strictly monotone and tends to $-\infty$. Especially, strict monotonicity of this subsequence implies that, for all $k\geq 1$,
      $$
        W^{\varphi(k+1)}-W^{\varphi(k)} = \sum_{l=\varphi(k)}^{\varphi(k+1)} \left( \alpha_0 \norml \dot{\ww}^{l} \normr_{\Xx}^2 - C \norml \dot{\ww}^{l} \normr_{\Xx} \right) < 0\:.
      $$
      Hence, for all $k\geq 1$, there exists $\psi(k) \in [\![\varphi(k), \varphi(k+1) ]\!]$ such that $\alpha_0 \norml \dot{\ww}^{\psi(k)} \normr_{\Xx}^2 - C \norml \dot{\ww}^{\psi(k)} \normr_{\Xx} < 0$, which proves that $\left\{\dot{\ww}^{\psi(k)}\right\}_k$ in bounded in $\Xx$.
  \end{enumerate}
  Then, boundedness of $\left\{\dot{\lambda}^k\right\}_k$ in $H^{-\frac{1}{2}}(\Gamma_C)$ immediatly follows from \eqref{LMFMDerk:1} and the surjectivity of the trace operator from $\Xx$ to $\Hh^{\frac{1}{2}}(\Gamma_C)$. 
   
\end{prf}

\begin{cor}
  There exists an increasing sequence of strictly positive parameters $\left\{\gamma^k\right\}_k$ such that the whole sequence $\left\{ \left(\dot{\ww}^{k}, \dot{\lambda}^{k}\right) \right\}_k$ defined by \eqref{LMFMDerk:all} is bounded in $\Xx\times H^{-\frac{1}{2}}(\Gamma_C)$.
  \label{CorBoundMDer}
\end{cor}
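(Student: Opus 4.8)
The statement of Theorem~\ref{ThmBoundMDer} only provides a bounded \emph{subsequence} for an arbitrary increasing choice of parameters, so the point of the corollary is to \emph{engineer} the sequence $\{\gamma^k\}_k$ so that one is systematically in the favourable alternative (i) of that proof, in which case boundedness of the \emph{whole} sequence $\{\dot{\ww}^k\}_k$ in $\Xx$ follows at once from \eqref{EstWkBis}, and then boundedness of $\{\dot{\lambda}^k\}_k$ in $H^{-\frac12}(\Gamma_C)$ follows from \eqref{LMFMDerk:1} together with surjectivity of the trace operator from $\Xx$ onto $\Hh^{\frac12}(\Gamma_C)$. Alternative (i) is precisely the condition that the scale-invariant quantity $b_k:=\tfrac{1}{\gamma^k}\norml \dot{\lambda}^k\normr_{0,\Gamma_C}^2$ stays bounded, so the whole problem reduces to a recursive construction of $\{\gamma^k\}_k$ keeping $\{b_k\}_k$ bounded. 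Throughout, $\thetaa\in\Thetaa$ is fixed, and I assume the standing hypotheses of this section (Assumption~\ref{A2} and $\lambda^0\in H^{\frac12}(\Gamma_C)$), so that each $(\dot{\uu}^k,\dot{\lambda}^k)$ exists strongly in $\Xx\times L^2(\Gamma_C)$ by Theorem~\ref{ThmExistMDerk}.

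\textbf{Key recursive estimate.} Reusing the estimates already obtained in the proof of Theorem~\ref{ThmBoundMDer}: dropping the nonnegative term $\tfrac{1}{2\gamma^{k+1}}\norml\dot{\lambda}^{k+1}\normr_{0,\Gamma_C}^2$ in \eqref{EstWkBis} and solving the resulting quadratic inequality gives $\norml\dot{\ww}^{k+1}\normr_{\Xx}\le \tfrac{C}{\alpha_0}+\tfrac{1}{\sqrt{2\alpha_0\gamma^{k+1}}}\norml\dot{\lambda}^k\normr_{0,\Gamma_C}$, while \eqref{EstWk} combined with \eqref{EstPSLambdakWk} (and dropping $\alpha_0\norml\dot{\ww}^{k+1}\normr_{\Xx}^2\ge0$) yields $\norml\dot{\lambda}^{k+1}\normr_{0,\Gamma_C}^2\le 2C\gamma^{k+1}\norml\dot{\ww}^{k+1}\normr_{\Xx}+\norml\dot{\lambda}^k\normr_{0,\Gamma_C}^2$; here the constant $C$ is independent of $k$ because $\{L^k[\thetaa]\}_k$ is bounded in $\Xx^*$ (a consequence of Theorem~\ref{ThmCvALM}). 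Substituting the first bound into the second and dividing by $\gamma^{k+1}\ge\gamma^k$ produces the self-correcting recursion
$$
  b_{k+1}\ \le\ \frac{2C^2}{\alpha_0}\ +\ C\sqrt{\tfrac{2}{\alpha_0}}\,\frac{\norml\dot{\lambda}^k\normr_{0,\Gamma_C}}{\sqrt{\gamma^{k+1}}}\ +\ \frac{\norml\dot{\lambda}^k\normr_{0,\Gamma_C}^2}{\gamma^{k+1}}\,.
$$

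\textbf{Construction and conclusion.} I would set $\gamma^1:=1$, which determines $\uu^1,\lambda^1,\dot{\uu}^1,\dot{\lambda}^1$ and hence the finite number $b_1=\norml\dot{\lambda}^1\normr_{0,\Gamma_C}^2$. Inductively, once $\gamma^1,\dots,\gamma^k$ are fixed the value $\norml\dot{\lambda}^k\normr_{0,\Gamma_C}$ is a determined (finite) quantity, so one may take $\gamma^{k+1}:=\max\{\gamma^k,\ \norml\dot{\lambda}^k\normr_{0,\Gamma_C}^2,\ \tfrac{2C^2}{\alpha_0}\norml\dot{\lambda}^k\normr_{0,\Gamma_C}^2\}$, which is $\ge\gamma^k>0$ so the sequence is increasing and strictly positive. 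With this choice each of the last two terms in the recursion above is $\le 1$, whence $b_{k+1}\le \tfrac{2C^2}{\alpha_0}+2$, and therefore $\sup_k b_k\le\max\{b_1,\tfrac{2C^2}{\alpha_0}+2\}<\infty$. Being in alternative (i) of the proof of Theorem~\ref{ThmBoundMDer}, the whole sequence $\{(\dot{\ww}^k,\dot{\lambda}^k)\}_k$ is then bounded in $\Xx\times H^{-\frac12}(\Gamma_C)$, which is the claim. The only genuinely delicate point is the apparent tension in the recursion — one wants $\gamma^{k+1}$ large to damp $\dot{\ww}^{k+1}$, yet a large $\gamma^{k+1}$ seems to amplify $\dot{\lambda}^{k+1}$; this is resolved by monitoring the homogeneity-zero quantity $b_k$ rather than $\norml\dot{\lambda}^k\normr_{0,\Gamma_C}$ itself, and by letting the threshold for $\gamma^{k+1}$ depend on the already-computed value $\norml\dot{\lambda}^k\normr_{0,\Gamma_C}$, so that the growth term is neutralized at every step.
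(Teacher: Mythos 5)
Your proposal is correct and rests on the same mechanism as the paper's proof: choose $\gamma^{k+1}$ adaptively, as a maximum involving the already-computed quantity $\norml \dot{\lambda}^k \normr_{0,\Gamma_C}^2$, so that estimate \eqref{EstWkBis} keeps $\{\dot{\ww}^k\}_k$ bounded, and then recover boundedness of $\{\dot{\lambda}^k\}_k$ in $H^{-\frac{1}{2}}(\Gamma_C)$ from \eqref{LMFMDerk:1} and surjectivity of the trace. The difference is that the paper stops at the simpler rule $\gamma^{k+1}=\max\{\gamma^k,\norml \dot{\lambda}^k \normr_{0,\Gamma_C}^2\}$: with it the right-hand side of \eqref{EstWkBis} is at most $\tfrac{1}{2}$, which already bounds the whole sequence $\{\dot{\ww}^k\}_k$ without any control of your quantity $b_{k+1}$ — so the extra $L^2$ estimate on $\dot{\lambda}^{k+1}$, the recursion on $b_k$, and the third entry $\tfrac{2C^2}{\alpha_0}\norml \dot{\lambda}^k \normr_{0,\Gamma_C}^2$ in the max are not needed; what your refinement buys is the literal conclusion of case (i), namely boundedness of $\norml \dot{\lambda}^k \normr_{0,\Gamma_C}^2/\gamma^k$, which the paper only invokes loosely. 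One point you should make explicit if you keep your rule: the constant $C$ enters your definition of $\gamma^{k+1}$, yet $C$ bounds $\{L^k[\thetaa]\}_k$, which depends on the iterates $(\uu^k,\lambda^k)$ and hence, a priori, on the parameter sequence being constructed; to avoid circularity you must observe that the ALM stability estimates bound $\{(\uu^k,\lambda^k)\}_k$ uniformly over all admissible increasing parameter choices (with $\gamma^1$ fixed), so a single $C$ can be fixed in advance — the paper's rule sidesteps this entirely because it does not involve $C$.
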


\begin{prf}
  From estimation \eqref{EstWkBis}, it is clear that if the sequence of parameters $\left\{\gamma^k \right\}_k$ is built from some $\gamma^1>0$ following the rule
  \begin{equation}
      \gamma^{k+1} = \max \left\{ \gamma^k,  \norml \dot{\lambda}^{k} \normr_{0,\Gamma_C}^2 \right\}\:,
      \label{HypGammak}
  \end{equation}
  then one automatically gets boundedness of the whole sequence $\left\{\dot{\ww}^k\right\}_k$ in $\Xx$. In other words, such a choice of parameters ensures that we are in case (i) of the previous proof. Again, boundedness of $\left\{\dot{\lambda}^k\right\}_k$ in $H^{-\frac{1}{2}}(\Gamma_C)$ follows. 
\end{prf}

Of course, the previous result implies that for any sequence of parameters, the associated iterates $\left( \dot{\ww}^k, \dot{\lambda}^{k} \right)$ converge weakly in $\Xx\times H^{-\frac{1}{2}}(\Gamma_C)$ to some limit $(\hat{\ww},\hat{\lambda})$, up to a subsequence. In order to specify this limit, we need to make the following additional assumption.


\begin{hypothesis}
    The iterates $\lambda^k$ generated by the ALM converge to $\lambda$ a.e. on $\Gamma_C$.
    \label{A6}
\end{hypothesis}
\begin{remark}
Because of the non continuity of $\dmaxx$ on $\{0\}\times \mathbb{R}$ getting a convergence result without Assumption \ref{A5} seems rather difficult (maybe even impossible).
\end{remark}
\begin{lemma}
Let $\left\{\gamma^k\right\}_k$ be any increasing sequence in $\mathbb{R}_+^*$,
$(\uu^k,\lambda^k)$ still denotes the solution of \eqref{ItALM}--\eqref{UpdateMultALM}, and $(\uu,\lambda)$ the solution of \eqref{ALF:all}.
When Assumption \ref{A5} and Assumption \ref{A6} hold, the characteristic functions $\chi_{\pazocal{A}^k}$ (respectively $\chi_{\pazocal{B}^k}$) converge to $\chi_{\pazocal{A}}$ (respectively 0) strongly in $L^p(\Gamma_C)$, for each $1<p<+\infty$, up to a subsequence.
    \label{LemCvChik}
\end{lemma}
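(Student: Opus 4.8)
The plan is to reduce the statement to an almost‑everywhere convergence of the characteristic functions along a suitable subsequence, and then to invoke Lebesgue's dominated convergence theorem on the finite‑measure set $\Gamma_C$ to upgrade it to strong $L^p$ convergence. First I would collect the consequences of the hypotheses. By Theorem \ref{ThmCvALM}, $\uu^k\to\uu$ strongly in $\Xx$, hence $\uu^k_{\normalExt}=\Lambda\uu^k\to\uu_{\normalExt}$ strongly in $L^2(\Gamma_C)$ by continuity of the normal trace, and after extracting a subsequence (still indexed by $k$) one may assume $\uu^k_{\normalExt}\to\uu_{\normalExt}$ a.e.\ on $\Gamma_C$; together with Assumption \ref{A6} this gives pointwise a.e.\ convergence of both $\uu^k_{\normalExt}$ and $\lambda^k$ along this subsequence. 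Next I would reformulate the sets: the update rule \eqref{UpdateMultALM} reads $\lambda^{j}=\maxx\bigl(\lambda^{j-1}+\gamma^{j}(\uu^{j}_{\normalExt}-\gG_{\normalExt})\bigr)$, so $\pazocal{A}^{j}=\{\lambda^{j}>0\}$, while $\pazocal{B}^{j}=\{\lambda^{j-1}+\gamma^{j}(\uu^{j}_{\normalExt}-\gG_{\normalExt})=0\}$; on the limit side, complementarity \eqref{LMF0:3} together with $\uu\in\Kk$ yields $\{\lambda>0\}=\pazocal{A}$ and $\{\lambda=0\}=\pazocal{I}\cup\pazocal{B}$ up to a null set, and Assumption \ref{A5} ($|\pazocal{B}|=0$) ensures a.e.\ point of $\Gamma_C$ lies in $\pazocal{A}\cup\pazocal{I}$.

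Then I would run the pointwise argument at a.e.\ fixed $x\in\Gamma_C$. If $x\in\pazocal{A}$, then $\lambda(x)>0$, so $\lambda^{j}(x)\to\lambda(x)>0$ and hence $\lambda^{j}(x)>0$, i.e.\ $x\in\pazocal{A}^{j}$, for $j$ large; since then the argument of $\maxx$ is strictly positive, also $x\notin\pazocal{B}^{j}$ eventually, so $\chi_{\pazocal{A}^{j}}(x)\to1=\chi_{\pazocal{A}}(x)$ and $\chi_{\pazocal{B}^{j}}(x)\to0$. If $x\in\pazocal{I}$, then $\uu_{\normalExt}(x)-\gG_{\normalExt}(x)<0$, so $\uu^{j}_{\normalExt}(x)-\gG_{\normalExt}(x)<0$ for $j$ large; using that $\{\gamma^k\}_k$ is increasing one bounds $\gamma^{j}\bigl(\uu^{j}_{\normalExt}(x)-\gG_{\normalExt}(x)\bigr)\le\gamma^{1}\bigl(\uu^{j}_{\normalExt}(x)-\gG_{\normalExt}(x)\bigr)\to\gamma^{1}\bigl(\uu_{\normalExt}(x)-\gG_{\normalExt}(x)\bigr)<0$, while $\lambda^{j-1}(x)\to\lambda(x)=0$, hence $\lambda^{j-1}(x)+\gamma^{j}(\uu^{j}_{\normalExt}(x)-\gG_{\normalExt}(x))<0$ for $j$ large, i.e.\ $x\in\pazocal{I}^{j}$ eventually, giving $\chi_{\pazocal{A}^{j}}(x)\to0=\chi_{\pazocal{A}}(x)$ and $\chi_{\pazocal{B}^{j}}(x)\to0$. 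This establishes $\chi_{\pazocal{A}^{k}}\to\chi_{\pazocal{A}}$ and $\chi_{\pazocal{B}^{k}}\to0$ a.e.\ on $\Gamma_C$ along the subsequence.

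Finally, since $0\le\chi_{\pazocal{A}^{k}},\chi_{\pazocal{B}^{k}}\le1$ and $|\Gamma_C|<+\infty$, dominated convergence promotes this to strong convergence in $L^p(\Gamma_C)$ for every $1\le p<+\infty$, in particular for $1<p<+\infty$, which is the claim. The only genuinely delicate point is the inactive‑set case, where $\gamma^{k}$ may be unbounded: this is precisely why the monotonicity of $\{\gamma^k\}_k$ is used, to dominate $\gamma^{k}$ times the (eventually negative) gap by $\gamma^{1}$ times that gap. I do not expect any other obstruction; Assumptions \ref{A5} and \ref{A6} are exactly what rules out, respectively, uncontrolled behaviour on the biactive set and the failure of pointwise convergence of the multiplier — the latter being unavoidable because $\dmaxx$ is discontinuous precisely on $\{0\}\times\mathbb{R}$.
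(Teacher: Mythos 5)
Your proposal is correct and takes essentially the same route as the paper: extracting a subsequence along which $\uu^k_{\normalExt}\to\uu_{\normalExt}$ a.e.\ on $\Gamma_C$ and combining it with Assumptions \ref{A5} and \ref{A6}, you run the same pointwise case analysis on $\pazocal{A}$ and $\pazocal{I}$, using the monotonicity $\gamma^k\geq\gamma^1$ exactly as the paper does to force the argument of $\maxx$ to stay negative on the inactive set. The only (harmless) difference is the final upgrade to strong $L^p$ convergence: you apply dominated convergence to both $\chi_{\pazocal{A}^k}$ and $\chi_{\pazocal{B}^k}$, whereas the paper uses dominated convergence only for $\chi_{\pazocal{B}^k}$ and a weak-convergence-plus-convergence-of-norms (uniform convexity) argument for $\chi_{\pazocal{A}^k}$; your version is slightly more direct and equally valid.
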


\begin{prf}
  First of all, note that since $\pazocal{A}^k$ and $\pazocal{A}$ are measurable, so are $\chi_{\pazocal{A}^k}$ and $\chi_{\pazocal{A}}$, for any $k\geq 0$. Since those functions are also bounded on $\Gamma_C$ which is of finite measure, they both belong to $L^1(\Gamma_C)$, and thus to any $L^p(\Gamma_C)$ with $1<p<+\infty$.
  Now, let us begin with proving pointwise convergence, from which we will deduce weak then strong convergence.
  
  Since $\uu^k \to \uu$ strongly in $L^2(\Gamma_C)$, there exists a subsequence, still denoted $\uu^k$, that converges a.e. on $\Gamma_C$. From now, we consider this subsequence. Then, from Assumption \ref{A5}, for a.e. $x\in\Gamma_C$, $x$ is either in $\pazocal{A}$, or in $\pazocal{I}$.
  \begin{itemize}[leftmargin=*]
      \item If $x\in\pazocal{A}$, then there is some $\delta>0$ such that $\lambda(x)>\delta$. From Assumption \ref{A6}, one gets that $\exists k_0>0$, $\forall k\geq k_0$, $|\lambda^k(x)-\lambda(x)|<\delta/2$. Especially, for all $k\geq k_0$,
      $$
        \lambda^{k+1}(x) = \maxx \left( \lambda^k(x)+\gamma^{k+1}\left( \uu^{k+1}_{\normalExt}(x)-\gG_{\normalExt}(x)\right) \right) >\delta/2 \:.
      $$
      Thus, for all $k\geq k_0+1$, $\chi_{\pazocal{A}^k}(x)=1=\chi_{\pazocal{A}}(x)$, and $\chi_{\pazocal{B}^k}(x)=0$.
      \item If $x\in\pazocal{I}$, then $\lambda(x)=0$ and there is some $\delta'>0$ such that $\uu_{\normalExt}(x)-\gG_{\normalExt}(x)<-\delta'/\gamma^1$. Moreover, $\exists k_0'>0$ such that $\forall k\geq k_0'$, $|\uu^k_{\normalExt}(x)-\uu_{\normalExt}(x)|<\delta'/2\gamma^1$ and $\lambda^k(x)<\delta'/4$. Consequently, for such values of $k$, one has
      $$
        \lambda^k(x)+\gamma^{k+1}\left( \uu^{k+1}_{\normalExt}(x)-\gG_{\normalExt}(x)\right) < \delta'/4-\gamma^{k+1}\delta'/2\gamma^1 \leq -\delta'/4\:.
      $$
      Hence, for any $k\geq k_0'+1$, one gets $\chi_{\pazocal{A}^k}(x)=0=\chi_{\pazocal{A}}(x)$, and $\chi_{\pazocal{B}^k}(x)=0$.
  \end{itemize}
  This proves that $\chi_{\pazocal{A}^k} \to \chi_{\pazocal{A}}$ and $\chi_{\pazocal{B}^k} \to 0$ a.e. on $\Gamma_C$. Let $1<p<+\infty$. For $\left\{\chi_{\pazocal{B}^k}\right\}_k$, strong convergence follows directly from Lebesgue's dominated convergence theorem and the fact that $|\chi_{\pazocal{B}^k}|^p\leq 1$ a.e. on $\Gamma_C$.  For $\left\{\chi_{\pazocal{A}^k}\right\}_k$, since the sequence is bounded in $L^p(\Gamma_C)$, weak convergence in $L^p(\Gamma_C)$ follows. Taking for example $p=2$, one gets that
  $$
    \int_{\Gamma_C} \chi_{\pazocal{A}^k} = \int_{\Gamma_C} \chi_{\pazocal{A}^k}\cdot 1 \longrightarrow \int_{\Gamma_C} \chi_{\pazocal{A}}\:.
  $$
  Obviously, this also proves that $\norml \chi_{\pazocal{A}^k} \normr_{L^p(\Gamma_C)} \to \norml \chi_{\pazocal{A}} \normr_{L^p(\Gamma_C)}$. For such values of $p$, $L^p(\Gamma_C)$ is uniformly convex, therefore weak convergence and convergence of the norms imply strong convergence.
\end{prf}

\begin{theo}
    Suppose Assumption \ref{A5} and Assumption \ref{A6} hold. Then, choosing the parameters $\gamma^k$ as in Corollary \ref{CorBoundMDer} leads to one of the two following cases:
    \begin{enumerate}[(i),leftmargin=*]
        \item $\left\{\gamma^k\right\}_k$ is bounded, then the whole sequence $\left\{ \dot{\ww}^k \right\}_k$ converges to $\dot{\ww}$, the solution of \eqref{FVConDerW}, strongly in $\Xx$,
        \item $\left\{\gamma^k\right\}_k$ is unbounded, then $\left\{ \dot{\ww}^k \right\}_k$ converges weakly to some limit $\hat{\ww}\in \Xx_{\pazocal{A}}$, up to a subsequence.
    \end{enumerate}
    \label{ThmCvgConDerALM}
\end{theo}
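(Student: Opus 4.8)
The plan is to combine the uniform bounds of Corollary~\ref{CorBoundMDer} with the a.e.\ and compactness statements of Lemma~\ref{LemCvChik}, treating the two cases separately only where the growth of $\{\gamma^k\}_k$ enters. Throughout, Assumption~\ref{A2} is in force, so $\dot{\ww}^k$, $\dot{\lambda}^k$ are well defined and solve \eqref{LMFMDerk:all}. With the parameters of \eqref{HypGammak}, Corollary~\ref{CorBoundMDer} gives $\{(\dot{\ww}^k,\dot{\lambda}^k)\}_k$ bounded in $\Xx\times H^{-\frac{1}{2}}(\Gamma_C)$, together with $\|\dot{\lambda}^k\|_{0,\Gamma_C}^2\leq\gamma^{k+1}$. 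Fix $\thetaa\in\Thetaa$. Up to a subsequence I would take $\dot{\ww}^k\rightharpoonup\hat{\ww}$ in $\Xx$, hence $\dot{\ww}^k_{\normalExt}\to\hat{\ww}_{\normalExt}$ strongly in $L^2(\Gamma_C)$ by compactness of $\Hh^{\frac{1}{2}}(\Gamma_C)\hookrightarrow\Ll^2(\Gamma_C)$; from Theorem~\ref{ThmCvALM}, Assumption~\ref{A6} and Lemma~\ref{LemCvChik}, one also has, up to a further subsequence, $\uu^k\to\uu$ in $\Xx$, $\lambda^k\to\lambda$ a.e.\ and weakly in $L^2(\Gamma_C)$, and $\chi_{\pazocal{A}^k}\to\chi_{\pazocal{A}}$, $\chi_{\pazocal{B}^k}\to0$, $\chi_{\pazocal{I}^k}\to\chi_{\pazocal{I}}$ strongly in every $L^p(\Gamma_C)$, $1<p<+\infty$.

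Consider first the unbounded case (ii), where I must only show boundedness (done) and that any such $\hat{\ww}$ lies in $\Xx_{\pazocal{A}}$, i.e.\ $\hat{\ww}_{\normalExt}=0$ a.e.\ on $\pazocal{A}$. Since $\chi_{\pazocal{A}^{k+1}}\to\chi_{\pazocal{A}}$ and $\dot{\ww}^{k+1}_{\normalExt}\to\hat{\ww}_{\normalExt}$ strongly in $L^2(\Gamma_C)$, one has $\chi_{\pazocal{A}^{k+1}}\dot{\ww}^{k+1}_{\normalExt}\to\chi_{\pazocal{A}}\hat{\ww}_{\normalExt}$ in $L^1(\Gamma_C)$, so it suffices to show $\chi_{\pazocal{A}^{k+1}}\dot{\ww}^{k+1}_{\normalExt}\to0$ in the sense of distributions on $\Gamma_C$. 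Writing \eqref{LMFMDerk:2} on $\pazocal{A}^{k+1}$, $\pazocal{I}^{k+1}$, $\pazocal{B}^{k+1}$ gives
\begin{equation*}
   \frac{1}{\gamma^{k+1}}\big(\dot{\lambda}^{k+1}-\dot{\lambda}^k\big)
   = \chi_{\pazocal{A}^{k+1}}\dot{\ww}^{k+1}_{\normalExt}
   - \frac{1}{\gamma^{k+1}}\chi_{\pazocal{I}^{k+1}}\dot{\lambda}^k
   + \frac{1}{\gamma^{k+1}}\chi_{\pazocal{B}^{k+1}}\big(\maxx(\dot{\lambda}^k+\gamma^{k+1}\dot{\ww}^{k+1}_{\normalExt})-\dot{\lambda}^k\big),
\end{equation*}
and in case (ii), since $\gamma^k\to+\infty$, one has $\frac{1}{\gamma^{k+1}}\dot{\lambda}^k\to0$ in $L^2(\Gamma_C)$ (from $\|\dot{\lambda}^k\|_{0,\Gamma_C}^2\leq\gamma^{k+1}$), $\frac{1}{\gamma^{k+1}}\dot{\lambda}^{k+1}\to0$ in $H^{-\frac{1}{2}}(\Gamma_C)$ (from boundedness there), and the last two terms above tend to $0$ in some $L^r(\Gamma_C)$, $r>1$ --- using $\chi_{\pazocal{B}^{k+1}}\to0$ in $L^2(\Gamma_C)$, Lipschitz continuity of $\maxx$, and the $L^2(\Gamma_C)$ bounds on $\dot{\ww}^{k+1}_{\normalExt}$ and $\frac{1}{\gamma^{k+1}}\dot{\lambda}^k$ --- hence to $0$ in $H^{-\frac{1}{2}}(\Gamma_C)$ by Sobolev embedding. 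Therefore $\chi_{\pazocal{A}^{k+1}}\dot{\ww}^{k+1}_{\normalExt}\to0$ in $H^{-\frac{1}{2}}(\Gamma_C)$, and comparing with the $L^1(\Gamma_C)$ limit gives $\hat{\ww}\in\Xx_{\pazocal{A}}$; together with the $\Xx$-bound this proves (ii).

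For the bounded case (i), $\{\dot{\lambda}^k\}_k$ is moreover bounded in $L^2(\Gamma_C)$, and \eqref{LMFMDerk:all} then has exactly the algebraic structure of the augmented Lagrangian method applied to the linear limit problem \eqref{FVConDerW}, except that the role of the active set is played by the $k$-dependent sets $\pazocal{A}^{k+1}$. I would therefore transcribe the convergence proof of Theorem~\ref{ThmCvALM} to \eqref{LMFMDerk:all}, using Lemma~\ref{LemCvChik} to pass to the limit in $\pazocal{A}^{k+1}$, $\pazocal{I}^{k+1}$, $\pazocal{B}^{k+1}$; this yields that the weak limit $(\hat{\ww},\hat{\lambda})$ solves the Lagrange-multiplier form of \eqref{FVConDerW}, in particular $\hat{\ww}\in\Xx_{\pazocal{A}}$. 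The identification $\hat{\ww}=\dot{\ww}$ is then direct: testing \eqref{LMFMDerk:1} with $\phii\in\Xx_{\pazocal{A}}$ and letting $k\to+\infty$, the term $\prodL2{\dot{\lambda}^{k+1},\phii_{\normalExt}}{\Gamma_C}$ vanishes (split over $\pazocal{A}^{k+1}$, $\pazocal{I}^{k+1}$ where $\dot{\lambda}^{k+1}=0$, and $\pazocal{B}^{k+1}$: $\{\dot{\lambda}^{k+1}\}_k$ bounded in $L^2(\Gamma_C)$, and $\chi_{\pazocal{A}^{k+1}}\phii_{\normalExt}\to0$, $\chi_{\pazocal{B}^{k+1}}\phii_{\normalExt}\to0$ in $L^2(\Gamma_C)$ by dominated convergence since $\phii\in\Xx_{\pazocal{A}}$), and $L^{k+1}[\thetaa](\phii)\to L'(\phii)-a'(\uu,\phii)$ (from $\uu^{k+1}\to\uu$ in $\Xx$ and $\int_{\Gamma_C}\lambda^{k+1}\phii_{\normalExt}\divv_\Gamma\thetaa\to\int_{\Gamma_C}\lambda\,\phii_{\normalExt}\divv_\Gamma\thetaa=0$, the last equality because $\supp\lambda=\pazocal{A}$ by Assumption~\ref{A5} and $\phii_{\normalExt}=0$ a.e.\ on $\pazocal{A}$). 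Hence $a(\hat{\ww},\phii)=L'(\phii)-a'(\uu,\phii)-a(\gG',\phii)$ for all $\phii\in\Xx_{\pazocal{A}}$, so $\hat{\ww}=\dot{\ww}$ by uniqueness of the solution of \eqref{FVConDerW} (Lax--Milgram on the Hilbert space $\Xx_{\pazocal{A}}$), and the whole sequence satisfies $\dot{\ww}^k\rightharpoonup\dot{\ww}$ in $\Xx$. Strong convergence follows by coercivity: $\alpha_0\|\dot{\ww}^{k+1}-\dot{\ww}\|_{\Xx}^2\leq a(\dot{\ww}^{k+1},\dot{\ww}^{k+1}-\dot{\ww})-a(\dot{\ww},\dot{\ww}^{k+1}-\dot{\ww})$, both terms tending to $0$ via \eqref{LMFMDerk:1} with $\vv=\dot{\ww}^{k+1}-\dot{\ww}$, weak convergence in $\Xx$, strong convergence of $(\dot{\ww}^{k+1}-\dot{\ww})_{\normalExt}$ to $0$ in $L^2(\Gamma_C)$, $\uu^{k+1}\to\uu$ in $\Xx$, and the $L^2(\Gamma_C)$ bounds on $\{\lambda^{k+1}\}_k$ and $\{\dot{\lambda}^{k+1}\}_k$.

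I expect the two delicate points to be: the $H^{-\frac{1}{2}}(\Gamma_C)$-level argument for $\hat{\ww}\in\Xx_{\pazocal{A}}$ in the unbounded case --- where $\{\dot{\lambda}^k\}_k$ may fail to be $L^2(\Gamma_C)$-bounded, forcing one to combine the $H^{-\frac{1}{2}}$ bound on $\{\dot{\lambda}^k\}_k$, the inequality $\|\dot{\lambda}^k\|_{0,\Gamma_C}^2\leq\gamma^{k+1}$ and the divergence of $\{\gamma^k\}_k$, then reconcile the distributional limit with the $L^1(\Gamma_C)$ limit from the strong $L^2$ convergence of the normal traces --- and, in the bounded case, making rigorous the transcription of the ALM convergence proof when the "active set" $\pazocal{A}^{k+1}$ is itself varying with $k$, its control being exactly what Lemma~\ref{LemCvChik} is designed to provide.
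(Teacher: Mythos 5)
Your case (ii) and your concluding steps of case (i) are sound, but case (i) has a genuine gap at its pivot: the membership $\hat{\ww}\in\Xx_{\pazocal{A}}$. You obtain (correctly) that the weak limit satisfies $a(\hat{\ww},\phii)=L'(\phii)-a'(\uu,\phii)-a(\gG',\phii)$ for all $\phii\in\Xx_{\pazocal{A}}$, and you then invoke uniqueness of the solution of \eqref{FVConDerW}; but uniqueness only identifies $\hat{\ww}$ with $\dot{\ww}$ if $\hat{\ww}$ itself lies in $\Xx_{\pazocal{A}}$ — an element of $\Xx$ satisfying the equation against test functions in $\Xx_{\pazocal{A}}$ is far from unique. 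You delegate precisely this point to "transcribing the convergence proof of Theorem~\ref{ThmCvALM}", and that shortcut does not work: the system \eqref{LMFMDerk:all} is \emph{not} an augmented Lagrangian iteration for the limit problem \eqref{FVConDerW}. The sets $\pazocal{A}^{k+1}$, $\pazocal{B}^{k+1}$, $\pazocal{I}^{k+1}$ are dictated by the outer iterates $(\uu^{k+1},\lambda^{k})$ of the ALM, not generated by the derivative recursion, so the convex/monotone saddle-point structure on which Theorem~\ref{ThmCvALM} rests is absent; what you face is a sequence of linear problems with $k$-dependent coefficient sets. The missing step must be done explicitly, and it is exactly what the paper does: test the update relation $\dot{\lambda}^{k+1}=\chi_{\pazocal{A}^{k+1}}\bigl(\dot{\lambda}^{k}+\gamma^{k+1}\dot{\ww}^{k+1}_{\normalExt}\bigr)+\chi_{\pazocal{B}^{k+1}}\maxx\bigl(\dot{\lambda}^{k}+\gamma^{k+1}\dot{\ww}^{k+1}_{\normalExt}\bigr)$ against $\eta\in L^4(\Gamma_C)$, use the boundedness of $\{\dot{\lambda}^k\}_k$ in $L^2(\Gamma_C)$ (available in case (i) from \eqref{HypGammak}), the strong $L^4$ convergences of Lemma~\ref{LemCvChik}, the strong $L^2$ convergence of the normal traces and $\gamma^k\to\hat{\gamma}_1$, to pass to the limit and get $\hat{\lambda}=\chi_{\pazocal{A}}\bigl(\hat{\lambda}+\hat{\gamma}_1\hat{\ww}_{\normalExt}\bigr)$ a.e.\ on $\Gamma_C$, whence $\hat{\ww}_{\normalExt}=0$ a.e.\ on $\pazocal{A}$. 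You already use all of these ingredients elsewhere, so the gap is fixable, but as written the identification $\hat{\ww}=\dot{\ww}$ is unsupported.

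For the rest, your route differs from the paper's in ways that are legitimate. In case (ii) the paper bounds $\gamma^{k+1}\norml\dot{\ww}^{k+1}_{\normalExt}\normr^2_{0,\pazocal{A}^{k+1}}$ by taking $\vv=\dot{\ww}^{k+1}$ in \eqref{LMFMDerk:1} and lets the divergence of $\gamma^k$ kill the limit, whereas you divide the multiplier update by $\gamma^{k+1}$ and exploit $\norml\dot{\lambda}^{k}\normr^2_{0,\Gamma_C}\leq\gamma^{k+1}$ from the rule of Corollary~\ref{CorBoundMDer}, reconciling an $H^{-\frac{1}{2}}$ limit with an $L^1$ limit of $\chi_{\pazocal{A}^{k+1}}\dot{\ww}^{k+1}_{\normalExt}$; both are valid, yours leaning more heavily on the specific parameter rule. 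Likewise, your way of showing that the multiplier term $\prodL2{\dot{\lambda}^{k+1},\phii_{\normalExt}}{\Gamma_C}$ vanishes for $\phii\in\Xx_{\pazocal{A}}$ (splitting over the sets and using dominated convergence) and your strong-convergence argument via $\vv=\dot{\ww}^{k+1}-\dot{\ww}$ are correct variants of the paper's passage through $\hat{\lambda}$ and of its argument $a(\dot{\ww}^{k+1},\dot{\ww}^{k+1})\to a(\dot{\ww},\dot{\ww})$.
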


\begin{prf}
  As mentioned above, a direct consequence of Theorem \ref{ThmBoundMDer} is that, up to a subsequence, the iterates $\left( \dot{\ww}^k, \dot{\lambda}^{k} \right)$ converge weakly to some limit $(\hat{\ww},\hat{\lambda}) \in \Xx\times H^{-\frac{1}{2}}(\Gamma_C)$. In the following, let us consider that subsequence, which we still denote using the superscript $^k$. Due to strong convergence $\left( \uu^k, \lambda^{k} \right) \to \left( \uu,\lambda \right)$ in $\Xx\times H^{-\frac{1}{2}}(\Gamma_C)$, it is clear that $L^{k}[\thetaa] \to L[\thetaa]$ strongly in $\Xx^*$, where $L[\thetaa]$ is defined as: for all $\vv\in\Xx$,
  $$
    L[\thetaa](\vv) := L'(\vv) - a'(\uu,\vv) - \int_{\Gamma_C} \lambda \vv_{\normalExt} \divv_\Gamma \thetaa \:.
  $$
  Consequently, the weak limit satisfies:
  \begin{equation}
      a(\hat{\ww},\vv) - L[\thetaa](\vv) + a(\gG',\vv) + \prodD{\hat{\lambda},\vv_{\normalExt}}{\Gamma_C} = 0\:, \hspace{1em} \forall \vv\in\Xx\:.
  \end{equation}
  From now, we distinguish two possible cases.
  \begin{enumerate}[(i),leftmargin=*]
      \item Case $\left\{\gamma^k\right\}_k$ bounded. \\
      From the definition of $\left\{\gamma^k\right\}_k$ (see \eqref{HypGammak}), it follows that $\left\{\dot{\lambda}^k\right\}_k$ is bounded in $L^2(\Gamma_C)$. Therefore there exists a subsequence that converges weakly in $L^2(\Gamma_C)$ to $\hat{\lambda}$ (due to uniqueness of the weak limit), which also proves that $\hat{\lambda}\in L^2(\Gamma_C)$. As $\left\{\gamma^k\right\}_k$ is an increasing bounded sequence, it converges, say to $\hat{\gamma}_1>0$. Now, let $\eta\in L^4(\Gamma_C)$, one has:
      \begin{equation*}
        \begin{aligned} 
          \prodL2{\dot{\lambda}^{k+1},\eta}{\Gamma_C} &= \prodL2{\dot{\lambda}^{k}+\gamma^{k+1}\dot{\ww}^{k+1}_{\normalExt},\eta}{\pazocal{A}^{k+1}} + \prodL2{\dot{\lambda}^{k+1},\eta}{\pazocal{B}^{k+1}} \\
          &= \prodL2{\dot{\lambda}^{k},\chi_{\pazocal{A}^{k+1}}\eta}{\Gamma_C} + \gamma^{k+1}\prodL2{\dot{\ww}^{k+1}_{\normalExt},\chi_{\pazocal{A}^{k+1}}\eta}{\Gamma_C} + \prodL2{\dot{\lambda}^{k+1},\chi_{\pazocal{B}^{k+1}}\eta}{\Gamma_C}\:.
        \end{aligned}
      \end{equation*}
      We know that $\dot{\lambda}^{k}$, $\dot{\ww}^{k}_{\normalExt}$ converge weakly in $L^2(\Gamma_C)$, and using the results of Lemma \ref{LemCvChik} with $p=4$, it follows that $\chi_{\pazocal{A}^{k}}\eta$, $\chi_{\pazocal{B}^{k}}\eta$ converge strongly in $L^2(\Gamma_C)$. Thus, passing to the limit on both sides of the equality yields:
      $$
        \hat{\lambda} = \chi_{\pazocal{A}}\left( \hat{\lambda} + \hat{\gamma}_1 \hat{\ww}_{\normalExt}\right) \: \mbox{ in } L^{\frac{4}{3}}(\Gamma_C)\:.
      $$
      From this, one deduces that $\hat{\ww}\in\Xx_{\pazocal{A}}$ and
      $$
        a(\hat{\ww},\vv) - L'(\vv) + a'(\uu,\vv) + a(\gG',\vv) = 0\:, \hspace{1em} \forall \vv\in\Xx_{\pazocal{A}}\:,
      $$
      which means that $\hat{\ww}=\dot{\ww}$, since the solution of this problem is unique. Uniqueness also proves that the whole sequence converges weakly to $\dot{\ww}$.
      
      Finally, to get strong convergence, let us take $\vv=\dot{\ww}^{k+1}$ as test-function in \eqref{LMFMDerk:1}. Since the embedding $H^{\frac{1}{2}}(\Gamma_C)\hookrightarrow L^2(\Gamma_C)$ is compact, one obtains for the second term of \eqref{LMFMDerk:1}:
      $$
        \prodL2{\dot{\lambda}^{k+1},\dot{\ww}^{k+1}_{\normalExt}}{\Gamma_C} \longrightarrow \prodL2{\hat{\lambda},\dot{\ww}_{\normalExt}}{\Gamma_C} = 0\:,
      $$
      from which we get that $a(\dot{\ww}^{k+1},\dot{\ww}^{k+1}) \to a(\dot{\ww},\dot{\ww})$. The ellipticity of $a$ finishes the proof.
      \item Case $\left\{\gamma^k\right\}_k$ unbounded. \\
      Again, let us take $\vv=\dot{\ww}^{k+1}$ as test-function in \eqref{LMFMDerk:1}, and rewrite the second term of the formulation.
      \begin{equation*}
        \begin{aligned}
          \prodL2{\dot{\lambda}^{k+1},\dot{\ww}^{k+1}_{\normalExt}}{\Gamma_C} &= \prodL2{\dot{\lambda}^{k}+\gamma^{k+1}\dot{\ww}^{k+1}_{\normalExt},\dot{\ww}^{k+1}_{\normalExt}}{\pazocal{A}^{k+1}} + \prodL2{\dot{\lambda}^{k+1},\dot{\ww}^{k+1}_{\normalExt}}{\pazocal{B}^{k+1}} \\
          & \geq \gamma^{k+1} \norml \dot{\ww}^{k+1}_{\normalExt} \normr_{0,\pazocal{A}^{k+1}}^2 - C\left( \norml \dot{\lambda}^{k} \normr_{-1/2,\Gamma_C} + \norml \dot{\lambda}^{k+1} \normr_{-1/2,\Gamma_C} \right)\norml \dot{\ww}^{k+1} \normr_{\Xx}\:.
        \end{aligned}
      \end{equation*}
      Using this estimation in \eqref{LMFMDerk:1}, one obtains that $\left\{ \gamma^{k+1} \norml \dot{\ww}^{k+1}_{\normalExt} \normr_{0,\pazocal{A}^{k+1}}^2 \right\}_k$ is bounded. Using Lemma \ref{LemCvChik} with $p=2$ and the continuous embedding $H^{\frac{1}{2}}(\Gamma_C) \hookrightarrow L^4(\Gamma_C)$, one obtains, up to a subsequence,
      $$
        \norml \dot{\ww}^{k+1}_{\normalExt} \normr_{0,\pazocal{A}^{k+1}}^2 = \int_{\Gamma_C} \chi_{\pazocal{A}^{k+1}} \cdot | \dot{\ww}^{k+1}_{\normalExt} |^2 \: \longrightarrow \: \norml \hat{\ww}_{\normalExt} \normr_{0,\pazocal{A}}^2\:.
      $$
      On the other hand, since $\left\{\gamma^k\right\}_k$ is unbounded, there exists a subsequence (of the previous subsequence) that diverges to $+\infty$. Hence, boundedness of $\left\{ \gamma^{k+1} \norml \dot{\ww}^{k+1}_{\normalExt} \normr_{0,\pazocal{A}^{k+1}}^2 \right\}_k$ implies that $\norml \hat{\ww}_{\normalExt} \normr_{0,\pazocal{A}}^2=0$, that is $\hat{\ww}\in \Xx_{\pazocal{A}}$.
  \end{enumerate}
   
\end{prf}

\subsection{Shape derivative of a generic criterion}

As we aim at solving problem \eqref{ShapeOPT} using a gradient descent, we need to have usable and easy to evaluate expression for the shape derivatives of the functional $J$. From now, let us consider functionals which take the generic form:
\begin{equation}
  J^k(\Omega) := \int_\Omega l(\uu^k(\Omega)) + \int_{\partial\Omega} m(\uu^k(\Omega))\:.
  \label{GeneralJType}
\end{equation}
Let us make the usual regularity assumptions: the functions $l,m$ are $\pazocal{C}^1(\mathbb{R}^d,\mathbb{R})$, and their derivatives, denoted $l'$, $m'$, are Lipschitz. It is also assumed that those functions and their derivatives satisfy, for all $u$, $v \in \mathbb{R}^d$,
\begin{subequations}\label{CondReglm}
    \begin{align}
        |l(u)| \leq C\left(1+|u|^2\right)\:, \qquad
        &|m(u)| \leq C\left(1+|u|^2\right)\:,
        \label{Condlm} \\
        |l'(u)\cdot v| \leq C |u\cdot v|\:, \qquad
        &|m'(u)\cdot v| \leq C|u\cdot v| \:,
        \label{Condl'm'}
    \end{align}
\end{subequations}
for some constants $C>0$. Let us state the well known shape differentiability result for such functionals $J^k$ (see for example \cite{henrot2006variation}).

\begin{theo}  \label{ThmDJVol}
    Let $k\geq 1$. Under the assumptions of Corollary \ref{CorSDerk}, $J^k$ is shape differentiable at $\Omega$, and its derivative in the direction $\thetaa\in\Thetaa$ writes:
    \begin{equation}
        dJ^k(\Omega)[\thetaa] = \int_\Omega l'(\uu^k)\cdot \dot{\uu}^k + \: l(\uu^k)\divv\thetaa 
        + \int_{\partial\Omega} m'(\uu^k)\cdot \dot{\uu}^k + m(\uu^k) \divv_\Gamma\thetaa\:.
        \label{DJVol0}
    \end{equation}
\end{theo}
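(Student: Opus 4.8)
The plan is to reduce Theorem \ref{ThmDJVol} to the standard shape differentiation of integral functionals, exactly as in the classical references (notably \cite[Chapter 5]{henrot2006variation}), by using the shape differentiability of $\uu^k$ already established in Corollary \ref{CorSDerk}. First I would recall the setting: under the assumptions of Corollary \ref{CorSDerk}, which include Assumption \ref{A2} (so the data $\Aa$, $\ff$, $\tauu$ are smooth enough) and Assumption \ref{A4} at rank $k$ (so the biactive sets $\pazocal{B}^1,\dots,\pazocal{B}^k$ are of zero measure), the iterate $\uu^k$ is strongly shape differentiable in $\Ll^2(\Omega)$ and admits a strong material derivative $\dot{\uu}^k\in\Xx$ in every direction $\thetaa\in\Thetaa$, with the map $\thetaa\mapsto\dot{\uu}^k$ linear continuous. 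Since $\uu^k\in\Xx\subset\Hh^1(\Omega)$ and $\Omega$ is $\pazocal{C}^1$, the trace of $\uu^k$ on $\partial\Omega$ is well defined in $\Hh^{1/2}(\partial\Omega)$, and the material derivative $\dot{\uu}^k$ has a trace in $\Hh^{1/2}(\partial\Omega)$ as well.

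Next I would handle the volume term $\int_\Omega l(\uu^k(\Omega))$. Pulling back to the reference domain via the diffeomorphism $\Id+t\thetaa$, one writes $\int_{\Omega(t)} l(\uu^k(\Omega(t))) = \int_\Omega l\big((\uu^k)^t\big)\,\JacV(t)$, where $(\uu^k)^t = \uu^k(\Omega(t))\circ(\Id+t\thetaa)$. The integrand is differentiable in $t$ at $t=0^+$: the map $t\mapsto\JacV(t)$ is differentiable with derivative $\divv\thetaa$ at $t=0$; the map $t\mapsto(\uu^k)^t$ is strongly differentiable in $\Xx$ (hence in $\Ll^2(\Omega)$) with derivative $\dot{\uu}^k$ by the material differentiability just recalled; and $l$ is $\pazocal{C}^1$ with Lipschitz derivative satisfying the growth conditions \eqref{Condlm}--\eqref{Condl'm'}, which makes the composition $\vv\mapsto\int_\Omega l(\vv)$ differentiable from $\Ll^2(\Omega)$ to $\mathbb{R}$ (this is where the growth hypotheses are used, to control the Nemytskij operator induced by $l$ and $l'$, cf.\ \cite[Section 4.3]{troltzsch2010optimal}). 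Applying the chain rule — valid here because the outer maps are Fréchet differentiable and the inner map is strongly differentiable — yields $\frac{d}{dt}\big|_{t=0}\int_\Omega l((\uu^k)^t)\JacV(t) = \int_\Omega l'(\uu^k)\cdot\dot{\uu}^k + l(\uu^k)\divv\thetaa$.

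For the boundary term $\int_{\partial\Omega} m(\uu^k(\Omega))$, the same pull-back is used, but with the tangential Jacobian $\JacB(t)$ in place of $\JacV(t)$: $\int_{\partial\Omega(t)} m(\uu^k(\Omega(t))) = \int_{\partial\Omega} m\big((\uu^k)^t\big)\,\JacB(t)$. One uses that $t\mapsto\JacB(t)$ is differentiable at $t=0$ with derivative $\divv_\Gamma\thetaa$ (the tangential divergence), that the trace of $(\uu^k)^t$ depends differentiably on $t$ in $\Ll^2(\partial\Omega)$ by continuity of the trace operator and strong material differentiability of $\uu^k$, and again the $\pazocal{C}^1$-regularity and growth conditions on $m$, $m'$ to differentiate the boundary Nemytskij functional. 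This gives $\frac{d}{dt}\big|_{t=0}\int_{\partial\Omega} m((\uu^k)^t)\JacB(t) = \int_{\partial\Omega} m'(\uu^k)\cdot\dot{\uu}^k + m(\uu^k)\divv_\Gamma\thetaa$. Summing the two contributions gives formula \eqref{DJVol0}; linearity and continuity of $\thetaa\mapsto dJ^k(\Omega)[\thetaa]$ follow from linearity and continuity of $\thetaa\mapsto\dot{\uu}^k$ (Corollary \ref{CorSDerk}) together with linearity and continuity of $\thetaa\mapsto\divv\thetaa$, $\thetaa\mapsto\divv_\Gamma\thetaa$, which establishes shape differentiability of $J^k$.

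The routine parts are the Jacobian expansions, which are quoted from \cite{henrot2006variation,sokolowski1992introduction}, so the only real point requiring care — the main (mild) obstacle — is justifying the chain rule for the composition of the Nemytskij operators induced by $l$, $m$ with the material-derivative map: one must check that the growth conditions \eqref{Condlm}--\eqref{Condl'm'} indeed guarantee Fréchet differentiability of $\vv\mapsto\int_\Omega l(\vv)$ on $\Ll^2(\Omega)$ and of $\vv\mapsto\int_{\partial\Omega} m(\vv)$ on $\Ll^2(\partial\Omega)$ (with the trace embedding $\Hh^1(\Omega)\hookrightarrow\Ll^2(\partial\Omega)$), so that composing with the strongly differentiable pull-back is legitimate. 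Since this is exactly the situation treated in \cite[Theorem 5.2.2 and Proposition 5.4.18]{henrot2006variation}, the proof reduces to invoking those results with the shape differentiability of $\uu^k$ in hand, and I would simply write: \emph{the result follows by adapting the proofs of \cite[Theorem 5.2.2 and Corollary 5.2.5]{henrot2006variation}, using that $\uu^k$ is shape differentiable by Corollary \ref{CorSDerk}.}
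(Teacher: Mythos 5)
Your argument is correct and follows essentially the same route as the paper, which states the result as the classical shape-differentiation theorem for integral functionals and refers to \cite{henrot2006variation} (Theorem 5.2.2 and related results), the only problem-specific ingredient being the shape/material differentiability of $\uu^k$ supplied by Corollary \ref{CorSDerk}. Your pull-back computation with $\JacV(t)$, $\JacB(t)$ and the chain rule for the Nemytskij functionals under the growth conditions \eqref{Condlm}--\eqref{Condl'm'} is exactly the content of those cited results, so nothing is missing.
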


\begin{remark}
  It follows from Theorem \ref{ThmBoundMDer} that $dJ^k$ is bounded for suitable choices of parameters $\gamma^k$. Therefore, formula \eqref{DJVol0} produces usable shape derivatives, regardless how many iterations of the ALM algorithm are performed. Moreover, a sufficient condition to get convergence of $\left\{ dJ^k\right\}_k$ is that $\left\{\dot{\uu}^k\right\}_k$ converges weakly in $\Xx$.
\end{remark}

As usual, in order to rewrite \eqref{DJVol0} as a boundary integral, we introduce the associated adjoint state $\pp^k\in \Xx$ as the solution of:
\begin{equation}
    a(\pp^k,\vv) + \gamma^k\prodL2{\chi_{\pazocal{A}^k} \pp^k_{\normalExt}, \vv_{\normalExt}}{\Gamma_C} = -\int_\Omega l'(\uu^k)\vv - \int_{\partial\Omega} m'(\uu^k)\vv\:, \hspace{1em} \forall \vv \in \Xx\:.
    \label{FVAdj}
\end{equation}
The left hand side of this equation is obviously a continuous and coercive bilinear form on $\Xx\times\Xx$, while the right hand side belongs to $\Xx^*$ since $l$, $m$ satisfy conditions \eqref{CondReglm}. Thus, Lax-Milgram lemma ensures existence and uniqueness of $\pp^k$ for each $k$.

\begin{cor}
    Suppose $\Omega$ is of class $\pazocal{C}^2$. Then, if in addition to the assumptions of Theorem \ref{ThmDJVol}, $\uu^k$, $\pp^k\in \Hh^2(\Omega)$ and $\lambda^{k-1}\in H^{\frac{3}{2}}(\Gamma_C)$, then one has
    \begin{equation}
        dJ^k(\Omega)[\thetaa] = \int_{\partial\Omega} \mathfrak{A}^k \theta + \int_{\Gamma_N} \mathfrak{B}^k \theta + \int_{\Gamma_C} \mathfrak{C}^k \theta \:,
        \label{DJ}
    \end{equation}
  where $\mathfrak{A}^k$, $\mathfrak{B}^k$ and $\mathfrak{C}^k$ depend on $\uu^k$, $\pp^k$, $\lambda^{k}$, their gradients, and the data.
\end{cor}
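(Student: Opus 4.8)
The plan is to follow the proof of Theorem~\ref{ThmDJ}: eliminate the material derivative $\dot{\uu}^k$ from the distributed expression \eqref{DJVol0} by means of the adjoint state, and then convert the resulting volume integrals into boundary integrals using the extra regularity. First I would record that, under Assumption~\ref{A4} at rank $k$, the generalized derivative ${R^k_{\normalExt}}'(\uu^k)$ in \eqref{EqMDerUk}--\eqref{EqMDerLambdak} collapses to $\chi_{\pazocal{A}^k}\bigl(\dot{\lambda}^{k-1}+\gamma^k(\dot{\uu}^k_{\normalExt}-\gG_{\normalExt}')\bigr)$, so that $\dot{\uu}^k$ solves a \emph{linear} variational problem whose bilinear form is exactly the symmetric coercive form $b^k(\vv,\ww):=a(\vv,\ww)+\gamma^k\prodL2{\chi_{\pazocal{A}^k}\vv_{\normalExt},\ww_{\normalExt}}{\Gamma_C}$ used in \eqref{FVAdj} to define $\pp^k$. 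Writing this problem as $b^k(\dot{\uu}^k,\vv)=\widetilde{L}^k[\thetaa](\vv)$ for all $\vv\in\Xx$, where the right-hand side gathers $L'$, $-a'(\uu^k,\cdot)$, the gap term $-\int_{\Gamma_C}\lambda^k(\cdot)_{\normalExt}\divv_\Gamma\thetaa$ and the residual contact contributions in $\chi_{\pazocal{A}^k}$, $\gG_{\normalExt}'$ and $\dot{\lambda}^{k-1}$, I would take $\vv=\dot{\uu}^k$ in \eqref{FVAdj} and $\vv=\pp^k$ in the formulation for $\dot{\uu}^k$, then subtract and use the symmetry of $b^k$. This cancels every term containing $\dot{\uu}^k$ in \eqref{DJVol0} and yields the distributed shape derivative $dJ^k(\Omega)[\thetaa]=-\widetilde{L}^k[\thetaa](\pp^k)+\int_\Omega l(\uu^k)\divv\thetaa+\int_{\partial\Omega}m(\uu^k)\divv_\Gamma\thetaa$, which is valid with merely $\uu^k,\pp^k\in\Xx$ and $\lambda^k\in L^2(\Gamma_C)$. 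The term $\dot{\lambda}^{k-1}$ still appears; it is removed by an induction on $k$ (for $k=1$, $\dot{\lambda}^0={\lambda^0}'=\grad\lambda^0\cdot\thetaa$ is pure data; for $k\ge2$ one substitutes the analogous distributed expression produced at step $k-1$), which is also why the regularity hypothesis naturally bears on $\lambda^{k-1}$ rather than $\lambda^k$.

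Next, assuming $\Omega$ of class $\pazocal{C}^2$ and $\uu^k,\pp^k\in\Hh^2(\Omega)$, I would integrate by parts in $-\widetilde{L}^k[\thetaa](\pp^k)$ as in \cite[pp.~135--141]{sokolowski1992introduction} and \cite{henrot2006variation}. Using the strong forms $-\Divv\sigmaa(\uu^k)=\ff$ and the strong form of \eqref{FVAdj}, the bulk terms produce the boundary integrand $\mathfrak{A}^k=l(\uu^k)+(\kappa+\partial_{\normalInt})m(\uu^k)+\Aa:\epsilonn(\uu^k):\epsilonn(\pp^k)-\ff\pp^k$ on $\partial\Omega$ and $\mathfrak{B}^k=-(\kappa+\partial_{\normalInt})(\tauu\pp^k)$ on $\Gamma_N$; here the restriction $\thetaa=\theta\normalInt$ on $\partial\Omega$ is essential, since then $\thetaa\cdot\normalInt=\theta$, $\divv_\Gamma\thetaa=\kappa\theta$, and the tangential Stokes formula on the closed surface $\partial\Omega$ eliminates the tangential gradients. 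The terms supported on $\Gamma_C$ — the gap-divergence term, the $\gamma^k\chi_{\pazocal{A}^k}$-terms, and the piece stemming from $\gG_{\normalExt}'=-\theta$ — are reorganised using $\lambda^k=\gamma^k(\uu^k_{\normalExt}-\gG_{\normalExt})+\lambda^{k-1}$ on $\pazocal{A}^k$ and $\lambda^k=0$ off $\pazocal{A}^k$; the assumption $\lambda^{k-1}\in H^{3/2}(\Gamma_C)$ makes $\lambda^k\pp^k_{\normalExt}$ regular enough for a tangential integration by parts on $\Gamma_C$, giving $\mathfrak{C}^k$ as a $(\kappa+\partial_{\normalInt})(\lambda^k\pp^k_{\normalExt})$-type term plus lower-order contributions in $\uu^k$, $\pp^k$, $\lambda^k$ and the data. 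Gathering all contributions yields \eqref{DJ}, with $\theta=\thetaa\cdot\normalInt$ the only remaining $\thetaa$-dependence.

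The hard part is the $\Gamma_C$ contribution: unlike the Neumann boundary, it couples the derivative of the gap, the indicator $\chi_{\pazocal{A}^k}$ of the shape-dependent active set, and — through $\dot{\lambda}^{k-1}$ — the whole history of the ALM iteration, so one must carry out the induction carefully and justify the surface integration by parts, which is precisely what forces the hypothesis $\lambda^{k-1}\in H^{3/2}(\Gamma_C)$ and benefits from having restricted the deformation fields to $\Thetaa$ (so that $\normalExt'=0$ and $\gG_{\normalExt}'=-\theta$ on $\Gamma_C$, killing the most troublesome terms). The remaining steps — the bulk integrations by parts, Lax--Milgram for $\pp^k$, and the uniform bounds that make $dJ^k$ usable — are routine and follow from \cite{henrot2006variation,sokolowski1992introduction} and Theorem~\ref{ThmBoundMDer}.
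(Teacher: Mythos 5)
There is a genuine gap, and it lies exactly where you locate the ``hard part'': the treatment of $\dot{\lambda}^{k-1}$. The paper's proof does not keep the recursive material derivative of the previous multiplier at all. It declares that, at iteration $k$, the previous iterate is known data: $\lambda^{k-1}$ is ``considered as a fixed function'', and the hypothesis $\lambda^{k-1}\in H^{\frac{3}{2}}(\Gamma_C)$ is used precisely to view it as the trace of some $\tilde{\lambda}\in H^2(\mathbb{R}^d)$, so that $\lambda^k=\maxx\bigl(\tilde{\lambda}+\gamma^k(\uu^k_{\normalExt}-\gG_{\normalExt})\bigr)\in H^1(\Gamma_C)$. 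With $\tilde{\lambda}$ playing the same role as the data $\mathfrak{F}s\in H^2(\mathbb{R}^d)$ in the penalty chapter, formulation \eqref{ItALM} becomes structurally identical to the penalty formulation, and the proof of \cite[Theorem 3.15]{chaudet2019shape} (adjoint state, symmetry of $b^k$, elimination of $\dot{\uu}^k$, then tangential integration by parts, which is what requires $\lambda^k\pp^k_{\normalExt}$ to be regular enough) is adapted verbatim to produce \eqref{DJ} with \eqref{EqABC}. So the $H^{3/2}$ assumption is there to legitimize the extension of $\lambda^{k-1}$ and the surface integration by parts of the term $(\kappa+\partial_{\normalInt})(\lambda^k\pp^k_{\normalExt})$, not to control an induction over the ALM history.

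Your alternative — keep $\dot{\lambda}^{k-1}$ in the right-hand side of the problem for $\dot{\uu}^k$ (as in \eqref{EqMDerUk}) and remove it ``by induction, substituting the analogous distributed expression produced at step $k-1$'' — does not work as stated and, even if repaired, would not prove this corollary. First, there is nothing to substitute: $\dot{\lambda}^{k-1}$ is not a distributed shape derivative of a functional, and the adjoint $\pp^k$ of \eqref{FVAdj} cannot absorb it because $\dot{\uu}^{k-1}$ (hidden inside $\dot{\lambda}^{k-1}$) solves a variational problem with the \emph{different} bilinear form $b^{k-1}$ built on $\chi_{\pazocal{A}^{k-1}}$ and $\gamma^{k-1}$; eliminating the whole recursion would require a backward chain of adjoint states, one per ALM iteration. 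Second, and decisively, carrying the recursion through would yield integrands depending on $\uu^{k-1},\lambda^{k-2},\dots$, i.e.\ on the entire iteration history, whereas the corollary asserts that $\mathfrak{A}^k$, $\mathfrak{B}^k$, $\mathfrak{C}^k$ depend only on $\uu^k$, $\pp^k$, $\lambda^k$ and the data — a formula that is only obtained (and only claimed) in the ``frozen $\lambda^{k-1}$'' sense used by the paper, consistent with the algorithmic use described in the conclusion. The rest of your argument (linearity under Assumption \ref{A4}, coincidence of the bilinear form of the linearized state problem with that of \eqref{FVAdj}, symmetry trick, $\divv_\Gamma(\theta\normalInt)=\kappa\theta$, $\normalExt'=0$ and $\gG_{\normalExt}'=-\theta$ on $\Gamma_C$) matches the paper's route.
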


\begin{prf}
   Suppose we are at iteration $k$. Then the solution at the previous iteration is known, and $\lambda^{k-1}$ can be considered as a fixed function. Due to its regularity, it will be seen as the trace of some function $\tilde{\lambda}\in H^2(\mathbb{R}^d)$, which leads to:
   $$
     \lambda^k = \maxx\left( \tilde{\lambda} + \gamma^k\left(\uu^k_{\normalExt}-\gG_{\normalExt} \right)\right) \in H^1(\Gamma_C)\:.
   $$
   Consequently, the technical difficulties related to this problem are similar to the ones in \cite{chaudet2019shape}, and the proof of \cite[Theorem 3.15]{chaudet2019shape} can be adapted, which enables to recover \eqref{DJ}, with:
   \begin{equation}\label{EqABC}
   \left\{\
      \begin{aligned}
        \mathfrak{A}^k &= l(\uu^k) + (\kappa +\partial_{\normalInt})m(\uu^k) + \Aa:\epsilonn(\uu^k):\epsilonn(\pp^k) - \ff\pp^k \:,\\
        \mathfrak{B}^k &= -(\kappa +\partial_{\normalInt})\left( \tauu \pp^k \right) \:,\\
        \mathfrak{C}^k &= -(\kappa +\partial_{\normalInt})\left( \lambda^k\pp^k_{\normalExt}\right)\:,
      \end{aligned}
  \right.
  \end{equation}
  where $\kappa$ is the mean curvature on $\partial\Omega$ and $\partial_{\normalInt}$ is the normal derivative.
  Of course, since $\thetaa \in \Thetaa$ here, $\thetaa\cdot\normalInt$ may simply be replaced by $\theta$.
\end{prf}


\section{Conclusion}

In this work, we have expressed sufficient conditions for shape differentiability of $\uu$, the solution to the original Signorini problem. We also got interested in shape differentiability properties for the sequence of the iterates $\uu^k$ (approaching $\uu$) obtained when applying the augmented Lagrangian method to this problem. After proving that these iterates were always conically shape differentiable, we have given sufficient conditions for the associated shape derivatives to converge to the shape derivative of $\uu$. On the other hand, we also found conditions that guarantee shape differentiability of the iterates. In this case, we were able to apply the usual method, that is to introduce an adjoint state, then get an explicit expression for the shape derivative of some generic functional.

A natural extension of the present work would be to consider the contact problem with Tresca friction. For this model, proving conical shape differentiabilty of $\uu^k$ and expressing sufficient conditions for $\uu^k$ to be shape differentiable is rather straightforward since the arguments from \cite{chaudet2019shape} apply. However, proving conical shape differentiability and deriving sufficient conditions for classical shape differentiability of the solution to the associated variational inequality seems more difficult. Indeed, for this problem, conical shape differntiability has been proved only in the two-dimensional case and for specific directions $\thetaa$, see \cite{sokolowski1988shape}.

Numerical analysis and comparison to other approaches is outside of the scope of this paper, and will be presented separately in a near future. From the numerical point of view, one could use formula \eqref{DJ} in a gradient based algorithm. After solving the last iteration $\bar{k}$ of the ALM, one may use the tangent matrix associated to the non-linear discrete system solved by $\uu^{\bar{k}}$ to get the adjoint and find a descent direction thanks to \eqref{DJ}. 

\part{Mise en \oe uvre numérique}

\chapter{Une méthode d'optimisation de formes avec des level-sets}     
\label{chap:4.1}

\section*{Introduction}

Nous revenons ici à l'algorithme d'optimisation de formes brièvement exposé au chapitre \ref{chap:1.1}. On rappelle que l'approche considérée s'appuie sur une représentation des formes par l'intermédiaire de level sets. Par ailleurs, pour une forme donnée, nous avons vu dans les chapitres précédents qu'on est capable de déterminer des directions de descente à partir de l'analyse de sensibilité par rapport à la forme. Ceci nous permet donc d'implémenter un algorithme d'optimisation de type descente de gradient. Cette approche a pour la première fois été introduite dans \cite{allaire2004structural}, dans le contexte de l'optimisation de structures. 

L'utilisation d'une fonction level set $\phi$ est très intéressante en pratique puisqu'elle permet de gérer les déformations géométriques du domaine de façon tout à fait naturelle en résolvant une équation d'advection ou de Hamilton-Jacobi pour $\phi$. Elle gère donc très bien les changements de topologie. En revanche, ce choix ne permet a priori pas d'avoir une représentation nette de l'interface $\partial\Omega=\{ \phi=0 \}$. Pour une revue complète des différentes variantes de l'approche par level-set, nous renvoyons le lecteur à \cite{van2013level}. À l'origine, dans \cite{allaire2004structural} et la majorité des travaux qui en découlent directement, les auteurs proposent de travailler sur un domaine de calcul maillé avec des quadrangles ou des hexaèdres réguliers. Ceci permet d'appliquer un schéma de type différences finies pour l'advection de $\phi$ et d'utiliser une formulation éléments finis de type $Q^1$ ou $Q^2$ pour la résolution mécanique, le tout sur un seul et même maillage. Dans ces cas-là, comme on ne dispose pas d'un sous-maillage pour $\Omega$, la résolution mécanique doit être faite sur tout $D$, en remplissant $D\setminus \Omega$ par un matériau très « mou », dit \textit{matériau Ersatz}. Quant aux conditions aux limites qui s'appliquent sur $\partial\Omega$, elles sont imposées de façon approchée, à l'aide d'une régularisation de la fonction Heaviside du bord. On peut également gérer ces conditions aux limites sans avoir besoin de remailler à l'aide de \textit{méthodes de frontières immergées} (en anglais IBTs pour \textit{Immersed Boundary Techniques}), parmi lesquelles nous citons la très populaire méthode XFEM, voir \cite{belytschko2003topology,kreissl2012levelset}. 

Dans ce travail, nous choisissons une approche différente qui consiste à coupler la méthode de level-set avec une \textit{discrétisation conforme}, c'est-à-dire qu'on cherche à avoir une description de $\Omega$ à la fois de manière implicite via $\phi$, et explicite via un sous-maillage de $D$. Dans cette direction, nous mentionnons entre autres les travaux \cite{ha2008level,yamasaki2011level,allaire2011topology,allaire2014shape}, ainsi que \cite{abe2007boundary} pour le cas d'une résolution du problème mécanique par la méthode des éléments de frontière. Bien que cette approche soit plus coûteuse puisqu'elle nécessite de procéder à un remaillage à chaque itération, elle permet de limiter la résolution au sous-domaine $\Omega$, et d'appliquer exactement les conditions aux limites sur $\partial\Omega$, ce qui est un critère important dans le cas des problèmes de contact, où la qualité et la précision de la solution obtenue dépendent fortement de la précision au voisinage de la zone de contact $\Gamma_C$.

L'objectif de ce chapitre est d'abord de présenter de façon plus détaillée les différentes étapes de l'algorithme proposé. Nous nous attardons en particulier sur certains choix d'implémentation, notamment la technique de découpage mise en \oe uvre pour disposer d'une discrétisation conforme. Ensuite, nous montrons des résultats numériques en deux et trois dimensions, pour des problèmes d'élasticité avec et sans contact.

\section{Détails d'implémentation}

On commence par rappeler le problème qu'on cherche à résoudre. Il s'agit d'un problème d'optimisation de la forme:
\begin{equation}
    \inf_{\Omega \in \pazocal{U}_{ad}} J(\Omega) \:,
    \label{ShapeOPT.4.1}
\end{equation}
où $J$ représente la fonctionnelle à minimiser, $\Omega$ est la forme ou le domaine sur lequel est posé notre problème mécanique, et $\pazocal{U}_{ad}$ est l'ensemble des formes admissibles.

Dans notre contexte, $J$ prend la forme d'une expression de type \eqref{ExpressionJ}, dépendant de $\uu$, la solution de l'équation d'état posée sur $\Omega$. En général, la définition de $\pazocal{U}_{ad}$ permet d'imposer des contraintes, d'une part sur les propriétés géométriques des domaines $\Omega$ considérés, et d'autre part sur certaines zones de ces domaines ou de leurs frontières relatives au problème physique sous-jacent. Avant de donner la définition que nous avons choisie pour $\pazocal{U}_{ad}$, nous introduisons quelques notations.

On se donne un domaine régulier $D\subset \mathbb{R}^d$ fixé, qui contiendra tous les domaines admissibles $\Omega$. Soit $\hat{\Gamma}_D\subset\partial D$ une partie de sa frontière représentant la zone de Dirichlet potentiel. C'est-à-dire que pour chaque domaine $\Omega\subset D$, le bord de Dirichlet associé à $\Omega$ sera donné par $\Gamma_D:=\partial\Omega \cap \hat{\Gamma}_D$. Si le bord de Dirichlet est « mobile » à l'intérieur de $\hat{\Gamma}_D$, il ne pourra pas être vide, car on veut assurer le caractère bien posé et la cohérence physique du problème. Quant au bord de Neumann (non-homogène) $\Gamma_N$, nous le supposons inclus dans $\partial D$ et fixé pendant tout le processus d'optimisation de formes. Ceci nous donne donc finalement:
$$
   \pazocal{U}_{ad} := \{ \Omega \subset D \: | \: \Omega \mbox{ ouvert régulier, } \Gamma_N\subset \partial D \mbox{ fixé, } \partial\Omega \cap \hat{\Gamma}_D \neq \emptyset \}.
$$

Pour résoudre \eqref{ShapeOPT.4.1}, nous proposons un algorithme de type gradient, qui génère une suite de formes $\{ \Omega^l\}_l$ de sorte que la monotonie de $J$ soit assurée, i.e.$\!$ la suite $\{ \Omega^l \}_l$ est telle que $J(\Omega^{l+1})<J(\Omega^l)$. Résumons l'algorithme présenté au chapitre \ref{chap:1.1}, section \ref{sec:algoODF}:
\begin{algorithm}
\caption*{\textbf{Algorithme:} Optimisation de formes}
\begin{enumerate}
	\item Choisir un domaine $\Omega^0$ et initialiser $l=0$. 
	\item Résoudre l'équation d'état \eqref{FVElash}, \eqref{FVPenah} ou \eqref{FVLagAugh}, posée sur $\Omega^l$.
	\item Résoudre la formulation adjointe \eqref{FVElasAdjh}, \eqref{FVPenaAdjh} ou \eqref{FVLagAugAdjh}, posée sur $\Omega^l$.
	\item Déterminer une direction de descente $\thetaa^l$ en résolvant \eqref{FVThetaScal}.
	\item Mettre à jour le domaine $\Omega^l \to \Omega^{l+1}$.
    \item Tant que le critère de convergence n'est pas respecté, mettre à jour $l=l+1$ et retourner à l'étape 2.
\end{enumerate}
\end{algorithm}

Nous nous intéressons maintenant au corps de l'algorithme, à savoir les étapes 2, 3, 4 et 5.

\begin{notation}
	Pour les descriptions des étapes 2, 3, 4 et 5, nous omettons les exposants $l$ relatifs à l'itération courante pour améliorer la lisibilité. Par exemple, les variables $\Omega^l$, $\thetaa^l$ seront remplacées par $\Omega$, $\thetaa$.  
\end{notation}

\subsection{Résolution de l'équation d'état}

Soit $D_h$ un maillage de $D$, $h$ étant le paramètre de discrétisation, correspondant ici à la taille de maille. On suppose qu'à chaque itération, on dispose d'une discrétisation $\Omega_h$ de $\Omega$ sous la forme d'un sous-maillage de $D_h$, sur lequel on cherche à résoudre une équation d'état. Dans notre cas, cette équation d'état est l'équation qui traduit l'équilibre mécanique (en élasticité linéaire ou en élasticité linéaire avec contact), et nous notons $\uu\in \Xx$ sa solution.

L'étape préliminaire est de choisir une discrétisation de cette équation sur le domaine $\Omega_h$. Ici, pour que l'étape de découpage / remaillage soit le plus efficace, on considère des maillages non structurés composés de triangles en 2D et de tetraèdres en 3D. On peut donc utiliser des éléments finis de Lagrange ($P^1$ ou $P^2$), ce qui nous donne une approximation $\uu_h$ de $\uu$. Pour une présentation de la méthode des éléments finis, nous renvoyons le lecteur à l'ouvrage de référence \cite{Cia2002a}.
 
\paragraph{Cas de l'élasticité linéaire.}
On choisit une discrétisation $\uu_h\in P^2$ de $\uu$. Le problème discret \eqref{FVElash} vérifié par $\uu_h$ étant linéaire, il se traduit en un système matrice-vecteur que nous résolvons à l'aide d'une méthode directe ($LU$). Pour l'analyse de convergence, nous renvoyons à \cite{Cia2002a}. Rappelons l'équation vérifiée par $\uu$ (introduite en \eqref{FVElas}):
\begin{equation}
	a(\uu,\vv) = L(\vv)\:, \hspace{1em} \forall \vv \in \Xx\:.
	\label{FVElash}
\end{equation}

\paragraph{Cas du contact pénalisé.}
On choisit toujours une discrétisation $\uu_{\varepsilon,h}\in P^2$ de $\uu_\varepsilon$. Comme le problème continu, non linéaire et non différentiable, est traité par une méthode de Newton semi-lisse, on obtient que la discrétisation de chaque itération de Newton donne un problème linéaire, que nous résolvons par $LU$. Pour l'analyse de convergence, voir \cite{chouly2013convergence}. On rappelle l'équation vérifiée par $\uu_{\varepsilon}$ (introduite en \eqref{FVPena}): $\forall \vv \in \Xx$,
\begin{equation}
	a(\uu_{\varepsilon},\vv) + \frac{1}{\varepsilon} \prodL2{\maxx(\uu_{\varepsilon,\normalExt}-\gG_{\normalExt}),\vv_{\normalExt}}{\Gamma_C} + \frac{1}{\varepsilon} \prodL2{\qq(\varepsilon\mathfrak{F}s,\uu_{\varepsilon,\tanExt}), \vv_{\tanExt}}{\Gamma_C} = L(\vv)\:.
	\label{FVPenah}
\end{equation}

\paragraph{Cas du contact par Lagrangien augmenté.}
Pour ce cas, nous prenons une discrétisation $\uu^{k}_h\in P^2$ de $\uu^{k}$ et $\left( \lambda^{k}_h, \muu^{k}_h\right)\in P^1\times P^1$ de $\left( \lambda^{k}, \muu^k\right)$. On donne plus bas la formulation \eqref{FVLagAugh} dans le cas frottant. Comme pour la pénalisation, après application de la méthode de Newton semi-lisse, la discrétisation mène à un système algébrique linéaire qu'on résout par $LU$. Pour l'analyse de convergence, voir \cite{burman2019augmented}. Nous mentionnons également \cite{chouly2013nitsche,chouly2014adaptation} pour la méthode de Nitsche, dont la méthode de Lagrangien augmenté est un cas particulier. Rappelons la formulation vérifiée par le triplet $\left(\uu^{k}, \lambda^{k}, \muu^{k}\right)$ (introduite en \eqref{IterLagAug})
\begin{equation}
	\begin{aligned}
	&a(\uu^k,\vv) + \prodL2{\lambda^k,\vv_{\normalExt}}{\Gamma_C}+ \prodL2{\muu^k,\vv_{\tanExt}}{\Gamma_C} = L(\vv)\:, \\
	&\lambda^k = \maxx\left( \lambda^{k-1} + \gamma_1^k (\uu^k_{\normalExt}-\gG_{\normalExt})\right) \:, \\
	&\muu^k = \qq\left( \mathfrak{F}s, \muu^{k-1} + \gamma_2^k\uu^k_{\tanExt} \right) \:.
	\end{aligned}
	\label{FVLagAugh}
\end{equation}

\subsection{Résolution de la formulation adjointe}

Puisque l'état adjoint $\pp$ vit dans le même espace que $\uu$, on choisit la même discrétisation, à savoir $\pp_h\in P^2$. Puis, quel que soit le problème mécanique considéré, la formulation vérifiée par l'état adjoint est linéaire et admet un second membre dépendant uniquement de $J$ et de $\uu$ donné par:
$$
	L_{adj}[\uu](\vv)= -\int_{\Omega} j'(\uu)\cdot\vv -\int_{\partial\Omega}k'(\uu)\cdot\vv\:, \ \ \forall \vv \in \Xx\:.
$$ 
Comme on connaît $j$, $k$, et $\uu_h$ (qu'on vient de calculer à l'étape précédente), on est capable d'assembler la version discrète de ce second membre. Pour ce qui est du membre de gauche de la formulation, nous distinguons les cas.

\paragraph{Cas de l'élasticité linéaire.}
Dans ce cas, comme la forme bilinéaire $a$ est symétrique, le membre de gauche de la formulation adjointe \eqref{FVElasAdjh} est le même que celui de la formulation primale \eqref{FVElas}, voir plus bas. On peut donc récupérer le système résolu à l'étape précédente, en changeant uniquement le second membre. 
\begin{equation}
	a(\pp,\vv) = L_{adj}[\uu](\vv)\:, \hspace{1em} \forall \vv \in \Xx\:.
	\label{FVElasAdjh}
\end{equation}

\paragraph{Cas du contact pénalisé.}
L'adjoint $\pp_\varepsilon$ dans ce cas-là est donné par la solution de \eqref{FVA}. En utilisant les notations introduites au chapitre \ref{chap:1.1} dans la présentation de la méthode de Newton semi-lisse, on peut réécrire cette formulation de la façon suivante:
\begin{equation}
	\prodD{G_\varepsilon(\uu_\varepsilon)\pp_\varepsilon,\vv}{\Xx^*,\Xx} = L_{adj}[\uu_\varepsilon](\vv)\:, \ \ \forall \vv \in \Xx\:, 
	\label{FVPenaAdjh}
\end{equation}
où $G_\varepsilon$ a été défini en \eqref{ExpDerGenFPena}. Cela signifie que pour construire le système algébrique vérifié par $\pp_{\varepsilon,h}$, il suffit de prendre le système résolu à la dernière itération de Newton de l'étape précédente, en d'en changer le second membre.

\paragraph{Cas du contact par Lagrangien augmenté.}
De même que pour la pénalisation, la formulation adjointe \eqref{FVAdj} vérfiée par $\pp^k$ se réécrit:
\begin{equation}
	\prodD{G^k(\uu^k)\pp^k,\vv}{\Xx^*,\Xx} = L_{adj}[\uu^k](\vv)\:, \ \ \forall \vv \in \Xx\:, 
	\label{FVLagAugAdjh}
\end{equation}
où $G^k$ a été défini en \eqref{ExpDerGenFLagAug}. Là encore, on récupère la dernière matrice tangente de l'étape précédente, à laquelle on associe notre second membre, ce qui nous donnera le système à résoudre pour trouver $\pp^k_h$.

\begin{rmrk}
	Le calcul de l'état adjoint est peu coûteux en pratique car il correspond toujours à la résolution d'un problème linéaire, dont la complexité est la même que celle d'une itération de Newton. Le coût de calcul est même moindre puisque l'assemblage et la décomposition $LU$ de la matrice ont déjà été faits.
\end{rmrk}

\subsection{Calcul d'une direction de descente}

Le processus choisi pour l'évolution du domaine exige de déterminer un champ de vecteurs $\thetaa=\theta \normalInt$ sur tout le domaine $D$. On voudra ici que ce champ vérifie $dJ(\Omega)[\thetaa]<0$. Pour chacune des formulations considérées, on a vu que sous les bonnes hypothèses, on disposait d'une expression de cette dérivée directionnelle sous la forme:
$$
	dJ(\Omega)[\thetaa] = \int_{\partial\Omega} \mathfrak{g}(\thetaa\cdot\normalInt) = \int_{\partial\Omega} \mathfrak{g} \,\theta\:,
$$
où $\mathfrak{g}$ dépend de $\uu$, $\pp$, leurs gradients, et les données du problème. Puisqu'on souhaite se limiter à des champs de vecteurs dirigés selon $\normalInt$, il nous suffit de déterminer un champ scalaire $\theta$ qui nous assure que $\thetaa=\theta\normalInt$ soit une direction de descente. Compte tenu de l'expression précédente, nous choisissons $\theta\in H^1(D)$ solution de:
\begin{equation}
	\int_{D} \gradd \theta\cdot\gradd v + \alpha\,\theta\, v = - \int_{\partial\Omega} \mathfrak{g} \,v \:, \ \ \forall v \in H^1(D)\:,
	\label{FVThetaScal}
\end{equation}
où la paramètre $\alpha$ est de l'ordre de la taille de maille $h$, voir \cite{dapogny2013shape}. Nous renvoyons au chapitre \ref{chap:1.1} pour les motivations de ce choix. On peut discrétiser cette formulation par éléments finis de type Lagrange sur $D_h$, en prenant par exemple $\theta_h\in P^1$ ou $P^2$. Le membre de gauche est standard, et puisqu'on est capable de calculer $\mathfrak{g}_h$ aux points d'intégration à partir de $\uu_h$ et $\pp_h$, on peut assembler le second membre en intégrant sur le bord maillé $\partial\Omega_h$. On obtient alors un système algébrique linéaire, qu'on résout encore par une méthode directe.

Nous rappelons ici les expressions de $\mathfrak{g}$ obtenues pour l'élasticité sans contact, pour le contact frottant pénalisé, et pour le contact frottant par Lagrangien augmenté (voir \eqref{ExpStructdJ}, \eqref{EqABC.2} et \eqref{EqABC}, respectivement):
\begin{equation*}
	\begin{aligned}
		\mathfrak{g} &= j(\uu)+\Aa:\epsilonn(\uu):\epsilonn(\pp)-\ff\pp + \chi_{\Gamma_N}(\kappa+\partial_{\normalInt})\left( k(\uu) - \tauu\pp\right)\:, \\
		\mathfrak{g}_\varepsilon &= j(\uu_\varepsilon)+\Aa:\epsilonn(\uu_\varepsilon):\epsilonn(\pp_\varepsilon)-\ff\pp_\varepsilon + \chi_{\Gamma_N}(\kappa+\partial_{\normalInt})\left( k(\uu_\varepsilon) - \tauu\pp_\varepsilon\right)\\
		\: & \hspace{1em} + \frac{1}{\varepsilon}\chi_{\Gamma_C}(\kappa+\partial_{\normalInt})\left( \maxx(\uu_{\varepsilon,\normalExt}-\gG_{\normalExt})\pp_{\varepsilon,\normalExt} + \qq(\varepsilon\mathfrak{F}s,\uu_{\varepsilon,\tanExt})\pp_{\varepsilon,\tanExt}\right) \:, \\
		\mathfrak{g}^k &= j(\uu^k)+\Aa:\epsilonn(\uu^k):\epsilonn(\pp^k)-\ff\pp^k + \chi_{\Gamma_N}(\kappa+\partial_{\normalInt})\left( k(\uu^k) - \tauu\pp^k\right)\\
		\: & \hspace{1em} + \chi_{\Gamma_C}(\kappa+\partial_{\normalInt})\left( \lambda^k\pp^k_{\normalExt} + \muu^k\pp^k_{\tanExt}\right) \:. 
	\end{aligned}
\end{equation*}

\begin{rmrk}
	Une façon de s'assurer que la partie $\Gamma_N$ du bord reste bien fixée est d'ajouter la condition de Dirichlet $\theta=0$ sur $\Gamma_N$ à la formulation \eqref{FVThetaScal}.
\end{rmrk}

\subsection{Évolution du domaine}

La fonction level-set $\phi$ avec laquelle nous travaillons est la fonction distance orientée à $\partial\Omega$. Si $\Omega$ est suffisamment régulier, alors cette fonction est bien définie, et elle admet de bonnes propriétés, voir \cite{DelZol2001}. L'évolution du domaine est alors régie par l'équation de Hamilton-Jacobi \eqref{HJEquation} sur un intervalle de temps $[0,T]$, où $\theta(t,x)=\theta(x)$ est le champ scalaire calculé à l'étape précédente.

Pour des questions de robustesse, nous choisissons de discrétiser cette équation sur un maillage cartésien auxiliaire, puis de la résoudre par différences finies (en espace et en temps). Notons justement qu'une partie de la robustesse, de l'efficacité et de la simplicité de l'approche proposée est dûe à la méthode de résolution (\textit{fast-marching}, voir \cite{Set1996}), et au fait que $D$ soit en général un parallelépipède. On a un maillage $D_\Delta$ de $D$, dont on note $(x_i,y_j,z_k)$ les différents n\oe uds. On subdivise également $[0,T]$ en $N$ sous-intervalles $[t^{n-1},t^n]$, $1\leq n \leq N$. Ce qui nous donne une version discrète $\phi_{ijk}^n = \phi\left(t^n,(x_i,y_j,z_k)\right)$ sur $[0,T]\times D_\Delta$. En réinterpolant $\theta_h$ sur la grille cartésienne, i.e.$\!$ en évaluant $\theta_h$ aux n\oe uds de la grille, on obtient une approximation $\theta_{ijk}$ de $\theta$, et on peut résoudre une version discrète de l'équation de Hamilton-Jacobi:
\begin{equation}
    \begin{aligned}
    &\frac{\partial\phi}{\partial t}(t,x) + \theta(t,x) \cdot|\grad\phi(t,x)| = 0\:, \\
    &\phi(0,x) = \phi_0(x)\:.
    \end{aligned}
	\label{HJEquationh}
\end{equation}

\begin{rmrk}
	En pratique, l'efficacité de l'algorithme nous permet d'utiliser une grille $D_\Delta$ très fine, afin de gagner un peu en précision. 
\end{rmrk}

Nous associons à cette équation la condition initiale $\phi^0_{ijk}=\phi_0(x_i,y_j,z_k)$ où $\phi_0$ désigne la fonction distance signée à la frontière $\partial\Omega$ de la forme courante, ainsi que des conditions aux limites de Neumann sur $\partial D_\Delta$. Pour la discrétisation en espace, afin de limiter la diffusion numérique, nous choisissons un schéma explicite d'ordre deux, voir \cite{harten1987uniformly,osher1991high}. Pour la discrétisation en temps, nous utilisons un schéma d'Euler explicite standard, ce qui impose une condition de type CFL sur le pas de temps. Après résolution, on se retrouve avec $\phi^N_{ijk}$ qui nous donne une fonction level-set associée au domaine après évolution, qu'on peut projeter $L^2$ sur le maillage éléments finis. Ceci nous donne $\tilde{\phi}_h\in P^m$, $m\geq 1$, représentation implicite du nouveau domaine $\tilde{\Omega}_h$ sur $D_h$.

\begin{rmrk}
	Même si la condition initiale $\phi_0$ est très régulière et correspond à une fonction distance, rien ne garantit que la fonction $\phi$ conserve ces bonnes propriétés au cours de la résolution de \eqref{HJEquationh}. En pratique, la solution peut devenir très irrégulière, notamment au voisinage de $\{ \phi=0 \}$. Elle peut aussi s'éloigner considérablement d'une fonction distance, i.e.$\!$ $|\grad \phi| \ll 1$ ou $\gg 1$. Pour éviter ces problèmes, il est possible de \textit{réinitialiser} périodiquement au cours de la résolution de \eqref{HJEquation}, voir \cite{SusSmeOsh1994}. Plus précisément, à $\bar{t}\in [0,T]$ fixé, réinitialiser $\bar{\phi}=\phi(\bar{t},\cdot)$ signifie remplacer $\bar{\phi}$ par la solution $\psi$ de:
	\begin{equation}
		\left\{ \
		\begin{array}{cr}
			\partial_t \psi + \mbox{sgn} (\bar{\phi}) \left( |\grad\psi|-1 \right) = 0  & \mbox{ sur } [0,+\infty)\times D,\\
			\psi(0,x) = \bar{\phi}(x) & \mbox{ sur } D.	
		\end{array}
		\right.
		\label{EqRedistPhi}
	\end{equation}
	Puisque \eqref{EqRedistPhi} est une équation de Hamilton-Jacobi, on peut encore utiliser le même schéma numérique pour la résoudre.
\end{rmrk}

\paragraph{Maillage de $\tilde{\Omega}_h$.} À cette étape de l'itération, on aimerait pouvoir décider si on accepte cette nouvelle forme. Pour cela, on doit pouvoir vérifier qu'elle respecte bien les critères géométriques pour être dans $\pazocal{U}_{ad}$, et on doit pouvoir calculer $J(\tilde{\Omega}_h)$. Pour ces deux raisons, dans le cadre de l'approche par discrétisation conforme, on souhaite disposer d'un maillage de $\tilde{\Omega}_h$. Comme on connaît $\tilde{\phi}_h$ partout sur $D_h$, il est possible de déterminer \textit{exactement} l'intersection  entre $\{ \tilde{\phi}_h=0\}$ et les arêtes du maillage $D_h$, peu importe le degré d'interpolation choisi pour $\tilde{\phi}_h$. Il suffit ensuite d'ajouter des n\oe uds au maillage à toutes ces intersections, en ajoutant les composantes de maillages permettant d'assurer la validité de ce-dernier à chaque étape du processus. On obtient alors un nouveau maillage $\tilde{D}_h$ de $D$, contenant un sous-maillage $\tilde{\Omega}_h$ représentant le nouveau domaine.

Cette idée très simple qui consiste à \textit{découper} le maillage autour de la ligne ou surface $\{ \tilde{\phi}_h=0\}$ est bien connue dans le cas d'un champ $\tilde{\phi}_h\in P^1$, voir par exemple \cite{lorensen1987marching,frey1996texel}. Dans le contexte de l'optimisation de formes, nous mentionnons la série de papiers \cite{allaire2011topology,allaire2013mesh,allaire2014shape}, où les auteurs utilisent cette technique de découpage comme l'étape préliminaire d'un remaillage plus sophistiqué. Elle présente deux avantages principaux: sa facilité d'implémentation, et sa robustesse. En revanche, elle génère des maillages dont les éléments ne sont pas de bonne qualité au voisinage de l'interface découpée (éléments étirés, pouvant être très petits), et de plus cette interface maillée peut s'avérer irrégulière, surtout en 3D. Nous renvoyons à \cite{dapogny2013shape} pour une discussion à ce sujet. 

\begin{figure}
\begin{center}

\subfloat[Points d'intersection trouvés.]{
\begin{tikzpicture}

\draw[scale=0.5,domain=-1.9:2.1,smooth,variable=\y,red, densely dashed]  plot ({\y*\y},{\y});     

\node[] at (1.3,1.4) {$\tilde{\phi}_h>0$};
\node[] at (2.4,0.4) {$\tilde{\phi}_h<0$};

\draw[black] (0.2,-1) -- (0.2,1);
\draw[black] (0.2,-1) -- (2.4,-0.3);
\draw[black] (0.2,1) -- (2.4,-0.3);
\draw[black] (0.2,-1) -- (-1.2,0.4);
\draw[black] (0.2,1) -- (-1.2,0.4);

\node[red] at (0.82,0.64) {\tiny{$\bullet$}};
\node[red] at (1.08,-0.74) {\tiny{$\bullet$}};

\end{tikzpicture}
}
\hspace{4em}
\subfloat[Maillage après découpage.]{
\begin{tikzpicture}
    
\draw[black] (0.2,-1) -- (0.2,1);
\draw[black] (0.2,-1) -- (2.4,-0.3);
\draw[black] (0.2,1) -- (2.4,-0.3);
\draw[black] (0.2,-1) -- (-1.2,0.4);
\draw[black] (0.2,1) -- (-1.2,0.4);

\draw[red, thick] (1.08,-0.74) -- (0.82,0.64);
\draw[black, ultra thin] (1.08,-0.74) -- (0.2,1);

\node[red] at (0.82,0.64) {\tiny{$\bullet$}};
\node[red] at (1.08,-0.74) {\tiny{$\bullet$}};

\end{tikzpicture}
}

\end{center}
  \caption{Schéma du découpage de $D_h$ pour $\tilde{\phi}_h\in P^1$.}
  \label{fig:SchDecP1}
\end{figure}
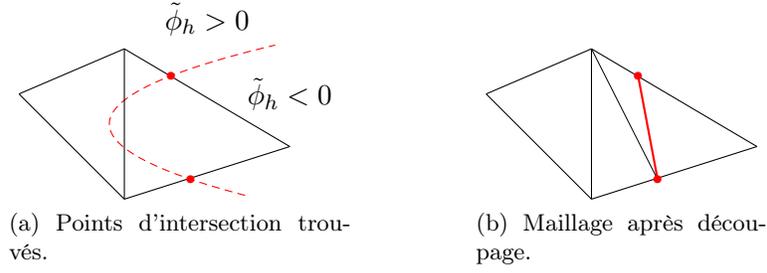

\begin{figure}
\begin{center}

\subfloat[Points d'intersection trouvés.]{
\begin{tikzpicture}

\draw[scale=0.5,domain=-1.9:2.1,smooth,variable=\y,red, densely dashed]  plot ({\y*\y},{\y});     

\node[] at (1.3,1.4) {$\tilde{\phi}_h>0$};
\node[] at (2.4,0.4) {$\tilde{\phi}_h<0$};

\draw[black] (0.2,-1) -- (0.2,1);
\draw[black] (0.2,-1) -- (2.4,-0.3);
\draw[black] (0.2,1) -- (2.4,-0.3);
\draw[black] (0.2,-1) -- (-1.2,0.4);
\draw[black] (0.2,1) -- (-1.2,0.4);

\node[red] at (0.2,0.3) {\tiny{$\bullet$}};
\node[red] at (0.2,-0.32) {\tiny{$\bullet$}};
\node[red] at (0.82,0.64) {\tiny{$\bullet$}};
\node[red] at (1.08,-0.74) {\tiny{$\bullet$}};

\end{tikzpicture}
}
\hspace{4em}
\subfloat[Maillage après découpage.]{
\begin{tikzpicture}
    
\draw[black] (0.2,-1) -- (0.2,1);
\draw[black] (0.2,-1) -- (2.4,-0.3);
\draw[black] (0.2,1) -- (2.4,-0.3);
\draw[black] (0.2,-1) -- (-1.2,0.4);
\draw[black] (0.2,1) -- (-1.2,0.4);

\draw[red, thick] (0.2,0.3) -- (0.2,-0.32);
\draw[red, thick] (0.2,0.3) -- (0.82,0.64);
\draw[red, thick] (0.2,-0.32) -- (1.08,-0.74);
\draw[black, ultra thin] (0.2,0.3) -- (-1.2,0.4);
\draw[black, ultra thin] (0.2,0.3) -- (2.4,-0.3);
\draw[black, ultra thin] (0.2,-0.32) -- (-1.2,0.4);
\draw[black, ultra thin] (0.2,-0.32) -- (2.4,-0.3);

\node[red] at (0.2,0.3) {\tiny{$\bullet$}};
\node[red] at (0.2,-0.32) {\tiny{$\bullet$}};
\node[red] at (0.82,0.64) {\tiny{$\bullet$}};
\node[red] at (1.08,-0.74) {\tiny{$\bullet$}};

\end{tikzpicture}
}

\end{center}
  \caption{Schéma du découpage de $D_h$ pour $\tilde{\phi}_h\in P^2$.}
  \label{fig:SchDecP2}
\end{figure}
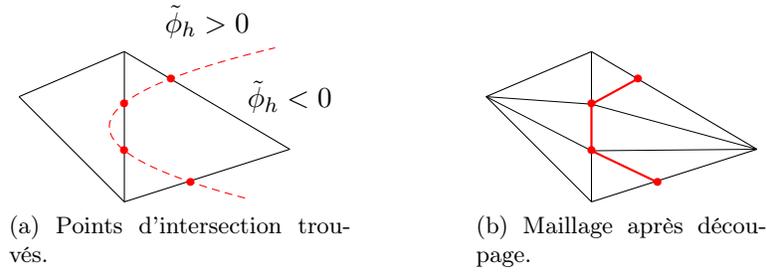

Dans ce travail, nous privilégions cette technique malgré ses inconvénients, principalement pour sa facilité d'implémentation. Cependant, nous proposons d'en améliorer la précision en considérant un champ $\tilde{\phi}_h\in P^m$ avec $m>1$. Cela permet de repérer un plus grand nombre de points d'intersection entre $\{ \tilde{\phi}_h=0\}$ et $D_h$ (jusqu'à deux par arête pour le $P^2$, trois pour le $P^3$, etc), et de donner une représentation plus précise de la ligne ou surface régulière $\{ \phi=0\}$ sous-jacente. Comme on peut le voir sur l'exemple \ref{ex:decoupage} (figures \ref{fig:SchDecP1} et \ref{fig:SchDecP2}), passer du découpage $P^1$ au découpage $P^2$ semble apporter un gain en précision, même dans des cas 2D relativement élémentaires.

\begin{exmpl} \label{ex:decoupage}
	Pour illustrer l'influence du degré d'interpolation de $\tilde{\phi}_h$ sur $\tilde{\Omega}_h$, prenons un exemple tridimensionnel. On considère un maillage régulier $D_h$ du cube $[0,1]\times[0,1]\times[0,1]\subset \mathbb{R}^3$ composé de 4913 sommets, voir figure \ref{sub:MailDh}. On définit sur ce domaine la fonction scalaire polynômiale de degré 4
$$
\tilde{\phi}(x,y,z):= 16\left(x-\frac{1}{2}\right)^4+\left(y-\frac{1}{2}\right)^2+\left(z-\frac{1}{2}\right)^2-\frac{1}{4}\:.
$$
On note toujours $\tilde{\phi}_h$ le champ éléments finis associé à $\tilde{\phi}$ sur $D_h$. Lorsqu'on utilise la technique de découpage présentée plus haut dans les trois cas $\tilde{\phi}_h\in P^1$, $P^2$ et $P^3$, on obtient les surfaces et domaines présentés figure \ref{fig:ResDecSphere}.

\begin{figure}[h]
  \begin{center}
    \subfloat[Domaine $\tilde{\Omega}_h$ pour $\tilde{\phi}_h\in P^1$.]{
    \includegraphics[width=0.28\textwidth]{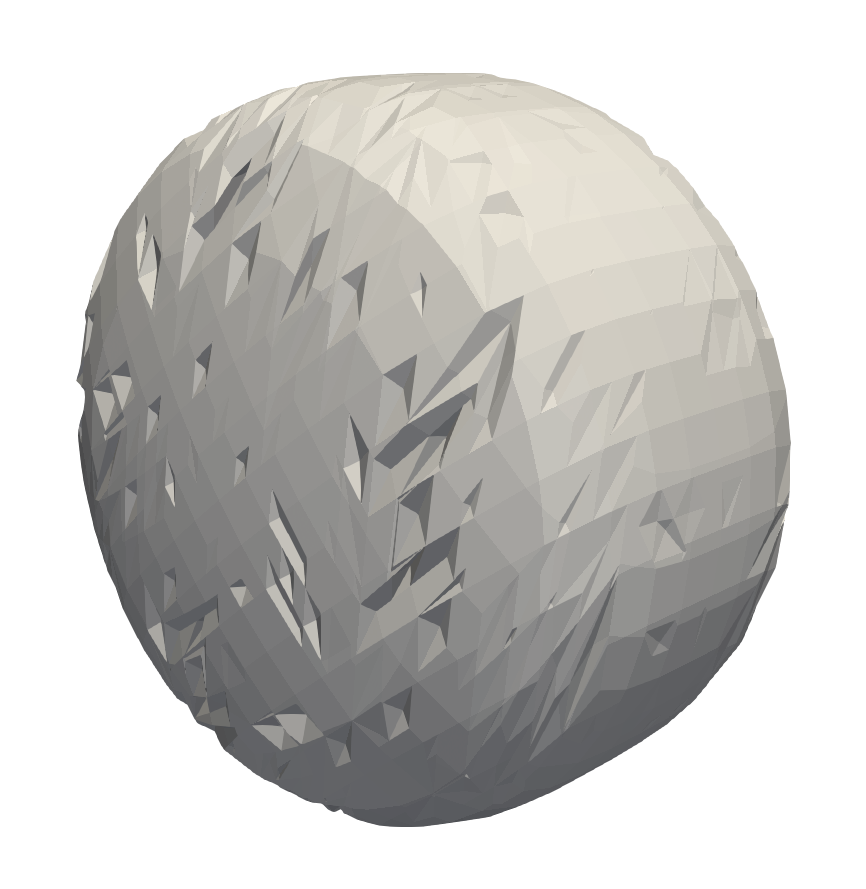}
    \label{sub:OmegahP1}
    }
    \hspace{1em}
    \subfloat[Domaine $\tilde{\Omega}_h$ pour $\tilde{\phi}_h\in P^2$.]{
    \includegraphics[width=0.28\textwidth]{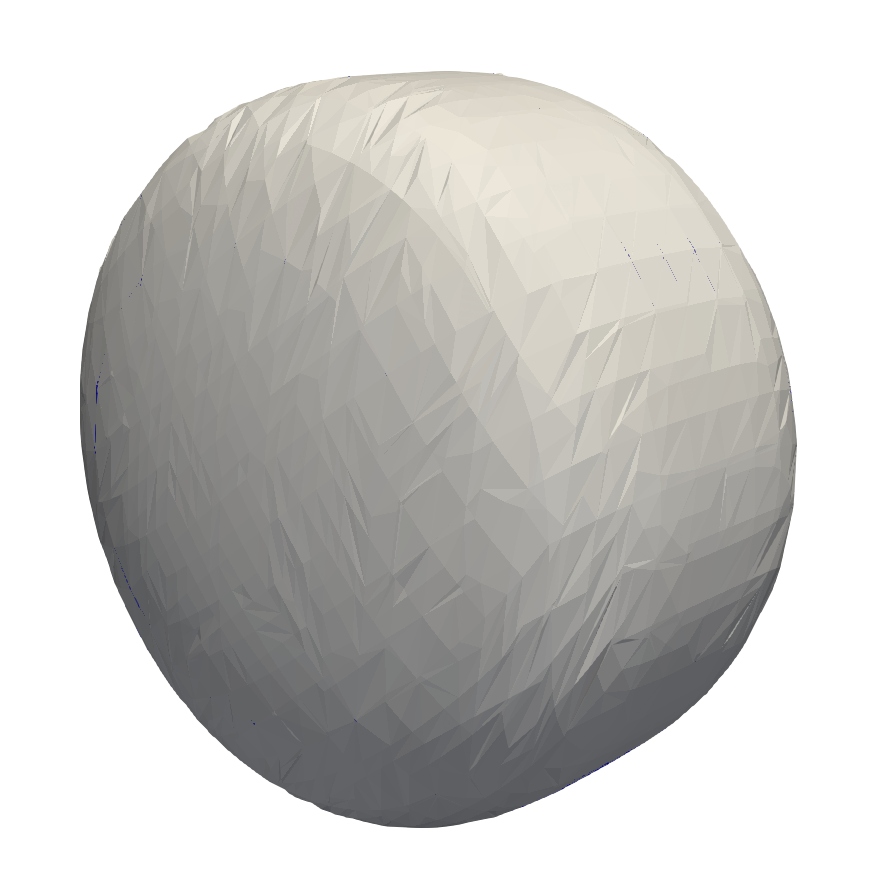}
    \label{sub:OmegahP2}
    }
    \hspace{1em}
    \subfloat[Domaine $\tilde{\Omega}_h$ pour $\tilde{\phi}_h\in P^3$.]{
    \includegraphics[width=0.28\textwidth]{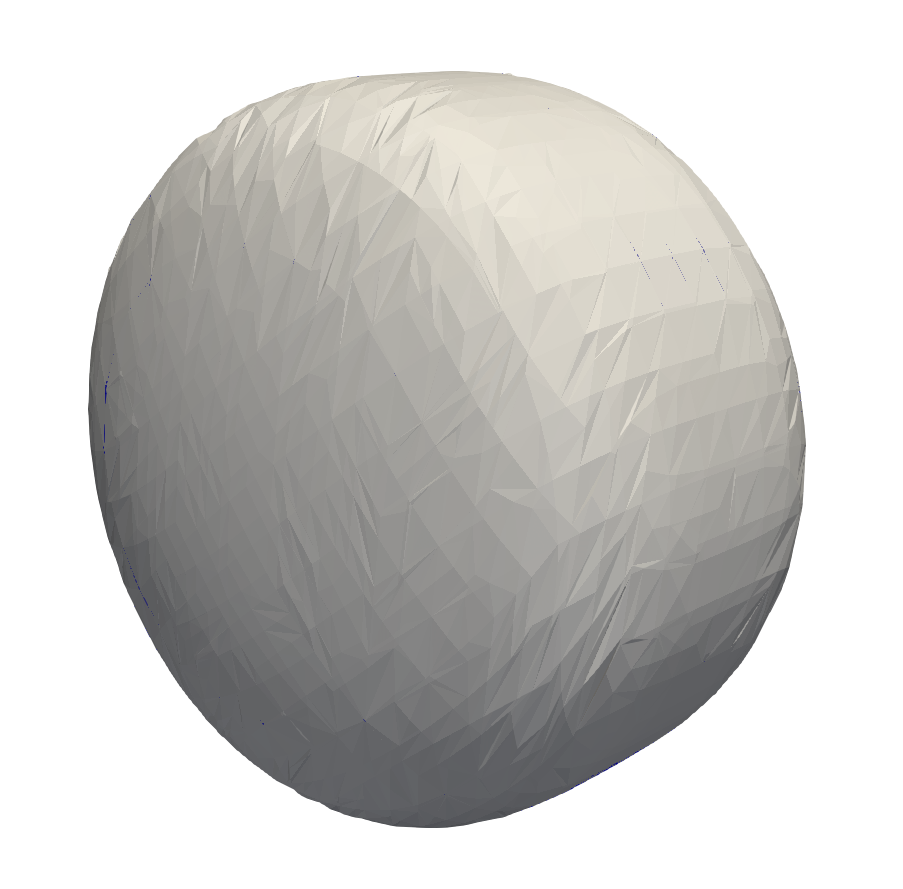}
    \label{sub:OmegahP3}
    }
    \end{center}
  \caption{Domaines $\tilde{\Omega}_h$ obtenus après découpage autour de la surface $\{ \tilde{\phi}_h=0 \}$.}
  \label{fig:ResDecSphere}
\end{figure}

	Comme attendu, dans les cas $P^2$ et $P^3$, on obtient une surface de niveau $\{ \tilde{\phi}_h=0 \}$ plus régulière, pour un nombre de sommets ajoutés du même ordre de grandeur: 3563 pour le $P^1$ contre 3574 pour le $P^2$ et le $P^3$. En revanche, le passage du $P^2$ au $P^3$ n'améliore pas le résultat de façon significative. En pratique, les observations faites pour cet exemple restent vraies dans la plupart des cas. Nous choisissons donc de travailler avec $\tilde{\phi}_h\in P^2$, qui constitue selon nous le meilleur compromis. Par ailleurs, comme on peut le voir sur la figure \ref{fig:MailDecSphere}, le choix du degré d'interpolation ne semble pas avoir d'influence sur la qualité du maillage découpé.
	
\begin{figure}[h]
  \begin{center}
    \subfloat[Maillage $D_h$.]{
    \includegraphics[width=0.23\textwidth]{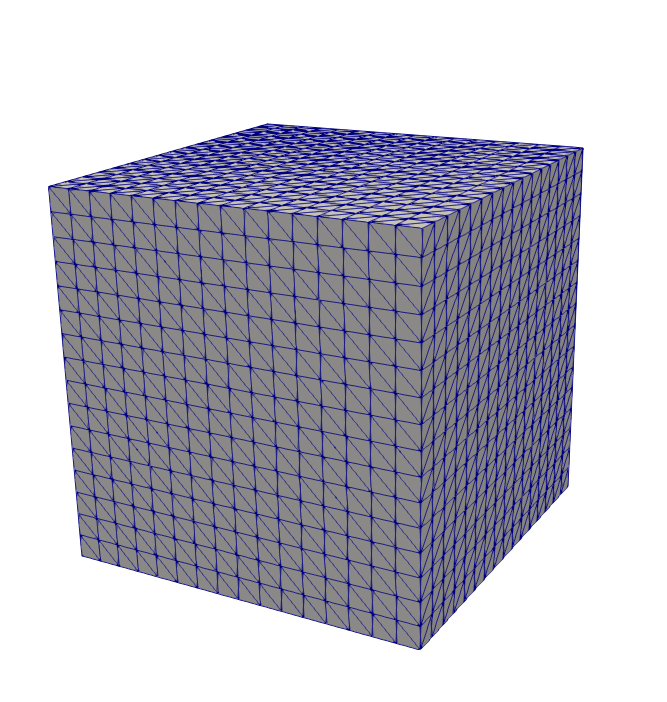}
    \label{sub:MailDh}
    }
    \subfloat[Maillage pour $\tilde{\phi}_h\in P^1$.]{
    \includegraphics[width=0.23\textwidth]{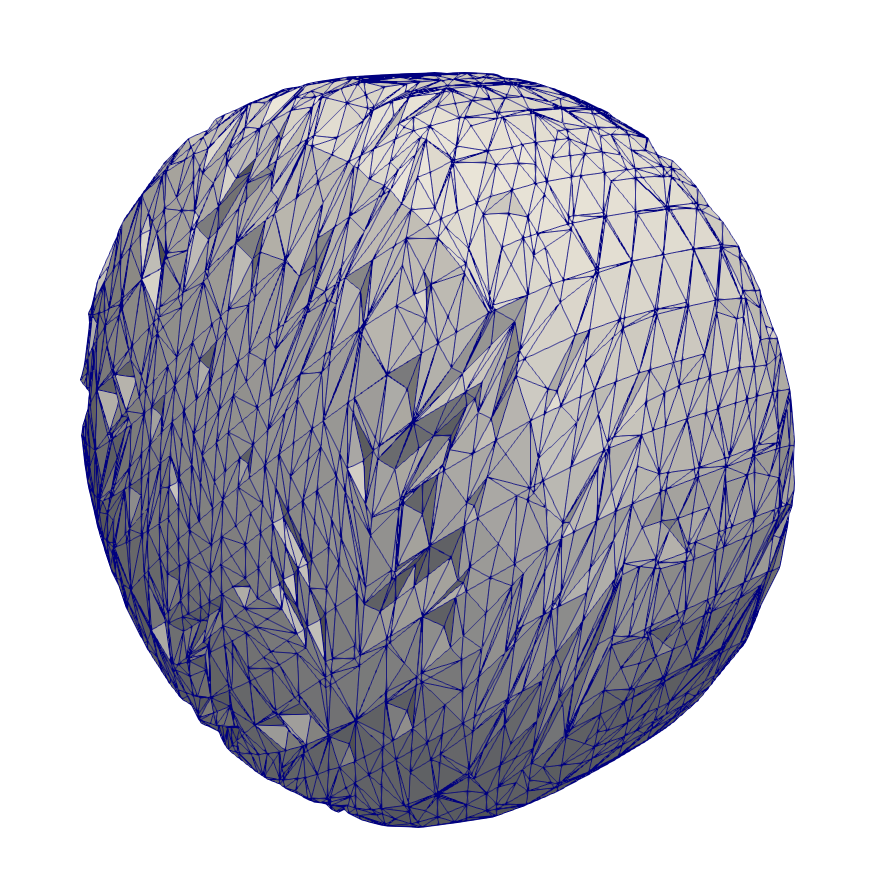}
    \label{sub:MailOmegahP1}
    }
    \subfloat[Maillage pour $\tilde{\phi}_h\in P^2$.]{    \includegraphics[width=0.23\textwidth]{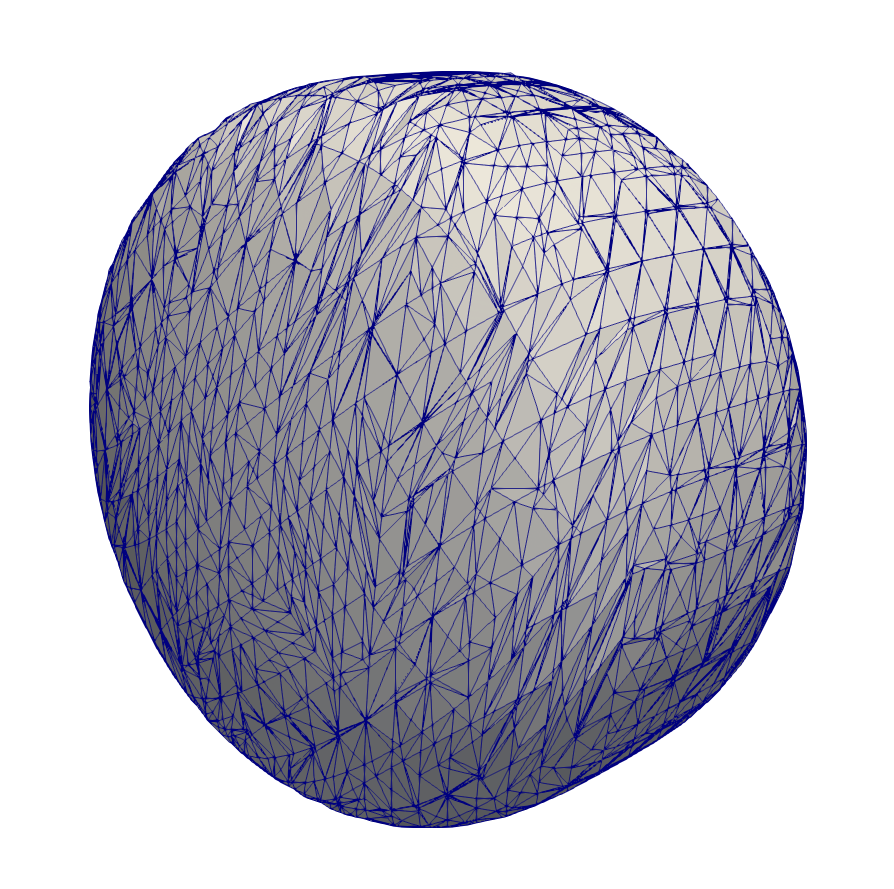}
    \label{sub:MailOmegahP2}
    }
    \subfloat[Maillage pour $\tilde{\phi}_h\in P^3$.]{    \includegraphics[width=0.23\textwidth]{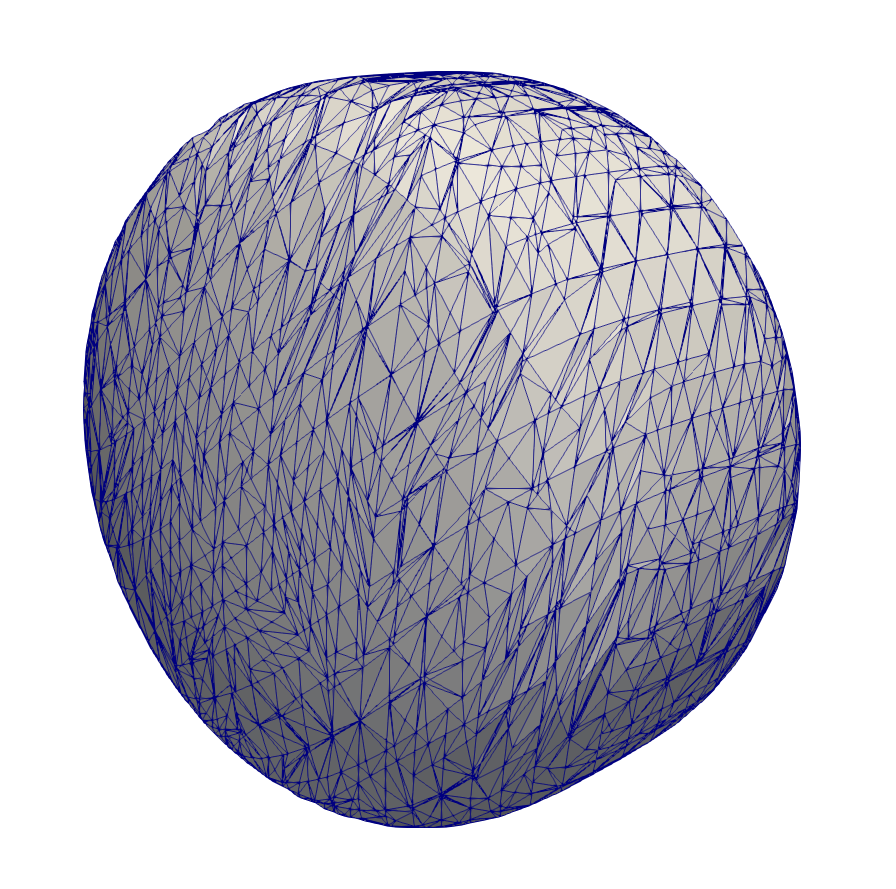}
    \label{sub:MailOmegahP3}
    }
    \end{center}
  \caption{Maillages pour le découpage autour de la surface $\{ \tilde{\phi}_h=0 \}$.}
  \label{fig:MailDecSphere}
\end{figure}

\end{exmpl}

\begin{rmrk}
	On voit sur sur l'exemple précédent que le maillage surfacique de $\{ \tilde{\phi}_h=0 \}$ obtenu a des éléments de mauvaise qualité. Cela peut mettre en péril la résolution lorsque les problèmes considérés sont difficiles à résoudre  (grand nombre d'inconnues, fortes non-linéarités, etc). Dans ce travail, nous nous limitons au cas du contact en élasticité linéaire, sur des maillages de 5000 sommets ou moins, nous ne rencontrons donc pas de problème lors de la résolution sur les maillages découpés. De plus, même dans le cas où un « problème » apparaîssait à une itération donnée, le processus d'optimisation de formes ne serait pas mis en péril grâce à la robustesse du solveur d'optimisation. En revanche, pour étendre notre méthode d'optimisation aux cas du contact avec frottement de Coulomb en élasticité non-linéaire sur des cas-tests indutriels (maillages fins et géométries complexes), il sera nécessaire d'ajouter une étape pour améliorer la qualité du maillage, comme par exemple dans \cite{dapogny2013shape}. L'idée est de contrôler la qualité du maillage obtenu, puis éventuellement de faire appel à une étape supplémentaire d'adaptation. 
\end{rmrk}

\begin{rmrk}
	Notons que nos expériences numériques n'ont pas démontré la nécessité d'une géométrie d'ordre supérieure à 1. Ainsi dans notre procédure, même si on découpe le maillage à partir d'un champ scalaire $P^2$, on conserve un champ de transformation géométrique $P^1$. Autrement dit, la transformation qui génère l'élément courant à partir de l'élément de référence est affine. En particulier, les maillages obtenus n'ont pas d'arêtes courbes, comme on peut le voir sur les figures \ref{fig:ResDecSphere} et \ref{fig:MailDecSphere}.
\end{rmrk}

\paragraph{Forme acceptée ou rejetée.}
À la fin de cette étape, on a une forme maillée $\tilde{\Omega}_h$. Deux cas de figure peuvent alors se produire.
\begin{itemize}[topsep=0pt, parsep=0pt]
	\item Si cette nouvelle forme est bien dans l'ensemble admissble $\pazocal{U}_{ad}$ et si elle est \textit{meilleure} que la précédente au sens où $J(\tilde{\Omega}_h)< J(\Omega_h)$, alors la forme est acceptée. Elle devient le point de départ de l'itération suivante.
	\item Sinon, c'est qu'on est allé \textit{trop loin} dans cette direction de descente, $i.e.\!$ qu'on a advecté $\phi$ dans la direction de $\theta\normalInt$ sur un intervalle $[0,T]$ trop grand. On rejette la forme, puis on retourne au début de cette étape 5, en advectant cette fois sur l'intervalle $[0,T/2]$. Une manière simple d'éviter d'avoir à résoudre à nouveau l'advection sur cet intervalle est de garder en mémoire le champ $\phi$ au temps $T/2$. 
\end{itemize}

\begin{rmrk}
	Cette procédure peut être vue comme une version simplifiée d'une méthode de recherche linéaire. Théoriquement, comme on est certain d'avoir une direction de descente, on devrait avoir $J(\tilde{\Omega}_h)< J(\Omega_h)$ si $T$ est suffisamment petit. À l'inverse, il peut être intéressant en pratique d'avoir de plus grandes valeurs de $T$ car cela permet à l'algorithme de converger en un plus petit nombre d'itérations.
\end{rmrk}

\begin{rmrk}
	En début de processus, afin de permettre des changements de topologie et d'éviter de tomber trop vite dans le voisinage d'un minimum local, nous acceptons des formes $\tilde{\Omega}_h$ telles que $J(\tilde{\Omega}_h)< \beta J(\Omega_h)$, où $\beta>1$.
\end{rmrk}

\section{Résultats numériques}

Dans cette section, nous présentons quelques résultats numériques obtenus pour des cas-tests en deux et trois dimensions. Dans un premier temps, pour tester la validité de la méthode d'optimisation de formes proposée, nous nous concentrons sur des cas académiques en élasticité linéaire sans contact, pour lesquels nous pouvons comparer nos résultats avec ceux de la littérature. Puis dans un second temps, nous nous intéressons à des cas en élasticité linéaire avec contact, dont certains sont tirés de la littérature, et d'autres complètement nouveaux.

L'algorithme proposé a été implémenté dans le logiciel de résolution numérique par éléments finis MEF++ du GIREF (Groupe Interdisciplinaire de Recherche en Éléments Finis de l'Université Laval). Il s'agit d'un code de recherche multiphysique, écrit en C++,  qui est également utilisé pour des applications industrielles. Nous renvoyons le lecteur au site web \url{https://giref.ulaval.ca/} pour plus d'informations et quelques exemples d'application. 

Dans la suite, nous considérons toujours des matériaux linéaires élastiques isotropes (voir chapitre \ref{chap:1.1}), de module d'Young $E=1$ et de coefficient de Poisson $\nu=0.3$. De plus, sauf indication contraire, la fonctionnelle qu'on cherche à minimiser $J$ est définie par une combinaison linéaire du volume $Vol$ et de l'énergie de déformation $C$ (compliance), de sorte que, si $\uu$ désigne le déplacement mécanique:
$$
    J(\Omega) = \alpha_1 C(\Omega) + \alpha_2 Vol(\Omega)= \int_\Omega (\alpha_1\ff \uu(\Omega)+\alpha_2)+ \int_{\Gamma_N} \alpha_1\tauu \uu(\Omega) \:,
$$
où $\alpha_1$ et $\alpha_2$ sont des coefficients réels positifs.

Pour ce qui est du critère de convergence de l'algorithme d'optimisation de formes, nous choisissons le critère classique de l'écart relatif entre   $J(\Omega^l)$ et $J(\Omega^{l+1})$.

\begin{rmrk}
	Comme nous l'avons indiqué au chapitre \ref{chap:1.1}, la méthode de level-set se prête bien aux changements de topologie. Plus précisément, on peut faire disparaître des zones isolées de matière ($\phi<0$) ou de vide ($\phi>0$). Par conséquent, en deux dimensions, si on part d'une forme ayant $n$ « trous » (nombre de composantes connexes du complémentaire), on aura une forme optimale ayant au plus $n$ trous. Autrement dit, si la méthode d'optimisation géométrique à l'aide de level-sets permet de fermer des trous au cours du processus, elle ne permet pas d'en ajouter. De fait, pour ces cas-tests là, il est habituel de partir d'une forme initiale ayant un certain nombre de trous afin d'agrandir l'ensemble admissible, voir \cite{allaire2004structural,allaire2005structural}. En trois dimensions, cela reste vrai, mais la notion de trou s'interprète comme une cavité à l'intérieur de la structure, ce qui n'est en général pas souhaitable pour une pièce mécanique (car difficile à concevoir d'un point de vue pratique). On peut malgré tout vouloir partir d'une forme avec des trous, car cela pourra faciliter d'autres types de changements topologiques, et éventuellement accelérer la convergence de l'algorithme.
\end{rmrk}

\subsection{Élasticité linéaire sans contact}

\subsubsection{Exemples en deux dimensions}

\paragraph{Cantilever.}
Nous commençons par le cas-test le plus fréquemment présenté en optimisation de structures: le cantilever en deux dimensions. Le domaine $D$ est ici le rectangle $[0,2]\times[0,1]$, et les zones de Neumann et Dirichlet sont définies comme indiqué sur la figure \ref{sub:InitCLCanti2d}. Pour ce qui est du problème mécanique, on ne considère pas de force volumique, i.e.$\!$ $\ff=0$, et les contraintes surfaciques sont imposées à $\tauu=(0,-0.01)$. Le maillage $D_h$ est composé en moyenne de 1400 sommets, et les coefficients associés à $J$ sont pris tels que $\alpha_1=10$, $\alpha_2=0.01$. L'écart d'ordre de grandeur entre ces deux coefficients vient du fait que $\tau\cdot\uu$ est de l'ordre de $10^{-4}$ alors que le volume est de l'ordre de l'unité.

\begin{figure}[h]
  \begin{center}
    \subfloat[Géométrie initiale.]{
	\resizebox{0.44\textwidth}{!}{
    \begin{tikzpicture}
    
	\draw[black] (0,0) -- (8,0);
	\draw[black] (8,0) -- (8,1.6);
	\draw[orange, very thick] (8,1.6) -- (8,2.4);
	\draw[black] (8,2.4) -- (8,4);
	\draw[black] (8,4) -- (0,4);
	\draw[blue, very thick] (0,4) -- (0,0);

	\node[white] at (4,-0.3) {$\:$};
	\node[black] at (4,2) {\large{$D$}};
	\node[blue] at (0.7,2) {\large{$\hat{\Gamma}_D$}};
	\node[orange] at (7.3,2) {\large{$\Gamma_N$}};
	\end{tikzpicture}
    \label{sub:InitCLCanti2d}
    }
    }
    \hspace{0.6em}
    \subfloat[Itération 0 pour $h$.]{
    \includegraphics[width=0.45\textwidth]{canti2d_bis_it0.png}
    \label{sub:ResCanti2dIt0}
    }
    \hspace{.5em}
    \subfloat[Itération 24 pour $h$.]{
    \includegraphics[width=0.45\textwidth]{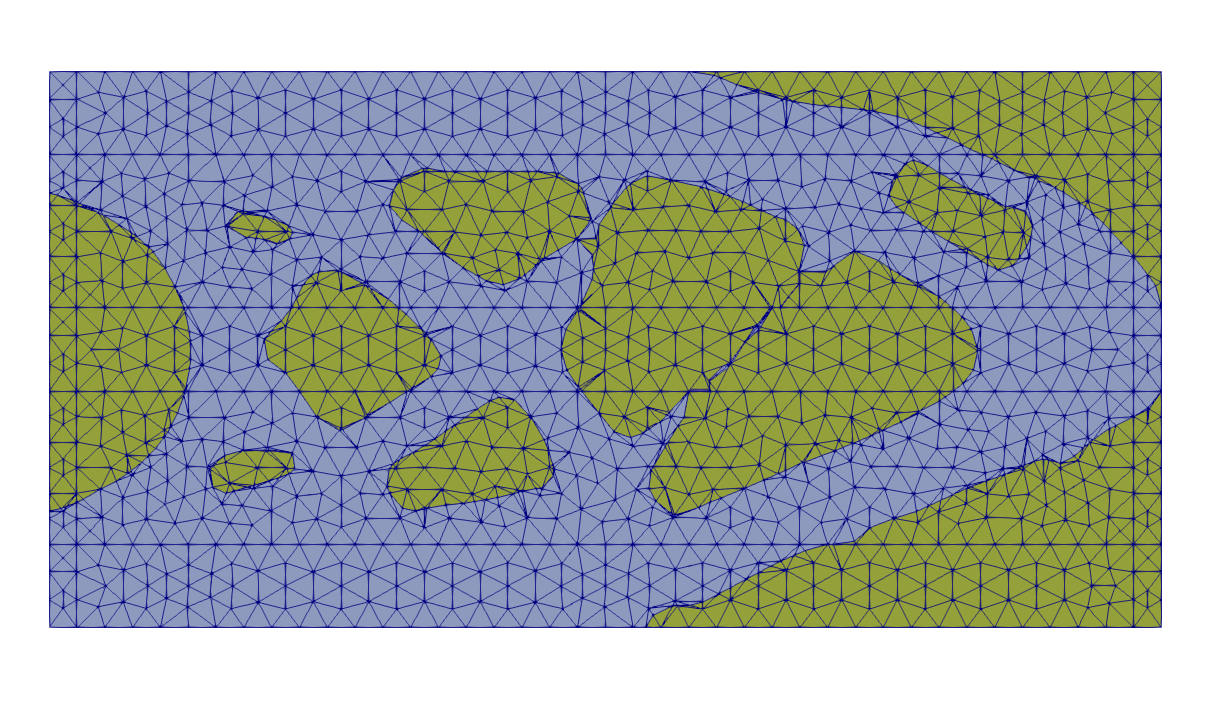}
    \label{sub:ResCanti2dIt24}
    }
    \hspace{.5em}
    \subfloat[Itération 91 pour $h$.]{   
    \includegraphics[width=0.45\textwidth]{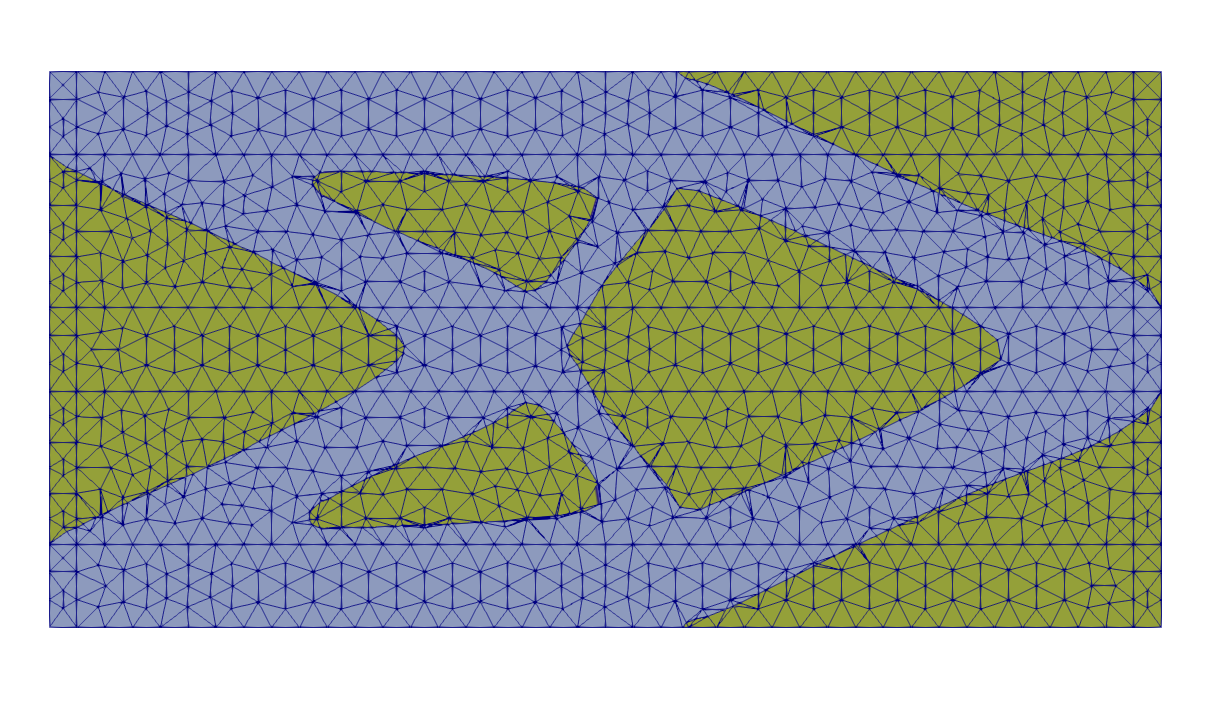}
    \label{sub:ResCanti2dIt91}
    }
    \hspace{.5em}
    \subfloat[Itération 41 pour $h/2$.]{
    \includegraphics[width=0.45\textwidth]{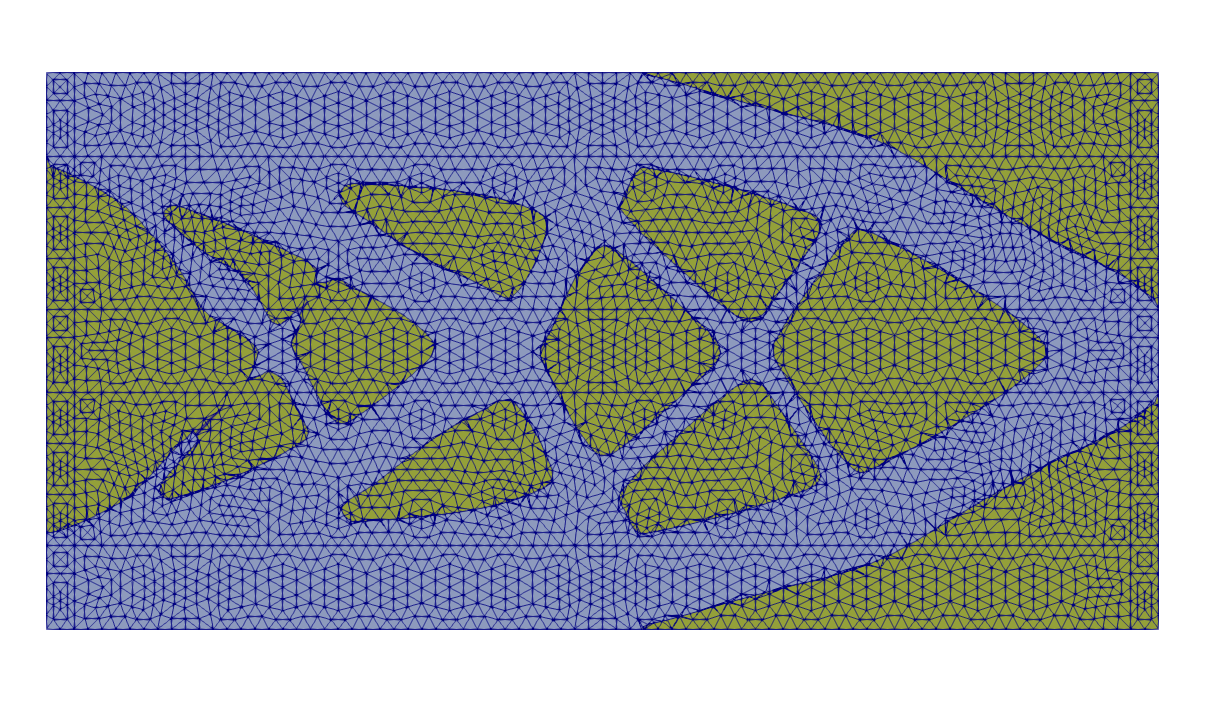}
    \label{sub:ResCanti2dFinIt41}
    }
    \hspace{.5em}
    \subfloat[Itération 190 pour $h/2$.]{   
    \includegraphics[width=0.45\textwidth]{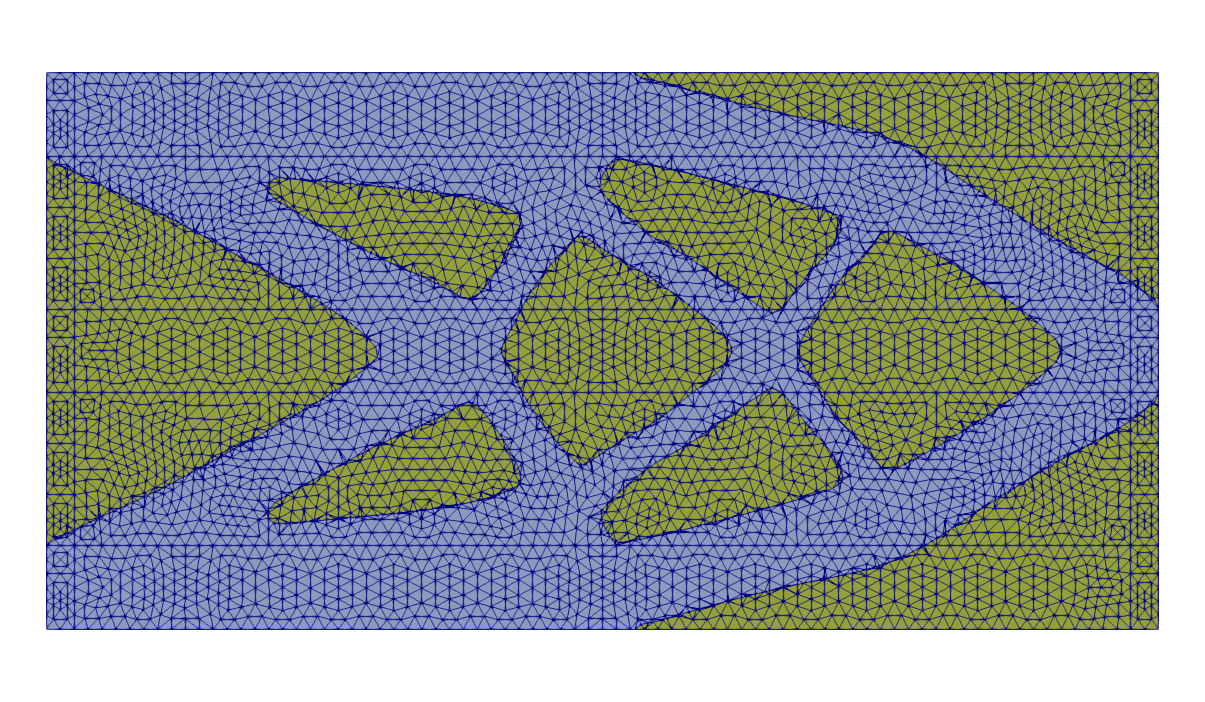}
    \label{sub:ResCanti2dFinIt190}
    }
    \end{center}
  \caption{Géométrie initiale et différentes itérations pour le cantilever 2D ($\Omega_h$ en bleu, $D_h\setminus \Omega_h$ en jaune).}
  \label{fig:ResCanti2d}
\end{figure}

L'algorithme converge en 91 itérations, et effectue au total 92 évaluations de la fonctionnelle $J$, ce qui représente un peu moins de 4 minutes. Nous avons représenté sur les figures \ref{sub:ResCanti2dIt0}, \ref{sub:ResCanti2dIt24}, \ref{sub:ResCanti2dIt91} les formes obtenues aux itérations 0, 24, et 91. On retrouve bien la forme optimale attendue pour ce genre de mise en données, voir par exemple \cite{allaire2004structural} ou \cite{dapogny2013shape}. De plus, comme attendu, la méthode de level-set permet de gérer des changements de topologie: le nombre de composantes connexes de $D_h\setminus \Omega_h$ varie au cours du processus. On a également représenté l'historique de convergence en figure \ref{fig:CvgIterCasTests2d}, à gauche. On remarque des sauts dans les valeurs de $J(\Omega^l)$ correspondant aux itérations $l$ où des changements de topologie surviennent (par exemple $l=24$). 

Pour illustrer l'influence du choix du maillage sur la forme optimale obtenue, nous relançons exactement le même cas test après avoir divisé en deux toutes les arêtes de $D_h$, on a donc remplacé le pas du maillage $h$ par $h/2$. Le maillage comporte cette fois environ 5400 sommets en moyenne, et l'algorithme converge en 190 itérations en 30 minutes, effectuant un total de 205 évaluations de $J$. Nous avons représenté les formes aux itérations 41 et 190 figures \ref{sub:ResCanti2dFinIt41} et \ref{sub:ResCanti2dFinIt190}. Avec ce maillage raffiné, on peut capter des détails géométriques plus petits (e.g.$\!$ des raidisseurs plus fins), et donc avoir une meilleure forme optimale, comme le montre le graphique de la figure \ref{fig:CvgIterCasTests2d}. Cette dépendance au maillage provient du fait que, dans le cas de l'élasticité linéaire, lorsqu'on cherche à minimiser la compliance, la solution optimale prend la forme d'une microstructure (voir \cite{allaire2012shape}).

\paragraph{Mât électrique.}
Nous passons maintenant à un autre exemple dans la littérature: le mât électrique optimal, voir \cite{dapogny2013shape}. Le domaine $D$ est en forme de T de 120 de hauteur. La barre du T a une largeur de 80, et le pied a une largeur de 40. La définition des zones géométriques est donnée figure \ref{sub:InitCLMat2d}. On prend encore ici $\ff=0$ et $\tauu=(0,-1)$. $D$ comporte en moyenne 3000 sommets, et on a pris $\alpha_1=\alpha_2=1$.

\begin{figure}[h]
  \begin{center}
    \subfloat[Géométrie initiale.]{
	\resizebox{0.31\textwidth}{!}{
    \begin{tikzpicture}
    
	\draw[blue, very thick] (2,0) -- (6,0);
	\draw[black] (6,0) -- (6,8);
	\draw[black] (6,8) -- (7,8);
	\draw[orange, very thick] (7,8) -- (8,8);
	\draw[black] (8,8) -- (8,12);
	\draw[black] (8,12) -- (0,12);
	\draw[black] (0,12) -- (0,8);
	\draw[orange, very thick] (0,8) -- (1,8);
	\draw[black] (1,8) -- (2,8);
	\draw[black] (2,8) -- (2,0);

	\node[white] at (4,-0.2) {$\:$};
	\node[black] at (4,10) {\LARGE{$D$}};
	\node[blue] at (4,0.7) {\LARGE{$\hat{\Gamma}_D$}};
	\node[orange] at (7.3,8.7) {\LARGE{$\Gamma_N$}};
	\node[orange] at (0.7,8.7) {\LARGE{$\Gamma_N$}};
	\end{tikzpicture}
    }
    \label{sub:InitCLMat2d}
    }
    \subfloat[Itération 0.]{
    \includegraphics[width=0.31\textwidth]{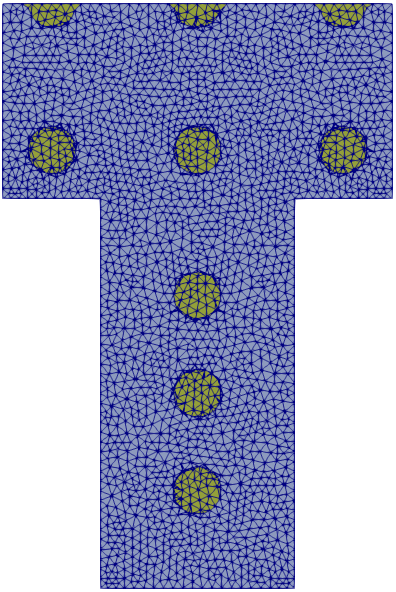}
    \label{sub:ResMat2dIt0}
    }
    \subfloat[Itération 6.]{
    \includegraphics[width=0.31\textwidth]{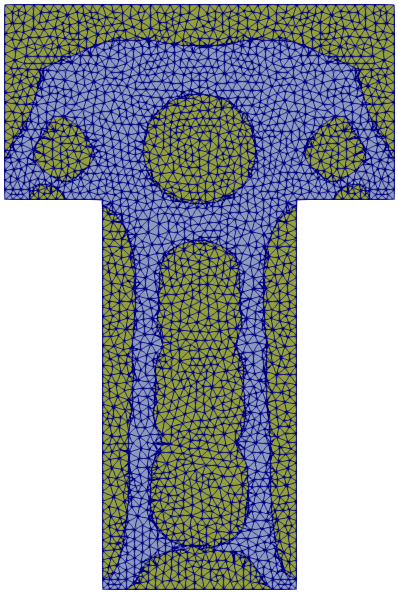}
    \label{sub:ResMat2dIt10}
    }
    \hspace{2em}
    \subfloat[Itération 31.]{   
    \includegraphics[width=0.31\textwidth]{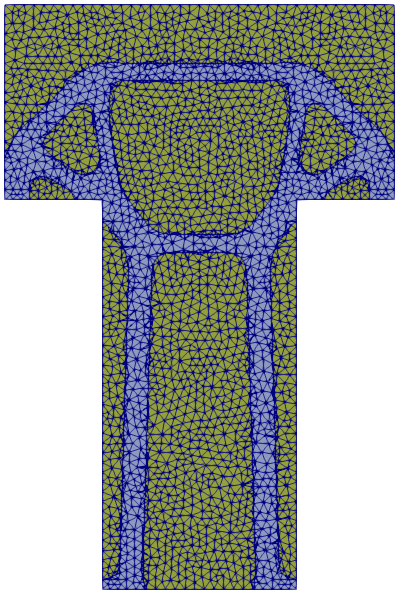}
    \label{sub:ResMat2dIt40}
    }
    \hspace{2em}
    \subfloat[Zoom sur l'itération 6.]{   
    \includegraphics[width=0.4\textwidth]{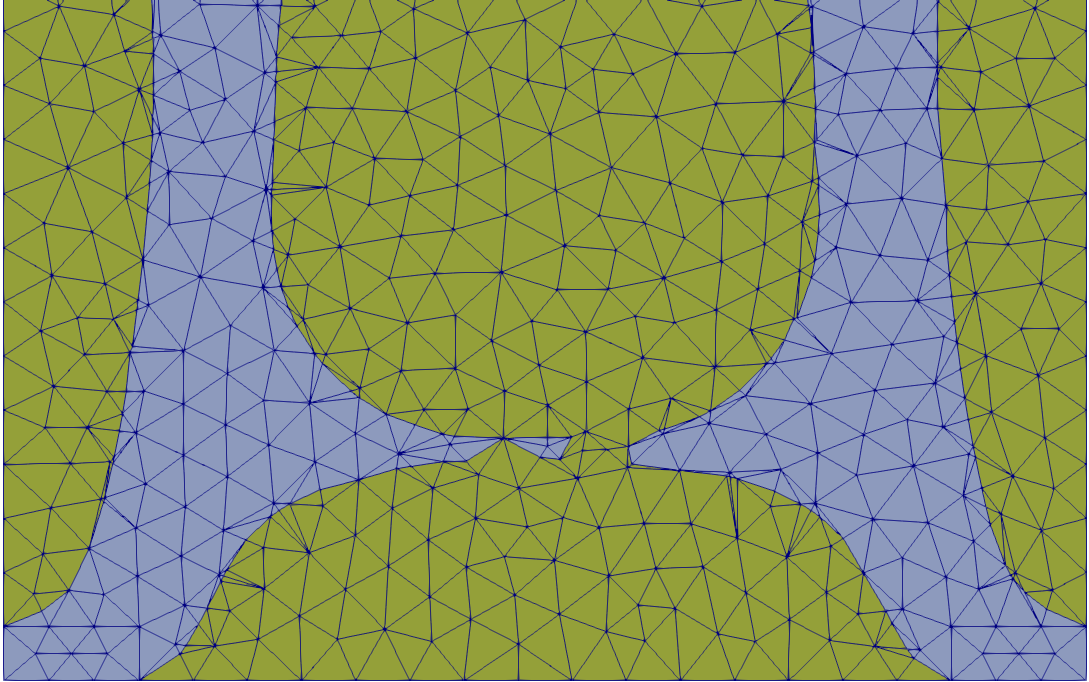}
    \label{sub:ZoomMat2dIt6}
    }
    \end{center}
  \caption{Différentes itérations pour le mât 2D ($\Omega_h$ en bleu, $D_h\setminus \Omega_h$ en jaune).}
  \label{fig:ResMat2d}
\end{figure}

L'algorithme converge en 31 itérations, avec 35 évaluations de $J$, en environ 2 minutes. Trois itérations (0, 6 et 31) sont données figure \ref{fig:ResMat2d}. Là encore, le résultat obtenu est semblable à ce qui est présenté dans la littérature: on retrouve les deux pieds verticaux, ainsi que les deux barres horizontales qui stabilisent la structure. La suite des valeurs de $J$ est montrée figure \ref{fig:CvgIterCasTests2d}. Ici, comme les changements de topologie ont lieu en tout début de processus, lorsque la fonctionnelle décroît le plus vite, on ne voit aucun saut dans les valeurs de $J$: la suite reste strictement monotone du début à la fin. 

Nous proposons également un zoom sur la zone de $D$ où un changement de topologie se produit à l'itération 6, voir \ref{sub:ZoomMat2dIt6}. Grâce à la level-set, on peut gérer le phénomène de façon tout à fait transparente, tout en disposant d'un maillage de $\Omega_h$ à chaque itération. On note par ailleurs que le découpage en $P^2$ donne un bord très régulier, qui respecte bien la courbure, même si le maillage n'est pas courbe.

\begin{figure}
\begin{center}
	\resizebox{!}{0.34\textwidth}{
	\begin{tikzpicture}
		\begin{axis}[
    		xlabel={Nombre d'itérations $l$},
    		xmin=0, xmax=200,
    		]
    		\addplot[color=red,mark=none] table {res_canti2d_gros.txt};
    		\addplot[color=blue,mark=none] table {res_canti2d_fin.txt};
   			\legend{$J(\Omega^l)$ pour $h$, $J(\Omega^l)$ pour $h/2$}
   		\end{axis}
	\end{tikzpicture}
	}
	\hspace{3em}
	\resizebox{!}{0.33\textwidth}{
	\begin{tikzpicture}
		\begin{axis}[
    		xlabel={Nombre d'itérations $l$},
    		xmin=0, xmax=32,
    		ymin=3000, ymax=6500,
    		ytick={3000,3500,4000,4500,5000,5500,6000,6500}
    		]
    		\addplot[color=red,mark=none] table {res_mat2d.txt};
   			\legend{$J(\Omega^l)$}
   		\end{axis}
	\end{tikzpicture}
	}
\end{center}
\caption{Convergence des itérations pour le cantilever (à gauche) et pour le mât électrique (à droite).}
\label{fig:CvgIterCasTests2d}
\end{figure}
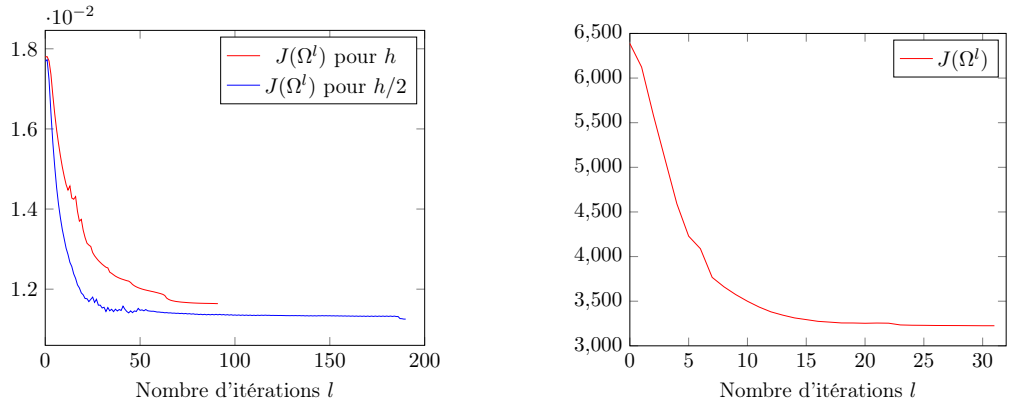

\subsubsection{Exemples en trois dimensions}

\paragraph{Cantilever.} Ce cas-test est la version 3d du celui présenté précédemment. Les dimensions de la boîte $D$ sont $5\times 3\times 2.4$. Et le maillage comporte en moyenne 4500 sommets. On a représenté le maillage initial et la topologie initiale avec les zones de conditions aux limites figures \ref{sub:MailCanti3d} et \ref{sub:ResCanti3dIt0}. Pour illustrer la topologie initiale, on a affiché $\complement \Omega_h$ plutôt que $\Omega_h$. Les forces extérieures sont telles que $\ff=0$ et $\tauu=(0,0,-0.01)$, et les coefficients de la fonctionnelle sont fixés à $\alpha_1=20$, $\alpha_2=0.01$. L'algorithme converge en 137 itérations, après avoir évalué la fonctionnelle 183 fois, en environ 1h50. L'historique de convergence est tracé figure \ref{sub:CvgIterCanti3d}. Comme le montrent les résultats présentés figure \ref{fig:ResCanti3d}, on obtient une forme optimale (figure \ref{sub:ResCanti3dIt137}) similaire aux résultats de la littérature. En 3d, comme en 2d, la méthode de level-set permet de gérer automatiquement les changements de topologie au cours du processus, cf figure \ref{sub:ResCanti3dIt14}. 

\begin{figure}[h]
  \begin{center}
    \subfloat[Maillage $D_h$.]{
    \includegraphics[width=0.31\textwidth]{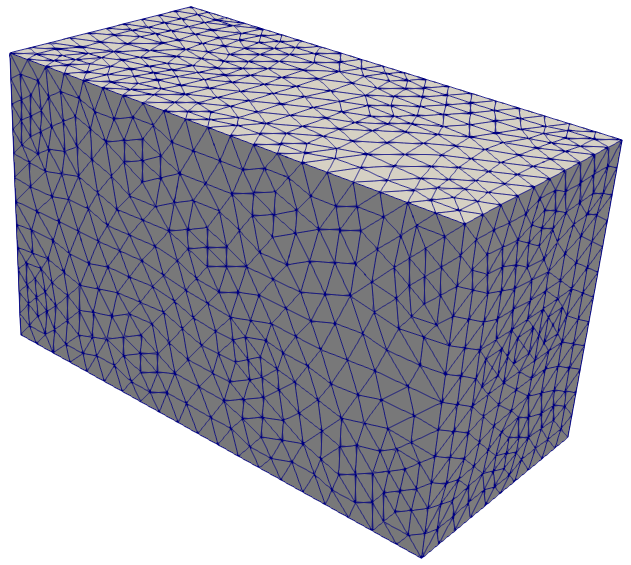}
    \label{sub:MailCanti3d}
    }
    \hspace{0.1em}
    \subfloat[Itération 0.]{
    \includegraphics[width=0.31\textwidth]{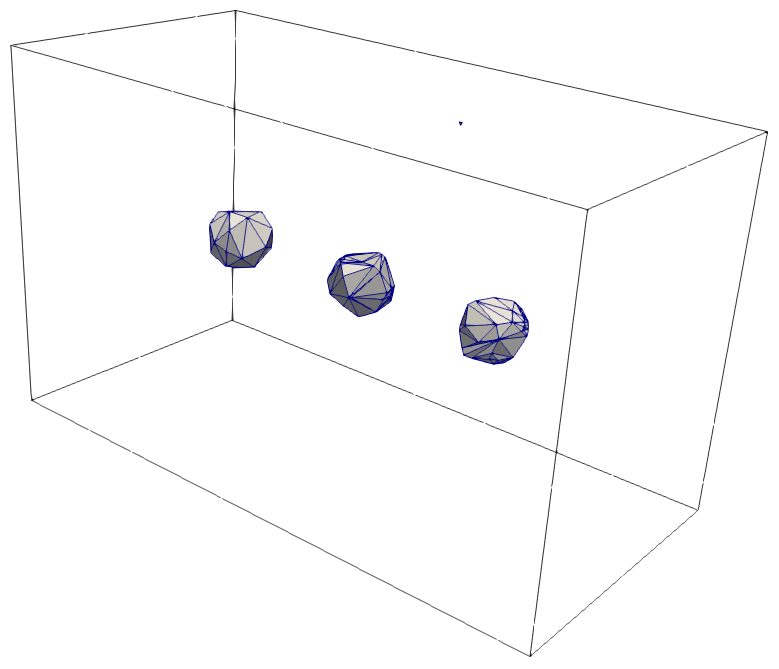}
    \label{sub:ResCanti3dIt0}
    }
    \hspace{.1em}
    \subfloat[Itération 14.]{
    \includegraphics[width=0.31\textwidth]{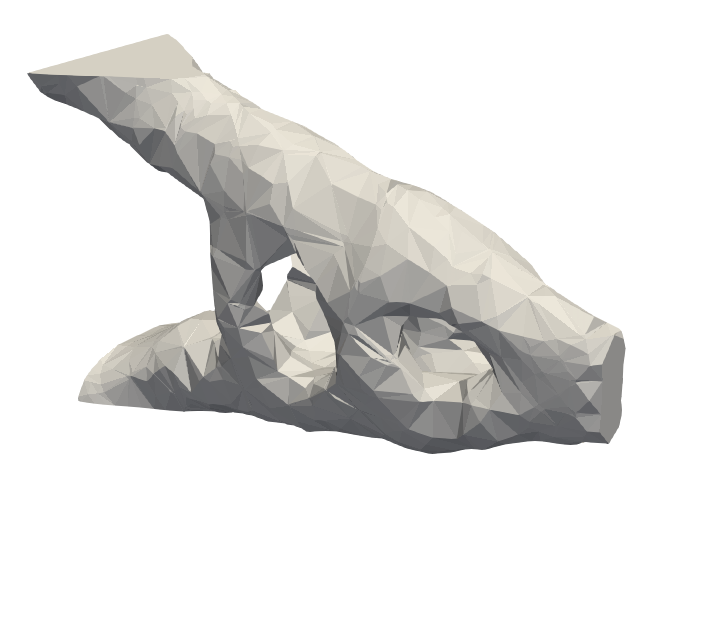}
    \label{sub:ResCanti3dIt14}
    }
    \hspace{.1em}
    \subfloat[Itération 137.]{   
    \includegraphics[width=0.31\textwidth]{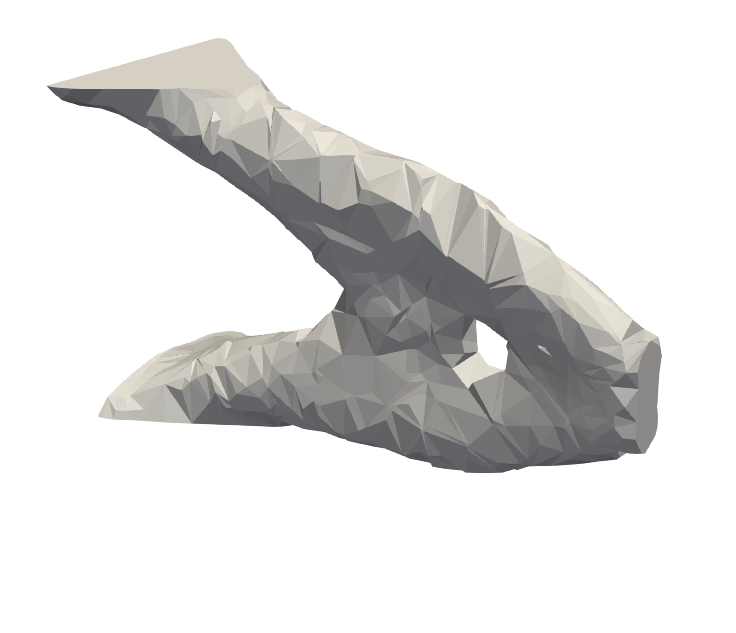}
    \label{sub:ResCanti3dIt137}
    }
    \hspace{1em}
    \subfloat[Historique de convergence.]{
    \resizebox{!}{0.31\textwidth}{
	\begin{tikzpicture}
		\begin{axis}[
    		xlabel={Nombre d'itérations $l$},
    		xmin=0, xmax=145,
    		]
    		\addplot[color=red,mark=none] table {res_canti3d.txt};
   			\legend{$J(\Omega^l)$}
   		\end{axis}
	\end{tikzpicture}
	}
	\label{sub:CvgIterCanti3d}
	}
    \end{center}
  \caption{Géométrie initiale, différentes itérations et historique de convergence pour le cantilever 3D.}
  \label{fig:ResCanti3d}
\end{figure}

\paragraph{Domaine en L.}
Nous terminons les cas-tests en élasticité linéaire par le domaine en forme de L, parfois appelé poutre en L, voir par exemple \cite{dapogny2013shape}. Les grands côtés du L sont de longueur 2, les petits 1.2, et la profondeur est de 0.8. Le maillage initial est donné figure \ref{sub:MailDomL3d}. La topologie de la configuration initiale, ainsi que les zones pour les conditions aux limites, sont représentées figure \ref{sub:InitCLDomL3d}. Ici, $\ff=0$ et $\tauu=(0,0,-1)$, et nous prenons cette fois $\alpha_1=4$, $\alpha_2=2$.
\begin{figure}[h]
  \begin{center}
    \subfloat[Maillage $D_h$.]{
    \includegraphics[width=0.31\textwidth]{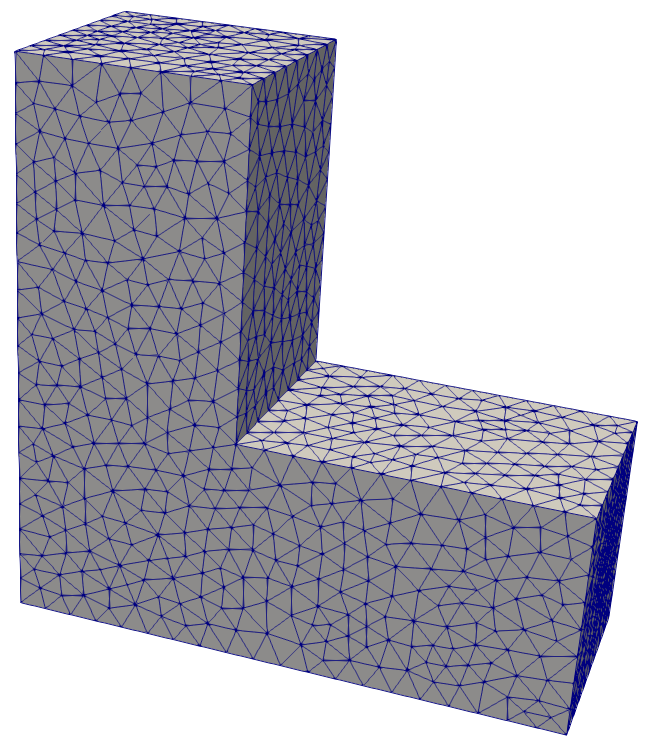}
    \label{sub:MailDomL3d}
    }
    \hspace{0.1em}
    \subfloat[Itération 0.]{
	\resizebox{0.31\textwidth}{!}{
	\begin{tikzpicture}
    	\node[anchor=south west,inner sep=0] at (0,0) {\includegraphics[width=\textwidth]{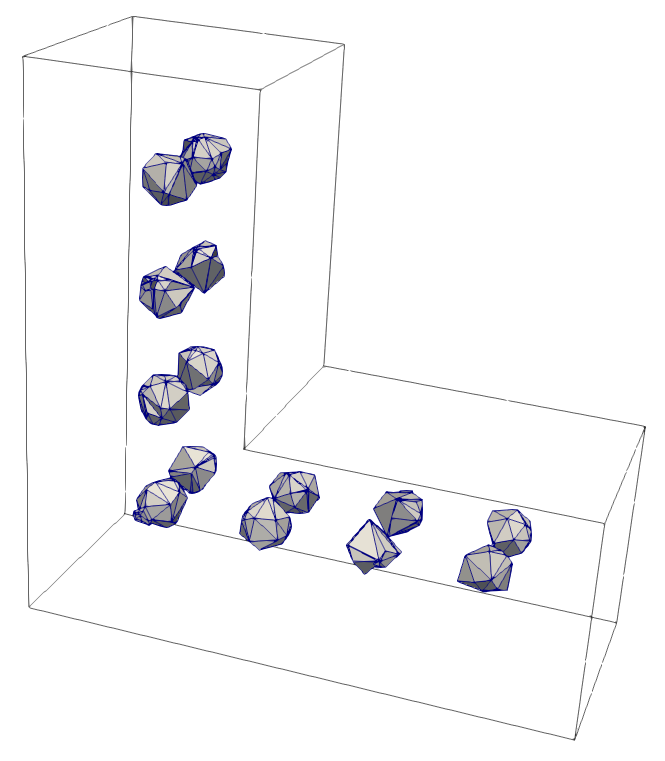}};
    	\draw[fill=blue, opacity=0.4]  (0.5,16.5) -- (6.15,15.7) -- (8,16.8) -- (3,17.45) -- cycle;
    	\node[blue] at (4.5,17.5) {\Huge{$\hat{\Gamma}_D$}};
    	\draw[fill=orange, opacity=0.4]  (14,2.6) -- (14.3,3.5) -- (14.6,5.5) -- (14.25,4.5) -- cycle;
    	\node[orange] at (13.5,7) {\Huge{$\Gamma_N$}};
    \end{tikzpicture}
    \label{sub:InitCLDomL3d}
    }
    }
    \hspace{.1em}
    \subfloat[Itération 8.]{
    \includegraphics[width=0.31\textwidth]{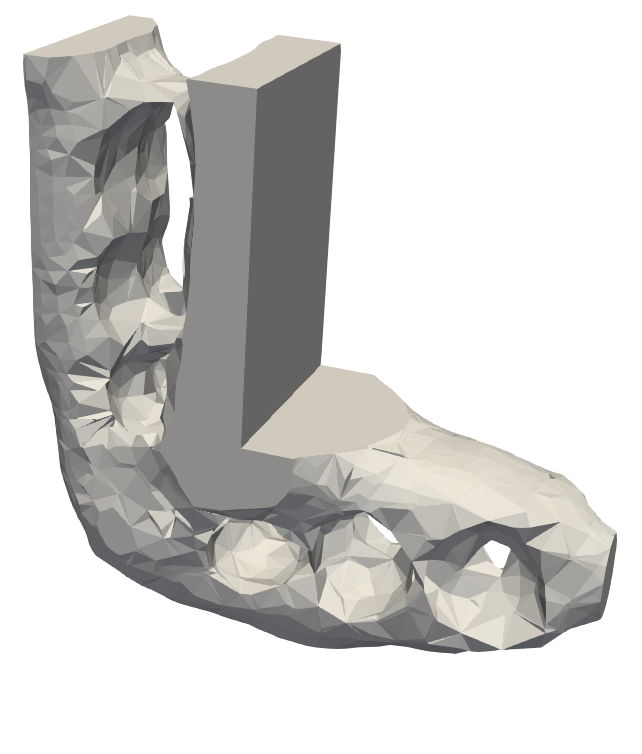}
    \label{sub:ResDomL3dIt8}
    }
    \hspace{.1em}
    \subfloat[Itération 45.]{   
    \includegraphics[width=0.31\textwidth]{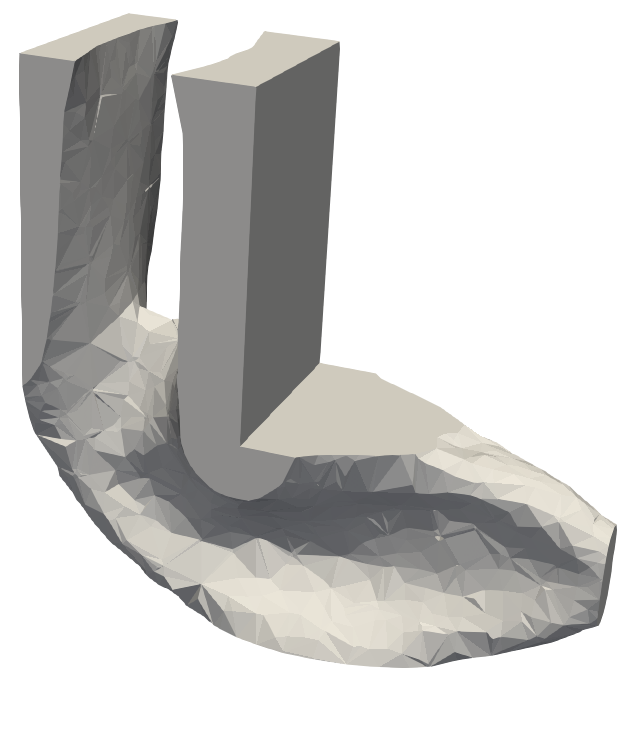}
    \label{sub:ResDomL3dIt45}
    }
    \hspace{1em}
    \subfloat[Historique de convergence.]{
    \resizebox{!}{0.35\textwidth}{
	\begin{tikzpicture}
		\begin{axis}[
    		xlabel={Nombre d'itérations $l$},
    		xmin=0, xmax=46,
    		ymin=2.2, ymax=4.5
    		]
    		\addplot[color=red,mark=none] table {res_domaineL3d.txt};
   			\legend{$J(\Omega^l)$}
   		\end{axis}
	\end{tikzpicture}
	}
	\label{sub:CvgIterDomL3d}
	}
    \end{center}
  \caption{Géométrie initiale, différentes itérations et historique de convergence pour le domaine en L 3D.}
  \label{fig:ResDomL3d}
\end{figure}
On obtient une forme convergée après 45 itérations, pour 72 évaluations de la fonctionnelle. Le processus prend environ 45 minutes. On a représenté les itérations 8 et 45 (figures \ref{sub:ResDomL3dIt8} et \ref{sub:ResDomL3dIt45}), ainsi que la courbe de convergence de $J$. Malgré les « trous » dans la configuration initiale, on a une forme optimale pleine. Comme on s'y attend, l'algorithme cherche à utiliser la zone de Dirichlet pour gagner en rigidité, tout en renforçant le creux du L (zone où les contraintes sont les plus élevées).

\begin{rmrk}
	En comparant le nombre d'itérations de l'algorithme et le nombre d'évaluations de $J$, on voit dans ces deux exemples qu'environ une forme sur trois est rejetée. Ceci suggère que la valeur du temps final $T$ choisie pour l'advection de $\phi$ est un peu trop grande.
\end{rmrk}

\begin{rmrk}
	Pour ces deux cas 3D, nous avons choisi des formes intiales avec des trous. En pratique, cela n'a pas d'influence sur la forme optimale obtenue (même résultat qu'avec une initialisation sans trou). Cependant, ces choix de formes initiales permettent de voir apparaître plus vite des changements de topologie (voir figures \ref{sub:ResCanti3dIt14} et \ref{sub:ResDomL3dIt8}).
\end{rmrk}

\begin{rmrk}
	Notons enfin qu'à chaque fois, nous avons utilisé des éléments finis $P^2$ pour le déplacement $\uu$. Cependant, dans les cas où les géométries ont des coins entrants (mât en forme de T et domaine en L), on sait que les solutions présentent des singularités. Il n'est donc pas forcément intéressant de monter en ordre dans la discrétisation car lorsque la solution est singulière, on n'a plus de contrôle sur l'erreur de discrétisation. En revanche, en pratique, on peut quand même espérer qu'utiliser du $P^2$ plutôt que du $P^1$ permet de gagner en précision loin des singularités. 
\end{rmrk}

\subsection{Élasticité linéaire avec contact}

Dans cette section, nous nous intéressons à deux types de configurations. Premièrement, afin de valider notre méthode, nous nous plaçons dans le même contexte que \cite{maury2016shape} ou \cite{stromberg2010topology}, c'est-à-dire que nous fixons dès le départ un ensemble de zones $\hat{\Gamma}_C\subset \partial D$ où des phénomènes de contact peuvent survenir. Au cours du processus, on définira la zone de contact potentiel $\Gamma_C$ associée à une forme $\Omega$ par $\Gamma_C:=\hat{\Gamma}_C\cap \partial\Omega$. Cela signifie que le traitement de $\Gamma_C$ est le même que celui de $\Gamma_D$: si la forme « s'accroche » à une partie de $\hat{\Gamma}_C$, alors il y aura contact à cet endroit-là, sinon le bord est libre de contrainte. En particulier dans cet exemple, on ne calcule pas de dérivées de forme sur $\Gamma_C$.

Dans un deuxième temps, nous considérons des cas où la zone de contact n'est pas connue a priori. On définit seulement un objet rigide $\Omega_{rig}$ avec lequel notre corps déformable est susceptible de rentrer en contact. Puis, pour une forme $\Omega$ donnée, pour l'ensemble des points de $\partial\Omega \setminus (\Gamma_D\cup \Gamma_N)$, c'est le calcul du gap qui déterminera les points entrant en contact avec l'objet rigide. Cette approche plus générale permet d'utiliser le calcul des dérivées de forme sur $\Gamma_C$, et d'optimiser cette partie du bord sans lui imposer de contrainte a priori. Comme il n'existe à notre connaissance pas de cas-tests 3d dans la littérature pour ce genre de situations dans le cas d'un objet rigide non plan, nous en proposons un qui est une généralisation d'un cas 2d tiré de \cite{fancello1995numerical}.

\subsubsection{Exemple où la zone de contact est connue a priori}

\paragraph{Anchrage 2d.} Ce premier cas-test est tiré de \cite{maury2016shape}. Le domaine $D$ est le carré unité $[0,1]\times[0,1]$, représenté par un maillage composé de 2600 sommets en moyenne. On a imposé deux zones disjointes de contact potentiel à gauche et à droite, on impose une condition de Neumann au milieu en bas, voir figure \ref{sub:InitCLAnchrage2d}. Pour ce qui est des forces extérieures, nous prenons $\ff=0$ et $\tauu=(0,-0.01)$. Les coefficients de $J$ vérifient $\alpha_1=25$, $\alpha_2=0.01$. Dans le cas du contact pénalisé, nous avons fixé $\varepsilon = 10^8$, et pour la Lagrangien augmenté,  $\gamma_1^k=\gamma_2^k=1000$ pour tout $k$. Et pour le cas frottant, nous avons pris un seuil de Tresca $s=10^{-2}$ (ordre de grandeur de la contrainte normale $\sigmaa_{\normalInt\!\normalExt}$), et un coefficient de frottement $\mathfrak{F}=0.2$. Les formes optimales obtenues dans chaque cas sont représentées figure \ref{fig:ResAnchrage2d}.

\begin{figure}[h]
  \begin{center}
    \subfloat[Géométrie intiale.]{
	\resizebox{0.29\textwidth}{!}{
    \begin{tikzpicture}
	\draw[black] (0,0) -- (0.5,0);
	\draw[dartmouthgreen, very thick] (0.25,0) -- (0.75,0);
	\draw[black] (0.75,0) -- (1.75,0);
	\draw[orange, very thick] (1.75,0) -- (2.25,0);
	\draw[black] (2.25,0) -- (3.25,0);
	\draw[dartmouthgreen, very thick] (3.25,0) -- (3.75,0);
	\draw[black] (3.75,0) -- (4,0);
	\draw[black] (4,0) -- (4,4);
	\draw[black] (4,4) -- (0,4);
	\draw[black] (0,4) -- (0,0);

	\node[white] at (2,-0.25) {$\:$};
	\node[white] at (4.2,2) {$\:$};
	\node[black] at (2,2) {$D$};
	\node[orange] at (2,0.4) {$\Gamma_N$};
	\node[dartmouthgreen] at (0.5,0.4) {$\Gamma_C$};
	\node[dartmouthgreen] at (3.5,0.4) {$\Gamma_C$};
	\end{tikzpicture}
    \label{sub:InitCLAnchrage2d}
    }
    }
   \hspace{.3em}
    \subfloat[Itération 0.]{
    \includegraphics[width=0.3\textwidth]{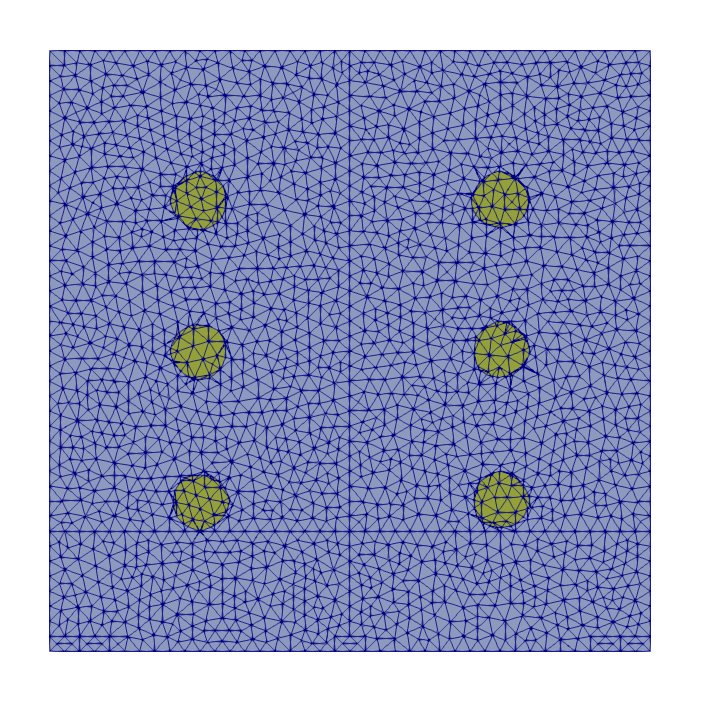}
    \label{sub:ResAnchrage2dInit}
    }
   \hspace{.3em}
    \subfloat[Itération finale contact glissant (pénalisation).]{
    \includegraphics[width=0.3\textwidth]{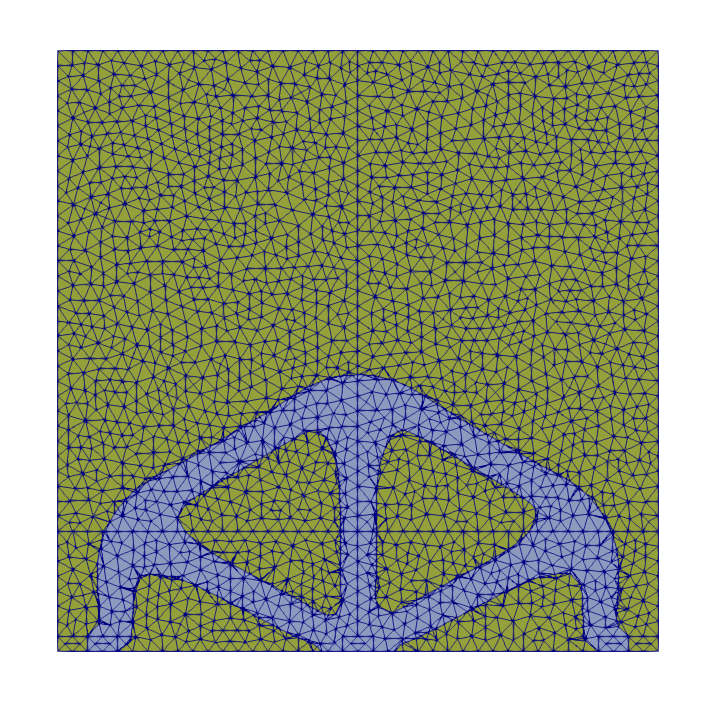}
    \label{sub:ResAnchrage2dPena}
    }
    
    \subfloat[Itération finale contact glissant (Lagrangien augmenté).]{
    \includegraphics[width=0.3\textwidth]{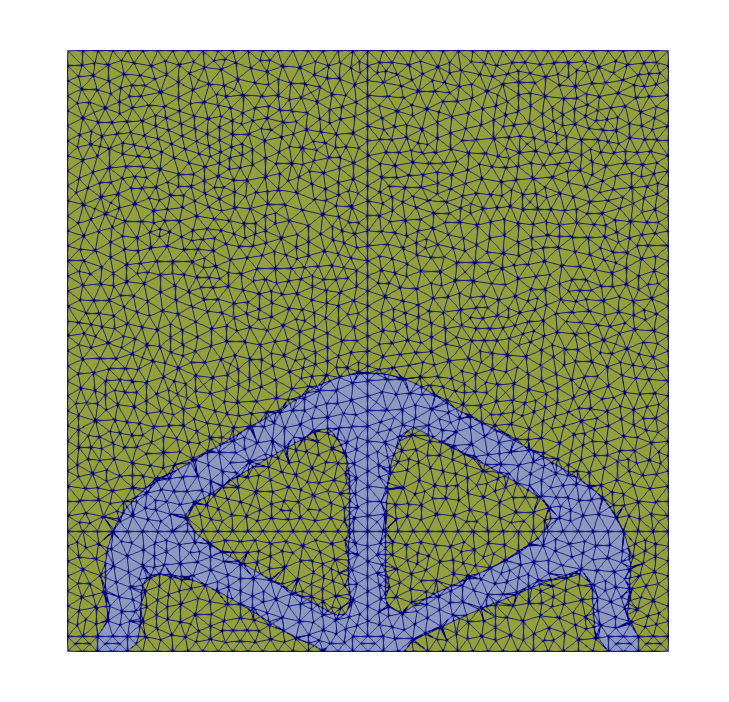}
    \label{sub:ResAnchrage2dLagAug}
    }
    \hspace{.3em}
    \subfloat[Itération finale contact frottant (pénalisation).]{
    \includegraphics[width=0.29\textwidth]{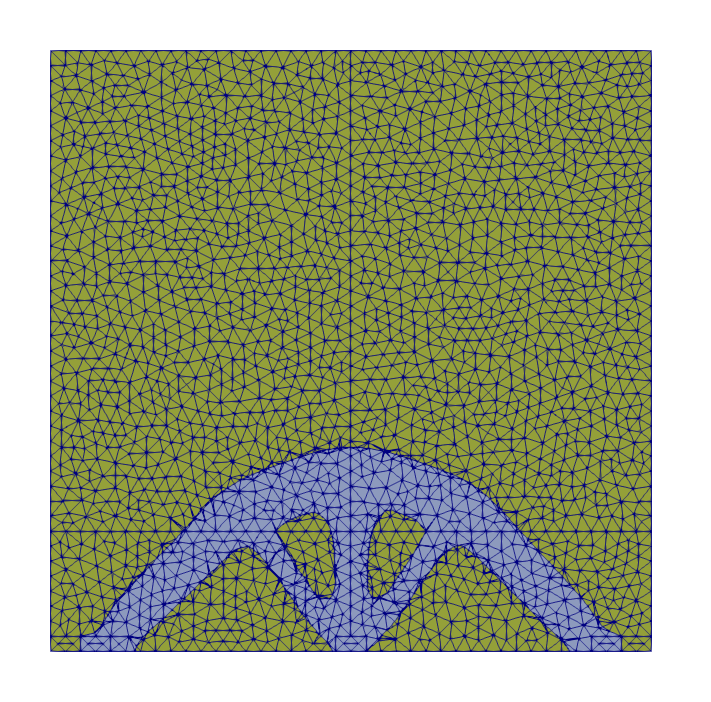}
    \label{sub:ResAnchrage2dFrottPena}
    }
    \hspace{.3em}
    \subfloat[Itération finale contact frottant (Lagrangien augmenté).]{
    \includegraphics[width=0.29\textwidth]{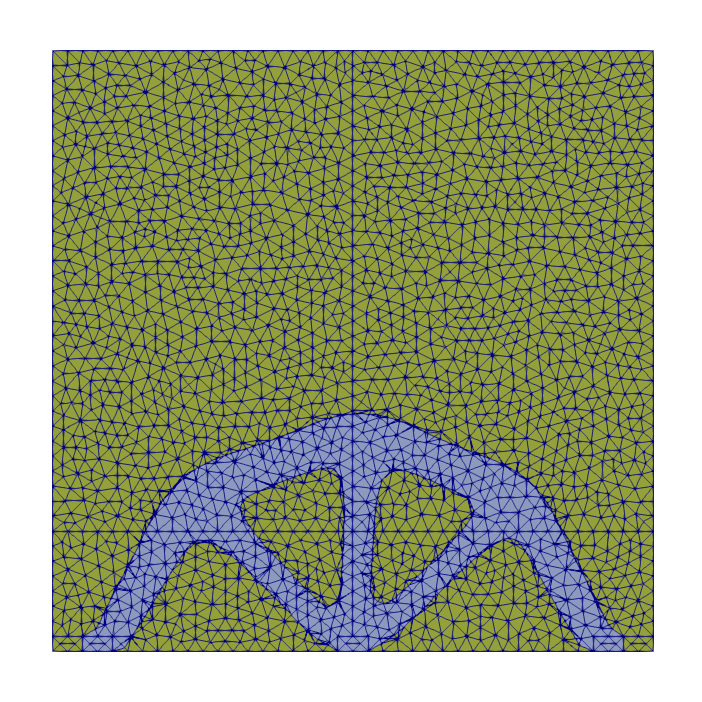}
    \label{sub:ResAnchrage2dFrottLagAug}
     }
    \end{center}
  \caption{Forme initiale et résultats pour l'anchrage 2d ($\Omega_h$ en bleu, $D_h\setminus \Omega_h$ en jaune).}
  \label{fig:ResAnchrage2d}
\end{figure}

Dans le cas glissant, on obtient quasiment la même forme optimale avec les deux formulations, et ce design est similaire au résultat obtenu dans \cite{maury2016shape}. On note en particulier que les pattes de l'anchrage sont bien verticales au-dessus des zones de contact, ce qui permet de limiter le glissement et donc d'améliorer la rigidité. Les courbes de convergence de $J(\Omega^l)$, tracées figure \ref{fig:CvgIterAnchrage2d}, ont également le même comportement, et semblent converger vers la même valeur, en presque le même nombre d'itérations: 55 pour la pénalisation contre 52 pour le Lagrangien augmenté. Dans le cas frottant, on observe des formes optimales différentes, mais les valeurs de $J$ finales semblent très proches, voir figure \ref{fig:CvgIterAnchrage2d}. Comme prévu, dans les deux cas, grâce au frottement, les pattes peuvent être inclinées sans altérer la rigidité de la structure. 

\begin{figure}
\begin{center}
    \subfloat[Contact glissant.]{   
	\resizebox{!}{0.35\textwidth}{
	\begin{tikzpicture}
		\begin{axis}[
    		xlabel={Nombre d'itérations $l$},
    		xmin=0, xmax=60,
    		]
    		\addplot[color=gray,mark=none] table {res_anchrage2d_pena.txt};
    		\addplot[color=dartmouthgreen,mark=none] table {res_anchrage2d_lagaug.txt};
   			\legend{$J(\Omega^l)$ (pénalisation), $J(\Omega^l)$ (Lagrangien augmenté)}
   		\end{axis}
	\end{tikzpicture}
	}
	}
	\hspace{2em}
    \subfloat[Contact frottant.]{   
	\resizebox{!}{0.35\textwidth}{
	\begin{tikzpicture}
		\begin{axis}[
    		xlabel={Nombre d'itérations $l$},
    		xmin=0, xmax=290,
    		]
    		\addplot[color=gray,mark=none] table {res_anchrage2d_frott_pena_bis.txt};
    		\addplot[color=dartmouthgreen,mark=none] table {res_anchrage2d_frott_lagaug.txt};
   			\legend{$J(\Omega^l)$ (pénalisation), $J(\Omega^l)$ (Lagrangien augmenté)}
   		\end{axis}
	\end{tikzpicture}
	}
	}
\end{center}
\caption{Convergence des itérations pour l'anchrage 2d.}
\label{fig:CvgIterAnchrage2d}
\end{figure}
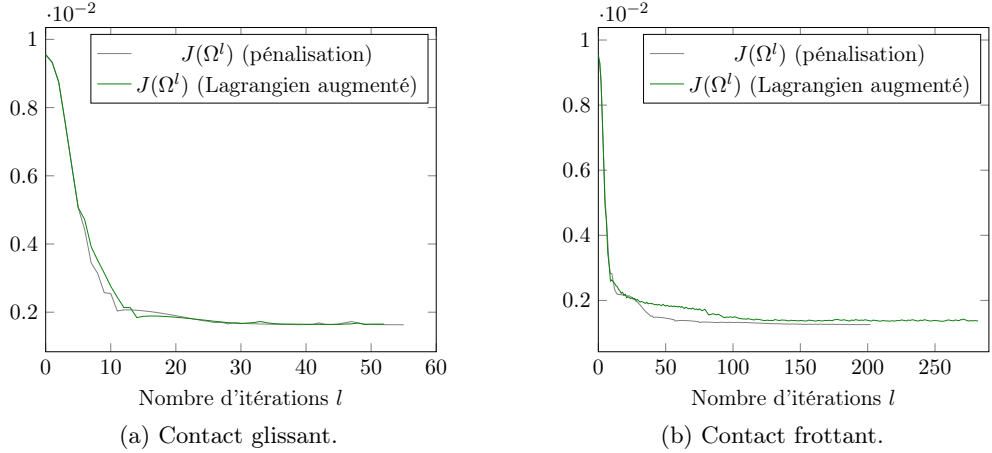

\subsubsection{Exemples où la zone de contact n'est pas connue a priori}

Afin de pouvoir valider notre approche, et de mettre en pratique les résultats théoriques des chapitres \ref{chap:2.1} et \ref{chap:3.1}, nous étudions dans ce qui suit un cas qui teste les dérivées de forme sur la zone de contact.

\paragraph{Cantilever 2d en contact avec un disque.}
En s'inspirant à la fois des cas-tests classiques d'optimisation de formes en élasticité sans contact et des cas-tests de résolution des problèmes de contact, on peut penser pour le cas 2d au cantilever en contact avec disque (voir par exemple \cite{stadler2004infinite}). Ce cas, ou des variantes proches, ont déjà été traités en optimisation de formes, nous citons par exemple \cite{fancello1995numerical}, ou encore \cite{haslinger1987shape,haslinger2003introduction} pour le cas d'un objet rigide plan. Nous préférons le cas plus général d'un objet non plan, car pour un plan on ne voit pas l'influence de la normale sur la dérivée de forme puisque $\normalExt'[\thetaa]=0$. Dans les travaux que nous venons de citer, seule la zone de contact, représentée à l'aide des valeurs d'un champ éléments finis sur un maillage donné (paradigme \textit{discrétiser-puis-optimiser}), est optimisée. Ici, en plus de nous placer dans le paradigme \textit{optimiser-puis-discrétiser}, nous proposons une optimisation de l'ensemble des frontières de $\Omega$ (incluant $\Gamma_C$), tout en ayant la possibilité de générer des changements de topologie (grâce à la représentation par level-set).

On considère donc le même domaine $D$ et les mêmes zones géométriques pour les conditions aux limites de Neumann et Dirichlet que dans le cas du cantilever 2d sans contact (figure \ref{sub:InitCLCanti2d}). On ajoute simplement un disque rigide de rayon $R$ de centre $(1,-R)$ avec lequel la poutre pourrait entrer en contact sous l'effet des contraintes extérieures. Ceci nous donne la géométrie figure \ref{fig:InitCLCanti2dDisque}. On impose à la structure les contraintes $\ff=0$ et $\tauu=(0,-0.01)$.
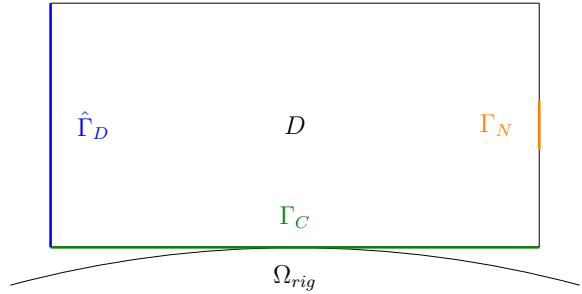
\begin{figure}[h]
	\begin{center}
	\resizebox{0.5\textwidth}{!}{
    \begin{tikzpicture}
    
	\draw[black] (8.66,-0.62) arc (75:105:18);
	\draw[dartmouthgreen, very thick] (0,0) -- (8,0);
	\draw[black] (8,0) -- (8,1.6);
	\draw[orange, very thick] (8,1.6) -- (8,2.4);
	\draw[black] (8,2.4) -- (8,4);
	\draw[black] (8,4) -- (0,4);
	\draw[blue, very thick] (0,4) -- (0,0);

	\node[white] at (4,-0.3) {$\:$};
	\node[black] at (4,2) {\large{$D$}};
	\node[blue] at (0.7,2) {\large{$\hat{\Gamma}_D$}};
	\node[orange] at (7.3,2) {\large{$\Gamma_N$}};
	\node[dartmouthgreen] at (4,0.5) {\large{$\Gamma_C$}};
	\node[black] at (4,-0.5) {\large{$\Omega_{rig}$}};
	\end{tikzpicture}
    }	
	\end{center}
  	\caption{Géométrie initiale pour le cantilever 2d en contact avec un disque.}
    \label{fig:InitCLCanti2dDisque}
\end{figure}

Ce benchmark présente plusieurs avantages. Tout d'abord, la géométrie est assez simple, et comme $\Omega_{rig}$ est un disque, on dispose d'expressions analytiques pour toutes les quantités (notamment $\normalExt$ et $\gG_{\normalExt}$), ce qui limite le biais introduit par la méthode numérique. De plus, comme on connaît d'une part le résultat du problème de contact seul et d'autre part celui du problème d'optimisation de formes sans contact, on peut avoir une intuition de la forme optimale obtenue en combinant les deux.

En effet, si on cherche à minimiser $J$, combinaison linéaire de la compliance et du volume, alors on s'attend à voir apparaître des raidisseurs au cours du processus d'optimisation de formes, comme pour la cas sans contact. En revanche, à cause du contact, on devrait perdre la symétrie de la forme optimale. On s'attend plutôt à ce que l'algorithme renforce les zones autour des points en contact, la structure pourra ainsi s'appuyer sur l'objet rigide pour gagner en rigidité, et limiter le mouvement vers le bas induit par la force de traction.

Dans cet exemple, on choisit un tel $J$, avec les coefficients $\alpha_1=15$ et $\alpha_2=0.01$. Pour la formulation pénalisé, on prend $\varepsilon=10^5$, et pour la formulation Lagrangien augmenté $\gamma_1^k=\gamma_2^k=100$ pour tout $k$. Le rayon $R$ du disque est fixé à 8. Enfin, dans le cas frottant, le seuil de Tresca est tel que $s=10^{-2}$ (ordre de grandeur de la contrainte normale), et le coefficient de frottement $\mathfrak{F}=0.2$. Avec ce jeu de données, on obtient les résultats présentés figure \ref{fig:ResCanti2dCercle}.

\begin{figure}[h]
  \begin{center}
    \subfloat[Itération 0.]{
	\resizebox{0.46\textwidth}{!}{
	\begin{tikzpicture}
    	\node[anchor=south west,inner sep=0] at (0,0) {\includegraphics[width=\textwidth]{canti2d_bis_it0.png}};
    	\draw[black] (15.6,0.01) arc (77:103:35);
    \end{tikzpicture}
    \label{sub:ResCanti2dCercleIt0}
    }
    }
    \hspace{.5em}
    \subfloat[Itération finale sans contact.]{
	\resizebox{0.46\textwidth}{!}{
	\begin{tikzpicture}
    	\node[anchor=south west,inner sep=0] at (0,0) {\includegraphics[width=\textwidth]{canti2d_bis_it64.png}};
    	\draw[white] (15.6,0.01) arc (77:103:35);
    \end{tikzpicture}
    \label{sub:ResCanti2dBis}
    }
    }
    \hspace{.5em}
    \subfloat[Itération finale contact glissant (pénalisation).]{
	\resizebox{0.46\textwidth}{!}{
	\begin{tikzpicture}
    	\node[anchor=south west,inner sep=0] at (0,0) {\includegraphics[width=\textwidth]{canti2d_cercle_pena_it303.png}};
    	\draw[black] (15.6,0.01) arc (77:103:35);
    \end{tikzpicture}
    \label{sub:ResCanti2dCerclePena}
    }
    }
    \hspace{.5em}
    \subfloat[Itération finale contact glissant (Lagrangien augmenté).]{
	\resizebox{0.46\textwidth}{!}{
	\begin{tikzpicture}
    	\node[anchor=south west,inner sep=0] at (0,0) {\includegraphics[width=\textwidth]{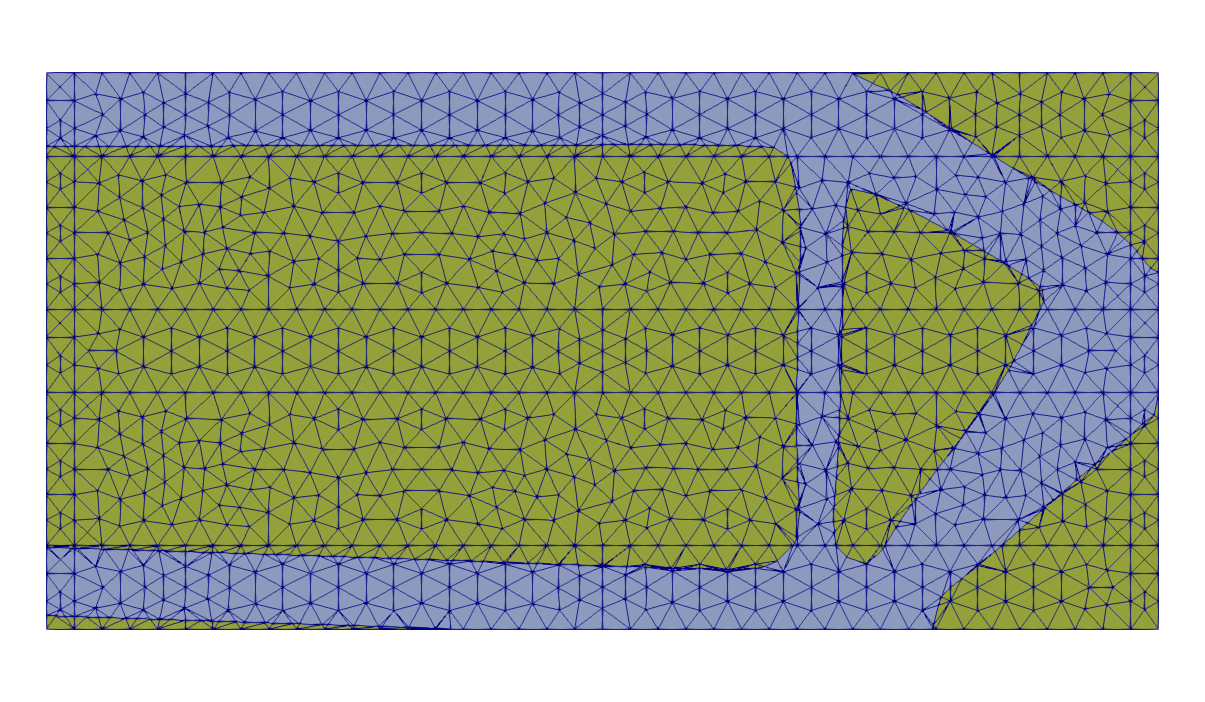}};
    	\draw[black] (15.6,0.01) arc (77:103:35);
    \end{tikzpicture}
    \label{sub:ResCanti2dCercleLagAug}
    }
    }
    \hspace{.5em}
    \subfloat[Itération finale contact frottant (pénalisation).]{
	\resizebox{0.46\textwidth}{!}{
	\begin{tikzpicture}
    	\node[anchor=south west,inner sep=0] at (0,0) {\includegraphics[width=\textwidth]{canti2d_cercle_frott_pena_it107.png}};
    	\draw[black] (15.6,0.01) arc (77:103:35);
    \end{tikzpicture}
    \label{sub:ResCanti2dCercleFrottPena}
    }
    }
    \hspace{.5em}
    \subfloat[Itération finale contact frottant (Lagrangien augmenté).]{
	\resizebox{0.46\textwidth}{!}{
	\begin{tikzpicture}
    	\node[anchor=south west,inner sep=0] at (0,0) {\includegraphics[width=\textwidth]{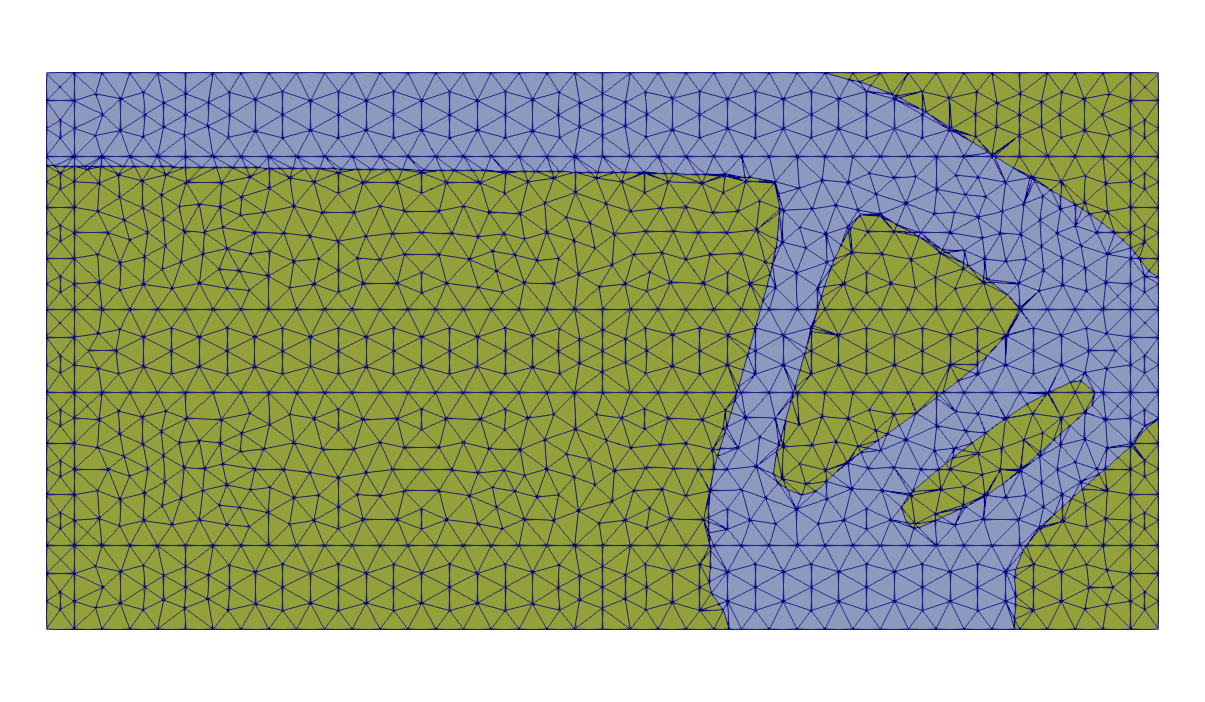}};
    	\draw[black] (15.6,0.01) arc (77:103:35);
    \end{tikzpicture}
    \label{sub:ResCanti2dCercleFrottLagAug}
    }
    }
    \end{center}
  \caption{Forme initiale et résultats pour le cantilever en contact avec un disque ($\Omega_h$ en bleu, $D_h\setminus \Omega_h$ en jaune).}
  \label{fig:ResCanti2dCercle}
\end{figure}

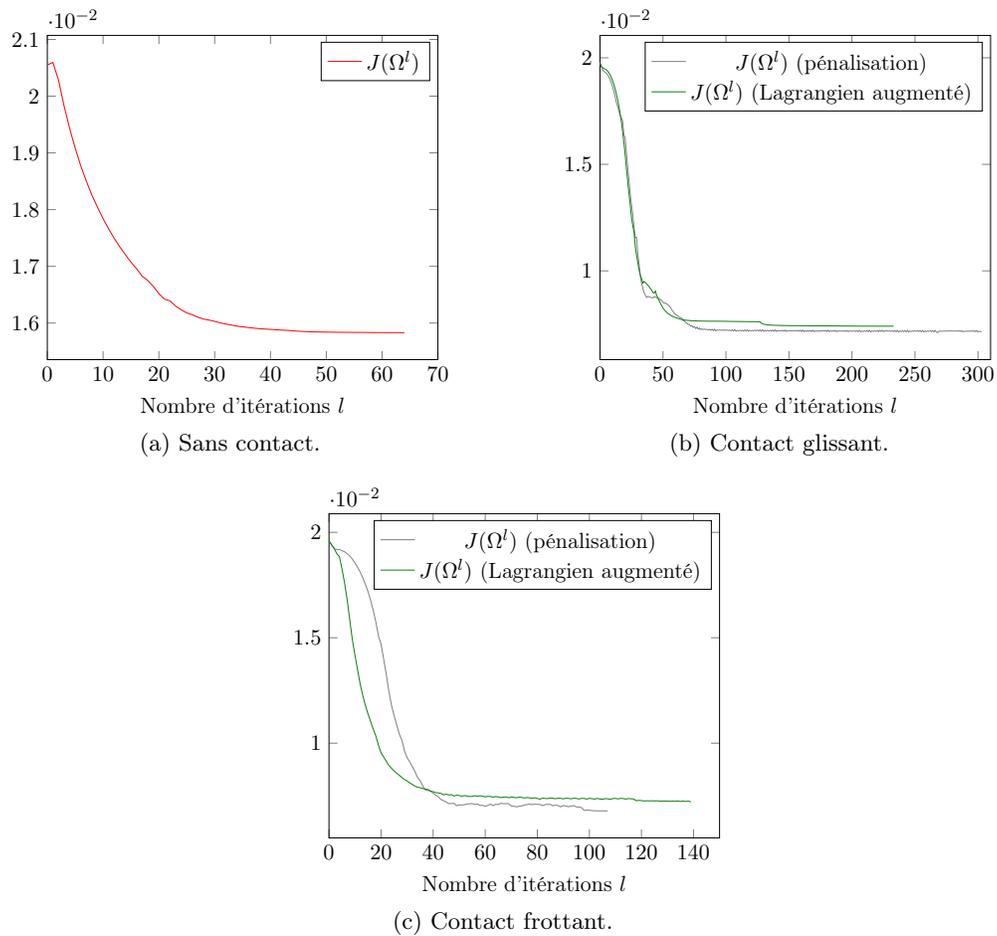
\begin{figure}
\begin{center}
    \subfloat[Sans contact.]{   
	\resizebox{!}{0.35\textwidth}{
	\begin{tikzpicture}
		\begin{axis}[
    		xlabel={Nombre d'itérations $l$},
    		xmin=0, xmax=70,
    		]
    		\addplot[color=red,mark=none] table {res_canti2d_bis.txt};
   			\legend{$J(\Omega^l)$}
   		\end{axis}
	\end{tikzpicture}
	}
	}
	\hspace{2em}
    \subfloat[Contact glissant.]{   
	\resizebox{!}{0.35\textwidth}{
	\begin{tikzpicture}
		\begin{axis}[
    		xlabel={Nombre d'itérations $l$},
    		xmin=0, xmax=310,
    		]
    		\addplot[color=gray,mark=none] table {res_canti2d_cercle_pena.txt};
    		\addplot[color=dartmouthgreen,mark=none] table {res_canti2d_cercle_lagaug.txt};
   			\legend{$J(\Omega^l)$ (pénalisation), $J(\Omega^l)$ (Lagrangien augmenté)}
   		\end{axis}
	\end{tikzpicture}
	}
	}
	\hspace{2em}
    \subfloat[Contact frottant.]{   
	\resizebox{!}{0.35\textwidth}{
	\begin{tikzpicture}
		\begin{axis}[
    		xlabel={Nombre d'itérations $l$},
    		xmin=0, xmax=150,
    		]
    		\addplot[color=gray,mark=none] table {res_canti2d_cercle_frott_pena.txt};
    		\addplot[color=dartmouthgreen,mark=none] table {res_canti2d_cercle_frott_lagaug.txt};
   			\legend{$J(\Omega^l)$ (pénalisation), $J(\Omega^l)$ (Lagrangien augmenté)}
   		\end{axis}
	\end{tikzpicture}
	}
	}
\end{center}
\caption{Convergence des itérations pour le cantilever en contact avec un disque.}
\label{fig:CvgIterCanti2dCercle}
\end{figure}

On remarque encore que pour le même jeu de paramètres, le Lagrangien augmenté et la pénalisation donnent des formes optimales très similaires. Dans tous les cas, on remarque que le lieu des points effectivement en contact (initialement au milieu) a été déplacé vers la droite au cours du processus d'optimisation. En effet, plus l'appui sur l'objet rigide sera proche de la zone où on applique la contrainte, plus la forme sera rigide. Pour le modèle de contact glissant, la forme optimale a deux points d'anchrage: un en haut, qui permet d'assurer une certaine rigidité, et l'autre en bas, pour empêcher l'objet de glisser vers le bas et vers la droite. Pour le modèle frottant, on n'a plus qu'un point d'anchrage, car le frottement sur l'objet rigide fait qu'on aura une plus grande stabilité au niveau de la zone d'appui.

Par ailleurs, comme on s'y attend, si on compare les designs optimaux obtenus avec contact et celui sans contact, on voit clairement que pour ce cas-test, la possibilité de s'appuyer sur un corps rigide permet d'avoir des formes tout aussi rigides mais bien plus légères. En effet, si on regarde les valeurs de $J$ pour ces designs optimaux, rassemblées dans les tableaux figure \ref{fig:TabValeursJCanti2d}, on voit que le cas avec contact donne des formes bien plus performantes.

Si les allures des tracés de l'historique de $J$ (voir figure \ref{fig:CvgIterCanti2dCercle}) sont comparables, on obtient des valeurs finales assez différentes pour la pénalisation et le Lagrangien augmenté. Plus précisément, les valeurs du premier tableau de la figure \ref{fig:TabValeursJCanti2d} semblent indiquer que le design obtenu dans le cas pénalisé est plus performant. En réalité, il paraît plus probable que cet écart soit dû à l'inconsistence de la méthode de pénalisation, d'autant qu'ici $\epsilon = 10^5$, ce qui n'est pas très élevé.

\begin{figure}
\begin{center}
    \begin{tabular}{|c|c|c|}
    \hline
    \textbf{Avec contact} & glissant & frottant \\
    \hline
    Pénalisation & 0.00714989 & 0.00678781  \\
    \hline
    Lagrangien augmenté & 0.00740285 & 0.00722132 \\
    \hline
    \end{tabular}
    \hspace{1.5em}
    \begin{tabular}{|c|c|}
    \hline
    \textbf{Sans contact} & 0.0158314 \\
    \hline
    \end{tabular}
    \caption{Valeurs de $J$ à l'itération finale pour le cantilever.}
    \label{fig:TabValeursJCanti2d}
\end{center}
\end{figure}

\paragraph{Cantilever 3d en contact avec une boule.}
Pour la dimension trois, nous mentionnons \cite{beremlijski2009shape} pour le cas d'un cantilever 3d en contact avec un plan, où la zone de contact est optimisée. Comme pour les travaux cités dans le cas 2d, les auteurs représentent $\Gamma_C$ par une fonction définie comme la distance au plan rigide (gap), qu'ils discrétisent par éléments finis avant d'écrire les conditions d'optimalité associées au système discrétisé. Pour les mêmes raisons qu'en dimension deux, nous proposons à la place un cantilever en contact avec un corps rigide $\Omega_{rig}$ sphérique. On peut trouver ce cas-test en mécanique du contact, bien que la variante la plus populaire soit celle où la boule est remplacée par un cylindre d'axe orthogonal à l'axe de la poutre.

On considère donc cette fois le même domaine $D$ et les mêmes $\hat{\Gamma}_D$ et $\Gamma_N$ que pour le cantilever 3d sans contact (voir figure \ref{sub:ResCanti3dIt0}). Puis on ajoute $\Omega_{rig}$, la boule de centre $(2.5,-R,1.2)$ et de rayon $R$. On applique toujours des forces extérieures $\ff=0$ et $\tau=(0,0,-0.01)$. Les paramètres de résolutions sont tels que $\varepsilon=10^7$ et $\gamma_1^k=\gamma_2^k=1000$ pour tout $k$, et les coefficients devant $J$ sont mis à $\alpha_1=30$, $\alpha_2=0.01$.

\begin{figure}[h]
  \begin{center}
    \subfloat[Maillage $D_h$.]{
	\includegraphics[width=0.35\textwidth]{canti3d_maillage.png}
    \label{sub:MailCanti3dBoule}
    }
    \hspace{.5em}
    \subfloat[Itération finale sans contact.]{
	\includegraphics[width=0.35\textwidth]{canti3d_it0.png}
    \label{sub:ResCanti3dBouleIt0}
    }
    \hspace{.5em}
    \subfloat[Itération finale contact glissant (pénalisation).]{
	\includegraphics[width=0.35\textwidth]{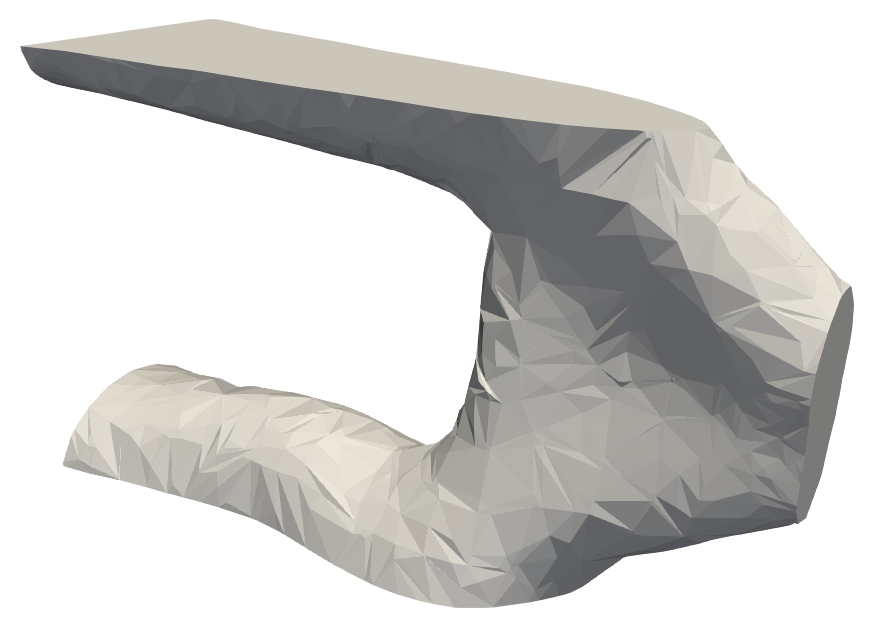}
    \label{sub:ResCanti3dBoulePena}
    }
    \hspace{.5em}
    \subfloat[Itération finale contact glissant (Lagrangien augmenté).]{
	\includegraphics[width=0.35\textwidth]{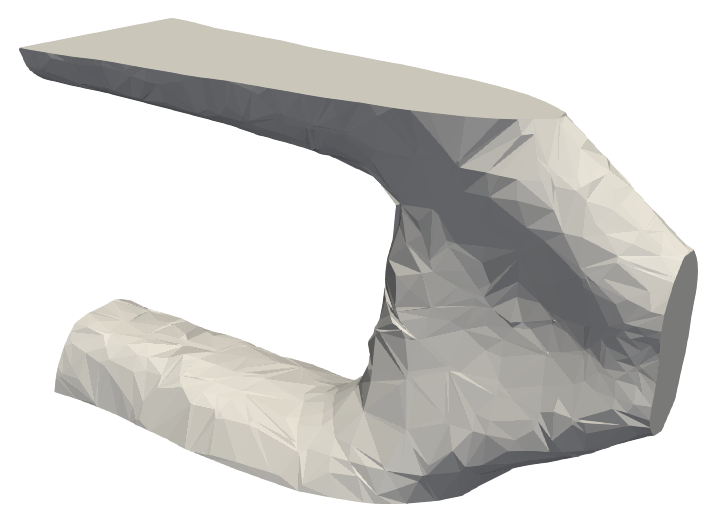}
    \label{sub:ResCanti3dBouleLagAug}
    }
    \hspace{.5em}
    \subfloat[Itération finale contact frottant (pénalisation).]{
	\includegraphics[width=0.35\textwidth]{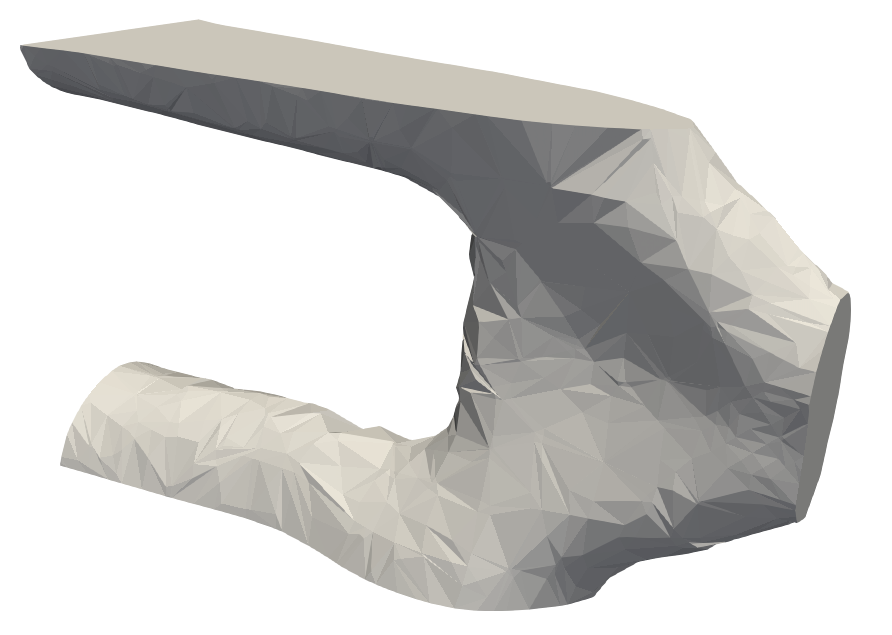}
    \label{sub:ResCanti3dBouleFrottPena}
    }
    \hspace{.5em}
    \subfloat[Itération finale contact frottant (Lagrangien augmenté).]{
	\includegraphics[width=0.35\textwidth]{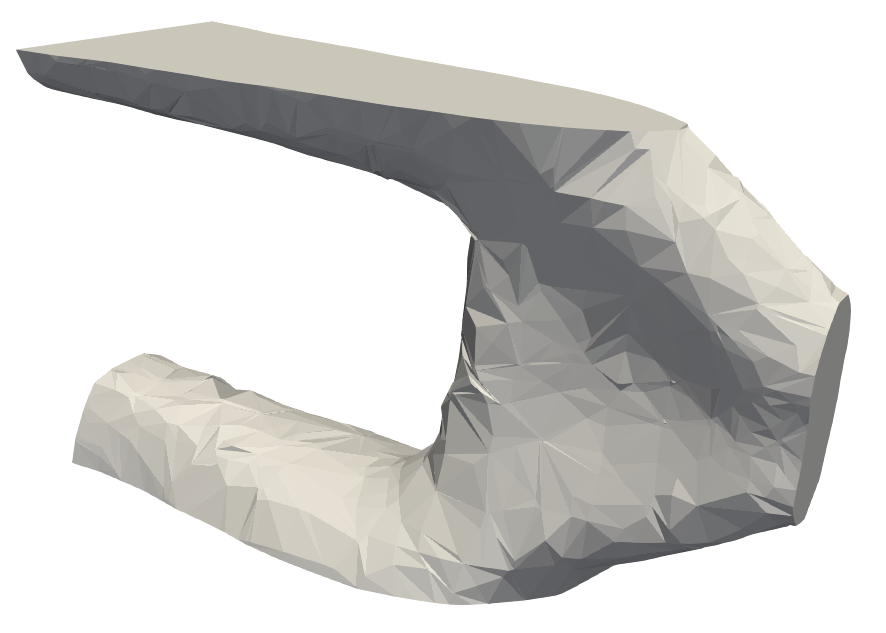}
    \label{sub:ResCanti3dBouleFrottLagAug}
    }
    \end{center}
  \caption{Forme initiale et finales pour le cantilever en contact avec une boule.}
  \label{fig:ResCanti3dBoule}
\end{figure}

On obtient cette fois encore des formes optimales similaires dans les cas du contact pénalisé et par Lagrangien augmenté, que ce soit dans le modèle de contact glissant ou frottant, voir figure \ref{fig:ResCanti3dBoule}. Du point de vue géométrique, on peut faire les mêmes remarques que pour le cantilever 2d en contact avec un disque: les formes optimales s'appuient sur l'objet rigide au niveau d'une zone de contact (effectif) ayant été déplacée plus près du bord de Neumann. Comme dans le cas du domaine en L en 3d, on note également que la forme cherche à s'affiner dans la direction orhtogonale au mouvement (sans pour autant créer de trou). Le coefficient de frottement étant relativement faible, on obtient des designs optimaux semblables dans les cas avec ou sans frottement.

De façon plus quantitative, si on regarde les tracés des valeurs de $J$ sur le figure \ref{fig:CvgIterCanti3dBoule}, on remarque un comportement inattendu de la fonctionnelle dans le cas du contact frottant pénalisé autour de l'itération 30. Dans le processus, cela correspond à des itérations où la zone de contact est fortement déplacée et où le Newton de la résolution du problème mécanique a des difficultés à converger. C'est d'ailleurs ce qui nous limite dans le choix du coefficient de frottement: si on le prend plus grand, l'algorithme d'optimisation ne parvient pas à converger pour le modèle pénalisé à cause des difficultés à résoudre le problème mécanique.

\begin{figure}
\begin{center}
    \subfloat[Contact glissant.]{   
	\resizebox{!}{0.35\textwidth}{
	\begin{tikzpicture}
		\begin{axis}[
    		xlabel={Nombre d'itérations $l$},
    		xmin=0, xmax=170,
    		]
    		\addplot[color=gray,mark=none] table {res_canti3d_sphere_pena.txt};
    		\addplot[color=dartmouthgreen,mark=none] table {res_canti3d_sphere_lagaug_bis.txt};
   			\legend{$J(\Omega^l)$ (pénalisation), $J(\Omega^l)$ (Lagrangien augmenté)}
   		\end{axis}
	\end{tikzpicture}
	}
	}
	\hspace{2em}
    \subfloat[Contact frottant.]{   
	\resizebox{!}{0.35\textwidth}{
	\begin{tikzpicture}
		\begin{axis}[
    		xlabel={Nombre d'itérations $l$},
    		xmin=0, xmax=160,
    		]
    		\addplot[color=gray,mark=none] table {res_canti3d_sphere_frott_pena.txt};
    		\addplot[color=dartmouthgreen,mark=none] table {res_canti3d_sphere_frott_lagaug.txt};
   			\legend{$J(\Omega^l)$ (pénalisation), $J(\Omega^l)$ (Lagrangien augmenté)}
   		\end{axis}
	\end{tikzpicture}
	}
	}
\end{center}
\caption{Convergence des itérations pour le cantilever en contact avec une boule.}
\label{fig:CvgIterCanti3dBoule}
\end{figure}
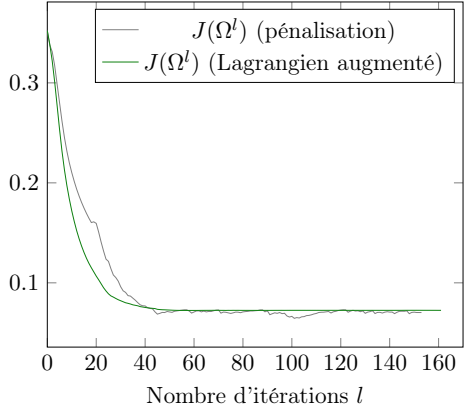
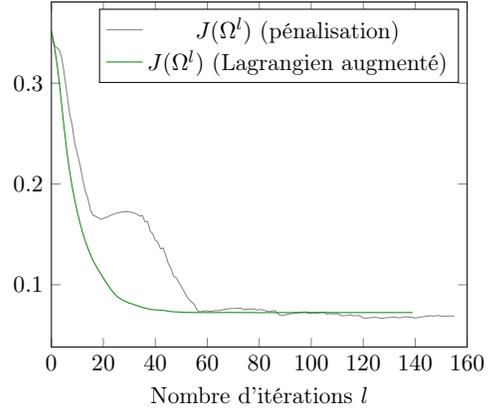
\chapter*{Conclusion}           
\label{chap:conclusion}         
\phantomsection\addcontentsline{toc}{chapter}{\nameref{chap:conclusion}} 

Dans ce travail, nous avons présenté une méthode d'optimisation de formes topologique appliquée à la mécanique des structures, plus précisément à l'élasticité linéaire avec contact, glissant et frottant (Tresca). Cette méthode s'appuie sur une représentation des formes à l'aide de level-sets, et sur un algorithme de type gradient utilisant les dérivées de formes calculées par l'analyse de sensibilité en dimension infinie. Afin de contourner les difficultés techniques liées à la formulation du problème de contact, nous avons proposé de nous concentrer sur deux versions régularisées de cette-dernière: la formulation pénalisée et la formulation Lagrangien augmenté. Ces deux formulations admettant des termes non-différentiables, nous avons introduit une approche par dérivées directionnelles permettant d'obtenir des conditions d'optimalité sans avoir à ajouter une étape de régularisation supplémentaire.

Après avoir présenté notre cadre d'étude, à la fois côté optimisation de formes et mécanique du contact, puis avoir fait l'analyse mathématique des formulations considérées, nous nous sommes consacrés à l'analyse de sensibilité par rapport à la forme dans le cas du contact pénalisé.
En particulier, nous avons montré que la solution de la formulation associée admettait toujours des dérivées de formes directionnelles. De plus, lorsque la zone de contact affleurant est de mesure nulle, cette solution est même dérivable (au sens de Gâteaux) par rapport à la forme, ce qui autorise l'application des méthodes classiques de la théorie du contrôle optimal.
Nous avons ensuite adapté cette approche par dérivées directionnelles au cas de la formulation Lagrangien augmenté du problème de contact, en s'appuyant sur les similarités entre la formulation résolue à chaque itération de la méthode de Lagrangien augmenté et la formulation pénalisée. De plus, sous certaines hypothèses, nous avons prouvé un résultat de convergence pour la suite de dérivées directionnelles de formes des itérées construite.
Enfin, nous avons proposé un algorithme d'optimisation de formes qui permet d'avoir à chaque itération non seulement une représentation implicite de la forme par l'intermédiaire d'une fonction level-set sur une grille cartésienne, mais aussi une représentation explicite grâce à un sous-maillage composé de triangle ou de tetraèdres, obtenu par découpage. Nous avons validé la méthode numérique en la testant sur des cas-tests classiques d'élasticité linéaire sans contact, en deux et trois dimensions. Puis nous avons présenté le cas-test du cantilever en contact (avec un disque ou une boule selon la dimension), dans un contexte où la zone de contact est complètement optimisée et où le gap est non nul. Des résultats ont été obtenus pour la formulation pénalisée et pour la formulation Lagrangien augmenté, ce qui a permis de mettre en évidence les différences entre les deux modèles.

Du côté théorique, plusieurs pistes pourraient être explorées dans la continuité des travaux présentés dans ce manuscrit. Premièrement, il serait intéressant d'étendre les résultats du chapitre \ref{chap:3.1} au cas du frottement de Tresca. Bien que la preuve de la dérivabilité directionnelle à chaque itération de Lagrangien augmenté soit directe, la dérivabilité conique de l'inéquation variationnelle d'origine est loin d'être aussi simple (voir \cite{sokolowski1992introduction}), et il en va de même de l'analyse de convergence des dérivées de forme. Ensuite, il conviendrait d'essayer de généraliser ce résultat de convergence (Théorème \ref{ThmCvgConDerALM}), dans le cas glissant comme dans le cas frottant, car les hypothèses qui ont été faites sont très restrictives.

On pourrait également penser à explorer le cas de matériaux non-linéaires en grandes déformations. Évidemment, la première condition nécessaire pour pouvoir espérer tirer des conclusions du point de vue de l'optimisation de formes est l'existence et l'unicité de la solution. Sous ces hypothèses, alors les méthodes présentées de ce travail devraient pouvoir s'adapter. Une différence importante avec l'élasticité linéaire en petites déformations est que la normale (que ce soit $\normalExt$ ou $\normalInt$) dépend du déplacement $\uu$. Les formules présentées ici ne sont donc plus valides, à moins qu'on considère le cas particulier d'un objet rigide plan, auquel cas $\normalExt$ est de toute façon constant. Il conviendrait donc d'essayer de les généraliser.

Du côté numérique, une première idée serait d'appliquer l'algorithme présenté au cas du contact avec frottement de Coulomb. En effet, on peut montrer que lorsque le coefficient de frottement est suffisamment petit, le problème de Coulomb peut s'interpréter comme la limite d'une suite de problèmes de Tresca, voir \cite{eck2005unilateral}. Ce résultat théorique suggère donc l'utilisation d'un algorithme itératif de type point fixe résolvant un problème de Tresca à chaque itération. On pourrait alors utiliser les expressions des dérivées de formes données aux chapitres \ref{chap:2.1} ou \ref{chap:3.1} pour la dernière itération de Tresca, ce qui donnerait une approximation de la dérivée de forme du problème de Coulomb.

Pour améliorer la qualité du maillage afin de faciliter la résolution mécanique, il pourrait être utile d'implémenter une stratégie de remaillage (similaire à celle présentée dans \cite{dapogny2013shape}) en plus du découpage. On pourrait réfléchir à un quantité permettant de mesurer la qualité du maillage surfacique, ce qui indiquerait si l'étape de remaillage est nécessaire. Pour la partie concernant le remaillage en tant que tel, il serait intéressant d'utiliser les nombreuses fonctionnalités d'adaptation de maillages disponibles dans MEF++.

Comme la méthode de résolution présentée se veut relativement générale, elle peut facilement être étendue aux cas de l'élasticité mixte (résolution de la mécanique en déplacements-pression $(\uu,p)$) ou encore de plusieurs matériaux élastiques, on parle parfois de \textit{distribution de matériaux}. Il faut seulement déterminer les expressions des dérivées de forme pour ces cas-là, ce qui permettra de trouver des directions de descente.

Une dernière perspective serait d'adapter la méthode d'optimisation de formes proposée au cas de matériaux élastiques en grandes déformations après en avoir fait l'étude théorique. Évidemment cela nécessite avant tout un solveur robuste pour le problème mécanique seul, par exemple s'appuyant sur une méthode de continuation pour gérer la non unicité de la solution. 

\bibliographystyle{plain}
\bibliography{bibliography,references}

\begin{thebibliography}{100}

\bibitem{abe2007boundary}
K.~Abe, S.~Kazama, and K.~Koro.
\newblock A boundary element approach for topology optimization problem using
  the level set method.
\newblock {\em Communications in numerical methods in Engineering},
  23(5):405--416, 2007.

\bibitem{alart1997methode}
P.~Alart.
\newblock M{\'e}thode de {N}ewton g{\'e}n{\'e}ralis{\'e}e en m{\'e}canique du
  contact.
\newblock {\em Journal de math{\'e}matiques pures et appliqu{\'e}es},
  76(1):83--108, 1997.

\bibitem{AlaCur1991}
P.~Alart and A.~Curnier.
\newblock A mixed formulation for frictional contact problems prone to
  {{Newton}} like solution methods.
\newblock {\em Computer Methods in Applied Mechanics and Engineering},
  92(3):353 -- 375, 1991.

\bibitem{allaire2012shape}
G.~Allaire.
\newblock {\em Shape optimization by the homogenization method}, volume 146.
\newblock Springer Science \& Business Media, 2012.

\bibitem{allaire2016second}
G.~Allaire, E.~Canc{\`e}s, and J.-L. Vi{\'e}.
\newblock Second-order shape derivatives along normal trajectories, governed by
  hamilton-jacobi equations.
\newblock {\em Structural and Multidisciplinary Optimization},
  54(5):1245--1266, 2016.

\bibitem{allaire2011topology}
G.~Allaire, C.~Dapogny, and P.~Frey.
\newblock Topology and geometry optimization of elastic structures by exact
  deformation of simplicial mesh.
\newblock {\em Comptes Rendus Mathematique}, 349(17-18):999--1003, 2011.

\bibitem{allaire2013mesh}
G.~Allaire, C.~Dapogny, and P.~Frey.
\newblock A mesh evolution algorithm based on the level set method for geometry
  and topology optimization.
\newblock {\em Structural and Multidisciplinary Optimization}, 48(4):711--715,
  2013.

\bibitem{allaire2014shape}
G.~Allaire, C.~Dapogny, and P.~Frey.
\newblock Shape optimization with a level set based mesh evolution method.
\newblock {\em Computer Methods in Applied Mechanics and Engineering},
  282:22--53, 2014.

\bibitem{allaire2005structural}
G.~Allaire, F.~De~Gournay, F.~Jouve, and A.-M. Toader.
\newblock Structural optimization using topological and shape sensitivity via a
  level set method.
\newblock {\em Control and cybernetics}, 34(1):59, 2005.

\bibitem{allaire2004structural}
G.~Allaire, F.~Jouve, and A.-M. Toader.
\newblock Structural optimization using sensitivity analysis and a level-set
  method.
\newblock {\em Journal of computational physics}, 194(1):363--393, 2004.

\bibitem{allaire2007conception}
G.~Allaire and M.~Schoenauer.
\newblock {\em Conception optimale de structures}, volume~58.
\newblock Springer, 2007.

\bibitem{amassad2002optimal}
A.~Amassad, D.~Chenais, and C.~Fabre.
\newblock Optimal control of an elastic contact problem involving {T}resca
  friction law.
\newblock {\em Nonlinear Analysis: Theory, Methods \& Applications},
  48(8):1107--1135, 2002.

\bibitem{aubin2007approximation}
J.-P. Aubin.
\newblock {\em Approximation of elliptic boundary-value problems}.
\newblock Courier Corporation, 2007.

\bibitem{ball1976convexity}
J.~M. Ball.
\newblock Convexity conditions and existence theorems in nonlinear elasticity.
\newblock {\em Archive for rational mechanics and Analysis}, 63(4):337--403,
  1976.

\bibitem{BelXiaPar2003}
T.~Belytschko, S.~P. Xiao, and C.~Parimi.
\newblock Topology optimization with implicit functions and regularization.
\newblock {\em International Journal for Numerical Methods in Engineering},
  57(8):1177--1196, 2003.

\bibitem{belytschko2003topology}
T.~Belytschko, S.~P. Xiao, and C.~Parimi.
\newblock Topology optimization with implicit functions and regularization.
\newblock {\em International Journal for Numerical Methods in Engineering},
  57(8):1177--1196, 2003.

\bibitem{BenSig2003}
M.P. Bendsoe and O.~Sigmund.
\newblock {\em Topology {{Optimization}}: {{Theory}}, {{Methods}}, and
  {{Applications}}}.
\newblock Engineering Online Library. {Springer Berlin Heidelberg}, 2003.

\bibitem{beremlijski2009shape}
P.~Beremlijski, J.~Haslinger, M.~Ko{\v{c}}vara, R.~Ku{\v{c}}era, and J.~V.
  Outrata.
\newblock Shape optimization in three-dimensional contact problems with
  {C}oulomb friction.
\newblock {\em SIAM Journal on Optimization}, 20(1):416--444, 2009.

\bibitem{beremlijski2002shape}
P.~Beremlijski, J.~Haslinger, M.~Ko{\v{c}}vara, and J.~Outrata.
\newblock Shape optimization in contact problems with {C}oulomb friction.
\newblock {\em SIAM Journal on Optimization}, 13(2):561--587, 2002.

\bibitem{beremlijski2014shape}
P.~Beremlijski, J.~Haslinger, J.~Outrata, and R.~Path{\'o}.
\newblock Shape optimization in contact problems with {C}oulomb friction and a
  solution-dependent friction coefficient.
\newblock {\em SIAM Journal on Control and Optimization}, 52(5):3371--3400,
  2014.

\bibitem{bertsekas2014constrained}
D.~P. Bertsekas.
\newblock {\em Constrained optimization and Lagrange multiplier methods}.
\newblock Academic press, 2014.

\bibitem{boieri1987existence}
P.~Boieri, F.~Gastaldi, and D.~Kinderlehrer.
\newblock Existence, uniqueness, and regularity results for the two-body
  contact problem.
\newblock {\em Applied Mathematics and Optimization}, 15(1):251--277, 1987.

\bibitem{bonnans2013perturbation}
J.-F. Bonnans and A.~Shapiro.
\newblock {\em Perturbation analysis of optimization problems}.
\newblock Springer Science \& Business Media, 2013.

\bibitem{bourdin2003design}
B.~Bourdin and A.~Chambolle.
\newblock Design-dependent loads in topology optimization.
\newblock {\em ESAIM: Control, Optimisation and Calculus of Variations},
  9:19--48, 2003.

\bibitem{braibant1984shape}
V.~Braibant and C.~Fleury.
\newblock Shape optimal design using b-splines.
\newblock {\em Computer Methods in Applied Mechanics and Engineering},
  44(3):247--267, 1984.

\bibitem{brezis2010functional}
H.~Brezis.
\newblock {\em Functional analysis, Sobolev spaces and partial differential
  equations}.
\newblock Springer Science \& Business Media, 2010.

\bibitem{browder1965nonlinear}
F.~E. Browder.
\newblock Nonlinear monotone operators and convex sets in banach spaces.
\newblock {\em Bulletin of the American Mathematical Society}, 71(5):780--785,
  1965.

\bibitem{burman2019augmented}
E.~Burman, P.~Hansbo, and M.~G. Larson.
\newblock Augmented {L}agrangian finite element methods for contact problems.
\newblock {\em ESAIM: Mathematical Modelling and Numerical Analysis},
  53(1):173--195, 2019.

\bibitem{cea1986conception}
J.~C{\'e}a.
\newblock Conception optimale ou identification de formes, calcul rapide de la
  d{\'e}riv{\'e}e directionnelle de la fonction co{\^u}t.
\newblock {\em ESAIM: Mathematical Modelling and Numerical Analysis},
  20(3):371--402, 1986.

\bibitem{chaudet2019phd}
B.~Chaudet-Dumas.
\newblock {\em Optimisation de formes pour les problèmes de contact en
  élasticité linéaire}.
\newblock PhD thesis, Universit{\'e} Laval, 2019.

\bibitem{chaudet2019shape}
B.~Chaudet-Dumas and J.~Deteix.
\newblock Shape derivatives for the penalty formulation of elastic contact
  problems with {T}resca friction.
\newblock {\em SIAM Journal on Control and Optimization}, (to appear).

\bibitem{chen2000smoothing}
X.~Chen, Z.~Nashed, and L.~Qi.
\newblock Smoothing methods and semismooth methods for nondifferentiable
  operator equations.
\newblock {\em SIAM Journal on Numerical Analysis}, 38(4):1200--1216, 2000.

\bibitem{chouly2014adaptation}
F.~Chouly.
\newblock An adaptation of {N}itsche method to the {T}resca friction problem.
\newblock {\em Journal of Mathematical Analysis and Applications},
  411(1):329--339, 2014.

\bibitem{chouly2013nitsche}
F.~Chouly and P.~Hild.
\newblock A {N}itsche-based method for unilateral contact problems: numerical
  analysis.
\newblock {\em SIAM Journal on Numerical Analysis}, 51(2):1295--1307, 2013.

\bibitem{chouly2013convergence}
F.~Chouly and P.~Hild.
\newblock On convergence of the penalty method for unilateral contact problems.
\newblock {\em Applied Numerical Mathematics}, 65:27--40, 2013.

\bibitem{ChoHilRen2015}
F.~Chouly, P.~Hild, and Y.~Renard.
\newblock Symmetric and non-symmetric variants of {{Nitsche}}'s method for
  contact problems in elasticity : Theory and numerical experiments.
\newblock {\em Math. Comp.}, 84:1089--1112, 2015.

\bibitem{Cia1988a}
P.~G. Ciarlet.
\newblock {\em Mathematical {{Elasticity Vol}}. 1 : {{Three}}-{{Dimensional
  Elasticity}}}.
\newblock {North-Holland Pub. Co.}, 1988.

\bibitem{Cia2002a}
P.~G. Ciarlet.
\newblock {\em The {{Finite Element Method}} for {{Elliptic Problems}}},
  volume~40 of {\em Classics in {{Applied Mathematics}}}.
\newblock {Society for Industrial and Applied Mathematics (SIAM)},
  {Philadelphia, PA}, 2002.

\bibitem{clarke1990optimization}
F.~H. Clarke.
\newblock {\em Optimization and nonsmooth analysis}, volume~5.
\newblock Siam, 1990.

\bibitem{cocu1984existence}
M.~Cocu.
\newblock Existence of solutions of signorini problems with friction.
\newblock {\em International journal of engineering science}, 22(5):567--575,
  1984.

\bibitem{cohen2004optimisation}
G.~Cohen.
\newblock Optimisation des grands systemes.
\newblock 2004.

\bibitem{crandall1983viscosity}
M.~G. Crandall and P.-L. Lions.
\newblock Viscosity solutions of {H}amilton-{J}acobi equations.
\newblock {\em Transactions of the American mathematical society},
  277(1):1--42, 1983.

\bibitem{dapogny2013shape}
C.~Dapogny.
\newblock {\em Shape optimization, level set methods on unstructured meshes and
  mesh evolution}.
\newblock PhD thesis, Universit{\'e} Pierre et Marie Curie-Paris 6, 2013.

\bibitem{de2006velocity}
F.~De~Gournay.
\newblock Velocity extension for the level-set method and multiple eigenvalues
  in shape optimization.
\newblock {\em SIAM journal on control and optimization}, 45(1):343--367, 2006.

\bibitem{delfour2012introduction}
M.~C. Delfour.
\newblock {\em Introduction to optimization and semidifferential calculus},
  volume~12.
\newblock SIAM, 2012.

\bibitem{delfour1994shape}
M.~C. Delfour and J.-P. Zol{\'e}sio.
\newblock Shape analysis via oriented distance functions.
\newblock {\em Journal of functional analysis}, 123(1):129--201, 1994.

\bibitem{delfour1995boundary}
M.~C. Delfour and J.-P. Zol{\'e}sio.
\newblock A boundary differential equation for thin shells.
\newblock {\em Journal of differential equations}, 119(2):426--449, 1995.

\bibitem{DelZol2001}
M.~C. Delfour and J.-P. Zol{\'e}sio.
\newblock {\em Shapes and Geometries: Analysis, Diefferential Calculus, and
  Optimisation}.
\newblock {Society for Industrial and Applied Mathematics}, {Philadelphia, PA},
  2001.

\bibitem{deny1965theorie}
J.~Deny.
\newblock Th{\'e}orie de la capacit{\'e} dans les espaces fonctionnels.
\newblock {\em S{\'e}minaire Brelot-Choquet-Deny. Th{\'e}orie du potentiel},
  9:1--13, 1965.

\bibitem{dervieux1975formule}
A.~Dervieux and B.~Palmerio.
\newblock Une formule de hadamard dans des problemes d'optimal design.
\newblock In {\em IFIP Technical Conference on Optimization Techniques}, pages
  63--76. Springer, 1975.

\bibitem{diop2019phd}
T.~Diop.
\newblock {\em Résolution itérative de problèmes de contact frottant de
  grande taille}.
\newblock PhD thesis, Universit{\'e} Laval, 2019.

\bibitem{DuvLio1972}
G.~Duvaut and J.L. Lions.
\newblock {\em Les In{\'e}quations En M{\'e}canique et En Physique}.
\newblock {Dunod}, {Paris}, 1972.

\bibitem{eck1998existence}
C.~Eck and J.~Jarusek.
\newblock Existence results for the static contact problem with {C}oulomb
  friction.
\newblock {\em Mathematical Models and Methods in Applied Sciences},
  8(03):445--468, 1998.

\bibitem{eck2005unilateral}
C.~Eck, J.~Jarusek, and M.~Krbec.
\newblock {\em Unilateral contact problems: variational methods and existence
  theorems}.
\newblock CRC Press, 2005.

\bibitem{ekeland1999convex}
I.~Ekeland and R.~Temam.
\newblock {\em Convex analysis and variational problems}, volume~28.
\newblock Siam, 1999.

\bibitem{facchinei2007finite}
F.~Facchinei and J.-S. Pang.
\newblock {\em Finite-dimensional variational inequalities and complementarity
  problems}.
\newblock Springer Science \& Business Media, 2007.

\bibitem{fancello1995numerical}
E.~A. Fancello, J.~Haslinger, and R.~A. Feijoo.
\newblock Numerical comparison between two cost functions in contact shape
  optimization.
\newblock {\em Structural Optimization}, 9(1):57--68, 1995.

\bibitem{fichera1963sul}
G.~Fichera.
\newblock Sul problema elastostatico di {S}ignorini con ambigue condizioni al
  contorno.
\newblock {\em Atti Accad. Naz. Lincei Rend. Cl. Sci. Fis. Mat. Natur.(8)},
  34:138--142, 1963.

\bibitem{ForGlo1983a}
M.~Fortin and R.~Glowinski.
\newblock {\em Augmented {{Lagrangian}} Methods: Applications to the Numerical
  Solution of Boundary-Value Problems}, volume~15 of {\em Studies in
  {{Mathematics}} and Its {{Applications}}}.
\newblock {North-Holland}, {Amsterdam}, 1983.

\bibitem{frey1996texel}
P.~Frey and H.~Borouchaki.
\newblock Texel: triangulation de surfaces implicites. partie i: aspects
  th{\'e}oriques.
\newblock 1996.

\bibitem{goldberg1992nemytskij}
H.~Goldberg, W.~Kampowsky, and F.~Tr{\"o}ltzsch.
\newblock On {N}emytskij operators in {L}p-spaces of abstract functions.
\newblock {\em Mathematische Nachrichten}, 155(1):127--140, 1992.

\bibitem{ha2008level}
S.-H. Ha and S.~Cho.
\newblock Level set based topological shape optimization of geometrically
  nonlinear structures using unstructured mesh.
\newblock {\em Computers \& Structures}, 86(13-14):1447--1455, 2008.

\bibitem{hadamard1908memoire}
J.~Hadamard.
\newblock {\em M{\'e}moire sur le probl{\`e}me d'analyse relatif {\`a}
  l'{\'e}quilibre des plaques {\'e}lastiques encastr{\'e}es}, volume~33.
\newblock Imprimerie nationale, 1908.

\bibitem{harten1987uniformly}
A.~Harten and S.~Osher.
\newblock Uniformly high-order accurate nonoscillatory schemes. i.
\newblock {\em SIAM Journal on Numerical Analysis}, 24(2):279--309, 1987.

\bibitem{hartman1966some}
P.~Hartman and G.~Stampacchia.
\newblock On some non-linear elliptic differential-functional equations.
\newblock {\em Acta mathematica}, 115(1):271--310, 1966.

\bibitem{haslinger1985shape}
J.~Haslinger, V.~Horak, and P.~Neittaanm{\"a}ki.
\newblock {\em Shape optimization in contact problems with friction}.
\newblock Number 1985, 10. University of Jyv{\"a}skyl{\"a}, 1985.

\bibitem{haslinger2003introduction}
J.~Haslinger and R.~A.~E. M{\"a}kinen.
\newblock {\em Introduction to shape optimization: theory, approximation, and
  computation}.
\newblock SIAM, 2003.

\bibitem{haslinger1985existence}
J.~Haslinger and P.~Neittaanm{\"a}ki.
\newblock On the existence of optimal shapes in contact problems.
\newblock {\em Numerical Functional Analysis and Optimization},
  7(2-3):107--124, 1985.

\bibitem{haslinger1987shape}
J.~Haslinger and P.~Neittaanm{\"a}ki.
\newblock Shape optimization in contact problems. approximation and numerical
  realization.
\newblock {\em ESAIM: Mathematical Modelling and Numerical Analysis},
  21(2):269--291, 1987.

\bibitem{haslinger1986shape}
J.~Haslinger, P.~Neittaanm{\"a}ki, and T.~Tiihonen.
\newblock Shape optimization in contact problems based on penalization of the
  state inequality.
\newblock {\em Aplikace matematiky}, 31(1):54--77, 1986.

\bibitem{haslinger2012shape}
J.~Haslinger, J.~Outrata, and R.~Path{\'o}.
\newblock Shape optimization in 2d contact problems with given friction and a
  solution-dependent coefficient of friction.
\newblock {\em Set-Valued and Variational Analysis}, 20(1):31--59, 2012.

\bibitem{HeeCur1993}
J.-H. Heegaard and A.~Curnier.
\newblock An augmented {{Lagrangian}} method for discrete large-slip contact
  problems.
\newblock {\em International Journal for Numerical Methods in Engineering},
  36(4):569--593, 1993.

\bibitem{heinemann2016shape}
C.~Heinemann and K.~Sturm.
\newblock Shape optimization for a class of semilinear variational inequalities
  with applications to damage models.
\newblock {\em SIAM Journal on Mathematical Analysis}, 48(5):3579--3617, 2016.

\bibitem{henrot2006variation}
A.~Henrot and M.~Pierre.
\newblock {\em Variation et optimisation de formes: une analyse
  g{\'e}om{\'e}trique}, volume~48.
\newblock Springer Science \& Business Media, 2006.

\bibitem{Hes1969}
M.~R. Hestenes.
\newblock Multiplier and gradient methods.
\newblock {\em Journal of Optimization Theory and Applications}, 4(5):303--320,
  November 1969.

\bibitem{hintermuller2002primal}
M.~Hinterm{\"u}ller, K.~Ito, and K.~Kunisch.
\newblock The primal-dual active set strategy as a semismooth {N}ewton method.
\newblock {\em SIAM Journal on Optimization}, 13(3):865--888, 2002.

\bibitem{hintermuller2009mathematical}
M.~Hinterm{\"u}ller and I.~Kopacka.
\newblock Mathematical programs with complementarity constraints in function
  space: C-and strong stationarity and a path-following algorithm.
\newblock {\em SIAM Journal on Optimization}, 20(2):868--902, 2009.

\bibitem{hintermuller2011optimal}
M.~Hinterm{\"u}ller and A.~Laurain.
\newblock Optimal shape design subject to elliptic variational inequalities.
\newblock {\em SIAM Journal on Control and Optimization}, 49(3):1015--1047,
  2011.

\bibitem{hinze2008optimization}
M.~Hinze, R.~Pinnau, M.~Ulbrich, and S.~Ulbrich.
\newblock {\em Optimization with PDE constraints}, volume~23.
\newblock Springer Science \& Business Media, 2008.

\bibitem{hiptmair2015comparison}
R.~Hiptmair, A.~Paganini, and S.~Sargheini.
\newblock Comparison of approximate shape gradients.
\newblock {\em BIT Numerical Mathematics}, 55(2):459--485, 2015.

\bibitem{hueber2008primal}
S.~H{\"u}eber, G.~Stadler, and B.~I. Wohlmuth.
\newblock A primal-dual active set algorithm for three-dimensional contact
  problems with coulomb friction.
\newblock {\em SIAM Journal on scientific computing}, 30(2):572--596, 2008.

\bibitem{ItoKun2000a}
K.~Ito and K.~Kunisch.
\newblock Augmented {{Lagrangian}} methods for nonsmooth, convex optimization
  in {{Hilbert}} spaces.
\newblock {\em Nonlinear Analysis: Theory, Methods \& Applications},
  41(5-6):591--616, 2000.

\bibitem{ito2000optimal}
K.~Ito and K.~Kunisch.
\newblock Optimal control of elliptic variational inequalities.
\newblock {\em Applied Mathematics and Optimization}, 41(3):343--364, 2000.

\bibitem{ito2003semi}
K.~Ito and K.~Kunisch.
\newblock Semi--smooth {N}ewton methods for variational inequalities of the
  first kind.
\newblock {\em ESAIM: Mathematical Modelling and Numerical Analysis},
  37(1):41--62, 2003.

\bibitem{ito2008lagrange}
K.~Ito and K.~Kunisch.
\newblock {\em Lagrange multiplier approach to variational problems and
  applications}.
\newblock SIAM, 2008.

\bibitem{jaruvsek1983contact}
J.~Jaru{\v{s}}ek.
\newblock Contact problems with bounded friction. {C}oercive case.
\newblock {\em Czechoslovak Mathematical Journal}, 33(2):237--261, 1983.

\bibitem{jaruvsek2003conical}
J.~Jaru{\v{s}}ek, M.~Krbec, M.~Rao, and J.~Soko{\l}owski.
\newblock Conical differentiability for evolution variational inequalities.
\newblock {\em Journal of Differential Equations}, 193(1):131--146, 2003.

\bibitem{jaruvsek2007sharp}
J.~Jaru{\v{s}}ek and J.~V. Outrata.
\newblock On sharp necessary optimality conditions in control of contact
  problems with strings.
\newblock {\em Nonlinear Analysis: Theory, Methods \& Applications},
  67(4):1117--1128, 2007.

\bibitem{KikOde1988}
N.~Kikuchi and J.~T. Oden.
\newblock {\em Contact Problems in Elasticity: A Study of Variational
  Inequalities and Finite Element Methods}, volume~8.
\newblock {SIAM}, 1988.

\bibitem{kikuchi1988contact}
N.~Kikuchi and J.~T. Oden.
\newblock {\em Contact problems in elasticity: a study of variational
  inequalities and finite element methods}, volume~8.
\newblock SIAM, 1988.

\bibitem{KikSon1981}
N.~Kikuchi and Y.~J. Song.
\newblock Penalty/finite-element approximations of a class of unilateral
  problems in linear elasticity.
\newblock {\em Quarterly of Applied Mathematics}, 39(1):1--22, 1981.

\bibitem{kikuchi1981penalty}
N.~Kikuchi and Y.~J. Song.
\newblock Penalty/finite-element approximations of a class of unilateral
  problems in linear elasticity.
\newblock {\em Quarterly of Applied Mathematics}, 39(1):1--22, 1981.

\bibitem{kim2000meshless}
N.~H. Kim, K.~K. Choi, J.~S. Chen, and Y.~H. Park.
\newblock Meshless shape design sensitivity analysis and optimization for
  contact problem with friction.
\newblock {\em Computational Mechanics}, 25(2-3):157--168, 2000.

\bibitem{kinderlehrer1981remarks}
D.~Kinderlehrer.
\newblock Remarks about {S}ignorini's problem in linear elasticity.
\newblock {\em Annali della Scuola Normale Superiore di Pisa - Classe di
  Scienze}, Ser. 4, 8(4):605--645, 1981.

\bibitem{kovccvara1994optimization}
M.~Ko{\v{c}}cvara and J.~Outrata.
\newblock On optimization of systems governed by implicit complementarity
  problems.
\newblock {\em Numerical Functional Analysis and Optimization},
  15(7-8):869--887, 1994.

\bibitem{kreissl2012levelset}
S.~Kreissl and K.~Maute.
\newblock Levelset based fluid topology optimization using the extended finite
  element method.
\newblock {\em Structural and Multidisciplinary Optimization}, 46(3):311--326,
  2012.

\bibitem{kummer2000generalized}
B.~Kummer.
\newblock Generalized {N}ewton and {NCP}-methods: Convergence, regularity,
  actions.
\newblock {\em Discussiones Mathematicae, Differential Inclusions, Control and
  Optimization}, 20(2):209--244, 2000.

\bibitem{kunisch2005generalized}
K.~Kunisch and G.~Stadler.
\newblock Generalized {N}ewton methods for the 2d-{S}ignorini contact problem
  with friction in function space.
\newblock {\em ESAIM: Mathematical Modelling and Numerical Analysis},
  39(4):827--854, 2005.

\bibitem{LanTay1986}
J.A. Landers and R.L. Taylor.
\newblock An {{Augmented}} lagrangian {{Formulation}} for the {{Finite Element
  Solution}} of {{Contact Problems}}.
\newblock Technical report, {California Univ Berkeley Dept of Civil
  Engineering}, {California, USA}, 1986.

\bibitem{laurain2016distributed}
A.~Laurain and K.~Sturm.
\newblock Distributed shape derivative via averaged adjoint method and
  applications.
\newblock {\em ESAIM: Mathematical Modelling and Numerical Analysis},
  50(4):1241--1267, 2016.

\bibitem{LauSim1993}
T.A. Laursen and J.C. Simo.
\newblock Algorithmic symmetrization of {{Coulomb}} frictional problems using
  augmented {{Lagrangians}}.
\newblock {\em Computer Methods in Applied Mechanics and Engineering},
  108:133--146, September 1993.

\bibitem{lawry2015level}
M.~Lawry and K.~Maute.
\newblock Level set topology optimization of problems with sliding contact
  interfaces.
\newblock {\em Structural and Multidisciplinary Optimization},
  52(6):1107--1119, 2015.

\bibitem{lewy1969regularity}
H.~Lewy and G.~Stampacchia.
\newblock On the regularity of the solution of a variational inequality.
\newblock {\em Communications on Pure and Applied Mathematics}, 22(2):153--188,
  1969.

\bibitem{LiPet2004}
Shengtai Li and Linda Petzold.
\newblock Adjoint sensitivity analysis for time-dependent partial differential
  equations with adaptive mesh refinement.
\newblock {\em Journal of Computational Physics}, 198(1):310--325, July 2004.

\bibitem{licht1991remarks}
C.~Licht, E.~Pratt, and M.~Raous.
\newblock Remarks on a numerical method for unilateral contact including
  friction.
\newblock In {\em Unilateral Problems in Structural Analysis IV}, pages
  129--144. Springer, 1991.

\bibitem{Lio1969}
J.-L. Lions.
\newblock {\em Quelques M{\'e}thodes de R{\'e}solution Des Probl{\`e}mes Aux
  Limites Non Lin{\'e}aires}.
\newblock {Dunod}, 1969.

\bibitem{lorensen1987marching}
W.~E. Lorensen and H.~E. Cline.
\newblock Marching cubes: A high resolution 3d surface construction algorithm.
\newblock In {\em ACM siggraph computer graphics}, volume~21, pages 163--169.
  ACM, 1987.

\bibitem{maury2016shape}
A.~Maury.
\newblock {\em Shape optimization for contact and plasticity problems thanks to
  the level set method}.
\newblock PhD thesis, Universit{\'e} Pierre et Marie Curie-Paris VI, 2016.

\bibitem{maury2017shape}
A.~Maury, G.~Allaire, and F.~Jouve.
\newblock Shape optimisation with the level set method for contact problems in
  linearised elasticity.
\newblock {\em SMAI-Journal of computational mathematics}, 3:249--292, 2017.

\bibitem{micheletti1972metrica}
A.-M. Micheletti.
\newblock Metrica per famiglie di domini limitati e proprieta generiche degli
  autovalori.
\newblock {\em Annali della Scuola Normale Superiore di Pisa-Classe di
  Scienze}, 26(3):683--694, 1972.

\bibitem{mignot1976controle}
F.~Mignot.
\newblock Contr{\^o}le dans les in{\'e}quations variationelles elliptiques.
\newblock {\em Journal of Functional Analysis}, 22(2):130--185, 1976.

\bibitem{mignot1984optimal}
F.~Mignot and J.-P. Puel.
\newblock Optimal control in some variational inequalities.
\newblock {\em SIAM Journal on Control and Optimization}, 22(3):466--476, 1984.

\bibitem{mordukhovich2006variationalI}
B.~S. Mordukhovich.
\newblock {\em Variational analysis and generalized differentiation I: Basic
  theory}, volume 330.
\newblock Springer Science \& Business Media, 2006.

\bibitem{mordukhovich2006variationalII}
B.~S. Mordukhovich.
\newblock {\em Variational analysis and generalized differentiation II:
  Applications}, volume 331.
\newblock Springer Science \& Business Media, 2006.

\bibitem{murat1975etude}
F.~Murat and J.~Simon.
\newblock {\'E} de probl{\`e}mes d'optimal design.
\newblock In {\em IFIP Technical Conference on Optimization Techniques}, pages
  54--62. Springer, 1975.

\bibitem{murat1976controle}
F.~Murat and J.~Simon.
\newblock Sur le contr{\^o}le par un domaine g{\'e}om{\'e}trique.
\newblock {\em Publication du Laboratoire d’Analyse Num{\'e}rique de
  l’Universit{\'e} Paris}, 6:189, 1976.

\bibitem{necas1980solution}
J.~Necas, J.~Jaru{\v{s}}ek, and J.~Haslinger.
\newblock On the solution of the variational inequality to the {S}ignorini
  problem with small friction.
\newblock {\em Bolletino UMI}, 5(17-B):796--811, 1980.

\bibitem{neittaanmaki1988optimization}
P.~Neittaanm{\"a}ki, J.~Sokolowski, and J.-P. Zol{\'e}sio.
\newblock Optimization of the domain in elliptic variational inequalities.
\newblock {\em Applied mathematics \& optimization}, 18(1):85--98, 1988.

\bibitem{NovSok2013}
A.~Novotn{\'y} and J.~Soko{\l}owski.
\newblock {\em Topological Derivatives in Shape Optimization}.
\newblock Interaction of Mechanics and Mathematics. {Springer}, {Heidelberg ;
  New York}, 2013.
\newblock OCLC: ocn832709473.

\bibitem{oden1980theory}
J.~T. Oden and N.~Kikuchi.
\newblock Theory of variational inequalities with applications to problems of
  flow through porous media.
\newblock {\em International Journal of Engineering Science},
  18(10):1173--1284, 1980.

\bibitem{oden1983nonlocal}
J.~T. Oden and E.~B. Pires.
\newblock Nonlocal and nonlinear friction laws and variational principles for
  contact problems in elasticity.
\newblock 1983.

\bibitem{OshSet1988}
S.~Osher and J.~A. Sethian.
\newblock Front {{Propagating}} with {{Curvature Dependent Speed}}:
  {{Algorithms Based}} on {{Hamilton}}-{{Jacobi Formulations}}.
\newblock {\em Journal of Computational Physics}, 79:12--49, 1988.

\bibitem{osher1991high}
S.~Osher and C.-W. Shu.
\newblock High-order essentially nonoscillatory schemes for
  {H}amilton--{J}acobi equations.
\newblock {\em SIAM Journal on numerical analysis}, 28(4):907--922, 1991.

\bibitem{outrata2013nonsmooth}
J.~Outrata, M.~Kocvara, and J.~Zowe.
\newblock {\em Nonsmooth approach to optimization problems with equilibrium
  constraints: theory, applications and numerical results}, volume~28.
\newblock Springer Science \& Business Media, 2013.

\bibitem{pironneau1982optimal}
O.~Pironneau.
\newblock Optimal shape design for elliptic systems.
\newblock In {\em System Modeling and Optimization}, pages 42--66. Springer,
  1982.

\bibitem{Pow1969}
M.~J.~D. Powell.
\newblock A method for nonlinear constraints in minimization problems.
\newblock In R.~Fletcher, editor, {\em Optimization}, pages 283--298. {Academic
  Press}, {New York}, 1969.

\bibitem{qi1993convergence}
L.~Qi.
\newblock Convergence analysis of some algorithms for solving nonsmooth
  equations.
\newblock {\em Mathematics of operations research}, 18(1):227--244, 1993.

\bibitem{qi1993nonsmooth}
L.~Qi and J.~Sun.
\newblock A nonsmooth version of {N}ewton's method.
\newblock {\em Mathematical programming}, 58(1-3):353--367, 1993.

\bibitem{QuaVal2008}
A.~M. Quarteroni and A.~Valli.
\newblock {\em Numerical {{Approximation}} of {{Partial Differential
  Equations}}}.
\newblock {Springer Publishing Company, Incorporated}, 1st ed. 1994. 2nd
  printing edition, 2008.

\bibitem{rauch2012hyperbolic}
J.~Rauch.
\newblock {\em Hyperbolic partial differential equations and geometric optics},
  volume 133.
\newblock American Mathematical Soc., 2012.

\bibitem{rauls2018generalized}
A.-T. Rauls and G.~Wachsmuth.
\newblock Generalized derivatives for the solution operator of the obstacle
  problem.
\newblock {\em Set-Valued and Variational Analysis}, pages 1--27, 2018.

\bibitem{renard2006uniqueness}
Y.~Renard.
\newblock A uniqueness criterion for the {S}ignorini problem with {C}oulomb
  friction.
\newblock {\em SIAM journal on mathematical analysis}, 38(2):452--467, 2006.

\bibitem{Ren2013}
Y.~Renard.
\newblock Generalized {{Newton}}'s methods for the approximation and resolution
  of frictional contact problems in elasticity.
\newblock {\em Computer Methods in Applied Mechanics and Engineering}, 256:38
  -- 55, 2013.

\bibitem{samareh2001survey}
J.~A. Samareh.
\newblock Survey of shape parameterization techniques for high-fidelity
  multidisciplinary shape optimization.
\newblock {\em AIAA journal}, 39(5):877--884, 2001.

\bibitem{Set1996}
J.~A. Sethian.
\newblock {\em Level {{Sets Methods}} and {{Fast Marching Methods}}}.
\newblock Number~3 in Cambridge {{Monograph}} on {{Applied}} and
  {{Computational Mathematics}}. {Cambridge University Press}, {Cambridge},
  1996.

\bibitem{SimLau1992}
J.~C. Simo and T.~A. Laursen.
\newblock An augmented lagrangian treatment of contact problems involving
  friction.
\newblock {\em Computers and Structures}, 42(1):97 -- 116, 1992.

\bibitem{simon1980differentiation}
J.~Simon.
\newblock Differentiation with respect to the domain in boundary value
  problems.
\newblock {\em Numerical Functional Analysis and Optimization},
  2(7-8):649--687, 1980.

\bibitem{SimBer1986}
J.~W. Simons and P.~G. Bergan.
\newblock A finite element formulation of three-dimensional contact problems
  with slip and friction.
\newblock {\em Computational Mechanics}, 1986.

\bibitem{sokolowski1982shape}
J.~Sokolowski and J.-P. Zol{\'e}sio.
\newblock Shape sensitivity analysis for variational inequalities.
\newblock In {\em System Modeling and Optimization}, pages 401--407. Springer,
  1982.

\bibitem{sokolowski1985derivee}
J.~Sokolowski and J.-P. Zol{\'e}sio.
\newblock D{\'e}riv{\'e}e par rapport au domaine de la solution d'un
  probl{\`e}me unilat{\'e}ral.
\newblock {\em CR Acad. Sc. Paris}, 301(4):103--106, 1985.

\bibitem{sokolowski1986sensitivity}
J.~Sokolowski and J.-P. Zol{\'e}sio.
\newblock Sensitivity analysis of elastic-plastic torsion problem.
\newblock In {\em System Modelling and Optimization}, pages 845--853. Springer,
  1986.

\bibitem{soklowski1987shape}
J.~Sokolowski and J.-P. Zol{\'e}sio.
\newblock Shape design sensitivity analysis of plates and plane elastic solids
  under unilateral constraints.
\newblock {\em Journal of optimization theory and applications},
  54(2):361--382, 1987.

\bibitem{sokolowski1987shape}
J.~Sokolowski and J.-P. Zol{\'e}sio.
\newblock Shape sensitivity analysis of unilateral problems.
\newblock {\em SIAM journal on mathematical analysis}, 18(5):1416--1437, 1987.

\bibitem{sokolowski1988shape}
J.~Soko{\l}owski and J.-P. Zol{\'e}sio.
\newblock Shape sensitivity analysis of contact problem with prescribed
  friction.
\newblock {\em Nonlinear Analysis: Theory, Methods \& Applications},
  12(12):1399--1411, 1988.

\bibitem{sokolowski1992introduction}
J.~Sokolowski and J.-P. Zol{\'e}sio.
\newblock Introduction to shape optimization.
\newblock In {\em Introduction to Shape Optimization}. Springer, 1992.

\bibitem{sokolowski4differential}
J.~Sokolowski and J.-P. Zol{\'e}sio.
\newblock Differential stability of solutions to unilateral problems.
\newblock {\em Free boundary problems: application and theory}, 4:537--547,
  1993.

\bibitem{stadler2004infinite}
G.~Stadler.
\newblock {\em Infinite-dimensional semi-smooth {N}ewton and augmented
  {L}agrangian methods for friction and contact problems in elasticity}.
\newblock Selbstverl., 2004.

\bibitem{stadler2004semismooth}
G.~Stadler.
\newblock Semismooth {N}ewton and augmented {L}agrangian methods for a
  simplified friction problem.
\newblock {\em SIAM Journal on Optimization}, 15(1):39--62, 2004.

\bibitem{stromberg2010topology}
N.~Str{\"o}mberg and A.~Klarbring.
\newblock Topology optimization of structures in unilateral contact.
\newblock {\em Structural and Multidisciplinary Optimization}, 41(1):57--64,
  2010.

\bibitem{SusSmeOsh1994}
M.~Sussman, P.~Smereka, and S.~Osher.
\newblock A {{Level Set Approach}} for {{Computing}} solutions to
  {{Incompressible Two}}-{{Phase Flows}}.
\newblock {\em J. Computational Physics}, 114:146--159, 1994.

\bibitem{susu2018optimal}
L.~M. Susu.
\newblock Optimal control of a viscous two-field gradient damage model.
\newblock {\em GAMM-Mitteilungen}, 40(4):287--311, 2018.

\bibitem{troltzsch2010optimal}
F.~Tr{\"o}ltzsch.
\newblock {\em Optimal control of partial differential equations: theory,
  methods, and applications}, volume 112.
\newblock American Mathematical Soc., 2010.

\bibitem{ulbrich2002semismooth}
M.~Ulbrich.
\newblock Semismooth {N}ewton methods for operator equations in function
  spaces.
\newblock {\em SIAM Journal on Optimization}, 13(3):805--841, 2002.

\bibitem{uzawa1958iterative}
H.~Uzawa.
\newblock Iterative methods for concave programming.
\newblock {\em Studies in linear and nonlinear programming}, 6:154--165, 1958.

\bibitem{van2013level}
N.~P. Van~Dijk, K.~Maute, M.~Langelaar, and F.~Van~Keulen.
\newblock Level-set methods for structural topology optimization: a review.
\newblock {\em Structural and Multidisciplinary Optimization}, 48(3):437--472,
  2013.

\bibitem{wachsmuth2014strong}
G.~Wachsmuth.
\newblock Strong stationarity for optimal control of the obstacle problem with
  control constraints.
\newblock {\em SIAM Journal on Optimization}, 24(4):1914--1932, 2014.

\bibitem{wachsmuth2016towards}
G.~Wachsmuth.
\newblock Towards m-stationarity for optimal control of the obstacle problem
  with control constraints.
\newblock {\em SIAM Journal on Control and Optimization}, 54(2):964--986, 2016.

\bibitem{WriSimTay1985}
P.~Wriggers, J.~C. Simo, and R.~L. Taylor.
\newblock Penalty and {{Augmented Lagrangian}} formulations for contact
  problems.
\newblock {\em Numerical Methods in Engineering: Theory and Applications},
  1985.

\bibitem{yamasaki2011level}
S.~Yamasaki, T.~Nomura, A.~Kawamoto, K.~Sato, and S.~Nishiwaki.
\newblock A level set-based topology optimization method targeting metallic
  waveguide design problems.
\newblock {\em International Journal for Numerical Methods in Engineering},
  87(9):844--868, 2011.

\bibitem{zarantonello1971projections}
E.~H. Zarantonello.
\newblock Projections on convex sets in {H}ilbert space and spectral theory:
  Part i. projections on convex sets: Part ii. spectral theory.
\newblock In {\em Contributions to nonlinear functional analysis}, pages
  237--424. Elsevier, 1971.

\bibitem{zolesio1977optimal}
J.-P. Zol{\'e}sio.
\newblock An optimal design procedure for optimal control support.
\newblock In {\em Convex Analysis and its Applications}, pages 207--219.
  Springer, 1977.

\bibitem{zolesio1979identification}
J.-P. Zol{\'e}sio.
\newblock {\em Identification de domaines par d{\'e}formations}.
\newblock PhD thesis, Université de Nice, 1979.

\end{thebibliography}



\end{document}